\documentclass[12pt]{amsart}

\setlength\textheight{7.7in}
\setlength\textwidth{6.5in}
\setlength\oddsidemargin{0in}            
\setlength\evensidemargin{0in}
\setlength\parindent{0.25in}
\setlength\marginparwidth{0.8in}
\usepackage[margin=1.0in]{geometry}
\usepackage{caption}
\usepackage{graphicx,longtable,wrapfig,xfrac,enumitem,rotating,extpfeil,amsmath,tikz,tikz-cd,amssymb,mathrsfs,float,array,mathtools,bm,amsthm,hyperref,booktabs,makecell}
%\usetikzlibrary{external}
%\tikzexternalize[prefix=figures/]

\newcolumntype{L}[1]{>{\raggedright\arraybackslash}p{#1}}
\newcolumntype{M}[1]{>{\centering\arraybackslash}m{#1}}
\usetikzlibrary{calc,decorations.pathmorphing,shapes}
\def\R{0.6cm}
\def\S{0.4cm}
\def\T{0.3cm}
\def\P{0.1cm}
\tikzset{
  pics/mynode1/.style args={#1,#2,#3,#4,#5}{
    code={
      \filldraw[thick, color=violet,fill=violet!5] (90:0.8) circle(0.6);
      \filldraw[thick, color=blue,fill=blue!5] (-30:0.8) circle(0.6);
      \draw[thick, color=cyan,fill=cyan!5] (210:0.8) circle(0.6);
      \node (#1) at (0,0) {};
      \node[#5] at (90:0.8) {#2};
      \node[#5] at (-30:0.8) {#3};
      \node[#5] at (210:0.8) {#4};
      \node at (90:1.7) {$k$};
      \node at (-30:1.7) {$j$};
      \node at (210:1.7) {$i$};
     }
  }
}

\newcommand{\hextwist}[3]{\begin{tikzpicture}
   \coordinate (c) at (1,1);
   \draw (0:\S) \foreach \x in {60,120,...,360} {  -- (\x:\S) };
   \draw [blue, thick, -{Stealth}] {[rounded corners] (-150:\S*0.8) -- (90:\S*0.8) -- (-30:\S*0.8)};
   \foreach \x/\l/\p in
    { 60/{(2,3,1)}/above,
      120/{(2,1,3)}/above,
      180/{(1,2,3)}/left,
      240/{(1,3,2)}/below,
      300/{(3,1,2)}/below,
      360/{(3,2,1)}/right
    }
    \node at (\x:\S) {};
    \node[label={#1}] at (90:\S*0.6) {};
    \node[label={#2}] at (-50:\S*1.9) {};
    \node[label={#3}] at (-130:\S*1.9) {};
  \end{tikzpicture}}

\newcommand{\hexchange}[3]{\begin{tikzpicture}
   \coordinate (c) at (1,1);
   \draw (0:\R) \foreach \x in {60,120,...,360} {  -- (\x:\R) };
   \draw [color = blue, thick,decorate,decoration={coil,aspect=0}] (-155:\R*0.8) -- (95:\R*0.8);
   \draw [color = blue, thick,decorate,decoration={coil,aspect=0}] (-145:\R*0.8) -- (-35:\R*0.8);
   \draw [color = violet, arrows = {-Stealth[inset=4pt, angle=80:9pt]}] [line width=1pt, double distance=1pt] (120:\R*0.4) to [bend left] (-60:\R*0.4);
   \foreach \x/\l/\p in
    { 60/{(2,3,1)}/above,
      120/{(2,1,3)}/above,
      180/{(1,2,3)}/left,
      240/{(1,3,2)}/below,
      300/{(3,1,2)}/below,
      360/{(3,2,1)}/right
    }
    \node at (\x:\R) {};
    \node[label={#1}] at (90:\R*0.7) {};
    \node[label={#2}] at (-50:\R*1.7) {};
    \node[label={#3}] at (-130:\R*1.7) {};
  \end{tikzpicture}}
\newcommand{\hexchangeDots}[3]{\begin{tikzpicture}
   \coordinate (c) at (1,1);
   \draw (0:\R) \foreach \x in {60,120,...,360} {  -- (\x:\R) };
   \draw [color = blue, thick,decorate,decoration={coil,aspect=0}] (-155:\R*0.8) -- (95:\R*0.8);
   \draw [color = blue, dashed, thick,decorate,decoration={coil,aspect=0}] (-145:\R*0.8) -- (-35:\R*0.8);
   \foreach \x/\l/\p in
    { 60/{(2,3,1)}/above,
      120/{(2,1,3)}/above,
      180/{(1,2,3)}/left,
      240/{(1,3,2)}/below,
      300/{(3,1,2)}/below,
      360/{(3,2,1)}/right
    }
    \node at (\x:\R) {};
    \node[label={#1}] at (90:\R*0.7) {};
    \node[label={#2}] at (-50:\R*1.7) {};
    \node[label={#3}] at (-130:\R*1.7) {};
  \end{tikzpicture}}
\newcommand{\Dchange}[4]{\begin{tikzpicture}
    % Define coordinates for points a and b
    \coordinate (a) at (-1,0);
    \coordinate (b) at (0,0);
    \coordinate (c) at (0.8,0.5);
    \coordinate (d) at (0.8,-0.5);
    \coordinate (m) at ({$(b)!.7!(c)$} -| {$(b)!.7!(c)$});
    \coordinate (m2) at ({$(b)!.7!(d)$} -| {$(b)!.7!(d)$});
    
    % Draw a straight line from a to b
    \draw [shorten <=0.2cm,shorten >=0.2cm] (a) -- (b);
    \draw [shorten <=0.2cm,shorten >=0.2cm] (b) -- (c);
    \draw [shorten <=0.2cm,shorten >=0.2cm] (b) -- (d);
    \draw [color = violet, shorten <=0.1cm,shorten >=0.05cm, out=-70,in=70, arrows = {-Stealth[inset=4pt, angle=70:10pt]},line width=1pt, double distance=1pt] (m) to (m2);
    
    % Label the points
    \node at (a) {#1};
    \node at (b) {#2};
    \node at (c) {#3};
    \node at (d) {#4};
  \end{tikzpicture}}
\newcommand{\DchangeDash}[4]{\begin{tikzpicture}
    % Define coordinates for points a and b
    \coordinate (a) at (-1,0);
    \coordinate (b) at (0,0);
    \coordinate (c) at (0.8,0.5);
    \coordinate (d) at (0.8,-0.5);
    \coordinate (m) at ({$(b)!.7!(c)$} -| {$(b)!.7!(c)$});
    \coordinate (m2) at ({$(b)!.7!(d)$} -| {$(b)!.7!(d)$});
    
    % Draw a straight line from a to b
    \draw [shorten <=0.2cm,shorten >=0.2cm] (a) -- (b);
    \draw [shorten <=0.2cm,shorten >=0.2cm] (b) -- (c);
    \draw [dashed, shorten <=0.2cm,shorten >=0.2cm] (b) -- (d);
    \draw [color = violet, shorten <=0.1cm,shorten >=0.05cm, out=-70,in=70, arrows = {-Stealth[inset=4pt, angle=70:10pt]},line width=1pt, double distance=1pt] (m) to (m2);
    
    % Label the points
    \node at (a) {#1};
    \node at (b) {#2};
    \node at (c) {#3};
    \node at (d) {#4};
  \end{tikzpicture}}

\newcommand\longline{%
  \mathrel{\begin{tikzpicture}[baseline=-0.5ex]
    \draw (0,0) -- (0.4cm,0);
  \end{tikzpicture}}%
}

\newcommand\xline[1]{%
  \mathrel{\begin{tikzpicture}[baseline=-0.5ex]
    \draw (0,0) -- (0.4cm,0) node [midway, label={[label distance=-0.2cm]90:$\scriptstyle #1$}] {};
  \end{tikzpicture}}%
}

\newcommand\xsquigline[1]{%
  \mathrel{\begin{tikzpicture}[baseline=-0.5ex]
    \draw[decorate, decoration={snake, amplitude=0.7pt, segment length=1.2mm}] (0,0) -- (0.4cm,0) node [midway, label={[label distance=-0.2cm]90:$\scriptstyle #1$}] {};
  \end{tikzpicture}}%
}

\DeclareMathOperator{\sgn}{sgn}
\DeclareMathOperator{\im}{im}
\DeclareMathOperator{\Id}{Id}
\DeclareMathOperator{\Mid}{mid}
\DeclareMathOperator{\Sq}{Sq}
\DeclareMathOperator{\sq}{sq}
\DeclareMathOperator{\gr}{gr}
\DeclareMathOperator{\Ob}{Ob}
\DeclareMathOperator{\hocolim}{hocolim}
\DeclareMathOperator{\conv}{conv}
\DeclareMathOperator{\rank}{rank}
\DeclareMathOperator{\cell}{cell}
\DeclareMathOperator{\Int}{int}

\hypersetup{
    colorlinks=true,
    linkcolor=blue,
    filecolor=magenta,
    urlcolor=cyan,
    pdftitle={On Steenrod squares for even and odd Khovanov homology}
  }
\newtheorem{theorem}{Theorem}[section]
\newtheorem{proposition}[theorem]{Proposition}
\newtheorem{corollary}[theorem]{Corollary}
\newtheorem{lemma}[theorem]{Lemma}
\theoremstyle{definition}
\newtheorem{procedure}[theorem]{Procedure}
\newtheorem{definition}[theorem]{Definition}
\newtheorem{notation}[theorem]{Notation}
\newtheorem{question}[theorem]{Question}
\newtheorem{construction}[theorem]{Construction}
\newtheorem*{notation*}{Notation}
\theoremstyle{remark}    
\newtheorem{example}{Example}[theorem]
\newtheorem{remark}{Remark}[theorem]
\newcommand{\bigslant}[2]{{\raisebox{.2em}{$#1$}\left/\raisebox{-.2em}{$#2$}\right.}}
\newcommand{\boldvarphi}{\boldsymbol{\varphi}}
\makeatletter
\newcommand{\pushright}[1]{\ifmeasuring@#1\else\omit\hfill$\displaystyle#1$\fi\ignorespaces}
\makeatother

\graphicspath{{images/inkscapeimages}}
\title{On Steenrod squares for even and odd Khovanov homology}
\author{Advika Rajapakse}
\address{Department of Mathematics\\University of California\\Los Angeles, CA 90095}
\email{advika@math.ucla.edu}
\thanks{AR was supported by NSF Grant DMS-2136090}
\begin{document}
\maketitle
\begin{abstract}
  For an arbitrary link $L\subset S^3$, Sarkar-Scaduto-Stoffregen constructed a family $\mathcal{X}_l(L)$, $l\geq 0$, of spaces, giving a family of spatial refinements of even and odd Khovanov homology. We give a computation of $\Sq^2$ on these spaces, determining the stable homotopy type of $\mathcal{X}_l(K)$ for all $l$ and all knots $K$ up to 11 crossings. We also prove that the Steenrod squares $\Sq_0^2$, $\Sq_1^2$ defined by Sch\"utz do arise as Steenrod squares on these spaces.
\end{abstract}
\tableofcontents
\section{Introduction}
\subsection{Khovanov homologies}
Khovanov homology, a categorification of the Jones polynomial, gives a bigraded vector space $Kh^{i,j}(L)$ for each link $L\subset S^3$ whose graded Euler characteristic is the Jones polynomial $V(L)$ \cite{MR1740682}. Furthermore, $Kh^{i,j}(L)$ is an invariant of the isotopy class of $L$. Following its discovery, we have seen several generalizations, such as tangle invariants  \cite{MR1928174, MR2174270}, and perturbations \cite{MR2174270, MR2173845}. We have also seen that Khovanov homology has a functorial property where associated to a link cobordism in $\mathbb{R}^3\times [0,1]$, there is a homomorphism of Khovanov chain complexes \cite{MR2174270}. This result has found many exciting applications, such as Rasmussen's $s$-invariant \cite{MR2729272} giving a lower bound for the slice genus of a knot and a proof \cite{MR4076631} that the Conway knot is not slice. There have also been numerous applications in Khovanov homology to bound the Thurston-Bennequin number for Legendrian knots \cite{MR2186113,MR2250492}. Finally, Khovanov homology, along with Khovanov-Rozansky homology, has been used to define an exciting new family of smooth four-manifold invariants \cite{MR4562565}.\par
In \cite{MR3071132}, Ozsv\'ath,  Rasmussen, and Szab\'o construct a modified version of $Kh$, which we call $Kh_o$, that shares the same $\mathbb{Z}/2$ reduction as $Kh$, but differ over $\mathbb{Q}$. $Kh_o$ shares similar structural properties and reduced theories, and there are even chain maps associated to cobordisms \cite{MR3363817}.
\subsection{Khovanov homotopy types}
Lipshitz-Sarkar \cite{MR3230817} have constructed a space-level link invariant $\mathcal{X}_{Kh}(L)$ that refines Khovanov homology. In particular, taking the cohomology of $\mathcal{X}_{Kh}(L)$ recovers $Kh^{i,j}(L)$, and furthermore, the stable homotopy type of $\mathcal{X}_{Kh}(L)$ is a link invariant. It has been shown \cite{MR4153651} that $\mathcal{X}_{Kh}(L)$ enjoys further structural properties, in particular, regarding split unions, connect sums, and mirrors. Furthermore, stable cohomology operations, like the Steenrod squares $\Sq^n$, on these spaces give operations $\Sq^n: Kh^{i,j}(L)\to Kh^{i+n,j}(L)$ which are not generally trivial \cite{MR4153651}.\par
Sarkar-Scaduto-Stoffregen \cite{MR4078823} have since constructed modifications of this space, defining a family of spaces $\mathcal{X}_l(L)$, $l\geq 0$ beginning with $\mathcal{X}_0(L) = \mathcal{X}_{Kh}(L)$ and $\mathcal{X}_1(L) = \mathcal{X}_o(L)$. For $l$ even, $\mathcal{X}_l(L)$ is a refinement of $Kh$, and for $l$ odd, $\mathcal{X}_l(L)$ is a refinement of $Kh_o$. It has been a question whether the stable homotopy type of $\mathcal{X}_l(L)$ only depends on $l\ \mathrm{mod}\ 2$, and whether these spectra $\mathcal{X}_l(L)$ share structural properties analogous to the original $\mathcal{X}_{Kh}(L)$, and. We answer the former question in the negative, and some of the latter questions follow immediately from our computation of the second Steenrod square $\Sq^2$ on these spaces $\mathcal{X}_l(L)$.\par
Our first theorem both answers Question (q-1) and (q-7) from \cite{MR4078823}. In particular, (q-1) is answered in the positive and (q-7) is answered in the negative.
\begin{theorem}\label{depends more than mod 2}
  The spectrum $\mathcal{X}_2(T_{3,4})$ is a wedge sum of Moore spaces, implying $\mathcal{X}_0(T_{3,4})\not\sim \mathcal{X}_2(T_{3,4})$. Furthermore, $\mathcal{X}_1(T_{3,4})$ is a wedge sum of Moore spaces while $\mathcal{X}_3(T_{3,4})$ is not, so we have $\mathcal{X}_1(T_{3,4})\not\sim \mathcal{X}_3(T_{3,4})$,
\end{theorem}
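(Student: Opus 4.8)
The plan is to combine our computation of $\Sq^2$ on the spectra $\mathcal{X}_l$ with the known structure of the Lipshitz--Sarkar Khovanov homotopy type of $T_{3,4}$. Throughout write $T := T_{3,4}$, the knot $8_{19}$, presented as the closure of the positive $3$--braid $(\sigma_1\sigma_2)^4$. Since even and odd Khovanov homology have the same $\mathbb{Z}/2$ reduction, the $\mathbb{F}_2$--cohomology $H^{*,*}(\mathcal{X}_l(T);\mathbb{F}_2)\cong Kh^{*,*}(T;\mathbb{F}_2)$ does not depend on $l$; in particular the set of \emph{candidate bidegrees} --- the pairs $(i,j)$ for which $H^{i,j}(\mathcal{X}_l(T);\mathbb{F}_2)$ and $H^{i+2,j}(\mathcal{X}_l(T);\mathbb{F}_2)$ are both nonzero, i.e.\ the only bidegrees where $\Sq^2$ can possibly be nontrivial --- is the same for every $l$, and (reading it off from $Kh(T;\mathbb{F}_2)$) it is small and explicit.

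First I would record $Kh(T;\mathbb{Z})$, $Kh_o(T;\mathbb{Z})$ and their common $\mathbb{F}_2$--reduction. The homology carries only $2$--torsion and is supported on a short band of $\delta = j-2i$ diagonals; since $\Sq^n$ shifts $\delta$ by $-2n$ this forces $\Sq^n = 0$ on $\mathcal{X}_l(T)$ for all $n\geq 4$, while $\Sq^3 = \Sq^1\Sq^2$ by the Adem relations. Hence the stable homotopy type of $\mathcal{X}_l(T)$ is controlled entirely by the integral homology together with $\Sq^2$: by the obstruction-theoretic analysis standard in this subject, a spectrum with only $2$--torsion in homology, confined to such a band of diagonals, is a wedge of Moore spaces precisely when $\Sq^2$ vanishes on it.

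Next, for $l = 1, 2, 3$ I would evaluate $\Sq^2$ in each candidate bidegree using the $\Sq^2$ formula for $\mathcal{X}_l$ developed in the body of the paper: locate, inside the cube of resolutions of the braid closure above, the piece of the cohomotopy of $\mathcal{X}_l(T)$ responsible for that bidegree, and push it through the machinery. The statement to be established is that $\Sq^2\equiv 0$ on $\mathcal{X}_1(T)$ and on $\mathcal{X}_2(T)$, but $\Sq^2\neq 0$ in some candidate bidegree of $\mathcal{X}_3(T)$. With the previous paragraph this yields that $\mathcal{X}_1(T)$ and $\mathcal{X}_2(T)$ are wedges of Moore spaces while $\mathcal{X}_3(T)$ is not; since $\mathcal{X}_1(T)$ and $\mathcal{X}_3(T)$ both refine $Kh_o(T)$ and therefore have identical (co)homology, and $\Sq^2$ is a stable invariant, it follows that $\mathcal{X}_1(T)\not\sim\mathcal{X}_3(T)$.

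Finally, the case $l = 0$ of the same formula recovers the known fact that $\Sq^2$ is nonzero on $\mathcal{X}_{Kh}(T) = \mathcal{X}_e(T)$ (cf.\ \cite{MR4153651}), so $\mathcal{X}_e(T)$ is not a wedge of Moore spaces; as $\mathcal{X}_e(T)$ and $\mathcal{X}_2(T)$ both refine $Kh(T)$ and hence have the same (co)homology while differing in $\Sq^2$, we conclude $\mathcal{X}_e(T)\not\sim\mathcal{X}_2(T)$. The main obstacle is precisely this explicit evaluation of $\Sq^2$ for $l = 1, 2, 3$: the delicate feature is the dependence on the parameter $l$ in the sign/labelling data entering the $\mathcal{X}_l$ construction, which is exactly what makes the answer jump between $l = 1, 2$ and $l = 3$ (and between $l = 0$ and $l = 2$); a secondary but routine point is checking that the width-and-torsion hypotheses on $Kh(T)$ genuinely license the ``wedge of Moore spaces $\iff$ $\Sq^2 \equiv 0$'' reduction invoked above.
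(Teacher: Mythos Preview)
Your proposal is correct and is essentially the paper's own argument. The paper packages the $\Sq^2$ computations via the function $St_l$ and appeals to Proposition~\ref{classification} (which is exactly your ``wedge of Moore spaces $\iff$ $\Sq^2$ trivial'' reduction under the width and torsion hypotheses satisfied by $T_{3,4}=8_{19}$); the relevant outputs are that $St_1(T_{3,4})$ and $St_2(T_{3,4})$ are identically zero, that $St_3(T_{3,4})$ is nonzero at $(i,j)=(2,11)$, and that the nontriviality of $\Sq^2$ on $\mathcal{X}_e(T_{3,4})$ is already known from \cite{MR3252965}.
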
\par
Our next theorem refutes a potential property regarding links and their mirrors. Indeed, note that from \cite{MR4153651} the property
\[
  \mathcal{X}_0(m(L)) \sim \mathcal{X}_0(L)^\vee,
\]
where ``$\vee$'' denotes the Spanier-Whitehead dual in this case. Question (q-5) in \cite{MR4078823} asks if this property extends to the odd Khovanov spectra. The following theorem answers this question in the negative.
\begin{theorem}\label{not dual}
  Let $\vee$ denote the Spanier-Whitehead dual operation. We have $\mathcal{X}_1(m(T_{3,4}))\not\sim \mathcal{X}_1(T_{3,4})^{\vee}$, $\mathcal{X}_3(m(T_{3,4}))\not\sim \mathcal{X}_3(T_{3,4})^\vee$.
\end{theorem}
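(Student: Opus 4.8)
The plan is to detect the asymmetry between $\mathcal{X}_l(m(T_{3,4}))$ and $\mathcal{X}_l(T_{3,4})^\vee$ through the action of $\Sq^2$, exactly as in the proof of Theorem \ref{depends more than mod 2}. Since Spanier--Whitehead duality carries $\Sq^2$ on $\mathcal{X}^\vee$ to the dual (i.e.\ transpose) of the $\Sq^2$ operation on $\mathcal{X}$, the two spectra $\mathcal{X}_l(m(T_{3,4}))$ and $\mathcal{X}_l(T_{3,4})^\vee$ are stably equivalent only if, in every bidegree, the map $\Sq^2\colon Kh_o^{i,j}(m(T_{3,4})) \to Kh_o^{i+2,j}(m(T_{3,4}))$ agrees, after identifying cohomology groups appropriately, with the $\mathbb{F}_2$-transpose of $\Sq^2\colon Kh_o^{i,j}(T_{3,4})\to Kh_o^{i+2,j}(T_{3,4})$. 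So the strategy is: (i) compute $\Sq^2$ on $\mathcal{X}_1(T_{3,4})$ and $\mathcal{X}_3(T_{3,4})$ using the algorithm of this paper; (ii) do the same for the mirror $m(T_{3,4})$; (iii) exhibit a single bidegree where the rank of $\Sq^2$ for the mirror differs from the rank of $\Sq^2$ for $T_{3,4}$ — since a linear map and its transpose have equal rank, any such discrepancy forces non-equivalence.

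The first concrete step is to recall that, over $\mathbb{F}_2$, $Kh_o$ and $Kh$ coincide, so $\mathcal{X}_1$ and $\mathcal{X}_3$ have the same $\mathbb{F}_2$-cohomology as $\mathcal{X}_0 = \mathcal{X}_{Kh}$; and for a mirror link one has the reindexing $Kh^{i,j}(m(L)) \cong Kh^{-i,-j}(L)$ at the level of $\mathbb{F}_2$-vector spaces. Under this reindexing the Spanier--Whitehead dual $\mathcal{X}_1(T_{3,4})^\vee$ has $\Sq^2$ given by the transpose of that of $\mathcal{X}_1(T_{3,4})$, so the content of the theorem is that $\Sq^2$ on the mirror is \emph{not} merely the transpose. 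For $T_{3,4}$ the relevant torsion in Khovanov homology is concentrated in a small range of bidegrees, and by Theorem \ref{depends more than mod 2} we already know $\Sq^2$ is nontrivial on $\mathcal{X}_3(T_{3,4})$ in at least one bidegree $(i,j)$; the plan is to run the $\Sq^2$ algorithm on $m(T_{3,4})$ in the mirrored bidegree $(-i-2,-j)$ and read off its rank, then compare.

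The main obstacle will be the bookkeeping in step (ii): the combinatorial algorithm for $\Sq^2$ in this paper is formulated on an explicit Khovanov-type cube of resolutions, and running it on $m(T_{3,4})$ means either taking a diagram of the mirror (with all crossings reversed, roughly doubling the number of positive/negative crossing interchanges to track) or else carefully transporting the $\mathcal{X}_l$ construction through the mirror operation and checking how the ladybug-type sign/quantum-grading conventions behave — this is where the $l$-dependence really enters, since the relative even/odd signs are exactly what distinguishes $\mathcal{X}_1$ from $\mathcal{X}_3$. Concretely I expect the cleanest route is to use a minimal $13$-crossing diagram of $T_{3,4}$, compute the full cube once, extract $\Sq^2$ for both $\mathcal{X}_1$ and $\mathcal{X}_3$, and then separately compute the cube for a minimal diagram of $m(T_{3,4})$; the verification that a rank jumps (say from the transpose-predicted value to a strictly larger or smaller value) in one bidegree then completes both non-equivalences at once. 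A secondary subtlety is making sure the Moore-space decomposition of $\mathcal{X}_1(T_{3,4})$ from Theorem \ref{depends more than mod 2} is compatible with the duality statement: a wedge of Moore spaces is Spanier--Whitehead self-dual up to reindexing, so if $\mathcal{X}_1(m(T_{3,4}))$ failed to be a wedge of Moore spaces — which a nonzero $\Sq^2$ detects — that alone would give $\mathcal{X}_1(m(T_{3,4}))\not\sim \mathcal{X}_1(T_{3,4})^\vee$, and this is likely the quickest path for $l=1$; the $l=3$ case needs the honest rank comparison since both sides already carry nontrivial $\Sq^2$.
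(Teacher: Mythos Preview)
Your approach is correct and matches the paper's: compute $\Sq^2$ on $\mathcal{X}_l$ for both $T_{3,4}$ and its mirror, then use that a spectrum and its Spanier--Whitehead dual have $\Sq^2$ of the same rank in corresponding bidegrees. The paper simply cites its computed table of $St_l$ values and observes that in each case exactly one of $\mathcal{X}_l(T_{3,4})$, $\mathcal{X}_l(m(T_{3,4}))$ has nontrivial $\Sq^2$.

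Two small corrections. First, your expectation for $l=3$ is off: the computation shows $\mathcal{X}_3(m(T_{3,4}))$ has \emph{trivial} $\Sq^2$, while $\mathcal{X}_3(T_{3,4})$ has nontrivial $\Sq^2$ (in bidegree $(2,11)$). So the $l=3$ case is exactly as easy as the $l=1$ case, just with the roles of the knot and its mirror swapped; no refined rank comparison is needed. Your proposed rank comparison would of course still detect this, since rank $0$ versus rank $1$ is a rank discrepancy. Second, $T_{3,4}=8_{19}$ has an $8$-crossing minimal diagram, not $13$.
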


\begin{theorem}\label{not smash product}
  We have $\mathcal{X}_k(T_{2,3}\sqcup T_{2,3})\not\sim \mathcal{X}_k(T_{2,3})\wedge \mathcal{X}_k(T_{2,3})$ for all odd $k$. 
\end{theorem}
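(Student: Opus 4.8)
The plan is to separate the two spectra by the action of the Steenrod square $\Sq^2$ on mod-$2$ cohomology: on the smash product $\Sq^2$ is forced to be nonzero by the Cartan formula, while on $\mathcal X_o(T_{2,3}\coprod T_{2,3})$ it can be computed from a diagram and, one expects, vanishes in the relevant bidegree.

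First I would record the (co)homology involved. Even and odd Khovanov homology of the trefoil agree, with $Kh_o^{i,j}(T_{2,3})\cong\mathbb Z$ for $(i,j)\in\{(0,1),(0,3),(2,5),(3,9)\}$ and $Kh_o^{3,7}(T_{2,3})\cong\mathbb Z/2$. By the universal coefficient theorem $H^{*,*}(\mathcal X_o(T_{2,3});\mathbb Z/2)$ is one-dimensional in the bidegrees $(0,1),(0,3),(2,5),(2,7),(3,7),(3,9)$; the Bockstein $\Sq^1$ is an isomorphism $H^{2,7}\to H^{3,7}$, and $\Sq^2$ vanishes on all of $\mathcal X_o(T_{2,3})$ since no two of these bidegrees differ by $(2,0)$. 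Write $\alpha$ for the generator of $H^{2,7}$ and $\beta=\Sq^1\alpha$ for that of $H^{3,7}$. Because the odd Khovanov cochain complex of a split link is the tensor product of the complexes of its components (one uses $\Lambda^{\ast}(A\oplus B)\cong\Lambda^{\ast}A\otimes\Lambda^{\ast}B$ together with the product sign assignment), the Künneth theorem over $\mathbb Z/2$ gives a bigraded isomorphism $H^{*,*}(\mathcal X_o(T_{2,3}\coprod T_{2,3});\mathbb Z/2)\cong H^{*,*}(\mathcal X_o(T_{2,3});\mathbb Z/2)^{\otimes 2}$. In particular the two sides of the theorem have the same mod-$2$ cohomology, with $H^{4,14}$ and $H^{6,14}$ one-dimensional in each case, so only the $\Sq^2$-module structure is left to tell them apart.

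On the smash side the Cartan formula gives $\Sq^2(x\otimes y)=\Sq^2x\otimes y+\Sq^1x\otimes\Sq^1y+x\otimes\Sq^2y$, which, since $\Sq^2$ annihilates each factor, collapses to $\Sq^2(x\otimes y)=\Sq^1x\otimes\Sq^1y$. Taking $x=y=\alpha$ yields $\Sq^2(\alpha\otimes\alpha)=\beta\otimes\beta\neq 0$ in $H^{6,14}(\mathcal X_o(T_{2,3})\wedge\mathcal X_o(T_{2,3});\mathbb Z/2)$, so on the smash product $\Sq^2\colon H^{4,14}\to H^{6,14}$ is nonzero. (Equivalently, $\mathcal X_o(T_{2,3})$ is a wedge of spheres together with one copy of the Moore space $M(\mathbb Z/2,2)=S^2\cup_2 e^3$, and $M(\mathbb Z/2,2)\wedge M(\mathbb Z/2,2)$ carries a nontrivial $\Sq^2$, hence is not a wedge of Moore spaces.) It remains to show $\Sq^2\colon H^{4,14}(\mathcal X_o(T_{2,3}\coprod T_{2,3});\mathbb Z/2)\to H^{6,14}$ is zero, which I would verify by applying the combinatorial formula for $\Sq^2$ on the spaces $\mathcal X_l$ developed below to the standard split six-crossing diagram of $T_{2,3}\coprod T_{2,3}$ with $l=1$: one identifies the essentially unique cocycle in bidegree $(4,14)$, enumerates the configurations in the cube of resolutions that contribute to $\Sq^2$ of it, and checks that their contributions cancel. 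Since $\Sq^2$ is a stable cohomology operation, the resulting mismatch proves $\mathcal X_o(T_{2,3}\coprod T_{2,3})\not\sim\mathcal X_o(T_{2,3})\wedge\mathcal X_o(T_{2,3})$.

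The main obstacle is this final computation on the $2^6$-vertex cube of the split diagram. Since the diagram is a \emph{split} union, the flow category of $\mathcal X_o(T_{2,3}\coprod T_{2,3})$ is close to the product of the two trefoil flow categories and the underlying cochain complex is an honest tensor product; the point one expects is that the odd framings of the two-dimensional moduli spaces of the product cube differ from the product of the framings of the factors in precisely the way needed to cancel the would-be $\Sq^1\otimes\Sq^1$ term, and making this framing comparison precise---rather than simply grinding through the formula---is where the real content lies. A cleaner alternative, if it can be arranged, is to prove directly that $\mathcal X_o$ of a split union of links whose spectra are wedges of Moore spaces is again a wedge of Moore spaces; this would give the theorem at once, since $\mathcal X_o(T_{2,3})$ is such a wedge but $\mathcal X_o(T_{2,3})\wedge\mathcal X_o(T_{2,3})$, having nontrivial $\Sq^2$, is not.
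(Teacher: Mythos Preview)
Your argument rests on a factual error about the integral odd Khovanov homology of the trefoil. You write ``Even and odd Khovanov homology of the trefoil agree, with \dots\ $Kh_o^{3,7}(T_{2,3})\cong\mathbb Z/2$,'' but this is the \emph{even} Khovanov homology. The odd Khovanov homology of $T_{2,3}$ is torsion-free: since the trefoil is alternating, $\widetilde{Kh}_o(T_{2,3})\cong\mathbb Z^3$ is supported in bidegrees $(0,2),(2,6),(3,8)$, and the unreduced theory is $\mathbb Z^6$ supported in $(0,1),(0,3),(2,5),(2,7),(3,7),(3,9)$. Consequently the Bockstein $\Sq^1$ is identically zero on $H^*(\mathcal X_o(T_{2,3});\mathbb F_2)$, and your Cartan-formula computation collapses to $\Sq^2(\alpha\otimes\alpha)=0$. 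The smash product $\mathcal X_o(T_{2,3})\wedge\mathcal X_o(T_{2,3})$ is in fact a wedge of spheres.

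The paper's argument runs in the opposite direction from what you expect: it observes (exactly as above) that $\mathcal X_o(T_{2,3})$ is a wedge of spheres, hence so is the smash product, and then \emph{computes} via the combinatorial $\Sq^2$-formula that $\mathcal X_o(T_{2,3}\coprod T_{2,3})$ carries a nontrivial $\Sq^2$ (the function $St_1$ is nonzero at one bidegree). So the disjoint-union spectrum is \emph{not} a wedge of Moore spaces, and the two spectra differ. Your proposed final step---showing $\Sq^2$ vanishes on $\mathcal X_o(T_{2,3}\coprod T_{2,3})$---would therefore fail, and your ``cleaner alternative'' (that $\mathcal X_o$ of a split union of Moore-wedge spectra is again a Moore wedge) is precisely what this example disproves.
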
\par
Recall from \cite{MR3071132} the property $Kh^{i,j}_o(L) \cong Kh^{i,j-1}_o(L)\oplus Kh^{i,j+1}_o(L)$. The following theorem refutes a potential space-level lift of this property, and answers Question (q-2) from \cite{MR4078823} in the negative.
\begin{theorem}\label{relation with sum of reduced}
  We have $\mathcal{X}^{-11}_1(T_{3,-4}) \not\sim \widetilde{\mathcal{X}}^{-12}_1(T_{3,-4})\vee \widetilde{\mathcal{X}}^{-10}_1(T_{3,-4})$.
\end{theorem}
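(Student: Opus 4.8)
The plan is to separate the three spectra by computing the Steenrod operation $\Sq^2$ on each and using that $\Sq^2$ is additive over wedge sums. Since the theorem asserts only that a stable \emph{homotopy} equivalence fails, one first checks that homology is no obstruction: over $\mathbb{Z}/2$ the odd and even Khovanov complexes coincide, so Shumakovitch's reduced/unreduced splitting gives
\[
H^{*}(\mathcal{X}^{-11}_o(T_{3,-4});\mathbb{Z}/2)\;\cong\;H^{*}(\widetilde{\mathcal{X}}^{-12}_o(T_{3,-4});\mathbb{Z}/2)\;\oplus\;H^{*}(\widetilde{\mathcal{X}}^{-10}_o(T_{3,-4});\mathbb{Z}/2),
\]
and from a presentation of $Kh_o(T_{3,-4})$ one sees the integral groups line up in this range as well. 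So the content is entirely in the attaching data, which I detect with $\Sq^2$.

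Next I would compute $\Sq^2$ on all three spectra using the combinatorial description of $\Sq^2$ on the $\mathcal{X}_l$ set up earlier in the paper: take a diagram of $T_{3,-4}$, form the reduced and unreduced odd cubes of resolutions, restrict to the quantum gradings $-11$, $-12$, $-10$, and evaluate the formula on the two-step subquotient complexes in which a nonzero $\Sq^2$ can live. I expect the outcome to be that $\Sq^2$ is nonzero on $\mathcal{X}^{-11}_o(T_{3,-4})$, i.e.\ $\rank\!\left(\Sq^2\colon Kh_o^{i,-11}(T_{3,-4};\mathbb{Z}/2)\to Kh_o^{i+2,-11}(T_{3,-4};\mathbb{Z}/2)\right)>0$ for some $i$, whereas $\Sq^2$ vanishes on both $\widetilde{\mathcal{X}}^{-12}_o(T_{3,-4})$ and $\widetilde{\mathcal{X}}^{-10}_o(T_{3,-4})$; equivalently, the ranks of $\Sq^2$ on the two reduced pieces sum to strictly less, in some homological grading, than the rank on the unreduced piece. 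This is consistent with Theorems \ref{depends more than mod 2} and \ref{not dual}, which already show that $\mathcal{X}_o(T_{3,-4})=\mathcal{X}_1(m(T_{3,4}))$ is not a wedge of Moore spaces while $\mathcal{X}_1(T_{3,4})$ is.

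Finally, I would conclude: if $\mathcal{X}^{-11}_o(T_{3,-4})\sim \widetilde{\mathcal{X}}^{-12}_o(T_{3,-4})\vee\widetilde{\mathcal{X}}^{-10}_o(T_{3,-4})$, then under the resulting splitting of $\mathbb{Z}/2$-cohomology the map $\Sq^2$ on the left is block diagonal, hence has rank in each homological grading equal to the sum of the ranks on the two wedge summands; by the previous paragraph this is impossible, so no such stable equivalence exists. The main obstacle is the $\Sq^2$ computation itself — in particular, tracking the sign and ladybug choices of the odd construction through the (larger) unreduced cube and verifying that the associated quadratic refinement is genuinely nonzero — together with being careful enough on the reduced side, in both quantum gradings $-12$ and $-10$, to be sure that no $\Sq^2$ survives there.
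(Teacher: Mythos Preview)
Your approach is correct and matches the paper's: show $\Sq^2$ is nontrivial on $\mathcal{X}^{-11}_o(T_{3,-4})$ but trivial on each reduced summand, then use additivity under wedge. The one place the paper is sharper is on the reduced side: rather than running the combinatorial $\Sq^2$ formula at $j=-12$ and $j=-10$, the paper observes that $\Sq^2$ vanishes there \emph{for degree reasons}---the reduced odd Khovanov homology of $T_{3,-4}$ at those quantum gradings simply does not have nonzero groups in homological degrees $i$ and $i+2$ simultaneously, so there is nothing to compute. Thus the only genuine input is the unreduced computation $St_1(-4,-11)=(0,1,0,0)$ already recorded in Table~\ref{St table}, and the ``main obstacle'' you flag on the reduced side evaporates.
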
\par
We now discuss Conway mutation and the odd Khovanov spectra. Note that while Khovanov homology is not invariant under Conway mutation, Khovanov homology over $\mathbb{F}_2$ \cite{MR2657645}, and more generally, odd Khovanov homology \cite{MR2592723}, is mutation-invariant. However, these results do not extend to the odd Khovanov homotopy types, as can be seen by studying the mutant pair $T_{2,3}\sqcup T_{2,3}$, $T_{2,3}\# T_{2,3}\sqcup U$.
\begin{theorem}\label{mutant}
  We have a non-equivalence of mutant pairs $\mathcal{X}_k(T_{2,3}\sqcup T_{2,3})\not\sim \mathcal{X}_k(T_{2,3}\# T_{2,3}\sqcup U)$ for all odd $k$.
\end{theorem}

\subsection{Steenrod squares}
Lipshitz-Sarkar \cite{MR3252965} have given an explicit formula for the Steenrod square $\Sq^2$ on the even Khovanov homotopy type $\mathcal{X}_e(L)$, computing the stable homotopy type of $\mathcal{X}_e(L)$ for all prime links up to 11 crossings and giving a computable definition for $\Sq^2: Kh^{i,j}\to Kh^{i,j}$. Sch\"utz \cite{MR4970173} modified Lipshitz-Sarkar's definition of $\Sq^2$ defining operations $\Sq_0^2:Kh_o^{i,j}\to Kh_o^{i+2,j}$, $\Sq_1^2: Kh_o^{i,j}\to Kh_o^{i+2,j}$, which are themselves link invariants, and give rise to new $s$-invariants. Sch\"utz conjectured that the operations $\Sq_0^2$, $\Sq_1^2$, arise from the $\Sq^2$ operations on the odd Khovanov spectra $\mathcal{X}_{2l+1}(L)$. We confirm this conjecture:
\begin{theorem}\label{agrees with Schutz}
  When viewing the second Steenrod square $\Sq^2|_{\mathcal{X}_l(L)}$ on the space $\mathcal{X}_l(L)$ as an operation on $Kh(L;\mathbb{F}_2)$, we have $\Sq^2|_{\mathcal{X}_1(L)} = \Sq_1^2$ and $\Sq^2|_{\mathcal{X}_3(L)} = \Sq_0^2$.
\end{theorem}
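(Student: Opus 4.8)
The plan is to compare the combinatorial description of $\Sq^2|_{\mathcal{X}_l(L)}$ obtained in the earlier sections directly with Sch\"utz's cochain-level definitions of $\Sq_0^2$ and $\Sq_1^2$. Both sides are stable cohomology operations on the mod $2$ Khovanov cohomology $Kh(L;\mathbb{F}_2)$: every $\mathcal{X}_l(L)$ is built from the cube of resolutions equipped with a sign assignment of ``type $l$'', and the $\mathbb{F}_2$-reduction of the resulting cochain complex is, independently of $l$, the standard complex $C_{Kh}(L;\mathbb{F}_2)$. So it makes sense to ask that $\Sq^2|_{\mathcal{X}_1(L)}=\Sq_1^2$ and $\Sq^2|_{\mathcal{X}_3(L)}=\Sq_0^2$ as operations $Kh^{i,j}(L;\mathbb{F}_2)\to Kh^{i+2,j}(L;\mathbb{F}_2)$, and the task is to verify equality term by term.

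First I would carry out the usual reduction, following Lipshitz--Sarkar: fixing a quantum grading $j$ and a class $x\in Kh^{i,j}(L;\mathbb{F}_2)$, one passes to a subquotient spectrum of $\mathcal{X}_l(L)$ whose $j$-graded cohomology is concentrated in homological degrees $i$, $i+1$, $i+2$, using that $\Sq^2$ is stable and natural for the inclusion/collapse maps producing such subquotients. On a spectrum with cells in only three adjacent degrees, $\Sq^2\colon H^i\to H^{i+2}$ is determined by the $\mathbb{F}_2$-homotopy class of the attaching data, i.e. by the quadratic refinement built from the moduli-space counts between the $i$- and $(i+1)$-cells and between the $(i+1)$- and $(i+2)$-cells. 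Since $\mathcal{X}_l(L)$ sits in exactly the Burnside-functor/framed-flow-category framework to which this applies, with only the type-$l$ sign assignment varying, the earlier computation of $\Sq^2$ yields a closed formula for $\Sq^2|_{\mathcal{X}_l(L)}$ as a sum over faces of the cube, over the ladybug matching, and over generators differing by $2$ in homological grading, with coefficients recorded by the type-$l$ sign assignment.

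Next I would place Sch\"utz's definitions of $\Sq_0^2$ and $\Sq_1^2$ alongside the $l=1$ and $l=3$ specializations of this formula. Both are sums over the same combinatorial data in the Khovanov cube, and because every sign that survives into the $\Sq^2$ computation survives only modulo $2$, the only possible genuine discrepancy lies in the discrete \emph{choice} that resolves the ambiguity intrinsic to the odd cube --- the odd-theory analogue of the ladybug matching. There are precisely two such choices, and they are exactly the data distinguishing $\Sq_0^2$ from $\Sq_1^2$. So the theorem reduces to identifying which of the two choices is forced by the type-$1$ sign assignment and which by the type-$3$ sign assignment, and matching these against Sch\"utz's indexing.

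I expect this last identification to be the main obstacle. It requires making the abstract description of $\mathcal{X}_l(L)$ concrete enough to read off which resolution of the odd-cube ambiguity its moduli spaces implement, and then matching that against Sch\"utz's explicit cochain formula; the bookkeeping is delicate because the type-$1$ and type-$3$ sign assignments are genuinely different (only cohomologous after passage to $Kh_o$), so one must follow the effect of that difference through the $2$- and $3$-cell attaching maps and not merely at the level of the differential. A secondary technical point is to check that the three-consecutive-degrees truncation of the first step can be performed compatibly on $\mathcal{X}_l(L)$ and on the construction underlying Sch\"utz's operations, so that the comparison of formulas is legitimate; this should follow from naturality of $\Sq^2$ but needs to be stated with care.
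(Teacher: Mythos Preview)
Your broad arc---truncate to three adjacent degrees, extract a combinatorial cochain-level formula for $\Sq^2|_{\mathcal{X}_l(L)}$, and compare it against Sch\"utz's cochain formula---is exactly what the paper does. But your diagnosis of the final step is where the proposal goes wrong.

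You claim that once both formulas are written as sums over the same combinatorial data, ``the only possible genuine discrepancy lies in the discrete choice that resolves the ambiguity intrinsic to the odd cube,'' and that the task is simply to identify which choice the type-$1$ versus type-$3$ sign assignment forces. This is not what happens. The paper's formula $Q(Z(C))$ (a sum of terms like $\sum_{a\text{---}b} ab$, $\sum_a a$, $\sum \max\{a,b\}$, $\sum \Mid(a,b,c)$, switchback and winding counts, turnaround counts, and a signed-vertex correction) and Sch\"utz's formula $\widetilde{Q}(\widetilde{C})=1+F(C)+D(C)$ are structurally quite different. Their difference, computed cycle by cycle, is \emph{not} zero and is \emph{not} a single discrete parameter: it simplifies to an expression $T_1(C)+T_2(C)$ where $T_1(C)=\sum_{a\text{---}b}\max\{a,b\}$ and $T_2(C)$ is a sum of certain sign-map values over oriented edges. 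The real work is then to show that $\sum_z\bigl(\sum_C T_1(C)+T_2(C)\bigr)z$ is a coboundary in $C^{*}_{\mathcal{M}}(\mathscr{C};\mathbb{F}_2)$. The paper does this by constructing an explicit chain homotopy $H$ with $\langle H(x),y\rangle=\sum_{p\in\mathcal{M}(y,x)}\binom{S_{\mathbb{Z}}(p)+1}{2}$ to null-homotope the map $L$ underlying $T_1$, and by exhibiting $T_2$ directly as $d$ of an explicit cochain. None of this is a matter of matching up a binary choice.

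A secondary point: the distinction between Sch\"utz's $\Sq_0^2$ and $\Sq_1^2$ is encoded not in a ladybug-type matching but in the parameter $\epsilon\in\{0,1\}$ appearing in the frame assignment $f_\epsilon$ (it enters exactly in the two cases where $\sigma(p)=\sigma(q)\neq\sigma(p')=\sigma(q')$). Correspondingly, on the paper's side the difference between $l\equiv 1$ and $l\equiv 3\pmod 4$ enters through the extra summand $k\cdot\bigl(\#\{\overrightarrow{e_b}:\Delta S(e_b)=1\}-\#\{\overleftarrow{e_b}:\Delta S(e_b)=1\}\bigr)/2$ in $Q_{2k+1}$. Matching these requires a second pass through the same kind of term-by-term cancellation, not a one-line identification. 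So your proposal correctly locates where the comparison must happen but substantially underestimates what that comparison entails; the missing ingredient is the explicit computation of $Q-\widetilde{Q}$ and the construction of the homotopy witnessing that the residual terms are exact.
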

We might ask whether there are other Steenrod squares arising from the rest of these spaces $\mathcal{X}_l(L)$. The answer is that there are only four total, including the $\Sq_0^2$, $\Sq_1^2$ on odd $Kh$, and the original $\Sq^2$ on even $Kh$.
\begin{theorem}\label{sums to 0}
  The second Steenrod square $\Sq^2|_{\mathcal{X}_l(L)}$, viewed as an operation on $Kh(L;\mathbb{F}_2)$, only depends on $l\ \mathrm{mod}\ 4$. Furthermore, $\Sq^2|_{\mathcal{X}_0(L)}+\Sq^2|_{\mathcal{X}_1(L)}+\Sq^2|_{\mathcal{X}_2(L)}+\Sq^2|_{\mathcal{X}_3(L)} = 0$.
\end{theorem}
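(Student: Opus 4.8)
The plan is to reduce both assertions to the explicit combinatorial description of $\Sq^2|_{\mathcal{X}_l(L)}$ — the computation that is the main technical output of the paper — and then to read off its dependence on the parameter $l$. Since even and odd Khovanov homology have the same $\mathbb{Z}/2$-reduction, the chain complex computing $Kh(L;\mathbb{F}_2)$, and hence the source and target of the operation, are independent of $l$, so the statement is meaningful; the $l$-dependence of the Sarkar--Scaduto--Stoffregen construction lies entirely in the framings (equivalently, the orientation/sign data) of the associated flow category. The first step is to record that a CW model for $\mathcal{X}_l(L)$ can be chosen so that its $1$-cell attaching maps have $l$-independent mod $2$ reduction — this is just the $l$-independence of the Khovanov differential modulo $2$ — so that the only way $l$ can influence $\Sq^2$ is through the framings of the index-$2$ moduli spaces, i.e. through the coefficients appearing in the (Lipshitz--Sarkar-type) formula for the attaching maps of $2$-cells.

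The second step is to isolate the $l$-dependent part of that formula. The value $\langle \Sq^2|_{\mathcal{X}_l(L)}\,x, y\rangle$ is a sum, over the relevant index-$2$ configurations in the cube of resolutions — the \emph{ladybug} squares, where the two length-two paths genuinely interact — of a product of combinatorial factors, exactly one of which, an $l$-twisted coefficient $\tau_l$ coming from the SSS framing of that configuration, varies with $l$. The crux is to show that $\tau_l$ factors through the two binary digits $(l \bmod 2,\ \lfloor l/2\rfloor \bmod 2)$: informally, the first digit records the choice of (even versus odd) sign assignment on the hypercube, while the second records a $\binom{l}{2}$-type contribution accumulated from the SSS ladybug data, and $\binom{l}{2}\equiv \lfloor l/2\rfloor \pmod 2$ is periodic of period $4$. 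Granting this, $\Sq^2|_{\mathcal{X}_l(L)}$ depends only on $l\bmod 4$, and we may write
\[
  \Sq^2|_{\mathcal{X}_l(L)} \;=\; A \;+\; (l \bmod 2)\,B \;+\; \bigl(\lfloor l/2\rfloor \bmod 2\bigr)\,C
\]
for fixed operations $A = \Sq^2|_{\mathcal{X}_0(L)}$, $B$, $C$ on $Kh(L;\mathbb{F}_2)$.

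The remaining assertion is then a one-line computation: summing the displayed identity over $l = 0,1,2,3$ gives $4A + 2B + 2C$, which is $0$ over $\mathbb{F}_2$ because exactly two of $l\in\{0,1,2,3\}$ have $l$ odd and exactly two have $\lfloor l/2\rfloor$ odd. Together with Theorem~\ref{agrees with Schutz}, this identifies the full collection $\{\Sq^2|_{\mathcal{X}_l(L)}\}_{l\geq 0}$ with the four operations $A,\ A+B,\ A+C,\ A+B+C$, which include the Lipshitz--Sarkar square $\Sq^2|_{\mathcal{X}_0(L)}$ and Sch\"utz's $\Sq_0^2$ and $\Sq_1^2$, and whose only linear relation is $A+(A+B)+(A+C)+(A+B+C)=0$. (If desired, the vanishing can alternatively be packaged homotopy-theoretically, by exhibiting the index-$2$ framing data summed over $l\bmod 4$ as a null datum; but the termwise argument above is cleaner.)

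I expect the main obstacle to be the second step: pinning down exactly how $l$ enters the SSS framing at the index-$2$ moduli spaces and verifying that it introduces no dependence beyond the two digits above. In particular one must rule out any residual $l$-dependence hidden in the framings of the index-$1$ moduli spaces — although those do not change the mod $2$ differential, they feed into the formula for the $2$-cell attaching maps, so one needs control of (at least) a mod $4$ refinement of the differential together with its orientation data, and it is here that the detailed bookkeeping of the SSS construction is essential. A secondary point requiring care is that the ladybug squares are the only configurations contributing an $l$-dependent term — configurations arising from commuting squares with no double edge must be shown to contribute identically for all $l$ — and that the identification of $\tau_l$ is compatible across the different combinatorial types of ladybug configuration, so that a single function of $(l\bmod 2,\ \lfloor l/2\rfloor \bmod 2)$ governs all of them.
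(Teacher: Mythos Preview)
Your high-level plan --- reduce to the paper's explicit chain-level formula for $\Sq^2$ on $\mathcal{X}_l$, isolate the $l$-dependent terms, and observe that they factor through $(l\bmod 2,\ \lfloor l/2\rfloor\bmod 2)$ --- is exactly the paper's approach, and your affine decomposition $A + (l\bmod 2)B + (\lfloor l/2\rfloor\bmod 2)C$ is correct. In the paper this is literally a one-line proof: once the formulas $Q_{2k}(C)$ and $Q_{2k+1}(C)$ are written down, the only $k$-dependent summand in each is $k\cdot R$ for a fixed integer count $R$, so $Q_l$ manifestly depends only on $l\bmod 4$ and $Q_0+Q_1+Q_2+Q_3 = 2(Q_0+Q_1)+2R \equiv 0$.

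Where your proposal diverges from the paper is in the mechanism you conjecture for the $l$-dependence. The paper does not organize the computation around ladybug squares or a $\binom{l}{2}$-type accumulation; rather, the $l$-signed realization replaces the single $J$-factor in each cell by $J^l$, with the sign map $\sigma$ acting by the diagonal flip $\tau_l$. Tracking this through the boundary-matching tubes yields the extra summand directly, with no case analysis on configuration type and no need to separately control index-$1$ framings. So the obstacles you flag in your last paragraph --- ruling out hidden $l$-dependence from index-$1$ data, and checking uniformity across ladybug types --- are artifacts of the framework you've imposed rather than genuine difficulties; in the paper's setup they simply do not arise. Your proposal is a correct outline but with the wrong picture of where the work lies: the content is entirely in deriving $Q_l$, after which the theorem is immediate.
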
\par

We conclude with some questions motivated by the above theorem and our computations.
\begin{question}
  Does the stable homotopy type of the space $\mathcal{X}^j_l(L)$ only depend on $l\ \mathrm{mod}\ 4$?
\end{question}
\begin{question}
  Does there exist a non-split link $L$ such that $\mathcal{X}^j_2(L)$ is not a wedge sum of Moore spaces?
\end{question}

\begin{question}
  Is the odd Khovanov spectrum $\mathcal{X}^j_1(m(L))$ Spanier-Whitehead dual to the spectrum $\mathcal{X}^j_3(L)$? Is it dual to $\mathcal{X}^j_l(L)$ for some $l\equiv 3\mod 4$?
\end{question}
\begin{question}
  Do we have $\mathcal{X}^j_2(L\sqcup L') \sim \mathcal{X}^j_2(L)\wedge \mathcal{X}^j_2(L')$ for arbitrary links $L$, $L'$?
\end{question}
\begin{question}
  Are the odd Khovanov spectra $\mathcal{X}_1(L),\mathcal{X}_3(L),\ldots$ invariant under component-preserving Conway mutation?
\end{question}

\paragraph{Acknowledgments}
The author would like to thank Sucharit Sarkar for many helpful conversations, and for introducing the author to this problem. The author would also like to thank Robert Lipshitz for his helpful comments. The author is also grateful to Krishnendu Kar for suggesting the author compute the odd Khovanov spectrum for Conway mutations.
 \section{Outline of argument and review of $\text{Sq}^2$}
Our first goal is to compute the Steenrod square $\Sq^2:H^{*}(X;\mathbb{F}_2)\to H^{*+2}(X;\mathbb{F}_2)$ of the odd Khovanov spectrum $X:= \mathcal{X}_1(L)$ of a link $L$. In our construction of $X$, $X$ is the formal desuspension $\Sigma^{-N}Y$ of some CW complex $Y$. Our focus now turns to studying $\Sq^2:H^{*}(Y;\mathbb{F}_2)\to H^{*+2}(Y;\mathbb{F}_2)$. We simplify further by studying $\Sq^2:H^{m}(Y';\mathbb{F}_2)\to H^{m+2}(Y';\mathbb{F}_2)$ for a simpler complex $Y'$ with only cells of dimension $m$, $(m+1)$, $(m+2)$, where $m>2$. Our strategy is now as follows.
\begin{enumerate}
\item Fix a cycle $\mathbf{c}\in C^m(Y';\mathbb{F}_2)$.
\item Construct an Eilenberg MacLane space $K_m:= K(\mathbb{Z}/2,m)$ with one $m$-cell $e^m$, one $(m+1)$-cell $e^{m+1}$, one $(m+2)$-cell $e^{m+2}$, and higher-dimension cells ($e^m$ shall be the fundamental class $\iota$). We only need the $(m+2)$ skeleton $K_m^{(m+2)}$.
\item Construct a map $\mathfrak{c}: Y'\to K_m^{(m+2)}$ such that $\mathfrak{c}^*\iota = [\mathbf{c}]$.
\item Conclude that $\Sq^2([\mathbf{c}])= \mathfrak{c}^*\Sq^2(\iota) =\mathfrak{c}^*[e^{m+2}] = [\mathfrak{c}^* e^{m+2}]$.
\end{enumerate}
There are many choices of space $K_m^{(m+2)}$, so it is wise to pick one that is simple enough to work with. For $m>2$, we construct a model of the $m^{\text{th}}$ Eilenberg Maclane space $K(m,\mathbb{Z}/2)$, which we call $K_m$. We can choose the $m$-cell to be $e^m$ with the entire boundary $\partial e^m$ glued to the basepoint. To satisfy $\pi_m(K_m)=\mathbb{Z}/2$, we attach a single $(m+1)$-cell by a degree $2$ map $\partial e^{m+1}\to K_k^{(k)}=S^m$. The resulting $(m+1)$ skeleton $K_m^{(m+1)}$ has $\pi_{m+1}(K_m^{(m+1)})\cong \mathbb{Z}/2$, which is a consequence of the following lemmas:
\begin{lemma}\label{1 stem}
  $\pi_1^s(\mathbb{S}) = \mathbb{Z}/2$, with generator represented by $\eta$, where $\eta:S^3\to S^2$ is the Hopf map.
\end{lemma}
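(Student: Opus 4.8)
The plan is to reduce the statement to the classical computation of $\pi_3(S^2)$ and then to pin down the stable value with one Steenrod-square argument together with a standard Whitehead-product identity. First I would recall the Hopf fibration $S^1\hookrightarrow S^3\xrightarrow{\eta}S^2$ and run its long exact sequence of homotopy groups: since $\pi_3(S^1)=\pi_2(S^1)=0$, the map $\eta_\ast\colon\pi_3(S^3)\to\pi_3(S^2)$ is an isomorphism, so $\pi_3(S^2)\cong\mathbb{Z}$ with generator $[\eta]$. Next I would apply the Freudenthal suspension theorem: the suspension homomorphism $\Sigma\colon\pi_{n+1}(S^n)\to\pi_{n+2}(S^{n+1})$ is surjective for $n=2$ and an isomorphism for every $n\geq 3$, so $\pi_1^s(\mathbb{S})\cong\pi_4(S^3)$ and this group is a cyclic quotient of $\pi_3(S^2)=\mathbb{Z}\langle\eta\rangle$ generated by the image of $\eta$. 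It therefore suffices to show that the kernel of $\Sigma\colon\pi_3(S^2)\to\pi_4(S^3)$ is exactly $2\mathbb{Z}$.

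For the lower bound, i.e.\ that $\pi_1^s(\mathbb{S})\neq 0$, I would show that $\eta$ is stably essential by detecting it with a cohomology operation: the mapping cone of $\eta$ is $\mathbb{CP}^2$, and on $H^\ast(\mathbb{CP}^2;\mathbb{F}_2)$ the operation $\Sq^2$ carries the degree-$2$ generator $x$ to $x^2\neq 0$. Since $\Sq^2$ is stable, a stable nullhomotopy of $\eta$ would realize $\Sigma^\infty\mathbb{CP}^2$ as a wedge of sphere spectra, forcing all positive-degree Steenrod operations on its mod-$2$ cohomology to vanish, a contradiction. For the upper bound, i.e.\ that $2\eta$ is stably trivial, I would invoke the classical identity $[\iota_2,\iota_2]=\pm 2\eta$ in $\pi_3(S^2)$ (computable from the Hopf invariant of the Whitehead square, or from the EHP sequence) together with the fact that Whitehead products become nullhomotopic after suspension, since the top cell of $\Sigma(X\times Y)\simeq\Sigma X\vee\Sigma Y\vee\Sigma(X\wedge Y)$ is attached trivially. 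Thus $\Sigma[\iota_2,\iota_2]=0$, and combined with $[\iota_2,\iota_2]=\pm 2\eta$ this gives $2\,\Sigma\eta=0$ in $\pi_4(S^3)$, hence $2\eta=0$ in $\pi_1^s(\mathbb{S})$. Combining the two bounds, $\pi_1^s(\mathbb{S})$ is cyclic, nonzero, and annihilated by $2$, so it equals $\mathbb{Z}/2$ with generator represented by $\eta$, as claimed.

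I expect the upper bound — that $2\eta$ dies after a single suspension — to be the only genuinely nontrivial point; the Whitehead-product route sketched above is clean but relies on the classical value of $[\iota_2,\iota_2]$, which one could instead bypass by computing $\pi_4(S^3)\cong\mathbb{Z}/2$ directly, e.g.\ via the Serre spectral sequence of the $3$-connected cover fibration $S^3\langle 3\rangle\to S^3\to K(\mathbb{Z},3)$, or simply by citing the standard references, as the statement is entirely classical. We include the argument only because it is the explicit generator $\eta$ that enters the construction of $K_m$ below.
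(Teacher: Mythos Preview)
Your proposal is correct and in fact more complete than the paper's own proof. The paper follows exactly the same steps you outline for the cyclicity (Hopf fibration long exact sequence, then Freudenthal) and for the lower bound (nontriviality of $\Sq^2$ on $\mathbb{CP}^2$), but it explicitly declines to prove the upper bound, writing ``We do not give a full proof that $\pi_1^s(\mathbb{S})\cong\mathbb{Z}/2$, but we only show that $0\neq[\Sigma\eta]\in\pi_4(S^3)$.'' Your Whitehead-product argument for $2\Sigma\eta=0$ is a genuine addition; the paper simply takes the order of the group as known and only verifies the part it actually needs downstream, namely that the attaching map for the $(m+2)$-cell of $K_m$ is stably essential.
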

\begin{proof}
  From the fiber bundle $S^1\to S^3\xrightarrow{\eta}S^2$, we have that $\pi_3(S^2)\cong \pi_3(S^3)\cong \mathbb{Z}$, and from the Freudenthal suspension theorem, we see that the sequence $\pi_2(S^1)\to \pi_3(S^2)\to \pi_4(S^3)\to \ldots$ stabilizes at $\pi_4(S^3)$. Furthermore, the map $\pi_3(S^2)\to \pi_4(S^3)$ is surjective, so $\pi_4(S^3)$ is cyclic, generated by $\eta$. We do not give a full proof that $\pi_1^s(\mathbb{S})\cong \mathbb{Z}/2$, but we only show that $0\neq [\Sigma \eta]\in \pi_4(S^3)$. For if $\Sigma\eta = 0$, then $\Sq^2$ would act trivially on the (reduced) mapping cone $C(\Sigma \eta)$. But $C(\Sigma \eta) = \Sigma C(\eta) = \Sigma \mathbb{C}P^2$, which has nontrivial $\Sq^2$.
\end{proof}
\begin{lemma}\label{suspension RP2}
  $\pi_2^s(\mathbb{R}P^2) \cong \mathbb{Z}/2$, with generator represented by $S^3\xrightarrow{\eta} S^2\xhookrightarrow{\Sigma i} \Sigma\mathbb{R}P^2$, where $\eta$ is the Hopf map and $i:S^1\to \mathbb{R}P^2$ is the inclusion of the $1$-skeleton. In fact, we have the suspension sequence
  \begin{equation*}
    \begin{tikzpicture}[scale=2]
  \path[draw, ->, shorten <=1.1cm,shorten >=1.1cm, thick] (0,1) -- (2,1) node [midway, label={[label distance=-0.2cm]90:$\times 2$}] {};
  \path[draw, ->, shorten <=1.1cm,shorten >=1.1cm, thick] (2,1) -- (4,1) node [midway, label={[label distance=-0.2cm]90:$\mathrm{mod}\ 2$}] {};
  \path[draw, ->, shorten <=1.1cm,shorten >=1.1cm, thick] (4,1) -- (6,1);
  \draw [double equal sign distance] (2,0.9) to (2,0.5);
  \path[draw, ->, shorten <=0.3cm,shorten >=0.3cm, thick] (0,1) -- (0,0.4) node [midway, label={[label distance=-0.3cm, rotate=-90]45:$\cong$}] {};
  \path[draw, ->, shorten <=0.3cm,shorten >=0.3cm, thick] (4,1) -- (4,0.4) node [midway, label={[label distance=-0.3cm, rotate=-90]45:$\cong$}] {};
     \node at (0, 1) {$\pi_2(\mathbb{R}P^2)$};
   \node at (2, 1) {$\pi_3(\Sigma\mathbb{R}P^2)$};
   \node at (4, 1) {$\pi_4(\Sigma^2\mathbb{R}P^2)$};
   \node at (4, 1) {$\ldots$};
   \node at (0, 0.4) {$\mathbb{Z}$};
   \node at (2, 0.4) {$\langle \eta \rangle$};
   \node at (4, 0.4) {$\mathbb{Z}/2$};
   \node at (5.8, 1) {$\ldots$};
   \end{tikzpicture}
 \end{equation*}
   which stabilizes by $\pi_4(\Sigma^2\mathbb{R}P^2)$. Furthermore, the suspension map $\Sigma^i: \pi_2(\mathbb{R}P^2)\to \pi_{2+i}(\mathbb{R}P^2)$ is nullhomotopic for $i\geq 2$.
 \end{lemma}
 \begin{proof}
   The first homomorphism being multiplication by $2$ is explained from the fact $\Sigma q\cong 2\eta$, where $q$ is canonical generator of $\pi_2(\mathbb{R}P^2)$. The third and following homomorphisms are isomorphism by the Freudenthal suspension theorem. It remains to explain the second homomorphism is $\mathrm{mod}\ 2$. We use the following commutative diagram:
     \begin{equation*}
       \begin{tikzpicture}[scale=2]
         
  \path[draw, ->, shorten <=0.7cm,shorten >=0.7cm, thick] (0,1) -- (2,1) node [midway, label={[label distance=-0.2cm]90:$\mathrm{mod}\ 2$}] {};
  \path[draw, ->, shorten <=0.7cm,shorten >=0.7cm, thick] (2,1) -- (4,1) node [midway, label={[label distance=-0.2cm]90:$\cong$}] {};
  \path[draw, ->, shorten <=0.7cm,shorten >=1.1cm, thick] (4,1) -- (6,1) node [midway, label={[label distance=-0.2cm]90:$\cong$}] {};
  \path[draw, ->, shorten <=1.1cm,shorten >=1.1cm, thick] (0,0) -- (2,0) node [midway, label={[label distance=-0.2cm]90:$\mathrm{mod}\ 2$}] {};
  \path[draw, ->, shorten <=1.1cm,shorten >=1.1cm, thick] (2,0) -- (4,0) node [midway, label={[label distance=-0.2cm]90:$\cong$}] {};
  \path[draw, ->, shorten <=1.1cm,shorten >=1.1cm, thick] (4,0) -- (6,0) node [midway, label={[label distance=-0.2cm]90:$\cong$}] {};
  \path[draw, ->>, shorten <=0.3cm,shorten >=0.3cm, thick] (0,1) -- (0,0) {};
  \path[draw, ->, shorten <=0.3cm,shorten >=0.3cm, thick] (2,1) -- (2,0) node [midway, label={[label distance=-0.3cm, rotate=-90]45:$\cong$}] {};
  \path[draw, ->, shorten <=0.3cm,shorten >=0.3cm, thick] (4,1) -- (4,0) node [midway, label={[label distance=-0.3cm, rotate=-90]45:$\cong$}] {};
  \draw [double equal sign distance] (0,1.2) to (0,1.5);
  \path[draw, ->, shorten <=0.3cm,shorten >=0.3cm, thick] (2,1) -- (2,1.7) node [midway, label={[label distance=-0.3cm, rotate=-90]45:$\cong$}] {};
  \node at (0,1.7) {$\langle\eta\rangle$};
  \node at (2,1.7) {$\mathbb{Z}/2$};
     \node at (0, 1) {$\pi_3(S^2)$};
   \node at (2, 1) {$\pi_4(S^3)$};
   \node at (4, 1) {$\pi_5(S^4)$};
   \node at (4, 1) {$\ldots$};
   \node at (0, 0) {$\pi_3(\Sigma\mathbb{R}P^2)$};
   \node at (2, 0) {$\pi_4(\Sigma^2\mathbb{R}P^2)$};
   \node at (4, 0) {$\pi_5(\Sigma^3\mathbb{R}P^2)$};
   \node at (5.8, 1) {$\ldots$};
   \node at (5.8, 0) {$\ldots$};
   \end{tikzpicture}
 \end{equation*}
 We explain the arrows in this digram first. The top horizontal arrows are a surjection followed by isomorphisms by the proof of Lemma \ref{1 stem}. It now suffices to prove that the vertical isomorphisms are a surjection followed by isomorphisms. We use the cofiber long exact sequence
    \[
      \pi_{m+2} (\Sigma^{m-1}\mathbb{R}P^2,S^m)\to \pi_{m+1}(S^m)\to \pi_{m+1} (\Sigma^{m-1}\mathbb{R}P^2)\to \pi_{m+1} (\Sigma^{m-1}\mathbb{R}P^2,S^m)\to \pi_{m}(S^m),
    \]
    and note that for $m=2$, the sequence is
    \[
      \pi_4 (\Sigma\mathbb{R}P^2,S^2)\to \mathbb{Z}/2\to \pi_3(\Sigma\mathbb{R}P^2)\to \mathbb{Z}\xrightarrow{2}\mathbb{Z},
    \]
    and for $m>2$, the sequence is 
    \[
      \mathbb{Z}/2\xrightarrow{2} \mathbb{Z}/2\to \pi_{m+1}(\Sigma^{m-1}\mathbb{R}P^2)\to \mathbb{Z}\xrightarrow{2}\mathbb{Z}.\qedhere
    \]
 \end{proof}
 To zero out the $\pi_{m+1}(K_m^{(m+1)})$, we attach a $(m+2)$-cell $e^{m+2}$, with the attaching map $\partial e^{m+2}\cong S^{m+1}\xrightarrow{\Sigma^{m-2}\eta} S^m\cong K_m^{(m)}$ being a $(m-2)$-fold suspension of the Hopf map $\eta$.
 \section{Permutohedra and twists}
\subsection{Permutohedra}
We first give our definition of permutohedra:
\begin{definition}
  In $\mathbb{R}^n$, let $v_\sigma = (\sigma^{-1}(1),\ldots,\sigma^{-1}(n))\in \mathbb{R}^n$ be the $\sigma$ permutation of the tuple $(1,\ldots,n)$. The $(n-1)$-dimensional permutohedron $\Pi_{n-1}$ is the convex hull in $\mathbb{R}^n$ of the $n!$ points $v_\sigma$.
\end{definition}
\begin{remark}\label{metric discussion}
  Note that $\Pi_{n-1}$ is a $(n-1)$-dimensional polytope in the affine subspace $\mathbb{A}^{n-1}:=\{(x_1,\ldots,x_{n})\in \mathbb{R}^n\ |\ \Sigma_i x_i = n(n-1)/2\}$ in $\mathbb{R}^n$. We also remark that the affine $(n-1)$-space $\mathbb{A}^{n-1}$ inherits a smooth structure and Riemannian metric from the space $\mathbb{R}^n$ that it lies in. Therefore, $\Pi^{n-1}\subset \mathbb{A}^{n-1}$ inherits a well-defined tangent space $T(\Pi^{n-1})$ and Riemannian metric from $\mathbb{A}^{n-1}$. 
\end{remark}
\begin{notation}
  We let $e_1,\ldots,e_n$ be the canonical ordered basis of unit vectors in $\mathbb{R}^n$. $e_i-e_j$ are tangent vectors in $\mathbb{A}^{n-1}$ for $1\leq i,j\leq n$, and since $\Pi^{n-1}$ is codimension $0$, these are tangent vectors in $\Pi^{n-1}$.
\end{notation}
We can also define $\Pi^{n-1}$ as an intersection of half-spaces $H_S\subset \mathbb{A}^{n-1}$, which we shall define:
\begin{definition}
  Let $S\in \underline{2}^{\{0,\ldots, n-1\}}\backslash \{\overline{1},\overline{0}\}$, that is $S$ is a non-empty proper subset of $\{0,\ldots, n-1\}$. Let $|S| = k$. Define $H_S\subset \mathbb{A}^{n-1}\subset \mathbb{R}^n$ to be the half-space $\{(x_1,\ldots, x_{n})\}\in \mathbb{A}^{n-1}\mid \Sigma_{i\in S}x_i\geq k(k+1)/2\}$. Define the \textit{facet} $F_S$ of $\Pi^{n-1}$ to be $\Pi^{n-1}\cap \partial H_S$.
\end{definition}
There are  $2^n-2$ of these half-spaces $H_S$, and indeed $\bigcap_{S} H_S=\Pi^{n-1}$. Furthermore, the union of the facets $F_S$ form the boundary $\partial \Pi^{n-1}$. These facets can also be identified with lower-dimensional permutohedra:
\begin{lemma}[\cite{MR4153651}]\label{face identification}
  Let $a_1<a_2<\dots <a_k$ be the elements of $S$ and let $b_1<b_2<\dots b_{n-k}$ be the elements of $\{1,2,\ldots n\}$. The map
  \[
    f_S (x_1,\ldots, x_{n}) = ((x_{a_1}\ldots x_{a_{k}}),(x_{b_1}-k\ldots x_{b_{n-k}}-k))
  \]
  identifies the facet $F_S$ with $\Pi^{k-1}\times \Pi^{n-k-1}$.
\end{lemma}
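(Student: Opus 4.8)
The plan is to exhibit $f_S$ as the restriction of a global affine isomorphism of $\mathbb{R}^n$ and to pin down its image by tracking vertices. First I would observe that the formula defining $f_S$ makes sense on all of $\mathbb{R}^n$, giving an affine map $F\colon \mathbb{R}^n\to\mathbb{R}^k\times\mathbb{R}^{n-k}=\mathbb{R}^n$; since $\{a_1,\dots,a_k\}$ and $\{b_1,\dots,b_{n-k}\}$ partition $\{1,\dots,n\}$, the map $F$ is a permutation of the coordinates followed by a translation, hence a bijection. In particular $f_S$ is injective, so it remains only to identify $f_S(F_S)$ and to verify that it lands in $\mathbb{A}^{k-1}\times\mathbb{A}^{n-k-1}$.

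Next I would describe $F_S$ via its vertices. By definition $F_S=\Pi^{n-1}\cap\partial H_S$ is the set of points of $\Pi^{n-1}$ on which the linear functional $x\mapsto\sum_{i\in S}x_i$ attains its minimum (the constant occurring in the definition of $H_S$). A vertex $v_\sigma$ attains this minimum precisely when the coordinate positions indexed by $S$ carry the $k$ smallest coordinate values; and since every point of $\Pi^{n-1}$ is a convex combination of vertices and the functional is linear, a point of $\Pi^{n-1}$ lies in $F_S$ iff it is a convex combination of such vertices. Thus $F_S=\conv\{\,v_\sigma : \sigma \text{ sends } S \text{ onto the positions of the } k \text{ smallest values}\,\}$. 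Such $\sigma$ are parametrized by an arbitrary bijection from $S$ to the $k$ smallest values together with an arbitrary bijection from the complement of $S$ to the remaining $n-k$ values, so the vertices of $F_S$ are in natural bijection with the product of the vertex sets of $\Pi^{k-1}$ and $\Pi^{n-k-1}$.

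Then I would check that $f_S$ realizes exactly this bijection on vertices. If $v_\sigma$ is a vertex of $F_S$, then $((v_\sigma)_{a_1},\dots,(v_\sigma)_{a_k})$ is a permutation of the $k$ smallest coordinate values, i.e.\ a vertex of $\Pi^{k-1}$, while $((v_\sigma)_{b_1}-k,\dots,(v_\sigma)_{b_{n-k}}-k)$ is a permutation of the remaining values shifted down by $k$, i.e.\ a vertex of $\Pi^{n-k-1}$. A short computation — using that $\sum_i x_i$ is constant on $\mathbb{A}^{n-1}$ and $\sum_{i\in S}x_i$ is the prescribed constant on $F_S$ — shows that $f_S$ carries the affine hull of $F_S$ into $\mathbb{A}^{k-1}\times\mathbb{A}^{n-k-1}$, so each factor of the image lies where it should. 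Because $f_S$ is affine and injective and sends the vertex set of the polytope $F_S$ bijectively onto the vertex set of $\Pi^{k-1}\times\Pi^{n-k-1}$, it sends $F_S=\conv(\text{its vertices})$ isomorphically onto $\conv$ of the image, which is $\Pi^{k-1}\times\Pi^{n-k-1}$. As a byproduct, $\dim F_S=(k-1)+(n-k-1)=n-2$, confirming that $F_S$ is a genuine facet.

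The step that requires care is the combinatorial heart of the second paragraph: correctly identifying the vertices that minimize $\sum_{i\in S}x_i$ on $\Pi^{n-1}$, and matching the resulting product of symmetric groups $\mathrm{Sym}(S)\times\mathrm{Sym}(\{1,\dots,n\}\setminus S)$ with the vertices of $\Pi^{k-1}\times\Pi^{n-k-1}$, all while keeping the additive constants (and the shift by $k$ in the second block of coordinates) consistent with the chosen normalization of the $v_\sigma$. Once that is in place, the rest is formal, since affine maps preserve convex hulls and a coordinate permutation composed with a translation is a bijection.
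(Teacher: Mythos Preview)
Your argument is correct and follows essentially the same approach as the paper: identify the vertices of $F_S$ as those $v_\sigma$ whose $S$-indexed coordinates take the $k$ smallest values, and check that $f_S$ sends them bijectively to the vertices of $\Pi^{k-1}\times\Pi^{n-k-1}$. The paper's proof is a terse two-sentence version of this, simply asserting that it suffices to track vertices; you have supplied the surrounding justification (that $f_S$ extends to an affine bijection of $\mathbb{R}^n$, hence preserves convex hulls) that makes this sufficiency explicit.
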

\begin{proof}[Proof (\cite{MR4153651})]
  It suffices to prove that $f_S$ takes the vertices of $F_S$ to the vertices of $\Pi^{k}\times \Pi^{n-k}$. But the vertices of $F_S$ are the points $(x_1,\ldots x_{n})$ so that $\{x_{a_1}\ldots x_{a_k}\} = \{1,\ldots k\}$ and $\{x_{a_1}\ldots x_{a_k}\} = \{k+1,\ldots n\}$.
\end{proof}
\begin{lemma}\label{face intersection}
  Let $S,T$ be two non-empty, proper subsets of $\{1,\ldots, n\}$, and $F_S,F_T$ their associated facets of $\Pi^{n-1}$. $F_S\cap F_T$ is nonempty if and only if $S\subseteq T$ or $T\subseteq S$.
\end{lemma}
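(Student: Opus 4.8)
The plan is to reduce the statement to a purely combinatorial fact about vertices. Recall that the vertices of $\Pi^{n-1}$ are the $n!$ points $v_\sigma$, whose coordinate tuples run over all permutations of $\{0,1,\dots,n-1\}$ (the normalization matching the definition of $H_S$), and that for $S$ with $|S|=k$ one has $\sum_{i\in S}x_i\ge k(k-1)/2$ on all of $\Pi^{n-1}$, since the sum of any $k$ distinct elements of $\{0,\dots,n-1\}$ is at least $0+1+\cdots+(k-1)=k(k-1)/2$, with equality exactly when those elements are $\{0,1,\dots,k-1\}$. Hence a vertex $v$ lies on $F_S=\Pi^{n-1}\cap\partial H_S$ if and only if the $k$ coordinates of $v$ indexed by $S$ take precisely the values $\{0,1,\dots,k-1\}$; equivalently, $S$ is exactly the set of positions at which $v$ attains its $k$ smallest coordinates. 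Now $F_S\cap F_T=\Pi^{n-1}\cap\partial H_S\cap\partial H_T$ is a face of $\Pi^{n-1}$, and if it is nonempty it contains a vertex: writing any of its points as a convex combination $\sum\lambda_\sigma v_\sigma$ of vertices, and using that this point minimizes both linear functionals $\sum_{i\in S}x_i$ and $\sum_{i\in T}x_i$ over $\Pi^{n-1}$, every $v_\sigma$ with $\lambda_\sigma>0$ must also minimize both, i.e. lie in $F_S\cap F_T$. So it suffices to determine when some vertex lies on both $F_S$ and $F_T$.

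For the forward direction, suppose $v$ is a vertex in $F_S\cap F_T$, and assume without loss of generality $|S|\le|T|$. By the description above, $S$ is the set of positions where $v$ attains its $|S|$ smallest values, and $T$ is the set of positions where $v$ attains its $|T|$ smallest values. Since the $|S|$ smallest values of $v$ are in particular among its $|T|$ smallest values, the positions realizing them — namely $S$ — form a subset of $T$. Thus $S\subseteq T$.

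For the converse, assume without loss of generality $S\subseteq T$ with $|S|=k\le m=|T|$. Choose any bijection $\tau\colon\{1,\dots,n\}\to\{0,1,\dots,n-1\}$ carrying $S$ onto $\{0,\dots,k-1\}$, carrying $T\setminus S$ onto $\{k,\dots,m-1\}$, and carrying the complement of $T$ onto $\{m,\dots,n-1\}$. The corresponding vertex $v$ of $\Pi^{n-1}$ has its $S$-coordinates equal to $\{0,\dots,k-1\}$ and its $T$-coordinates equal to $\{0,\dots,m-1\}$, so $v\in F_S\cap F_T$, which is therefore nonempty. The only step that takes any thought is the setup in the first paragraph — identifying which vertices lie on a given facet, and observing that a nonempty intersection of facets must contain a vertex — after which both implications are immediate; I do not anticipate a genuine obstacle beyond this bookkeeping.
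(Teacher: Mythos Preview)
Your proof is correct, but takes a different route from the paper's. The paper argues the forward direction purely with the half-space inequalities: if a point lies on both $F_S$ and $F_T$, then $\sum_{i\in S}x_i+\sum_{i\in T}x_i$ equals the sum of two triangular numbers; rewriting this as $\sum_{i\in S\cap T}x_i+\sum_{i\in S\cup T}x_i$ and applying the constraints $H_{S\cap T}$, $H_{S\cup T}$ gives a lower bound by two other triangular numbers, and convexity of $k\mapsto\binom{k}{2}$ forces $|S\cap T|=\min(|S|,|T|)$, hence containment. Your approach instead reduces to vertices via the combinatorial description of which $v_\sigma$ lie on a given facet, then observes that ``positions of the $|S|$ smallest coordinates'' is monotone in $|S|$. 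The paper's argument is slicker for the forward direction and avoids any discussion of vertices; yours is more concrete, and has the advantage of handling both directions uniformly (the paper's proof as written only gives the forward implication, leaving the converse implicit). Your normalization choice, matching the definition of $H_S$, is the right call given the paper's own off-by-one inconsistency between the vertex coordinates and the affine hyperplane.
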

\begin{proof}
  Suppose that $(x_1,\ldots,x_{n})\in F_S\cap F_T$. Then
  \begin{equation}\label{F addition}
    \sum_{i\in S}x_i+\sum_{i\in T}x_i=(1+\ldots+|S|)+(1+\ldots+|T|).
  \end{equation}
  But since $F_S\cap F_T\subset \Pi^{n-1}\subset H_{S\cup T}\cap H_{S\cap T}$,
  \begin{equation}\label{H addition}
    \sum_{i\in S}x_i+\sum_{i\in T}x_i=\sum_{i\in S\cap T}x_i+\sum_{i\in S\cup T}x_i\geq (1+\ldots + |S\cap T|)+ (1 +\ldots + |S\cup T|).
  \end{equation}
  (\ref{F addition}) and (\ref{H addition}) both hold only if either $S\subseteq T$ or $T\subseteq S$.
\end{proof}
An intersection of facets $F_{S_1}\cap\ldots\cap F_{S_m}$ can produce higher codimension boundary components of $\Pi^{n-1}$. Note that from Lemma \ref{face intersection} that the intersection only if the sets $S_1,\ldots, S_m$ can be ordered by inclusion.
\begin{notation}
  Given the set $\{1,\ldots,n\}$, we denote $\overline{0} = \emptyset$, $\overline{1} = \{1,\ldots, n\}$. 
\end{notation}

\begin{definition}
  Given a chain $c=\{\overline{1}=S_m>S_{m-1}>\ldots >S_{1}>S_0=\overline{0}\}$ in $2^{\{1,\ldots, n\}}$, define the \textit{face} $F_c$ of $\Pi^{n-1}$ as the intersection of facets $\bigcap_{1\leq i \leq m-1} F_{S_i}$.
\end{definition}
Another characterization of $F_c$ is
  \[
    F_c = \Big\{(x_1,\ldots,x_n): \sum_{i\in S_{k+1}\backslash S_{k}}x_i = (|S_k|+1)+(|S_k|+2)+\ldots+|S_{k+1}|\text{ for all }0\leq k\leq m-1\Big\},
  \]
  implying that $F_c$ is an $(n-m)$-dimensional face.

\begin{lemma}\label{chain to face}
  Given a chain $c=\{\overline{1}=S_m>S_{m-1}>\ldots >S_{1}>S_0=\overline{0}\}$ in $2^{\{1,\ldots,n\}}$, the map
\[
  f_c(x_1,\ldots, x_n) = (\mathbf{t}_{S_{1},S_0},\ldots,\mathbf{t}_{S_{m},S_{m-1}})
\]
where if $S,T\in 2^{\{1,\ldots, n\}}$, $S>T$, then if $a_1<\ldots < a_{|S|-|T|}$ are the elements of $S\backslash T$, then 
\[
  \mathbf{t}_{S,T} = (x_{a_1}-|T|,\ldots x_{a_{|u|-|v|}}-|T|)
\]
identifies the face $F_c$ of $\Pi^{n-1}$ with $\Pi^{|S_1|-|S_{0}|-1}\times \ldots\times \Pi^{|S_m|-|S_{m-1}|-1}$.
\end{lemma}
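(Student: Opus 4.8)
The plan is to run the same argument as in the proof of Lemma~\ref{face identification}, but organised so that every face in the chain is handled simultaneously. Write $d_k = |S_k| - |S_{k-1}|$ for $1\le k\le m$, so that $S_0\subset S_1\subset\cdots\subset S_m$ partitions $\{1,\ldots,n\}$ into the blocks $S_k\setminus S_{k-1}$ and $\sum_k d_k = n$. Then $f_c$ is the restriction to $F_c$ of an affine automorphism $\widehat{f}_c$ of $\mathbb{R}^n$: namely, $\widehat{f}_c$ reorders the coordinates so that block $S_1\setminus S_0$ comes first, then $S_2\setminus S_1$, and so on, and subtracts the constant $|S_{k-1}|$ from each coordinate in block $k$. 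Hence $f_c(F_c)$ is a polytope affinely isomorphic to $F_c$, and since $P := \Pi^{d_1-1}\times\cdots\times\Pi^{d_m-1}$ (sitting inside $\mathbb{R}^{d_1}\times\cdots\times\mathbb{R}^{d_m} = \mathbb{R}^n$) is also a polytope, with vertex set the product of the vertex sets of its factors, it suffices to check that $\widehat{f}_c$ carries the vertices of $F_c$ bijectively onto the vertices of $P$; the claimed identification then follows by taking convex hulls. As a consistency check, $\dim P = \sum_k(d_k-1) = n-m = \dim F_c$.

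First I would identify the vertices of $F_c$. As a face of $\Pi^{n-1}$, its vertices are among the $v_\sigma$, and $v_\sigma$ is a vertex of $F_c$ precisely when it satisfies the defining equations, i.e.\ $\sum_{i\in S_k\setminus S_{k-1}}(v_\sigma)_i = (|S_{k-1}|+1)+\cdots+|S_k|$ for every $k$. I claim these are equivalent to the block conditions
\[
\{(v_\sigma)_i : i\in S_k\setminus S_{k-1}\} = \{|S_{k-1}|+1,\ldots,|S_k|\}\qquad\text{for }1\le k\le m.
\]
The nontrivial direction is a minimal-sum induction on $k$: the entries of $v_\sigma$ are a permutation of $1,\ldots,n$, and granting inductively that the entries on $S_1,\ldots,S_{k-1}$ use up exactly the values $1,\ldots,|S_{k-1}|$, the number $(|S_{k-1}|+1)+\cdots+|S_k|$ is the smallest possible sum of $d_k$ distinct elements of the remaining value set $\{|S_{k-1}|+1,\ldots,n\}$; so equality of sums forces the entries on $S_k\setminus S_{k-1}$ to be exactly $\{|S_{k-1}|+1,\ldots,|S_k|\}$. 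The reverse implication is immediate.

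With the vertices described, the bijection is transparent: a vertex of $F_c$ is exactly the data of a bijection $S_k\setminus S_{k-1}\to\{|S_{k-1}|+1,\ldots,|S_k|\}$ for each $k$, chosen independently over $k$, and the block $\mathbf{t}_{S_k,S_{k-1}}$ of $\widehat{f}_c$ translates such a block down to a permutation of $1,\ldots,d_k$, i.e.\ a vertex of $\Pi^{d_k-1}$; conversely an independent choice of vertex in each factor of $P$ reassembles a unique vertex of $F_c$. So $\widehat{f}_c$ is a vertex bijection and we are done. An alternative I would mention is induction on $m$ using Lemma~\ref{face identification} directly: split $F_{S_1}$ off as $\Pi^{|S_1|-1}\times\Pi^{n-|S_1|-1}$; since $S_1\subset S_j$ for $j>1$, the remaining facets $F_{S_2},\ldots,F_{S_{m-1}}$ become facets of the second factor indexed by the chain $S_2\setminus S_1\subset\cdots$, to which the inductive hypothesis applies, with Lemma~\ref{face intersection} ensuring these sets really do form a chain so that $F_c\neq\emptyset$. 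The one genuinely delicate point in either approach is the vertex characterisation — the minimal-sum induction showing that a single scalar equation per block rigidifies the \emph{set} of values appearing in that block; everything after that is bookkeeping together with the standard fact that an affine isomorphism preserves the face lattice of a polytope.
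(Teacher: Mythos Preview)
The paper states this lemma without proof, so there is nothing to compare against directly. Your argument is correct and is exactly the natural generalisation of the paper's proof of Lemma~\ref{face identification}: both rest on identifying the vertex set of the face and checking that the affine map carries it bijectively onto the vertex set of the target product of permutohedra. Your minimal-sum induction cleanly supplies the one step the paper's two-block proof did not need, namely that the block constraints together force each block of coordinates of a vertex to occupy a prescribed interval of values. The alternative inductive route you sketch (peeling off $F_{S_1}$ via Lemma~\ref{face identification} and recursing on the second factor) is also valid and is probably what the author had in mind when omitting the proof.
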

Certain facets of $\Pi^{n-1}$ will be particularly useful to us:
\begin{definition}\label{f_i def}
  Fix the permutohedron $\Pi^{n-1}$ and let $0\leq i\leq n-1$ be an integer. For $0\leq a_1<\ldots <a_k \leq n-1$, we define $G_{\{a_1,\ldots,a_k\}}:= F_{\{0,\ldots,n-1\}\backslash\{a_1+1,\ldots,a_k+1\}}$. So in particular, $G_{\{i\}}\subset \Pi^{n-1}$ is the facet $F_{\{1,\ldots,\widehat{i+1},\ldots,n\}}$. Equivalently, $G_{\{i\}}$ is the subset of $\Pi^{n-1}\subset \mathbb{R}^n$ where the $(i+1)^\text{th}$ coordinate is $n$. Our upshifting of indices is done only to be compatibile with the notation for signed flow categories.
  We often write $G_i = G_{\{i\}}$, $G_{ij} = G_{\{i,j\}}$, and so on.\par
  We also define face-identifying maps $g_i$ by $g_i:= f_S$, where $S=\{1,\ldots, \widehat{i+1},\ldots, n\}$. In particular, we can define $G_i$ with $\Pi^{n-2}$ through the map
    \[
      G_i \xrightarrow{\substack{g_i\\ \sim}} \Pi^{n-2}\times \Pi^0\cong \Pi^{n-2}.
    \]
\end{definition}

\begin{remark}\label{omnitruncation}
  On any abstract polytope $P$ of dimension $d$, there exists a polytope $t_I P$, $I = \{0,1,\ldots,d-1\}$ called the \textit{omnitruncation} of $P$. See (\cite[Chapter 8]{MR370327}, or \cite[Chapter 3, Section 6]{MR3358167} for the defintion of omnitruncation). The omnitruncation $t_I P$ has facets corresponding to the faces of $P$, and a vertex for each flag $P$. We outline one way to view $t_IP$: Chop off each vertex of $P$ with a hyperplane normal to the vector pointing from the centroid of $P$ to the vertex. These hyperplanes should just barely intersect $P$ so that they do not intersect with one another \textit{inside of} $P$. We then chop off the original edges of $P$ with similar hyperplanes that are normal to the vectors from the centroid to the midpoint. If $P$ is higher than $3$-diensional, we continue this process of truncating higher and higher dimensional facets. An equivalent definition is that $t_IP$ to be the dual of the barycentric subdivision $\mathcal{B}$ of $P$ (note that the vertices of $B$ correspond to the faces of $P$ and the facets of $B$ corresponds to the flags of $P$).\par
  Observe that the $\Pi^{n-1}$ is the omnitruncation of the standard topological $(n-1)$-simplex $\Delta^{n-1}$ with vertices $0<1<\ldots<n-1$. In this regard, the facets $G_i$ correspond to the vertices $0,1,\ldots, n-1\in \Delta^{n-1}$, the facets $G_{\{i,j\}}$ correspond to the edges of $\Delta^{n-1}$ and the faces $G_{\{i,j,k\}}$ correspond to $2$-faces. The $2$-skeleton of $\Delta^{m-1}$ is simply connected for $n\geq 3$, so any loop $\mathcal{K}$ in $\Pi^{n-1}$ passing through just the facets of type $G_i$, $G_{\{i,j\}}$ should be nullhomotopic. We will use this idea in Section \ref{simplifying}, where we manipulate circular tubes lying inside polytopes.
  \begin{figure}
    \begin{tabular}{ m{5.5cm} m{1.5cm} m{6cm} }
      \def\svgscale{1.2}%% Creator: Inkscape 1.4.2 (ebf0e940d0, 2025-05-08), www.inkscape.org
%% PDF/EPS/PS + LaTeX output extension by Johan Engelen, 2010
%% Accompanies image file 'pyramid.pdf' (pdf, eps, ps)
%%
%% To include the image in your LaTeX document, write
%%   \input{<filename>.pdf_tex}
%%  instead of
%%   \includegraphics{<filename>.pdf}
%% To scale the image, write
%%   \def\svgwidth{<desired width>}
%%   \input{<filename>.pdf_tex}
%%  instead of
%%   \includegraphics[width=<desired width>]{<filename>.pdf}
%%
%% Images with a different path to the parent latex file can
%% be accessed with the `import' package (which may need to be
%% installed) using
%%   \usepackage{import}
%% in the preamble, and then including the image with
%%   \import{<path to file>}{<filename>.pdf_tex}
%% Alternatively, one can specify
%%   \graphicspath{{<path to file>/}}
%% 
%% For more information, please see info/svg-inkscape on CTAN:
%%   http://tug.ctan.org/tex-archive/info/svg-inkscape
%%
\begingroup%
  \makeatletter%
  \providecommand\color[2][]{%
    \errmessage{(Inkscape) Color is used for the text in Inkscape, but the package 'color.sty' is not loaded}%
    \renewcommand\color[2][]{}%
  }%
  \providecommand\transparent[1]{%
    \errmessage{(Inkscape) Transparency is used (non-zero) for the text in Inkscape, but the package 'transparent.sty' is not loaded}%
    \renewcommand\transparent[1]{}%
  }%
  \providecommand\rotatebox[2]{#2}%
  \newcommand*\fsize{\dimexpr\f@size pt\relax}%
  \newcommand*\lineheight[1]{\fontsize{\fsize}{#1\fsize}\selectfont}%
  \ifx\svgwidth\undefined%
    \setlength{\unitlength}{143.93691296bp}%
    \ifx\svgscale\undefined%
      \relax%
    \else%
      \setlength{\unitlength}{\unitlength * \real{\svgscale}}%
    \fi%
  \else%
    \setlength{\unitlength}{\svgwidth}%
  \fi%
  \global\let\svgwidth\undefined%
  \global\let\svgscale\undefined%
  \makeatother%
  \begin{picture}(1,0.90997223)%
    \lineheight{1}%
    \setlength\tabcolsep{0pt}%
    \put(0,0){\includegraphics[width=\unitlength,page=1]{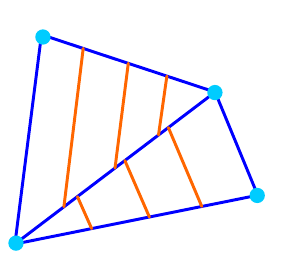}}%
    \put(0.08674917,0.86251417){\color[rgb]{0,0,0}\makebox(0,0)[lt]{\lineheight{1.25}\smash{\begin{tabular}[t]{l}$3$\end{tabular}}}}%
    \put(0.72052549,0.65568679){\color[rgb]{0,0,0}\makebox(0,0)[lt]{\lineheight{1.25}\smash{\begin{tabular}[t]{l}$0$\end{tabular}}}}%
    \put(0.89620439,0.21577451){\color[rgb]{0,0,0}\makebox(0,0)[lt]{\lineheight{1.25}\smash{\begin{tabular}[t]{l}$1$\end{tabular}}}}%
    \put(-0.00387668,0.00362675){\color[rgb]{0,0,0}\makebox(0,0)[lt]{\lineheight{1.25}\smash{\begin{tabular}[t]{l}$2$\end{tabular}}}}%
  \end{picture}%
\endgroup%
 & \begin{tikzpicture}
      [decoration=snake,
   line around/.style={decoration={pre length=#1,post length=#1}}]
      \draw[->, decorate, line around = 3pt] (0,0)--(1.2,0);
    \end{tikzpicture} & \def\svgscale{0.75}%% Creator: Inkscape 1.4.2 (ebf0e940d0, 2025-05-08), www.inkscape.org
%% PDF/EPS/PS + LaTeX output extension by Johan Engelen, 2010
%% Accompanies image file 'truncated_pyramid.pdf' (pdf, eps, ps)
%%
%% To include the image in your LaTeX document, write
%%   \input{<filename>.pdf_tex}
%%  instead of
%%   \includegraphics{<filename>.pdf}
%% To scale the image, write
%%   \def\svgwidth{<desired width>}
%%   \input{<filename>.pdf_tex}
%%  instead of
%%   \includegraphics[width=<desired width>]{<filename>.pdf}
%%
%% Images with a different path to the parent latex file can
%% be accessed with the `import' package (which may need to be
%% installed) using
%%   \usepackage{import}
%% in the preamble, and then including the image with
%%   \import{<path to file>}{<filename>.pdf_tex}
%% Alternatively, one can specify
%%   \graphicspath{{<path to file>/}}
%% 
%% For more information, please see info/svg-inkscape on CTAN:
%%   http://tug.ctan.org/tex-archive/info/svg-inkscape
%%
\begingroup%
  \makeatletter%
  \providecommand\color[2][]{%
    \errmessage{(Inkscape) Color is used for the text in Inkscape, but the package 'color.sty' is not loaded}%
    \renewcommand\color[2][]{}%
  }%
  \providecommand\transparent[1]{%
    \errmessage{(Inkscape) Transparency is used (non-zero) for the text in Inkscape, but the package 'transparent.sty' is not loaded}%
    \renewcommand\transparent[1]{}%
  }%
  \providecommand\rotatebox[2]{#2}%
  \newcommand*\fsize{\dimexpr\f@size pt\relax}%
  \newcommand*\lineheight[1]{\fontsize{\fsize}{#1\fsize}\selectfont}%
  \ifx\svgwidth\undefined%
    \setlength{\unitlength}{301.1362271bp}%
    \ifx\svgscale\undefined%
      \relax%
    \else%
      \setlength{\unitlength}{\unitlength * \real{\svgscale}}%
    \fi%
  \else%
    \setlength{\unitlength}{\svgwidth}%
  \fi%
  \global\let\svgwidth\undefined%
  \global\let\svgscale\undefined%
  \makeatother%
  \begin{picture}(1,0.85949531)%
    \lineheight{1}%
    \setlength\tabcolsep{0pt}%
    \put(0,0){\includegraphics[width=\unitlength,page=1]{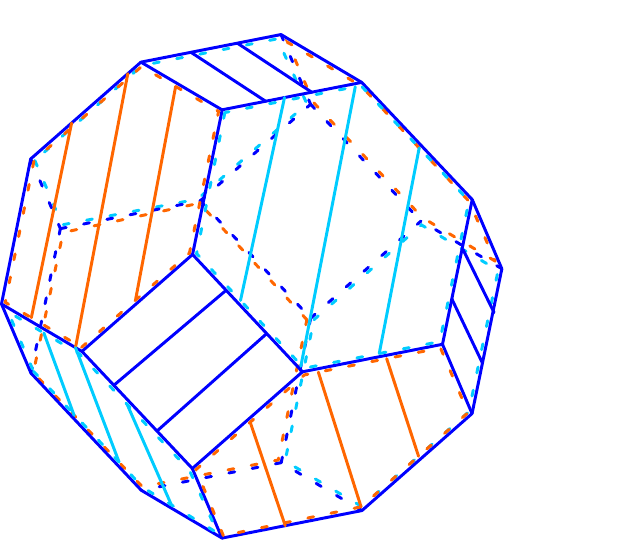}}%
    \put(0.5363881,0.49875379){\color[rgb]{0,0,0}\makebox(0,0)[lt]{\lineheight{1.25}\smash{\begin{tabular}[t]{l}$G_0$\end{tabular}}}}%
    \put(0.50781594,0.16649568){\color[rgb]{0,0,0}\makebox(0,0)[lt]{\lineheight{1.25}\smash{\begin{tabular}[t]{l}$G_{\{0,1,2\}}$\end{tabular}}}}%
    \put(0.2416672,0.27369111){\color[rgb]{0,0,0}\makebox(0,0)[lt]{\lineheight{1.25}\smash{\begin{tabular}[t]{l}$G_{\{0,2\}}$\end{tabular}}}}%
    \put(0.1281924,0.5768699){\color[rgb]{0,0,0}\makebox(0,0)[lt]{\lineheight{1.25}\smash{\begin{tabular}[t]{l}$G_{\{0,2,3\}}$\end{tabular}}}}%
    \put(0.25457038,0.83681126){\color[rgb]{0,0,0}\makebox(0,0)[lt]{\lineheight{1.25}\smash{\begin{tabular}[t]{l}$G_{\{0,3\}}$\end{tabular}}}}%
    \put(0.82259199,0.32689613){\color[rgb]{0,0,0}\makebox(0,0)[lt]{\lineheight{1.25}\smash{\begin{tabular}[t]{l}$G_{\{0,1\}}$\end{tabular}}}}%
    \put(0.11785305,0.19780678){\color[rgb]{0,0,0}\makebox(0,0)[lt]{\lineheight{1.25}\smash{\begin{tabular}[t]{l}$G_2$\end{tabular}}}}%
    \put(0,0){\includegraphics[width=\unitlength,page=2]{truncated_pyramid.pdf}}%
  \end{picture}%
\endgroup%
\\
    \end{tabular}
    \caption{Left, the solid tetrahedron $\Delta^{n-1}$ $(n=4)$. Right: The omnitruncation of $\Delta^{n-1}$, which is the $(n-1)$-dimensional permutohedron $\Pi^{n-1}$.}
    \label{permutohedron to simplicial complex}
  \end{figure}
\end{remark}
\subsection{Twists}
\begin{notation}
  Define $J:=[-1,1]\subset \mathbb{R}$. Let $e_J$ be the unit vector in this $\mathbb{R}$ direction.
\end{notation}
\begin{definition}
  Consider a connected topological group $G$, where we imagine $G$ to be a group of rotations. We define a \textit{twist} in $G$ to be a continuous path $\gamma:[0,1]\to G$ such that $\gamma(0)= \text{\text{Id}}$ and $\gamma(t)$ is constant for $t$ in a neighborhood of $\{0,1\}$. Composition of twists is performed through the diamond operation $\diamond$. If $\phi,\psi: [0,1]\to G$ are two twists, we define $\phi\diamond\psi:[0,1]\to G$ to be the twist that is $\psi(2t)$ on $[0,1/2]$ and $\phi(2t-1)\circ\psi(1)$ on $[1/2,1]$.\par
  We denote the nontwist $c_{\Id}$ by the constant map $[0,1]\to G$, $t\mapsto \Id$. $c_{\Id}$ acts as the identity (up to homotopy) under the diamond composition.
\end{definition}
Composition of twists is not associative, but it is associative up to homotopy. In fact, we observe the following property:
\begin{lemma}\label{twist homotopy}
  If $g$ and $f$ are twists, then the twist $t\mapsto g(t)f(t)$ is homotopic to $g \diamond f$ relative the endpoints $0,1\in [0,1]$.
\end{lemma}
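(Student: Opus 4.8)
The plan is to realize $g\diamond f$ as a \emph{pointwise} product of suitably reparametrized copies of $g$ and $f$, and then to deform those reparametrizations back to the identity parametrization, multiplying the two deformations together using continuity of the multiplication map $G\times G\to G$.

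First I would record that the two paths in question already share their endpoints: $g(0)f(0)=\Id\cdot\Id=\Id=(g\diamond f)(0)$, and $g(1)f(1)=(g\diamond f)(1)$, so it is meaningful to ask for a homotopy rel $\{0,1\}$. Next, set $u_0,v_0\colon[0,1]\to[0,1]$ by $u_0(t)=\max(0,2t-1)$ and $v_0(t)=\min(1,2t)$, and observe directly from the definition of $\diamond$ that $(g\diamond f)(t)=g(u_0(t))\,f(v_0(t))$: for $t\le 1/2$ this reads $g(0)f(2t)=f(2t)$, and for $t\ge 1/2$ it reads $g(2t-1)f(1)$, which matches the two branches of $g\diamond f$. (Equivalently, $g\diamond f=\widetilde g\cdot\widetilde f$ where $\widetilde g$ is $g$ reparametrized to be constant $=\Id$ on $[0,1/2]$ and $\widetilde f$ is $f$ reparametrized to be constant $=f(1)$ on $[1/2,1]$; such reparametrizations are homotopic to $g$ and $f$ rel endpoints.)

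Then I would define $H\colon[0,1]\times[0,1]\to G$ by
\[
  H(s,t)=g\big((1-s)u_0(t)+st\big)\cdot f\big((1-s)v_0(t)+st\big).
\]
This is continuous since $G$ is a topological group, $H(0,-)=g\diamond f$, and $H(1,-)$ is the twist $t\mapsto g(t)f(t)$. Moreover, for every $s$ the two arguments equal $0$ at $t=0$ and $1$ at $t=1$, so $H(s,0)=\Id$ and $H(s,1)=g(1)f(1)$ for all $s$; hence $H$ is a homotopy rel endpoints, proving the lemma.

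I do not expect a genuine obstacle: this is the familiar Eckmann--Hilton-type fact that in an $H$-space the pointwise product of two based paths is homotopic rel endpoints to their concatenation, specialized to a topological group. The only points needing care are that the interpolating reparametrizations all fix $0$ and $1$ (so the homotopy genuinely fixes both endpoints throughout), and that the intermediate stages $H(s,-)$ need not themselves be twists in the strict sense (constant near the endpoints) — which is harmless, since the lemma only asserts a homotopy of paths rel endpoints.
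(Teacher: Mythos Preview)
Your proof is correct and follows essentially the same idea as the paper's: both recognize that $(g\diamond f)(t)$ can be written as a pointwise product $g(u_0(t))f(v_0(t))$ of reparametrized copies of $g$ and $f$, and then homotope the reparametrizations. The paper's explicit homotopy is slightly asymmetric (it only deforms $g$'s reparametrization, landing first at $(t\mapsto g(t)f(t))*c_{g(1)f(1)}$ and then invoking a further homotopy to drop the constant tail), whereas your linear interpolation of both reparametrizations goes directly from $g\diamond f$ to $t\mapsto g(t)f(t)$ in one step; this is a cosmetic difference, not a substantive one.
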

\begin{example}
  Consider the product $J\times \Pi^k$ and the center point $P=(1/2,p)$, where $p$ is the center of $\Pi^k$. We can imagine the group of rotations of $J\times \Pi^k$ to be given by $SO(T_P(J\times \Pi^k))$. Therefore, a twist of $J\times \Pi^k$ is given by a path $[0,1]\to SO(T_P(J\times \Pi^k))$ starting at $\text{Id}$.
\end{example}
\begin{definition}
  Let $*$ denote concatenation of paths. That is, if $f,g:[0,1]\to G$ are paths with $f(1)=g(0)$, define $f*g:[0,1]\to G$ to be the path that is $f(2t)$ on the subinterval $[0,1/2]$ and $g(2t-1)$ on the subinterval $[1/2,1]$.
\end{definition}

\begin{proof}
  At time $s$, define the homotopy $h_s:=g(st)f(t)*f(s+(1-s)t)f(1)$. $h_0=g\diamond f$ and $h_1=c_{g(1)f(1)}*g(t)f(t)$, which is homotopic to $g(t)f(t)$. See Figure \ref{homotopy drawings}.
\end{proof}
\begin{lemma}\label{conjugation}
  Given twists $f: [0,1]\to G$ and $g: [0,1]\to G$, $g(0)=f(0)=\text{Id}$,
  \begin{equation*}
    g(1)f(t)g(1)^{-1}\cong g(t)f(t)g(t)^{-1}\cong  g(t)\diamond f(t)\diamond g(t)^{-1},
  \end{equation*}
  where the homotopies are relative the endpoints $\{0,1\}\subset [0,1]$.
\end{lemma}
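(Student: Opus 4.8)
The plan is to establish the two homotopies in turn, all relative to $\{0,1\}\subset[0,1]$. First note that since $G$ is a topological group, $t\mapsto g(t)^{-1}$ is continuous and is itself a twist, so each of $g(1)f(t)g(1)^{-1}$, $g(t)f(t)g(t)^{-1}$ and $g\diamond f\diamond g^{-1}$ is a twist, and all have endpoint values $\Id$ at $t=0$ and $g(1)f(1)g(1)^{-1}$ at $t=1$, so that rel-endpoint homotopies between them make sense. For the first homotopy, $g(1)f(t)g(1)^{-1}\cong g(t)f(t)g(t)^{-1}$, I would exhibit the explicit ``slide''
\[
  H_s(t) \;=\; g\bigl((1-s)t+s\bigr)\,f(t)\,g\bigl((1-s)t+s\bigr)^{-1}, \qquad s,t\in[0,1],
\]
so that $H_0$ is $g(t)f(t)g(t)^{-1}$ and $H_1$ is $g(1)f(t)g(1)^{-1}$. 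It is continuous in $(s,t)$, and I would check it is a homotopy through twists rel $\{0,1\}$: since $f$ is a twist it equals $\Id$ on a neighbourhood of $t=0$, and on that neighbourhood $H_s(t)$ collapses to $\Id$ for every $s$ (in particular $H_s(0)=\Id$); and near $t=1$ both $f$ and $g$ are constant, and $(1-s)t+s\in[t,1]$ for $s,t\in[0,1]$, so $g\bigl((1-s)t+s\bigr)=g(1)$ and $H_s(t)=g(1)f(1)g(1)^{-1}$ there, independently of $s$.

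For the second homotopy I would reduce to Lemma~\ref{twist homotopy}. Note that $t\mapsto g(t)^{-1}$ is a twist, and write $fg^{-1}$ for the pointwise-product twist $t\mapsto f(t)g(t)^{-1}$. Since multiplication in $G$ is associative, Lemma~\ref{twist homotopy} applied first to the twists $g$ and $fg^{-1}$, and then to the twists $f$ and $g^{-1}$, yields
\[
  g(t)f(t)g(t)^{-1} \;=\; g(t)\bigl(f(t)g(t)^{-1}\bigr) \;\cong\; g\diamond\bigl(fg^{-1}\bigr) \;\cong\; g\diamond\bigl(f\diamond g^{-1}\bigr) \;\cong\; g\diamond f\diamond g^{-1},
\]
where the middle homotopy uses that $\phi\diamond\psi$ depends on $\psi$ only through a reparametrisation of $\psi$ onto $[0,1/2]$ together with the constant $\psi(1)$, so that a rel-endpoint homotopy of $\psi$ through twists induces one of $\phi\diamond\psi$; and the final homotopy is the homotopy-associativity of $\diamond$ noted in the text. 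Chaining these with the first homotopy proves the lemma.

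The only genuinely non-formal point — and the only place the twist hypotheses enter — is the verification that the homotopies above remain ``through twists'', that is, constant near $t=0,1$ and equal to $\Id$ at $t=0$; the essential observation making the slide $H_s$ work is precisely that $f\equiv\Id$ near $t=0$ trivialises the conjugation there uniformly in $s$. Beyond that, everything is bookkeeping layered on Lemma~\ref{twist homotopy} and the homotopy-associativity of $\diamond$, so I would not expect a real obstacle.
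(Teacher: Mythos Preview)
Your proof is correct and follows essentially the same approach as the paper. Your explicit slide $H_s(t)=g((1-s)t+s)f(t)g((1-s)t+s)^{-1}$ is exactly the paper's first homotopy (written there as $(g|_{[s,1]})f(\cdot)(g^{-1}|_{[s,1]})$), and your reduction of the second homotopy to two applications of Lemma~\ref{twist homotopy} plus homotopy-associativity matches the paper's computation; you are simply more explicit about verifying the rel-endpoint and constancy-near-endpoints conditions, which the paper leaves to a figure.
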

\begin{proof}
  The first homotopy is $(\left.g(\cdot)\right|_{[s,1]})f(\cdot)(\left.g(\cdot)^{-1}\right|_{[s,1]})$ (see Figure \ref{homotopy drawings} for a picture). To understand the second homotopy, use Lemma \ref{twist homotopy} to calculate
  \begin{equation*}
    f(t) \diamond g(t) \diamond f(t)^{-1}\cong f(t)\diamond (g(t)f(t)^{-1})\cong f(t)g(t)f(t)^{-1}.\qedhere
  \end{equation*}
\end{proof}
\begin{figure}
  $\footnotesize{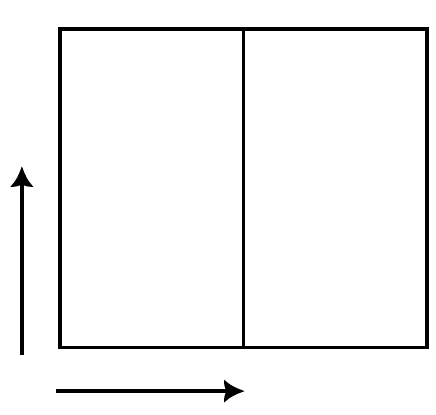}$\hspace{0.7cm}
  $\footnotesize{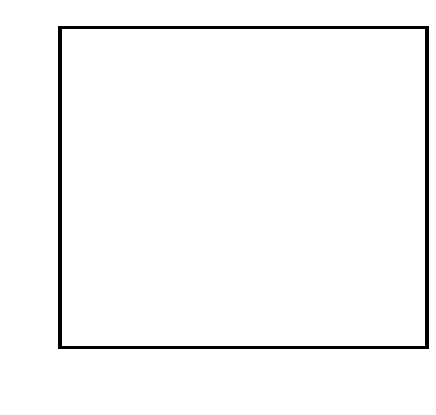}$
  \caption{Left: The homotopy in Lemma \ref{twist homotopy}. Right: The homotopy in Lemma \ref{conjugation}}
  \label{homotopy drawings}
\end{figure}
\begin{definition}\label{subtracting twists}
  Let $g$ and $f$ be twists $[0,1]\to G$ with the same endpoint $f(1)=g(1)\in G$. The concatenation $g*\overline{f} = \overline{f}\diamond g$ is a loop in $G$ starting and ending at $\text{Id}$. $g*\overline{f}$ represents an element in $\pi_1(G)$, which we call $g-f$.
\end{definition}

\subsection{Twisting manifolds with corners}\label{twists}
The twists we focus on in this paper are twists of the form $\varphi: [0,1]\to SO(T_PW)$, where $W$ is a manifold with corners equipped with a Riemannian metric, and $P\in \Int W$ (we think of $P$ as a point of rotation for $W$).
\begin{notation}
  Let $P\in W$, and $c_{\Id}: [0,1]\to SO(T_PW)$ be the nontwist. We often abuse notation and write $c_{\Id} = W$.
\end{notation}

\begin{definition}
  Let $W$, $Z$ be manifolds with corners, with $P\in W$, $P'\in Z$. If $f: [0,1]\to SO(T_PW)$, $g: [0,1]\to SO(T_{P'}Z)$ are twists, we identify $SO(T_{(P,P')}(W\times Z))\cong T_{P}W\times T_{P'}Z$ and define the twist $f\times g: [0,1]\to SO(T_{(P',P)}(W\times Z))$ to be $f$ in the $T_P W$ component and $g$ in the $T_{P'}Z$ component. So in particular, $f\times Z$ is $f$ in the $T_P W$ component and the nontwist in the $T_{P'}Z$ component.\par
  We call an element $g\in SO(T_PW)$ a \textit{symmetry} if $L$ is induced by an isometry $W\to W$ (which is therefore a diffeomorphism).\par
  We call $f$ \textit{gluable} if $f(1): T_P W\to T_P W$ is a symmetry, and $f$ is constant near the endpoints $0,1$.
\end{definition}
Note that $f\times g$ is gluable if $f$ and $g$ are gluable.
See Figure \ref{star figure} for an illustration of a gluable and non-gluable twist.
  \begin{figure}
    \centering
    \def\svgscale{0.9}%% Creator: Inkscape 1.4.2 (ebf0e940d0, 2025-05-08), www.inkscape.org
%% PDF/EPS/PS + LaTeX output extension by Johan Engelen, 2010
%% Accompanies image file 'star_1.pdf' (pdf, eps, ps)
%%
%% To include the image in your LaTeX document, write
%%   \input{<filename>.pdf_tex}
%%  instead of
%%   \includegraphics{<filename>.pdf}
%% To scale the image, write
%%   \def\svgwidth{<desired width>}
%%   \input{<filename>.pdf_tex}
%%  instead of
%%   \includegraphics[width=<desired width>]{<filename>.pdf}
%%
%% Images with a different path to the parent latex file can
%% be accessed with the `import' package (which may need to be
%% installed) using
%%   \usepackage{import}
%% in the preamble, and then including the image with
%%   \import{<path to file>}{<filename>.pdf_tex}
%% Alternatively, one can specify
%%   \graphicspath{{<path to file>/}}
%% 
%% For more information, please see info/svg-inkscape on CTAN:
%%   http://tug.ctan.org/tex-archive/info/svg-inkscape
%%
\begingroup%
  \makeatletter%
  \providecommand\color[2][]{%
    \errmessage{(Inkscape) Color is used for the text in Inkscape, but the package 'color.sty' is not loaded}%
    \renewcommand\color[2][]{}%
  }%
  \providecommand\transparent[1]{%
    \errmessage{(Inkscape) Transparency is used (non-zero) for the text in Inkscape, but the package 'transparent.sty' is not loaded}%
    \renewcommand\transparent[1]{}%
  }%
  \providecommand\rotatebox[2]{#2}%
  \newcommand*\fsize{\dimexpr\f@size pt\relax}%
  \newcommand*\lineheight[1]{\fontsize{\fsize}{#1\fsize}\selectfont}%
  \ifx\svgwidth\undefined%
    \setlength{\unitlength}{103.5395032bp}%
    \ifx\svgscale\undefined%
      \relax%
    \else%
      \setlength{\unitlength}{\unitlength * \real{\svgscale}}%
    \fi%
  \else%
    \setlength{\unitlength}{\svgwidth}%
  \fi%
  \global\let\svgwidth\undefined%
  \global\let\svgscale\undefined%
  \makeatother%
  \begin{picture}(1,0.95137839)%
    \lineheight{1}%
    \setlength\tabcolsep{0pt}%
    \put(0,0){\includegraphics[width=\unitlength,page=1]{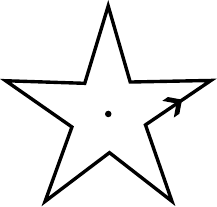}}%
    \put(0.51455268,0.45136181){\color[rgb]{0,0,0}\makebox(0,0)[lt]{\lineheight{1.25}\smash{\begin{tabular}[t]{l}$P$\end{tabular}}}}%
    \put(0,0){\includegraphics[width=\unitlength,page=2]{star_1.pdf}}%
  \end{picture}%
\endgroup%

    \hspace{0.1cm}\raisebox{1cm}{$\to$}\hspace{0.1cm}
    \def\svgscale{0.9}%% Creator: Inkscape 1.4.2 (ebf0e940d0, 2025-05-08), www.inkscape.org
%% PDF/EPS/PS + LaTeX output extension by Johan Engelen, 2010
%% Accompanies image file 'star_2.pdf' (pdf, eps, ps)
%%
%% To include the image in your LaTeX document, write
%%   \input{<filename>.pdf_tex}
%%  instead of
%%   \includegraphics{<filename>.pdf}
%% To scale the image, write
%%   \def\svgwidth{<desired width>}
%%   \input{<filename>.pdf_tex}
%%  instead of
%%   \includegraphics[width=<desired width>]{<filename>.pdf}
%%
%% Images with a different path to the parent latex file can
%% be accessed with the `import' package (which may need to be
%% installed) using
%%   \usepackage{import}
%% in the preamble, and then including the image with
%%   \import{<path to file>}{<filename>.pdf_tex}
%% Alternatively, one can specify
%%   \graphicspath{{<path to file>/}}
%% 
%% For more information, please see info/svg-inkscape on CTAN:
%%   http://tug.ctan.org/tex-archive/info/svg-inkscape
%%
\begingroup%
  \makeatletter%
  \providecommand\color[2][]{%
    \errmessage{(Inkscape) Color is used for the text in Inkscape, but the package 'color.sty' is not loaded}%
    \renewcommand\color[2][]{}%
  }%
  \providecommand\transparent[1]{%
    \errmessage{(Inkscape) Transparency is used (non-zero) for the text in Inkscape, but the package 'transparent.sty' is not loaded}%
    \renewcommand\transparent[1]{}%
  }%
  \providecommand\rotatebox[2]{#2}%
  \newcommand*\fsize{\dimexpr\f@size pt\relax}%
  \newcommand*\lineheight[1]{\fontsize{\fsize}{#1\fsize}\selectfont}%
  \ifx\svgwidth\undefined%
    \setlength{\unitlength}{103.68843499bp}%
    \ifx\svgscale\undefined%
      \relax%
    \else%
      \setlength{\unitlength}{\unitlength * \real{\svgscale}}%
    \fi%
  \else%
    \setlength{\unitlength}{\svgwidth}%
  \fi%
  \global\let\svgwidth\undefined%
  \global\let\svgscale\undefined%
  \makeatother%
  \begin{picture}(1,0.95286318)%
    \lineheight{1}%
    \setlength\tabcolsep{0pt}%
    \put(0,0){\includegraphics[width=\unitlength,page=1]{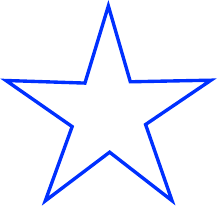}}%
    \put(0.51729668,0.41758298){\color[rgb]{0,0,0}\makebox(0,0)[lt]{\lineheight{1.25}\smash{\begin{tabular}[t]{l}$P$\end{tabular}}}}%
    \put(0,0){\includegraphics[width=\unitlength,page=2]{star_2.pdf}}%
  \end{picture}%
\endgroup%

    , 
    \def\svgscale{0.9}%% Creator: Inkscape 1.4.2 (ebf0e940d0, 2025-05-08), www.inkscape.org
%% PDF/EPS/PS + LaTeX output extension by Johan Engelen, 2010
%% Accompanies image file 'star_3.pdf' (pdf, eps, ps)
%%
%% To include the image in your LaTeX document, write
%%   \input{<filename>.pdf_tex}
%%  instead of
%%   \includegraphics{<filename>.pdf}
%% To scale the image, write
%%   \def\svgwidth{<desired width>}
%%   \input{<filename>.pdf_tex}
%%  instead of
%%   \includegraphics[width=<desired width>]{<filename>.pdf}
%%
%% Images with a different path to the parent latex file can
%% be accessed with the `import' package (which may need to be
%% installed) using
%%   \usepackage{import}
%% in the preamble, and then including the image with
%%   \import{<path to file>}{<filename>.pdf_tex}
%% Alternatively, one can specify
%%   \graphicspath{{<path to file>/}}
%% 
%% For more information, please see info/svg-inkscape on CTAN:
%%   http://tug.ctan.org/tex-archive/info/svg-inkscape
%%
\begingroup%
  \makeatletter%
  \providecommand\color[2][]{%
    \errmessage{(Inkscape) Color is used for the text in Inkscape, but the package 'color.sty' is not loaded}%
    \renewcommand\color[2][]{}%
  }%
  \providecommand\transparent[1]{%
    \errmessage{(Inkscape) Transparency is used (non-zero) for the text in Inkscape, but the package 'transparent.sty' is not loaded}%
    \renewcommand\transparent[1]{}%
  }%
  \providecommand\rotatebox[2]{#2}%
  \newcommand*\fsize{\dimexpr\f@size pt\relax}%
  \newcommand*\lineheight[1]{\fontsize{\fsize}{#1\fsize}\selectfont}%
  \ifx\svgwidth\undefined%
    \setlength{\unitlength}{103.5395032bp}%
    \ifx\svgscale\undefined%
      \relax%
    \else%
      \setlength{\unitlength}{\unitlength * \real{\svgscale}}%
    \fi%
  \else%
    \setlength{\unitlength}{\svgwidth}%
  \fi%
  \global\let\svgwidth\undefined%
  \global\let\svgscale\undefined%
  \makeatother%
  \begin{picture}(1,0.95137828)%
    \lineheight{1}%
    \setlength\tabcolsep{0pt}%
    \put(0,0){\includegraphics[width=\unitlength,page=1]{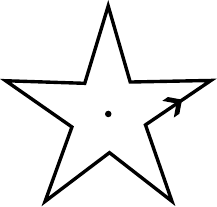}}%
    \put(0.51455284,0.45136168){\color[rgb]{0,0,0}\makebox(0,0)[lt]{\lineheight{1.25}\smash{\begin{tabular}[t]{l}$P$\end{tabular}}}}%
    \put(0,0){\includegraphics[width=\unitlength,page=2]{star_3.pdf}}%
  \end{picture}%
\endgroup%

    \hspace{0.1cm}\raisebox{1.0cm}{$\to$}\hspace{0.1cm}
    \raisebox{-0.30cm}{\def\svgscale{0.9}%% Creator: Inkscape 1.4.2 (ebf0e940d0, 2025-05-08), www.inkscape.org
%% PDF/EPS/PS + LaTeX output extension by Johan Engelen, 2010
%% Accompanies image file 'star_4.pdf' (pdf, eps, ps)
%%
%% To include the image in your LaTeX document, write
%%   \input{<filename>.pdf_tex}
%%  instead of
%%   \includegraphics{<filename>.pdf}
%% To scale the image, write
%%   \def\svgwidth{<desired width>}
%%   \input{<filename>.pdf_tex}
%%  instead of
%%   \includegraphics[width=<desired width>]{<filename>.pdf}
%%
%% Images with a different path to the parent latex file can
%% be accessed with the `import' package (which may need to be
%% installed) using
%%   \usepackage{import}
%% in the preamble, and then including the image with
%%   \import{<path to file>}{<filename>.pdf_tex}
%% Alternatively, one can specify
%%   \graphicspath{{<path to file>/}}
%% 
%% For more information, please see info/svg-inkscape on CTAN:
%%   http://tug.ctan.org/tex-archive/info/svg-inkscape
%%
\begingroup%
  \makeatletter%
  \providecommand\color[2][]{%
    \errmessage{(Inkscape) Color is used for the text in Inkscape, but the package 'color.sty' is not loaded}%
    \renewcommand\color[2][]{}%
  }%
  \providecommand\transparent[1]{%
    \errmessage{(Inkscape) Transparency is used (non-zero) for the text in Inkscape, but the package 'transparent.sty' is not loaded}%
    \renewcommand\transparent[1]{}%
  }%
  \providecommand\rotatebox[2]{#2}%
  \newcommand*\fsize{\dimexpr\f@size pt\relax}%
  \newcommand*\lineheight[1]{\fontsize{\fsize}{#1\fsize}\selectfont}%
  \ifx\svgwidth\undefined%
    \setlength{\unitlength}{103.74438308bp}%
    \ifx\svgscale\undefined%
      \relax%
    \else%
      \setlength{\unitlength}{\unitlength * \real{\svgscale}}%
    \fi%
  \else%
    \setlength{\unitlength}{\svgwidth}%
  \fi%
  \global\let\svgwidth\undefined%
  \global\let\svgscale\undefined%
  \makeatother%
  \begin{picture}(1,1.04957811)%
    \lineheight{1}%
    \setlength\tabcolsep{0pt}%
    \put(0,0){\includegraphics[width=\unitlength,page=1]{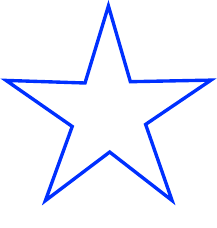}}%
    \put(0.50647722,0.54654397){\color[rgb]{0,0,0}\makebox(0,0)[lt]{\lineheight{1.25}\smash{\begin{tabular}[t]{l}$P$\end{tabular}}}}%
    \put(0,0){\includegraphics[width=\unitlength,page=2]{star_4.pdf}}%
  \end{picture}%
\endgroup%
}
    \caption{Let $W$ denote the $5$-pointed star. Left: An example of a symmetry of $T_PW$. We can view the transformation as a $72^\circ$ counterclockwise rotation of $T_PW$. Right: An example of a non-symmetry of $T_PW$. We can view the transformation is a $108^{\circ}$ counterclockwise rotation of $T_PW$. The dotted overlay shows how the non-symmetry would look if extended to the entire $W$. The transformation on the left can be extended, however. Therefore, a $72^\circ$ counterclockwise twist is gluable, but not a $108^\circ$ counterclockwise twist.}
    \label{star figure}
  \end{figure}
\begin{remark}
  Observe that gluable twists are closed under the $\diamond$ composition.
\end{remark}
Now let $P = (0,P')$ be the midpoint of $X = J\times \Pi^{\kappa-1}$. Since there is a natural Riemannian metric on both $J$ and $\Pi^{\kappa-1}$ (see Remark \ref{metric discussion}), we put the product metric on $T_{P} X$. We define special types of twists $\varphi_{i, j}: [0,1]\to SO(T_P X)$ which we will use frequently in this paper.
\begin{definition}
  Let $\omega\in S_\kappa$ be a permutation in the symmetric group. We define $P_\omega: \mathbb{A}^{\kappa-1}\to \mathbb{A}^{\kappa-1}$ as the orthogonal transformation that applies the $\omega$-permutation to the coordinates of a point $p$. Namely, $P_\omega(p_1,\ldots,p_{\kappa}) = (p_{\omega^{-1}(1)},\ldots,p_{\omega^{-1}(\kappa)})$. We also let $P_\omega$ denote the orthogonal transformation $T_p \mathbb{A}^{\kappa-1}\to T_p\mathbb{A}^{\kappa-1}$, similarly defined by the $\omega$-permutation to the coordinates of a vector $\mathbf{v}$.
\end{definition}

\begin{definition}
  For $1\leq i,j\leq \kappa-2$, $i\neq j$, we define the twist $\varphi_{i,j}$ as the following: take the vector $\mathbf{a}= (0,e_i-e_j), \partial_J = (e_J,0)\in T_0 J\times T_{P'} \Pi^{\kappa-1} \cong T_P X$.
  Now identify $T_{P}X\cong \langle \mathbf{a},\partial_J\rangle^\perp\times\langle \mathbf{a},\partial_J\rangle$ and define $\varphi_{i, j}:[0,1]\to SO(T_PX)$ to be the twist $\langle \mathbf{a},\partial_J\rangle^\perp\times \varphi$, where $\varphi$ turns the oriented basis $(\mathbf{a},\partial_J)$ $180^\circ$ clockwise (see Figure \ref{twist in plane picture}).
  \begin{figure}
    \begin{tabular}{m{4.5cm} m{3cm} m{5cm}}
      %% Creator: Inkscape 1.4.2 (ebf0e940d0, 2025-05-08), www.inkscape.org
%% PDF/EPS/PS + LaTeX output extension by Johan Engelen, 2010
%% Accompanies image file '180_twist_part_1.pdf' (pdf, eps, ps)
%%
%% To include the image in your LaTeX document, write
%%   \input{<filename>.pdf_tex}
%%  instead of
%%   \includegraphics{<filename>.pdf}
%% To scale the image, write
%%   \def\svgwidth{<desired width>}
%%   \input{<filename>.pdf_tex}
%%  instead of
%%   \includegraphics[width=<desired width>]{<filename>.pdf}
%%
%% Images with a different path to the parent latex file can
%% be accessed with the `import' package (which may need to be
%% installed) using
%%   \usepackage{import}
%% in the preamble, and then including the image with
%%   \import{<path to file>}{<filename>.pdf_tex}
%% Alternatively, one can specify
%%   \graphicspath{{<path to file>/}}
%% 
%% For more information, please see info/svg-inkscape on CTAN:
%%   http://tug.ctan.org/tex-archive/info/svg-inkscape
%%
\begingroup%
  \makeatletter%
  \providecommand\color[2][]{%
    \errmessage{(Inkscape) Color is used for the text in Inkscape, but the package 'color.sty' is not loaded}%
    \renewcommand\color[2][]{}%
  }%
  \providecommand\transparent[1]{%
    \errmessage{(Inkscape) Transparency is used (non-zero) for the text in Inkscape, but the package 'transparent.sty' is not loaded}%
    \renewcommand\transparent[1]{}%
  }%
  \providecommand\rotatebox[2]{#2}%
  \newcommand*\fsize{\dimexpr\f@size pt\relax}%
  \newcommand*\lineheight[1]{\fontsize{\fsize}{#1\fsize}\selectfont}%
  \ifx\svgwidth\undefined%
    \setlength{\unitlength}{155.71922002bp}%
    \ifx\svgscale\undefined%
      \relax%
    \else%
      \setlength{\unitlength}{\unitlength * \real{\svgscale}}%
    \fi%
  \else%
    \setlength{\unitlength}{\svgwidth}%
  \fi%
  \global\let\svgwidth\undefined%
  \global\let\svgscale\undefined%
  \makeatother%
  \begin{picture}(1,0.67334903)%
    \lineheight{1}%
    \setlength\tabcolsep{0pt}%
    \put(0,0){\includegraphics[width=\unitlength,page=1]{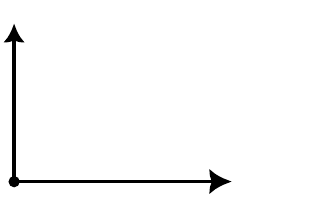}}%
    \put(0.0123519,0.02256319){\color[rgb]{0,0,0}\makebox(0,0)[lt]{\lineheight{1.25}\smash{\begin{tabular}[t]{l}$P$\end{tabular}}}}%
    \put(0,0){\includegraphics[width=\unitlength,page=2]{180_twist_part_1.pdf}}%
    \put(0.28029785,0.39067578){\color[rgb]{0,0,0}\makebox(0,0)[lt]{\lineheight{1.25}\smash{\begin{tabular}[t]{l}$180^{\circ}$\end{tabular}}}}%
    \put(0.6398857,0.01004213){\color[rgb]{0,0,0}\makebox(0,0)[lt]{\lineheight{1.25}\smash{\begin{tabular}[t]{l}$\mathbf{a}$\end{tabular}}}}%
    \put(-0.00267307,0.62857157){\color[rgb]{0,0,0}\makebox(0,0)[lt]{\lineheight{1.25}\smash{\begin{tabular}[t]{l}$e_J$\end{tabular}}}}%
  \end{picture}%
\endgroup%
 & \makecell{$\varphi_{i, j}$\\
      \begin{tikzpicture}
      [decoration=snake,
   line around/.style={decoration={pre length=#1,post length=#1}}]
      \draw[->, decorate, line around = 3pt] (0,0)--(1.2,0);
    \end{tikzpicture}} & %% Creator: Inkscape 1.4.2 (ebf0e940d0, 2025-05-08), www.inkscape.org
%% PDF/EPS/PS + LaTeX output extension by Johan Engelen, 2010
%% Accompanies image file '180_twist_part_2.pdf' (pdf, eps, ps)
%%
%% To include the image in your LaTeX document, write
%%   \input{<filename>.pdf_tex}
%%  instead of
%%   \includegraphics{<filename>.pdf}
%% To scale the image, write
%%   \def\svgwidth{<desired width>}
%%   \input{<filename>.pdf_tex}
%%  instead of
%%   \includegraphics[width=<desired width>]{<filename>.pdf}
%%
%% Images with a different path to the parent latex file can
%% be accessed with the `import' package (which may need to be
%% installed) using
%%   \usepackage{import}
%% in the preamble, and then including the image with
%%   \import{<path to file>}{<filename>.pdf_tex}
%% Alternatively, one can specify
%%   \graphicspath{{<path to file>/}}
%% 
%% For more information, please see info/svg-inkscape on CTAN:
%%   http://tug.ctan.org/tex-archive/info/svg-inkscape
%%
\begingroup%
  \makeatletter%
  \providecommand\color[2][]{%
    \errmessage{(Inkscape) Color is used for the text in Inkscape, but the package 'color.sty' is not loaded}%
    \renewcommand\color[2][]{}%
  }%
  \providecommand\transparent[1]{%
    \errmessage{(Inkscape) Transparency is used (non-zero) for the text in Inkscape, but the package 'transparent.sty' is not loaded}%
    \renewcommand\transparent[1]{}%
  }%
  \providecommand\rotatebox[2]{#2}%
  \newcommand*\fsize{\dimexpr\f@size pt\relax}%
  \newcommand*\lineheight[1]{\fontsize{\fsize}{#1\fsize}\selectfont}%
  \ifx\svgwidth\undefined%
    \setlength{\unitlength}{149.40549835bp}%
    \ifx\svgscale\undefined%
      \relax%
    \else%
      \setlength{\unitlength}{\unitlength * \real{\svgscale}}%
    \fi%
  \else%
    \setlength{\unitlength}{\svgwidth}%
  \fi%
  \global\let\svgwidth\undefined%
  \global\let\svgscale\undefined%
  \makeatother%
  \begin{picture}(1,0.66472915)%
    \lineheight{1}%
    \setlength\tabcolsep{0pt}%
    \put(0,0){\includegraphics[width=\unitlength,page=1]{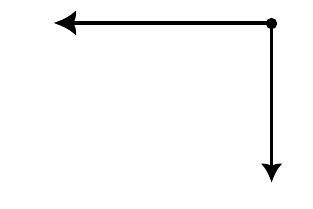}}%
    \put(0.89706697,0.61805928){\color[rgb]{0,0,0}\makebox(0,0)[lt]{\lineheight{1.25}\smash{\begin{tabular}[t]{l}$P$\end{tabular}}}}%
    \put(-0.00172493,0.4821868){\color[rgb]{0,0,0}\makebox(0,0)[lt]{\lineheight{1.25}\smash{\begin{tabular}[t]{l}$\varphi_{i, j}(1)(\mathbf{a})$\end{tabular}}}}%
    \put(0.7664739,0.01946873){\color[rgb]{0,0,0}\makebox(0,0)[lt]{\lineheight{1.25}\smash{\begin{tabular}[t]{l}$\varphi_{i, j}(1)(e_J)$\end{tabular}}}}%
  \end{picture}%
\endgroup%

    \end{tabular}
      \caption {An illustration of how the twist looks in the plane $\langle \mathbf{a}, \mathbf{\partial_J}\rangle$}
      \label{twist in plane picture}
    \end{figure}
  \end{definition}
See Figure \ref{hex twist} for an illustration of twist examples $\varphi_i$. Note the twist $\varphi_{i, j}$ is gluable. Indeed, $\varphi_{i, j}(1)$ is the isometry $X\to X$ that maps $(\mathbf{v},t)$ to $(P_{(ij)}(\mathbf{v}),-t)$. Observe that $\varphi_{i, j}(1)$ swaps $J\times G_{\{i\}}$ with $J\times G_{\{j\}}$, $J\times G_{\{i,k\}}$ with $J\times G_{\{j,k\}}$, and so on.
\begin{notation}
  We write $\varphi_i:= \varphi_{i, i+1}$, which lightens up the notation in our paper.
\end{notation}
\begin{figure}
  \centering
  \def\svgscale{0.8}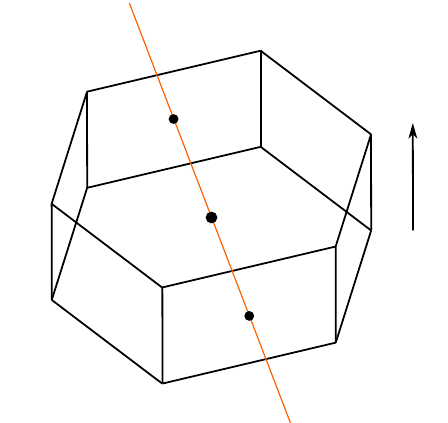\hspace{1cm}
  \def\svgscale{0.8}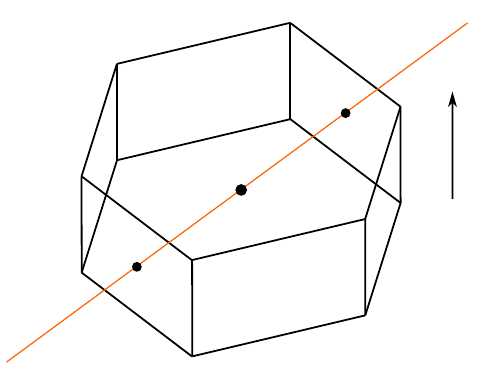
  \caption{Let $X = J\times \Pi^2$. Left: The gluable twist $\varphi_1$ in $T_PX$. Right: The gluable twist $\varphi_2$ in $T_PX$.}
  \label{hex twist}
\end{figure}

\begin{lemma}\label{commutators}
  Let $X = J\times\Pi^{\kappa-1}$, and let $1\leq i,j,k\leq \kappa-1$. We have the identities
  \begin{align*}
    \varphi_{j, k}\diamond \varphi_{i, j} &= \varphi_{i, k}^{-1}\diamond \varphi_{j, k} =  \varphi_{i, j} \diamond \varphi_{i, k}^{-1}\\
    &=  \varphi_{i, k}\diamond \varphi_{j, k}^{-1} =  \varphi_{i, j}^{-1} \diamond \varphi_{i, k}.
  \end{align*}
  In particular, we observe $\varphi_{i+1}\diamond \varphi_i = \varphi_{i, i+2}^{-1}\diamond \varphi_{i+1} = \varphi_i\diamond \varphi_{i, i+2}^{-1}$.
\end{lemma}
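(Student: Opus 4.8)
The plan is to localise all the twists involved onto one $3$-dimensional subspace of $T_PX$, reduce to an identity of based paths in $SO(3)$, and conclude by a short quaternion computation in the double cover $\mathrm{Spin}(3)=SU(2)$. We may assume $i,j,k$ are pairwise distinct (otherwise there is nothing to prove), we write $\widehat v:=v/|v|$, and we recall that for a twist $\varphi$ the symbol $\varphi^{-1}$ denotes the pointwise inverse $t\mapsto\varphi(t)^{-1}$, which is again a twist. Put $V:=\langle e_i-e_j,\ e_j-e_k\rangle\subset T_{P'}\Pi^{\kappa-1}$, a $2$-plane containing $e_i-e_k=(e_i-e_j)+(e_j-e_k)$, and $W:=\langle\partial_J\rangle\oplus V\subset T_PX$, a $3$-plane. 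Each of $\varphi_{i\rightleftarrows j},\varphi_{j\rightleftarrows k},\varphi_{i\rightleftarrows k}$ is by definition the identity on $\langle\mathbf a,\partial_J\rangle^\perp$ for the relevant $\mathbf a=(0,e_a-e_b)\in V\subset W$, and $W^\perp\subseteq\langle\mathbf a,\partial_J\rangle^\perp$ since $\langle\mathbf a,\partial_J\rangle\subseteq W$; hence every twist in the statement lies in $SO(W)\times\{\mathrm{Id}_{W^\perp}\}$, and since a rel-endpoint homotopy built in $SO(W)$ is one in $SO(T_PX)$, it suffices to prove the five identities inside $SO(W)\cong SO(3)$.

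Now fix an orientation of $W$, with induced quarter-turn $R$ of $V$, and consider the double cover $SU(2)=\mathrm{Spin}(3)$ of $SO(W)\cong SO(3)$ — concretely, the unit quaternions acting on $W\cong\mathrm{Im}\,\mathbb H$. By its definition the path $\varphi_{a\rightleftarrows b}|_W$ is a rotation of $W$ about a fixed axis, running from angle $0$ to $\pi$; that axis is a unit vector $\mathbf n_{ab}$ on the line in $V$ orthogonal to $e_a-e_b$, and the convention ``$180^\circ$ clockwise in the oriented plane $(e_a-e_b,\partial_J)$'' pins it down as $\mathbf n_{ab}=\varepsilon R(\widehat{e_a-e_b})$ for a sign $\varepsilon\in\{\pm1\}$ independent of the pair $ab$. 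Consequently the lift of $\varphi_{a\rightleftarrows b}|_W$ to $SU(2)$ starting at $1$ ends at the pure imaginary unit quaternion $\mathbf n_{ab}$, the lift of $\varphi_{a\rightleftarrows b}^{-1}$ ends at $\mathbf n_{ab}^{-1}=-\mathbf n_{ab}$, and since $SU(2)\to SO(3)$ is a group homomorphism the lift of $g\diamond f$ ends at (endpoint of the lift of $g$)(endpoint of the lift of $f$). As based paths in $SO(3)$ are rel-endpoint homotopic once their $SU(2)$-lifts agree at the endpoint, the lemma reduces to the equality of
\[
  \mathbf b\mathbf a,\quad \mathbf n_{ik}^{-1}\mathbf b,\quad \mathbf a\,\mathbf n_{ik}^{-1},\quad \mathbf n_{ik}\mathbf b^{-1},\quad \mathbf a^{-1}\mathbf n_{ik},\qquad\text{where }\mathbf a:=\mathbf n_{ij},\ \mathbf b:=\mathbf n_{jk}.
\]
Applying $R$ to $\widehat{e_i-e_k}=\widehat{e_i-e_j}+\widehat{e_j-e_k}$ gives $\mathbf n_{ik}=\mathbf a+\mathbf b$, while $\langle\mathbf a,\mathbf b\rangle=\langle\widehat{e_i-e_j},\widehat{e_j-e_k}\rangle=-\tfrac12$ gives $\mathbf a\mathbf b+\mathbf b\mathbf a=-2\langle\mathbf a,\mathbf b\rangle=1$ and $\mathbf a^2=\mathbf b^2=-1$; substituting $\mathbf n_{ik}=\mathbf a+\mathbf b$ makes each of the last four quantities equal $1-\mathbf a\mathbf b=\mathbf b\mathbf a$. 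The ``in particular'' clause is the special case $(i,j,k)=(i,i+1,i+2)$.

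The step I expect to be the real obstacle is the claim, above, that $\varepsilon$ does not depend on the pair $ab$ — equivalently, resolving the $\pi_1(SO(3))\cong\mathbb Z/2$ ambiguity in which homotopy class each side of an identity represents. What forces the three axes $\mathbf n_{ij},\mathbf n_{jk},\mathbf n_{ik}$ to be mutually consistent is exactly the relation $e_i-e_k=(e_i-e_j)+(e_j-e_k)$ together with the fact that every $\varphi_{a\rightleftarrows b}(1)$ reverses the $\partial_J$-direction, so that the clockwise convention is applied in the same way for all three pairs. One can instead avoid $SU(2)$ altogether: by Lemma \ref{conjugation} and the orientation computation — e.g.\ $\varphi_{j\rightleftarrows k}(1)$ conjugates $\varphi_{i\rightleftarrows j}$ to $\varphi_{i\rightleftarrows k}^{-1}$, so $\varphi_{j\rightleftarrows k}\diamond\varphi_{i\rightleftarrows j}\diamond\varphi_{j\rightleftarrows k}^{-1}\simeq\varphi_{i\rightleftarrows k}^{-1}$ and $\diamond$-cancelling a $\varphi_{j\rightleftarrows k}$ gives the first identity — the rest follow similarly, using in addition that any two $360^\circ$-rotation loops in $SO(W)$ are homotopic (rotate the axis) in order to bridge the identities containing a $\varphi_{i\rightleftarrows k}^{-1}$ to those that do not.
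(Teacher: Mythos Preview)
Your proof is correct. The paper takes precisely the conjugation route you sketch in your final paragraph: it applies Lemma~\ref{conjugation} to rewrite $\varphi_{j\rightleftarrows k}\diamond\varphi_{i\rightleftarrows j}\diamond\varphi_{j\rightleftarrows k}^{-1}$ as conjugation by the endpoint $\varphi_{j\rightleftarrows k}(1)=P_{(jk)}\tau_J$, then observes that conjugation by $\tau_J$ inverts $\varphi_{i\rightleftarrows j}$ while conjugation by the coordinate swap $P_{(jk)}$ sends it to $\varphi_{i\rightleftarrows k}$, yielding the first identity in a few lines; the remaining identities are declared to follow similarly. Your primary approach---restricting to the $3$-plane $W$, lifting to $\mathrm{Spin}(3)=SU(2)$, and reducing to the quaternion identity $\mathbf b\mathbf a = 1-\mathbf a\mathbf b$---is genuinely different and has the virtue of dispatching all five identities at once by a single endpoint computation, whereas the paper's conjugation argument must in principle be rerun for each. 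The cost is the axis-consistency check (your sign $\varepsilon$), which you correctly identify and resolve; the paper's approach sidesteps this by never leaving $SO(T_PX)$.
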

\begin{proof}
  We only prove the first equality; the other equality follows similarly. The first equality follows from the identity
  \begin{align*}
    \varphi_{j, k}\diamond \varphi_{i, j}\diamond \varphi_{j, k}^{-1} &\cong \varphi_{j, k}(1) \varphi_{i, j}(t) \varphi_{j, k}^{-1}(1) \\
                                                                                                                      &= (P_{(jk)}\tau_J)\varphi_{i, j}(t)(P_{(jk)}\tau_J)^{-1} \\
                                                                                                                      &= P_{(jk)}\left(\tau_J \varphi_{i, j}(t)\tau_J^{-1}\right)P_{(jk)}^{-1} \\
                                                                                                                      &= P_{(jk)}\varphi_{i, j}(t)^{-1}P_{(jk)}^{-1}\\
    &= \varphi_{i, k}(t)^{-1}.\qedhere
    \end{align*}
\end{proof}
\begin{definition}\label{full twist}
  Suppose $W$ is a manifold with corners equipped with a Riemannian metric, and that $\dim W = m \geq 4$. If $P\in \Int W$, then $\pi_1(SO(T_PW))\cong \pi_1(SO(m))\cong \mathbb{Z}/2$. We call a ``full twist'' to be any twist $g:[0,1]\to  SO(T_PW)$ such that $g(1) = \Id$ and $g$ defines the generator $1\in \pi_1(SO(T_PW))$.
\end{definition}
\begin{remark}\label{full twist fact}
  Let $W,P$ be as Definition \ref{full twist}. Note that all full twists are homotopic relative the endpoints $0,1$, and that a $\diamond$ composition of an even number of full twists is nullhomotopic. Note that by Lemma \ref{conjugation}, full twists in $SO(T_PW)$ must commute (under the $\diamond$ operation) with any twist $f$ up to homotopy. Indeed, if $g$ is such a full twist, then $g\diamond f\diamond g^{-1} \cong g(1)fg(1)^{-1} = f$.
\end{remark}

\begin{definition}\label{full twist notation}
  Let $W,P$ be as Definition \ref{full twist}, and suppose $g$ is a full twist. We denote the homotopy class of $g$ as $1$, and furthermore a composition $g_1\diamond\ldots g_l$ of full twists as $l$. We further write $f\diamond g_1\diamond\ldots g_l \cong f + l$. The ``$+$'' symbol is natural for us, since by Remark \ref{full twist fact}, full twists commute under $\diamond$ composition. \par
  Now let $\varphi$, $\phi$ be twists $[0,1]\to SO(T_P W)$ with the same endpoint $\varphi(1)=\phi(1)$. $\varphi-\phi$ defines an element $\omega \in \pi_1 (SO(T_PW))\cong \mathbb{Z}/2$ (see Definition \ref{subtracting twists}). We write $\varphi-\phi=(\omega)$. Observe the identities $\varphi-\phi=\phi-\varphi$ and $(\varphi-\phi) + (\phi-\psi) = \varphi-\psi$ if $\varphi(1)=\phi(1)=\psi(1)$. In view of Remark \ref{full twist fact}, we conclude that $\varphi = \phi + (\omega)$.
\end{definition}

\subsection{Adding twists to tubes}\label{twists to tubes}
Let us consider the case where we have an embedding $\Theta:X \times [0,1]\to Y$, where $X = J\times \Pi^{n-1}$ and $Y$ is a manifold with corners. We call $\Theta$, a \textit{tube}, with ends $X\times \{0\}$, $X\times \{1\}$. We define what it means to ``add a twist'' to $\Theta$.
\begin{definition}\label{twist definition}
  Suppose we are given a gluable twist $\varphi: [0,1] \to SO(T_PW)$. If $W$ is convex, each point $P'\in W$ is written as $P+v$ for a unique $v\in T_PW$. Given a twist $\varphi: [0,1]\to SO(T_PW)$, we can construct an embedding $\Omega:[0,1]\times P\to [0,1]\times P$ as the following. $\Omega(P+v,t) = (P + \lambda(t)\varphi_t(v),t)$, where $\lambda:[0,1]\to (0,1]$ is a continuous function satisfying:
  \begin{itemize}
  \item $\lambda(t)$ is the constant $1$ for $t$ near the endpoints $0,1$.
  \item $\lambda$ decays in the middle quickly enough so that $P + \lambda(t)\varphi(W)$ stays inside $W$.
  \end{itemize}
  To ``add'' the twist $\varphi$ to a tube $\Theta: W\times [0,1]\to Y$, simply precompose $\Theta$ with $\Omega$ (see Figure \ref{twist addition}). We call the resulting tube $\Theta \diamond \varphi$.
\end{definition}
The twist $\Theta \diamond \varphi$ depends on our choices for $\lambda$, but it is straightforward to check that it is well-defined up to homotopy (relative the endpoints $0,1\in [0,1]$).
\begin{lemma}\label{associativity}
  Let $\Theta:X\times [0,1]\to Y$ be a tube, and let $\varphi,\varphi': [0,1]\to SO(T_PX)$ be twists. $(\Theta\diamond\varphi)\diamond\varphi'\cong \Theta \diamond \varphi\diamond\varphi'$. 
\end{lemma}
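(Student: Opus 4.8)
The plan is to unwind the definition of the operation $\Theta\mapsto\Theta\diamond\varphi$ and observe that iterating it amounts, almost tautologically, to adding the pointwise product twist $t\mapsto\varphi(t)\varphi'(t)$; one then converts this pointwise product into the $\diamond$-product using Lemma \ref{twist homotopy}, invoking the homotopy-invariance of the construction.

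First I would record the unwinding. By Definition \ref{twist definition}, $\Theta\diamond\varphi=\Theta\circ\Omega_\varphi$, where for a decay function $\lambda$ we write $\Omega_\varphi$ for the embedding $\Omega_\varphi(P+v,t)=(P+\lambda(t)\,\varphi(t)(v),t)$ of $X\times[0,1]$ into itself (here $X=J\times\Pi^{n-1}$ is convex, so each point of $X$ is uniquely $P+v$). Choosing a second decay function $\lambda'$ for the outer step,
\[
(\Theta\diamond\varphi)\diamond\varphi' \;=\; \Theta\circ\Omega_\varphi\circ\Omega_{\varphi'},
\]
and since each $\varphi(t)\in SO(T_PX)$ is linear,
\[
\Omega_\varphi\bigl(\Omega_{\varphi'}(P+v,t)\bigr)\;=\;\Bigl(P+\lambda(t)\lambda'(t)\,\bigl(\varphi(t)\circ\varphi'(t)\bigr)(v),\,t\Bigr).
\]
Thus $(\Theta\diamond\varphi)\diamond\varphi'$ is precisely $\Theta$ precomposed with the embedding built from the pointwise product twist $\eta(t):=\varphi(t)\varphi'(t)$ and the decay function $\lambda\lambda'$. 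Here $\eta$ is again gluable ($\eta(0)=\Id$, $\eta$ is constant near $\{0,1\}$, and $\eta(1)=\varphi(1)\varphi'(1)$ is a composite of symmetries, hence a symmetry), and $\lambda\lambda'$ is a legitimate decay function for $\eta$ because $\Omega_\varphi\circ\Omega_{\varphi'}$ is manifestly an embedding $X\times[0,1]\to X\times[0,1]$. So for this choice of decay data we have the equality $(\Theta\diamond\varphi)\diamond\varphi'=\Theta\diamond\eta$.

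Next I would apply Lemma \ref{twist homotopy}: the twist $\eta=t\mapsto\varphi(t)\varphi'(t)$ is homotopic to $\varphi\diamond\varphi'$ relative the endpoints $\{0,1\}$. Adding a twist to a tube is well defined up to homotopy with respect to the choice of decay function, and also — as one checks by pushing a homotopy of twists through the construction — with respect to the homotopy class of the twist rel $\{0,1\}$; hence $\Theta\diamond\eta\cong\Theta\diamond(\varphi\diamond\varphi')$, which is the right-hand side $\Theta\diamond\varphi\diamond\varphi'$. Combining with the equality above gives $(\Theta\diamond\varphi)\diamond\varphi'\cong\Theta\diamond\varphi\diamond\varphi'$.

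I expect the only subtle point to be the homotopy-invariance of $\Theta\diamond(-)$ in the twist variable. Given a homotopy $H_s$ of twists rel $\{0,1\}$ from $\eta$ to $\varphi\diamond\varphi'$, one wants the embeddings $\Omega_{H_s}$ (with a fixed decay function) to form a homotopy of embeddings $X\times[0,1]\to X\times[0,1]$ fixing the slices $t\in\{0,1\}$. The fixed-end condition is automatic since the decay function is $\equiv1$ near $\{0,1\}$; the real content is that the decay function and the homotopy must be chosen compatibly, so that $P+\lambda(t)\lambda'(t)\,H_s(t)(X-P)$ stays inside $X$ for the entire (compact) family of rotations $\{H_s(t):s,t\in[0,1]\}$. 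This is possible because $X$ is convex and the family of rotations is compact, and it is harmless because shrinking the decay function does not change $\Theta\diamond\eta$ up to homotopy. This bookkeeping is routine; the rest of the argument is formal.
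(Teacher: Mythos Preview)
Your proof is correct and follows essentially the same approach as the paper: unwind the definition to see $(\Theta\diamond\varphi)\diamond\varphi'=\Theta\circ\Omega_\varphi\circ\Omega_{\varphi'}$, compute the composite to recognize it as $\Theta\diamond(t\mapsto\varphi(t)\varphi'(t))$, and then invoke Lemma \ref{twist homotopy}. Your version is more careful than the paper's about the bookkeeping (gluability of $\eta$, legitimacy of $\lambda\lambda'$, homotopy-invariance in the twist variable), but the core argument is identical.
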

\begin{proof}
  From Definition \ref{twist definition}, the tube $(\Theta\diamond\varphi)\diamond\varphi'$ is $(\Theta\circ \Omega)\circ\Omega'$, where $\Omega(P+v,t) = \left(P + \lambda(t)\varphi_t\cdot v,t\right)$ and $\Omega'(P+v,t) = \left(P + \lambda'(t)\varphi'_t\cdot v,t\right)$.
  \begin{align*}
    \left((\Theta\circ \Omega)\circ\Omega'\right)(P+v,t) = \Omega\left(P + \lambda'(t)\varphi'(t)\cdot v,t\right) &= \left(P + \lambda(t)\varphi(t)\cdot (\lambda'(t)\varphi'(t)\cdot v),t\right)\\
                                                                                                                                   &= \left(P + \lambda(t)\lambda'(t)(\varphi(t)\varphi'(t))\cdot v,t\right).
  \end{align*}
  
  Therefore, $(\Theta\diamond \Omega)\diamond\Omega'\cong \Theta \diamond (t\mapsto \varphi(t)\varphi'(t))\cong \Theta\diamond\varphi\diamond\varphi'$, with the last equivalence being from Lemma \ref{twist homotopy}.
\end{proof}
\begin{figure}
  {\centering\def\svgscale{0.8}%% Creator: Inkscape 1.4.2 (ebf0e940d0, 2025-05-08), www.inkscape.org
%% PDF/EPS/PS + LaTeX output extension by Johan Engelen, 2010
%% Accompanies image file 'lambda_graph.pdf' (pdf, eps, ps)
%%
%% To include the image in your LaTeX document, write
%%   \input{<filename>.pdf_tex}
%%  instead of
%%   \includegraphics{<filename>.pdf}
%% To scale the image, write
%%   \def\svgwidth{<desired width>}
%%   \input{<filename>.pdf_tex}
%%  instead of
%%   \includegraphics[width=<desired width>]{<filename>.pdf}
%%
%% Images with a different path to the parent latex file can
%% be accessed with the `import' package (which may need to be
%% installed) using
%%   \usepackage{import}
%% in the preamble, and then including the image with
%%   \import{<path to file>}{<filename>.pdf_tex}
%% Alternatively, one can specify
%%   \graphicspath{{<path to file>/}}
%% 
%% For more information, please see info/svg-inkscape on CTAN:
%%   http://tug.ctan.org/tex-archive/info/svg-inkscape
%%
\begingroup%
  \makeatletter%
  \providecommand\color[2][]{%
    \errmessage{(Inkscape) Color is used for the text in Inkscape, but the package 'color.sty' is not loaded}%
    \renewcommand\color[2][]{}%
  }%
  \providecommand\transparent[1]{%
    \errmessage{(Inkscape) Transparency is used (non-zero) for the text in Inkscape, but the package 'transparent.sty' is not loaded}%
    \renewcommand\transparent[1]{}%
  }%
  \providecommand\rotatebox[2]{#2}%
  \newcommand*\fsize{\dimexpr\f@size pt\relax}%
  \newcommand*\lineheight[1]{\fontsize{\fsize}{#1\fsize}\selectfont}%
  \ifx\svgwidth\undefined%
    \setlength{\unitlength}{70.10353269bp}%
    \ifx\svgscale\undefined%
      \relax%
    \else%
      \setlength{\unitlength}{\unitlength * \real{\svgscale}}%
    \fi%
  \else%
    \setlength{\unitlength}{\svgwidth}%
  \fi%
  \global\let\svgwidth\undefined%
  \global\let\svgscale\undefined%
  \makeatother%
  \begin{picture}(1,0.9064481)%
    \lineheight{1}%
    \setlength\tabcolsep{0pt}%
    \put(0,0){\includegraphics[width=\unitlength,page=1]{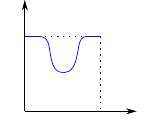}}%
    \put(-0.00795944,0.59481495){\color[rgb]{0,0,0}\makebox(0,0)[lt]{\lineheight{1.25}\smash{\begin{tabular}[t]{l}$1$\end{tabular}}}}%
    \put(0.64083381,0.00744596){\color[rgb]{0,0,0}\makebox(0,0)[lt]{\lineheight{1.25}\smash{\begin{tabular}[t]{l}$1$\end{tabular}}}}%
    \put(0.35745027,0.25021008){\color[rgb]{0.10196078,0,1}\makebox(0,0)[lt]{\lineheight{1.25}\smash{\begin{tabular}[t]{l}$\lambda$\end{tabular}}}}%
  \end{picture}%
\endgroup%
\par}
  \begin{tabular}{m{6.5cm} m{1.1cm} m{6cm}}
    \def\svgscale{0.8}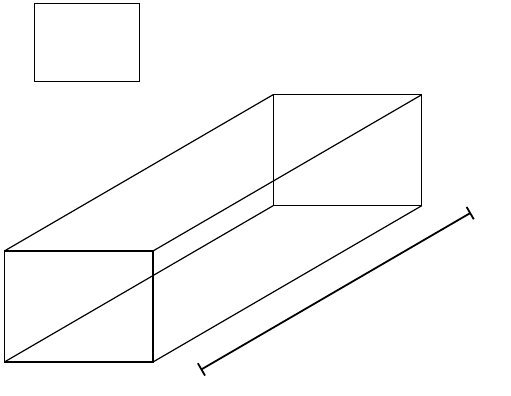 & \begin{tikzpicture}
      \draw[->] (0,0)--(1.2,0) node[midway,above] {$\Omega$};
    \end{tikzpicture} & \def\svgscale{0.8}%% Creator: Inkscape 1.4.2 (ebf0e940d0, 2025-05-08), www.inkscape.org
%% PDF/EPS/PS + LaTeX output extension by Johan Engelen, 2010
%% Accompanies image file 'twist_model_2.pdf' (pdf, eps, ps)
%%
%% To include the image in your LaTeX document, write
%%   \input{<filename>.pdf_tex}
%%  instead of
%%   \includegraphics{<filename>.pdf}
%% To scale the image, write
%%   \def\svgwidth{<desired width>}
%%   \input{<filename>.pdf_tex}
%%  instead of
%%   \includegraphics[width=<desired width>]{<filename>.pdf}
%%
%% Images with a different path to the parent latex file can
%% be accessed with the `import' package (which may need to be
%% installed) using
%%   \usepackage{import}
%% in the preamble, and then including the image with
%%   \import{<path to file>}{<filename>.pdf_tex}
%% Alternatively, one can specify
%%   \graphicspath{{<path to file>/}}
%% 
%% For more information, please see info/svg-inkscape on CTAN:
%%   http://tug.ctan.org/tex-archive/info/svg-inkscape
%%
\begingroup%
  \makeatletter%
  \providecommand\color[2][]{%
    \errmessage{(Inkscape) Color is used for the text in Inkscape, but the package 'color.sty' is not loaded}%
    \renewcommand\color[2][]{}%
  }%
  \providecommand\transparent[1]{%
    \errmessage{(Inkscape) Transparency is used (non-zero) for the text in Inkscape, but the package 'transparent.sty' is not loaded}%
    \renewcommand\transparent[1]{}%
  }%
  \providecommand\rotatebox[2]{#2}%
  \newcommand*\fsize{\dimexpr\f@size pt\relax}%
  \newcommand*\lineheight[1]{\fontsize{\fsize}{#1\fsize}\selectfont}%
  \ifx\svgwidth\undefined%
    \setlength{\unitlength}{243.52982649bp}%
    \ifx\svgscale\undefined%
      \relax%
    \else%
      \setlength{\unitlength}{\unitlength * \real{\svgscale}}%
    \fi%
  \else%
    \setlength{\unitlength}{\svgwidth}%
  \fi%
  \global\let\svgwidth\undefined%
  \global\let\svgscale\undefined%
  \makeatother%
  \begin{picture}(1,0.6353901)%
    \lineheight{1}%
    \setlength\tabcolsep{0pt}%
    \put(0,0){\includegraphics[width=\unitlength,page=1]{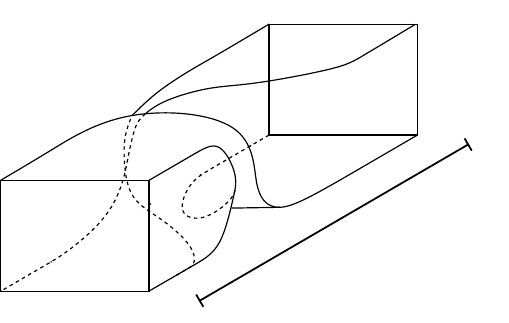}}%
    \put(0.41870809,0.00670763){\color[rgb]{0,0,0}\makebox(0,0)[lt]{\lineheight{1.25}\smash{\begin{tabular}[t]{l}$0$\end{tabular}}}}%
    \put(0.93807048,0.32993828){\color[rgb]{0,0,0}\makebox(0,0)[lt]{\lineheight{1.25}\smash{\begin{tabular}[t]{l}$1$\end{tabular}}}}%
    \put(0,0){\includegraphics[width=\unitlength,page=2]{twist_model_2.pdf}}%
    \put(0.01052636,0.00583912){\color[rgb]{0,0,0}\makebox(0,0)[lt]{\lineheight{1.25}\smash{\begin{tabular}[t]{l}$W\times \{0\}$\end{tabular}}}}%
    \put(0.54906456,0.60734022){\color[rgb]{0,0,0}\makebox(0,0)[lt]{\lineheight{1.25}\smash{\begin{tabular}[t]{l}$W\times \{1\}$\end{tabular}}}}%
  \end{picture}%
\endgroup%

  \end{tabular}
  \begin{tabular}{m{6.5cm} m{1cm} m{6cm}}
    \def\svgscale{0.8}%% Creator: Inkscape 1.4.2 (ebf0e940d0, 2025-05-08), www.inkscape.org
%% PDF/EPS/PS + LaTeX output extension by Johan Engelen, 2010
%% Accompanies image file 'twist_embedded_1.pdf' (pdf, eps, ps)
%%
%% To include the image in your LaTeX document, write
%%   \input{<filename>.pdf_tex}
%%  instead of
%%   \includegraphics{<filename>.pdf}
%% To scale the image, write
%%   \def\svgwidth{<desired width>}
%%   \input{<filename>.pdf_tex}
%%  instead of
%%   \includegraphics[width=<desired width>]{<filename>.pdf}
%%
%% Images with a different path to the parent latex file can
%% be accessed with the `import' package (which may need to be
%% installed) using
%%   \usepackage{import}
%% in the preamble, and then including the image with
%%   \import{<path to file>}{<filename>.pdf_tex}
%% Alternatively, one can specify
%%   \graphicspath{{<path to file>/}}
%% 
%% For more information, please see info/svg-inkscape on CTAN:
%%   http://tug.ctan.org/tex-archive/info/svg-inkscape
%%
\begingroup%
  \makeatletter%
  \providecommand\color[2][]{%
    \errmessage{(Inkscape) Color is used for the text in Inkscape, but the package 'color.sty' is not loaded}%
    \renewcommand\color[2][]{}%
  }%
  \providecommand\transparent[1]{%
    \errmessage{(Inkscape) Transparency is used (non-zero) for the text in Inkscape, but the package 'transparent.sty' is not loaded}%
    \renewcommand\transparent[1]{}%
  }%
  \providecommand\rotatebox[2]{#2}%
  \newcommand*\fsize{\dimexpr\f@size pt\relax}%
  \newcommand*\lineheight[1]{\fontsize{\fsize}{#1\fsize}\selectfont}%
  \ifx\svgwidth\undefined%
    \setlength{\unitlength}{232.55820447bp}%
    \ifx\svgscale\undefined%
      \relax%
    \else%
      \setlength{\unitlength}{\unitlength * \real{\svgscale}}%
    \fi%
  \else%
    \setlength{\unitlength}{\svgwidth}%
  \fi%
  \global\let\svgwidth\undefined%
  \global\let\svgscale\undefined%
  \makeatother%
  \begin{picture}(1,0.67448453)%
    \lineheight{1}%
    \setlength\tabcolsep{0pt}%
    \put(0,0){\includegraphics[width=\unitlength,page=1]{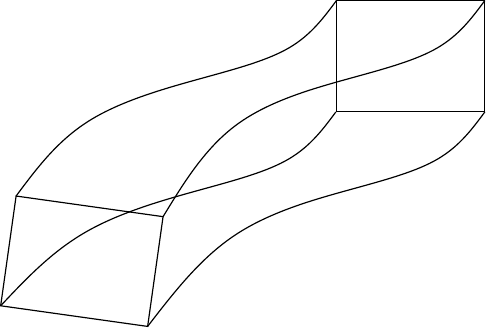}}%
    \put(0.27603846,0.51718687){\color[rgb]{0,0,0}\makebox(0,0)[lt]{\lineheight{1.25}\smash{\begin{tabular}[t]{l}$\Theta$\end{tabular}}}}%
    \put(0,0){\includegraphics[width=\unitlength,page=2]{twist_embedded_1.pdf}}%
  \end{picture}%
\endgroup%
 & \begin{tikzpicture}
      [decoration=snake,
   line around/.style={decoration={pre length=#1,post length=#1}}]
      \draw[->, decorate, line around = 3pt] (0,0)--(1.2,0);
    \end{tikzpicture} & \def\svgscale{0.8}%% Creator: Inkscape 1.4.2 (ebf0e940d0, 2025-05-08), www.inkscape.org
%% PDF/EPS/PS + LaTeX output extension by Johan Engelen, 2010
%% Accompanies image file 'twist_embedded_2.pdf' (pdf, eps, ps)
%%
%% To include the image in your LaTeX document, write
%%   \input{<filename>.pdf_tex}
%%  instead of
%%   \includegraphics{<filename>.pdf}
%% To scale the image, write
%%   \def\svgwidth{<desired width>}
%%   \input{<filename>.pdf_tex}
%%  instead of
%%   \includegraphics[width=<desired width>]{<filename>.pdf}
%%
%% Images with a different path to the parent latex file can
%% be accessed with the `import' package (which may need to be
%% installed) using
%%   \usepackage{import}
%% in the preamble, and then including the image with
%%   \import{<path to file>}{<filename>.pdf_tex}
%% Alternatively, one can specify
%%   \graphicspath{{<path to file>/}}
%% 
%% For more information, please see info/svg-inkscape on CTAN:
%%   http://tug.ctan.org/tex-archive/info/svg-inkscape
%%
\begingroup%
  \makeatletter%
  \providecommand\color[2][]{%
    \errmessage{(Inkscape) Color is used for the text in Inkscape, but the package 'color.sty' is not loaded}%
    \renewcommand\color[2][]{}%
  }%
  \providecommand\transparent[1]{%
    \errmessage{(Inkscape) Transparency is used (non-zero) for the text in Inkscape, but the package 'transparent.sty' is not loaded}%
    \renewcommand\transparent[1]{}%
  }%
  \providecommand\rotatebox[2]{#2}%
  \newcommand*\fsize{\dimexpr\f@size pt\relax}%
  \newcommand*\lineheight[1]{\fontsize{\fsize}{#1\fsize}\selectfont}%
  \ifx\svgwidth\undefined%
    \setlength{\unitlength}{232.55824772bp}%
    \ifx\svgscale\undefined%
      \relax%
    \else%
      \setlength{\unitlength}{\unitlength * \real{\svgscale}}%
    \fi%
  \else%
    \setlength{\unitlength}{\svgwidth}%
  \fi%
  \global\let\svgwidth\undefined%
  \global\let\svgscale\undefined%
  \makeatother%
  \begin{picture}(1,0.67448473)%
    \lineheight{1}%
    \setlength\tabcolsep{0pt}%
    \put(0,0){\includegraphics[width=\unitlength,page=1]{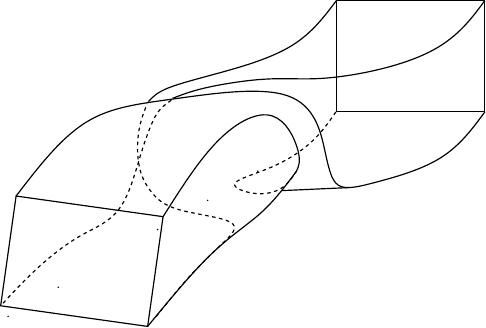}}%
    \put(0.20939063,0.52643815){\color[rgb]{0,0,0}\makebox(0,0)[lt]{\lineheight{1.25}\smash{\begin{tabular}[t]{l}$\Theta\circ\Omega$\end{tabular}}}}%
    \put(0,0){\includegraphics[width=\unitlength,page=2]{twist_embedded_2.pdf}}%
  \end{picture}%
\endgroup%
\\
  \end{tabular}
  \caption{Precomposing with the above map $\Omega$ defines the twisted tube $\Theta\circ \Omega$.}
  \label{twist addition}
\end{figure}
\subsection{Doubly specified tubes}
So far, we have focused on tubes $\Theta: X\times [0,1] \to Y$, where note that $\Theta$ has a canonical starting face $X\times \{0\}$, and ending face $X\times \{1\}$, and (gluable) twists are added twisting from the starting face in the direction of the ending face. We next define a type of tube $T$ in this paper that does not have canonical starting and ending face, where we can add twists that travel in both directions. We give a name to each side of $T$, and while there is no restriction to the names, we often make the arbitrary choice to name one side $\alpha$ and the other side $\beta$.
\begin{definition}\label{doubly param. tube}
  Let $T = \{(\Theta, \alpha),(\Lambda, \beta)\}$ be an unordered pair of tubes $W\times [0,1]\to Y$ such that $\Theta(x,t) = \Lambda(D(x),1-t)$ for some isometry $D: W\to W$ fixing $P\in W$. We call $T$, a \textit{doubly specified  tube} (abbreviated \textit{d.s.\ tube}). We denote the face $\Theta(X\times\{0\})$ as the \textit{$\alpha$-end}, and the face $\Lambda(X\times\{0\})$ as the \textit{$\beta$-end}.\par
  If $G=\Id$, then $\Lambda$ is just the reversed parametrization of $\Theta$. In this case, we say that $T$ is \textit{boundary-coherent}.\par
  A family $\{T_s\}_{0\leq s\leq 1}$, $T_s = \{(\Theta_s,\alpha_s),(\Lambda_s,\beta_s)\}$ of d.s.\ tubes is called a \textit{homotopy} if $\Theta_s$, $\Lambda_s$ themselves are homotopies relative $t=0,1$, and $\Theta_s(x,t) = \Lambda_s(D(x),1-t)$ for all $s$. (Note that $D$ is fixed.) The tubes $T_0$, $T_1$ are said to be \textit{homotopic}.
\end{definition}
\begin{example}\label{same face example}
  Consider the manifold with corners $Y:= J\times \Pi^{\kappa}$ living inside $J\times \mathbb{A}^{\kappa}$, and define $X:= J\times \Pi^{\kappa-1}$ We define a d.s.\ tube $T = \{(V_i,\alpha),(V'_i,\beta)\}$ with both ends on the boundary $\partial Y$ (see Figure \ref{Vi tube}). We first define $V_i: X \times [0,1]\to \mathbb{A}^{\kappa}$ as follows:
  \begin{figure}
    \centering
    \def\svgscale{0.6}%% Creator: Inkscape 1.4.2 (ebf0e940d0, 2025-05-08), www.inkscape.org
%% PDF/EPS/PS + LaTeX output extension by Johan Engelen, 2010
%% Accompanies image file 'tube_chain_2.pdf' (pdf, eps, ps)
%%
%% To include the image in your LaTeX document, write
%%   \input{<filename>.pdf_tex}
%%  instead of
%%   \includegraphics{<filename>.pdf}
%% To scale the image, write
%%   \def\svgwidth{<desired width>}
%%   \input{<filename>.pdf_tex}
%%  instead of
%%   \includegraphics[width=<desired width>]{<filename>.pdf}
%%
%% Images with a different path to the parent latex file can
%% be accessed with the `import' package (which may need to be
%% installed) using
%%   \usepackage{import}
%% in the preamble, and then including the image with
%%   \import{<path to file>}{<filename>.pdf_tex}
%% Alternatively, one can specify
%%   \graphicspath{{<path to file>/}}
%% 
%% For more information, please see info/svg-inkscape on CTAN:
%%   http://tug.ctan.org/tex-archive/info/svg-inkscape
%%
\begingroup%
  \makeatletter%
  \providecommand\color[2][]{%
    \errmessage{(Inkscape) Color is used for the text in Inkscape, but the package 'color.sty' is not loaded}%
    \renewcommand\color[2][]{}%
  }%
  \providecommand\transparent[1]{%
    \errmessage{(Inkscape) Transparency is used (non-zero) for the text in Inkscape, but the package 'transparent.sty' is not loaded}%
    \renewcommand\transparent[1]{}%
  }%
  \providecommand\rotatebox[2]{#2}%
  \newcommand*\fsize{\dimexpr\f@size pt\relax}%
  \newcommand*\lineheight[1]{\fontsize{\fsize}{#1\fsize}\selectfont}%
  \ifx\svgwidth\undefined%
    \setlength{\unitlength}{209.72634383bp}%
    \ifx\svgscale\undefined%
      \relax%
    \else%
      \setlength{\unitlength}{\unitlength * \real{\svgscale}}%
    \fi%
  \else%
    \setlength{\unitlength}{\svgwidth}%
  \fi%
  \global\let\svgwidth\undefined%
  \global\let\svgscale\undefined%
  \makeatother%
  \begin{picture}(1,0.73664744)%
    \lineheight{1}%
    \setlength\tabcolsep{0pt}%
    \put(0,0){\includegraphics[width=\unitlength,page=1]{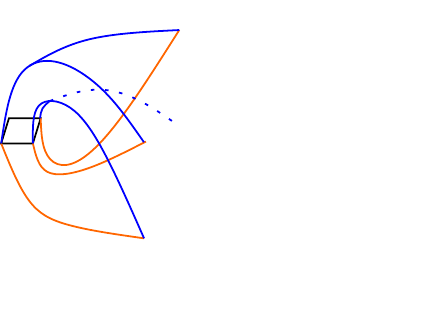}}%
    \put(0.43549968,0.37073037){\color[rgb]{0,0,0}\makebox(0,0)[lt]{\lineheight{1.25}\smash{\begin{tabular}[t]{l}$J\times G_b$\end{tabular}}}}%
    \put(0,0){\includegraphics[width=\unitlength,page=2]{tube_chain_2.pdf}}%
    \put(0.06662046,0.67013351){\color[rgb]{0,0,0}\makebox(0,0)[lt]{\lineheight{1.25}\smash{\begin{tabular}[t]{l}$T_b$\end{tabular}}}}%
  \end{picture}%
\endgroup%

    \caption{An illustration of the doubly specified tube $T_i = \{(V_i,\alpha),(V'_i,\beta)\}$. $V_i$ starts in the bottom (orange) portion and goes to the top (blue) portion.}
    \label{Vi tube}
  \end{figure}
  \begin{itemize}
  \item We define the starting position $(t=0)$ of $V_i$ by $V_i(s,a,0) = (s,f_i(a))$, where, recall, $f_i$ is the embedding from Definition \ref{f_i def}.
  \item The vectors $dV_i(\partial_t)$, $dV_i(\partial_J)$ should start $(t=0)$ pointing in the $-\partial_{\mathbf{n}}$, $\partial_J$ directions respectively, where $\mathbf{n}$ is the inward unit normal of $X$.
  \item As $t$ increases from $0$ to $1$, these vectors $dV_i(\partial_t)$, $dV_i(\partial_J)$ should rotate $180^\circ$ in the plane $\langle \mathbf{n},\partial_J\rangle$ so that at time $t=\frac{1}{2}$, they point in the directions $\partial_J$, $\mathbf{n}$ respectively.
  \item We define the ending position $(t=1)$ of $V_i$ by $V_i(s,a,0) = (-s,f_i(a))$.
  \end{itemize}
  We now define $V_i: X\times [0,1]\to J\times \mathbb{A}^{\kappa}$ by $V'_i(s,a,t) = V_i(-s,a,1-t)$. In effect, $V_i$ turns ``up and around'' back into $Y$, where $V'_i$ turns ``down and around.''
\end{example}

\begin{example}\label{X example}
  Let $X = J\times \Pi^{\kappa-1}$ be, and let $Y$ be as in the previous example. Consider the faces $J\times F_i$, $J\times F_j\subset Y$, $0\leq i<j\leq \kappa$. Let $E:=\conv((J\times F_i)\cup (J\times F_j))$ be the convex hull of the two faces (see Figure \ref{different convex tubes} for an illustration).
  \begin{figure}
    \centering
    \footnotesize
    \def\svgscale{0.55}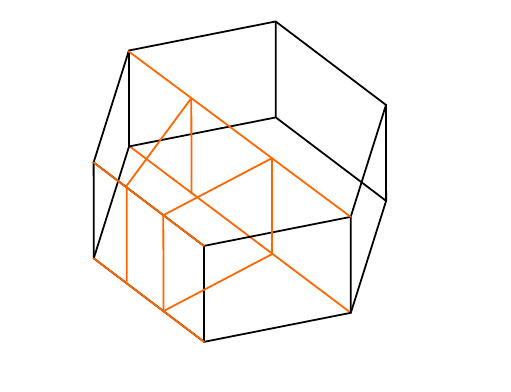\hspace{0.3cm}
    \def\svgscale{0.55}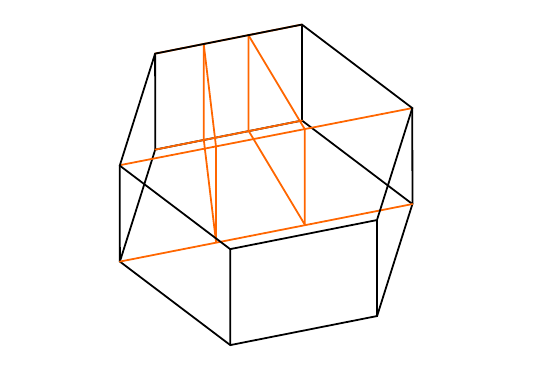\hspace{0.3cm}
    \def\svgscale{0.55}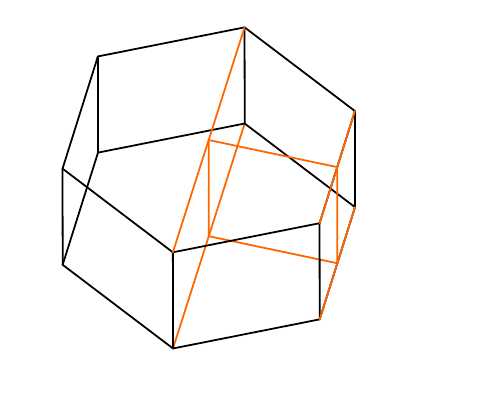
    \caption{The convex tubes $X\times [0,1]\to \Pi^2$, where $X = J\times \Pi^1$. Note that both the left and the middle tubes are boundary-coherent, while the right tube is boundary-incoherent.}
    \label{different convex tubes}
  \end{figure}
  $E$ is the image of a d.s.\ tube $T_{\{i,j\}}=\{(V_{ij},\alpha),(V_{ji},\beta)\}$ containing parametrizations $X\times [0,1]\to Y$. Our definition of $T_{\{i,j\}}$ is as follows:
  \begin{align*}
    V_{ij}(s,(a_1,\ldots,a_\kappa),t) &:= (s,(a_1,\ldots,a_i,\kappa+1,a_{i+1},\ldots,a_\kappa)) + t(\kappa+1-a_j)(e_{j+1}-e_{i+1})\\
    V_{ji}(s,(a_1,\ldots,a_\kappa),t) &:= (s,(a_1,\ldots,a_j,\kappa+1,a_{j+1},\ldots,a_\kappa)) + t(\kappa+1-a_{i+1})(e_{i+1}-e_{j+1}).
  \end{align*}
  We have intentionally defined $V_{ij},V_{ji}$ such that $V_{ij}(\cdot,\cdot,0)$ (resp.\ $V_{ji}(\cdot,\cdot,0)$) is the inclusion $J\times \Pi^{\kappa-1}\xhookrightarrow{\Id \times f_i}J \times \Pi^{\kappa}$ (resp.\ the inclusion $J\times \Pi^{\kappa-1}\xhookrightarrow{\Id \times f_j}J \times \Pi^{\kappa}$) (again, see Figure \ref{different convex tubes}). To check the behavior when $t=1$, we compute
  \begin{equation}\label{endpoint Vi Vj}
  \begin{aligned}
    V_{ij}(s,(a_1,\ldots,a_\kappa),1) &= (s,(a_1,\ldots,a_i,a_j,a_{i+1},\ldots a_{j-1},\kappa+1,a_{j+1},\ldots,a_\kappa)),\\
    V_{ji}(s,(a_1,\ldots,a_\kappa),1) &= (s,(a_1,\ldots,a_i,\kappa+1,a_{i+2},\ldots,a_j,a_{i+1},a_{j+1},\ldots,a_\kappa)),
  \end{aligned}
  \end{equation}
  so it is indeed the case that $V_{ij}(J\times \Pi^{\kappa-1}\times \{t\})$ goes from $J\times F_i$ to $J\times F_j$ and $V_{ji}(J\times\Pi^{\kappa-1}\times \{t\})$ goes from $J\times F_j$ to $J\times F_i$. However, note from (\ref{endpoint Vi Vj}) that $T_{\{i,j\}}$ is only boundary-coherent when $j = i+1$. Indeed, we have the relation $V_{i,j}(s,x,1) = V_{j,i}(s,P_{(i+1\ \ldots\ j)}(x),0)$, where $P_{(i+1\ \ldots\ j)}$ permutes the coordinates of $\Pi^{\kappa-1}$ as
  \[
    P_{(i+1\ \ldots\ j)}(a_1,\ldots,a_\kappa) = (a_1,\ldots,a_i,a_j,a_{i+1},\ldots,\widehat{a_j},\ldots,a_\kappa).
  \]
\end{example}
\begin{definition}[Adding twists]
  Suppose $T = \{(\Theta, \alpha),(\Lambda, \beta)\}$ is a d.s.\ tube with parametrizations $\Theta,\Lambda:W\times [0,1]\to Y$, and the relation $\Theta(x,t) = \Lambda(Dx,1-t)$. Let $\varphi$ be a gluable twist in $T_PW$, with $\varphi_1:T_PW\to T_PW$ induced by an isometry $\Phi:W\to W$. We define $T\diamond(\varphi,\alpha)$ to be the d.s. tube $T' = \{(\Theta',\alpha),(\Lambda',\beta)\}$, where
  \[
    \Theta' = \Theta \diamond \varphi,\qquad \Lambda'(x,t) = \Theta'(\Phi^{-1}D^{-1}(x),1-t)
  \]
  It is immediate from the property that $\Theta'(x,t) = \Lambda'(D \circ \Phi(x),1-t)$, so we indeed have that $T'$ is a d.s.\ tube (since $D \circ \Phi$ is an isometry). Furthermore, we have $\Lambda'(x,0) = \Lambda(x,0)$.
\end{definition}
Note how the names of the tube ends stay the same after adding twists. This makes it easier to continually add twists without having to keep track of the names of the ends.
\begin{notation}[Adding full twists]
  In view of Remark \ref{full twist fact}, if $\dim W \geq 4$, we can add a full twist $f$ along $T$ in either direction and we should get the same result (up to homotopy). So similar to Notation \ref{full twist notation}, we denote $T$ plus a $\diamond$ composition of $k$ full twists (in either direction) as $T+k$.
\end{notation}
\begin{example}
  Let $T_{\{i,j\}}=\{(V_{ij},i),(V_{ji},j)\}$ be as in Example \ref{X example}, and consider the tube $T:= T_{\{i,j\}} \diamond (\varphi_{j-1}\diamond\ldots\diamond \varphi_{i+1},i)$ (see Figures \ref{1 twist fig}, \ref{2 twists fig} for examples).
  \begin{figure}
    \centering
    \footnotesize
    \begin{tabular}{m{5cm} m{3.5cm} m{5cm}}
      \def\svgscale{0.6}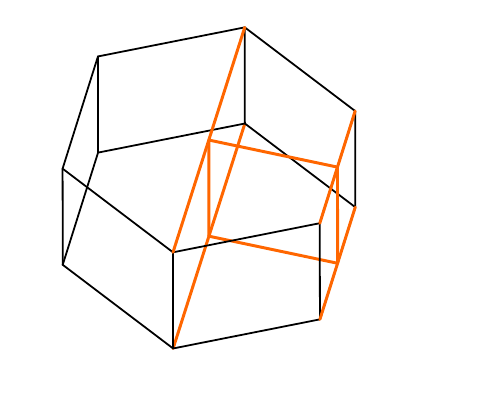 &
      \makecell{\begin{tikzpicture}
      [decoration=snake,
   line around/.style={decoration={pre length=#1,post length=#1}}]
      \draw[->, decorate, line around = 1pt] (0,0)--(1.5,0);
    \end{tikzpicture}\\
      \def\svgscale{0.6}%% Creator: Inkscape 1.4.2 (ebf0e940d0, 2025-05-08), www.inkscape.org
%% PDF/EPS/PS + LaTeX output extension by Johan Engelen, 2010
%% Accompanies image file 'adding_1_twist.pdf' (pdf, eps, ps)
%%
%% To include the image in your LaTeX document, write
%%   \input{<filename>.pdf_tex}
%%  instead of
%%   \includegraphics{<filename>.pdf}
%% To scale the image, write
%%   \def\svgwidth{<desired width>}
%%   \input{<filename>.pdf_tex}
%%  instead of
%%   \includegraphics[width=<desired width>]{<filename>.pdf}
%%
%% Images with a different path to the parent latex file can
%% be accessed with the `import' package (which may need to be
%% installed) using
%%   \usepackage{import}
%% in the preamble, and then including the image with
%%   \import{<path to file>}{<filename>.pdf_tex}
%% Alternatively, one can specify
%%   \graphicspath{{<path to file>/}}
%% 
%% For more information, please see info/svg-inkscape on CTAN:
%%   http://tug.ctan.org/tex-archive/info/svg-inkscape
%%
\begingroup%
  \makeatletter%
  \providecommand\color[2][]{%
    \errmessage{(Inkscape) Color is used for the text in Inkscape, but the package 'color.sty' is not loaded}%
    \renewcommand\color[2][]{}%
  }%
  \providecommand\transparent[1]{%
    \errmessage{(Inkscape) Transparency is used (non-zero) for the text in Inkscape, but the package 'transparent.sty' is not loaded}%
    \renewcommand\transparent[1]{}%
  }%
  \providecommand\rotatebox[2]{#2}%
  \newcommand*\fsize{\dimexpr\f@size pt\relax}%
  \newcommand*\lineheight[1]{\fontsize{\fsize}{#1\fsize}\selectfont}%
  \ifx\svgwidth\undefined%
    \setlength{\unitlength}{124.81198649bp}%
    \ifx\svgscale\undefined%
      \relax%
    \else%
      \setlength{\unitlength}{\unitlength * \real{\svgscale}}%
    \fi%
  \else%
    \setlength{\unitlength}{\svgwidth}%
  \fi%
  \global\let\svgwidth\undefined%
  \global\let\svgscale\undefined%
  \makeatother%
  \begin{picture}(1,0.70372131)%
    \lineheight{1}%
    \setlength\tabcolsep{0pt}%
    \put(0,0){\includegraphics[width=\unitlength,page=1]{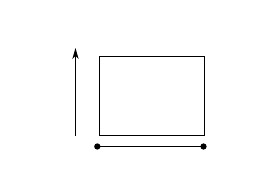}}%
    \put(0.30756322,0.00377511){\color[rgb]{0,0,0}\makebox(0,0)[lt]{\lineheight{1.25}\smash{\begin{tabular}[t]{l}$21$\end{tabular}}}}%
    \put(0.71319583,0.00836218){\color[rgb]{0,0,0}\makebox(0,0)[lt]{\lineheight{1.25}\smash{\begin{tabular}[t]{l}$12$\end{tabular}}}}%
    \put(0.15530505,0.40654592){\color[rgb]{0,0,0}\makebox(0,0)[lt]{\lineheight{1.25}\smash{\begin{tabular}[t]{l}$J$\end{tabular}}}}%
    \put(0,0){\includegraphics[width=\unitlength,page=2]{adding_1_twist.pdf}}%
    \put(0.44398539,0.21770293){\color[rgb]{0,0,0}\makebox(0,0)[lt]{\lineheight{1.25}\smash{\begin{tabular}[t]{l}$180^\circ$\end{tabular}}}}%
    \put(-0.00331676,0.64891905){\color[rgb]{0,0,0}\makebox(0,0)[lt]{\lineheight{1.25}\smash{\begin{tabular}[t]{l}add twist $(\varphi_1,\alpha)$ \end{tabular}}}}%
  \end{picture}%
\endgroup%
} &
                                                                                \def\svgscale{0.6}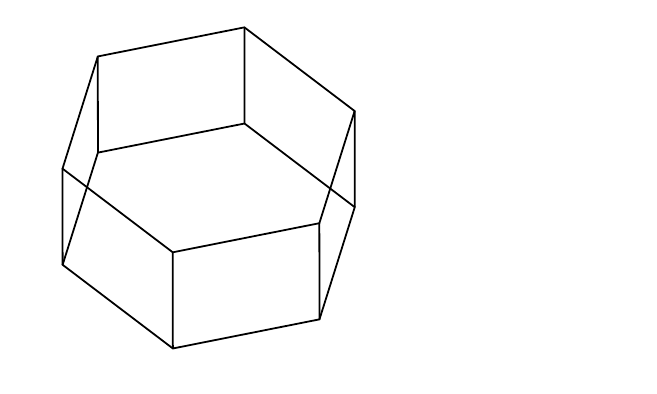
    \end{tabular}
  \caption{We illustrate $T':= T_{\{0,2\}}\diamond (\varphi_1,0)$, where $T=\{(V_{0,2},0),(V_{2,0},2)\}$. Here, we imagine $\kappa = 2$, and we only draw the $J\times\mathcal{M}_{\mathscr{C}_C(n)}(v,\overline{0})$ factor. $T'$ is not boundary-coherent, since the two sides differ by a flip in the $J$-factor}
  \label{1 twist fig}
\end{figure}
\begin{figure}
  \centering
  \def\svgscale{0.80}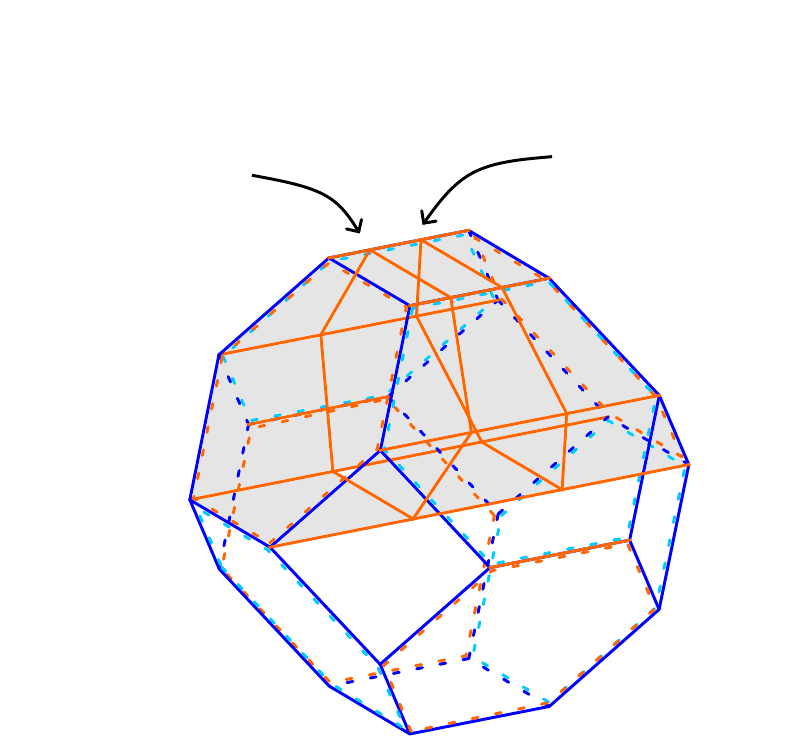
  \caption{An example of the tube $T\diamond (\varphi_2,0)\diamond(\varphi_1,0)$, where $T =\{(V_0,\alpha),(V_3,\beta)\}$ Here, we imagine $\kappa = 3$, and we only draw the $\mathcal{M}_{\mathscr{C}_C(n)}(v,\overline{0})$ factor (so not the $J$ factor). This tube $T$ happens to be boundary-coherent.}
  \label{2 twists fig}
\end{figure}
Observe
\begin{equation}\label{possibly incoherent}
  \begin{aligned}
    &(V_{ij}\diamond\varphi_{j-1}\diamond\ldots\diamond\varphi_{i+1})(s,(a_1,\ldots,a_\kappa),1)\\
    &=V_{ij}(\varphi_{j-1}(1)\ldots\varphi_{i+1}(1)(s,(a_1,\ldots,a_\kappa),1)\\
    &= V_{ij}\left((-1)^{j-i-1}s,P_{(j\ \ldots\ i+1)}(a_1,\ldots,a_\kappa),1\right)\\
    &= \left((-1)^{j-i-1}s,(a_1,\ldots a_{j},\kappa+1,a_{j+1},\ldots,a_\kappa)\right)\\
    &=V_{ji}((-1)^{j-i-1}s,(a_1,\ldots,a_\kappa),0),
  \end{aligned}
\end{equation}

  implying that $T\diamond(\varphi_{j-1}\diamond\ldots\diamond\varphi_{i+1},i)$ is boundary-coherent if and only if $j-i \equiv 1\mod 2$.
\end{example}
Now suppose we want to add a twist $\varphi$ to $T$ in the $\alpha$-direction and a twist $\varphi'$ in the $\beta$-direction. The following proposition shows that the order does not matter.
\begin{proposition}
  Let $T=\{(\Theta,\alpha),(\Lambda,\beta)\}$ be a d.s.\ tube as in Definition \ref{doubly param. tube}, and let $\varphi,\varphi'$ be twists.
  \[
    T \diamond (\varphi,\alpha)\diamond (\varphi',\beta) \cong T \diamond(\varphi',\beta)\diamond(\varphi,\alpha).
  \]
\end{proposition}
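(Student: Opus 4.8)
The plan is to prove the stronger statement that, after replacing $\varphi$ and $\varphi'$ by conveniently reparametrized representatives, the two iterated d.s.\ tubes are literally equal. Recall first that $T\diamond(\varphi,\alpha)$ is well-defined up to homotopy independently of the auxiliary function $\lambda$ entering the construction of $\Omega$, and that a homotopy of the gluable twist $\varphi$ rel $\{0,1\}$ keeping the endpoint $\varphi_1$ fixed induces a homotopy of $T\diamond(\varphi,\alpha)$ (carry the homotopy through the formula for $\Omega$, noting that the isometry $\varphi_1$ and hence the new gluing data do not change); the analogous facts hold for the iterated operations. Using this I would reparametrize $\varphi$ to a homotopic gluable twist that does all of its turning on $[0,1/2]$, so that $\varphi_t = \varphi_1$ for $t\ge 1/2$, reparametrize $\varphi'$ likewise so that $\varphi'_s = \varphi'_1$ for $s\ge 1/2$, and choose the corresponding $\lambda,\lambda'$ to equal $1$ outside these subintervals. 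Write $\Phi$ and $\Psi$ for the isometries of $W$ inducing $\varphi_1$ and $\varphi'_1$.

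The second step is to unwind the two iterated constructions at the level of the $\Theta$-parametrization. Beginning from $\Theta(x,t)=\Lambda(Dx,1-t)$, adding $\varphi$ in the $\alpha$-direction replaces $\Theta$ by $\Theta\diamond\varphi$ and updates the gluing isometry by right multiplication, $D\rightsquigarrow D\Phi$; adding $\varphi'$ in the $\beta$-direction --- which by definition interchanges the two members of the unordered pair and twists $\Lambda$ --- replaces $\Lambda$ by $\Lambda\diamond\varphi'$ and updates the gluing isometry by left multiplication, $D\rightsquigarrow \Psi^{-1}D$. These two updates commute, so either order yields the gluing isometry $\Psi^{-1}D\Phi$. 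Unwinding the definition of $\diamond$ through the relation and using that $D$ fixes $P$, one computes the $\Theta$-parametrizations
\[
  T\diamond(\varphi,\alpha)\diamond(\varphi',\beta):\quad (x,t)\longmapsto \Theta\!\left(P+\lambda(t)\lambda'(1-t)\,\bigl(\varphi_t\,\Phi^{-1}D^{-1}\,\varphi'_{1-t}\,\Psi^{-1}D\Phi\bigr)(x-P),\,t\right),
\]
\[
  T\diamond(\varphi',\beta)\diamond(\varphi,\alpha):\quad (x,t)\longmapsto \Theta\!\left(P+\lambda(t)\lambda'(1-t)\,\bigl(D^{-1}\,\varphi'_{1-t}\,\Psi^{-1}D\,\varphi_t\bigr)(x-P),\,t\right).
\]
Now the disjoint-support choice pays off. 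When $t\le 1/2$ one has $\varphi'_{1-t}=\Psi$, so in both formulas the $D$'s and $\Psi$'s cancel and the rotation factor collapses to $\varphi_t$; when $t\ge 1/2$ one has $\varphi_t=\Phi$, so both factors collapse to $D^{-1}\varphi'_{1-t}\Psi^{-1}D\Phi$. Hence the two $\Theta$-parametrizations coincide pointwise, and since the gluing isometries also agree the corresponding $\Lambda$-parametrizations agree as well; the two d.s.\ tubes are therefore equal, which in particular gives the asserted homotopy.

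I expect the only real difficulty to be the bookkeeping in the second step: keeping track of which isometry relates $\Theta$ to $\Lambda$ after each twist has been added and on which side the update acts, the delicate point being that ``adding a twist in the $\beta$-direction'' is defined only after the unordered pair $T$ is reordered. There is no genuine topology once the reparametrization is in place --- the geometric content, that a twist spun in from the $\alpha$-end and a twist spun in from the $\beta$-end occupy disjoint collars of the tube and hence cannot interact, is precisely what concentrating the turning on $[0,1/2]$ and on $[1/2,1]$ makes explicit. The one preliminary point that deserves a careful sentence is the reduction itself: that $T\mapsto T\diamond(\varphi,\alpha)$ and its iterate factor through homotopy classes of the input twists rel endpoints, so that replacing $\varphi,\varphi'$ by their reparametrized representatives does not change the homotopy types being compared.
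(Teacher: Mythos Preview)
Your proof is correct and is precisely the ``direct computation'' the paper alludes to, carried out in full; the disjoint-support reparametrization is a clean way to make the two composites literally equal rather than merely homotopic, but the approach is the same.
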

\begin{proof}
  Follows from a direct computation.
\end{proof}
We now consider ``sliding'' twists from one end of $T$ to another, whether it be from the $\alpha$-end to the $\beta$-end, or the other way (see Figure \ref{twist direction change} for an illustration).
\begin{figure}
  \centering
  \def\svgscale{1.0}%% Creator: Inkscape 1.4.2 (ebf0e940d0, 2025-05-08), www.inkscape.org
%% PDF/EPS/PS + LaTeX output extension by Johan Engelen, 2010
%% Accompanies image file 'same_tube_different_twists.pdf' (pdf, eps, ps)
%%
%% To include the image in your LaTeX document, write
%%   \input{<filename>.pdf_tex}
%%  instead of
%%   \includegraphics{<filename>.pdf}
%% To scale the image, write
%%   \def\svgwidth{<desired width>}
%%   \input{<filename>.pdf_tex}
%%  instead of
%%   \includegraphics[width=<desired width>]{<filename>.pdf}
%%
%% Images with a different path to the parent latex file can
%% be accessed with the `import' package (which may need to be
%% installed) using
%%   \usepackage{import}
%% in the preamble, and then including the image with
%%   \import{<path to file>}{<filename>.pdf_tex}
%% Alternatively, one can specify
%%   \graphicspath{{<path to file>/}}
%% 
%% For more information, please see info/svg-inkscape on CTAN:
%%   http://tug.ctan.org/tex-archive/info/svg-inkscape
%%
\begingroup%
  \makeatletter%
  \providecommand\color[2][]{%
    \errmessage{(Inkscape) Color is used for the text in Inkscape, but the package 'color.sty' is not loaded}%
    \renewcommand\color[2][]{}%
  }%
  \providecommand\transparent[1]{%
    \errmessage{(Inkscape) Transparency is used (non-zero) for the text in Inkscape, but the package 'transparent.sty' is not loaded}%
    \renewcommand\transparent[1]{}%
  }%
  \providecommand\rotatebox[2]{#2}%
  \newcommand*\fsize{\dimexpr\f@size pt\relax}%
  \newcommand*\lineheight[1]{\fontsize{\fsize}{#1\fsize}\selectfont}%
  \ifx\svgwidth\undefined%
    \setlength{\unitlength}{431.40287144bp}%
    \ifx\svgscale\undefined%
      \relax%
    \else%
      \setlength{\unitlength}{\unitlength * \real{\svgscale}}%
    \fi%
  \else%
    \setlength{\unitlength}{\svgwidth}%
  \fi%
  \global\let\svgwidth\undefined%
  \global\let\svgscale\undefined%
  \makeatother%
  \begin{picture}(1,0.30559451)%
    \lineheight{1}%
    \setlength\tabcolsep{0pt}%
    \put(0,0){\includegraphics[width=\unitlength,page=1]{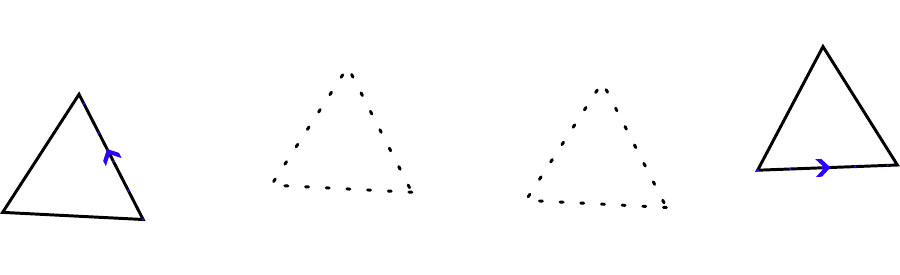}}%
    \put(0.06400776,0.12668884){\color[rgb]{0,0,0}\makebox(0,0)[lt]{\lineheight{1.25}\smash{\begin{tabular}[t]{l}$\alpha$\end{tabular}}}}%
    \put(0.90363631,0.1731453){\color[rgb]{0,0,0}\makebox(0,0)[lt]{\lineheight{1.25}\smash{\begin{tabular}[t]{l}$\beta$\end{tabular}}}}%
    \put(0.30472462,0.00538147){\color[rgb]{0,0,0}\makebox(0,0)[lt]{\lineheight{1.25}\smash{\begin{tabular}[t]{l}$(\varphi\diamond \varphi',\alpha)$\end{tabular}}}}%
    \put(0.82436877,0.01247961){\color[rgb]{0,0,0}\makebox(0,0)[lt]{\lineheight{1.25}\smash{\begin{tabular}[t]{l}$(\psi',\beta)$\end{tabular}}}}%
    \put(0,0){\includegraphics[width=\unitlength,page=2]{same_tube_different_twists.pdf}}%
    \put(0.21090371,0.28115704){\color[rgb]{0,0,0}\makebox(0,0)[lt]{\lineheight{1.25}\smash{\begin{tabular}[t]{l}$(\varphi',\alpha)$\end{tabular}}}}%
    \put(0.59817553,0.28936476){\color[rgb]{0,0,0}\makebox(0,0)[lt]{\lineheight{1.25}\smash{\begin{tabular}[t]{l}$(\psi\diamond\psi',\beta)$\end{tabular}}}}%
    \put(0,0){\includegraphics[width=\unitlength,page=3]{same_tube_different_twists.pdf}}%
  \end{picture}%
\endgroup%
\vspace{0.3cm}
  \caption{An illustration of how the same tube $T'$ can be constructed by adding a twist to $T$ in the $\alpha$-direction vs.\ adding a twist in the $\beta$-direction.}
  \label{twist direction change}
\end{figure}

\begin{proposition}\label{sliding}
  Let $T = \{(\Theta,\alpha),(\Lambda,\beta)\}$, with relation $\Theta(x,t) = \Lambda(Dx,1-t)$. We have
  \[
    T\diamond(\varphi\diamond\varphi',\alpha) \diamond (\psi',\beta) \cong T\diamond(\varphi',\alpha) \diamond (\psi\diamond \psi',\beta),
  \]
  where $\psi$ is the twist $\psi(t) = (dD)\varphi(1-t)\varphi(1)^{-1}(dD)^{-1}$. (Here, $dD$ is the tangent map $dD: T_PW\to T_PW$.)
\end{proposition}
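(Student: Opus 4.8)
The plan is to reduce to the case $\varphi'=\psi'=c_{\Id}$, i.e.\ to the ``basic sliding'' statement $T\diamond(\varphi,\alpha)\cong T\diamond(\psi,\beta)$, and then bootstrap formally from there. Before anything else I would check that $\psi$ is a genuine gluable twist: since $\varphi$ is constant near $0$ and near $1$, one reads off $\psi(0)=(dD)\varphi(1)\varphi(1)^{-1}(dD)^{-1}=\Id$, that $\psi$ is constant near the endpoints, and that $\psi(1)=(dD)\varphi(1)^{-1}(dD)^{-1}$ is the symmetry of $W$ induced by the isometry $D\circ\Phi^{-1}\circ D^{-1}$, where $\Phi$ induces $\varphi(1)$.

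For the reduction, I would use the d.s.-tube version of Lemma \ref{associativity} (and its mirror for twists added along the $\beta$-end) to rewrite the two sides as $\big(T\diamond(\varphi,\alpha)\big)\diamond(\varphi',\alpha)\diamond(\psi',\beta)$ and $T\diamond(\varphi',\alpha)\diamond(\psi,\beta)\diamond(\psi',\beta)$ respectively, then invoke the Proposition just above (that an $\alpha$-twist and a $\beta$-twist added to a d.s.\ tube commute up to homotopy) to slide the factor $(\varphi',\alpha)$ to the far right on both sides. This leaves exactly the statement $T\diamond(\varphi,\alpha)\diamond(\psi',\beta)\cong T\diamond(\psi,\beta)\diamond(\psi',\beta)$, which follows from the basic sliding statement by applying the (continuous, formula-defined) operation ``$\diamond(\psi',\beta)$'' to every member of the homotopy. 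This last move is legitimate precisely because, as the computation below shows, $T\diamond(\varphi,\alpha)$ and $T\diamond(\psi,\beta)$ have the \emph{same} relating isometry $D\circ\Phi$, so the homotopy between them is a homotopy of d.s.\ tubes in the sense of Definition \ref{doubly param. tube} --- and along a homotopy of d.s.\ tubes the relating isometry is required to stay fixed.

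For the basic sliding statement itself I would simply unwind both d.s.\ tubes. Writing a point of the convex $W$ as $P+w$, the $\beta$-end parametrization of $T\diamond(\varphi,\alpha)$ is, by definition, $\Lambda'(P+w,t)=(\Theta\diamond\varphi)\big(\Phi^{-1}D^{-1}(P+w),\,1-t\big)$; expanding $\Theta\diamond\varphi=\Theta\circ\Omega$ and using the defining relation in the form $\Lambda(P+u,s)=\Theta\big(P+(dD)^{-1}u,\,1-s\big)$, this becomes $\Theta\big(P+\lambda(1-t)\,\varphi(1-t)(d\Phi)^{-1}(dD)^{-1}w,\,1-t\big)$, where $\lambda$ is the scaling function used in forming $\Theta\diamond\varphi$. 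On the other hand, the $\beta$-end parametrization of $T\diamond(\psi,\beta)$ is $\Lambda\diamond\psi$, and the identical expansion, together with the identity $(dD)^{-1}\psi(t)=\varphi(1-t)(d\Phi)^{-1}(dD)^{-1}$ --- which is precisely why $\psi$ is defined the way it is --- turns it into $\Theta\big(P+\mu(t)\,\varphi(1-t)(d\Phi)^{-1}(dD)^{-1}w,\,1-t\big)$. The two agree except for the scaling functions $\lambda(1-\cdot)$ versus $\mu$, so they define homotopic tubes rel $t\in\{0,1\}$. Finally one checks straight from the definition of adding twists that both d.s.\ tubes carry relating isometry $D\circ\Phi$: for $T\diamond(\psi,\beta)$ this comes out as $\Psi^{-1}D$ with $\Psi$ inducing $\psi(1)$, and since $\Psi=D\circ\Phi^{-1}\circ D^{-1}$ one gets $\Psi^{-1}D=D\circ\Phi$. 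Promoting the homotopy of $\beta$-end parametrizations, via this fixed relating isometry, to a homotopy of d.s.\ tubes completes the basic case.

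The main obstacle is bookkeeping rather than any real idea: one must keep straight that the relation $\Theta(x,t)=\Lambda(Dx,1-t)$ makes the relating isometry seen from the $\Lambda$-side equal to $D^{-1}$, track the conjugation by $dD$ forced when a twist is pushed through the reparametrization $D$ and the time-reversal $t\mapsto 1-t$ coming from $\Lambda$ running backwards relative to $\Theta$, and --- the one place where care is essential --- verify that the two d.s.\ tubes being compared really do have equal relating isometry, since Definition \ref{doubly param. tube}'s notion of homotopy holds that isometry fixed. Once the definitions are unwound there is no freedom left: the conjugated, time-reversed twist $\psi(t)=(dD)\varphi(1-t)\varphi(1)^{-1}(dD)^{-1}$ is forced.
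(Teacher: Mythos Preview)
Your proposal is correct and supplies exactly the details the paper omits: the paper's proof is literally ``Follows from another direct computation,'' and your unwinding of the $\beta$-parametrizations---showing they agree up to the choice of scaling function once one uses the identity $(dD)^{-1}\psi(t)=\varphi(1-t)(d\Phi)^{-1}(dD)^{-1}$---is the computation in question. Your extra structuring (reducing to the basic case $\varphi'=\psi'=c_{\Id}$ via associativity and $\alpha$/$\beta$-commutativity, then checking that the relating isometry is $D\Phi$ on both sides so the homotopy can be promoted to one of d.s.\ tubes) is a clean way to organize it, and the care you take with the fixed relating isometry is exactly the point that makes the ``apply $\diamond(\psi',\beta)$ to the homotopy'' step legitimate.
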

\begin{proof}
  Follows from another direct computation.
\end{proof}
\begin{example}\label{Vij twists}
  Let $T_{\{i,j\}}=\{(V_{ij},i),(V_{ji},j)\}$ be as in Example \ref{X example}, with relation $V_{ij}(x,t) = V_{ji}(Dx,1-t)$, and consider again $T := T_{\{i,j\}}\diamond(\varphi_{j-1}\diamond\ldots\diamond\varphi_{i+1},i)$. After a repeated application of Proposition \ref{sliding}, we have the identity
  \begin{equation}\label{adding varphis}
    T = T_{\{i,j\}}\diamond(\varphi_{j-1}\diamond\ldots\diamond\varphi_{i+1},i) = T_{\{i,j\}}\diamond(\psi_{i+1}\diamond\ldots\diamond \psi_{j-1},j),
  \end{equation}
  where
  \begin{align*}
    \psi_{(n)}(t)
    &= (dD)\varphi_{n}(1-t)\varphi_{n}(1)^{-1}(dD)^{-1}\\
    &=P_{(i+1\ldots j)}\left(\varphi_{n}(t)^{-1}\right)\left(P_{(i+1\ldots j)}\right)^{-1}=
    \begin{cases*}
      \varphi_{j, i+1}^{-1}(t) & if $n=j-1$\\
      \varphi_{(n+1)}^{-1}(t) & otherwise.
    \end{cases*}    
  \end{align*}
  By a repeated application of Lemma \ref{commutators}, the right hand side of (\ref{adding varphis}) simplifies as
  \[
    T\diamond (\varphi_{(i+2)}^{-1}\diamond \ldots \diamond \varphi_{(j-1)}^{-1}\diamond\varphi_{j, i+1},j) \cong \ldots \cong T\diamond(\varphi_{(i+1)}\diamond \ldots\diamond \varphi_{(j-1)},j)
  \]
  by ``sliding'' the $\varphi_{j, i+1}$ to the other end of the composition.
  
\end{example}
We are allowed to compose twists with d.s.\ tubes, with Lemma \ref{associativity} showing that this composition rule is associative up to homotopy. Therefore, we can imagine that tubes are analogous with points in affine space and twists are analogous with vectors. With this analogy in mind, we define the difference $T-T'$ of tubes in terms of twists.
\begin{definition}\label{tube difference}
  Let $T= \{(\Theta,\alpha),(\Lambda,\beta)\}$, $T' = \{(\Theta',\alpha),(\Lambda',\beta)\}$ both be doubly specified tubes, with
  \begin{equation}\label{tube match up}
    \Theta(x,0)=\Theta'(x,0),\qquad \Lambda(x,0) = \Lambda'(x,0).
  \end{equation}
  We define $T-T'$ as the set $T-T' := \{\{(f,\alpha), (g,\beta)\}: T' \cong T \diamond (f,\alpha) \diamond (g,\beta)\}$. If $T' \cong T \diamond(f,\alpha) \diamond (g,\beta)$, then we write $T' - T \cong (f,\alpha)\diamond (g,\beta)$. Furthermore, we can compose sets of the form $T'-T''$, $T-T'$ as long as the end labels for $T$, $T'$, $T''$ all match up.
\end{definition}
Note how the criterion (\ref{tube match up}) that the ends of $T$, $T'$ match up is necessary, but not sufficient for $T-T'$ to be nonempty. For example, the cores of $T,T'$ may not even be homotopic, in which case there is no way to express $T'$ as a twisting of $T$. Now if $T,T'\subset Y$ are smooth d.s. tubes, where $Y$ is a smooth manifold, and the cores $T,T'$ are homotopic, then $T-T'$ is necessarily nonempty.
\begin{lemma}
  Whe have the relation $T-T'' = (T'-T'')\diamond (T-T')$ In particular, if $T'-T''\cong (f',\alpha)\diamond (g',\beta)$ and $T-T'\cong (f,\alpha)\diamond (g,\beta)$, we have $T-T''\cong (f'\diamond f,\alpha)\diamond (g'\diamond g,\beta)$.
\end{lemma}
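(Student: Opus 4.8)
The plan is to unwind Definition \ref{tube difference} and turn the asserted relation into a bookkeeping statement about $\diamond$-compositions of twists, using two ingredients already in hand: the doubly specified analogue of the associativity Lemma \ref{associativity}, and the proposition saying that adding a twist in the $\alpha$-direction commutes up to homotopy with adding one in the $\beta$-direction. Two things must be verified: the ``in particular'' clause, which is the inclusion $(T'-T'')\diamond(T-T')\subseteq T-T''$, and the reverse inclusion, which promotes this to the stated equality of sets.

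First I would prove the ``in particular'' clause, which is the computational core. Suppose $(f',\alpha)\diamond(g',\beta)$ represents $T'-T''$ and $(f,\alpha)\diamond(g,\beta)$ represents $T-T'$; unwinding the notation, this says $T'\cong T''\diamond(f',\alpha)\diamond(g',\beta)$ and $T\cong T'\diamond(f,\alpha)\diamond(g,\beta)$, so
\[
  T\cong T''\diamond(f',\alpha)\diamond(g',\beta)\diamond(f,\alpha)\diamond(g,\beta).
\]
Now I would move $(g',\beta)$ past $(f,\alpha)$ using the commutation proposition, producing $T''\diamond(f',\alpha)\diamond(f,\alpha)\diamond(g',\beta)\diamond(g,\beta)$, and then collapse the two pairs of same-side twists with the d.s.\ form of Lemma \ref{associativity} --- which one obtains by applying Lemma \ref{associativity} to the $\alpha$-parametrization and checking that the isometries $D,\Phi$ appearing in the definition of adding a twist to a d.s.\ tube compose so that the induced $\beta$-parametrization is exactly the one coming from the single twist $f'\diamond f$. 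This gives $T\cong T''\diamond(f'\diamond f,\alpha)\diamond(g'\diamond g,\beta)$, i.e.\ $(f'\diamond f,\alpha)\diamond(g'\diamond g,\beta)$ represents $T-T''$, which is the ``in particular'' assertion and the inclusion $\supseteq$.

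For the reverse inclusion I would observe that adding a twist to a d.s.\ tube is invertible up to homotopy: for a twist $f$, the path $\bar f(t):=f(1)^{-1}f(1-t)$ is again a twist, and by Lemma \ref{twist homotopy} both $f\diamond\bar f$ and $\bar f\diamond f$ are homotopic rel endpoints to nullhomotopic loops (each being, after a left translation, a path concatenated with its reverse), so $(\,\cdot\,)\diamond(f,\alpha)\diamond(\bar f,\alpha)$ is the identity operation up to homotopy, as is the same with the factors swapped, and likewise in the $\beta$-direction. Then, given a representative $(h,\alpha)\diamond(k,\beta)$ of $T-T''$ and --- invoking that the cores are all homotopic, so that $T-T'$ is nonempty --- a representative $(f,\alpha)\diamond(g,\beta)$ of $T-T'$, from $T''\diamond(h,\alpha)\diamond(k,\beta)\cong T\cong T'\diamond(f,\alpha)\diamond(g,\beta)$ one applies $(\bar f,\alpha)\diamond(\bar g,\beta)$ to both sides and rearranges as above to get $T'\cong T''\diamond(h\diamond\bar f,\alpha)\diamond(k\diamond\bar g,\beta)$; thus $(h\diamond\bar f,\alpha)\diamond(k\diamond\bar g,\beta)$ represents $T'-T''$, and the computation of the previous paragraph shows its $\diamond$-composite with $(f,\alpha)\diamond(g,\beta)$ is $(h,\alpha)\diamond(k,\beta)$ up to homotopy. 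Hence $T-T''=(T'-T'')\diamond(T-T')$.

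I expect the only real obstacle to be bookkeeping rather than anything conceptual. The delicate part of the forward computation is the d.s.\ associativity statement --- checking that after adding $\varphi$ then $\varphi'$ along $\alpha$ the $\beta$-side is literally the parametrization prescribed by adding $\varphi\diamond\varphi'$, which comes down to the composition law for the isometries $D$ and $\Phi$. The delicate part of the reverse inclusion is the ``cancellation'' step, which is exactly where one uses that $T-T'$ is nonempty and not merely that the endpoint-matching condition (\ref{tube match up}) holds (cf.\ the remark following Definition \ref{tube difference}). Both are the sort of ``direct computation'' the neighbouring propositions keep deferring to; I would simply carry them out carefully.
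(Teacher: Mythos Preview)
Your approach is correct and is exactly what the paper does: its entire proof is the sentence ``Follows from a direct computation using Definition \ref{tube difference}.'' You have simply carried out that computation explicitly, including the commutation of $\alpha$- and $\beta$-twists and the d.s.\ associativity, and you additionally spell out the reverse inclusion via inverse twists, which the paper does not bother to mention.
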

\begin{proof}
  Follows from a direct computation using Definition \ref{tube difference}.
\end{proof}

\subsection{Composing doubly specified tubes}
Consider two doubly specified tubes $T,T'\subset Y$, and imagine that in our setup, the $\Lambda$-end of $T$ is identified with the $\Omega'$-end of $T'$. (See Figure \ref{combining tubes} for an illustration.)
\begin{figure}
  \centering
  \def\svgscale{0.9}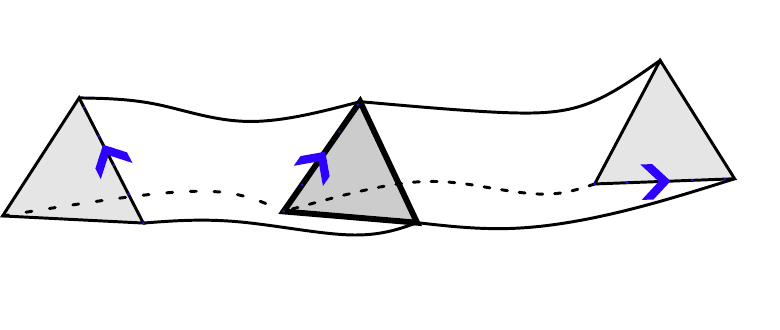
  \caption{The tube $T\cup T'$, where $T=\{(\Theta,\alpha),(\Lambda,\beta)\}$, $T' = \{(\Theta',\alpha'),(\Lambda',\beta')\}$. Here, $T\cup T'$ joins the $\beta$-end of $T$ with the $\alpha'$ end of $T'$.}
  \label{combining tubes}
\end{figure}
We wish to doubly parametrize $T\cup T'$, where the $T$-portion (resp.\ $T'$-portion) is still parametrized using the ``free'' $\Theta$-end (resp.\ $\Lambda'$-end).
\begin{definition}\label{tube compose operation}
  Let $T,T'$ be doubly specified tubes, with $T=\{(\Theta,\alpha),(\Lambda,\beta)\}$, $T' = \{(\Theta',\alpha'),(\Lambda',\beta')\}$, and end relations $\Theta(x,t) = \Lambda(D(x),1-t)$ and $\Theta'(x,t) = \Lambda'(D'(x),1-t)$. Suppose additionally that $\Lambda(x,0) = \Theta'(x,0)$. We define $T\cup T' = T\cup_{\beta,\alpha'} T' := \{(\widehat{\Theta},\alpha),(\widehat{\Lambda},\beta')\}$, with the parametrizations $\widehat{\Theta},\widehat{\Lambda}$ defined by
  \begin{align*}
  \widehat{\Theta}(x,t) &= 
    \left\{\begin{array}{@{}l@{}}
      \Theta(x,2t) \\[\jot]
      \Theta'(D(x),2t-1)
    \end{array}\right. &&\hspace{-5em}
    \begin{array}{@{}l@{}}
      t\in [0,1/2] \\[\jot]
      t\in [1/2,1],
    \end{array}\\[\jot]
    \widehat{\Lambda}(x,t) &= 
    \left\{\begin{array}{@{}l@{}}
      \Lambda'(x,2t) \\[\jot]
      \Lambda(D'(x),2t-1)
    \end{array}\right. &&\hspace{-5em}
    \begin{array}{@{}l@{}}
      t\in [0,1/2] \\[\jot]
      t\in [1/2,1].
    \end{array}
  \end{align*}
  Furthermore, if we are given a sequence of d.s.\ tubes $T_1,\ldots,T_n$, with $T_i = \{(\Theta_i,\alpha_i), (\Lambda_i,\beta_i)\}$, and $\Lambda_i(x,0) = \Theta_{i+1}(x,0)$, we can iterate the ``$\cup$'' operation to obtain a d.s.\ $T_1\cup\ldots \cup T_n$. The order of composition is invariant up to homotopy.
\end{definition}
We intentionally defined $\widehat{\Theta}$ and $\widehat{\Lambda}$ in order to satisfy the identities $\widehat{\Theta}(x,t) = \Theta(x,2t)$, $\widehat{\Lambda}(x,t) = \Lambda(x,2t)$ for $0\leq t\leq 1/2$. In other words, $\widehat{\Theta}|_{W\times [0,1/2]}$ parametrizes $T$ just like $\Theta$ (and $\widehat{\Lambda}|_{W\times [0,1/2]}$ parametrizes $T'$ just like $\Lambda'$).\par

Consider a concatenation $(T\diamond(\varphi,\alpha)) \cup T'$. We describe how to ``push'' $\varphi$ into $T'$ (see Figure \ref{intertube slide}).
\begin{figure}
  \centering
  \begin{tabular}{c}
    %% Creator: Inkscape 1.4.2 (ebf0e940d0, 2025-05-08), www.inkscape.org
%% PDF/EPS/PS + LaTeX output extension by Johan Engelen, 2010
%% Accompanies image file 'one_tube_to_another_1.pdf' (pdf, eps, ps)
%%
%% To include the image in your LaTeX document, write
%%   \input{<filename>.pdf_tex}
%%  instead of
%%   \includegraphics{<filename>.pdf}
%% To scale the image, write
%%   \def\svgwidth{<desired width>}
%%   \input{<filename>.pdf_tex}
%%  instead of
%%   \includegraphics[width=<desired width>]{<filename>.pdf}
%%
%% Images with a different path to the parent latex file can
%% be accessed with the `import' package (which may need to be
%% installed) using
%%   \usepackage{import}
%% in the preamble, and then including the image with
%%   \import{<path to file>}{<filename>.pdf_tex}
%% Alternatively, one can specify
%%   \graphicspath{{<path to file>/}}
%% 
%% For more information, please see info/svg-inkscape on CTAN:
%%   http://tug.ctan.org/tex-archive/info/svg-inkscape
%%
\begingroup%
  \makeatletter%
  \providecommand\color[2][]{%
    \errmessage{(Inkscape) Color is used for the text in Inkscape, but the package 'color.sty' is not loaded}%
    \renewcommand\color[2][]{}%
  }%
  \providecommand\transparent[1]{%
    \errmessage{(Inkscape) Transparency is used (non-zero) for the text in Inkscape, but the package 'transparent.sty' is not loaded}%
    \renewcommand\transparent[1]{}%
  }%
  \providecommand\rotatebox[2]{#2}%
  \newcommand*\fsize{\dimexpr\f@size pt\relax}%
  \newcommand*\lineheight[1]{\fontsize{\fsize}{#1\fsize}\selectfont}%
  \ifx\svgwidth\undefined%
    \setlength{\unitlength}{362.4291031bp}%
    \ifx\svgscale\undefined%
      \relax%
    \else%
      \setlength{\unitlength}{\unitlength * \real{\svgscale}}%
    \fi%
  \else%
    \setlength{\unitlength}{\svgwidth}%
  \fi%
  \global\let\svgwidth\undefined%
  \global\let\svgscale\undefined%
  \makeatother%
  \begin{picture}(1,0.32601511)%
    \lineheight{1}%
    \setlength\tabcolsep{0pt}%
    \put(0,0){\includegraphics[width=\unitlength,page=1]{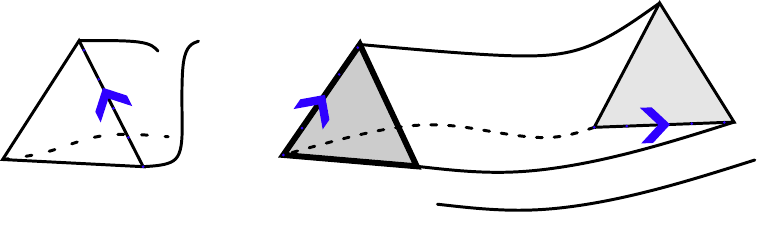}}%
    \put(0.07171879,0.16726244){\color[rgb]{0,0,0}\makebox(0,0)[lt]{\lineheight{1.25}\smash{\begin{tabular}[t]{l}$\alpha$\end{tabular}}}}%
    \put(0.43745411,0.18523903){\color[rgb]{0,0,0}\makebox(0,0)[lt]{\lineheight{1.25}\smash{\begin{tabular}[t]{l}$\beta$\end{tabular}}}}%
    \put(0.51331995,0.21531243){\color[rgb]{0,0,0}\makebox(0,0)[lt]{\lineheight{1.25}\smash{\begin{tabular}[t]{l}$\alpha'$\end{tabular}}}}%
    \put(0.85210231,0.20933372){\color[rgb]{0,0,0}\makebox(0,0)[lt]{\lineheight{1.25}\smash{\begin{tabular}[t]{l}$\beta'$\end{tabular}}}}%
    \put(0.25721056,0.00640557){\color[rgb]{0,0,0}\makebox(0,0)[lt]{\lineheight{1.25}\smash{\begin{tabular}[t]{l}$T\diamond (\varphi,\alpha)$\end{tabular}}}}%
    \put(0.77060457,0.01856146){\color[rgb]{0,0,0}\makebox(0,0)[lt]{\lineheight{1.25}\smash{\begin{tabular}[t]{l}$T'$\end{tabular}}}}%
    \put(0,0){\includegraphics[width=\unitlength,page=2]{one_tube_to_another_1.pdf}}%
  \end{picture}%
\endgroup%
\\
    \hspace{2cm}%% Creator: Inkscape 1.4.2 (ebf0e940d0, 2025-05-08), www.inkscape.org
%% PDF/EPS/PS + LaTeX output extension by Johan Engelen, 2010
%% Accompanies image file 'one_tube_to_another_2.pdf' (pdf, eps, ps)
%%
%% To include the image in your LaTeX document, write
%%   \input{<filename>.pdf_tex}
%%  instead of
%%   \includegraphics{<filename>.pdf}
%% To scale the image, write
%%   \def\svgwidth{<desired width>}
%%   \input{<filename>.pdf_tex}
%%  instead of
%%   \includegraphics[width=<desired width>]{<filename>.pdf}
%%
%% Images with a different path to the parent latex file can
%% be accessed with the `import' package (which may need to be
%% installed) using
%%   \usepackage{import}
%% in the preamble, and then including the image with
%%   \import{<path to file>}{<filename>.pdf_tex}
%% Alternatively, one can specify
%%   \graphicspath{{<path to file>/}}
%% 
%% For more information, please see info/svg-inkscape on CTAN:
%%   http://tug.ctan.org/tex-archive/info/svg-inkscape
%%
\begingroup%
  \makeatletter%
  \providecommand\color[2][]{%
    \errmessage{(Inkscape) Color is used for the text in Inkscape, but the package 'color.sty' is not loaded}%
    \renewcommand\color[2][]{}%
  }%
  \providecommand\transparent[1]{%
    \errmessage{(Inkscape) Transparency is used (non-zero) for the text in Inkscape, but the package 'transparent.sty' is not loaded}%
    \renewcommand\transparent[1]{}%
  }%
  \providecommand\rotatebox[2]{#2}%
  \newcommand*\fsize{\dimexpr\f@size pt\relax}%
  \newcommand*\lineheight[1]{\fontsize{\fsize}{#1\fsize}\selectfont}%
  \ifx\svgwidth\undefined%
    \setlength{\unitlength}{430.02283292bp}%
    \ifx\svgscale\undefined%
      \relax%
    \else%
      \setlength{\unitlength}{\unitlength * \real{\svgscale}}%
    \fi%
  \else%
    \setlength{\unitlength}{\svgwidth}%
  \fi%
  \global\let\svgwidth\undefined%
  \global\let\svgscale\undefined%
  \makeatother%
  \begin{picture}(1,0.27022071)%
    \lineheight{1}%
    \setlength\tabcolsep{0pt}%
    \put(0,0){\includegraphics[width=\unitlength,page=1]{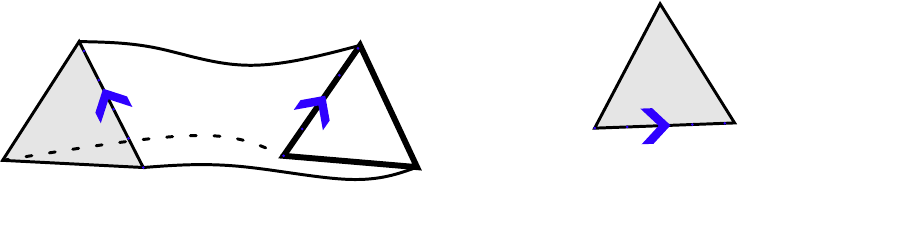}}%
    \put(0.05970592,0.1361845){\color[rgb]{0,0,0}\makebox(0,0)[lt]{\lineheight{1.25}\smash{\begin{tabular}[t]{l}$\alpha$\end{tabular}}}}%
    \put(0.3729232,0.15327064){\color[rgb]{0,0,0}\makebox(0,0)[lt]{\lineheight{1.25}\smash{\begin{tabular}[t]{l}$\beta$\end{tabular}}}}%
    \put(0.43263307,0.1759421){\color[rgb]{0,0,0}\makebox(0,0)[lt]{\lineheight{1.25}\smash{\begin{tabular}[t]{l}$\alpha'$\end{tabular}}}}%
    \put(0.71816346,0.17090316){\color[rgb]{0,0,0}\makebox(0,0)[lt]{\lineheight{1.25}\smash{\begin{tabular}[t]{l}$\beta'$\end{tabular}}}}%
    \put(0.27288603,0.00619363){\color[rgb]{0,0,0}\makebox(0,0)[lt]{\lineheight{1.25}\smash{\begin{tabular}[t]{l}$T$\end{tabular}}}}%
    \put(0.57608178,0.00539872){\color[rgb]{0,0,0}\makebox(0,0)[lt]{\lineheight{1.25}\smash{\begin{tabular}[t]{l}$T'\diamond ((dD)\varphi(dD)^{-1},\alpha')$\end{tabular}}}}%
    \put(0,0){\includegraphics[width=\unitlength,page=2]{one_tube_to_another_2.pdf}}%
  \end{picture}%
\endgroup%

  \end{tabular}
  \caption{Sliding the twist $\varphi$ from $T$ to $T'$.}
  \label{intertube slide}
\end{figure}
\begin{example}\label{gluing tubes homotopy}
  Let $X = J\times \Pi^{\kappa-1}$, $Y = J\times \Pi^{\kappa}$. Let $0\leq a,b,c\leq \kappa$ be distinct integers. $T_{\{a,b\}}\cup_{b,\alpha} T_b\cup_{\beta,b} T_{\{b,c\}}$ is homotopic to $T_{\{a,c\}}\diamond(\rho,a)$ for some twist $\rho$. (see Figure \ref{gluing tubes picture}).
  \begin{figure}
    \begin{tabular}{m{4.2cm} m{0.3cm} m{4.3cm} m{0.2cm} m{4.3cm}}
      \def\svgscale{0.6}%% Creator: Inkscape 1.4.2 (ebf0e940d0, 2025-05-08), www.inkscape.org
%% PDF/EPS/PS + LaTeX output extension by Johan Engelen, 2010
%% Accompanies image file 'tube_chain_1.pdf' (pdf, eps, ps)
%%
%% To include the image in your LaTeX document, write
%%   \input{<filename>.pdf_tex}
%%  instead of
%%   \includegraphics{<filename>.pdf}
%% To scale the image, write
%%   \def\svgwidth{<desired width>}
%%   \input{<filename>.pdf_tex}
%%  instead of
%%   \includegraphics[width=<desired width>]{<filename>.pdf}
%%
%% Images with a different path to the parent latex file can
%% be accessed with the `import' package (which may need to be
%% installed) using
%%   \usepackage{import}
%% in the preamble, and then including the image with
%%   \import{<path to file>}{<filename>.pdf_tex}
%% Alternatively, one can specify
%%   \graphicspath{{<path to file>/}}
%% 
%% For more information, please see info/svg-inkscape on CTAN:
%%   http://tug.ctan.org/tex-archive/info/svg-inkscape
%%
\begingroup%
  \makeatletter%
  \providecommand\color[2][]{%
    \errmessage{(Inkscape) Color is used for the text in Inkscape, but the package 'color.sty' is not loaded}%
    \renewcommand\color[2][]{}%
  }%
  \providecommand\transparent[1]{%
    \errmessage{(Inkscape) Transparency is used (non-zero) for the text in Inkscape, but the package 'transparent.sty' is not loaded}%
    \renewcommand\transparent[1]{}%
  }%
  \providecommand\rotatebox[2]{#2}%
  \newcommand*\fsize{\dimexpr\f@size pt\relax}%
  \newcommand*\lineheight[1]{\fontsize{\fsize}{#1\fsize}\selectfont}%
  \ifx\svgwidth\undefined%
    \setlength{\unitlength}{200.39341484bp}%
    \ifx\svgscale\undefined%
      \relax%
    \else%
      \setlength{\unitlength}{\unitlength * \real{\svgscale}}%
    \fi%
  \else%
    \setlength{\unitlength}{\svgwidth}%
  \fi%
  \global\let\svgwidth\undefined%
  \global\let\svgscale\undefined%
  \makeatother%
  \begin{picture}(1,0.85245788)%
    \lineheight{1}%
    \setlength\tabcolsep{0pt}%
    \put(0,0){\includegraphics[width=\unitlength,page=1]{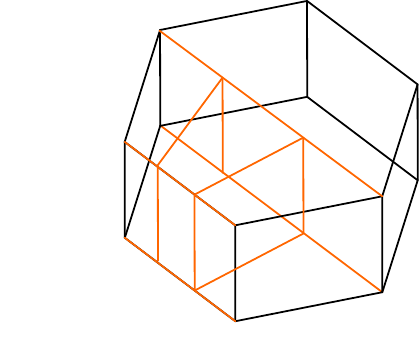}}%
    \put(-0.00278444,0.66945572){\color[rgb]{0,0,0}\makebox(0,0)[lt]{\lineheight{1.25}\smash{\begin{tabular}[t]{l}$J\times G_b$\end{tabular}}}}%
    \put(0,0){\includegraphics[width=\unitlength,page=2]{tube_chain_1.pdf}}%
    \put(0.15779271,0.12826626){\color[rgb]{0,0,0}\makebox(0,0)[lt]{\lineheight{1.25}\smash{\begin{tabular}[t]{l}$T_{\{a,b\}}$\end{tabular}}}}%
    \put(0.66329571,0.00853314){\color[rgb]{0,0,0}\makebox(0,0)[lt]{\lineheight{1.25}\smash{\begin{tabular}[t]{l}$J\times G_{a}$\end{tabular}}}}%
  \end{picture}%
\endgroup%
 & \raisebox{1.0cm}{$\bigcup$} & \def\svgscale{0.6} & \raisebox{1.0cm}{$\bigcup$} & \def\svgscale{0.6}%% Creator: Inkscape 1.4.2 (ebf0e940d0, 2025-05-08), www.inkscape.org
%% PDF/EPS/PS + LaTeX output extension by Johan Engelen, 2010
%% Accompanies image file 'tube_chain_3.pdf' (pdf, eps, ps)
%%
%% To include the image in your LaTeX document, write
%%   \input{<filename>.pdf_tex}
%%  instead of
%%   \includegraphics{<filename>.pdf}
%% To scale the image, write
%%   \def\svgwidth{<desired width>}
%%   \input{<filename>.pdf_tex}
%%  instead of
%%   \includegraphics[width=<desired width>]{<filename>.pdf}
%%
%% Images with a different path to the parent latex file can
%% be accessed with the `import' package (which may need to be
%% installed) using
%%   \usepackage{import}
%% in the preamble, and then including the image with
%%   \import{<path to file>}{<filename>.pdf_tex}
%% Alternatively, one can specify
%%   \graphicspath{{<path to file>/}}
%% 
%% For more information, please see info/svg-inkscape on CTAN:
%%   http://tug.ctan.org/tex-archive/info/svg-inkscape
%%
\begingroup%
  \makeatletter%
  \providecommand\color[2][]{%
    \errmessage{(Inkscape) Color is used for the text in Inkscape, but the package 'color.sty' is not loaded}%
    \renewcommand\color[2][]{}%
  }%
  \providecommand\transparent[1]{%
    \errmessage{(Inkscape) Transparency is used (non-zero) for the text in Inkscape, but the package 'transparent.sty' is not loaded}%
    \renewcommand\transparent[1]{}%
  }%
  \providecommand\rotatebox[2]{#2}%
  \newcommand*\fsize{\dimexpr\f@size pt\relax}%
  \newcommand*\lineheight[1]{\fontsize{\fsize}{#1\fsize}\selectfont}%
  \ifx\svgwidth\undefined%
    \setlength{\unitlength}{238.5129457bp}%
    \ifx\svgscale\undefined%
      \relax%
    \else%
      \setlength{\unitlength}{\unitlength * \real{\svgscale}}%
    \fi%
  \else%
    \setlength{\unitlength}{\svgwidth}%
  \fi%
  \global\let\svgwidth\undefined%
  \global\let\svgscale\undefined%
  \makeatother%
  \begin{picture}(1,0.70493788)%
    \lineheight{1}%
    \setlength\tabcolsep{0pt}%
    \put(0,0){\includegraphics[width=\unitlength,page=1]{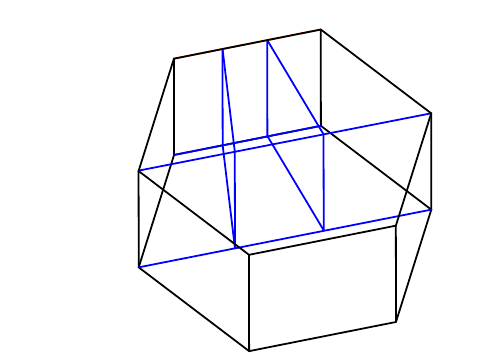}}%
    \put(-0.00233947,0.48745587){\color[rgb]{0,0,0}\makebox(0,0)[lt]{\lineheight{1.25}\smash{\begin{tabular}[t]{l}$J\times G_b$\end{tabular}}}}%
    \put(0.76446566,0.58027033){\color[rgb]{0,0,0}\makebox(0,0)[lt]{\lineheight{1.25}\smash{\begin{tabular}[t]{l}$J\times G_c$\end{tabular}}}}%
    \put(0,0){\includegraphics[width=\unitlength,page=2]{tube_chain_3.pdf}}%
    \put(0.38639683,0.67626043){\color[rgb]{0,0,0}\makebox(0,0)[lt]{\lineheight{1.25}\smash{\begin{tabular}[t]{l}$T_{\{b,c\}}$\end{tabular}}}}%
  \end{picture}%
\endgroup%

    \end{tabular}
    \begin{tabular}{m{0.5cm} m{4.3cm}}
      $\cong$ & \def\svgscale{0.6}%% Creator: Inkscape 1.4.2 (ebf0e940d0, 2025-05-08), www.inkscape.org
%% PDF/EPS/PS + LaTeX output extension by Johan Engelen, 2010
%% Accompanies image file 'tube_chain_pulled.pdf' (pdf, eps, ps)
%%
%% To include the image in your LaTeX document, write
%%   \input{<filename>.pdf_tex}
%%  instead of
%%   \includegraphics{<filename>.pdf}
%% To scale the image, write
%%   \def\svgwidth{<desired width>}
%%   \input{<filename>.pdf_tex}
%%  instead of
%%   \includegraphics[width=<desired width>]{<filename>.pdf}
%%
%% Images with a different path to the parent latex file can
%% be accessed with the `import' package (which may need to be
%% installed) using
%%   \usepackage{import}
%% in the preamble, and then including the image with
%%   \import{<path to file>}{<filename>.pdf_tex}
%% Alternatively, one can specify
%%   \graphicspath{{<path to file>/}}
%% 
%% For more information, please see info/svg-inkscape on CTAN:
%%   http://tug.ctan.org/tex-archive/info/svg-inkscape
%%
\begingroup%
  \makeatletter%
  \providecommand\color[2][]{%
    \errmessage{(Inkscape) Color is used for the text in Inkscape, but the package 'color.sty' is not loaded}%
    \renewcommand\color[2][]{}%
  }%
  \providecommand\transparent[1]{%
    \errmessage{(Inkscape) Transparency is used (non-zero) for the text in Inkscape, but the package 'transparent.sty' is not loaded}%
    \renewcommand\transparent[1]{}%
  }%
  \providecommand\rotatebox[2]{#2}%
  \newcommand*\fsize{\dimexpr\f@size pt\relax}%
  \newcommand*\lineheight[1]{\fontsize{\fsize}{#1\fsize}\selectfont}%
  \ifx\svgwidth\undefined%
    \setlength{\unitlength}{293.44369963bp}%
    \ifx\svgscale\undefined%
      \relax%
    \else%
      \setlength{\unitlength}{\unitlength * \real{\svgscale}}%
    \fi%
  \else%
    \setlength{\unitlength}{\svgwidth}%
  \fi%
  \global\let\svgwidth\undefined%
  \global\let\svgscale\undefined%
  \makeatother%
  \begin{picture}(1,0.59961121)%
    \lineheight{1}%
    \setlength\tabcolsep{0pt}%
    \put(0,0){\includegraphics[width=\unitlength,page=1]{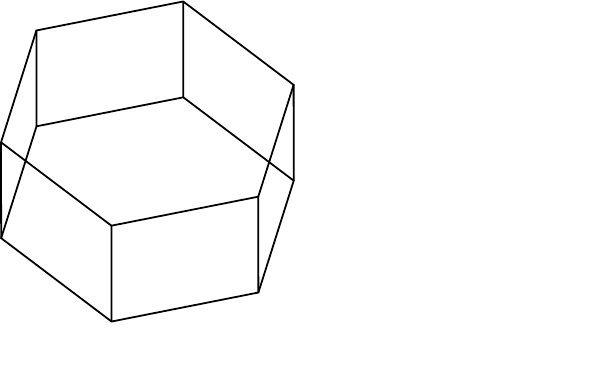}}%
    \put(0.28763026,0.00582731){\color[rgb]{0,0,0}\makebox(0,0)[lt]{\lineheight{1.25}\smash{\begin{tabular}[t]{l}$J\times G_a$\end{tabular}}}}%
    \put(0.41827313,0.53452417){\color[rgb]{0,0,0}\makebox(0,0)[lt]{\lineheight{1.25}\smash{\begin{tabular}[t]{l}$J\times G_c$\end{tabular}}}}%
    \put(0,0){\includegraphics[width=\unitlength,page=2]{tube_chain_pulled.pdf}}%
    \put(0.49820384,0.23281791){\color[rgb]{0,0,0}\makebox(0,0)[lt]{\lineheight{1.25}\smash{\begin{tabular}[t]{l}$T_{\{a,c\}}\diamond (\rho,\alpha)$\end{tabular}}}}%
  \end{picture}%
\endgroup%

    \end{tabular}
    \caption{The homotopic tubes $T_{\{a,b\}}\cup T_b\cup T_{\{b,c\}}\cong T_{\{a,c\}}\diamond(\rho,a)$ from Example \ref{gluing tubes homotopy}.}
    \label{gluing tubes picture}
  \end{figure}
  To see what $\rho$ must be, consult Table \ref{flipback table}.
  \begin{table}
    \centering
    \begin{tabular}{ m{4em} M{6cm} }
      Cases for \hextwist{$b$}{$c$}{$a$} & \makecell{Twist $\rho$ such that\\ $T_{\{a,c\}}+(\rho,\Theta_a) = T_{\{a,b\}}\cup T_b\cup T_{\{b,c\}}$}\\
      \hline
\hextwist{$j$}{$k$}{$i$}& $\varphi_{j, k}$ \\ 
\hextwist{$j$}{$i$}{$k$} & $\varphi_{j+1, i+1}$\\
\hextwist{$k$}{$j$}{$i$}& $\varphi_{k, j}$\\
\hextwist{$k$}{$i$}{$j$} & $\varphi_{k, i+1}$ \\
\hextwist{$i$}{$k$}{$j$} & $\varphi_{i+1, k}$ \\ 
\hextwist{$i$}{$j$}{$k$} & $\varphi_{i+1, j+1}$
    \end{tabular}
    \caption{How to read this table: $i = \min(a,b,c)$, $j = \Mid(a,b,c)$, $k = \max(a,b,c)$}
    \label{flipback table}
  \end{table}
\end{example}

\begin{proposition}\label{twist through concat}
  Suppose $T$, $T'$ are doubly specified tubes, with $T=\{(\Theta,\alpha),(\Lambda,\beta)\}$, $T'=\{(\Theta',\alpha),(\Lambda',\beta')\}$, with the relation $\Theta(x,t) = \Lambda(D(x),1-t)$. Suppose $\Lambda(x,0)=\Theta'(x,0)$. Then
  \[
    (T\diamond(\varphi,\alpha))\cup T' \cong T\cup(T'\diamond((dD)\varphi (dD)^{-1},\alpha')
  \]
\end{proposition}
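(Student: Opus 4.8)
The plan is to unwind the definition of adding a twist to a d.s.\ tube, together with Definition \ref{tube compose operation}, on both sides of the claimed equivalence, and then to exhibit a homotopy of d.s.\ tubes (in the sense of Definition \ref{doubly param. tube}) by sliding the active region of the twist across the junction where $T$ and $T'$ are glued. Throughout, $\Phi\colon W\to W$ denotes the isometry inducing $\varphi(1)$, and we assume $\varphi$ gluable so that all operations in sight are defined.

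First I would compute the left side. Writing $T_1:=T\diamond(\varphi,\alpha)=\{(\Theta_1,\alpha),(\Lambda_1,\beta)\}$ with $\Theta_1=\Theta\diamond\varphi=\Theta\circ\Omega_\varphi$ (notation of Definition \ref{twist definition}) and gluing isometry $D\Phi$, one checks $\Lambda_1(x,0)=\Lambda(x,0)=\Theta'(x,0)$, so $T_1\cup T'$ is defined; Definition \ref{tube compose operation} then gives its $\widehat\Theta$-parametrization as $\Theta\bigl(\Omega_\varphi(x,2t)\bigr)$ on $[0,\tfrac12]$ and $\Theta'\bigl(D\Phi(x),2t-1\bigr)$ on $[\tfrac12,1]$ --- the twist ``finishes'' by the midpoint, which is why the second half is precomposed with $D\Phi$ rather than $D$. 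Next I would compute the right side. The twist $\psi:=(dD)\varphi(dD)^{-1}$ is again gluable (it is constant near $0,1$, $\psi(0)=\Id$, and $\psi(1)$ is induced by the isometry $D\Phi D^{-1}$), so $T'\diamond(\psi,\alpha')$ makes sense, and Definition \ref{tube compose operation} gives the $\widehat\Theta$-parametrization of $T\cup\bigl(T'\diamond(\psi,\alpha')\bigr)$ as $\Theta(x,2t)$ on $[0,\tfrac12]$ and $\Theta'\bigl(\Omega_\psi(D(x),2t-1)\bigr)$ on $[\tfrac12,1]$. The computational heart is the identity
\[
  \Omega_\psi\bigl(D(x),\tau\bigr)=D\bigl(\Omega_\varphi(x,\tau)\bigr)
\]
(in the $W$-coordinate), which holds because $D$ is an isometry fixing $P$, so $D(P+v)=P+(dD)v$, and hence $\psi_\tau(dD)=(dD)\varphi_\tau$; this is exactly the point at which conjugation by $dD$, in that order, is forced.

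With both sides in hand, the right side's $\widehat\Theta$-parametrization becomes $\Theta(x,2t)$ on $[0,\tfrac12]$ and $\Theta'\bigl(D(\Omega_\varphi(x,2t-1))\bigr)$ on $[\tfrac12,1]$, so both composites run $\Theta$ over an initial subinterval and $\Theta'\circ D$ over a terminal one, with a single copy of the twist inserted whose ``span'' is $[0,\tfrac12]$ on the left and $[\tfrac12,1]$ on the right. I would then homotope by sliding this span rightward across the junction $t=\tfrac12$: on the part of the span lying over $T$ use $\Theta\circ\Omega_\varphi$, on the part lying over $T'$ use $\Theta'\circ D\circ\Omega_\varphi$, and stitch at $t=\tfrac12$, where the two agree because the $\beta$-end of $T$ is identified with the $\alpha'$-end of $T'$ by exactly $D$ (this is the junction specialization of the identity above). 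Rescaling the decay function $\lambda$ as permitted by the well-definedness remark following Definition \ref{twist definition}, this is a homotopy relative $t=0,1$ (one checks that at $t=1$ both sides give $\Theta'(D\Phi x,1)$, using $\Omega_\varphi(\cdot,1)=\Phi$, and at $t=0$ both give $\Theta(x,0)$). A clean way to package this is to note that the left side is homotopic to $(T\cup T')\diamond(\varphi,\alpha)$ --- adding $\varphi$ at the $\alpha$-end of the whole composite, its active region squeezed into the $T$-half --- and then to slide that active region into the $T'$-half, where it turns into $\psi$ by the identity above. Finally, the composite gluing isometry is assembled from $D,D',\Phi$ the same way on both sides, so the $\widehat\Lambda$-parametrizations are carried along by the same homotopy; alternatively, one reruns the computation from the $\beta'$-end.

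I expect the only real difficulty to be bookkeeping: keeping straight how the end-reversing relation $\Theta(x,t)=\Lambda(Dx,1-t)$ interacts with the inducing isometry $\Phi$ of $\varphi(1)$, and in particular certifying that the conjugating isometry is $dD$ --- the map identifying the $\beta$-end of $T$ with the $\alpha'$-end of $T'$ --- rather than $(dD)^{-1}$, $dD'$, or the differential of a composite. Once the explicit formulas of the first two steps are written out, the homotopy is routine, of the same character as the ``direct computation'' invoked for Proposition \ref{sliding}.
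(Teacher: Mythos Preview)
Your proposal is correct and is precisely the ``direct computation'' the paper alludes to: the paper's own proof simply states that one may either compute directly from the definitions or invoke Proposition \ref{sliding}, and you have carried out the former in detail, including the key identity $\Omega_\psi(D(x),\tau)=D(\Omega_\varphi(x,\tau))$ that makes the sliding homotopy work.
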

\begin{proof}
  We can either directly compute this identity using the definitions, or we can use Proposition \ref{sliding}.
\end{proof}
\section{Flow categories, the cube flow category, and signed cubical realizations}
We give a brief summary of the cube flow category, introduced by Lawson, Lipshitz, Sarkar \cite{MR4153651}.
\subsection{The cube category and sign assignments. }
\begin{definition}[\cite{MR4153651}]
For $n\in \mathbb{N}$, $n>0$, we define the cube category $\underline{2}^n$ as follows:
\begin{itemize}
\item $\Ob(\underline{2}^n) = 2^{\{1,\ldots, n\}}$, that is, subsets of $\{1,\ldots, n\}$.
\item For two objects $u,v\in \underline{2}^n$, $\text{Hom}(u,v)$ is empty unless $u \supseteq v$, in which case there is a single morphism $\phi_{u,v}:u\to v$.
\end{itemize}
\end{definition}
We often use the notation  $u<v$ as shorthand for $u\subset v$ (and $u\leq v$ as shorthand for $u\subseteq v$). Thre is a grading on the objects of $\underline{2}^n$ given by $\gr(u)=|u|$. If $u>v$ and $\gr(u)=\gr(v)+i$, we use the notation $u>_i v$. In the case where $i=1$ ($i=2$), we call $\phi_{u,v}$ an \textit{edge} (a \textit{face}). If we want to emphasize that $u,v$ are subsets of $\{1,\ldots, n\}$, we often use labels $S,T$.
\begin{definition}\label{cube sign assignment}
  A \textit{sign assignment} $\widetilde{s}$ is a function from the edges $u>_1v$ of the cube category $\underline{2}^n$ to $\mathbb{F}_2$ such that, for any face $u>_2w$ with intermediate vertices $v_1,v_2$, we have $\widetilde{s}_{u,v_1}+\widetilde{s}_{u,v_2}+\widetilde{s}_{v_1,w}+\widetilde{s}_{v_2,w} = 1$. (We abbreviate $\widetilde{s}_{u,v}:= \widetilde{s}(\phi_{u,v})$).\par
  The \textit{standard sign assignment} $s$ is defined as follows: For an edge $T>_1 S$, $T \backslash S = \{j\}$, we define
  \[
    s_{T,S}:= \# \{i \in S: i<j\}\pmod 2\in \mathbb{F}_2.
  \]
  When viewing a sign assignment $\tilde{s}$ as a cochain in $C^1_{\cell}([0,1]^n,\mathbb{F}_2)$, $\delta s \in C^2_{\cell}([0,1]^n,\mathbb{F}_2)$ is the constant map that maps all faces to $1$. Note that any sign assignment $\tilde{s}$ differs from $s$ by a cocycle $\delta c\in C^2_{\cell}([0,1]^n;\mathbb{F}_2)$.
\end{definition}
\begin{definition}
  The \textit{index assignment} $s_\mathbb{Z}$ is defined as follows: For an edge $T>_1 S$, $T \backslash S = \{j\}$, we define $s_\mathbb{Z}(T,S):= \# \{i \in S: i<j\}\in \mathbb{Z}$.
\end{definition}

\subsection{Manifolds with corners and $\langle n\rangle$-manifolds.}
\begin{definition}
  A \textit{facet} of $X$ is the closure of a codimension-$1$ boundary-component of $X$. A \textit{multifacet} of $X$ is a (possibly empty) union of disjoint facets of $X$. A manifold with corners $X$ is a \textit{multifaced manifold} if every $x\in X$ belongs to exactly $c(x)$ facets of $X$. We define an $\langle n\rangle$-manifold to be a multifaceted manifold $X$ with an ordered n-tuple $(\partial_1 X, \ldots, \partial_n X)$ of multifacets $X$ satisfying
  \begin{itemize}
  \item $\bigcup_i\partial_i X = \partial X$.
  \item For all distinct $i,j$, $\partial_i X\cap \partial_j X$ is a multifacet of both $\partial_i X$ and $\partial_j X$.
  \end{itemize}  
\end{definition}
\begin{lemma}\label{permutohedron as n-manifold}
  The space $\Pi^{n-1}$ can be viewed as an $\langle n-1 \rangle$-manifold by defining
  \[
    \partial_i\Pi^{n-1}=\bigcup_{\substack{S\\|S|=i}}F_S
  \]
  for $1\leq i\leq n-1$.
\end{lemma}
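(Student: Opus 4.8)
The plan is to check directly the three conditions in the definition of an $\langle n-1\rangle$-manifold for the decomposition $\partial_i\Pi^{n-1}=\bigcup_{|S|=i}F_S$. One first records that a simple convex polytope carries a canonical smooth manifold-with-corners structure, locally modeled via affine coordinates on orthants $[0,\infty)^d\times\mathbb{R}^{(n-1)-d}$, in which the open faces are the strata; for $\Pi^{n-1}$ the facets in the sense of the preceding definition are then precisely the $F_S$ with $S$ a nonempty proper subset of the $n$-element index set. The core verification is that $\Pi^{n-1}$ is a multifaced manifold, i.e.\ that a point in the relative interior of a face of codimension $d$ lies in exactly $d$ facets. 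By Lemma~\ref{chain to face} every face equals $F_c$ for a chain $c=\{\overline{1}=S_m>\cdots>S_0=\overline{0}\}$; it is $(n-m)$-dimensional, hence of codimension $d=m-1$, and equals $\bigcap_{1\le k\le m-1}F_{S_k}$, so it lies in the $m-1$ facets $F_{S_1},\dots,F_{S_{m-1}}$. Conversely, if the relative interior of $F_c$ met a further facet $F_T$, then $F_c\cap F_T\neq\emptyset$, so by Lemma~\ref{face intersection} $T$ is comparable with every $S_k$, hence insertable into the chain $c$; this produces a proper subface $F_{c'}\subsetneq F_c$ unless $T\in\{S_1,\dots,S_{m-1}\}$, which is impossible for a relative-interior point. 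Hence $c(x)=m-1=d$, as needed.

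Second, $\bigcup_i\partial_i\Pi^{n-1}=\partial\Pi^{n-1}$ is immediate, since every nonempty proper $S$ satisfies $1\le|S|\le n-1$, so the union of the $\partial_i$ ranges over all facets. Each $\partial_i\Pi^{n-1}$ is a multifacet because, by Lemma~\ref{face intersection}, distinct $F_S,F_{S'}$ with $|S|=|S'|=i$ are disjoint (neither set contains the other).

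Third, for distinct $i<j$ one must see that $\partial_i\Pi^{n-1}\cap\partial_j\Pi^{n-1}$ is a multifacet of each. By Lemma~\ref{face intersection}, $F_S\cap F_T\neq\emptyset$ with $|S|=i$, $|T|=j$ forces $S\subset T$, so the intersection is the union of the codimension-$2$ faces $F_S\cap F_T$ over pairs $S\subset T$, $|S|=i$, $|T|=j$; two of these meet only if all four subsets involved are pairwise comparable, which together with the size constraints forces the pairs to coincide, so the union is disjoint. Under the identification $F_T\cong\Pi^{j-1}\times\Pi^{n-j-1}$ of Lemma~\ref{face identification}, the faces $F_S\cap F_T$ with $S\subset T$, $|S|<j$, are exactly the facets of $F_T$ of the form $F'\times\Pi^{n-j-1}$ with $F'$ a facet of $\Pi^{j-1}$; since $\partial_j\Pi^{n-1}$ is the disjoint union of the $F_T$ with $|T|=j$, it follows that $\partial_i\Pi^{n-1}\cap\partial_j\Pi^{n-1}$ is a disjoint union of facets of $\partial_j\Pi^{n-1}$, i.e.\ a multifacet; the symmetric argument using $F_S\cong\Pi^{i-1}\times\Pi^{n-i-1}$ of Lemma~\ref{face identification} handles $\partial_i\Pi^{n-1}$.

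The only step that is not pure combinatorics of the face poset is the first one — namely, pinning down the smooth manifold-with-corners structure on $\Pi^{n-1}$ for which the $F_S$ and all their intersections are honestly embedded submanifolds-with-corners compatible with this combinatorics. I expect this setup, rather than any of the disjointness checks, to be the only real obstacle; once it is in place, everything else is driven mechanically by Lemmas~\ref{face identification}, \ref{face intersection}, and \ref{chain to face}.
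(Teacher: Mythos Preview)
Your proof is correct and follows essentially the same route as the paper's: both verify the defining conditions of an $\langle n-1\rangle$-manifold by using Lemma~\ref{face intersection} for disjointness and Lemma~\ref{face identification} to identify the intersections $F_S\cap F_T$ as facets of the product. The only noteworthy difference is in the multifaced condition: the paper builds the local corner model directly from the half-space description $\Pi^{n-1}=\bigcap_S H_S$, whereas you argue via the chain description of faces from Lemma~\ref{chain to face}; these are equivalent bookkeeping devices, and your version is arguably cleaner about the ``no extra facet'' direction.
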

\begin{proof}
  The proof is due to \cite{MR4153651}
  We check the following:
  \begin{enumerate}[label = (\arabic*), ref = (\arabic*)]
  \item Each $\partial_i \Pi^{n-1}$ is a multifacet\label{n-manifold 1}
  \item Every point $x$ belongs to $c(x)$ facets\label{n-manifold 2}
  \item $\bigcup_i\partial_i\Pi^{n-1}=\partial \Pi^{n-1}$\label{n-manifold 3}
  \item For each $i\neq j$, $\partial_i\Pi^{n-1}\cap \partial_j \Pi^{n-1}$ is a multifacet of $\partial_i\Pi^{n-1}$ (and $\partial_j\Pi^{n-1}$).\label{n-manifold 4}
    \end{enumerate}
    To prove \ref{n-manifold 1}, we use Lemma \ref{face intersection}: if $F_S\cap F_T\neq\emptyset$, then either $S\subseteq T$ or $T\subseteq S$. Therefore, $\partial_i\Pi^{n-1}$ is indeed a \textit{disjoint} union of the faces $S$, where $|S|=i$.\par
    To prove \ref{n-manifold 2}, fix a point $x\in \Pi^{n-1}$ and note that $x$ is possibly contained in facets, which are of the form $F_S$. For convenience, we call these facets $F_{S_1},\ldots, F_{S_m}$, where $S_1<\ldots < S_m$ (the collection of $S_i$ is possibly empty). From the construction $\Pi^{n-1}=\bigcap_{\emptyset\neq S\subset \{1,\ldots, n\}}H_S$, where $H_S$ are half-spaces, we see that $x$ is contained in a coordinate neighborhood diffeomorphic to $\bigcap_{1\leq i\leq m}H_{S_i}\cong \{y\in \mathbb{R}^{n-1}: y_1\geq 0, \ldots, y_m\geq 0\}$.\par
    Point \ref{n-manifold 3} is immediate from the definitions.\par
    For part \ref{n-manifold 4}, let $F_S$ be a facet of $\partial_i\Pi^{n-1}$. it suffices to show that $F_S\cap \partial_j \Pi^{n-1}$ is a multifacet of $F_S$. We simply need to show that $F_S\cap F_T$ is either empty or a facet of $F_S$. But Lemma \ref{face intersection} says that $F_S\cap F_T$ is empty unless $T\subset S$ or $S\subset T$. The map $f_S: F_S\to \Pi^{i-1}\times \Pi^{k-i-1}$ of Lemma \ref{face identification} identifies $F_S\cap F_T$ with
    \[
      \begin{cases*}
        F_T'\times \Pi^{k-i-1} & if $T\subset S$\\
        \Pi^{i-1}\times F_T' & if $S \subset T$
      \end{cases*},
    \]
    where  if $T\subset S$, then $F_T'\cong \Pi^{j-1}\times \Pi^{i-j-1}$ is a facet of $\Pi^{i-1}$ and if $S\subset T$, then $F_T'\cong \Pi^{j-i-1}\times \Pi^{k-j-1}$ is a facet of $\Pi^{k-i-1}$.
\end{proof}
\subsection{Signed flow categories}
We review some notation about flow categories from Cohen-Jones-Segal \cite{MR1362832}.
\begin{definition}[\cite{MR1362832}]
  A \textit{flow category} $\mathscr{C}$ is a topological category such that the objects $\Ob(\mathscr{C})$ form a discrete space, and the morphisms satisfy the following:
  \begin{enumerate}[label = (FC-\arabic*), ref = (FC-\arabic*)]
  \item For any $x\in \Ob(\mathscr{C})$, $\text{Hom}(x,x) = \{\text{Id}\}$; The identity morphisms in a flow category are a somewhat special case and it is often convenient to disregard them. We define the \textit{moduli space} $\mathcal{M}(x,y)$ from $x$ to $y$ to be $\text{Hom}(x,y)$ if $x\neq y$ and empty if $x=y$.
  \item For any $x,y\in \Ob(\mathscr{C})$ with $\gr(x)-\gr(y) = k$, $\mathcal{M}(x,y)$ is a (possibly empty) compact ($k-1$)-dimensional $\langle k-1\rangle$-manifold; and
  \item The composition maps combine to form a diffeomorphism of $\langle k-2\rangle$-manifolds
    \begin{equation}\label{composition}
      \coprod_{\substack{y\in \Ob(\mathscr{C})\backslash \{x,z\}\\ \gr(y)-\gr(x) = i}} \mathcal{M}(y,z)\times \mathcal{M}(x,y) \cong \partial_i\mathcal{M}(x,z).
    \end{equation}
  \end{enumerate}
  For any flow category $\mathscr{C}$, define $\Sigma^k\mathscr{C}$ to be the flow category obtained by increasing the gradings of $\mathscr{C}$ up by $k$.
\end{definition}
\begin{definition}
  Let $\mathscr{C}$ be a flow category. For $x,y\in \Ob(\mathscr{C})$, we define $\widehat{A}_{x,y}$ to be the (possibly empty) set of path components of $\mathcal{M}(x,y)$. The composition maps in (\ref{composition}) descend to composition maps $\widehat{A}_{y,z}\times \widehat{A}_{x,y}\to \widehat{A}_{x,z}$, which we denote by $(\gamma,\xi)\mapsto \gamma\circ \xi$.
\end{definition}
We generalize the notion of a flow category slightly by introducing ($\mathbb{F}_2$-valued) signs on components of the moduli spaces $\mathcal{M}(x,y)$:
\begin{definition}\label{signed flow category}
  A \textit{signed flow category} $\mathscr{C}$ is a flow category $\mathscr{C}$ equipped with a sign map $\sigma: \coprod_{x,y\in \Ob(\mathscr{C})}\widehat{A}_{x,y}\to \mathbb{F}_2$ such that $\sigma(\gamma\circ \xi) = \sigma(\gamma)+\sigma(\xi)$ for all $\gamma \in \widehat{A}_{y,z}$, $\xi\in \widehat{A}_{x,y}$. We call $\sigma$ a \textit{sign map}. Every (unsigned) flow category $\mathscr{C}'$ comes with a trivial sign map $\sigma_0$: simply define $\sigma_0(\gamma)=0$ for all $\gamma\in \widehat{A}_{x,y}$, $x,y\in \Ob(\mathscr{C})$.\par
  We often say \textit{unsigned flow category} when referring to a flow category to avoid any confusion.
\end{definition}
\begin{notation}
  In the case that $\gr(y) = \gr(x) + 1$ and $p\in \mathcal{M}(y,x)$, we use the notation $\sigma(p) := \sigma(\{p\})$.
\end{notation}
\begin{definition}\label{morse chain complex}
  Given a (signed or unsigned) flow category $\mathscr{C}$, we can define a cochain complex with $\mathbb{F}_2$ coefficients $C_{\mathcal{M}}^*(\mathscr{C};\mathbb{F}_2)$  as follows:
  \begin{enumerate}[label = (K-\arabic*), ref = (K-\arabic*)]
  \item The set of generators is $\Ob(\mathscr{C})$, with a generator $x$ having cohomological grading $\gr(x)$.
    \item For $\gr(y)=\gr(x)+1$, the coefficient of $y$ in $\delta x$ is $\# (\mathcal{M}(y,x))$.
  \end{enumerate}
\end{definition}

\subsection{The cube flow category}
We introduce the cube flow category, which records the moduli space of ``flowlines'' between vertices in the cube.
\begin{definition}\label{cube flow definition}
  Fix an integer $n>0$, The objects of the cube flow category $\mathscr{C}_C(n)$ are the same as the objects of the cube category $\underline{2}^n$, that is, subsets of  $\{1,\ldots,n\}$. The grading on the objects is the same as the grading in $\underline{2}^n$ and the partial ordering $\geq$ is also inherited from $\underline{2}^n$.\par
  The space $\mathcal{M}(u,v)$ is defined to be empty unless $u>v$. In the case $u>v$ and $|u|-|v|=k>0$, we define $\mathcal{M}(u,v)=\Pi^{k-1}$, the $(k-1)$-dimensional permutohedron. The composition map $\mathcal{M}(v,w)\times\mathcal{M}(u,v)$ is defined as follows: Assume $u>v>w$ and $|u|-|v| = k$, $|v|-|w|= l$ Let $u\backslash w = \{a_1,\ldots, a_{k+l}\}$, where $a_1<\ldots<a_{k+l}$. Let $S$ be the set of $s\in \{1,\ldots,k+l\}$ satisfying $a_{s}\in v$. By Lemma \ref{face identification}, there is a corresponding facet $F_S\subset \Pi^{k+l-1}=\mathcal{M}(u,w)$, and we define the composition by
  \[
    \mathcal{M}(v,w)\times\mathcal{M}(u,v) \xhookrightarrow{f_S^{-1}} F_S\hookrightarrow \mathcal{M}(u,w).
  \]
\end{definition}
\begin{lemma}
  Definition \ref{cube flow definition} defines a flow category.
\end{lemma}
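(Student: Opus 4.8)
The plan is to check the three flow-category axioms (FC-1)–(FC-3) directly from the geometry of permutohedra developed in Section 3. Axiom (FC-1) is immediate: by construction $\mathcal{M}(u,v)=\emptyset$ when $u=v$, while $\mathrm{Hom}_{\underline 2^n}(u,u)$ is a single (identity) morphism; the object set is finite, hence discrete, and every moduli space and every composition map is a smooth map between manifolds with corners, so $\mathscr{C}_C(n)$ is a topological category once composition is seen to be associative (addressed together with (FC-3) below). For (FC-2), if $u>v$ with $|u|-|v|=k$ then $\mathcal{M}(u,v)=\Pi^{k-1}$ is a compact polytope of dimension $k-1$, and Lemma \ref{permutohedron as n-manifold} exhibits it as an $\langle k-1\rangle$-manifold via $\partial_i\Pi^{k-1}=\bigcup_{|S|=i}F_S$.

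The heart of the matter is (FC-3). Fix $u>w$, put $m:=|u|-|w|$ so that $\mathcal{M}(u,w)=\Pi^{m-1}$, and write $u\setminus w=\{a_1<\dots<a_m\}$. First I would observe that the intermediate objects $v$ with $u>v>w$ are in bijection with subsets of $u\setminus w$ via $v\mapsto S_v:=\{s:a_s\in v\}$, and that under this correspondence $|S_v|=|v|-|w|$ while $|u|-|v|=m-|S_v|$; thus intermediate objects of a fixed grading level correspond to subsets $S\subseteq\{1,\dots,m\}$ of a fixed size. By the definition of composition, $\mathcal{M}(v,w)\times\mathcal{M}(u,v)$ is embedded as the facet $F_{S_v}\subset\Pi^{m-1}$ through $f_{S_v}^{-1}$, and Lemma \ref{face identification} identifies $F_{S_v}$ with $\Pi^{|S_v|-1}\times\Pi^{m-|S_v|-1}=\mathcal{M}(v,w)\times\mathcal{M}(u,v)$, matching dimensions and product structures. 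Lemma \ref{face intersection} shows that for $S\neq S'$ of the same size neither contains the other, so $F_S\cap F_{S'}=\emptyset$; combining this with the description of $\partial_i\Pi^{m-1}$ in Lemma \ref{permutohedron as n-manifold} gives, for each $i$,
\[
  \coprod_{\substack{u>v>w\\ |v|-|w|=i}}\mathcal{M}(v,w)\times\mathcal{M}(u,v)\;\xrightarrow{\ \coprod f_{S_v}^{-1}\ }\;\coprod_{|S|=i}F_S\;=\;\partial_i\mathcal{M}(u,w),
\]
which is the diffeomorphism required by (\ref{composition}); it is a diffeomorphism rather than merely a homeomorphism because each $f_S$ restricts an affine isomorphism.

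It remains to upgrade this to a diffeomorphism of $\langle m-2\rangle$-manifolds and, in the same stroke, to verify that composition in $\mathscr{C}_C(n)$ is associative. Both follow from coherence of the identifications $f_S$, for which I would invoke Lemma \ref{chain to face}: a chain $\overline 1\supsetneq S_{r-1}\supsetneq\dots\supsetneq S_1\supsetneq\overline 0$ cuts out a face of $\Pi^{m-1}$ which $f_c$ identifies with a product of lower-dimensional permutohedra, one factor per link of the chain, compatibly with refinement of chains. Translating through the bijection above, a codimension-one face of $F_{S_v}\cong\mathcal{M}(v,w)\times\mathcal{M}(u,v)$ is of the form $\bigl(\mathcal{M}(v',w)\times\mathcal{M}(v,v')\bigr)\times\mathcal{M}(u,v)$ or $\mathcal{M}(v,w)\times\bigl(\mathcal{M}(v'',v)\times\mathcal{M}(u,v'')\bigr)$, that is, the image of a length-three composition through a further intermediate vertex; hence the multifacet structure of $\partial_i\mathcal{M}(u,w)$ matches the one produced by iterated composition, and iterating $f_c$ shows that all bracketings of a chain $u>v_1>\dots>v_p>w$ land in the same face $F_c\subset\mathcal{M}(u,w)$, which is precisely associativity. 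I expect this coherence bookkeeping — tracking which factor of which product a facet occupies as chains are refined — to be the only genuinely technical point; it is essentially the verification carried out in \cite{MR4153651}, so one could alternatively cite that source.
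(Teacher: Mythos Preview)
Your argument is correct and is exactly the verification one finds in the cited reference; the paper's own proof consists entirely of the pointer ``See \cite{MR4153651}, Lemma 3.17,'' so you have simply unpacked what the paper defers. There is no substantive difference in approach.
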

\begin{proof}
  See \cite{MR4153651}, Lemma 3.17.
\end{proof}

\subsection{Signed cubical flow categories}
\begin{definition}[\cite{MR4153651}, Definition 3.21]
  A \textit{cubical flow category} is a flow category $\mathscr{C}$ equipped with a grading-preserving functor $\mathfrak{f}:\Sigma^k\mathscr{C}\to \mathscr{C}_{C}(n)$ for some $k\in \mathbb{Z}$, $n\in \mathbb{N}$ so that for each $x,y\in \Ob(\mathscr{C})$, $\mathfrak{f}: \mathcal{M}(x,y)\to \mathcal{M}(\mathfrak{f}(x),\mathfrak{f}(y))$ is a (trivial) covering map.\par
  Note that if $(\mathscr{C},\mathfrak{f})$ is a cubical flow category, then $\mathcal{M}(x,y)$ can only be nonempty ($x>y$) if $\mathfrak{f}(x)>\mathfrak{f}(y)$, and in this case, must be a (possibly empty) disjoint union of permutohedra.
\end{definition}
\begin{definition}
  A \textit{signed cubical flow category} $(\mathscr{C},\mathfrak{f},\sigma)$ is a cubical flow category $(\mathscr{C},\mathfrak{f})$ equipped with the additional structure of a signed flow category, that is, a sign map $\sigma$.
\end{definition}
The cube category $\mathscr{C}_C(n)$ is not a signed cubical flow category, so there should be no expectation that the sign map $\sigma$ satisfies any sort of compatibility condition with respect to the covering map $\mathfrak{f}: \mathscr{C}\to \mathscr{C}_C(n)$.
\begin{example}
  The Khovanov flow category $\mathscr{C}_K(L)$ (\cite{MR3230817}, Definition 5.3) associates to a link diagram $L$ with $n$ crossings a cubical flow category $\mathscr{C}_K(L)$. For any $v\in \underline{2}^n$, the subset of $\Ob(\mathscr{C}_K(L))$ that maps to $v$ are precisely the Khovanov generators $x$ that lie over the $v$-resolution of $L$.\par
  For any $u,v\in \underline{2}^n$ with $|u|-|v| = 1$, and any $x\in \mathfrak{f}^{-1}(u)=F(u)$, $y\in \mathfrak{f}^{-1}(v)=F(v)$, the moduli space is
  \[
    \mathscr{C}_K(L)(x,y)=
    \begin{cases*}
      \{\text{pt}_{x,y}\} & if $x$ appears in $\delta_{Kh}(y)$\\
      \emptyset & otherwise.\\
    \end{cases*}
  \] 
\end{example}
The $1$-dimensional moduli spaces takes some work to construct, and the higher-dimensional spaces are defined inductively. The $\mathbb{F}_2$-coefficient Khovanov chain complex $KC^*(L;\mathbb{F}_2)$ is canonically identified with $C_{\mathcal{M}}^*(\mathscr{C}_K(L);\mathbb{F}_2)$.
\begin{example}\label{odd khovanov flow cat}
  The odd Khovanov flow category $\mathscr{C}_{K,o}(L)$ is a signed flow category, which as a flow category, equals $\mathscr{C}_K(L)$ precisely. Now $\mathscr{C}_{K,o}(L)$ has a sign map $\sigma$ which arises from the odd Khovanov functor $\mathfrak{F}_o: (\underline{2}^n)^{\text{op}}\to \mathbb{Z}\text{--Mod}$ in \cite{MR4078823}, Section 5.1. Namely, for each edge $u>_1 v$ in $\underline{2}^n$ and Khovanov generator $y$ with $\mathfrak{f}(y) = v$, we write
  \[
    \mathfrak{F}_o(\phi^{\text{op}}_{v,u})(y) = \sum_{x\in F_o(u)}\epsilon_{x,y}x,
  \]
  where by definition of $\mathfrak{F}_o$, $\epsilon_{x,y}\in\{-1,0,1\}$ for each $x\in F_o(u)$, and more specifically, $\epsilon_{x,y}\in\{-1,1\}$ for each $x\in F_o(u)$ with $\mathscr{C}_{K,o}(L)(x,y)\neq \emptyset$. We impose the identity $\epsilon_{x,y} = (-1)^{\sigma_{x,y}(\{\text{pt}_{x,y}\})}$, which defines $\sigma_{x,y}$ for $\gr(x)-\gr(y)=1$. For sequences $x_m,\ldots x_0$ such that $\gr(x_i)-\gr(x_{i-1})=1$, $\mathcal{M}(x_i,x_{i-1}) = \{\text{pt}_{x_i,x_{i-1}}\}$, we define
  \[
    \sigma(\{\text{pt}_{x_m,x_{m-1}}\}\circ\ldots\circ \{\text{pt}_{x_1,x_0}\}):= \sigma(\{\text{pt}_{x_m,x_{m-1}}\})+\ldots + \sigma(\{\text{pt}_{x_1,x_0}\}).
  \]
  This definition allows us to fully extend $\sigma$ to a sign map on $\mathscr{C}_{K,o}(L)$.
\end{example}
\begin{definition}
  There is also the notion of a sign assignment on a cubical flow category $\mathscr{C}$. A \textit{cubical sign assignment} $\widetilde{S}$ is a function
  \[
    \widetilde{S}:\coprod_{\substack{x,y\\ \gr(y) = \gr(x)+1}}\mathcal{M}(y,x)\to \mathbb{F}_2
  \]
  from the set of $0$-dimensional moduli spaces to $\mathbb{F}_2$ such that if $I$ is an interval in a $2$D moduli space $\mathcal{M}(z,x)$ and $\partial I = \{p\circ q,p'\circ q'\}$, then
  \begin{equation}\label{sign criterion}
    \widetilde{S}(p)+ \widetilde{S}(q) + \widetilde{S}(p') + \widetilde{S}(q') = 1.
  \end{equation}
  Note how this notion of a sign assignment is a generalization of the sign assignment on the cube category (see Definition \ref{cube sign assignment}). Indeed, if we identify edges $v>_1 u$ of $\underline{2}^n$ with the corresponding (unique) morphisms $p\in \mathcal{M}_{\mathscr{C}_C(n)}(v,u)$, the criterion $\widetilde{s}_{u,v_1}+\widetilde{s}_{u,v_2}+\widetilde{s}_{v_1,w}+\widetilde{s}_{v_2,w} = 1$ of Definition \ref{cube sign assignment} is equivalent to the criterion (\ref{sign criterion}) for the cube flow category.
\end{definition}

\begin{definition}\label{canonical sign assignment}
  Let $\mathfrak{f}:\mathscr{C}\to \mathscr{C}_C(n)$ be a signed cubical flow category. We define the \textit{standard cubical sign assignment} $S$ as the pullback of the standard sign assignment $s$ along $\mathfrak{f}$, plus the sign map $\sigma$. Namely, if $p \in \mathfrak{f}^{-1}(\mathcal{M}(v,u))$ for $v>_1 u$, we define  $S(p) := s(v,u) + \sigma(p)$. If $\mathscr{C}$ is an unsigned cubical flow category, we simply omit the $\sigma$-component, defining $S(p) := s(v,u)$. Similarly, we define the \textit{cubical index assignment} by $S_{\mathbb{Z}}(p) := s_{\mathbb{Z}}(v,u)$.\par
  In practice, we simply say \textit{standard sign assignment} and \textit{index assignment} if the context is clear.
\end{definition}

\begin{example}\label{Z morse chain complex}
  Given a cubical flow category $\mathfrak{f}: \mathscr{C}\to \mathscr{C}_{C}(n)$ with sign map $\sigma$, we define a cochain complex with $\mathbb{Z}$ coefficients $C_{\mathcal{M}}^*(\mathscr{C};\mathbb{Z})$  as follows:
  \begin{enumerate}[label = (K'-\arabic*), ref = (K'-\arabic*)]
  \item The set of generators is $\Ob(\mathscr{C})$, with a generator $x$ having cohomological grading $\gr(x)$.
  \item The differential is defined on generators $x$ by
    \begin{equation}\label{complex coeff}
      dx = \sum_{\substack{p\in \mathcal{M}(y,x)\\ \gr(y) = \gr(x) + 1}}(-1)^{S(p)}y,
    \end{equation}
  \end{enumerate}
  where $S$ is the standard cubical sign assignment defined in Example \ref{canonical sign assignment}.
\end{example}
\begin{remark}
  Note that given the odd Khovanov flow category $\mathscr{C}_{K,o}(L)$ from Example \ref{odd khovanov flow cat}, the cochain complex $C_{\mathcal{M}}^*(\mathscr{C}_{K,o}(L);\mathbb{Z})$ recovers the odd Khovanov chain complex $Kc_o^*(L)$ (see \cite{MR3071132}). If we use the (unsigned) Khovanov flow category $\mathscr{C}_{K}(L)$, the cochain complex $C_{\mathcal{M}}^*(\mathscr{C}_{K}(L);\mathbb{Z})$ recovers the even Khovanov chain complex.
\end{remark}

\subsection{Signed cubical flow categories are functors from the cube to the signed Burnside category}
The construction of the Odd Khovanov homotopy type in \cite{MR4078823} does not rely on cubical flow categories, but instead unitary, lax functors $\underline{2}^n\to \mathscr{B}_\sigma$ from the cube category to the signed Burnside category. We want to show that these objects are equivalent in study, since we want to show that our definition of the odd Khovanov homotopy type is equivalent to the construction in \cite{MR4078823}. The following constructions that we define follow \cite{MR4153651}, Section 4.3, with the additional detail of adding sign maps:
\begin{construction}\label{flow category to functor}
  Fix a signed cubical flow category $\mathfrak{f}:\mathscr{C}\to \mathscr{C}_C(n)$ with a sign map $\sigma: \coprod_{x,y}\widehat{A}_{x,y}\to \mathbb{F}_2$. We construct a unitary, lax $2$-functor $F: \underline{2}^n\to \mathscr{B}_\sigma$ from the cube to the signed Burnside category. Namely, we need to define the sets $X_u:=F(u)$, the correspondences $(A_{u,w},s_{A_{u,v}},t_{A_{u,v}},\sigma_{A_{u,v}}):=F(\phi_{u,v})$, and the isomorphisms of correspondences $F_{u,v,w}:A_{v,w}\times_{X_v}A_{u,v}\to A_{u,w}$.
  \begin{itemize}
  \item For $u\in \underline{2}^n$, define $X_u:= \mathfrak{f}^{-1}(u)$.
  \item For $u,v\in \underline{2}^n$, $u>v$, define
    \[
      A_{u,v}:= \coprod_{\substack{x\in X_u \\ y\in X_v}}\widehat{A}_{x,y}.
    \]
    The sources $s_{A_{u,v}}$ and targets $t_{A_{u,v}}$, when restricted to the subset $\widehat{A}_{x,y}$, are simply constant maps to $x$ and $y$ respectively. Similarly, $\sigma_{A_{u,v}}:=(-1)^{\sigma}$.
  \item For $\gamma\in \widehat{A}_{y,z}\subseteq A_{v,w}$, $\xi\in \widehat{A}_{x,y}\subseteq A_{u,v}$, define $F_{u,v,w}(\gamma,\xi) = \gamma\circ \xi$.
  \end{itemize}
\end{construction}
\begin{construction}\label{functor to flow category}
  Fix a strictly unitary lax $2$-functor $\underline{2}^n\to \mathscr{B}_\sigma$. We build a signed cubical flow category $\mathfrak{f}:\mathscr{C}\to \mathscr{C}_C(n)$ with a sign map $\sigma$.
  \begin{itemize}
  \item $\Ob(\mathscr{C}) = \coprod_{u\in \underline{2}^n}F(u)$. For $x\in F(u)$, we define $\mathfrak{f}(x)=u$.
  \item Let $x,y\in \Ob(\mathscr{C})$, with $\mathfrak{f}(x) = u$, $\mathfrak{f}(y) = v$. Consider the subset $\widehat{B}_{x,y}=s^{-1}(x)\cap t^{-1}(y)\subseteq A_{u,v}=F(\phi_{u,v})$. For $x,y\in \Ob(\mathscr{C})$, we define
    \[
      \mathcal{M}(x,y)=\widehat{B}_{x,y}\times \mathcal{M}_{\mathscr{C}_C(n)}(u,v).
    \]
    As usual, $\widehat{A}_{x,y}$ denotes the set of components of $\mathcal{M}(x,y)$.
  \item For a component $\gamma=\{p\}\times \mathcal{M}_{\mathscr{C}_C(n)}(u,v)\in \widehat{A}_{x,y}$, define $\sigma(\gamma)$ implicitly by $(-1)^{\sigma(\gamma)} = \sigma_{A_{u,v}}(p)$.
  \item For $\mathfrak{f}(x) = u$, $\mathfrak{f}(y) = v$, $\mathfrak{f}(z) = w$, we define the composition map
    \[
      \circ:\left(\widehat{B}_{y,z}\times \mathcal{M}_{\mathscr{C}_C(n)}(v,w)\right)\times\left(\widehat{B}_{x,y}\times \mathcal{M}_{\mathscr{C}_C(n)}(u,v)\right)\to \widehat{B}_{x,z}\times \mathcal{M}_{\mathscr{C}_C(n)}(u,w)
    \]
    to be $F_{u,v,w}$ on the $\widehat{B}$ factors and $\circ_{\mathscr{C}_C(n)}$ on the $\mathcal{M}$ factors.
  \end{itemize}
\end{construction}
\begin{lemma}\label{equivalent constructions}
  Construction \ref{flow category to functor} defines a unitary, lax $2$-functor $F: \underline{2}^n\to \mathscr{B}_\sigma$. Similarly, Construction \ref{functor to flow category} defines a signed flow category $\mathfrak{f}:\mathscr{C}\to \mathscr{C}_C(n)$ with a sign map $\sigma$. 
\end{lemma}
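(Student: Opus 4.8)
The plan is to verify each construction against the relevant list of axioms, taking the unsigned statements from Lawson--Lipshitz--Sarkar \cite{MR4153651}, Section 4.3, as given and checking that the sign data is consistent throughout; the only genuinely new content is matching $\mathbb{F}_2$-additivity of $\sigma$ under composition with multiplicativity of the sign functions of a $\mathscr{B}_\sigma$-morphism.

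For Construction \ref{flow category to functor}, I would first observe that after forgetting signs the data $(X_u, A_{u,v}, s_{A_{u,v}}, t_{A_{u,v}}, F_{u,v,w})$ is exactly the unitary lax $2$-functor built in \cite{MR4153651}, so unitality, the fact that each $F_{u,v,w}$ is an isomorphism of (unsigned) correspondences, and the lax coherence identities all hold. It then remains to check that adjoining $\sigma_{A_{u,v}} := (-1)^{\sigma}$ produces a morphism in $\mathscr{B}_\sigma$: this is immediate since $\sigma$ is defined on all of $\coprod_{x,y}\widehat{A}_{x,y}$ and $A_{u,v} = \coprod_{x,y}\widehat{A}_{x,y}$, and the $F_{u,v,w}$ remain isomorphisms of \emph{signed} correspondences precisely because $\sigma(\gamma\circ\xi) = \sigma(\gamma)+\sigma(\xi)$ (Definition \ref{signed flow category}) translates into the required multiplicativity $(-1)^{\sigma(\gamma\circ\xi)} = (-1)^{\sigma(\gamma)}(-1)^{\sigma(\xi)}$.

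For Construction \ref{functor to flow category}, I would check (FC-1)--(FC-3) in turn. Axiom (FC-1) is built into the definition, since $\mathcal{M}(x,y)$ is only declared for $x>y$ and is empty otherwise. For (FC-2), when $\gr(x)-\gr(y)=k$ with $\mathfrak{f}(x)=u$, $\mathfrak{f}(y)=v$, we have $\mathcal{M}(x,y)=\widehat{B}_{x,y}\times\mathcal{M}_{\mathscr{C}_C(n)}(u,v)=\widehat{B}_{x,y}\times\Pi^{k-1}$, a disjoint union of finitely many copies of $\Pi^{k-1}$, which is a compact $(k-1)$-dimensional $\langle k-1\rangle$-manifold by Lemma \ref{permutohedron as n-manifold}. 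For (FC-3), the composition is $F_{u,v,w}$ on the $\widehat{B}$-factors and $\circ_{\mathscr{C}_C(n)}$ on the $\mathcal{M}$-factors; since $\circ_{\mathscr{C}_C(n)}$ realizes the boundary decomposition of $\mathcal{M}_{\mathscr{C}_C(n)}(u,w)$ as $\langle k-2\rangle$-manifolds (Definition \ref{cube flow definition}) and $F_{u,v,w}$ is a bijection of finite sets compatible with iterated composition, the product map is a diffeomorphism $\coprod_y \mathcal{M}(y,z)\times\mathcal{M}(x,y)\cong\partial_i\mathcal{M}(x,z)$ respecting the $\langle k-2\rangle$-structure, which is carried entirely by the permutohedron factor. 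Finally $\mathfrak{f}$ is grading-preserving and on moduli spaces is the projection $\widehat{B}_{x,y}\times\mathcal{M}_{\mathscr{C}_C(n)}(u,v)\to\mathcal{M}_{\mathscr{C}_C(n)}(u,v)$, hence a trivial covering, so $(\mathscr{C},\mathfrak{f})$ is a cubical flow category; and the sign map $\sigma(\gamma)$ defined by $(-1)^{\sigma(\gamma)}=\sigma_{A_{u,v}}(p)$ is additive under composition because $\sigma_{A_{u,v}}$ is multiplicative under fiber-product composition in $\mathscr{B}_\sigma$.

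The main obstacle is bookkeeping, not conceptual difficulty: one must ensure the $\langle k-2\rangle$-manifold structures in (FC-3) agree on both sides and that the lax coherence of $F$ is precisely what makes the iterated moduli-space compositions in $\mathscr{C}$ associative as manifolds with corners. Both of these are already settled in the unsigned case in \cite{MR4153651}, and since the signs only enter through an $\mathbb{F}_2$-valued cochain-type condition that is manifestly preserved by composition, nothing new can break once signs are included. One could additionally note that the two constructions are mutually inverse, though that is not asserted by the statement.
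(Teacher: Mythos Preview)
Your proposal is correct and follows essentially the same approach as the paper: both defer the unsigned statements to \cite{MR4153651} (the paper cites Lemmas 4.18 and 4.20 specifically, you cite Section 4.3) and then observe that the only new content is the sign compatibility, which reduces to the equivalence between $\mathbb{F}_2$-additivity of $\sigma$ and multiplicativity of $(-1)^\sigma$. Your version spells out more of the (FC-1)--(FC-3) verification than the paper does, but the structure is the same.
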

\begin{proof}
  We certainly have from \cite{MR4153651}, Lemma 4.18 that forgetting the sign maps $\sigma_{A_{u,v}}$, $F$ is a unitary, lax $2$-functor $\underline{2}^n\to \mathscr{B}$. The compatibility of the sign maps $\sigma_{A_{u,v}}$ under composition follows from the naturality of the sign map $\sigma$.\par
  We also have from \cite{MR4153651}, Lemma 4.20 that Construction \ref{functor to flow category} defines a flow category. And similarly, the compatibility of $\sigma$ under composition follows from the naturality of the sign maps $\sigma_{A_{u,v}}$.
\end{proof}
\subsection{Cubical neat embeddings}
Fix a cube flow category $\mathscr{C}_C(n)$ and fix a tuple $\mathbf{d} = (d_0,\ldots,d_{n-1})\in \mathbb{N}^n$ and a real number $R>0$ (we think of $R$ as a ``large'' number). For any $u>v$ in $\Ob(\mathscr{C}_C(n))= \underline{2}^n$, define
\[
  E_{u,v} = \left[\prod_{i=|v|}^{|u|-1}(-R,R)^{d_i}\right]\times \mathcal{M}_{\mathscr{C}_C(n)}(u,v).
\]
We can think of $E_{u,v}$ as a ``thickened'' moduli space. For $u>v>w$ in $\Ob(\mathscr{C}_C(n))$, there is a map $E_{v,w}\times E_{u,v}\to E_{u,w}$ given by:
\begingroup
  \allowdisplaybreaks
\begin{align*}
  E_{v,w}\times E_{u,v} &= \prod_{i=|w|}^{|v|-1}(-R,R)^{d_i}\times \mathcal{M}_{\mathscr{C}_C(n)}(v,w)\times \prod_{i=|v|}^{|u|-1}(-R,R)^{d_i}\times \mathcal{M}_{\mathscr{C}_C(n)}(u,v)\\
                        &\cong \prod_{i=|w|}^{|u|-1}(-R,R)^{d_i}\times \mathcal{M}_{\mathscr{C}_C(n)}(v,w)\times \mathcal{M}_{\mathscr{C}_C(n)}(u,v)\\
                        &\xhookrightarrow{\text{Id}\times\circ} \prod_{i=|w|}^{|u|-1}(-R,R)^{d_i}\times \mathcal{M}_{\mathscr{C}_C(n)}(u,w).
\end{align*}
\endgroup
Now we discuss cubical neat embeddings: a way to fit the moduli spaces $\mathcal{M}(x,y)$ into our thickened moduli spaces $E_{u,v}$.
\begin{definition}\label{cubical neat embedding def}
  A \textit{cubical neat embedding} $\iota$ of a (signed or unsigned) flow category $\mathscr{C}$ relative to a tuple $\mathbf{d} = (d_0,\ldots,d_{n-1})\in \mathbb{N}^n$ consists of neat embeddings $\iota_{x,y}:\mathcal{M}_{\mathscr{C}}(x,y)\to E_{\mathfrak{f}(x),\mathfrak{f}(y)}$ satisfying:
    \begin{enumerate}[label = (\arabic*), ref = \arabic*]
    \item For each $x,y\in \Ob(\mathscr{C})$, the following diagram commutes:
      \begin{equation*}
        \begin{tikzpicture}[scale=2]
  \path[draw, ->, shorten <=1.0cm,shorten >=0.7cm, thick] (0,1) -- (2,1) node [midway, label={[label distance=-0.2cm]90:$\iota_{x,y}$}] {};
  \path[draw, ->>, shorten <=0.3cm,shorten >=0.7cm, thick] (2,1) -- (2,0) {};
  \path[draw, ->, shorten <=0.7cm,shorten >=0.5cm, thick] (0,1) -- (2,0) node [midway, label={[label distance=-0.2cm]-125:$f_{x,y}$}] {};
  \node at (0, 1) {$\mathcal{M}_\mathscr{C}(x,y)$};
  \node at (2,1) {$E_{\mathfrak{f}(x),\mathfrak{f}(y)}$};
  \node at (2,0) {$\mathcal{M}_{\mathscr{C}_C(n)}(\mathfrak{f}(x),\mathfrak{f}(y))$)};
   \end{tikzpicture}
      \end{equation*}
      
    \label{lift of moduli space}
    \item For each $u,v\in\Ob(\mathscr{C})$, the induced map
    \begin{equation*}
      \coprod_{\substack{x,y\\\mathfrak{f}(x)=u,\mathfrak{f}(y)=v}} \iota_{x,y}:\coprod_{\substack{x,y\\\mathfrak{f}(x)=u,\mathfrak{f}(y)=v}}\mathcal{M}_\mathscr{C}(x,y)\to E_{u,v}
    \end{equation*}
    is a neat embedding \label{embedding criterion}
  \item For each $x,y,z\in\Ob(\mathscr{C})$, the following diagram commutes:
    \begin{equation*}
      \begin{tikzpicture}[scale=2]
  \path[draw, ->, shorten <=2.1cm,shorten >=1.0cm, thick] (0,1) -- (3,1) node [midway, label={[label distance=-0.2cm]90:$\cdot \circ \cdot$}] {};
  \path[draw, ->, shorten <=1.9cm,shorten >=1.0cm, thick] (0,0) -- (3,0) node [midway, label={[label distance=-0.2cm]90:$\cdot \circ\cdot$}] {};
  \path[draw, ->, shorten <=0.3cm,shorten >=0.3cm, thick] (0,1) -- (0,0);
  \path[draw, ->, shorten <=0.3cm,shorten >=0.3cm, thick] (3,1) -- (3,0) {};
     \node at (0, 1) {$\mathcal{M}_\mathscr{C}(y,z)\times\mathcal{M}_\mathscr{C}(x,y)$};
   \node at (3, 1) {$\mathcal{M}_\mathscr{C}(x,z)$};
   \node at (0, 0) {$E_{\mathfrak{f}(y),\mathfrak{f}(z)}\times E_{\mathfrak{f}(x),\mathfrak{f}(y)}$};
   \node at (3, 0) {$E_{\mathfrak{f}(x),\mathfrak{f}(z)}$};
   \end{tikzpicture}
    \end{equation*}
    
  \end{enumerate}
\end{definition}
For our construction in Section \ref{cubical realization}, we need to fit a ``slightly thickened'' version of the moduli spaces $\mathcal{M}(x,y)$ into our ``greatly thickened'' moduli spaces $E_{u,v}$.
\begin{definition}
  We can, in fact, extend the embedding $\iota_{x,y}$ to an embedding
  \begin{align*}
    \overline{\iota}_{x,y}:  \left[\prod_{i=|v|}^{|u|-1}(-\epsilon,\epsilon)^{d_i}\right]\times \mathcal{M}_{\mathscr{C}}(x,y)& \to  \left[\prod_{i=|v|}^{|u|-1}(-R,R)^{d_i}\right]\times \mathcal{M}_{\mathscr{C}_C(n)}(u,v),\\
    (a,p)\mapsto \iota_{x,y}(p)+(a,0)&\quad \text{for }a\in \prod_{i=|v|}^{|u|-1}(-\epsilon,\epsilon)^{d_i}\text{, } p\in \mathcal{M}_{\mathscr{C}}(x,y)
  \end{align*}
\end{definition}
for some small $\epsilon>0$. By Condition \ref{lift of moduli space}, $\iota(\mathcal{M}(x,y))$ is transverse to the fibers $\left[\prod_{i=|v|}^{|u|-1}(-R,R)^{d_i}\right]\times \{p\}$. So for small enough $\epsilon$, $\overline{\iota}_{x,y}$ is indeed an embedding.
\subsection{The signed cubical realization}\label{cubical realization}
\begin{definition}\label{cubical realization def}
Fix a cubical neat embedding $\iota$ of a signed cubical flow category $(\mathscr{C},\mathfrak{f}:\Sigma^N\mathscr{C}\to \mathscr{C}_C(n),\sigma)$ relative to a tuple $\mathbf{d}=(d_0,\ldots,d_{n-1})$. From $\mathscr{C},\iota$, we build a CW complex $||\mathscr{C}||=||\mathscr{C}||_{\iota}$ satisfying:
\begin{enumerate}[label = (C-\arabic*), ref = (C-\arabic*)]
  \item The CW complex $||\mathscr{C}||$ has one cell for each $x\in \Ob(\mathscr{C})$. Letting $u$ denote $\mathfrak{f}(x)$, this cell is given by
	\[
	  \mathcal{C}(x)=\prod_{i=0}^{|u|-1}[-R,R]^{d_i}\times \prod_{i=|u|}^{n-1}[-\epsilon,\epsilon]^{d_i}\times J \times \widetilde{\mathcal{M}}_{\mathscr{C}_C(n)}(u,\overline{0}),
	\]
        where $J$ is the usual unit interval $[0,1]$, and
        \[
          \widetilde{\mathcal{M}}_{\mathscr{C}_C(n)}(u,\overline{0}) =
          \begin{cases*}
            [0,1]\times \mathcal{M}_{\mathscr{C}_C(n)}(u,\overline{0}) & if $u \neq \overline{0}$\\
            \{0\} & if $u = \overline{0}$.
          \end{cases*}
        \]
    \item For any $x,y\in \Ob(\mathscr{C})$ with $\mathfrak{f}(x)=u>\mathfrak{f}(y)=v$, the cubical neat embedding $\iota$ gives an embedding
      \begingroup
      \allowdisplaybreaks
      \begin{align*}
        \jmath_\gamma:& \mathcal{C}(y) \times \gamma\\
        &\xhookrightarrow{\tau^{\sigma(\gamma)}\times\text{Id}}\mathcal{C}'(y)\times \gamma\\
        &\subseteq \mathcal{C}(y) \times \mathcal{M}_{\mathscr{C}}(x,y)\\
        &=\prod_{i=0}^{|v|-1}[-R,R]^{d_i}\times \prod_{i=|v|}^{n-1}[-\epsilon,\epsilon]^{d_i}\times J \times \widetilde{\mathcal{M}}_{\mathscr{C}_C(n)}(v,\overline{0})\times \mathcal{M}_\mathscr{C}(x,y)\\
        &\cong \prod_{i=0}^{|v|-1}[-R,R]^{d_i}\times \prod_{i=|u|}^{n-1}[-\epsilon,\epsilon]^{d_i}\times J\times\widetilde{\mathcal{M}}_{\mathscr{C}_C(n)}(v,\overline{0})\times \left(  \prod_{i=|v|}^{|u|-1}[-\epsilon,\epsilon]^{d_i} \times \mathcal{M}_\mathscr{C}(x,y)\right)\\
        &\xhookrightarrow{\text{Id} \times \tilde{\iota}_{x,y}}\prod_{i=0}^{|v|-1}[-R,R]^{d_i}\times \prod_{i=|u|}^{n-1}[-\epsilon,\epsilon]^{d_i}\times J\times \widetilde{\mathcal{M}}_{\mathscr{C}_C(n)}(v,\overline{0})\times \left(  \prod_{i=|v|}^{|u|-1}[-R,R]^{d_i} \times \mathcal{M}_\mathscr{C}(u,v)\right)\\
        &\cong \prod_{i=0}^{|u|-1}[-R,R]^{d_i}\times \prod_{i=|u|}^{n-1}[-\epsilon,\epsilon]^{d_i} \times J\times \widetilde{\mathcal{M}}_{\mathscr{C}_C(n)}(v,\overline{0})\times \mathcal{M}_{\mathscr{C}_C(n)}(u,v)\\
        &\xhookrightarrow{} \prod_{i=0}^{|u|-1}[-R,R]^{d_i}\times \prod_{i=|u|}^{n-1}[-\epsilon,\epsilon]^{d_i}\times J\times  \partial(\widetilde{\mathcal{M}}_{\mathscr{C}_C(n)}(u,\overline{0}))\\
        &\subset \partial\mathcal{C}(x),
	\end{align*}
        \endgroup
	where $\gamma \in \widehat{A}_{x,y}$, and $\tau: \mathcal{C}(y)\to \mathcal{C}(y)$ denotes a flip $t\mapsto -t$ in the $J$-factor, and $\text{Id}$ on all other factors (so $\tau^{\sigma(\gamma)}=\text{Id}$ if and only if $\sigma(\gamma)=0$). We call the image of this map $\mathcal{C}_y(x)$.
  \item \label{attaching realization} The attaching map for $\mathcal{C}(x)$ sends the subspace $\mathcal{C}_y(x)\cong \mathcal{C}(y)\times\mathcal{M}_\mathscr{C}(x,y)$ of the boundary $\partial \mathcal{C}'(x)$ by the projection map to $\mathcal{C}(y)$, and sends the complement of $\bigcup_y \mathcal{C}_y(x)$ to the basepoint.
  \end{enumerate}
  The \textit{signed cubical realization} $\mathcal{X}_1(\mathscr{C})$ is defined to be the formal desuspension
  \[
    \mathcal{X}_1(\mathscr{C}):= \Sigma^{-(N+|\mathbf{d}|+1)}||\mathscr{C}||,
  \]
  where $|\mathbf{d}|$ denotes $d_0+\ldots+d_{n-1}$. (The desuspension ensures that the gradings $\gr(x)$ of objects $x\in \Ob(\mathscr{C})$ agree with the dimensions of the corresponding ``cells'' in $\mathcal{X}_1(\mathscr{C})$.)
\end{definition}
\begin{remark}\label{sign importance}
  Our construction of $\mathcal{X}_1(\mathscr{C})$ appears similar to the cubical realization $\mathcal{X}(\mathscr{C})$ from \cite{MR4153651}, Section 3.7. The difference of our definition is our introduction of the $J$-factor in the cells $\mathcal{C}(x)$. The $J$-factor's role in the attaching maps $\partial\mathcal{C}(x)\to \mathcal{C}(y)$ is characterized by the potential flips $\tau^{\sigma(W)}$, which themselves depend on the signs $\sigma(W)$. Observe that if $\sigma(W)=0$ for all $W$, the $J$-factor simply acts like a suspension factor, and the signed cubical realization $\mathcal{X}_1(\mathscr{C})$ agrees with $\mathcal{X}(\mathscr{C})$.
\end{remark}
\begin{definition}
  On the other hand, given an unsigned cubical flow category $\mathscr{C}$, we can equip to $\mathscr{C}$ the trivial sign map $\sigma:=0$ as in Definition \ref{signed flow category}. We can then define the signed cubical realization $\mathcal{X}_1(\mathscr{C})$ of $\mathscr{C}$. We observe, as in Remark \ref{sign importance}, $\mathcal{X}_1(\mathscr{C})$ is stably equivalent to $\mathcal{X}(\mathscr{C})$.
\end{definition}
\begin{lemma}
  $C_{\mathcal{M}}^*(\mathscr{C};\mathbb{F}_2)$ is canonically isomorphic to $C_{\cell}^*(\mathcal{X}_1(\mathscr{C});\mathbb{F}_2)$, where generators $x\in \Ob(\mathscr{C})$ map to the generators $\mathcal{C}(x)$ of $C_{\cell}^*(\mathcal{X}_1(\mathscr{C});\mathbb{F}_2)$. In particular, $C_{\mathcal{M}}^*(\mathscr{C}_K(L);\mathbb{F}_2)\cong KC^*(L;\mathbb{F}_2)$ for any link diagram $L$.
\end{lemma}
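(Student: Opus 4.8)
The plan is to build the isomorphism directly on generators and then match the two differentials by a degree computation. Both cochain complexes are free $\mathbb{F}_2$-modules on $\Ob(\mathscr{C})$: the left one by (K-1), and the right one because, by (C-1), $||\mathscr{C}||$ has exactly one cell $\mathcal{C}(x)$ for each $x\in\Ob(\mathscr{C})$. So I would declare the generator correspondence to be $x\mapsto\mathcal{C}(x)$ and first check it is grading preserving. Counting factors in Definition \ref{cubical realization def} gives $\dim\mathcal{C}(x)=|\mathbf{d}|+1+\dim\widetilde{\mathcal{M}}_{\mathscr{C}_C(n)}(\mathfrak{f}(x),\overline{0})=|\mathbf{d}|+1+|\mathfrak{f}(x)|$ (uniformly in $\mathfrak{f}(x)$, including $\mathfrak{f}(x)=\overline 0$), and since $\mathfrak{f}\colon\Sigma^N\mathscr{C}\to\mathscr{C}_C(n)$ preserves gradings we have $|\mathfrak{f}(x)|=\gr_{\mathscr{C}}(x)+N$; hence after the formal desuspension by $N+|\mathbf{d}|+1$ the cell $\mathcal{C}(x)$ lands in dimension $\gr_{\mathscr{C}}(x)$, matching (K-1). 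Note also that this correspondence makes no reference to the auxiliary data $\iota,\mathbf{d},R,\epsilon$, which is what ``canonical'' will mean.

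The substantive point is matching the differentials, and the key observation is that over $\mathbb{F}_2$ the sign data does nothing. By Remark \ref{sign importance}, $\mathcal{X}_\sigma(\mathscr{C})$ differs from the unsigned realization only through the flips $\tau^{\sigma(\gamma)}$ appearing in the gluing maps $\jmath_\gamma$ of (C-2); a flip merely reverses an orientation, so it has no effect on the cellular chain complex with $\mathbb{F}_2$ coefficients. Thus I am reduced to the cellular boundary computation, which is carried out exactly as in the unsigned setting of \cite{MR4153651}: for $\gr(x)=\gr(y)+1$, the relevant coefficient is the $\mathbb{F}_2$-degree of
\[
  \partial\mathcal{C}(x)\longrightarrow ||\mathscr{C}||^{(k-1)}\longrightarrow ||\mathscr{C}||^{(k-1)}/||\mathscr{C}||^{(k-2)}\longrightarrow \mathcal{C}(y)/\partial\mathcal{C}(y)=S^{k-1},\qquad k=\dim\mathcal{C}(x).
\]

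To evaluate this degree I would argue as follows. Since $\gr(x)-\gr(y)=1$, the moduli space $\mathcal{M}_{\mathscr{C}}(x,y)$ is a compact $0$-manifold, so $\widehat{A}_{x,y}$ is finite with $|\widehat{A}_{x,y}|=\#\mathcal{M}_{\mathscr{C}}(x,y)$, and for each $\gamma\in\widehat{A}_{x,y}$ the map $\jmath_\gamma$ of (C-2) embeds a copy $\mathcal{C}(y)\times\gamma\cong\mathcal{C}(y)$, a $(k-1)$-disk, into $\partial\mathcal{C}(x)$ which the attaching map of \ref{attaching realization} carries homeomorphically onto $\mathcal{C}(y)$ via the projection (up to the irrelevant flip). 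Everything in $\partial\mathcal{C}(x)$ outside $\bigcup_{\gamma}\jmath_\gamma(\mathcal{C}(y)\times\gamma)$ is sent to the basepoint, to $\partial\mathcal{C}(y)$, or into a cell $\mathcal{C}(y')$ with $y'\neq y$, and in every case maps to the basepoint of the summand $\mathcal{C}(y)/\partial\mathcal{C}(y)$. Hence the preimage of a generic interior point of $\mathcal{C}(y)$ is one regular point per $\gamma$, the degree is $\#\mathcal{M}_{\mathscr{C}}(x,y)\bmod 2$, and by (K-2) this is precisely the coefficient of $x$ in $\delta y$ in $C^*_{\mathcal{M}}(\mathscr{C};\mathbb{F}_2)$. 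This proves the first claim, and the ``in particular'' then follows by taking $\mathscr{C}=\mathscr{C}_K(L)$ and invoking the canonical identification $C^*_{\mathcal{M}}(\mathscr{C}_K(L);\mathbb{F}_2)\cong KC^*(L;\mathbb{F}_2)$ recalled in the Khovanov example.

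I do not expect a serious obstacle: the only genuinely new ingredient beyond \cite{MR4153651} is the $\mathbb{F}_2$-invisibility of the flips $\tau^{\sigma(\gamma)}$ in the second paragraph, and the only step requiring real care is the bookkeeping in the third paragraph — verifying that the higher-codimension faces of $\partial\mathcal{C}(x)$, along which the cells $\mathcal{C}(y')$ with $\gr(y')<\gr(y)$ attach, contribute nothing to the degree onto $\mathcal{C}(y)$.
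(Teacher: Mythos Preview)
Your proposal is correct and takes the same approach as the paper, which in fact gives almost no detail: the paper's proof simply observes that the generator correspondence is immediate from the definitions and leaves compatibility with the differential as an exercise. Your argument fills in precisely that exercise, including the observation that the flips $\tau^{\sigma(\gamma)}$ are invisible over $\mathbb{F}_2$.
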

\begin{proof}
  The one-to-one correspondence of generators is immediate from Definitions \ref{morse chain complex} and \ref{cubical realization def}. The fact that the differentials agree is immediate.
\end{proof}
\begin{remark}
  The space $\mathcal{X}_1(\mathscr{C})$ is equivalent to  $\mathcal{X}(\mathscr{C})$ for $\mathscr{C}$ unsigned, but the addition of the $J$-factor streamlines our computation of $\Sq^2$ on $\mathcal{X}_1(\mathscr{C})$. In the signed case, the $J$ factor will be essential.
\end{remark}
\subsection{The signed cubical realization is stably equivalent to the signed realization}
\begin{definition}
  We define a \textit{stable Burnside functor} $\Sigma^r F$ to be a pair $(F:\underline{2}^n\to \mathscr{B}_\sigma,r)$ of a functor $F:\underline{2}^n\to \mathscr{B}_\sigma$, and an integer $r$. Given such a stable Burnside functor we fix a $k$-dimensional spatial refinement $\widetilde{F}_k$, denote $\widetilde{F}_k^+$ for the extended diagram (see \cite{MR4153651}, Section 4.4 for a definition), and then define $||F||_k = \hocolim \widetilde{F}_k^+$ (see \cite{MR4153651}, Definition 4.9 for a definition). Define the realization $|\Sigma^rF|$ as the finite CW spectrum $(||F||_k,r-k)$.
\end{definition}

\begin{proposition}[\cite{MR4078823}]
  Let $F:\underline{2}^n\to \mathscr{B}_\sigma$ be a strictly unitary, lax $2$-functor and let $\widetilde{F}_k$ be a $k$-dimensional \textit{spatial refinement} of $F$. The realization $||F||_k$ carries a CW complex structure whose nonbasepoint cells correspond to the elements of the set $\coprod_{u\in\underline{2}^n}F(u)$. Further, the equivalences $\Sigma \hocolim \widetilde{F}_k^+\cong \hocolim \widetilde{F}_{k+1}^+$ can be chosen to be cellular, so $\hocolim \widetilde{F}^+$ inherits the structure of a CW spectrum.
\end{proposition}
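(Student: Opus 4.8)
The plan is to recall the standard model of the realization as an iterated mapping cone (equivalently, as the geometric realization of the bar construction of $\widetilde{F}_k^+$ over $\underline{2}^n$), which exactly parallels the CW complex $||\mathscr{C}||$ built in Definition~\ref{cubical realization def}; this is carried out in \cite{MR4078823} (and, for the unsigned case, in \cite{MR4153651}), and I would follow that treatment. The point to establish is that with the right choices every map occurring in the construction is cellular, whereupon both assertions fall out formally.

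First I would fix a spatial refinement in which each $\widetilde{F}_k(u)$ is the wedge $\bigvee_{x\in F(u)} S^k$ of $k$-spheres indexed by the finite set $F(u)$, and each structure map $\widetilde{F}_k(\phi_{u,v})$ is a Pontryagin--Thom (``little box'') collapse map between such wedges; with the standard CW structures on wedges of spheres these collapse maps are cellular. The realization $\hocolim\widetilde{F}_k^+$ is assembled by iteratively coning off the structure maps along the cube directions: filtering by $|u|$, at stage $|u|=j$ one attaches, for each $u$ with $|u|=j$ and each $x\in F(u)$, a single cell of dimension $k+|u|$, glued by the map built out of the $\widetilde{F}_k(\phi_{u,v})$ together with the flips $\tau^{\sigma(\gamma)}$ as in \ref{attaching realization}. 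Since the mapping cone of a cellular map of CW complexes is a CW complex, $||F||_k$ inherits a CW structure, and by construction its cells other than the basepoint are in bijection with $\coprod_{u\in\underline{2}^n}F(u)$ --- the cell attached for $x\in F(u)$ being the analogue of the cell $\mathcal{C}(x)$ of Definition~\ref{cubical realization def}. Note that the sign data $\sigma$ enters only through the attaching maps, not through the cell structure.

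For the second assertion I would arrange the refinements in consecutive dimensions compatibly, taking the $(k{+}1)$-dimensional refinement to be the reduced suspension of the $k$-dimensional one: $\widetilde{F}_{k+1}(u)=\Sigma\widetilde{F}_k(u)$ and $\widetilde{F}_{k+1}(\phi_{u,v})=\Sigma\widetilde{F}_k(\phi_{u,v})$, which are again cellular. Because reduced suspension is a left adjoint it commutes with homotopy colimits (equivalently, with the bar construction), so
\[
  \Sigma\hocolim\widetilde{F}_k^+\ \cong\ \hocolim\Sigma\widetilde{F}_k^+\ =\ \hocolim\widetilde{F}_{k+1}^+,
\]
and this identification carries the dimension-$(k+|u|)$ cell indexed by $x\in F(u)$ to the dimension-$(k+1+|u|)$ cell indexed by the same $x$; in particular it is cellular. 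Hence the CW structures on the $||F||_k$ assemble to a CW spectrum structure on $\hocolim\widetilde{F}^+$, with cells still indexed by $\coprod_{u\in\underline{2}^n}F(u)$.

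The main obstacle --- and the reason this is recorded as a lemma rather than being immediate --- is bookkeeping rather than geometry: the spatial refinement, the homotopies witnessing lax $2$-functoriality, and the suspension identifications $\Sigma\widetilde{F}_k\simeq\widetilde{F}_{k+1}$ all involve choices (box embeddings into Euclidean space and interpolating homotopies between them), and one must check that these can be made simultaneously cellular and coherent across all of $\underline{2}^n$. I expect that, as in \cite{MR4078823}, this is handled by a careful but routine cellular-approximation argument applied uniformly over the cube; once it is in place, the two statements are purely formal consequences of the iterated mapping-cone description above.
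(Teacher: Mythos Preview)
Your proposal is correct and in fact more detailed than what the paper provides: the paper's entire proof is the single sentence ``The proof generalizes without changes from \cite{MR4153651},'' deferring to the unsigned argument in Lawson--Lipshitz--Sarkar. The iterated mapping-cone / bar-construction description you outline, with Pontryagin--Thom box maps giving cellular attaching maps and suspension commuting with hocolim, is exactly the argument carried out in \cite{MR4153651} and \cite{MR4078823} that the paper is citing, so your approach and the paper's are the same (you have simply unpacked the reference).
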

\begin{proof}
  The proof generalizes without changes from \cite{MR4153651}.
\end{proof}
\begin{theorem}
  Let $(\mathscr{C},\mathfrak{f}:\Sigma^N\mathscr{C}\to\mathscr{C}_C(n),\sigma)$ be a signed cubical flow category, and let $F:\underline{2}^n\to \mathscr{B}_\sigma$ be its corresponding unitary, lax $2$-functor (Lemma \ref{equivalent constructions}). The signed cubical realization is stably homotopy equivalent to the signed realization $|\Sigma^{-N}F| = (||F||_k,-N-k)$.
\end{theorem}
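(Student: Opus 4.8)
The plan is to reduce to the unsigned comparison theorem of Lawson--Lipshitz--Sarkar and then carefully track the extra sign data. First I would note that when $\sigma\equiv 0$ the statement is exactly the unsigned case: by Remark~\ref{sign importance}, $\mathcal{X}_\emptyset(\mathscr{C})$ is stably equivalent to the cubical realization $\mathcal{X}(\mathscr{C})$ of \cite{MR4153651}, which in turn is stably equivalent to the Burnside realization $|F|$ by \cite{MR4153651}, Section~4. So the only genuinely new content is the compatibility of the two bookkeeping devices for signs: the reflections $\tau^{\sigma(\gamma)}$ in the $J$-factor appearing in Definition~\ref{cubical realization def} on the flow-category side, versus the sign-twist built into the signed homotopy colimit that defines $\|F\|_k$ on the Burnside side.

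Concretely, I would proceed as follows. The cubical realization $\mathcal{X}_\sigma(\mathscr{C})=\Sigma^{-(N+|\mathbf{d}|+1)}\|\mathscr{C}\|$ is built from a choice of cubical neat embedding $\iota$ relative to $\mathbf{d}=(d_0,\dots,d_{n-1})$. From $\iota$ together with the sign map $\sigma$, I would extract a $k$-dimensional spatial refinement $\widetilde{F}_k$ of $F$ with $k=|\mathbf{d}|+1$: for each edge $u>_1 v$ and each component $\gamma\in\widehat{A}_{x,y}\subseteq A_{u,v}$, the extended embedding $\overline{\iota}_{x,y}$ supplies a little box in $\prod_{i=|v|}^{|u|-1}\mathbb{R}^{d_i}$, the $J$-direction supplies one more coordinate, and the sign $\sigma(\gamma)$ prescribes a reflection in that $J$-coordinate — arranged precisely so that the coherence condition $\sigma(\gamma\circ\xi)=\sigma(\gamma)+\sigma(\xi)$ of Definition~\ref{signed flow category} becomes the condition demanded of a spatial refinement of a lax functor into $\mathscr{B}_\sigma$. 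Here Conditions \ref{lift of moduli space}--\ref{attaching realization} of the cubical neat embedding and the transversality behind $\overline{\iota}_{x,y}$ are exactly the data needed to run the refinement construction, and all of the $\epsilon$-versus-$R$ rescaling and neatness estimates are identical to \cite{MR4153651}. Since the stable homotopy type of $|\Sigma^{-N}F|$ does not depend on the chosen refinement, it suffices to compare against $\|F\|_k$ built from this particular $\widetilde{F}_k$.

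Next I would compare the two CW structures cell by cell. Both $\|\mathscr{C}\|$ and $\|F\|_k$ have, apart from the basepoint, exactly one cell for each element of $\coprod_{u\in\underline{2}^n}F(u)=\Ob(\mathscr{C})$, and I would check that the attaching maps coincide. On the flow side the attaching map of the cell $\mathcal{C}(x)$ is described by the embeddings $\jmath_\gamma$ of Definition~\ref{cubical realization def}, which send $\mathcal{C}_y(x)\cong\mathcal{C}(y)\times\mathcal{M}_\mathscr{C}(x,y)$ into $\partial\mathcal{C}(x)$ after applying the flip $\tau^{\sigma(\gamma)}$, and then collapse the complement of $\bigcup_y\mathcal{C}_y(x)$; on the Burnside side the attaching map of the corresponding cell is assembled from the correspondences $A_{u,v}$ with their signs $\sigma_{A_{u,v}}$, which by the construction of $\widetilde{F}_k$ enter through the very same $J$-coordinate reflection. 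Hence the identity on cells defines a cellular homeomorphism $\|\mathscr{C}\|\to\|F\|_k$ — or, if one prefers to absorb the rescalings into a homotopy, a cellular homotopy equivalence, which is all that is needed. Desuspending both sides by $N+k=N+|\mathbf{d}|+1$ then gives $\mathcal{X}_\sigma(\mathscr{C})=\Sigma^{-(N+|\mathbf{d}|+1)}\|\mathscr{C}\|\simeq(\|F\|_k,-N-k)=|\Sigma^{-N}F|$, as claimed.

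The main obstacle is precisely the sign bookkeeping in the last two steps. One must verify that the single $J$-coordinate reflection $\tau^{\sigma(\gamma)}$ genuinely models the sign of the correspondence inside the signed homotopy colimit: that it acts in the correct coordinate, that it is consistent under composition of morphisms in $\underline{2}^n$ (so that the assembled attaching map is well defined and agrees with the cocycle controlling $\|F\|_k$), and that sliding a sign past the neat-embedding boxes introduces no spurious additional twist. When $\sigma\equiv 0$ all of this is vacuous and one recovers \cite{MR4153651}; the content of the theorem is exactly showing that the general-$\sigma$ identifications encode the \emph{same} twist, which I expect to be a careful but ultimately routine verification modeled on the unsigned argument.
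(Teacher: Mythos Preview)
Your high-level strategy matches the paper's: build a spatial refinement $\widetilde{F}_k$ of $F$ directly from the cubical neat embedding $\iota$ (with the $J$-reflection $\tau^{\sigma(\gamma)}$ encoding the sign), then compare the resulting $\|F\|_k$ with $\|\mathscr{C}\|$ cell by cell. The paper carries this out as its Step~1, and your description of how $\sigma$ enters the box maps is correct.

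The gap is in your comparison step. You assert that ``the identity on cells defines a cellular homeomorphism $\|\mathscr{C}\|\to\|F\|_k$'', perhaps up to an innocuous rescaling. This is not so: the cells of $\|F\|_k$, coming from the homotopy colimit, carry an extra mapping-cylinder coordinate. Concretely, the cell over $x\in F(u)$ in $\|F\|_k$ is $\mathcal{C}''(x)=\mathcal{M}_{\mathscr{C}_C(n)}(u,\overline{0})\times[0,2]\times B_x$, whereas in $\|\mathscr{C}\|$ it is $\mathcal{C}(x)=\mathcal{M}_{\mathscr{C}_C(n)}(u,\overline{0})\times[0,1]\times B_x$. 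The paper's Step~2 defines the comparison map as the quotient $[0,2]\to[0,2]/[1,2]\cong[0,1]$ on that factor and the identity elsewhere; this is degree~$\pm 1$ on each cell, hence a stable equivalence, but it is genuinely not injective and cannot be replaced by a rescaling. The substantive work is then the paper's Step~3: verifying that this collapse is compatible with the two distinct attaching recipes, which amounts to tracing a boundary point through the box map $e_\gamma$ on the hocolim side and the embedding $\jmath_\gamma$ on the flow-category side and checking they agree. That chase is where the $\tau^{\sigma(\gamma)}$ appearing in both constructions must be matched up; it follows the template of \cite{MR4153651} but is not subsumed by it.
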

The proof, which we shall outline, generalizes from \cite{MR4153651}, Theorem 8. 
\begin{proof}
  First  fix a cubical neat embedding $\iota$ of $\mathscr{C}$, where $\iota$ is relative $\mathbf{d}=(d_0,\ldots,d_{n-1})$, defining $||\mathscr{C}|| := ||\mathscr{C}||_\iota$. Now let $k = 1 + \sum_i d_i $ (the ``$1$'' accounts for the extra $J$-factor).
  \paragraph{\textbf{Step 1}: Build a spatial refinement $\widetilde{F}_k$ of $F$ and define $||F||_k$. }
  The cubical neat embedding $\iota$ determines the cells of $||\mathscr{C}||$, which are of the following form: if $u\in \Ob(\underline{2}^n)$ and $x\in F(u)$, then
  \[
    \mathcal{C}(x) =
    \begin{cases*}
      \prod_{i=0}^{|u|-1}[-R,R]^{d_i}\times \prod_{i=|u|}^{n-1}[-\epsilon,\epsilon]^{d_i}\times J\times [0,1]\times \mathcal{M}_{\mathscr{C}_C(n)}(u,\overline{0}) & if $u\neq \overline{0}$,\\
      \prod_{i=0}^{n-1}[-\epsilon,\epsilon]^{d_i}\times J\times \{0\} & if $u = \overline{0}$.
    \end{cases*}
  \]
  Now for $u\in \Ob(\underline{2}^n)$, we define $\widetilde{F}^+(u) = \bigvee_{x\in F(u)}B_x/\partial{B_x}$, where the box $B_x$ associated to $x$ is defined by
  \[
    B_x = J\times \prod_{i=0}^{|u|-1}[-R,R]^{d_i}\times \prod_{i=|u|}^{n-1}[-\epsilon,\epsilon]^{d_i}.
  \]
  For the remaining object $*\in \underline{2}^n_+$, simply define $\widetilde{F}^+(*) = \{\text{pt}\}$. Now we define our family of box maps using $\iota$. For $v>u$ in $\underline{2}^n$ and a chain of non-identity morphisms $v = v^0\xrightarrow{f_1}\ldots \xrightarrow{f_m}v^m=u$, we define the corresponding spatial refinement map
  \[
    \widetilde{F}^+_k(f_m,\ldots,f_1): [0,1]^{m-1}\times \widetilde{F}^+_k(v)\to \widetilde{F}^+_k(u),
  \]
  as follows: For all $\gamma\in F(\phi_{v,u})$, $\gamma\subset \mathcal{M}_{\mathscr{C}}(y,x)$, we define a map $e_\gamma: \mathcal{M}_{\mathscr{C}_C(n)}(v,u)\times B_x\to B_y$. We denote the box $B_\gamma = J\times \prod_{i=0}^{|u|-1}[-R,R]^{d_i}\times\prod_{i=|u|}^{n-1}[-\epsilon,\epsilon]^{d_i}$. Define an embedding $\imath_\gamma: \gamma\times B_\gamma\to B_y$ by the composition
  \begingroup
  \allowdisplaybreaks
  \begin{align*}
    \gamma\times B_\gamma &\xhookrightarrow{(\Id\times\tau^{\sigma(\gamma)})}\gamma\times B_\gamma\\
    &\subset \mathcal{M}_{\mathscr{C}}(y,x)\times B_\gamma\\
                                                       &=\mathcal{M}_{\mathscr{C}_C(n)}(y,x) \times J\times \prod_{i=0}^{|u|-1}[-R,R]^{d_i}\times\prod_{i=|u|}^{n-1}[-\epsilon,\epsilon]^{d_i}\\
                                                       & \cong J\times \prod_{i=0}^{|u|-1}[-R,R]^{d_i}\times  \left( \prod_{i=|u|}^{|v|-1}[-\epsilon,\epsilon]^{d_i}\times \mathcal{M}_{\mathscr{C}_C(n)}(y,x)\right) \times\prod_{i=|v|}^{n-1}[-\epsilon,\epsilon]^{d_i}\\
                                                       & \xhookrightarrow{(\Id,\overline{\iota}_{x,y},\Id)} J\times \prod_{i=0}^{|u|-1}[-R,R]^{d_i}\times  \left( \prod_{i=|u|}^{|v|-1}[-R,R]^{d_i}\times \mathcal{M}_{\mathscr{C}_C(n)}(v,u)\right) \times\prod_{i=|v|}^{n-1}[-\epsilon,\epsilon]^{d_i}\\
                                                       &\xtwoheadrightarrow{(\Id,\pi^R,\Id)} J\times \prod_{i=0}^{|u|-1}[-R,R]^{d_i}\times \prod_{i=|u|}^{|v|-1}[-R,R]^{d_i} \times\prod_{i=|v|}^{n-1}[-\epsilon,\epsilon]^{d_i}\\
                                                       &\cong J\times \prod_{i=0}^{|v|-1}[-R,R]^{d_i}\times \prod_{i=|v|}^{n-1}[-\epsilon,\epsilon]^{d_i} = B_y
  \end{align*}
  \endgroup
  and define $e_\gamma: \mathcal{M}_{\mathscr{C}_C(n)}(v,u) \times B_\gamma \to B_y$ by the composition
  \[
    \mathcal{M}_{\mathscr{C}_C(n)}(v,u) \times B_\gamma \xhookrightarrow{(\mathfrak{f}_\gamma^{-1},\Id)} \gamma \times B_\gamma \xhookrightarrow{\imath_\gamma} B_y.
  \]
  Consider the induced map
  \begin{equation}\label{box maps}
    \mathcal{M}_{\mathscr{C}_C(n)}(v,u) \times \coprod_{\substack{\gamma\in F(\phi_{v,u})\\ s(\gamma) = y}} B_\gamma \to B_y.
  \end{equation}
  Since $\iota$ is a cubical neat embedding (in particular, see Definition \ref{cubical neat embedding def}, (\ref{embedding criterion})), it follows that for any point $\text{pt}\in \mathcal{M}_{\mathscr{C}_C(n)}(v,u)$, the restriction $\{\text{pt}\}\times \coprod_{\gamma\in F(\phi_{v,u})\mid s(\gamma) = y}B_\gamma\to B_y$ is an inclusion of disjoint sub-boxes.\par
  The chain $v=v^0>\ldots > v^m = u$ corresponds to a subcube $[0,1]^{m-1}\subset \mathcal{M}_{\mathscr{C}_C(n)}(v,u)$ in the cubical complex structure of $\mathcal{M}_{\mathscr{C}_C(n)}(v,u)$ (see \cite{MR4153651}, Lemma 3.20). Restrict the map from (\ref{box maps}) to $[0,1]^{m-1}\times \coprod_{\substack{\gamma\in F(\phi_{v,u})\\ s(\gamma) = y}} B_\gamma$ to obtain our family $e$ of sub-boxes $\coprod_{\substack{\gamma\in F(\phi_{v,u})\\ s(\gamma) = y}} B_\gamma \subset B_y$.
  
  \paragraph{\textbf{Step 2}: Define a cellular map $||F||_k\to ||\mathscr{C}||$.}
  For any $u\in \Ob(\underline{2}^n)$ and any $x\in F(u)$, the cell associated to $x$ in $||F||_k$ is
  \[
    \mathcal{C}''(x) =
    \begin{cases*}
      \mathcal{M}_{\mathscr{C}_C(n)}(u,\overline{0}) \times [0,2] \times B_x& if $u\neq \overline{0}$,\\
      \{0\} \times B_x & if $u = \overline{0}$.
    \end{cases*}
  \]
  \[
    \mathcal{C}(x) =
    \begin{cases*}
      \mathcal{M}_{\mathscr{C}_C(n)}(u,\overline{0}) \times [0,1] \times B_x & if $u\neq \overline{0}$,\\
      \{0\} \times B_x & if $u = \overline{0}$.
    \end{cases*}
  \]
  Map $\mathcal{C}''(x)\to \mathcal{C}'(x)$ by the quotient map $[0,2]\to [0,2]/[1,2]\cong [0,1]$, and the identity map on all other factors. This map is degree $\pm 1$ on each cell, and is thus a stable equivalence as long as our map is well-defined, that is, compatible with the attaching maps of $||F||_k$ and $||\mathscr{C}||$.
  \paragraph{Step 3: Prove that our cellular map is well-defined.}
  Suppose $p\in \partial\mathcal{C}''(y)$, $p\in N_v\times B_y$, and $p$ is glued to some point $q\in \mathcal{C}''(x)$ under the CW complex attaching map of $\hocolim(\widetilde{F}^+_k)$. We prove that $p$, now viewed as a point in $\mathcal{C}(y)$, is glued to $q$, where now $q$ is viewed as a point in $\mathcal{C}(y)$. Just as in \par
  We first outline how $p$ is glued in $\hocolim(\widetilde{F}^+_k)$. Let $p = (p_1,p_2)$, where $p_1\in N_v$, $p_2\in B_y$, and also assume $p_1$ lies in the cube $[0,1]^m$ associated to the chain $v=v^0>\ldots > v^m$; we say $p_1$ has coordinates $(p_{1,1},\ldots,p_{1,l})$. Now suppose (since $p_1\in N_v$) that $p_{1,l}=0$. Denote $\widetilde{F}_k(\phi_{v^{l-1},v^l},\ldots,\phi_{v^1,v^0})$ by $\psi$. The point $p$ is glued to a point
  \[
    ((p_{1,l+1},\ldots, p_{1,m}),\psi((p_{1,1},\ldots,p_{1,l-1}),p_2)),
  \]
  which we now denote as $q$. To lighten up the notation, we introduce the terms $q_1 = (p_{1,l+1},\ldots, p_{1,m})\in\widetilde{M}_{v^l}$, $q_2 = \psi((p_{1,1},\ldots,p_{1,l-1}),p_2)\in B_x$, $q' = (p_{1,1},\ldots,p_{1,l-1})\in M_{v,v^l}$. We include these terms in the below equation for reference:
  \[
    q = \bigl(\overbrace{(p_{1,l+1},\ldots, p_{1,m})}^{q_1\in \widetilde{M}_{v^l}},\overbrace{\psi(\underbrace{(p_{1,1},\ldots,p_{1,l-1})}_{q'\in M_{v,v^l}},p_2)}^{q_2\in B_x}\bigr).
  \]
  The following equalities show that $p$ and $q$ (both written as points in $\mathcal{C}''(y)$, $\mathcal{C}''(x)$ respectively) are identified under the gluing map in $||\mathscr{C}||$:
  \begin{align*}
    \jmath_{\gamma}\left(\mathfrak{f}_\gamma^{-1}(q'),q\right) &= \jmath_{\gamma}\left(\mathfrak{f}_\gamma^{-1}(q'),q_1,q_2\right)\\
                                                               &=\left(q_1\circ q',\imath_\gamma\left(\mathfrak{f}_\gamma^{-1}(q'),q_2\right)\right)\\
    &=\left(q_1\circ q',e_\gamma(q',q_2)\right)\\
                                                               &= \left((p_{1,1},\ldots, p_{1,l-1},0,p_{1,l+1},\ldots, p_{1,m}),p_2\right)\\
    &=(p_1,p_2)=p.
  \end{align*}
  The second to last equality is justified in Figure \ref{box map}.
  \begin{figure}
    \centering
    \def\svgscale{0.8}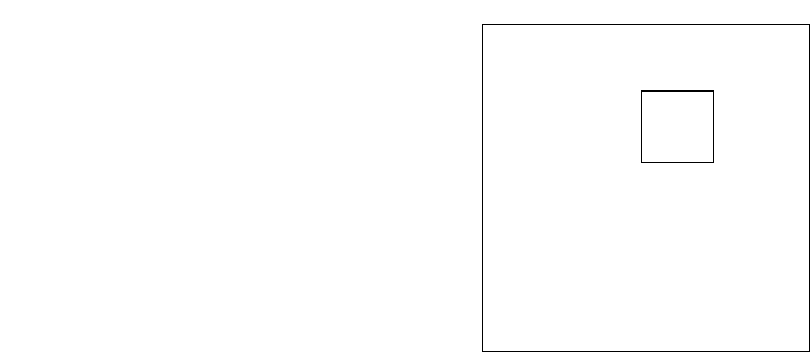
    \caption{We have $e_\gamma(q',q_2) = p_2$ because $\psi(q',p_2) = q_2$.}
    \label{box map}
  \end{figure}
\end{proof}

\subsection{Truncating the cubical realization to three adjacent dimensions}\label{truncation}
We denote the cubical realization $||\mathscr{C}||$ of a flow category $\mathscr{C}$ by $Y$, and we let $M = |\mathbf{d}|+1+l+N$. This choice of $M$ comes from the identity
\[
  \Sigma^{-l}C^*(\mathcal{X}(\mathscr{C});\mathbb{F}_2) = C_{\cell}^*(\Sigma^{-(|\mathbf{d}|+1+l+N)}Y;\mathbb{F}_2) = \Sigma^{-M} C_{\cell}^*(Y;\mathbb{F}_2).
\]
and the following fact that $\Sq^2: H^l(\mathcal{X}(\mathscr{C});\mathbb{F}_2)\to H^l(\mathcal{X}(\mathscr{C});\mathbb{F}_2)$ is precisely $\Sq^2: H^M(Y';\mathbb{F}_2)\to H^M(Y';\mathbb{F}_2)$. We will show that now all the cells of $Y$ are relevant to us, namely that we only care about the $M$, $(M+1)$, and $(M+2)$-dimensional cells of $Y$. These cells can be repackaged into another CW complex, which is the cubical realization of a ``truncated'' flow category.
\begin{definition}
  Define the \textit{truncated} signed cubical flow category $(\mathscr{C}',\mathfrak{f}',\sigma')$ as follows:
  \begin{itemize}
  \item $\mathscr{C}'$ is the full subcategory of $\mathscr{C}$ containing precisely the objects of grading $l,l+1,l+2$.
  \item The grading-preserving functor $\mathfrak{f}':\Sigma^N\mathscr{C}'\to \mathscr{C}_C(n)$ is the restriction of $\mathfrak{f}$ to $\mathscr{C}'$.
  \item Similarly, $\sigma'$ is the restriction of $\sigma$ to $\coprod_{x,y\in \Ob(\mathscr{C}')}\widehat{A}_{x,y}$.
  \end{itemize}
  By an abuse of notation, we identify $\mathfrak{f}'$ with $\mathfrak{f}$ and $\sigma'$ with $\sigma$.\par
  We define a cubical neat embedding $\iota'$ relative to the truncated tuple
  \[
    \mathbf{d}' := (0,\ldots,0,A,B,0,\ldots, 0), \qquad A = d_\kappa,\ B = d_{\kappa+1}.
  \]
  The embeddings $\iota'_{y,x}$ are defined exactly the same as the embeddings $\iota_{y,x}$ of $\mathscr{C}$. We define the \textit{truncated} cubical realization of $\mathscr{C}$ to be $||\mathscr{C}'||=||\mathscr{C}'||_{\iota}$, and we define $\mathcal{X}(\mathscr{C}') = \Sigma^{-(N+|\mathbf{d}+1)}||\mathscr{C}'||$. We let $\mathcal{C}'(x)$ denote the cell of $||\mathscr{C}'||$ corresponding to $x\in\Ob(\mathscr{C}')$.\par
\end{definition}
Objects of grading $l$ (resp.\ $l+1$, $l+2$), we often name $x$ (resp.\ $y$, $z$). In this spirit, the cells of $Y'$ are of the type
\begin{alignat*}{3}
	\mathcal{C}'(z)&=[-R,R]^A\times [-R,R]^B\times J\times \widetilde{\mathcal{M}}_{\mathscr{C}_C(n)}(w,\overline{0})&&\qquad \mathfrak{f}(z)=w,&&\quad |w|=\kappa+2\\[\jot]
	\mathcal{C}'(y)&=[-R,R]^A\times [-\epsilon,\epsilon]^B\times J\times \widetilde{\mathcal{M}}_{\mathscr{C}_C(n)}(v,\overline{0}) &&\qquad \mathfrak{f}(y)=v,&&\quad |v|=\kappa+1\\[\jot]
	\mathcal{C}'(x)&=[-\epsilon,\epsilon]^A\times [-\epsilon,\epsilon]^B\times J\times \widetilde{\mathcal{M}}_{\mathscr{C}_C(n)}(u,\overline{0}) &&\qquad \mathfrak{f}(x)=u,&&\quad |u|=\kappa,
\end{alignat*}
where we define $\kappa := l+N$, $A=d_\kappa$, $B=d_{\kappa+1}$. The cells are $Y'$ are of three consecutive dimensions; namely, if we define $m:= A+B+1+l+N$, then $\mathcal{C}(x)$, $\mathcal{C}(y)$, $\mathcal{C}(z)$ are $m$-dimensional, $m+1$-dimensional, and $m+2$-dimensional cells.\par
Now $Y^{(M+2)}/Y^{(M-1)}$ also consists of cells of three consecutive dimensions; the $M$, $(M+1)$, and $(M+2)$-dimensional cells of $Y$, which we write as
\begin{align*}
	\mathcal{C}(z)&=\prod_{i=0}^{(\kappa+2)-1}[-R,R]^{d_i}\times \prod_{i=\kappa+2}^{n-1}[-\epsilon,\epsilon]^{d_i} \times J\times \widetilde{\mathcal{M}}_{\mathscr{C}_C(n)}(w,\overline{0}) \qquad \mathfrak{f}(z)=w,\quad |w|=\kappa+2\\
	\mathcal{C}(y)&=\prod_{i=0}^{(\kappa+1)-1}[-R,R]^{d_i}\times \prod_{i=\kappa+1}^{n-1}[-\epsilon,\epsilon]^{d_i} \times J\times \widetilde{\mathcal{M}}_{\mathscr{C}_C(n)}(v,\overline{0}) \qquad \mathfrak{f}(y)=v,\quad |v|=\kappa+1\\
	\mathcal{C}(x)&=\prod_{i=0}^{\kappa-1}[-R,R]^{d_i}\times \prod_{i=\kappa}^{n-1}[-\epsilon,\epsilon]^{d_i} \times J\times \widetilde{\mathcal{M}}_{\mathscr{C}_C(n)}(u,\overline{0}) \qquad \mathfrak{f}(x)=u,\quad |u|=\kappa,
\end{align*}
We observe that 
\begin{proposition}\label{suspension identification}
  There is a CW structure on $\Sigma^{|\mathbf{d}|-|\mathbf{d}'|}Y'$, where the cells of $\Sigma^{|\mathbf{d}|-|\mathbf{d}'|}Y'$ corresponding to $x$, $y$, $z$ are copies of the cells $\mathcal{C}(x)$, $\mathcal{C}(y)$, $\mathcal{C}(z)$. In fact, identifying the cells of $\Sigma^{|\mathbf{d}|-|\mathbf{d}'|}Y'$ with their corresponding cells $\mathcal{C}(x)$, $\mathcal{C}(y)$, $\mathcal{C}(z)$ induces a homeomorphism
  \[
    \Sigma^{|\mathbf{d}|-|\mathbf{d}'|}Y' \cong Y^{(M+2)}/Y^{(M-1)}.
  \]
\end{proposition}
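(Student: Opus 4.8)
The plan is to compare $Y^{(M+2)}/Y^{(M-1)}$ cell by cell with an explicit $(|\mathbf{d}|-|\mathbf{d}'|)$-fold reduced suspension of $Y'$. Set $k := |\mathbf{d}|-|\mathbf{d}'| = \sum_{i\neq\kappa,\kappa+1}d_i$ and let $E := \prod_{i=0}^{\kappa-1}[-R,R]^{d_i}\times\prod_{i=\kappa+2}^{n-1}[-\epsilon,\epsilon]^{d_i}$, a $k$-dimensional cube, so that $\Sigma^k Y' = (E\times Y')/(\partial E\times Y'\cup E\times\{*\})$ carries its standard CW structure: one $(k+\dim e)$-cell $E\times e$ for each cell $e$ of $Y'$, attached by $\Id_E\times(\text{attaching map of }e)$ on $E\times\partial e$ and by the collapse on $\partial E\times e$.

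First I would do the dimension count: for $x\in\Ob(\mathscr{C})$ of grading $g$ one has $\dim\mathcal{C}(x) = |\mathbf{d}|+1+g+N$, so the cells of $Y$ of dimensions $M,M+1,M+2$ are exactly the cells $\mathcal{C}(x),\mathcal{C}(y),\mathcal{C}(z)$ attached to objects of gradings $l,l+1,l+2$, i.e.\ to the objects of $\mathscr{C}'$. A single shuffle of coordinate factors, the same for all of $x,y,z$, then exhibits $\mathcal{C}(\bullet)\cong E\times\mathcal{C}'(\bullet)$: one checks that the slots $i<\kappa$ carry $[-R,R]$ in all three of $\mathcal{C}(x),\mathcal{C}(y),\mathcal{C}(z)$ and the slots $i>\kappa+1$ carry $[-\epsilon,\epsilon]$ in all three, so together they form a copy of $E$, while the remaining factors (the slots $\kappa$ and $\kappa+1$, the $J$-factor, and the $\widetilde{\mathcal{M}}_{\mathscr{C}_C(n)}(\cdot,\overline{0})$-factor) form exactly $\mathcal{C}'(\bullet)$.

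The heart of the matter is the attaching maps. By Definition \ref{cubical realization def} the attaching map of $\mathcal{C}(x)$ is assembled from the maps $\jmath_\gamma:\mathcal{C}(y)\times\gamma\to\partial\mathcal{C}(x)$, with $\gamma\in\widehat{A}_{x,y}$, and $\jmath_\gamma$ alters only: the cube coordinates in the slots $i\in[|\mathfrak{f}(y)|,|\mathfrak{f}(x)|)$ (via $\overline{\iota}_{x,y}$), the $\widetilde{\mathcal{M}}_{\mathscr{C}_C(n)}$-factor (which moves onto a face), and the $J$-factor (the flip $\tau^{\sigma(\gamma)}$). Since $\gr(x)\leq l+2$, this slot range meets the $E$-slots precisely when $|\mathfrak{f}(y)|<\kappa$, i.e.\ when $\gr(y)<l$, i.e.\ when $y\notin\Ob(\mathscr{C}')$ and $\mathcal{C}(y)\subseteq Y^{(M-1)}$. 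Consequently: (a) for $y\in\Ob(\mathscr{C}')$ the embedding $\jmath_\gamma$ is $\Id_E$ on the $E$-coordinate, and since $\overline{\iota}_{x,y}=\overline{\iota}'_{x,y}$, $\mathcal{M}_\mathscr{C}(x,y)=\mathcal{M}_{\mathscr{C}'}(x,y)$ and $\sigma=\sigma'$ for the truncated data, we get $\mathcal{C}_y(x) = E\times\mathcal{C}'_y(x)$ with the attaching map there being $\Id_E\times(\text{projection to }\mathcal{C}'(y))$; (b) everything in $\partial\mathcal{C}(x)$ not of this form — the original basepoint locus together with $\bigcup_{y\notin\Ob(\mathscr{C}')}\mathcal{C}_y(x)$ — is sent to the basepoint of $Y^{(M+2)}/Y^{(M-1)}$, which is consistent with $\Id_E\times(\text{attaching map of }\mathcal{C}'(x)\text{ in }Y')$ because a point $(\vec e,p')$ of that locus has $p'$ lying in no surviving piece $\mathcal{C}'_y(x)$ (the $\widetilde{\mathcal{M}}$-face it meets is not one of those coming from objects of $\mathscr{C}'$) and hence $p'$ is itself sent to the basepoint of $Y'$.

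Putting (a) and (b) together shows that, under the cellwise identifications $\mathcal{C}(\bullet)\cong E\times\mathcal{C}'(\bullet)$, the CW structure of $Y^{(M+2)}/Y^{(M-1)}$ is exactly that of $\Sigma^k Y'$ recorded above; the identifications therefore assemble to the asserted homeomorphism $\Sigma^{|\mathbf{d}|-|\mathbf{d}'|}Y'\cong Y^{(M+2)}/Y^{(M-1)}$. The main obstacle is step (b): one must verify carefully that the boundary regions $\mathcal{C}_y(x)$ corresponding to objects $y$ \emph{outside} $\mathscr{C}'$ — these are the only ones whose defining embeddings touch the $E$-coordinates — are precisely the portions of $\partial\mathcal{C}(x)$ that the quotient $Y^{(M+2)}/Y^{(M-1)}$ collapses, and that on the complementary region the attaching map really is the product of $\Id_E$ with the $Y'$-attaching map. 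Everything else reduces to bookkeeping of $[-R,R]$-versus-$[-\epsilon,\epsilon]$ factors and to the fact that the truncated cubical neat embedding uses the very same moduli spaces, embeddings, and signs as $\mathscr{C}$.
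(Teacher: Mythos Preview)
Your proof is correct and follows the same approach as the paper: verify that the cells $\mathcal{C}(\bullet)$ factor as $E\times\mathcal{C}'(\bullet)$ and that the attaching maps respect this product structure because $\iota'$ is a restriction of $\iota$. The paper's proof is a one-sentence assertion of this compatibility, whereas you have carried out the bookkeeping explicitly; in particular your observation that the $\overline{\iota}_{x,y}$-slots $[|\mathfrak{f}(y)|,|\mathfrak{f}(x)|)$ meet the $E$-slots only when $\gr(y)<l$ is exactly the content behind the paper's word ``restriction.''

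One small point worth tightening: in step~(b) your justification (``$p'$ lies in no surviving $\mathcal{C}'_y(x)$, hence is sent to the basepoint of $Y'$'') literally covers only $E\times(\partial\mathcal{C}'(x)\setminus\bigcup\mathcal{C}'_y(x))$, not the piece $\partial E\times\bigcup_{y\in\Ob(\mathscr{C}')}\mathcal{C}'_y(x)$, which lies in your locus~(a). On that piece the $Y$-attaching map sends $(\vec e,p')$ with $\vec e\in\partial E$ to a point of $\partial\mathcal{C}(y)$, which becomes the basepoint only after invoking the inductive identification of the lower skeleton. This is implicit in your setup (you build the homeomorphism skeleton by skeleton), but it would be cleaner to say so.
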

\begin{proof}
  The identification of cells respects the attaching maps, because the cubical neat embedding $\iota'$ of $\mathscr{C}'$ can be viewed as a ``restriction'' of the cubical neat embedding $\iota$. Therefore, the identification of CW complexes is well-defined.
\end{proof}

By making sure $Y:= ||\mathscr{C}||$ is defined witih  $d_\kappa,d_{\kappa+1}\geq 2$ if necessary, we can assume that $A,B >1$.
    $\mathcal{C}(x)$, $\mathcal{C}(y)$, $\mathcal{C}(z)$ are $m$-dimensional, $m+1$-dimensional, and $m+2$-dimensional cells, where $m:= A+B+1+l+N$. Note that $m>2$.
\begin{procedure}\label{computation idea}
  We can compute $\Sq^2: H^M(Y;\mathbb{F}_2)\to H^{M+2}(Y;\mathbb{F}_2)$ as follows: For any element $[\mathbf{c}] \in H^M(Y';\mathbb{F}_2)$, $\mathbf{c} = \sum_x \mu_x \cdot \mathcal{C}(x)$, take the corresponding cycle $\mathbf{c}' = \sum_x \mu_x \cdot \mathcal{C}'(x)\in C_{\cell}^m(Y';\mathbb{F}_2)$, compute $\Sq^2([\mathbf{c}']) = [\mathbf{r}']\in H^{m+2}(Y';\mathbb{F}_2)$, and pull $\mathbf{r}'$ back to $C_{\cell}^{M+2}(Y;\mathbb{F}_2)$. This computation allows us to compute $\Sq^2: H^l(\mathcal{X}(\mathscr{C});\mathbb{F}_2)\to H^{l+2}(\mathcal{X}(\mathscr{C});\mathbb{F}_2)$.
\end{procedure}
\begin{proof}
  Consider the following commutative diagram:
  \[
  \begin{tikzpicture}[scale=2]
  \path[draw, ->, shorten <=1.3cm,shorten >=1.7cm, thick] (0,1) -- (3,1) node [midway, label={[label distance=-0.2cm]90:$\cong$}] {};
  \path[draw, ->>, shorten <=1.3cm,shorten >=1.7cm, thick] (0,0) -- (3,0);
  \path[draw, ->, shorten <=0.3cm,shorten >=0.3cm, thick] (0,0) -- (0,1) node [midway, label={[label distance=-0.1cm]180:$\Sq^2$}] {};
  \path[draw, ->, shorten <=0.3cm,shorten >=0.3cm, thick] (3,0) -- (3,1) node [midway, label={[label distance=-0.1cm]180:$\Sq^2$}]{};
  \path[draw, ->, shorten <=1.3cm,shorten >=1.7cm, thick] (6,0) -- (3,0) node [midway, label={[label distance=-0.2cm]90:$\cong$}] {};
  \path[draw, {Hooks[left]}->, shorten <=1.3cm,shorten >=1.7cm, thick] (6,1) -- (3,1) {};
  \path[draw, ->, shorten <=0.3cm,shorten >=0.3cm, thick] (6,0) -- (6,1) node [midway, label={[label distance=-0.1cm]180:$\Sq^2$}] {};
    \node at (0, 1) {$H^{m+2}(Y';\mathbb{F}_2)$};
    \node at (3, 1) {$H^{M+2}(Y^{(M+2)};\mathbb{F}_2)$};
    \node at (0, 0) {$H^m(Y';\mathbb{F}_2)$};
    \node at (3, 0) {$H^{M+2}(Y^{(M+2)};\mathbb{F}_2)$};
    \node at (6, 1) {$H^{M+2}(Y;\mathbb{F}_2)$};
    \node at (6, 0) {$H^{M}(Y;\mathbb{F}_2)$};
   \end{tikzpicture}
\]
  
    where the first set of horizontal arrows are induced by the homeomorphism in Proposition \ref{suspension identification} Imagine $[\mathbf{c}]$ starting in the bottom right corner. We can move $[\mathbf{c}]$ ``clockwise'' around the perimeter of the diagram to compute $\Sq^2([\mathbf{c}])\in H^{M+2}(Y;\mathbb{F}_2)$.
  \end{proof}
  We include the chronological order of spaces we constructed for reference:
  \[
\begin{tikzpicture}[scale=2]
  \path[draw, ->, thick, decorate,
  decoration={snake, amplitude = 0.3mm}] (0,0) -- (1.2,0) node [midway, label={[label distance=-0.2cm]90:subquotient}] {};
  \path[draw, ->, thick, decorate,
  decoration={snake, amplitude = 0.3mm}] (2.8,0) -- (3.8,0) node [midway, label={[label distance=-0.2cm]90:$\Sigma^{|\mathbf{d}'|-|\mathbf{d}|}$}]  {};
    \node at (-0.2, 0) {$Y$};
    \node at (2, 0) {$Y^{(M+2)}/Y^{(M-1)}$};
    \node at (4, 0) {$Y'$};
 \end{tikzpicture}
\]

We focus on the CW complex $Y'$ for the next few chapters.\par
The attaching maps for these cells are defined using the cubical neat embedding $\iota'$. Our cubical neat embedding $\iota'$ consists of only three classes of embeddings. Namely, there are the embeddings of the $0$-dimensional moduli spaces
\begin{equation}\label{yx,zy}
  \begin{aligned}
  \iota'_{y,x}:\mathcal{M}(y,x)&\to [-R,R]^{A}\times \mathcal{M}(v,u)\\
  p & \mapsto (a_p,\text{pt}),\\
  \iota'_{z,y}:\mathcal{M}(z,y)&\to [-R,R]^{B}\times \mathcal{M}(w,v)\\
    q & \mapsto (b_q,\text{pt}),
  \end{aligned}
\end{equation}
and finally, there are the embeddings of the $1$-dimensional moduli spaces
\begin{equation}\label{cubical embedding}
  \iota'_{z,x}:\mathcal{M}(z,x)\to [-R,R]^{A}\times [-R,R]^{B}\times \mathcal{M}(w,u),
\end{equation}
defined as follows: $\mathcal{M}(z,x)$ is a finite union of line segments $I$. For each segment $I\subset \mathcal{M}(z,x)$ $\iota_{x,z}$ maps its boundary points to $(p_1,q_1)$ and $(p_2,q_2)$, where $(p_i,q_i)$ is a point in $\mathcal{M}(y_i,x)\times \mathcal{M}(z,y_i)$. $\iota_{z,x}$ maps the rest of $I$ to an embedded path joining
\begin{equation*}
	(a_{p_1},b_{q_1},\mathcal{M}(v_1,u)\times \mathcal{M}(w,v_1))\text{ to } (a_{p_2},b_{q_2},\mathcal{M}(v_2,u)\times \mathcal{M}(w,v_2))
\end{equation*}
in $[-R,R]^{A}\times [-R,R]^{B}\times \mathcal{M}(w,u)$. The extended embeddings $\overline{\iota}'_{y,x}$, $\overline{\iota}'_{z,y}$, $\overline{\iota}'_{z,x}$, obtained from (\ref{yx,zy}), (\ref{cubical embedding}), and $\sigma$, induce embeddings
\begin{align}
    &\jmath'_{y,x}:\mathcal{C}'(x) \times \mathcal{M}(y,x) \to \mathcal{C}'(y)\label{embedding yx}\\
    &\jmath'_{z,y}:\mathcal{C}'(y)\times \mathcal{M}(z,y)\to \mathcal{C}'(z)\label{embedding zy}\\
    &\jmath'_{z,x}:\mathcal{C}'(x)\times \mathcal{M}(z,x)\to \mathcal{C}'(z)\label{embedding zx},
\end{align}
and through \ref{attaching realization}, we get our attaching maps. See Figure \ref{attaching pictures}.
\begin{figure}
  \begin{tabular} {m{6cm} m{5.0cm} m{3.0cm}}
    \def\svgscale{0.7}%% Creator: Inkscape 1.4.3 (0d15f75042, 2025-12-25), www.inkscape.org
%% PDF/EPS/PS + LaTeX output extension by Johan Engelen, 2010
%% Accompanies image file 'Cz_1.pdf' (pdf, eps, ps)
%%
%% To include the image in your LaTeX document, write
%%   \input{<filename>.pdf_tex}
%%  instead of
%%   \includegraphics{<filename>.pdf}
%% To scale the image, write
%%   \def\svgwidth{<desired width>}
%%   \input{<filename>.pdf_tex}
%%  instead of
%%   \includegraphics[width=<desired width>]{<filename>.pdf}
%%
%% Images with a different path to the parent latex file can
%% be accessed with the `import' package (which may need to be
%% installed) using
%%   \usepackage{import}
%% in the preamble, and then including the image with
%%   \import{<path to file>}{<filename>.pdf_tex}
%% Alternatively, one can specify
%%   \graphicspath{{<path to file>/}}
%% 
%% For more information, please see info/svg-inkscape on CTAN:
%%   http://tug.ctan.org/tex-archive/info/svg-inkscape
%%
\begingroup%
  \makeatletter%
  \providecommand\color[2][]{%
    \errmessage{(Inkscape) Color is used for the text in Inkscape, but the package 'color.sty' is not loaded}%
    \renewcommand\color[2][]{}%
  }%
  \providecommand\transparent[1]{%
    \errmessage{(Inkscape) Transparency is used (non-zero) for the text in Inkscape, but the package 'transparent.sty' is not loaded}%
    \renewcommand\transparent[1]{}%
  }%
  \providecommand\rotatebox[2]{#2}%
  \newcommand*\fsize{\dimexpr\f@size pt\relax}%
  \newcommand*\lineheight[1]{\fontsize{\fsize}{#1\fsize}\selectfont}%
  \ifx\svgwidth\undefined%
    \setlength{\unitlength}{268.73566395bp}%
    \ifx\svgscale\undefined%
      \relax%
    \else%
      \setlength{\unitlength}{\unitlength * \real{\svgscale}}%
    \fi%
  \else%
    \setlength{\unitlength}{\svgwidth}%
  \fi%
  \global\let\svgwidth\undefined%
  \global\let\svgscale\undefined%
  \makeatother%
  \begin{picture}(1,1.39492551)%
    \lineheight{1}%
    \setlength\tabcolsep{0pt}%
    \put(0,0){\includegraphics[width=\unitlength,page=1]{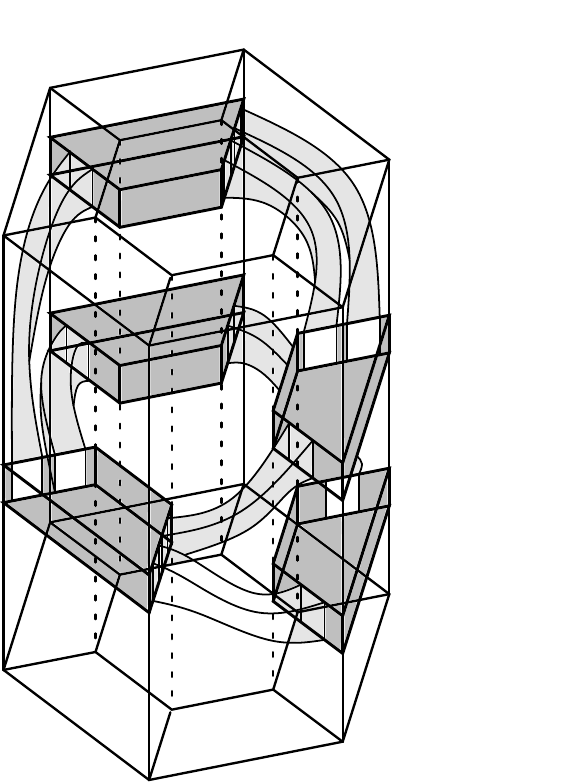}}%
    \put(0.24028133,1.36947298){\color[rgb]{0,0,0}\makebox(0,0)[lt]{\lineheight{1.25}\smash{\begin{tabular}[t]{l}$\partial\mathcal{C}'(z)$\end{tabular}}}}%
    \put(0,0){\includegraphics[width=\unitlength,page=2]{Cz_1.pdf}}%
    \put(0.35442915,0.20017376){\color[rgb]{0,0,0}\makebox(0,0)[lt]{\lineheight{1.25}\smash{\begin{tabular}[t]{l}$\mathcal{C}'(x)$\end{tabular}}}}%
    \put(0.73338487,0.52935282){\color[rgb]{0,0,0}\makebox(0,0)[lt]{\lineheight{1.25}\smash{\begin{tabular}[t]{l}$\mathcal{C}'(y)$\end{tabular}}}}%
    \put(0,0){\includegraphics[width=\unitlength,page=3]{Cz_1.pdf}}%
  \end{picture}%
\endgroup%
 & \def\svgscale{0.7}%% Creator: Inkscape 1.4.3 (0d15f75042, 2025-12-25), www.inkscape.org
%% PDF/EPS/PS + LaTeX output extension by Johan Engelen, 2010
%% Accompanies image file 'Cz_2.pdf' (pdf, eps, ps)
%%
%% To include the image in your LaTeX document, write
%%   \input{<filename>.pdf_tex}
%%  instead of
%%   \includegraphics{<filename>.pdf}
%% To scale the image, write
%%   \def\svgwidth{<desired width>}
%%   \input{<filename>.pdf_tex}
%%  instead of
%%   \includegraphics[width=<desired width>]{<filename>.pdf}
%%
%% Images with a different path to the parent latex file can
%% be accessed with the `import' package (which may need to be
%% installed) using
%%   \usepackage{import}
%% in the preamble, and then including the image with
%%   \import{<path to file>}{<filename>.pdf_tex}
%% Alternatively, one can specify
%%   \graphicspath{{<path to file>/}}
%% 
%% For more information, please see info/svg-inkscape on CTAN:
%%   http://tug.ctan.org/tex-archive/info/svg-inkscape
%%
\begingroup%
  \makeatletter%
  \providecommand\color[2][]{%
    \errmessage{(Inkscape) Color is used for the text in Inkscape, but the package 'color.sty' is not loaded}%
    \renewcommand\color[2][]{}%
  }%
  \providecommand\transparent[1]{%
    \errmessage{(Inkscape) Transparency is used (non-zero) for the text in Inkscape, but the package 'transparent.sty' is not loaded}%
    \renewcommand\transparent[1]{}%
  }%
  \providecommand\rotatebox[2]{#2}%
  \newcommand*\fsize{\dimexpr\f@size pt\relax}%
  \newcommand*\lineheight[1]{\fontsize{\fsize}{#1\fsize}\selectfont}%
  \ifx\svgwidth\undefined%
    \setlength{\unitlength}{269.61098018bp}%
    \ifx\svgscale\undefined%
      \relax%
    \else%
      \setlength{\unitlength}{\unitlength * \real{\svgscale}}%
    \fi%
  \else%
    \setlength{\unitlength}{\svgwidth}%
  \fi%
  \global\let\svgwidth\undefined%
  \global\let\svgscale\undefined%
  \makeatother%
  \begin{picture}(1,1.38982547)%
    \lineheight{1}%
    \setlength\tabcolsep{0pt}%
    \put(0,0){\includegraphics[width=\unitlength,page=1]{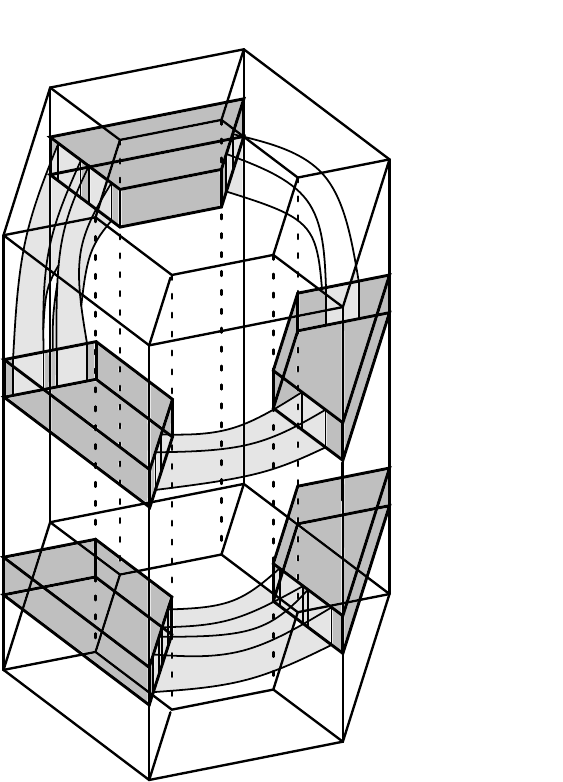}}%
    \put(0.49621922,1.14271754){\color[rgb]{0,0,0}\makebox(0,0)[lt]{\lineheight{1.25}\smash{\begin{tabular}[t]{l}$\mathcal{C}'(x')$\end{tabular}}}}%
    \put(0,0){\includegraphics[width=\unitlength,page=2]{Cz_2.pdf}}%
    \put(0.72540448,0.87728264){\color[rgb]{0,0,0}\makebox(0,0)[lt]{\lineheight{1.25}\smash{\begin{tabular}[t]{l}$\mathcal{C}'(y')$\end{tabular}}}}%
    \put(0.22619202,1.36445559){\color[rgb]{0,0,0}\makebox(0,0)[lt]{\lineheight{1.25}\smash{\begin{tabular}[t]{l}$\partial\mathcal{C}'(z')$\end{tabular}}}}%
  \end{picture}%
\endgroup%
 & \def\svgscale{0.7}%% Creator: Inkscape 1.4.2 (ebf0e940d0, 2025-05-08), www.inkscape.org
%% PDF/EPS/PS + LaTeX output extension by Johan Engelen, 2010
%% Accompanies image file 'Cz_axes.pdf' (pdf, eps, ps)
%%
%% To include the image in your LaTeX document, write
%%   \input{<filename>.pdf_tex}
%%  instead of
%%   \includegraphics{<filename>.pdf}
%% To scale the image, write
%%   \def\svgwidth{<desired width>}
%%   \input{<filename>.pdf_tex}
%%  instead of
%%   \includegraphics[width=<desired width>]{<filename>.pdf}
%%
%% Images with a different path to the parent latex file can
%% be accessed with the `import' package (which may need to be
%% installed) using
%%   \usepackage{import}
%% in the preamble, and then including the image with
%%   \import{<path to file>}{<filename>.pdf_tex}
%% Alternatively, one can specify
%%   \graphicspath{{<path to file>/}}
%% 
%% For more information, please see info/svg-inkscape on CTAN:
%%   http://tug.ctan.org/tex-archive/info/svg-inkscape
%%
\begingroup%
  \makeatletter%
  \providecommand\color[2][]{%
    \errmessage{(Inkscape) Color is used for the text in Inkscape, but the package 'color.sty' is not loaded}%
    \renewcommand\color[2][]{}%
  }%
  \providecommand\transparent[1]{%
    \errmessage{(Inkscape) Transparency is used (non-zero) for the text in Inkscape, but the package 'transparent.sty' is not loaded}%
    \renewcommand\transparent[1]{}%
  }%
  \providecommand\rotatebox[2]{#2}%
  \newcommand*\fsize{\dimexpr\f@size pt\relax}%
  \newcommand*\lineheight[1]{\fontsize{\fsize}{#1\fsize}\selectfont}%
  \ifx\svgwidth\undefined%
    \setlength{\unitlength}{147.10252176bp}%
    \ifx\svgscale\undefined%
      \relax%
    \else%
      \setlength{\unitlength}{\unitlength * \real{\svgscale}}%
    \fi%
  \else%
    \setlength{\unitlength}{\svgwidth}%
  \fi%
  \global\let\svgwidth\undefined%
  \global\let\svgscale\undefined%
  \makeatother%
  \begin{picture}(1,0.94187211)%
    \lineheight{1}%
    \setlength\tabcolsep{0pt}%
    \put(0,0){\includegraphics[width=\unitlength,page=1]{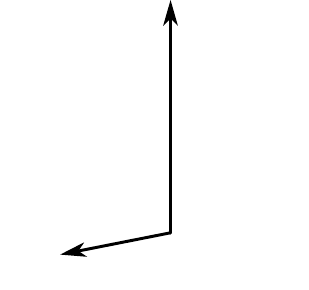}}%
    \put(-0.00379317,0.00966666){\color[rgb]{0,0,0}\makebox(0,0)[lt]{\lineheight{1.25}\smash{\begin{tabular}[t]{l}$[-R,R]^A$\end{tabular}}}}%
    \put(0.62747954,0.87184182){\color[rgb]{0,0,0}\makebox(0,0)[lt]{\lineheight{1.25}\smash{\begin{tabular}[t]{l}$[-R,R]^B$\end{tabular}}}}%
    \put(0,0){\includegraphics[width=\unitlength,page=2]{Cz_axes.pdf}}%
    \put(0.5925291,0.13965945){\color[rgb]{0,0,0}\makebox(0,0)[lt]{\lineheight{1.25}\smash{\begin{tabular}[t]{l}$J\times [0,1]$\end{tabular}}}}%
  \end{picture}%
\endgroup%

  \end{tabular}
  
  \caption{A picture of the attaching map for the boundary $\partial \mathcal{C}'(z)$. Here we imagine $\mathcal{C}'(z)$ as a thickened $2$-dimensional permutohedron ($|z|=|w|=3$). We do not draw the $J$-axis nor the $[0,1]$-component in $\widetilde{\mathcal{M}}_{\mathscr{C}_C(n)}(w,\overline{0})$.}
  \label{attaching pictures}
\end{figure}
\begin{definition}\label{G_i definition}
 We define the \textit{facets} $\mathbf{G}_S$ of $\mathcal{C}'(y)$, indexed by nonempty, proper subsets $S\subset\{0,\ldots, \kappa\}$, as follows:
  \begin{align*}
    \intertext {If $\kappa = 0$, define}
      \mathbf{G}_0 &:= [-R,R]^A\times [-\epsilon,\epsilon]^B\times J\times \{0\}\times\mathcal{M}_{\mathscr{C}_C(n)}(v,\overline{0}) \subset \partial\mathcal{C}'(y)
  \intertext{If $\kappa \geq 1$, define}
      \mathbf{G}_S&:= [-R,R]^A\times [-\epsilon,\epsilon]^B\times J\times [0,1]\times G_S\subset \partial\mathcal{C}'(y)
  \intertext{(See Figure \ref{G_i illustration} for a picture.) Similarly, for nonempty, proper subsets $S\subset \{0,\ldots,\kappa+1\}$ we define the \textit{facets} of $\mathcal{C}'(z)$ by}
    \mathbf{G}_S&:= [-R,R]^A\times [-R,R]^B\times J\times [0,1]\times G_S\subset \partial\mathcal{C}'(y)
  \end{align*}
  
\end{definition}
\begin{figure}
  \centering
  \begin{tabular} {m{7cm} m{3cm}}
    \def\svgscale{0.7}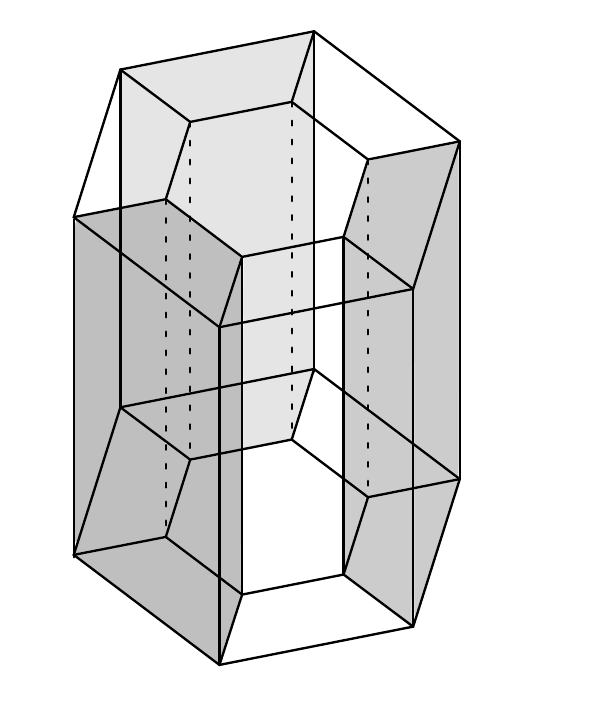 & \def\svgscale{0.7}%% Creator: Inkscape 1.4.2 (ebf0e940d0, 2025-05-08), www.inkscape.org
%% PDF/EPS/PS + LaTeX output extension by Johan Engelen, 2010
%% Accompanies image file 'Cy_axes.pdf' (pdf, eps, ps)
%%
%% To include the image in your LaTeX document, write
%%   \input{<filename>.pdf_tex}
%%  instead of
%%   \includegraphics{<filename>.pdf}
%% To scale the image, write
%%   \def\svgwidth{<desired width>}
%%   \input{<filename>.pdf_tex}
%%  instead of
%%   \includegraphics[width=<desired width>]{<filename>.pdf}
%%
%% Images with a different path to the parent latex file can
%% be accessed with the `import' package (which may need to be
%% installed) using
%%   \usepackage{import}
%% in the preamble, and then including the image with
%%   \import{<path to file>}{<filename>.pdf_tex}
%% Alternatively, one can specify
%%   \graphicspath{{<path to file>/}}
%% 
%% For more information, please see info/svg-inkscape on CTAN:
%%   http://tug.ctan.org/tex-archive/info/svg-inkscape
%%
\begingroup%
  \makeatletter%
  \providecommand\color[2][]{%
    \errmessage{(Inkscape) Color is used for the text in Inkscape, but the package 'color.sty' is not loaded}%
    \renewcommand\color[2][]{}%
  }%
  \providecommand\transparent[1]{%
    \errmessage{(Inkscape) Transparency is used (non-zero) for the text in Inkscape, but the package 'transparent.sty' is not loaded}%
    \renewcommand\transparent[1]{}%
  }%
  \providecommand\rotatebox[2]{#2}%
  \newcommand*\fsize{\dimexpr\f@size pt\relax}%
  \newcommand*\lineheight[1]{\fontsize{\fsize}{#1\fsize}\selectfont}%
  \ifx\svgwidth\undefined%
    \setlength{\unitlength}{107.69055104bp}%
    \ifx\svgscale\undefined%
      \relax%
    \else%
      \setlength{\unitlength}{\unitlength * \real{\svgscale}}%
    \fi%
  \else%
    \setlength{\unitlength}{\svgwidth}%
  \fi%
  \global\let\svgwidth\undefined%
  \global\let\svgscale\undefined%
  \makeatother%
  \begin{picture}(1,1.28487984)%
    \lineheight{1}%
    \setlength\tabcolsep{0pt}%
    \put(0,0){\includegraphics[width=\unitlength,page=1]{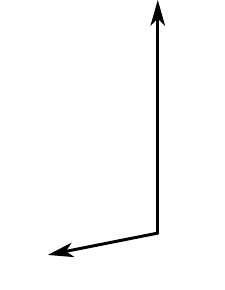}}%
    \put(-0.00518135,0.01320461){\color[rgb]{0,0,0}\makebox(0,0)[lt]{\lineheight{1.25}\smash{\begin{tabular}[t]{l}$[-R,R]^A$\end{tabular}}}}%
    \put(0.8028502,1.18922096){\color[rgb]{0,0,0}\makebox(0,0)[lt]{\lineheight{1.25}\smash{\begin{tabular}[t]{l}$J$\end{tabular}}}}%
    \put(0.7360769,0.17417508){\color[rgb]{0,0,0}\makebox(0,0)[lt]{\lineheight{1.25}\smash{\begin{tabular}[t]{l}$[0,1]$\end{tabular}}}}%
    \put(0,0){\includegraphics[width=\unitlength,page=2]{Cy_axes.pdf}}%
  \end{picture}%
\endgroup%

  \end{tabular}
  \caption{What we refer to as faces $\mathbf{G}_0$, $\mathbf{G}_1$, $\mathbf{G}_2$ in the case $\mathcal{C}'(y)$ is a thickened $2$-dimensional permutohedron ($|y| = |v| = 3$). We do not draw the extra $[0,1]$-factor.}
  \label{G_i illustration}
\end{figure}
\begin{remark}
  Given a cell $\mathcal{C}'(y)$ of $Y'$, there are embedded subsets $\mathcal{C}'_x(y)\cong \mathcal{C}'(x)\times \mathcal{M}(y,x)$ of $\partial\mathcal{C}'(y)$ for each $x$, where each component $\mathcal{C}'(x)\times \{p\}$ of $\mathcal{C}'_x(y)$ gets identified with $\mathcal{C}'(x)$ and everything outside of $\bigcup_xC_x(y)$ gets identified to the basepoint.\par
  We make an analogous observation for cells $\mathcal{C}'(z)$ of $Y$. There are embedded subsets $\mathcal{C}'_y(z)\cong \mathcal{C}'(y)\times \mathcal{M}(z,y)$, where each component $\mathcal{C}'(y)\times \{q\}$ of $\mathcal{C}'_y(z)$ gets identified with $\mathcal{C}'(y)$.\par
  It is important to note that our indexing of facets is compatible with our cubical index assignment $S_{\mathbb{Z}}$ in the following sense: if $p\in \mathcal{M}(y,x)$, then $S_{\mathbb{Z}}(p)$ equals the index $i$ of the facet $\mathbf{G}_i\subset \partial\mathcal{C}'(y)$ that contains $\mathcal{C}'(x)\times \{p\}$. Similarly, $\mathcal{C}'(y)\times\{q\}\subset \mathbf{G}_j$ for $q\in \mathcal{M}(z,y)$, $j = S_{\mathbb{Z}}(q)$.
\end{remark}
\section{Constructing map to the Eilenberg-MacLane space and boundary matching}\label{boundary matching}
\subsection{Defining the truncated Eilenberg-MacLane space $K_m^{(m+2)}$}
We are going to more explicitly define $K_m^{(m+2)}$. We start with a single $0$-cell. Then we define the $m$-cell of $K_m^{(m+2)}$ as
\begin{equation}\label{m-cell}
e^{m}=[-\epsilon,\epsilon]^A\times [-\epsilon,\epsilon]^B\times J\times \widetilde{\mathcal{M}}_{\mathscr{C}_C(\kappa)}(\overline{1},\overline{0}),
\end{equation}
with the entire boundary attached to the basepoint. Define the $(m+1)$-cell as $e^{m+1}=e^m\times [0,1]$. The map $e^{m+1}\to K_m^{(m+1)}$ maps $e^m\times \{0\}$ by Id to $e^m$ and maps $e^m\times\{1\}$ to $e^m$ by the following map:
\begin{equation}
  e^m\times\{1\} \cong e^m\xrightarrow{\tau} e^m,\qquad \tau(a,b,t,p) = (a,b,-t,p)\label{tau definition},
\end{equation}
which simply flips the $J$ factor. The rest of the boundary of $e^{m+1}$ gets mapped to the basepoint.
Also, $K_m^{(m-1)}$ has the characterization
\begin{equation}\label{m-cell redone}
  K_m^{(m+1)} = \bigslant{\left( [-\epsilon,\epsilon]^A\times [-\epsilon,\epsilon]^B\times \widetilde{\mathcal{M}}_{\mathscr{C}_C(\kappa)}(\overline{1},\overline{0})\right)}{\partial}\wedge \bigslant{\left(\frac{J\times [0,1]}{((t,0)\sim (-t,1)}\right)}{\partial},
\end{equation}
which is mainly different from the construction using (\ref{m-cell}) in the sense that the $J$ factor is moved to the right.\par 
Now we move to the $(m+2)$-cells. We define $e^{m+2} = D^{m+2}$ and define the attaching map $\partial e^{m+2}\to K_m^{(m+1)}$ as a suspension of the Hopf map $\eta$.
\[
  \partial e^{m+1}\cong S^{m+1}\xrightarrow{\Sigma^{m-2}\eta} S^m \cong e^{m}/\partial e^{m}\subset K_m^{(m+1)}.
\]
\par
Let $\mathbf{c}\in C_{\cell}^m(Y;\mathbb{F}_2)$ be an $m$-dimensional cocycle denoted by
\begin{equation*}
  \mathbf{c} = \sum_{x}\mu_x\cdot \mathcal{C}(x),
\end{equation*}
and let
\[
  \mu = \sum_x \mu_x \cdot x
\]
be the corresponding cocycle in $C_\mathcal{M}^l(\mathscr{C};\mathbb{F}_2)\cong C_{\cell}^m(Y;\mathbb{F}_2)$. The $\mu_x$-terms are the $\mathbb{F}_2$-coefficients. We say $x$ \textit{appears in} $\mu$ if $\mu_x=1$.
We would like to construct a map $\mathfrak{c}:Y\to K_m^{(m+2)}$ that pulls $\iota$ back to $\mathbf{c}$ (that is, $\mathbf{c}=\mathfrak{c}^*\iota$, where $\iota\in C_{\cell}^m(K_m^{(m+2)};\mathbb{F}_2)$ is the fundamental class $e^m$). Let the $x_1,\ldots,x_r$ be the generators of $C^m(Y)$ appears in $\mu$ (having coefficients $1$, not $0$). On the $m$-skeleton of $Y$, we ask $\mathfrak{c}$ to map the cells $\mathcal{C}(x)$ as follows:
\begin{itemize}
\item $\mathfrak{c}$ maps $\mathcal{C}(x)=[-\epsilon,\epsilon]^{d_\kappa}\times [-\epsilon,\epsilon]^{d_{\kappa+1}}\times \widetilde{\mathcal{M}}_{\mathscr{C}_C(n)}(u,\overline{0})$ by identity to $e^m$ if $x$ appears in $\mu$. ($\widetilde{\mathcal{M}}_{\mathscr{C}_C(n)}(u,\overline{0})$, by definition, is equal to $\widetilde{\mathcal{M}}_{\mathscr{C}_C(\kappa)}(\overline{1},\overline{0})$.)
\item $\mathfrak{c}$ maps $\mathcal{C}(x)$ to the basepoint if $\mu_x=0$.
\end{itemize}
Any $\mathfrak{c}$ satisfying the above condition will pull $\iota$ back to $\mathbf{c}$ by $\mathfrak{c}^*$. Now we must specify how $\mathfrak{c}$ maps the $(m+1)$-skeleton of $Y$, that is, specify how it maps cells of type $\mathcal{C}(y)$.\par
\begin{notation}
  For a generator $y\in \Ob(\mathscr{C})$, define
  \[
    \mathcal{M}(y,\mu):= \coprod_{\substack{x\text{ appears}\\\text{in } \mu}}\mathcal{M}(y,x).
  \]
  Note that since $\mu$ is a cocycle, $\#(\mathcal{M}(y,\mu))=0\mod 2$.
\end{notation}
Because $\mu$ is a cocycle, $\mathfrak{c}$ can only map an even number of these components homeomorphically onto $K_m^{(m)}\cong S^m$; everything outside of these components maps to the basepoint. We choose to group this set of these ``boundary components'' into pairs in a process called ``boundary matching:'' each component of some $C_x(y)$ for $x$ appears in $\mu$ is ``matched'' with some other arbitrarily chosen component of another $C_{x'}(y)$ (see Figure \ref{facewise picture} for an illustration).
\begin{figure}
  \begin{tabular} {m{7cm} m{3cm}}
    \def\svgscale{0.7}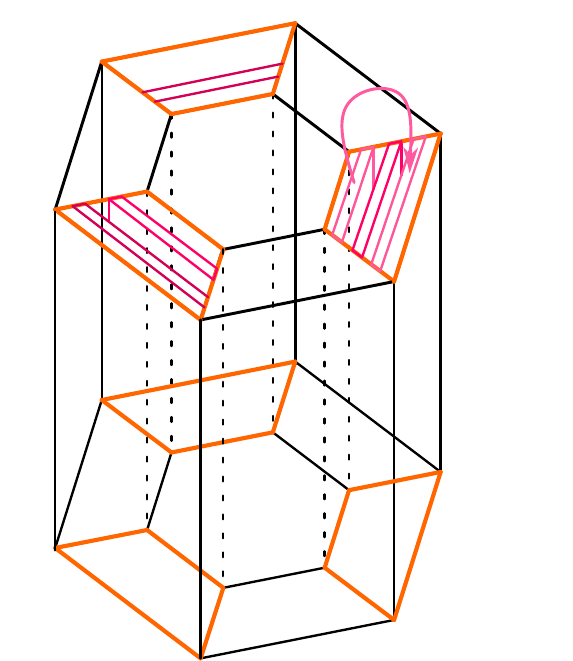 & \def\svgscale{0.7}
  \end{tabular}
    \caption{An illustration of a (facewise) boundary matching $\mathfrak{b}_y$ of $\mathcal{M}(y,\mu)$ in terms of embeddings of cells $\mathcal{C}(x)$ into $\mathcal{C}(y)$.  There are an even number of embedded cells of the form $\mathcal{C}(x)$ where $x\in \mu$, so we pair them up and connect them through tubes $\eta\subset \mathcal{C}(y)$. (The faces $\mathbf{G}_i$ are thickenings of $F_i\subset \Pi^{(\kappa+1)-1}$, defined in Definition \ref{G_i definition}.) Note that the edge from $\mathbf{G}_0$ to $\mathbf{G}_2$ (green) and $\mathbf{G}_1$ to $\mathbf{G}_2$ (light blue) must be oriented from the lower index face to the higher index face. But the edge from $\mathbf{G}_0$ to itself could have been oriented either way.}
    \label{facewise picture}
  \end{figure}
We use this boundary matching (matching an embedded $\mathcal{C}(x)\times\{p\}$ with an embedded $\mathcal{C}(x')\times\{p'\}$) to extend $\mathfrak{c}$ to the cell $\mathcal{C}(y)$. Our process is as follows:
\begin{enumerate}[label = (B-\arabic*), ref = B-\arabic*]
  \item The restriction $\mathfrak{c}|_{\partial\mathcal{C}(y)}$ to the boundary $\partial \mathcal{C}(y)$ is predetermined by $\partial\mathcal{C}(y)\xrightarrow{\text{attaching}} Y^{(m)}\xrightarrow{\mathfrak{c}|_{Y^{(m)}}}K_m^{(m)}$. In particular, if $x$ is represented in $\mu$, $\mathcal{C}_x(y)\subset \partial\mathcal{C}(y)$ is mapped by $\mathcal{C}_x(y)\cong \mathcal{C}(x)\times \mathcal{M}(y,x)\to \mathcal{C}(x)\cong e^m\to K_m^{(m)}$.\label{boundary condition}
\item Construct d.s.\ tubes $\eta := \{(\Theta, 0),(\Lambda, 1)\}: e^m\times [0,1]\xhookrightarrow{} \mathcal{C}(y)$, where $\Theta$ identifies the endpoints $e^m\times \{i\}$ ($i=0,1$) with the boundary components $\mathcal{C}(x)\times \{p\}$, $\mathcal{C}(x')\times \{p'\}$ in $\mathcal{C}_{x}(y)$, $\mathcal{C}_{x'}(y)$. (This identification may not be canonical.)
  \label{tube step}
\item Have $\mathfrak{c}$ map everywhere in $\mathcal{C}(y)$ outside of the tubes $\eta$ to the basepoint.\label{basepoint}
\item Continuously extend $\mathfrak{c}$ to $\eta$ for each $\eta$. That is, define an extension $\mathfrak{c}|_{\eta}:\eta\to K_m^{(m+1)}$ that agrees with $\mathfrak{c}|_{\partial\mathcal{C}(y)}$ on the boundary components $\mathcal{C}(x_0)\times\{p_0\}$, $\mathcal{C}(x_1)\times \{p_1\}$. \label{extension step}
\end{enumerate}
\subsection{Constructing the boundary matching tubes $\eta$}\label{eta construction}
Here, we outline (\ref{tube step}), which first involves examining a pair of matched boundary components. Suppose a component $\mathcal{C}'(x)\times \{p\}\subset \mathcal{C}(y)$ is matched with another component $\mathcal{C}'(x')\times \{p'\}\subset \mathcal{C}'(y)$. We first construct an ``unsigned'' version $\eta_\emptyset:= \{(\Theta_\emptyset,0),(\Lambda_\emptyset,1)\}$ of our eventual tube $\eta$.
\subsubsection*{Case 0: The matched boundary components are on the same face $\mathbf{G}_i\subset \partial \mathcal{C}(y)$.}\label{case0}
Let us first focus on $\Theta_\emptyset$. Among the two boundary components $\mathcal{C}'(x)\times \{p\}\subset \mathcal{C}(y)$, $\mathcal{C}'(x')\times \{p'\}\subset \mathcal{C}(y)$ we arbitrarily choose a starting $(t=0)$ component of $\Theta_\emptyset$, and an ending $(t=1)$ component. This choice is made arbitrarily, and after renaming $x,x',p,p'$, we intend that $\Theta_\emptyset$ goes from $\mathcal{C}'(x_0)\times \{p_0\}$ to $\mathcal{C}'(x_0)\times \{p_0\}$ as $t$ goes from $0$ to $1$.\par
The tube $\Theta_\emptyset: e^m\times [0,1]\xhookrightarrow{} \mathcal{C}(y)$ is characterized by its projection to the $\mathcal{C}(y)$ factors $[-R,R]^A$, $[-\epsilon,\epsilon]^B$, and $J\times \widetilde{\mathcal{M}}_{\mathscr{C}_C(n)}(v,\overline{0})$. In other words, $\Theta_\emptyset$ is determined by its projections:
\begin{enumerate}[label={(\arabic*)}]
\item $\pi_\epsilon\circ \Theta_\emptyset: e^m\times [0,1]\to [-\epsilon,\epsilon]^B$
\item $\pi_R\circ \Theta_\emptyset: e^m\times [0,1]\to [-R,R]^A$
\item $\pi_{\mathcal{M}}\circ \Theta_\emptyset: e^m\times [0,1] \to J\times \widetilde{\mathcal{M}}_{\mathscr{C}_C(n)}(v,\overline{0})$.
\end{enumerate}
\paragraph{Defining $\Theta_{\epsilon}:= \pi_\epsilon\circ \Theta_\emptyset$:}
For $a\in [-\epsilon,\epsilon]^A$, $b\in [-\epsilon,\epsilon]^B$, $s\in J$, $x\in \widetilde{\mathcal{M}}_{\mathscr{C}_C(n)}(u,\overline{0})$, and $t\in [0,1]$, we define $\Theta_\epsilon (a, b, s, x, t) = b$.
\paragraph{Defining $\Theta_R:=\pi_R\circ \Theta_\emptyset$:}
See Figure \ref{R projection}. Let $\gamma(t):[0,1]\to [-R,R]^A$ be a smooth embedded path from $a_{p_0}$ to $a_{p_1}$ (the points $a_{p_0},a_{p_1}$ are defined in (\ref{yx,zy})). We define $\Theta_R (t, a, b, x) = \gamma(t)+a$.
\begin{figure}
  \centering
  %% Creator: Inkscape 1.4.2 (ebf0e940d0, 2025-05-08), www.inkscape.org
%% PDF/EPS/PS + LaTeX output extension by Johan Engelen, 2010
%% Accompanies image file 'Theta_A_projection.pdf' (pdf, eps, ps)
%%
%% To include the image in your LaTeX document, write
%%   \input{<filename>.pdf_tex}
%%  instead of
%%   \includegraphics{<filename>.pdf}
%% To scale the image, write
%%   \def\svgwidth{<desired width>}
%%   \input{<filename>.pdf_tex}
%%  instead of
%%   \includegraphics[width=<desired width>]{<filename>.pdf}
%%
%% Images with a different path to the parent latex file can
%% be accessed with the `import' package (which may need to be
%% installed) using
%%   \usepackage{import}
%% in the preamble, and then including the image with
%%   \import{<path to file>}{<filename>.pdf_tex}
%% Alternatively, one can specify
%%   \graphicspath{{<path to file>/}}
%% 
%% For more information, please see info/svg-inkscape on CTAN:
%%   http://tug.ctan.org/tex-archive/info/svg-inkscape
%%
\begingroup%
  \makeatletter%
  \providecommand\color[2][]{%
    \errmessage{(Inkscape) Color is used for the text in Inkscape, but the package 'color.sty' is not loaded}%
    \renewcommand\color[2][]{}%
  }%
  \providecommand\transparent[1]{%
    \errmessage{(Inkscape) Transparency is used (non-zero) for the text in Inkscape, but the package 'transparent.sty' is not loaded}%
    \renewcommand\transparent[1]{}%
  }%
  \providecommand\rotatebox[2]{#2}%
  \newcommand*\fsize{\dimexpr\f@size pt\relax}%
  \newcommand*\lineheight[1]{\fontsize{\fsize}{#1\fsize}\selectfont}%
  \ifx\svgwidth\undefined%
    \setlength{\unitlength}{350.14742633bp}%
    \ifx\svgscale\undefined%
      \relax%
    \else%
      \setlength{\unitlength}{\unitlength * \real{\svgscale}}%
    \fi%
  \else%
    \setlength{\unitlength}{\svgwidth}%
  \fi%
  \global\let\svgwidth\undefined%
  \global\let\svgscale\undefined%
  \makeatother%
  \begin{picture}(1,0.44119873)%
    \lineheight{1}%
    \setlength\tabcolsep{0pt}%
    \put(0,0){\includegraphics[width=\unitlength,page=1]{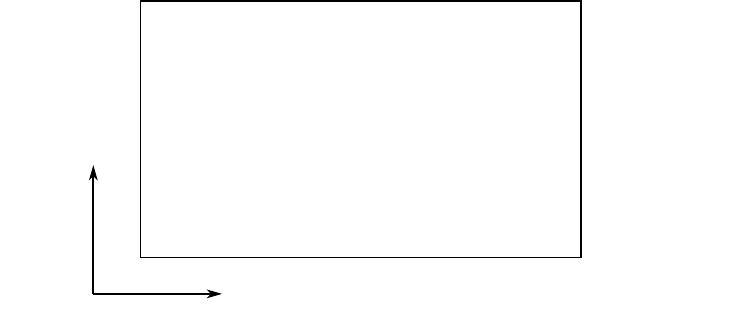}}%
    \put(0.27564241,0.00149079){\color[rgb]{0,0,0}\makebox(0,0)[lt]{\lineheight{1.25}\smash{\begin{tabular}[t]{l}$t$\end{tabular}}}}%
    \put(-0.00159357,0.19620048){\color[rgb]{0,0,0}\makebox(0,0)[lt]{\lineheight{1.25}\smash{\begin{tabular}[t]{l}$[-R,R]^A$\end{tabular}}}}%
    \put(0,0){\includegraphics[width=\unitlength,page=2]{Theta_A_projection.pdf}}%
    \put(0.58697139,0.22613377){\color[rgb]{0,0,0}\makebox(0,0)[lt]{\lineheight{1.25}\smash{\begin{tabular}[t]{l}image of\\$\pi_R(\Theta|_{e^m\times \{s\}})$\end{tabular}}}}%
    \put(0.56773896,0.05737061){\color[rgb]{0,0,0}\makebox(0,0)[lt]{\lineheight{1.25}\smash{\begin{tabular}[t]{l}$s$\end{tabular}}}}%
  \end{picture}%
\endgroup%

  \caption{An illustration showing how the embedding $\Theta$ behaves when projected to the $[-R,R]^A$-component of $\mathcal{C}(y)$.}
  \label{R projection}
\end{figure}
\paragraph{Defining $\Theta_\mathcal{M} := \pi_{\mathcal{M}}\circ \Theta_\emptyset$:}
Our construction of $\Theta_\mathcal{M} $ depends on a delicate treatment of the cases $u=\overline{0}$ and $u\neq \overline{0}$, since the definition of $\widetilde{\mathcal{M}}_{\mathscr{C}_C(n)}(u,\overline{0})$ is exceptional in the case $u=\overline{0}$ (see Definition \ref{cubical realization def}). Now suppose we match two components $\mathcal{C}(x_0)\times \{p_0\}\subset \mathbf{G}_i$, $\mathcal{C}(x_1)\times \{p_1\}\subset \mathbf{G}_j$.
\label{case0}
See Figure \ref{same face figure} for an illustration. We choose a tube $\Theta_{\mathcal{M}}$ where as $t$ moves from $0$ to $1$, the slices $e^m\times \{t\}$ move straight into $\Int(J\times \widetilde{\mathcal{M}}_{\mathscr{C}_C(n)}(v,\overline{0}))$, but then pull up and around back into the face $J\times G_i$, but with the $J$-direction flipped (see Figure \ref{same face figure}). This should look a lot like an ``inward'' version of our construction in Example \ref{same face example}. Let us give a precise construction. Denote values in $e^m\times [0,1]$ by $(a,b,s,p,t)$, where when $\kappa > 0$ $a\in [-\epsilon,\epsilon]^A$, $b\in [-\epsilon,\epsilon]^B$, $s\in J$, $p=(r, (a_1,\ldots,a_\kappa))\in \widetilde{\mathcal{M}}_{\mathscr{C}_C(n)}(u,\overline{0})$, and $t\in [0,1]$. In the exceptional case $\kappa = 0$, we denote $p = 0$.
\begin{itemize}
\item We define the starting position ($t=0$) of $\Theta_{\mathcal{M}}$ by
  \begin{equation}
    \Theta_{\mathcal{M}}(a,b,s,p,0) =
    \begin{cases*}
      (s,r,(a_1\ldots,a_{i-1},\kappa+1,a_{i},\ldots,a_\kappa))\subset \mathbf{G}_i & in the case $\kappa > 0$\\
      (s,0,(1))\in \mathbf{G}_1 & in the case $\kappa=0$.
    \end{cases*}
  \end{equation}
\item The vectors $d\Theta_\mathcal{M} (\partial_t)$, $d\Theta_\mathcal{M}(\partial_{J})$ should start ($t=0$) pointing in the $\partial_{\mathbf{n}}$, $\partial_J$ directions respectively, where $\mathbf{n}$ is the inward unit normal.
\item As $t$ increases from $0$ to $1$, these vectors $d\Theta_\mathcal{M} (\partial_t)$, $d\Theta_\mathcal{M} (\partial_J)$ should rotate $180^\circ$ in the plane $\langle\mathbf{n},\partial_J\rangle$ so that at time $t=\frac{1}{2}$, they point in the directions of $\partial_J,-\mathbf{n}$ respectively.
\item At the end ($t=1$), $d\Theta_\mathcal{M} (\partial_t)$, $d\Theta_\mathcal{M}(\partial_{J})$ should be pointing in the $-\partial_{\mathbf{n}}$, $-\partial_J$ directions respectively.
\item We define the ending position of $\Theta_{\mathcal{M}}$ by
  \begin{equation}\label{same face t=1}
    \Theta_{\mathcal{M}}(a,b,s,p,1) = 
    \begin{cases*}
      (-s,r,(a_1\ldots,a_{i-1},\kappa+1,a_{i},\ldots,a_\kappa))\in \mathbf{G}_i & in the case $\kappa > 0$\\
      (-s,0,(1))\in \mathbf{G}_1 & in the case $\kappa=0$.
    \end{cases*}
  \end{equation}
\end{itemize}
\begin{figure}
  \centering
  \def\svgscale{0.8}%% Creator: Inkscape 1.4.2 (ebf0e940d0, 2025-05-08), www.inkscape.org
%% PDF/EPS/PS + LaTeX output extension by Johan Engelen, 2010
%% Accompanies image file 'Theta_M_same_face.pdf' (pdf, eps, ps)
%%
%% To include the image in your LaTeX document, write
%%   \input{<filename>.pdf_tex}
%%  instead of
%%   \includegraphics{<filename>.pdf}
%% To scale the image, write
%%   \def\svgwidth{<desired width>}
%%   \input{<filename>.pdf_tex}
%%  instead of
%%   \includegraphics[width=<desired width>]{<filename>.pdf}
%%
%% Images with a different path to the parent latex file can
%% be accessed with the `import' package (which may need to be
%% installed) using
%%   \usepackage{import}
%% in the preamble, and then including the image with
%%   \import{<path to file>}{<filename>.pdf_tex}
%% Alternatively, one can specify
%%   \graphicspath{{<path to file>/}}
%% 
%% For more information, please see info/svg-inkscape on CTAN:
%%   http://tug.ctan.org/tex-archive/info/svg-inkscape
%%
\begingroup%
  \makeatletter%
  \providecommand\color[2][]{%
    \errmessage{(Inkscape) Color is used for the text in Inkscape, but the package 'color.sty' is not loaded}%
    \renewcommand\color[2][]{}%
  }%
  \providecommand\transparent[1]{%
    \errmessage{(Inkscape) Transparency is used (non-zero) for the text in Inkscape, but the package 'transparent.sty' is not loaded}%
    \renewcommand\transparent[1]{}%
  }%
  \providecommand\rotatebox[2]{#2}%
  \newcommand*\fsize{\dimexpr\f@size pt\relax}%
  \newcommand*\lineheight[1]{\fontsize{\fsize}{#1\fsize}\selectfont}%
  \ifx\svgwidth\undefined%
    \setlength{\unitlength}{221.48684044bp}%
    \ifx\svgscale\undefined%
      \relax%
    \else%
      \setlength{\unitlength}{\unitlength * \real{\svgscale}}%
    \fi%
  \else%
    \setlength{\unitlength}{\svgwidth}%
  \fi%
  \global\let\svgwidth\undefined%
  \global\let\svgscale\undefined%
  \makeatother%
  \begin{picture}(1,0.69753302)%
    \lineheight{1}%
    \setlength\tabcolsep{0pt}%
    \put(0,0){\includegraphics[width=\unitlength,page=1]{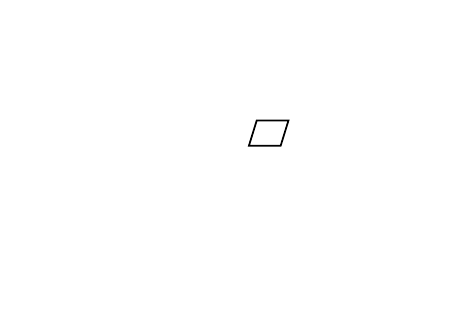}}%
    \put(-0.0025193,0.49760345){\color[rgb]{0,0,0}\makebox(0,0)[lt]{\lineheight{1.25}\smash{\begin{tabular}[t]{l}$J\times G_i$\end{tabular}}}}%
    \put(0,0){\includegraphics[width=\unitlength,page=2]{Theta_M_same_face.pdf}}%
    \put(0.94469643,0.46384237){\color[rgb]{0,0,0}\makebox(0,0)[lt]{\lineheight{1.25}\smash{\begin{tabular}[t]{l}$J$\end{tabular}}}}%
    \put(0,0){\includegraphics[width=\unitlength,page=3]{Theta_M_same_face.pdf}}%
    \put(0.37405468,0.49630316){\color[rgb]{0,0,0}\makebox(0,0)[lt]{\lineheight{1.25}\smash{\begin{tabular}[t]{l}$\Theta_{\mathcal{M}}$\end{tabular}}}}%
  \end{picture}%
\endgroup%

  \caption{An illustration of $\Theta_{\mathcal{M}}$ for when $\mathcal{C}'(x)\times\{p\}$, $\mathcal{C}'(x)\times\{p\}$ are in the same face $\mathbf{G}_i$}
  \label{same face figure}
\end{figure}
We now define $\Lambda_\emptyset(x,t) = \Theta_\emptyset(\tau(x),1-t)$. Essentially, $\Lambda_\emptyset$ moves in reverse, starting at $\mathcal{C}'(x_1)\times \{p_1\}\subset \mathcal{C}(y)$, and ending at $\mathcal{C}'(x_0)\times \{p_0\}\subset \mathcal{C}(y)$. We observe that $\Lambda_\emptyset$ turns ``down'' and around back into $\mathbf{G}_i$, instead of ``up'' and around.
\subsubsection*{Case 1: The matched boundary components are on faces $\mathbf{G}_i, \mathbf{G}_j\subset \partial \mathcal{C}(y)$, $i<j$.}
$\partial \mathcal{C}(y)$ contains distinct faces $\mathbf{G}_i,\mathbf{G}_{j}$. This assumption can only hold when $\kappa > 0$, which is an important distinction, since we can say $J\times \widetilde{\mathcal{M}}_{\mathscr{C}_C(n)}(v,\overline{0}) = J\times [0,1]\times \Pi^{\kappa}$, where $|v|=\kappa+1\geq 2$\par
We first construct a ``convex'' version $\eta^{\conv}:=\{(\Theta^{\conv},0),(\Lambda^{\conv},1)\}$ of the d.s.\ tube $\eta_\emptyset$, and we do so by first specifying $\Theta^{\conv}$.\par
Unlike in Case 0, where we could arbitrarily choose the starting component, there is a designated starting component for $\Theta^{\conv}$: the component in the lower index face. So after a relabeling of $x,x',p,p'$, we have two boundary components $\mathcal{C}(x_0)\times \{p_0\}$, $\mathcal{C}(x_1)\times \{p_1\}$, where importantly, $S_\mathbb{Z}(p_0)=i<S_\mathbb{Z}(p_1)=j$.\par
 $\Theta^{\conv}$ is similarly determined by its projections:
\begin{enumerate}[label={(\arabic*)}]
\item $\Theta_\epsilon^{\conv}: e^m\times [0,1]\to [-\epsilon,\epsilon]^B$\label{epsilon conv}
\item $\Theta_R^{\conv}: e^m\times [0,1]\to [-R,R]^A$\label{R conv}
\item $\Theta_{\mathcal{M}}^{\conv}: e^m\times [0,1] \to J\times \widetilde{\mathcal{M}}_{\mathscr{C}_C(n)}(v,\overline{0})$.\label{M conv}
\end{enumerate}
The projections $\Theta_\epsilon^{\conv}$, $\Theta_R^{\conv}$ are defined exactly the same as in Case 0, but \ref{M conv} is different.
Consider the doubly-parametrized tube $\{V_{ij},V_{ji}\}$ from Example \ref{X example} (see Figure \ref{1 apart 2D}), with
\begin{align*}
  V_{ij}:\left(J\times \Pi^{\kappa-1}\right)\times [0,1]&\hookrightarrow J\times \Pi^{\kappa}\\
  (s,(a_1,\ldots,a_\kappa),t)\mapsto (s,(a_1,\ldots,a_{i-1},\kappa+1,& a_i,a_{i+1},\ldots,a_\kappa))+t(\kappa+1-a_i)(e_{i+1}-e_{i})).
\end{align*}
We define the tube $\Theta^{\conv}_\mathcal{M}$ by
\begin{equation}\label{M conv equation}
\begin{aligned}
  \Theta^{\conv}_\mathcal{M}: e^m\times [0,1]
  &\xrightarrow{\pi\times \text{Id}} \left(J\times [0,1] \times \Pi^{\kappa-1}\right)\times [0,1]\\
  & \xrightarrow{(s,r,p,t)\mapsto (s,p,t,r)} J\times \Pi^{\kappa-1}\times [0,1]\times [0,1]\\
  & \xrightarrow{V_{i,j}\times \text{Id}_{[0,1]}} J\times \Pi^{\kappa}\times [0,1]\\
  &\xhookrightarrow{(s,p,r)\mapsto (s,r,p)} J\times [0,1]\times \Pi^{\kappa},
\end{aligned}
\end{equation}

where $\pi: e^m\to J\times [0,1] \times \Pi^{\kappa-1}$ is the projection onto the factor $J\times \widetilde{\mathcal{M}}_{\mathscr{C}_C(\kappa)}(\overline{1},\overline{0})$. Now to define $\Lambda^{\conv}$, we define $\Lambda_\epsilon^{\conv}(x,t) = \Theta_\epsilon^{\conv}(x,1-t)$, $\Lambda_R^{\conv}(x,t) = \Theta_R^{\conv}(x,1-t)$, and we define $\Lambda^{\conv}$ as the composition in (\ref{M conv equation}), but with $V_{ij}$ replaced by $V_{ji}$.
\begin{figure}
  \scriptsize
  \begin{tabular}{m{4.8cm} m{5.4cm} m{4.8cm} m{0.5cm}}
    \def\svgscale{0.51}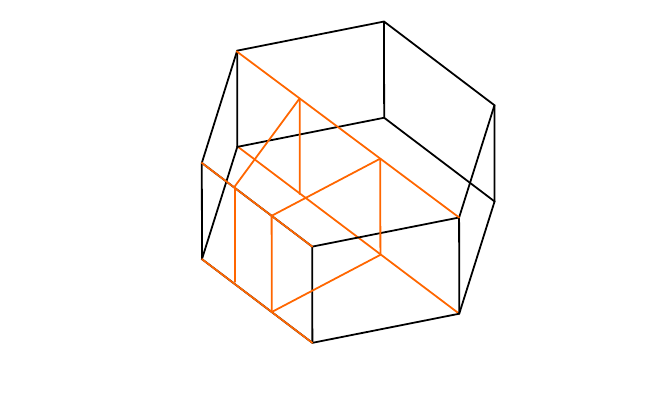 & \def\svgscale{0.51}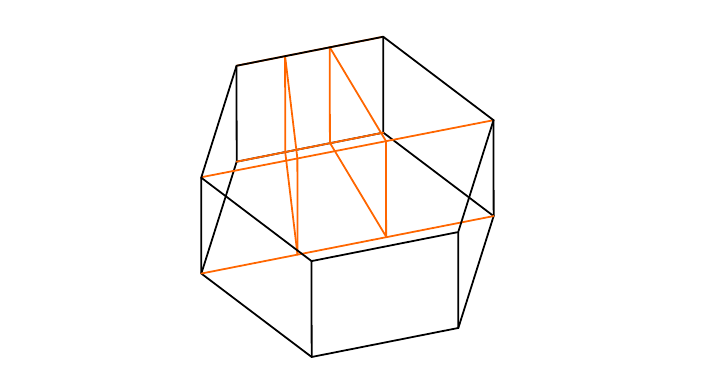 & \def\svgscale{0.51}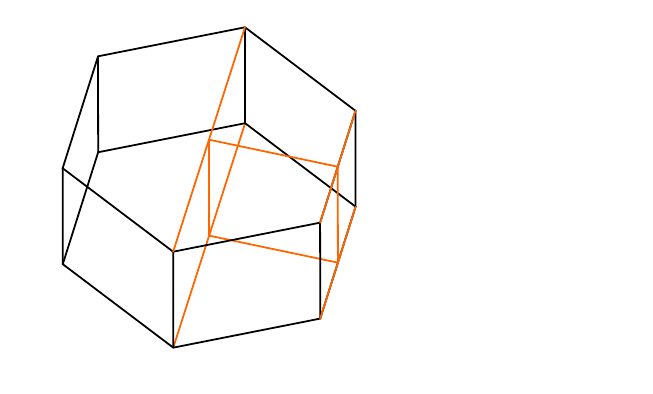 & \def\svgscale{0.51}%% Creator: Inkscape 1.4.2 (ebf0e940d0, 2025-05-08), www.inkscape.org
%% PDF/EPS/PS + LaTeX output extension by Johan Engelen, 2010
%% Accompanies image file 'J_axis.pdf' (pdf, eps, ps)
%%
%% To include the image in your LaTeX document, write
%%   \input{<filename>.pdf_tex}
%%  instead of
%%   \includegraphics{<filename>.pdf}
%% To scale the image, write
%%   \def\svgwidth{<desired width>}
%%   \input{<filename>.pdf_tex}
%%  instead of
%%   \includegraphics[width=<desired width>]{<filename>.pdf}
%%
%% Images with a different path to the parent latex file can
%% be accessed with the `import' package (which may need to be
%% installed) using
%%   \usepackage{import}
%% in the preamble, and then including the image with
%%   \import{<path to file>}{<filename>.pdf_tex}
%% Alternatively, one can specify
%%   \graphicspath{{<path to file>/}}
%% 
%% For more information, please see info/svg-inkscape on CTAN:
%%   http://tug.ctan.org/tex-archive/info/svg-inkscape
%%
\begingroup%
  \makeatletter%
  \providecommand\color[2][]{%
    \errmessage{(Inkscape) Color is used for the text in Inkscape, but the package 'color.sty' is not loaded}%
    \renewcommand\color[2][]{}%
  }%
  \providecommand\transparent[1]{%
    \errmessage{(Inkscape) Transparency is used (non-zero) for the text in Inkscape, but the package 'transparent.sty' is not loaded}%
    \renewcommand\transparent[1]{}%
  }%
  \providecommand\rotatebox[2]{#2}%
  \newcommand*\fsize{\dimexpr\f@size pt\relax}%
  \newcommand*\lineheight[1]{\fontsize{\fsize}{#1\fsize}\selectfont}%
  \ifx\svgwidth\undefined%
    \setlength{\unitlength}{18.39761344bp}%
    \ifx\svgscale\undefined%
      \relax%
    \else%
      \setlength{\unitlength}{\unitlength * \real{\svgscale}}%
    \fi%
  \else%
    \setlength{\unitlength}{\svgwidth}%
  \fi%
  \global\let\svgwidth\undefined%
  \global\let\svgscale\undefined%
  \makeatother%
  \begin{picture}(1,2.78879863)%
    \lineheight{1}%
    \setlength\tabcolsep{0pt}%
    \put(0,0){\includegraphics[width=\unitlength,page=1]{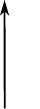}}%
    \put(0.33420436,2.03314602){\color[rgb]{0,0,0}\makebox(0,0)[lt]{\lineheight{1.25}\smash{\begin{tabular}[t]{l}$J$\end{tabular}}}}%
  \end{picture}%
\endgroup%

  \end{tabular}
  \caption{Both pictures: examples of $\Theta^{\conv}_{\mathcal{M}}$ for $|v|=\kappa+1 = 3$}
  \label{1 apart 2D}
\end{figure}
\paragraph{Case 1a: $j=i+1$:}
In this case, simply define $\eta_\emptyset := \eta^{\conv}$.
\paragraph{Case 1b: $j>i+1$:}
In this case, we need to add some twists to $\eta^{\conv}$ in the $J\times \Pi^{\kappa-1}$-plane (see Figures \ref{1 twist fig}, \ref{2 twists fig}).
Let $P\in e^m$ be the midpoint and consider the twists in
\[
  T_P e^m \cong T_{P_1}\left([-\epsilon,\epsilon]^{A+B}\times [0,1]\right)\times T_{P_2} \left(J\times \Pi^{\kappa-1}\right)
\]
that are constant in the first tangent plane, but $\varphi_r$ in the second tangent plane. We call these twists $\boldvarphi_r$. Using this notation, we define $\eta_\emptyset := \eta^{\conv} \diamond(\boldvarphi_{j-1}\diamond\ldots\diamond\boldvarphi_{i+1},0)$.\par
We are finally set up to define our boundary matching tube. The last step is incorporate the signs $\sigma(p_0)$, $\sigma(p_1)$:
\begin{definition}
  Define the doubly specified \textit{boundary matching} tube $\eta = \{\Theta,\Lambda\}$ by
  \begin{equation}
    \Theta(x,t) = \Theta_\emptyset(\tau^{\sigma(p_0)}(x),t),\quad \Lambda(x,t) = \Lambda_\emptyset(\tau^{\sigma(p_1)}(x),t),
  \end{equation}
\end{definition}
We constructed $\eta$ intentionally so it has the property that the restrictions $\Theta|_{e^m\times \{0\}}$ $\Lambda|_{e^m\times \{0\}}$  are the inclusions $e^m\cong C(x_0)\times \{p_0\}\hookrightarrow \mathcal{C}(y)$, $e^m\cong C(x_0)\times \{p_0\}\hookrightarrow \mathcal{C}(y)$. Note that in the unsigned flow category setting, $\eta = \eta_{\emptyset}$.

\subsection{Extending $\mathfrak{c}$ to the tubes $\eta$.}
Here, we outline Step (\ref{extension step}) using our tubes $\eta$ constructed in the previous section, thus finishing our extension of $\mathfrak{c}$ to $\mathcal{C}(y)$. Observe that $\Theta^{-1}|_{\mathcal{C}(x_0)\times \{p_0\}}: \mathcal{C}(x_0)\times \{p_0\}\to e^m\times \{0\}$ agrees with $\mathfrak{c}|_{\mathcal{C}(x_0)\times\{p_0\}}: \mathcal{C}(x_0)\times \{p_0\}\to e^m$. It might seem reasonable to extend $\mathfrak{c}$ to the slices $\Theta(e^m\times \{t\})$ similarly, but recall from equations (\ref{same face t=1}), (\ref{possibly incoherent}) that if the faces $\mathbf{G}_i$, $\mathbf{G}_k$ are even index apart, $(\Theta_{e^m\times \{1\}})^{-1}$ differs from $\mathfrak{c}|_{\mathcal{C}(x_1)\times \{p_1\}}$ by a flip in the $J$ factor. In these cases, $\mathfrak{c}$ must also map to the $(m+1)$-cell of $K_m^{(m+1)}$.
\begin{remark}\label{eta coherence}
  The boundary matching tube $\eta$ is boundary-coherent if and only if the ``flipping'' quantity $\omega = j-i+1+\sigma(p_0)+\sigma(p_1)=0 \in \mathbb{F}_2$.
\end{remark}

\begin{definition}
  Suppose $\eta$ is a boundary matching tube with ends $\mathcal{C}(x_0)\times \{p_0\}\subset \mathbf{G}_i$, $\mathcal{C}(x_1)\times \{p_1\}\subset \mathbf{G}_j$.
  \begin{enumerate}[label = (T-\arabic*), ref = \arabic*]
  \item If $\eta$ is boundary-coherent (see Remark \ref{eta coherence}), we define
    \[
      \mathfrak{c}|_{\eta}: \eta\xrightarrow{\Theta_\sigma^{-1}} e^m\times [0,1]\to  e^m\hookrightarrow K_m^{(m)}.
    \]
    We call $\eta$ (also $\Theta$) a \textit{boundary-coherent} tube in this case. \label{coherent}
  \item If $\eta$ is boundary-incoherent, we define
    \[
      \mathfrak{c}|_{\eta}: \eta\xrightarrow{\Theta_\sigma^{-1}} e^m\times[0,1] \cong e^{m+1}\to K_m^{(m+1)}.
    \]
  \end{enumerate}
  Whether $\eta$ is boundary-coherent or boundary-incoherent, our definition of  $\mathfrak{c}|_{\eta}$ agrees with the prescribed $\mathfrak{c}|_{\mathcal{C}(y)}$ from (\ref{boundary condition}) on the intersection $\eta\cap \partial\mathcal{C}(y) = (\mathcal{C}(x_0)\times \{p_0\})\cup (\mathcal{C}(x_1)\times \{p_1\})$, which confirms our extension to $\eta$ is well-defined.
\end{definition}
\begin{remark}
  The terms ``boundary-coherent'' and ``boundary-incoherent'' tubes $\eta$ should allude to the boundary-coherent and boundary-incoherent \textit{arcs} $\eta$ from \cite{MR3252965}. The two concepts are linked; whereas the identifications $\mathcal{C}(x_i)\times \{p_i\}\xrightarrow{\Theta^{-1}} e^m\times\{i\}$, $i=0,1$ agree with $\mathfrak{c}$ if $\eta$ is a boundary-coherent \textit{tube}, the framing of a boundary-incoherent \textit{arc} in \cite{MR3252965} agrees with the framing of its endpoints. Similarly, $\mathcal{C}(x_1)\times \{p_1\}\xrightarrow{\Theta^{-1}} e^m\times\{1\}$ disagrees with $\mathfrak{c}$ by a flip if $\eta$ is a boundary-incoherent tube, mirroring how the framing of a boundary-incoherent arc disagrees with the framing of one of its endpoints by a ``flip.''
\end{remark}
Let us examine the choices we could have made when following Steps (\ref{boundary condition})-(\ref{extension step}). Steps (\ref{boundary condition}), (\ref{basepoint}), and (\ref{extension step}), were entirely predetermined, but we could have made a lot of choices in Step (\ref{tube step}), affecting our extension of $\mathfrak{c}$ to cells $\partial\mathcal{C}(z)$. In particular, we made pairs of boundary components $\{\mathcal{C}(x)\times \{p\},\mathcal{C}(x')\times \{p'\}\}$, and furthermore designated a starting $(t=0)$ component and ending $(t=1)$ component for each pair. This designation is predetermined if the components lie in different index faces $\mathbf{G}_i,\mathbf{G}_j$ of $\partial\mathcal{C}(y)$, but the designation is arbitrary if $\mathcal{C}(x)\times \{p\},\mathcal{C}(x')\times \{p'\}$ lie in the same face $\mathbf{G}_i$. We encode these choices in what we call a \textit{facewise boundary matching}.
\begin{definition}\label{facewise def}
  Given a cycle $\mu \in C_{\mathcal{M}}^l(\mathscr{C};\mathbb{F}_2)$, we define a \textit{facewise boundary-matching} $\mathfrak{m}=(\mathfrak{b}_y,\mathfrak{s}_y)$, where:
  \begin{itemize}
  \item $\mathfrak{b}_y$ is a fixed point free involution of $\mathcal{M}(y,\mu)$. We can also think of $\mathfrak{b}_y$ as a partition of $\mathcal{M}(y,\mu)$ into disjoint pairs $\{p,\mathfrak{b}_y(p)\}$.
  \item $\mathfrak{s}_y$ is an ordering for each pair $\{p,p'\}\in \mathfrak{b}_y$. We require that if $S_{\mathbb{Z}}(p)<S_{\mathbb{Z}}(p')$, then $\{p,p'\}$ is ordered as $(p,p')$. In other words, if $p$ and $p'$ are in moduli spaces $\mathcal{M}(y,x)$ and $\mathcal{M}(y,x')$ respectively, and $\mathcal{C}(x)\times \{p\}$ and $\mathcal{C}(x')\times \{p'\}$ embed in faces $\mathbf{G}_i,\mathbf{G}_j\subset \partial \mathcal{C}(y)$, $i<j$, then $\mathfrak{s}_y$ orders the edge $\{p,p'\}$ as $(p,p')$.
  \end{itemize}
  Again, the process of facewise boundary matching is illustrated in Figure \ref{facewise picture}.
\end{definition}
Note that if $S_{\mathbb{Z}}(p)=S_{\mathbb{Z}}(p')$, in a matched pair $\{p,p\}$, then $\mathfrak{s}_y$ can order $\{p,p'\}$ as $(p,p')$ or $(p',p)$. This completely arbitrary choice encodes the choice we made in the beginning of Section \ref{eta construction} to designate a ``$t=0$'' component $\mathcal{C}(x_0)\times \{p_0\}$ and a ``$t=1$'' component $\mathcal{C}(x_1)\times \{p_1\}$.
\section{Cycles and their homotopy classes}
We have defined $\mathfrak{c}$ on the $(m+1)$-skeleton of $Y$, and pur next goal is to extend $\mathfrak{c}$ to the $(m+2)$-skeleton, which we can do one cell at a time. Following this idea, we fix an $(m+2)$-cell $\mathcal{C}'(z)$ of $Y'$. ``Most'' of the boundary $\partial\mathcal{C}'(z)$ must map to the basepoint under $\mathfrak{c}$; to explain what we mean, there are inclusions of type
\begin{enumerate}[label = (E-\arabic*), ref = E-\arabic*]
\item $\jmath'_{z,x}:\mathcal{C}'(x)\times \mathcal{M}_{\mathscr{C}'}(z,x)\hookrightarrow \partial\mathcal{C}'(z)$ (see Section \ref{truncation}, Equation (\ref{embedding zx})),\label{zx embedding}
\item $\jmath'_{y,x}:\mathcal{C}'(y)\times \mathcal{M}_{\mathscr{C}'}(z,y)\hookrightarrow \partial\mathcal{C}'(z)$ (see Section \ref{truncation}, Equation (\ref{embedding zy})),\label{zy embedding}
\end{enumerate}
and $\mathfrak{c}$ must map any point outside of these images to the basepoint. Embeddings of type-(\ref{zx embedding}) look like a collection of $\mathcal{C}'(x)$-tubes and embeddings of type-(\ref{zy embedding}) look like a collection of cells $\mathcal{C}'(y)$ (see Figure \ref{attaching pictures} for a visualization).\par
In fact, ``most'' of a type-(\ref{zy embedding}) image must map to the basepoint. Indeed, if we consider an embedded $\mathcal{C}'(y)\times \{q\}\subset \partial\mathcal{C}'(z)$, any point outside of the embedded boundary matching tubes $\eta\times\{q\}$ gets mapped to the basepoint as well.\par
We have narrowed our attention to two types of ``tubes,'' that is, tubes of the form (\ref{zx embedding}), and tubes of the form $\eta\times\{q\}$. We make these notions precise in the following definitions:
\begin{definition}
  Let $\mathcal{C}'(x)$ be an $m$-cell of $Y'$. If we consider a component $I \subset \mathcal{M}_{\mathscr{C}'}(z,x)$, we can define a tube
  \[
    \Theta_\sigma:e^m\times [0,1]\xhookrightarrow{\Id\times \theta} \mathcal{C}'(x)\times I\xhookrightarrow{\jmath'_I} \mathcal{C}'(z),
  \]
  where $\theta:[0,1]\to I$ is an arbitrary parametrization of $I$. By parametrizing $I$ from the other direction, we obtain another tube $\Lambda_\sigma(x,t) = \Theta_\sigma(x,1-t)$, giving us a coherent d.s.\ $\zeta = \{\Theta_\sigma,\Lambda_\sigma\}$. We call $\zeta$ a \textit{Pontrjagin-Thom tube}.
\end{definition}\label{boundary matching in z}
Observe that the restrictions $\Theta_\sigma|_{e^m\times\{0\}}$, $\Theta_\sigma|_{e^m\times\{1\}}$ are precisely the inclusions
\[
  \mathcal{C}'(x)\times \{p_0\}\times\{q_0\}\hookrightarrow \mathcal{C}'(z),\qquad \mathcal{C}'(x)\times \{p_1\}\times\{q_1\}\hookrightarrow \mathcal{C}'(z)
\]
for $\{p_i\}\times\{q_i\} = \theta(i)$, $(i=0,1)$, keeping in mind that $\mathcal{C}'(x)=e^m$.\par
\begin{definition}
  Let $q\in \mathcal{M}_{\mathscr{C}'}(z,y)$, and let $\eta = \{\Theta_\sigma,\Lambda_\sigma\}$ be a boundary matching tube in $\mathcal{C}'(y)$. We ``embed'' $\eta$ in $\partial\mathcal{C}'(z)$ by postcomposing $\Theta_\sigma,\Lambda_\sigma$ with the inclusion $\jmath'_q:\mathcal{C}'(y)\times\{q\}\hookrightarrow \partial\mathcal{C}'(z)$. Namely, define $\widetilde{\Theta}_\sigma:= \jmath'_q\circ \Theta_\sigma$, $\widetilde{\Lambda}_\sigma:= \jmath'_q\circ \Lambda_\sigma$, giving us $\tilde{\eta} = \{\widetilde{\Theta}_\sigma,\widetilde{\Lambda}_\sigma\}$. We reuse our terminology and also call $\tilde{\eta}$ a \textit{boundary matching tube}.
\end{definition}
\begin{notation}
  We defined tubes $\eta$ the previous section and tubes $\tilde{\eta}$ in Definition \ref{boundary matching in z}, both of which we call boundary mataching tubes. We always use the $\sim$ symbol to distinguish the two.
\end{notation}
Let $K\subset \bigl(\bigcup_{\substack{\tilde{\eta} \text{ boundary}\\ \text{matching}}}\tilde{\eta}\bigr) \cup \bigl(\bigcup_{\substack{\zeta \text{ Pontrjagin}\\\text{-Thom}}} \zeta\bigr)$
be a connected component, which we view (for now) as a topological space. $K$ is composed by piecing together a sequence of tubes that alternate between Pontrjagin-Thom and boundary matching. (See Figure \ref{cycle example arc union} for an illustration.)
\begin{figure}
  \begin{tabular}{m{7cm} m{2cm}}
    \def\svgscale{0.7}%% Creator: Inkscape 1.4.2 (ebf0e940d0, 2025-05-08), www.inkscape.org
%% PDF/EPS/PS + LaTeX output extension by Johan Engelen, 2010
%% Accompanies image file '6-cycle.pdf' (pdf, eps, ps)
%%
%% To include the image in your LaTeX document, write
%%   \input{<filename>.pdf_tex}
%%  instead of
%%   \includegraphics{<filename>.pdf}
%% To scale the image, write
%%   \def\svgwidth{<desired width>}
%%   \input{<filename>.pdf_tex}
%%  instead of
%%   \includegraphics[width=<desired width>]{<filename>.pdf}
%%
%% Images with a different path to the parent latex file can
%% be accessed with the `import' package (which may need to be
%% installed) using
%%   \usepackage{import}
%% in the preamble, and then including the image with
%%   \import{<path to file>}{<filename>.pdf_tex}
%% Alternatively, one can specify
%%   \graphicspath{{<path to file>/}}
%% 
%% For more information, please see info/svg-inkscape on CTAN:
%%   http://tug.ctan.org/tex-archive/info/svg-inkscape
%%
\begingroup%
  \makeatletter%
  \providecommand\color[2][]{%
    \errmessage{(Inkscape) Color is used for the text in Inkscape, but the package 'color.sty' is not loaded}%
    \renewcommand\color[2][]{}%
  }%
  \providecommand\transparent[1]{%
    \errmessage{(Inkscape) Transparency is used (non-zero) for the text in Inkscape, but the package 'transparent.sty' is not loaded}%
    \renewcommand\transparent[1]{}%
  }%
  \providecommand\rotatebox[2]{#2}%
  \newcommand*\fsize{\dimexpr\f@size pt\relax}%
  \newcommand*\lineheight[1]{\fontsize{\fsize}{#1\fsize}\selectfont}%
  \ifx\svgwidth\undefined%
    \setlength{\unitlength}{277.37154503bp}%
    \ifx\svgscale\undefined%
      \relax%
    \else%
      \setlength{\unitlength}{\unitlength * \real{\svgscale}}%
    \fi%
  \else%
    \setlength{\unitlength}{\svgwidth}%
  \fi%
  \global\let\svgwidth\undefined%
  \global\let\svgscale\undefined%
  \makeatother%
  \begin{picture}(1,1.35149491)%
    \lineheight{1}%
    \setlength\tabcolsep{0pt}%
    \put(0,0){\includegraphics[width=\unitlength,page=1]{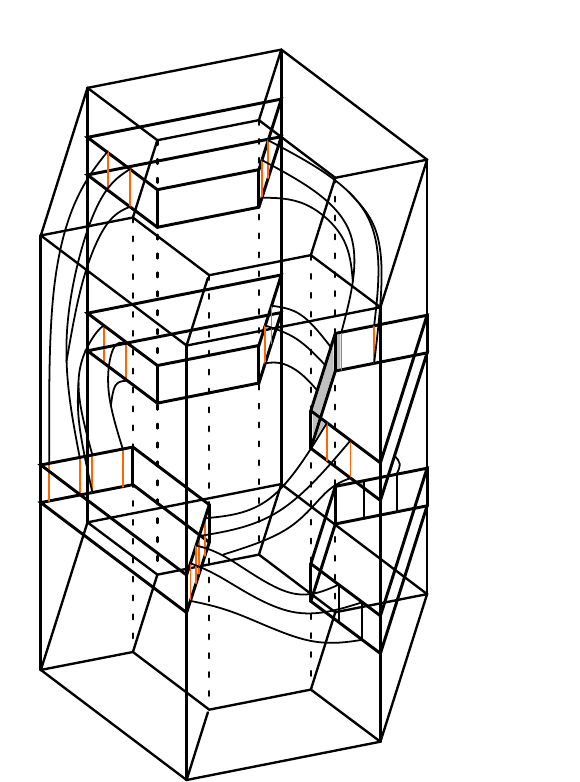}}%
    \put(0.29719168,1.32683491){\color[rgb]{0,0,0}\makebox(0,0)[lt]{\lineheight{1.25}\smash{\begin{tabular}[t]{l}$\partial\mathcal{C}(z)$\end{tabular}}}}%
    \put(0,0){\includegraphics[width=\unitlength,page=2]{6-cycle.pdf}}%
    \put(0.7252351,0.17553794){\color[rgb]{0,0,0}\makebox(0,0)[lt]{\lineheight{1.25}\smash{\begin{tabular}[t]{l}$\mathbf{G}_1(z)$\end{tabular}}}}%
    \put(0.15359897,1.23787694){\color[rgb]{0,0,0}\makebox(0,0)[lt]{\lineheight{1.25}\smash{\begin{tabular}[t]{l}$\mathbf{G}_0(z)$\end{tabular}}}}%
    \put(-0.00201171,0.0700821){\color[rgb]{0,0,0}\makebox(0,0)[lt]{\lineheight{1.25}\smash{\begin{tabular}[t]{l}$\mathbf{G}_2(z)$\end{tabular}}}}%
  \end{picture}%
\endgroup%
 & \def\svgscale{1.3}%% Creator: Inkscape 1.4.2 (ebf0e940d0, 2025-05-08), www.inkscape.org
%% PDF/EPS/PS + LaTeX output extension by Johan Engelen, 2010
%% Accompanies image file '6-cycle_parametrization.pdf' (pdf, eps, ps)
%%
%% To include the image in your LaTeX document, write
%%   \input{<filename>.pdf_tex}
%%  instead of
%%   \includegraphics{<filename>.pdf}
%% To scale the image, write
%%   \def\svgwidth{<desired width>}
%%   \input{<filename>.pdf_tex}
%%  instead of
%%   \includegraphics[width=<desired width>]{<filename>.pdf}
%%
%% Images with a different path to the parent latex file can
%% be accessed with the `import' package (which may need to be
%% installed) using
%%   \usepackage{import}
%% in the preamble, and then including the image with
%%   \import{<path to file>}{<filename>.pdf_tex}
%% Alternatively, one can specify
%%   \graphicspath{{<path to file>/}}
%% 
%% For more information, please see info/svg-inkscape on CTAN:
%%   http://tug.ctan.org/tex-archive/info/svg-inkscape
%%
\begingroup%
  \makeatletter%
  \providecommand\color[2][]{%
    \errmessage{(Inkscape) Color is used for the text in Inkscape, but the package 'color.sty' is not loaded}%
    \renewcommand\color[2][]{}%
  }%
  \providecommand\transparent[1]{%
    \errmessage{(Inkscape) Transparency is used (non-zero) for the text in Inkscape, but the package 'transparent.sty' is not loaded}%
    \renewcommand\transparent[1]{}%
  }%
  \providecommand\rotatebox[2]{#2}%
  \newcommand*\fsize{\dimexpr\f@size pt\relax}%
  \newcommand*\lineheight[1]{\fontsize{\fsize}{#1\fsize}\selectfont}%
  \ifx\svgwidth\undefined%
    \setlength{\unitlength}{87.04518055bp}%
    \ifx\svgscale\undefined%
      \relax%
    \else%
      \setlength{\unitlength}{\unitlength * \real{\svgscale}}%
    \fi%
  \else%
    \setlength{\unitlength}{\svgwidth}%
  \fi%
  \global\let\svgwidth\undefined%
  \global\let\svgscale\undefined%
  \makeatother%
  \begin{picture}(1,0.718687)%
    \lineheight{1}%
    \setlength\tabcolsep{0pt}%
    \put(0,0){\includegraphics[width=\unitlength,page=1]{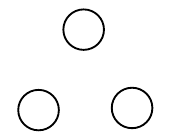}}%
    \put(-0.0055382,0.09677979){\color[rgb]{0,0,0}\makebox(0,0)[lt]{\lineheight{1.25}\smash{\begin{tabular}[t]{l}$0$\end{tabular}}}}%
    \put(0.85171286,0.1008887){\color[rgb]{0,0,0}\makebox(0,0)[lt]{\lineheight{1.25}\smash{\begin{tabular}[t]{l}$2$\end{tabular}}}}%
    \put(0.30281325,0.65088597){\color[rgb]{0,0,0}\makebox(0,0)[lt]{\lineheight{1.25}\smash{\begin{tabular}[t]{l}$1$\end{tabular}}}}%
    \put(0,0){\includegraphics[width=\unitlength,page=2]{6-cycle_parametrization.pdf}}%
  \end{picture}%
\endgroup%

  \end{tabular}
  \caption{Left: a cycle $K$ in $\partial \mathcal{C}(z)\cong \partial \mathcal{C}_{3}$, seen in Figure \ref{cycle example arc union}. Right: the facet cycle $Z$ that parametrizes $K$.}
  \label{cycle example arc union}
\end{figure}
Indeed, the following procedure gives us a sequence $\zeta_1,\tilde{\eta}_1,\zeta_2,\tilde{\eta}_2\ldots\zeta_l,\tilde{\eta}_l$ of consecutive tubes that alternate between Pontrjagin-Thom boundary matching such that $\bigcup_i \tilde{\eta}_i\cup\bigcup_i \zeta_i = K$:
\begin{enumerate}[label = (P-\arabic*), ref = P-\arabic*]
  \item Start with a Pontrjagin-Thom tube $\zeta_1\subset K$, which has end components $\mathcal{C}(x_1)\times\{p_1\}\times\{q_1\}$ and $\mathcal{C}(x_1)\times\{p'_2\}\times\{q_2\}$.\label{step 1}
  \item $\mathcal{C}(x_1)\times\{p'_2\}\subset \mathcal{C}(y_1)$ is boundary-matched with $\mathcal{C}(x_2)\times\{p_2\}\subset \mathcal{C}(y_1)$. There is a boundary matching tube $\tilde{\eta}_1$, which has ends $\mathcal{C}(x_1)\times\{p'_2\}\times\{q_2\}$ and $\mathcal{C}(x_2)\times\{p_2\}\times\{q_2\}$.
  \item The interval $I\subset \mathcal{M}_{\mathscr{C}}(z,x_2)$ containing the endpoint $\{p_2\}\times\{q_2\}$ has the other endpoint $\{p'_3\}\times\{q_3\}$. There exists a Pontrjagin-Thom tube $\zeta_2$ which has ends $\mathcal{C}(x_2)\times\{p'_2\}\times\{q_2\}$ and $\mathcal{C}(x_2)\times\{p_3\}\times\{q_3\}$.
  \item Continue this process until we find a Pontrjagin-Thom tube connected to $\mathcal{C}(x_1)\times\{p_1\}\times\{q_1\}$.\label{step 4}
  \end{enumerate}
  \begin{definition}
    Recall from Lemma \ref{suspension RP2} that $[\partial\mathcal{C}(z),K_m^{(m+1)}]\cong [S^{m+1},K_m^{(m+1)}]\cong \mathbb{Z}/2$, since we have already fixed $m>2$ in this paper. For a cycle $K$ of alternating Pontrjagin-Thom tubes and boundary matching tubes, the restriction $\mathfrak{c}|_K$ specifies an element $[\mathfrak{c}|_K] \in [\partial \mathcal{C}(z),K_m^{(m+1)}]\cong \mathbb{Z}/2$, defined by sending $K$ to $K_m^{(m+1)}$ by $\mathfrak{c}$ and $\partial \mathcal{C}(z)\backslash K$ to the basepoint. By summing along all cycles $K$, we obtain the identity $\sum_{K\subset \partial\mathcal{C}(z)}[\mathfrak{c}|_K] = [\mathfrak{c}|_{\partial\mathcal{C}(z)}]\in \mathbb{Z}/2$.
  \end{definition}
  This quantity $[\mathfrak{c}|_{\partial\mathcal{C}(z)}]$ will end up being our coefficient for $\mathcal{C}(z)$ in $\Sq^2(\mathbf{c})$.\par
  Upon piecing together the whole $K$, we observe $K$ looks like a tubular neighborhood of some closed curve, which we shall define.
  \begin{definition}\label{0-section construction}
    Let $P$ denote the midpoint of $e^m$ and $P_x$ denote the midpoint of $\mathcal{C}(x)$. We define the simple closed curve $\mathcal{K}\subset K$ to be the union of the $\{P\}\times [0,1]$-identified subsets of the $\eta$'s and the $\{P_x\}\times \mathcal{M}_{\mathscr{C}}(z,x)$-identified subsets of the $\zeta$'s.
  \end{definition}
  We observe that $K$ is a fiber bundle over $\mathcal{K}$ with fibers $e^m$, and $\mathcal{K}$ looks like a ``$0$-section'' of $K$. We ask if we can trivialize this fiber bundle, and Proposition \ref{trivialization} answers this question in the affirmative. The style of our proof is similar to \cite{MR3252965} (the discussion preceding Lemma 3.9). But first we need a lemma.
  \begin{lemma}\label{even incoherent}
    In a cycle $K$, there are an even number of boundary-incoherent tubes $\tilde{\eta}$.
  \end{lemma}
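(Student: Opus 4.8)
\textbf{Proof plan for Lemma \ref{even incoherent} (``even number of boundary-incoherent tubes in a cycle'').}
The plan is to assign to each tube in the alternating sequence $\zeta_1,\tilde\eta_1,\ldots,\zeta_l,\tilde\eta_l$ an $\mathbb{F}_2$-valued ``discrepancy'' measuring whether the two parametrizing ends of that tube agree or disagree up to a $J$-flip, and then show that the total discrepancy around the cycle must vanish. Concretely, for each endpoint component $\mathcal{C}(x)\times\{p\}\times\{q\}$ appearing in $K$, the identification with $e^m$ coming from $\mathfrak{c}|_{\partial\mathcal{C}(z)}$ (via the attaching data and the sign $\sigma$) is canonical, so it makes sense to ask whether the two ends of a given tube are identified with $e^m$ in a way that agrees or differs by the flip $\tau$. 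For a Pontrjagin--Thom tube $\zeta$ this discrepancy is always $0$: by construction $\zeta=\{\Theta_\sigma,\Lambda_\sigma\}$ is a \emph{coherent} d.s.\ tube, and its two restrictions $\Theta_\sigma|_{e^m\times\{0\}}$, $\Theta_\sigma|_{e^m\times\{1\}}$ are literally the inclusions of $\mathcal{C}(x)\times\{p_i\}\times\{q_i\}$ with no extra flip. For a boundary matching tube $\tilde\eta$, Remark \ref{eta coherence} identifies the discrepancy with the flipping quantity $\omega=j-i+1+\sigma(p_0)+\sigma(p_1)\in\mathbb{F}_2$, which is exactly the condition for $\tilde\eta$ to be boundary-incoherent.

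The key step is then a telescoping/parity argument. Walking once around the closed cycle $\mathcal{K}$, each of the $2l$ endpoint components is visited exactly twice — once as the terminal end of one tube and once as the initial end of the next tube in the alternating sequence. Summing the discrepancies of all $2l$ tubes in $\mathbb{F}_2$, the contribution at each shared endpoint cancels in pairs (the identification of that fixed endpoint component with $e^m$ is a single well-defined thing, so it is counted with the same value from both adjacent tubes), and what remains is $0$. On the other hand this same sum equals (sum of discrepancies of the $\zeta_i$) $+$ (sum of discrepancies of the $\tilde\eta_i$) $= 0 + \#\{\text{boundary-incoherent }\tilde\eta_i\}\pmod 2$. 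Hence the number of boundary-incoherent tubes in $K$ is even.

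I would phrase this cleanly by choosing, along $\mathcal{K}$, a ``running framing'' of the $e^m$-fiber bundle $K\to\mathcal{K}$ — i.e.\ starting from one endpoint component with its canonical identification to $e^m$, transport it through $\zeta_1$ (no change, since $\zeta_1$ is coherent), compare with the canonical identification at the next endpoint (they agree, again because $\zeta_1$ is a genuine parametrization of an interval of $\mathcal{M}(z,x)$), then transport through $\tilde\eta_1$, where the comparison picks up a factor $\tau^{\omega_1}$ with $\omega_1$ the incoherence bit of $\tilde\eta_1$, and so on. After going all the way around, the running framing must agree with itself, so $\sum_i\omega_i = 0\in\mathbb{F}_2$, which is the assertion. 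The only subtlety to be careful about — and the step I expect to require the most attention — is checking that transporting through a Pontrjagin--Thom tube genuinely contributes $0$, i.e.\ that the two endpoint identifications at the ends of $\zeta$ are ``the same'' identification with $e^m$ with no hidden flip; this is where one uses that $\zeta$ arises from an honest parametrization $\theta:[0,1]\to I$ of a \emph{connected component} $I\subset\mathcal{M}_{\mathscr{C}'}(z,x)$ together with the coherence relation $\Lambda_\sigma(x,t)=\Theta_\sigma(x,1-t)$, so that the $\sigma$-twist $\tau^{\sigma(\cdot)}$ is applied consistently at both ends and cancels in the comparison. Everything else is the bookkeeping $\sum\omega_i=0$ already sketched.
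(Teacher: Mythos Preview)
Your holonomy/telescoping intuition is natural, but the argument as written has a genuine gap at the step ``after going all the way around, the running framing must agree with itself'' (equivalently, ``the contribution at each shared endpoint cancels in pairs''). The running framing is obtained by transporting the canonical identification at the starting endpoint around $\mathcal{K}$ via the tube parametrizations; upon return it differs from the starting identification by exactly $\tau^{\sum_i\omega_i}$. Asserting that this equals the identity is precisely the statement that the $\mathbb{Z}/2$-holonomy of the $e^m$-bundle $K\to\mathcal{K}$ (with respect to the $J$-flip) is trivial---but that holonomy \emph{is} $\sum_i\omega_i$, so you are assuming what you want to prove. Likewise, in the ``cancellation'' phrasing: the discrepancy $\omega_T$ is a single bit attached to the tube $T$, not a difference of two endpoint-values, so there is nothing to pair off. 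The canonical identification at a shared endpoint is indeed well-defined, but that fact alone does not let you write $\omega_T$ as $\epsilon(\text{end}_1)+\epsilon(\text{end}_2)$ for any vertex function $\epsilon$.

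The paper supplies exactly the missing ingredient: the cubical sign assignment. On the special graph structure $\Gamma(z,\mu)$ one has the function $S(p,q)=S(p)+S(q)$, and the sign-assignment axiom (equation~\eqref{sign criterion}, encoded as axiom (G-2)) forces $S(v)+S(v')=1$ for every $E'$-edge (i.e.\ every Pontrjagin--Thom tube). Combined with Remark~\ref{eta coherence}, one checks that for every tube $\omega_T\equiv S(v_T)+S(v'_T)+1\pmod 2$. Now the telescoping is honest: $\sum_T\omega_T=\sum_T(S(v_T)+S(v'_T))+\#\{\text{tubes}\}=2\sum_vS(v)+2l\equiv0$. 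The paper packages the same computation slightly differently (showing $\#\{\text{PT}\}\equiv\#\{\text{coherent BM}\}$), but the crucial input you are missing is the sign-assignment identity on the Pontrjagin--Thom edges; the part you flagged as delicate (that PT tubes contribute $0$) is in fact fine.
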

  We postpone the proof of this lemma to Section \ref{special graph intro}.
  
\begin{proposition}\label{trivialization}
  There exist exactly two trivializations $\Phi: K\hookrightarrow e^m\times \mathcal{K}$ that satisfy the following:
  \begin{itemize}
  \item For each Pontrjagin-Thom tube $\zeta\subset K$, $\Phi|_{\zeta}$ is equal to
    \[
      \zeta\cong e^m\times \mathcal{K}_\zeta \xhookrightarrow{\tau^\omega \times \Id} e^m\times \mathcal{K}_\zeta,
    \]
    where  $\tau^\omega$ is either $\tau^0 = \Id$ or $\tau^1=\tau$, and $\mathcal{K}_\zeta = \mathcal{K}\cap \zeta$.
  \item For each boundary matching tube $\tilde{\eta}\subset K$, $\Phi|_{\tilde{\eta}}$ is equal to
    \[
      \tilde{\eta} \xhookrightarrow{\Theta^{-1}} e^m\times [0,1] \xhookrightarrow{\tau^\omega \times \Theta(P,\cdot)} e^m\times \mathcal{K}_{\tilde{\eta}},
    \]
    where $\tau^\omega\in\{\Id,\tau\}$, $\mathcal{K}_{\tilde{\eta}} = \mathcal{K}\cap \tilde{\eta}$.  
  \end{itemize}
  Furthermore, these two trivializations $\Phi,\Phi'$ are related by a flip in the $J$-factor $(\Phi' = (\tau\times \Id_{\mathcal{K}})\circ \Phi)$. 
\end{proposition}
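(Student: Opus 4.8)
The plan is to build $\Phi$ one tube at a time around the cycle $K$, reducing the existence of a global trivialization of the prescribed form to the vanishing of a single mod-$2$ obstruction, which is exactly the content of Lemma \ref{even incoherent}.

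First I would record the combinatorial shape of $K$: by the procedure (\ref{step 1})--(\ref{step 4}) it is a union of consecutive tubes $\zeta_1,\tilde\eta_1,\zeta_2,\tilde\eta_2,\ldots,\zeta_l,\tilde\eta_l$ alternating between Pontrjagin--Thom tubes and (embedded) boundary matching tubes, forming a fiber bundle over the closed curve $\mathcal{K}$ with fiber $e^m$, and each tube comes with its native parametrization ($\Theta_\sigma$ for a $\zeta$, $\widetilde\Theta_\sigma$ for a $\tilde\eta$) identifying it with $e^m\times[0,1]$. A trivialization of the form demanded by the Proposition amounts to a choice, for each tube, of a single bit $\omega\in\mathbb{F}_2$ recording whether we post-compose the native parametrization with a flip $\tau$ in the $J$-factor; so the assertion is that these bits are determined up to one global choice, and that such a choice exists.

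Next comes the junction analysis, which is the heart of the matter. At each fiber where two consecutive tubes meet, continuity of $\Phi$ forces the two bits $\omega$ to differ by a discrepancy $\delta\in\mathbb{F}_2$, namely whether the two native parametrizations restrict to the same identification of that fiber with $e^m$ or to ones differing by $\tau$. Using that $\Theta_\sigma$ for a Pontrjagin--Thom tube and $\Theta$ for a boundary matching tube were both built to restrict to the standard inclusion $e^m\cong\mathcal{C}(x)$ on their $t=0$ ends — and that a Pontrjagin--Thom tube is a coherent d.s.\ tube, so it contributes the same $\sigma$-twist at both of its ends — I would show $\delta=0$ at every junction except those adjacent to the $t=1$ end of a boundary matching tube $\tilde\eta_i$, where, by the endpoint computations (\ref{same face t=1}) in Case 0 and (\ref{endpoint Vi Vj})--(\ref{possibly incoherent}) together with the twist simplification of Example \ref{Vij twists} in Case 1 (the added twists $\boldvarphi_r$ leaving no residual coordinate permutation, only a $J$-flip), the discrepancy equals the boundary-incoherence bit $\omega=j-i+1+\sigma(p_0)+\sigma(p_1)$ of $\tilde\eta_i$ from Remark \ref{eta coherence}. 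Going around $\mathcal{K}$, the collection of bits $\{\omega(\text{tube})\}$ is solvable subject to these junction constraints if and only if the sum of all discrepancies vanishes; this sum is $\sum_i\omega(\tilde\eta_i)=\#\{\text{boundary-incoherent }\tilde\eta\subset K\}\bmod 2$, which is $0$ by Lemma \ref{even incoherent}.

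Hence a trivialization $\Phi$ exists — in fact one for each of the two values of the bit on the initial tube $\zeta_1$, since the solvability condition does not involve that value. Conversely any trivialization of the prescribed form has all of its bits determined by $\omega(\zeta_1)$ via the same propagation, so there are exactly two; changing $\omega(\zeta_1)$ flips every bit, which is precisely post-composition with $\tau\times\Id_{\mathcal{K}}$, giving $\Phi'=(\tau\times\Id_{\mathcal{K}})\circ\Phi$. I expect the main obstacle to be the junction bookkeeping — in particular verifying that the twists inserted in Case 1b exactly straighten out the coordinate permutation so that only a $J$-flip, governed by the parity $j-i+1+\sigma(p_0)+\sigma(p_1)$, survives at each junction; once that is in hand the argument is formal, modulo the postponed Lemma \ref{even incoherent}.
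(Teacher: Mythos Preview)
Your approach is correct and is essentially the same as the paper's: both reduce existence to the vanishing of a mod-$2$ monodromy around $\mathcal{K}$, identified with the parity of boundary-incoherent boundary matching tubes, and invoke Lemma \ref{even incoherent}. The paper packages your bit-propagation more compactly by composing all the tubes into a single d.s.\ tube $T=\zeta_1\cup\tilde\eta_1\cup\cdots\cup\zeta_l\cup\tilde\eta_l$ via Definition \ref{tube compose operation} and observing that $T$ is boundary-coherent (hence $\overline{\Theta}_\sigma$ factors through $e^m\times S^1$) precisely because an even number of its constituents are incoherent; this absorbs your junction bookkeeping into the already-developed composition machinery, though your explicit formulation has the advantage of making the ``exactly two'' uniqueness claim transparent, which the paper leaves as an exercise.
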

\begin{proof}
  Use Steps (\ref{step 1})--(\ref{step 4}) to find a sequence $\zeta_1,\tilde{\eta}_1,\zeta_2,\tilde{\eta}_2,\ldots,\zeta_l,\tilde{\eta}_l$ of consecutive d.s.\ tubes whose union is $K$. Following Definition \ref{tube compose operation}, we construct a single d.s.\ tube $T = \{\overline{\Theta}_\sigma,\overline{\Lambda}_\sigma\} := \zeta_1\cup\tilde{\eta}_1\cup\zeta_2\cup\tilde{\eta}_2\cup\ldots\cup\zeta_l\cup\tilde{\eta}_l$, with free ends being an end of $\zeta_1$ and an end of $\tilde{\eta}_l$. All of the $\zeta_i$'s are boundary-coherent, and by Lemma \ref{even incoherent}, an even number of the $\tilde{\eta}_i$'s are boundary-incoherent, so in total, an even number of the component tubes of $T$ are boudary-incoherent. By induction, we find that $T$ must be boundary-coherent. Therefore, the tube $\overline{\Theta}_\sigma:e^m\times [0,1]\to K$ is identical on the ends $e^m\times\{0\},e^m\times\{1\}$, and so factors through a homeomorphism $e^m\times S^1\xhookrightarrow{\sim} K$. We define the $e^m$-component $\Phi_{e^m}$ of $\Phi$ by the composition $K\xhookrightarrow{\overline{\Theta}_\sigma^{-1}}e^m\times S^1\xrightarrow{\pi} e^m$, and we define the $\mathcal{K}$-component $\Phi_{\mathcal{K}}$ of $\Phi$ to be the canonical projection to the ``$0$-section.'' Now simply define $\Phi':K\to e^m\times \mathcal{K}$ by $\Phi' = (\tau\times \Id) \circ \Phi$.\par
  The proof of uniqueness of the pair $\Phi,\Phi'$ is a consequence that a trivialization is determined by its value on any given subtube $\zeta\subset K$.
  \end{proof}
  \begin{definition}\label{K element}
  Let $\Phi,\Phi'$ denote the canonical pair of trivializations of $K$ from Proposition \ref{trivialization}. The pair $(K,\Phi)$ (resp.\ $(K,\Phi')$) specifies a class $[K,\Phi]$ (resp.\ $[K,\Phi']$) in $[\partial \mathcal{C}(z),e^m/\partial e^m] \cong [S^{m+1},S^m]\cong \mathbb{Z}/2$, defined by the composition $\pi \circ \Phi$ (resp.\ $\pi \circ \Phi'$) on $K$ and a constant map to the basepoint on $\partial C(z)\backslash K$. Since $\pi\circ \Phi$ differs from $\pi\circ \Phi'$ by a flip in the $J$-factor, $[K,\Phi]$ and $[K,\Phi']$ are actually the same element, which we call $[K]$.
\end{definition}
We now prove that the classes $[\mathfrak{c}|_K],[K]$ are identical. We start by building a diagram. First consider the projection $\pi:e^m\times \mathcal{K}\to e^m$ to the $e^m$-factor. Fix $\mathcal{C}(x_1)\times\{p_1\}\times\{q_1\}\subset K$, and let $\Phi:K\to e^m\times \mathcal{K}$ be the trivialization of $K$ defined in Proposition \ref{trivialization} such that $\pi\circ\Phi$ agrees with $\mathfrak{c}|_K$ on $\mathcal{C}(x_1)\times\{p_1\}\times\{q_1\}$. These maps are summarized in the following diagram:
\[
  \begin{tikzpicture}[scale=2]
  \path[draw, ->, shorten <=0.3cm,shorten >=1.0cm, thick] (0,1) -- (2,1) node [midway, label={[label distance=-0.2cm]90:$\mathfrak{c}$}] {};
  \path[draw, ->, shorten <=0.7cm,shorten >=0.7cm, thick] (0,0) -- (2,0) node [midway, label={[label distance=-0.2cm]90:$\pi$}] {};
  \path[draw, ->, shorten <=0.3cm,shorten >=0.3cm, thick] (0,0) -- (0,1) node [midway, label={[label distance=-0.1cm]180:$\Phi^{-1}$}] [midway, label={[label distance=-0.3cm, rotate=-90]45:$\cong$}] {};
  \path[draw, {Hooks[right]}->, shorten <=0.3cm,shorten >=0.3cm, thick] (2,0) -- (2,1) {};
     \node at (0, 1) {$K$};
   \node at (2, 1) {$K_m^{(m+1)}$};
   \node at (0, 0) {$e^m\times \mathcal{K}$};
   \node at (2, 0) {$e^m$};
   \end{tikzpicture}
\]

We will show that the above diagram commutes up to homotopy.
\begin{lemma}\label{trivialization equivalent}
  The restriction $\mathfrak{c}|_{K}$ is homotopic to $\pi\circ \Phi$ relative $\mathcal{C}(x_1)\times\{p_1\}\times\{q_1\}\cup \partial K$. In other words, the above square commutes up to homotopy relative $(e^m\times\left(\{P\}\times\{p_1\}\times\{q_1\}\right))\cup (\partial e^m\times \mathcal{K})$.
\end{lemma}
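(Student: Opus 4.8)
The plan is to transport the comparison across the homeomorphism $\overline{\Theta}_\sigma\colon e^m\times S^1\xrightarrow{\ \sim\ }K$ furnished by Proposition \ref{trivialization}; here $S^1=\mathcal{K}$, and $\overline{\Theta}_\sigma$ descends to $S^1$ precisely because the concatenated tube $T=\zeta_1\cup\tilde\eta_1\cup\dots\cup\zeta_l\cup\tilde\eta_l$ is boundary-coherent, which in turn rests on Lemma \ref{even incoherent}. Under this identification $\pi\circ\Phi$ becomes the $S^1$-constant map $G_0:=(\text{incl})\circ\pi$, and $\mathfrak{c}|_K$ becomes a map $G\colon e^m\times S^1\to K_m^{(m+1)}$. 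Both $G$ and $G_0$ send $\partial e^m\times S^1$ to the basepoint (so factor through $(e^m/\partial e^m)\times S^1=S^m\times S^1$) and both restrict to the bottom-cell inclusion $j\colon S^m=e^m/\partial e^m\hookrightarrow K_m^{(m+1)}$ (a map representing the fundamental class) on the distinguished fibre over $s_1$, the latter by the very choice of $\Phi$. Thus each of $G,G_0$ is a based loop in $\operatorname{Map}_*(S^m,K_m^{(m+1)})$ at $j$, with $G_0$ the constant loop, and the statement to be proved is that $[G]=0$ in $\pi_1\!\big(\operatorname{Map}_*(S^m,K_m^{(m+1)}),j\big)$. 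Since $S^m=\Sigma S^{m-1}$, the space $\operatorname{Map}_*(S^m,-)=\Omega\operatorname{Map}_*(S^{m-1},-)$ is a loop space, hence simple with all path components homotopy equivalent, so this group is identified with $\pi_{m+1}(K_m^{(m+1)})$, which by Lemmas \ref{1 stem} and \ref{suspension RP2} (and $m>2$) is $\mathbb{Z}/2$.

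I would then read off the loop $G$ from the tube decomposition $\zeta_1,\tilde\eta_1,\dots,\zeta_l,\tilde\eta_l$. On the $S^1$-interval of a boundary-coherent tube (every $\zeta_i$ and every coherent $\tilde\eta_i$) the map $G$ is constant in the $S^1$-direction and equals $j\circ\tau^{\varepsilon}$, where $\tau^\varepsilon\in\{\mathrm{Id},\tau\}$ according to the parity of the number of boundary-incoherent tubes preceding it; this follows because $\mathfrak{c}$ restricted to such a tube is $(\text{incl})\circ\pi\circ\Theta_\sigma^{-1}$ while $\overline{\Theta}_\sigma$ restricted to that interval is the tube's own parametrization, reparametrized in the $[0,1]$-factor and precomposed with $\tau^\varepsilon$ in the $e^m$-factor. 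On the interval of a boundary-incoherent $\tilde\eta_i$, the map $G$ is the ($\varepsilon$-twisted) characteristic map $\chi$ of the $(m+1)$-cell $e^{m+1}=e^m\times[0,1]$, i.e.\ a path in $\operatorname{Map}_*(S^m,K_m^{(m+1)})$ from $j\tau^{\varepsilon}$ to $j\tau^{\varepsilon+1}$ sweeping through $\Int e^{m+1}$. Because the number of boundary-incoherent tubes is even (Lemma \ref{even incoherent}), the parity $\varepsilon$ returns to its initial value, the constant stretches may be deleted up to based homotopy, and one obtains $[G]=[\lambda]^{\,l'}$ in $\pi_1$, where $2l'$ is that number and $\lambda$ is the single loop at $j$ given by a forward sweep of $e^{m+1}$ followed by a backward sweep of $e^{m+1}$ twisted by $\tau$.

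It remains to show $[\lambda]=0$, and this is the crux. The two halves of $\lambda$ traverse the disk $e^{m+1}$ with opposite orientations — the second is the first run backwards with the fibre-reflection $\tau$ prepended — so the total sweep is nullhomologous, and I would produce an explicit nullhomotopy $\overline{\lambda}$ rel the basepoint fibre by retracting both sweeps simultaneously back toward their common seam $\{1/2\}\times e^m$ (whose image lies in the $m$-skeleton $S^m$), using that $e^{m+1}$ is a cone and that the reflection $\tau$ can be absorbed since $[j]$ has order $2$ in $\pi_m(K_m^{(m+1)})$; throughout, $\partial e^m\times[0,1]$ stays at the basepoint by construction of $\chi$, so the homotopy is automatically relative $\partial K$. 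The main obstacle I anticipate is the bookkeeping for this retraction: keeping it continuous across the seam and verifying that it genuinely fixes the one distinguished fibre while flattening everything else. Once $[\lambda]=0$ is established we get $[G]=0$, hence $G\simeq G_0$ rel $(e^m\times\{s_1\})\cup(\partial e^m\times S^1)$; pulling back along $\overline{\Theta}_\sigma$ gives the homotopy $\mathfrak{c}|_K\simeq\pi\circ\Phi$ rel $\mathcal{C}(x_1)\times\{p_1\}\times\{q_1\}\cup\partial K$, and composing with the collapse $\partial\mathcal{C}(z)\to K/\partial K$ then yields the equality $[\mathfrak{c}|_K]=[K]$ advertised after Definition \ref{K element}.
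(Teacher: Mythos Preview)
Your overall strategy—transporting along $\overline{\Theta}_\sigma$, adjointing to a loop $[G]\in\pi_1\bigl(\operatorname{Map}_*(S^m,K_m^{(m+1)}),j\bigr)\cong\pi_{m+1}(K_m^{(m+1)})$, and decomposing tube-by-tube into $[\lambda]^{l'}$—is sound and runs parallel to the paper's argument. The gap is exactly the step you flag yourself: showing $[\lambda]=0$.

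Your proposed nullhomotopy does not work as written. First, the two halves of $\lambda$ are not reverses of one another: tracing the definitions, the second incoherent sweep is $\chi$ precomposed with the fibre involution $\tau$, i.e.\ a \emph{forward} sweep $t\mapsto\chi(\tau(\cdot),t)$ from $j\tau$ to $j$; your description ``the first run backwards with $\tau$ prepended'' would give $t\mapsto\chi(\tau(\cdot),1-t)$, which starts at $j\tau\tau=j$ and so cannot even be concatenated after $\chi$. More substantively, a ``retract to the seam'' kills $\chi*\overline{\chi}$, but $\chi*(\chi\circ\tau)$ is a genuinely different loop, and invoking ``$[j]$ has order $2$'' only says $j\simeq j\tau$—this lets you close the loop, not trivialize it. Concretely, the $J$-factor in $e^m$ allows $\lambda$ to be desuspended to a loop $\lambda_0$ in $\Omega\mathbb{R}P^2$, and $[\lambda_0]\in\pi_2(\mathbb{R}P^2)\cong\mathbb{Z}$ has no reason to vanish; what makes $[\lambda]=\Sigma^{m-1}[\lambda_0]$ vanish is precisely that $\Sigma^{m-1}\colon\pi_2(\mathbb{R}P^2)\to\pi_{m+1}(\Sigma^{m-1}\mathbb{R}P^2)$ is the zero map for $m\ge 3$. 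That is the content of Lemma~\ref{suspension RP2}, and it is not bypassed by any elementary retraction inside $e^{m+1}$.

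The paper's proof makes this desuspension explicit rather than working in the mapping space. Using the smash splitting of equation~\eqref{m-cell redone}, both $\mathfrak{c}\circ\Phi^{-1}$ and $\pi$ are recognized as $(m-1)$-fold suspensions of maps $f_1,f_2\colon (J\times\mathcal{K})/\partial\to\mathbb{R}P^2$ agreeing on the distinguished fibre $J\times\{p_1\}\times\{q_1\}$, so that their difference lives in $\pi_2(\mathbb{R}P^2)$; Lemma~\ref{suspension RP2} then finishes immediately. To repair your argument, replace the hand-waved retraction with this desuspension step (equivalently, observe that your $\lambda$ is visibly an $(m-1)$-fold suspension and invoke Lemma~\ref{suspension RP2}).
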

\begin{proof}
  We consider the composition $\mathfrak{c}\circ\Phi^{-1}:e^m\times \mathcal{K}\to K_m^{(m+1)}$ and the map $\pi:e^m\times \mathcal{K}\to K_m^{(m+1)}$. Using the characterization (\ref{m-cell redone}) of $K_m^{(m+1)}$ (and a reshuffling of the $J$ component in $e^m\times \mathcal{K}$), we can think of both $\mathfrak{c}\circ\Phi^{-1}$ and $\pi$ as maps
  \begin{align}
    &\left( [-\epsilon,\epsilon]^A\times [-\epsilon,\epsilon]^B \times \widetilde{\mathcal{M}}_{\mathscr{C}_C(\kappa)}(\overline{1},\overline{0}) \right)\times \left( J\times \mathcal{K}\right)\label{domain m+1 skeleton}\\
                                                    &\to \bigslant{\left( [-\epsilon,\epsilon]^A\times [-\epsilon,\epsilon]^B\times \widetilde{\mathcal{M}}_{\mathscr{C}_C(\kappa)}(\overline{1},\overline{0})\right)}{\partial}\wedge \bigslant{\left(\frac{J\times [0,1]}{((t,0)\sim (-t,1)}\right)}{\partial}\label{range m+1 skeleton}
  \end{align}
  Note that in both $\mathfrak{c}\circ\Phi^{-1},\pi$, the first factor of (\ref{domain m+1 skeleton}) maps by quotient to the first factor of (\ref{range m+1 skeleton}). Furthermore, the boundary of the second factor of (\ref{domain m+1 skeleton}) maps to the basepoint of (\ref{range m+1 skeleton}). Therefore, both maps factor through maps
  \begin{align*}
    &\bigslant{\left( [-\epsilon,\epsilon]^A\times [-\epsilon,\epsilon]^B \times \widetilde{\mathcal{M}}_{\mathscr{C}_C(\kappa)}(\overline{1},\overline{0}) \right)}{\partial}\wedge \bigslant{\left( J\times \mathcal{K}\right)}{\partial} \\
                                                    &\to \bigslant{\left( [-\epsilon,\epsilon]^A\times [-\epsilon,\epsilon]^B\times \widetilde{\mathcal{M}}_{\mathscr{C}_C(\kappa)}(\overline{1},\overline{0})\right)}{\partial}\wedge \bigslant{\left(\frac{J\times [0,1]}{((t,0)\sim (-t,1)}\right)}{\partial},
  \end{align*}
  which are both $(m-1)$-fold suspensions of maps
  \[
    f_1, f_2:\bigslant{\left(J\times \mathcal{K}\right)}{\partial}\to \bigslant{\left(\frac{J\times [0,1]}{((t,0)\sim (-t,1)}\right)}{\partial}\cong \mathbb{R}P^2,
  \]
  where $f_1$ is induced from $\mathfrak{c}\circ\Phi^{-1}$ and $f_2$ is induced from $g\circ \pi_2$. $f_1$ and $f_2$ agree on the subset $\{p_1\}\times \{q_1\}\times J\in \mathcal{K}\times J$, which implies that their homotopy classes (relative boundary) differ by an element $c\in \pi_2(\mathbb{R}P^2)\cong \mathbb{Z}$. Taking suspensions, we see that $\Sigma^{m-1}f_1 = \Sigma^{m-1}f_2 + \Sigma^{m-1}c$. But the suspension map $\Sigma^i:\pi_2(\mathbb{R}P^2)\to \pi_{i+2}(\Sigma^i\mathbb{R}P^2)$ is nullhomotopic for $i\geq 2$ by Lemma \ref{suspension RP2}. Since $m \geq 3$, we have $\Sigma^{m-1}f_1 \cong \Sigma^{m-1}f_2$ relative $(\{p_1\}\times \{q_1\}\times J)\wedge e^m$. Therefore, $\mathfrak{c}\circ\Phi^{-1}$ and $g\circ \pi$ are homotopic relative $(e^m\times\{p_1\}\times\{q_1\})\cup (\mathcal{K}\times \partial e^m)$.
\end{proof}
\begin{proposition}
  For any cycle $K\subset \partial \mathcal{C}(z)$, $[K] = [\mathfrak{c}|_K]$.
\end{proposition}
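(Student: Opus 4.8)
The plan is to read this off from Lemma~\ref{trivialization equivalent}; the only genuine work is to reconcile the two a priori different targets in which $[\mathfrak{c}|_K]$ and $[K]$ are defined, and to check that the homotopy supplied by that lemma extends over all of $\partial\mathcal{C}(z)$.

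First I would compare the ambient groups. By Definition~\ref{K element}, $[K]$ is the class of $\pi\circ\Phi$ in $[\partial\mathcal{C}(z),\,e^m/\partial e^m]\cong[S^{m+1},S^m]\cong\mathbb{Z}/2$ (using $m>2$ and $\partial\mathcal{C}(z)\cong S^{m+1}$), whereas $[\mathfrak{c}|_K]$ is a class in $[\partial\mathcal{C}(z),\,K_m^{(m+1)}]\cong\mathbb{Z}/2$. Since $e^m/\partial e^m=K_m^{(m)}$ is the $m$-skeleton of $K_m^{(m+1)}$ and the single $(m+1)$-cell of $K_m^{(m+1)}$ is attached along a degree-$2$ map, the cellular inclusion $e^m/\partial e^m\hookrightarrow K_m^{(m+1)}$ induces a surjection $\pi_{m+1}(S^m)\twoheadrightarrow\pi_{m+1}(K_m^{(m+1)})$ carrying the generator $\Sigma^{m-2}\eta$ to the generator; both groups being $\mathbb{Z}/2$ (by Lemma~\ref{suspension RP2} and the discussion preceding it), this map is an isomorphism. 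So it suffices to show that, as maps $\partial\mathcal{C}(z)\to K_m^{(m+1)}$, the extension of $\mathfrak{c}|_K$ by the constant basepoint map off $K$ is homotopic to the extension of $\pi\circ\Phi$ (composed with the inclusion $e^m/\partial e^m\hookrightarrow K_m^{(m+1)}$) by the constant basepoint map off $K$.

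Next I would apply Lemma~\ref{trivialization equivalent}, taking $\Phi$ to be the trivialization from Proposition~\ref{trivialization} for which $\pi\circ\Phi$ agrees with $\mathfrak{c}|_K$ on the reference fiber $\mathcal{C}(x_1)\times\{p_1\}\times\{q_1\}$: the lemma produces a homotopy $H_s\colon K\to K_m^{(m+1)}$ from $\mathfrak{c}|_K$ to $\pi\circ\Phi$ that is stationary on $\partial K=\partial e^m\times\mathcal{K}$. The key point is that $H_s$ restricts to the constant basepoint map on $\partial K$ for every $s$: on each Pontrjagin--Thom tube $\zeta$ and each boundary matching tube $\tilde{\eta}$, both $\mathfrak{c}$ and $\pi\circ\Phi$ are nothing but the fiber ($e^m$- or $e^{m+1}$-) coordinate of the tube's parametrization, so the tube walls $\partial e^m\times[0,1]$ collapse to the basepoint of $e^m/\partial e^m\subset K_m^{(m+1)}$. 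Hence $H_s$ glues with the constant homotopy at the basepoint on $\partial\mathcal{C}(z)\setminus\Int K$; this gluing is continuous because, by construction (Step~(\ref{basepoint})), $\mathfrak{c}$ is already the constant basepoint map on a neighborhood of $\partial K$ inside $\partial\mathcal{C}(z)$. The resulting homotopy $\bar{H}_s\colon\partial\mathcal{C}(z)\to K_m^{(m+1)}$ runs from a representative of $[\mathfrak{c}|_K]$ to a representative of $[K,\Phi]=[K]$, and via the isomorphism of the previous paragraph this gives $[\mathfrak{c}|_K]=[K]$.

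I do not anticipate a serious obstacle: the substantive homotopy-theoretic input --- the vanishing of the suspended $\pi_2(\mathbb{R}P^2)$ correction term --- was already isolated in Lemma~\ref{trivialization equivalent}, so this proposition is essentially the bookkeeping that matches the two definitions $[\mathfrak{c}|_K]$ and $[K]$. The only step warranting care is the extension of $H_s$ off $K$, and that is handled by the collar observation above.
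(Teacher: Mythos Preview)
Your proposal is correct and follows essentially the same approach as the paper: both proofs reduce the statement to Lemma~\ref{trivialization equivalent}. You supply more detail than the paper's terse proof---in particular, the explicit identification of $[\partial\mathcal{C}(z),e^m/\partial e^m]$ with $[\partial\mathcal{C}(z),K_m^{(m+1)}]$ via the skeletal inclusion, and the careful extension of the homotopy from $K$ to all of $\partial\mathcal{C}(z)$ by gluing with the constant basepoint map---while the paper simply notes that Lemma~\ref{trivialization equivalent} forces $[\mathfrak{c}|_K]$ to equal one of $[K,\Phi]$ or $[K,\Phi']$, both of which are $[K]$ by Definition~\ref{K element}.
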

\begin{proof}
  Fix $\mathcal{C}(x_1)\times\{p_1\}\times\{q_1\}\subset K$ as in Lemma \ref{trivialization equivalent}, and let $\Phi,\Phi':K\to e^m\times\mathcal{K}$ denote the canonical trivializations. By Lemma \ref{trivialization equivalent}, either $[\mathfrak{c}|_K]=[K,\Phi]$ or $[\mathfrak{c}|_K]=[K,\Phi']$, which are in both cases equal to $[K]$.
\end{proof}
\section{Introduction to special graph structures}\label{special graph intro}
 We define a special graph structure in a manner similar to a structure of the same name in \cite{MR4165986}, but not identical:
\begin{definition}[compare \cite{MR4165986}]\label{special graph structure def}
  A \textit{special graph structure} $\Gamma =
  \Gamma(V,E,E',E'',S)$ consists of a set of vertices $V$ together with a function $S: V\to \mathbb{F}_2$, a subset $E$ of edges, a subset $E'\subset E$ of edges, and a subset $E''\subset E - E'$ of directed edges.
  Furthermore, $\Gamma$ must satisfy the following criteria:
  \begin{enumerate}[label = (G-\arabic*):, ref = G-\arabic*]
  \item Each vertex is contained in two edges, with exactly one of the edges $e(v)$ being in $E'$.
  \item If $e\in E'$ and $e=\{v_1,v_2\}$, then $S(v_1)\neq S(v_2)$.
  \end{enumerate}
  A \textit{cubical special graph structure} $\Gamma =
  \Gamma(V,E,E',E'',S, S_\mathbb{Z},\sigma_2)$ is a special graph structure $\Gamma$ equipped with maps $S_\mathbb{Z}: V\to \mathbb{Z}$ $\sigma_2:V\to \mathbb{F}_2$ satisfying:
  \begin{enumerate}[label = (G-\arabic*):, ref = G-\arabic*]
    \setcounter{enumi}{2}
  \item $S_\mathbb{Z}(v_1) = S_\mathbb{Z}(v_2)$ if $\{v_1,v_2\}\in E\backslash E'$.
  \item $\sigma_2(v_1) = \sigma_2(v_2)$ if $\{v_1,v_2\}\in E\backslash E'$.
  \item $S_\mathbb{Z}(v_1) \neq S_\mathbb{Z}(v_2)$ if $\{v_1,v_2\}\in E'$.
  \end{enumerate}
\end{definition}
\begin{example}\label{R graph}
  Let $\mathscr{C}$ be a signed cubical flow category, $\mu\in C^l(\mathscr{C};\mathbb{F}_2)$ a cocycle, and $(\mathfrak{b}_y,\mathfrak{s}_y)$ a facewise boundary matching for $\mu$. Given $z\in \Ob(\mathscr{C})$ of grading $\gr(z) = \gr(x) + 2$, we define a cubical special graph structure $\Gamma(z,\mu)$ as follows. The vertex set $V$ is the disjoint union
  \[
    V = \coprod_{\gr(y) = l+1} \mathcal{M}(y,\mu)\times \mathcal{M}(z,y),
  \]
  (Think of $V$ as the set of chains $z\xrightarrow{q} y\xrightarrow{p} x$, $\gr(x) = \gr(y)-1 = \gr(z)-2$, where $x\in \mu$ and $y\in \Ob(\mathscr{C})$ is some intermediate object.) Each interval component $I\subset \mathcal{M}(z,x)$, $x\in \mu$ defines an edge in $E'$: if $\partial I = \{p_1\circ q_1, p_2\circ q_2\}$, then we have the edge $e = \{(p_1,q_1),(p_2,q_2)\}\in E$.\par
  We define the remaining edges $E\backslash E'$ as follows: If $p\in \mathcal{M}(y,x)$ is boundary matched with $p'\in\mathcal{M}(y,x')$, then for all $q\in\mathcal{M}(z,y)$, we have an edge $e = \{(p,q),(p',q)\}\in E\backslash E'$. Furthermore, if $(p,p')\in \mathfrak{s}_y$, then we require that $e$ is directed from $(p,q)$ to $(p',q)$. Our construction directs every edge in $E\backslash E'$, so we must define $E'':= E\backslash E'$. For an example of a cycle in $\widetilde{\Gamma}(z,\mu)$, see Figure \ref{graph counting incoherent}.\par
  We define $S:V\to\mathbb{F}_2$ by $S(p,q) := S(p) + S(q)$, where the latter $S$ denotes the cubical sign assignment $S(p) = s(\mathfrak{f}(p))+\sigma(p)$ from Example \ref{canonical sign assignment}. Finally, we define $S_{\mathbb{Z}}(p,q) = S_{\mathbb{Z}}(q)$, where the latter $S_{\mathbb{Z}}$ denotes the index assignment. $\sigma_2:V\to \mathbb{F}_2$ is defined by $\sigma_2(p,q) = \sigma(q)$; in other words, $\sigma_2(v)$ is the sign map $\sigma$ applied to the second factor of $V$.
\end{example}
\begin{remark}\label{graph correspondence}
  The cubical special graph structure $\Gamma(z,\mu)$ is a tool that records the relevant behavior of each cycle $K$. Let us start with the vertices $v=(p,q)$, which correspond with the embedded $\mathcal{C}(x)\times \{p\}\times\{q\}$. Furthermore, edges $\{v,v'\}$ correspond to tubes, with the ends $v,v'$ corresponding to tube ends. Edges $e \in E'$ correspond with Pontrjagin-Thom tubes $\mathcal{C}(x)\times I$ and the edges $e\in E\backslash E'$ corresponding with boundary matching tubes $\tilde{\eta}$. These correspondences piece together so that every cycle $K$ constructed in (\ref{step 1})--(\ref{step 4}) indeed corresponds with a graph cycle $C = C(K)$ in $\Gamma(z,\mu)$. Furthermore, the orientation of the edges in $E''$ encodes the embedding of tubes $\tilde{\eta}$ that match a face with itself.\par
  Finally, the map $S_\mathbb{Z}$ encodes the facets $\mathbf{G}_i$ each embedded $\mathcal{C}(x)\times \{p\}\times\{q\}$ lives in the facets $\mathbf{G}_{ij}$ each Pontrjagin-Thom tube $\zeta$ lives in.
\end{remark}
  In view of Remark \ref{graph correspondence}, we say $C_K$ \textit{parametrizes} $K$.\par
  \paragraph{Proof of Lemma \ref{even incoherent}:} Consider the graph $\Gamma(z,\mu)$ (see Figure \ref{graph counting incoherent} for an example).
  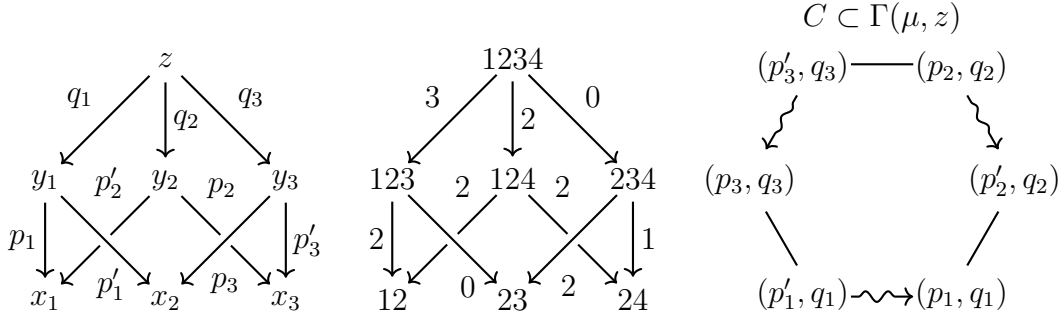
\begin{figure}
    \begin{tabular}{c c c}
      \begin{tikzpicture}[scale=1.6]
  \path[draw, <-, shorten <=\T,shorten >=\T, thick] (0,1) -- (1,2) node [midway, label={[label distance=-0.2cm]135:$q_1$}] {};
  \path[draw, <-, shorten <=\T,shorten >=\T, thick] (1,1) -- (1,2) node [midway, label={[label distance=-0.2cm]0:$q_2$}] {};
  \path[draw, <-, shorten <=\T,shorten >=\T, thick] (2,1) -- (1,2) node [midway, label={[label distance=-0.2cm]45:$q_3$}] {};
  \path[draw, <-, shorten <=\T,shorten >=\T, thick] (0,0) -- (0,1) node [midway, label={[label distance=-0.2cm]180:$p_1$}] {};
  \path[, draw, <-, shorten <=\T,shorten >=\T, thick] (2,0) -- (2,1) node [midway, label={[label distance=-0.2cm]0:$p_3'$}] {};
  \path[draw, <-, shorten <=\T,shorten >=\T, thick] (1,0) -- (2,1) node [pos=0.3, label={[label distance=-0.25cm]-45:$p_3$}] {};
  \path[draw, <-, shorten <=\T,shorten >=\T, thick] (1,0) -- (0,1) node [pos=0.3, label={[label distance=-0.35cm]225:$p_1'$}] {};
  \path[draw, <-, shorten <=\T,shorten >=\P, thick] (0,0) -- (0.5,0.5);
  \path[draw, <-, shorten <=\T,shorten >=\P, thick] (2,0) -- (1.5,0.5);
  \draw[shorten <=\T,shorten >=\P, thick] (1,1) -- (0.5,0.5) node [midway, label={[label distance=-0.2cm]135:$p_2'$}] {};
  \draw[shorten <=\T,shorten >=\P, thick] (1,1) -- (1.5,0.5) node [midway, label={[label distance=-0.2cm]45:$p_2$}] {};
   \node at (1, 2) {$z$};
   \node at (0, 1) {$y_1$};
   \node at (1, 1) {$y_2$};
   \node at (2, 1) {$y_3$};
   \node at (0, 0) {$x_1$};
   \node at (1, 0) {$x_2$};
   \node at (2, 0) {$x_3$};
 \end{tikzpicture} &
 \begin{tikzpicture}[scale=1.6]
  \path[draw, <-, shorten <=\T,shorten >=\T, thick] (0,1) -- (1,2) node [midway, label={[label distance=-0.2cm]135:$3$}] {};
  \path[draw, <-, shorten <=\T,shorten >=\T, thick] (1,1) -- (1,2) node [midway, label={[label distance=-0.2cm]0:$2$}] {};
  \path[draw, <-, shorten <=\T,shorten >=\T, thick] (2,1) -- (1,2) node [midway, label={[label distance=-0.2cm]45:$0$}] {};
  \path[draw, <-, shorten <=\T,shorten >=\T, thick] (0,0) -- (0,1) node [midway, label={[label distance=-0.2cm]180:$2$}] {};
  \path[draw, <-, shorten <=\T,shorten >=\T, thick] (2,0) -- (2,1) node [midway, label={[label distance=-0.2cm]0:$1$}] {};
  \path[draw, <-, shorten <=\T,shorten >=\T, thick] (1,0) -- (2,1) node [pos=0.3, label={[label distance=-0.2cm]-45:$2$}] {};
  \path[draw, <-, shorten <=\T,shorten >=\T, thick] (1,0) -- (0,1) node [pos=0.25, label={[label distance=-0.3cm]225:$0$}] {};
  \path[draw, <-, shorten <=\T,shorten >=\P, thick] (0,0) -- (0.5,0.5);
  \path[draw, <-, shorten <=\T,shorten >=\P, thick] (2,0) -- (1.5,0.5);
  \draw[shorten <=\T,shorten >=\P, thick] (1,1) -- (0.5,0.5) node [midway, label={[label distance=-0.2cm]135:$2$}] {};
  \draw[shorten <=\T,shorten >=\P, thick] (1,1) -- (1.5,0.5) node [midway, label={[label distance=-0.2cm]45:$2$}] {};
  \node at (1, 2) {$1234$};
  \node at (0, 1) {$123$};
  \node at (1, 1) {$124$};
  \node at (2, 1) {$234$};
  \node at (0, 0) {$12$};
  \node at (1, 0) {$23$};
  \node at (2, 0) {$24$};
\end{tikzpicture} &
\begin{tikzpicture}[scale = 0.8]
  \path[thick, draw,->,decorate,
  decoration={snake,amplitude=.7mm,segment length=3mm,post length=1mm, pre length = 1mm}] (255:2)--(285:2);
   \path[thick, draw,->,decorate,
  decoration={snake,amplitude=.4mm,segment length=3mm,post length=1mm, pre length = 1mm}] (45:2) -- (15:2);
  \path[thick, draw,->,decorate,
  decoration={snake,amplitude=.4mm,segment length=3mm,post length=1mm, pre length = 1mm}] (135:2)--(165:2);
  \draw[thick] (75:2)--(105:2);
  \draw[thick] (195:2)--(225:2);
  \draw[thick] (315:2)--(345:2);
  \node at (90: 2.7) {$C\subset \Gamma(\mu,z)$};
  \node at (0:2.2) {$(p_2',q_2)$};
  \node at (55:2.3) {$(p_2,q_2)$};
  \node at (125:2.3) {$(p_3',q_3)$};
  \node at (180:2.2) {$(p_3,q_3)$};
  \node at (235:2.3) {$(p_1',q_1)$};
  \node at (305:2.3) {$(p_1,q_1)$};
\end{tikzpicture}
    \end{tabular}
    \caption{Left: An example of a subset of chains $z\xrightarrow{q} y\xrightarrow{p} x$, $x\in \mu$.  Middle: the functor $\mathfrak{f}$ applied to the left diagram (the arrows are labeled by the signs $S_{\mathbb{Z}}$). Right: A cycle $C\subset \Gamma_{\mathfrak{m}}(\mu, z)$ formed by these chains. We assume the squares in the left diagram correspond to components $I\subset \mathcal{M}(z,x)$ (equivalently, from Pontrjagin-Thom tubes), and thus an edge $e\in E$ which we draw as a straight line. We also assume the pairs $\{(p,p'\}$ are boundary-matched meaning there are edges $e\in E\backslash E'$ (which we draw as squiggly) between vertices $(p_i,q_i)$, $(p_i',q_i)$. Note how in this example, $S_{\mathbb{Z}}(p_1)>S_{\mathbb{Z}}(p_1')$ and $S_{\mathbb{Z}}(p_3)>S_{\mathbb{Z}}(p_3')$, so we must direct the edges $\{(q_1,p_1'),(q_1,p_1)\}$, $\{(q_3,p_3'),(q_3,p_3)\}$ as $(q_1,p_1')\rightsquigarrow (q_1,p_1)$ and $(q_3,p_3')\rightsquigarrow (q_1,p_3)$.}
    \label{graph counting incoherent}
  \end{figure}
  \begingroup
  \allowdisplaybreaks
  \begin{align*}
    \#&\{\text{boundary matching tubes in $K$}\}\\
    &=\#\{\text{edges $e\in E'$ in $C_K$}\} \\
    &= \sum_{\substack{\text{edges $e\in E'$}\\\text{in $C_K$}}} 1\\
    &= \sum_{\substack{\text{edges}\\\{v,v'\}\in E'}} S(v')+S(v')\\
    &=\sum_{\text{vertices $v$}}S(v)\\
    &= \sum_{\substack{\text{edges}\\\{v,v'\}\in E\backslash E'}}S(v)+S(v')\\
    &=\#\{\text{edges $\{v,v'\}\in E\backslash E'$ in $C_K$}: S(v)\neq S(v')\}\\
    &=\#\{\text{boundary-coherent tubes in $K$}\}\mod 2
  \end{align*}
  \endgroup
  Subtracting both ends of the equation by $\#\{\text{boundary-coherent tubes in $K$}\}$, we observe the number of boundary-incoherent arcs must be even.
  \section{Parametrizing cycles $K$}
Consider cycles $K$ living in $\partial\mathcal{C}(z)$. Boundary matching components lie in the faces $\mathbf{G}_0,\ldots \mathbf{G}_{\kappa+1}$ and the Pontrjagin-Thom components straddle two distinct faces. Recall that the our choice of boundary matching determines our cycles $K$, and this data is recorded in our cubical special graph structure $\Gamma(z,\mu)$.
\begin{notation}\label{graph notation}
  If $\Gamma$ is a cubical special graph structure, $e\in E\backslash E'$, and $S_\mathbb{Z}(e) = b$, we write $e$ as $e_b$.\par
  We will have to write out cycles $C$ in $\Gamma$, so we introduce notation for these cycles. We abbreviate a portion $v_1'\xsquigline{e_a} v_1\longline v_2'\allowbreak\xsquigline{e_b}v_2\longline v_3'\xsquigline{e_c}v_3$ of $C$ as $e_a\longline e_b\longline e_c$, essentially treating the edges in $E\backslash E'$ as vertices.
  If, the edge $e_b$ is oriented, we can write $e_a\longline\overrightarrow{e_b}\longline e_c$ or $e_a\longline\overleftarrow{e_b}\longline e_c$ depending on the orientation of $e_b$. Let us clarify that while the arrow in $\overrightarrow{e_b}$ is drawn to emphasize orientation, we are not required to draw an arrow under an oriented $e_b$.
\end{notation}
\begin{definition}\label{facet cycles}
  Let $\Gamma = \Gamma(V,E,E',E'',S, S_\mathbb{Z})$ be a cubical special graph structure, and let $C\subset \Gamma$ be a cycle, denoted by $e_{a_1}\longline e_{a_2}\longline\ldots\longline e_{a_r}\longline e_{a_1}$. We define the \textit{facet cycle} of $C$ as the $\mathbb{Z}$-valued cycle $a_1\longline a_2\longline \ldots\longline a_r\longline a_1$, which we call $Z(C)$. The indices of $Z(C)$ inherit an orientation from $C$, appearing as $a\longline \overrightarrow{b}\longline c$ if $e_a\longline \overrightarrow{e_b}\longline e_c$.\par
  If $\Gamma = \Gamma(V,E,E',E'',S, S_\mathbb{Z},\sigma_2)$ is signed, we wish to include the data of the map $\sigma_2:V\to \mathbb{F}_2$ in this cycle. Let $\omega_i=\sigma_2(v)$, where $v$ is either vertex in $e_{a_i}$. We define the \textit{signed facet cycle} of $C$ as the cycle $Z(C)$ denoted by $(a_1,\omega_1)\longline (a_2,\omega_2)\longline\ldots \longline (a_r,\omega_r)\longline (a_1,\omega_1)$.
Just as in the previous paragraph, the indices of $Z(C)$ inherit their orientations from the corresponding cycle $C$ in $\Gamma$.
\end{definition}
Let us consider the case $\Gamma = \Gamma(z,\mu)$, with $C$ a cycle in $\Gamma$ parametrizing $K$. The corresponding facet cycle $Z=Z(C)$ records the essential information of $K$. Indeed, if $C$ is such a cycle, then $Z(C)$ records the sequences of facets $\mathbf{G}_i$ that $K$ passes through (see Figure \ref{cycle example arc union} for an example illustration).
And furthermore, the orientation of the indices $\overrightarrow{b}$, $\overleftarrow{b}$ records how $K$ behaves in turnarounds. For instance, in a portion $a\longline\overrightarrow{b}\longline a$ of $Z$, we know that the corresponding portion of $K$ travels through $\mathbf{G}_a$ into $\mathbf{G}_b$, turns up (in the $+J$ direction) and around in $\mathbf{G}_b$, and travels back towards $\mathbf{G}_a$ (see Figure \ref{parametrization turnarounds} for an example illustration).\par
\begin{figure}
  \begin{tabular}{m{9cm} c}
    \def\svgscale{0.70}%% Creator: Inkscape 1.4.2 (ebf0e940d0, 2025-05-08), www.inkscape.org
%% PDF/EPS/PS + LaTeX output extension by Johan Engelen, 2010
%% Accompanies image file 'cycle_with_turnarounds.pdf' (pdf, eps, ps)
%%
%% To include the image in your LaTeX document, write
%%   \input{<filename>.pdf_tex}
%%  instead of
%%   \includegraphics{<filename>.pdf}
%% To scale the image, write
%%   \def\svgwidth{<desired width>}
%%   \input{<filename>.pdf_tex}
%%  instead of
%%   \includegraphics[width=<desired width>]{<filename>.pdf}
%%
%% Images with a different path to the parent latex file can
%% be accessed with the `import' package (which may need to be
%% installed) using
%%   \usepackage{import}
%% in the preamble, and then including the image with
%%   \import{<path to file>}{<filename>.pdf_tex}
%% Alternatively, one can specify
%%   \graphicspath{{<path to file>/}}
%% 
%% For more information, please see info/svg-inkscape on CTAN:
%%   http://tug.ctan.org/tex-archive/info/svg-inkscape
%%
\begingroup%
  \makeatletter%
  \providecommand\color[2][]{%
    \errmessage{(Inkscape) Color is used for the text in Inkscape, but the package 'color.sty' is not loaded}%
    \renewcommand\color[2][]{}%
  }%
  \providecommand\transparent[1]{%
    \errmessage{(Inkscape) Transparency is used (non-zero) for the text in Inkscape, but the package 'transparent.sty' is not loaded}%
    \renewcommand\transparent[1]{}%
  }%
  \providecommand\rotatebox[2]{#2}%
  \newcommand*\fsize{\dimexpr\f@size pt\relax}%
  \newcommand*\lineheight[1]{\fontsize{\fsize}{#1\fsize}\selectfont}%
  \ifx\svgwidth\undefined%
    \setlength{\unitlength}{308.42898632bp}%
    \ifx\svgscale\undefined%
      \relax%
    \else%
      \setlength{\unitlength}{\unitlength * \real{\svgscale}}%
    \fi%
  \else%
    \setlength{\unitlength}{\svgwidth}%
  \fi%
  \global\let\svgwidth\undefined%
  \global\let\svgscale\undefined%
  \makeatother%
  \begin{picture}(1,0.98586898)%
    \lineheight{1}%
    \setlength\tabcolsep{0pt}%
    \put(0,0){\includegraphics[width=\unitlength,page=1]{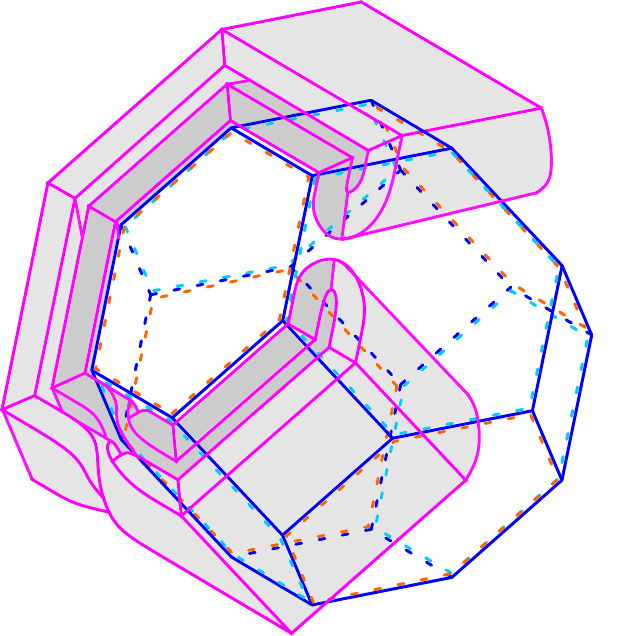}}%
    \put(0.62504889,0.55871334){\color[rgb]{0,0,0}\makebox(0,0)[lt]{\lineheight{1.25}\smash{\begin{tabular}[t]{l}$G_0$\end{tabular}}}}%
    \put(0.14576019,0.86120178){\color[rgb]{0,0,0}\makebox(0,0)[lt]{\lineheight{1.25}\smash{\begin{tabular}[t]{l}$G_3$\end{tabular}}}}%
    \put(0.08472301,0.1494855){\color[rgb]{0,0,0}\makebox(0,0)[lt]{\lineheight{1.25}\smash{\begin{tabular}[t]{l}$G_1$\end{tabular}}}}%
    \put(0.91736184,0.22952456){\color[rgb]{0,0,0}\makebox(0,0)[lt]{\lineheight{1.25}\smash{\begin{tabular}[t]{l}$G_2$\end{tabular}}}}%
    \put(0,0){\includegraphics[width=\unitlength,page=2]{cycle_with_turnarounds.pdf}}%
  \end{picture}%
\endgroup%
 & \def\svgscale{1.4}%% Creator: Inkscape 1.4.2 (ebf0e940d0, 2025-05-08), www.inkscape.org
%% PDF/EPS/PS + LaTeX output extension by Johan Engelen, 2010
%% Accompanies image file 'facet_cycle_with_turnarounds.pdf' (pdf, eps, ps)
%%
%% To include the image in your LaTeX document, write
%%   \input{<filename>.pdf_tex}
%%  instead of
%%   \includegraphics{<filename>.pdf}
%% To scale the image, write
%%   \def\svgwidth{<desired width>}
%%   \input{<filename>.pdf_tex}
%%  instead of
%%   \includegraphics[width=<desired width>]{<filename>.pdf}
%%
%% Images with a different path to the parent latex file can
%% be accessed with the `import' package (which may need to be
%% installed) using
%%   \usepackage{import}
%% in the preamble, and then including the image with
%%   \import{<path to file>}{<filename>.pdf_tex}
%% Alternatively, one can specify
%%   \graphicspath{{<path to file>/}}
%% 
%% For more information, please see info/svg-inkscape on CTAN:
%%   http://tug.ctan.org/tex-archive/info/svg-inkscape
%%
\begingroup%
  \makeatletter%
  \providecommand\color[2][]{%
    \errmessage{(Inkscape) Color is used for the text in Inkscape, but the package 'color.sty' is not loaded}%
    \renewcommand\color[2][]{}%
  }%
  \providecommand\transparent[1]{%
    \errmessage{(Inkscape) Transparency is used (non-zero) for the text in Inkscape, but the package 'transparent.sty' is not loaded}%
    \renewcommand\transparent[1]{}%
  }%
  \providecommand\rotatebox[2]{#2}%
  \newcommand*\fsize{\dimexpr\f@size pt\relax}%
  \newcommand*\lineheight[1]{\fontsize{\fsize}{#1\fsize}\selectfont}%
  \ifx\svgwidth\undefined%
    \setlength{\unitlength}{108.41028529bp}%
    \ifx\svgscale\undefined%
      \relax%
    \else%
      \setlength{\unitlength}{\unitlength * \real{\svgscale}}%
    \fi%
  \else%
    \setlength{\unitlength}{\svgwidth}%
  \fi%
  \global\let\svgwidth\undefined%
  \global\let\svgscale\undefined%
  \makeatother%
  \begin{picture}(1,0.62663395)%
    \lineheight{1}%
    \setlength\tabcolsep{0pt}%
    \put(0,0){\includegraphics[width=\unitlength,page=1]{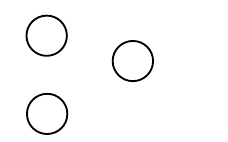}}%
    \put(-0.0044468,0.03397103){\color[rgb]{0,0,0}\makebox(0,0)[lt]{\lineheight{1.25}\smash{\begin{tabular}[t]{l}$1$\end{tabular}}}}%
    \put(0.69052556,0.33525541){\color[rgb]{0,0,0}\makebox(0,0)[lt]{\lineheight{1.25}\smash{\begin{tabular}[t]{l}$0$\end{tabular}}}}%
    \put(0.08228542,0.57219486){\color[rgb]{0,0,0}\makebox(0,0)[lt]{\lineheight{1.25}\smash{\begin{tabular}[t]{l}$3$\end{tabular}}}}%
    \put(0,0){\includegraphics[width=\unitlength,page=2]{facet_cycle_with_turnarounds.pdf}}%
    \put(0.88093655,0.13157747){\color[rgb]{0,0,0}\makebox(0,0)[lt]{\lineheight{1.25}\smash{\begin{tabular}[t]{l}$2$\end{tabular}}}}%
    \put(0,0){\includegraphics[width=\unitlength,page=3]{facet_cycle_with_turnarounds.pdf}}%
  \end{picture}%
\endgroup%

  \end{tabular}
  \caption{Left: A cycle $K$ in $\partial \mathcal{C}(z) \subset \partial \mathcal{C}_4$. Right: the facet cycle $c$ that parametrizes $K$. Note that this facet cycle, unlike the facet cycle in Figure \ref{cycle example arc union}, has turnarounds.}
  \label{parametrization turnarounds}
\end{figure}
So far, we have discussed how a cycle $C$ in $\Gamma(z,\mu)$ has signed facet cycle $Z(C)$, which takes values in $\{0\ldots \kappa+1\}\times \mathbb{F}_2$. The following construction examines a ``reverse'' procedure: given a cycle which takes values in $\{0\ldots \kappa+1\}\times \mathbb{F}_2$ at each index, we create a ``cycle'' $K$ living in $\partial\mathcal{C}(z)$. This $K$ is \textit{contrived}, in the sense that the Pontrjagin-Thom tubes in $K$ do not arise from a cubical neat embedding, nor do the boundary matching tubes do not arise from a boundary matching $\mathfrak{m}$ (see Definition \ref{facewise def}).
\begin{definition}
  For $r \geq 0$, define
  \begin{align*}
    E(r)
    &:= [-\epsilon,\epsilon]^A\times [-\epsilon,\epsilon]^B\times J\times \widetilde{\mathcal{M}}_{\mathscr{C}_C(r)}(\overline{1},\overline{0})\\
    \mathcal{C}(r)
    &:= [-R,R]^A\times [-R,R]^B\times J\times \widetilde{\mathcal{M}}_{\mathscr{C}_C(r)}(\overline{1},\overline{0}).
  \end{align*}
  This definition gives a higher dimensional analogs of $e^m$ and $\mathcal{C}(z)$. Indeed, fixing $r = \kappa$, we recover $e^m = E(\kappa)$ and $\mathcal{C}(z) = \mathcal{C}(\kappa+2)$.
\end{definition}
\begin{definition}
  Let $Z$ be a cycle denoted by
  \[
    (a_1,\omega_1)\longline (a_2,\omega_2)\longline \ldots \longline (a_n,\omega_n) \longline (a_1,\omega_1) \qquad (\text{resp.\ } a_1\longline a_2\longline \ldots \longline a_n \longline a_1),
  \]
  where:
  \begin{enumerate}[label = (\arabic*)]
    \item At each vertex, $Z$ is valued in $\mathbb{Z}_{\geq 0}$ (resp.\ $\mathbb{Z}_{\geq 0}\times \mathbb{F}_2$).\label{facet cycle 1}
    \item If $(a,\omega')\longline (b,\omega)$ (resp.\ $a\longline b$) is an edge in $c$, then $a\neq b$,
    \item Each index in the cycle is oriented.
    \item We have $(a,\omega')\longline \overrightarrow{(b,\omega)} \longline (c,\omega'')$ (resp.\ $a\longline \overrightarrow{b}\longline c$) if $a<c$.\label{facet cycle 4}
    \end{enumerate}
    We call $Z$ a \textit{signed facet cycle} (resp.\ \textit{unsigned facet cycle}).\par
    Similarly, let $D$ be a chain denoted by
    \[
      (a_1,\omega_1)\longline (a_2,\omega_2)\longline \ldots \longline (a_n,\omega_n) \qquad (\text{resp.\ } a_1\longline a_2\longline \ldots \longline a_k),
    \]
    satisfying \ref{facet cycle 1}-\ref{facet cycle 4}. We call $D$ a \textit{facet chain}.\par
    Sometimes, we drop the signs when writing out subchains of a facet cycle.
  \end{definition}
  \begin{construction}\label{cycle building}
    Suppose we are given a signed facet cycle $Z$ (or an unsigned facet cycle $Z$, in which case we treat $Z$ as a signed facet cycle with all signs $\omega=0$). We construct a tube cycle $K\cong E(r) \times S^1$ living in $\partial\mathcal{C}(r+2)$ as follows:
  \begin{enumerate}[label = (T-\arabic*)]
  \item \label{PT construction} For every $(a,\omega')\longline (b,\omega)$, we define a d.s.\ \textit{Pontrjagin-Thom} tube $\zeta$ in $\mathbf{G}_{ab}\subset \partial\mathcal{C}(r+2)$ by defining first an embedding $\Theta: E(r)\times [0,1]\to \partial \mathcal{C}(r+2)$:
    \begingroup
    \allowdisplaybreaks
    \begin{align*}
      \Theta: E(r)\times [0,1]
      &\xhookrightarrow{\Id\times f} E(r)\times \Pi^1\\
      &= [-\epsilon,\epsilon]^{A+B}\times J\times \widetilde{\mathcal{M}}_{\mathscr{C}_C(\kappa)}(\overline{1},\overline{0})\times \Pi^1\\
      &= [-\epsilon,\epsilon]^{A+B}\times J\times \widetilde{\mathcal{M}}_{\mathscr{C}_C(\kappa+2)}(\overline{1}-\{a,b\},\overline{0})\times \mathcal{M}_{\mathscr{C}_C(\kappa+2)}(\overline{1},\overline{1}-\{a,b\})\\
      &\xhookrightarrow{\widehat{\iota}} [-R,R]^{A+B}\times J\times \widetilde{\mathcal{M}}_{\mathscr{C}_C(\kappa+2)}(\overline{1},\overline{0})\\
      &= \mathcal{C}(r+2).
    \end{align*}
    \endgroup
     Here, $f:[0,1]\to \Pi^1$ is one of the two canonical identifications, $\widehat{\iota}(a,t,p,q) = (\gamma(q)+a,t,p\circ q)$, where $\gamma:\Pi^1\to [-R,R]^{A+B}$ is a smoothly embedded path. We now define the second parametrization $\Lambda(x,t):=\Theta(x,1-t)$, thus defining our d.s.\ tube $\zeta = \{(\Theta,\alpha),(\Lambda,\beta)\}$. By possibly perturbing the $\gamma$'s for each $(a,\omega')\longline (b,\omega)$, we can ensure that the $\zeta$'s do not intersect.\par
    We label the edge as $(a,\omega')\xline{\zeta}(b,\omega)$.
    
  \item \label{bm construction}For every $(a,\omega')\xline{\zeta'} \overrightarrow{(b,\omega)}\xline{\zeta''} (c,\omega'')$, we define a \textit{boundary matching} tube $\tilde{\eta}=\{(\tilde{\Theta},0),(\tilde{\Lambda},1)\}$ in $\mathbf{G}_b\subset \partial\mathcal{C}(z)$. We construct $\tilde{\eta}$ so that it connects $\zeta'$ to $\zeta''$, matching $\zeta'\cap \mathbf{G}_b\cong E(r)$ with $\zeta''\cap \mathbf{G}_b\cong E(r)$. The construction is analogous to Section \ref{eta construction}, as it follows a similar outline that begins by defining an ``unsigned'' version $\tilde{\eta}_\emptyset = \{(\tilde{\Theta}_\emptyset,0),(\tilde{\Lambda}_\emptyset,1)\}$.\par
    Indeed, if $a = c$, we define $\tilde{\Theta}_\emptyset$ by defining the its projections
    \begin{enumerate}[label={(\arabic*)}]
    \item $\tilde{\Theta}_R, \tilde{\Lambda}_R: E(r) \times [0,1]\to [-R,R]^{A+B}$,
    \item $\tilde{\Theta}_{\mathcal{M}}, \tilde{\Lambda}_{\mathcal{M}}: E(r)\times [0,1]\to J\times \mathcal{M}_{\mathscr{C}_C(r+2)}(\overline{1},\overline{0})$.
    \end{enumerate}
    The map $\tilde{\Theta}_R$ is defined by $\tilde{\Theta}_R(a,s,x,t)=\gamma(t)+a$ for $a\in [-\epsilon,\epsilon]^{A+B}$, $s\in J$, $x\in \mathcal{M}_{\mathscr{C}_C(\kappa)}(\overline{1},\overline{0})$, where $\gamma: [0,1]\to [-R,R]^{A+B}$ with the endpoints $\gamma(0),\gamma(1)$
    chosen to satisfy the equations
    \begin{equation}\label{matches boundary}
      \tilde{\Theta}_R(E(r)\times \{0\})=\zeta'\cap \mathbf{G}_b, \qquad \tilde{\Theta}_R(E(r)\times \{1\})=\zeta''\cap \mathbf{G}_b.
    \end{equation}
    This ensures that we can ``continuously'' travel from $\zeta'$ to $\tilde{\eta}$, and then through $\zeta''$. The projection $\tilde{\Lambda}_R$ is defined by $\tilde{\Lambda}_R(x,t) = \tilde{\Theta}_R(x,-t)$.\par
    The projection $\tilde{\Theta}_{\mathcal{M}}$ is defined in a way similar to the projection $\Theta_{\mathcal{M}}$ in Section \ref{eta construction}, where the slices $E(r)$ move directly into $\Int(J\times \mathcal{M}_{\mathscr{C}_C(r+2)}(\overline{1},\overline{0}))$, but then pull up in the $J$-direction and back around into the same face $J\times G_i$. The projection $\tilde{\Lambda}_{\mathcal{M}}$ is defined by $\tilde{\Lambda}_{\mathcal{M}}(x,t) = \tilde{\Theta}_{\mathcal{M}}(\tau(x),-t)$\par
    If $a\neq c$, we first define a convex version $\tilde{\eta}^{\conv} = \{(\tilde{\Theta}^{\conv},0),(\tilde{\Lambda}^{\conv},1)\}$, where its comprising tubes $\tilde{\Theta}^{\conv},\tilde{\Lambda}^{\conv}$ are determined by the following maps:
    \begin{enumerate}[label={(\arabic*)}]
    \item $\tilde{\Theta}_R^{\conv}, \tilde{\Lambda}_R^{\conv}: E(r) \times [0,1]\to [-R,R]^{A+B}$,
    \item $\tilde{\Theta}_{\mathcal{M}}^{\conv},\tilde{\Lambda}_{\mathcal{M}}^{\conv}: E(r)\times [0,1]\to J\times \mathcal{M}_{\mathscr{C}_C(r+2)}(\overline{1},\overline{0})$.
    \end{enumerate}
    The map $\tilde{\Theta}^{\conv}_R$ is defined in a similar way to before; namely, $\tilde{\Theta}^{\conv}_R(a,s,x,t)=\gamma(t)+a$, where $\gamma$ is constructed to solve the boundary-matching conditions analogous to (\ref{matches boundary}). We define $\tilde{\Theta}_{\mathcal{M}}^{\conv}$ as the following composition:
    \begin{equation}\label{contrived Theta M}
    \begin{aligned}
      \tilde{\Theta}_{\mathcal{M}}^{\conv}: E(r)\times [0,1]&\xhookrightarrow{\Theta_{\mathcal{M}}^{\conv}}J\times \widetilde{\mathcal{M}}_{\mathscr{C}_C(\kappa)}(\overline{1},\overline{0})\\
                     &\cong J\times \widetilde{\mathcal{M}}_{\mathscr{C}_C(r+1)}(\overline{1}-\{b\},\overline{0})\times \mathcal{M}_{\mathscr{C}_C(r+1)}(\overline{1},\overline{1}-\{b\})\\
                     &\xhookrightarrow{} J\times \widetilde{\mathcal{M}}_{\mathscr{C}_C(r+2)}(\overline{1},\overline{0})\\
      &= \partial\mathcal{C}(r+2),
    \end{aligned}
    \end{equation}
    where the embedding $\Theta_{\mathcal{M}}^{\conv}$ is defined in a similar way to the embedding $\Theta^{\conv}_{\mathcal{M}}$ from Section \ref{eta construction}, moving from $J\times F_j$ directly to $J\times F_j$ ($i=a_b$, $j=c_b$). $\Lambda_{\mathcal{M}}^{\conv}$ is defined by a composition similar to (\ref{contrived Theta M}), but with $\Theta_{\mathcal{M}}^{\conv}$ replaced with $\Lambda_{\mathcal{M}}^{\conv}$.
    \par
    And similar to Section \ref{eta construction}, we define $\tilde{\Lambda}_R^{\conv}(x,t) :=\tilde{\Theta}_R^{\conv}(x,1-t)$, and $\eta_{\emptyset} = \tilde{\eta}^{\conv}\diamond(\boldvarphi_{j-1}\diamond\ldots\diamond \boldvarphi_{i+1},0)$\par
    Finally, we define $\tilde{\eta}$ as the conjugation  $\tau^{\omega}(\eta_\emptyset)\tau^{-\omega}:= \{(\tau^{\omega}\tilde{\Theta}_\emptyset\tau^{-\omega},0),(\tau^{\omega}\tilde{\Lambda}_\emptyset\tau^{-\omega},1)\}$ of $\eta_{\emptyset}$, where the comprising tubes get conjugated by the $J$-factor isometry $\tau^{\omega}$. \end{enumerate}
  See Figure \ref{contrived example} for an example of this construction.\par
  \begin{figure}
    \centering
    \begin{tabular}{m{4cm} m{1cm} m{2cm} m{1.3cm} m{4.0cm}}
      \def\svgscale{0.65}%% Creator: Inkscape 1.4.2 (ebf0e940d0, 2025-05-08), www.inkscape.org
%% PDF/EPS/PS + LaTeX output extension by Johan Engelen, 2010
%% Accompanies image file 'honest_cycle.pdf' (pdf, eps, ps)
%%
%% To include the image in your LaTeX document, write
%%   \input{<filename>.pdf_tex}
%%  instead of
%%   \includegraphics{<filename>.pdf}
%% To scale the image, write
%%   \def\svgwidth{<desired width>}
%%   \input{<filename>.pdf_tex}
%%  instead of
%%   \includegraphics[width=<desired width>]{<filename>.pdf}
%%
%% Images with a different path to the parent latex file can
%% be accessed with the `import' package (which may need to be
%% installed) using
%%   \usepackage{import}
%% in the preamble, and then including the image with
%%   \import{<path to file>}{<filename>.pdf_tex}
%% Alternatively, one can specify
%%   \graphicspath{{<path to file>/}}
%% 
%% For more information, please see info/svg-inkscape on CTAN:
%%   http://tug.ctan.org/tex-archive/info/svg-inkscape
%%
\begingroup%
  \makeatletter%
  \providecommand\color[2][]{%
    \errmessage{(Inkscape) Color is used for the text in Inkscape, but the package 'color.sty' is not loaded}%
    \renewcommand\color[2][]{}%
  }%
  \providecommand\transparent[1]{%
    \errmessage{(Inkscape) Transparency is used (non-zero) for the text in Inkscape, but the package 'transparent.sty' is not loaded}%
    \renewcommand\transparent[1]{}%
  }%
  \providecommand\rotatebox[2]{#2}%
  \newcommand*\fsize{\dimexpr\f@size pt\relax}%
  \newcommand*\lineheight[1]{\fontsize{\fsize}{#1\fsize}\selectfont}%
  \ifx\svgwidth\undefined%
    \setlength{\unitlength}{243.10423603bp}%
    \ifx\svgscale\undefined%
      \relax%
    \else%
      \setlength{\unitlength}{\unitlength * \real{\svgscale}}%
    \fi%
  \else%
    \setlength{\unitlength}{\svgwidth}%
  \fi%
  \global\let\svgwidth\undefined%
  \global\let\svgscale\undefined%
  \makeatother%
  \begin{picture}(1,1.47969672)%
    \lineheight{1}%
    \setlength\tabcolsep{0pt}%
    \put(0,0){\includegraphics[width=\unitlength,page=1]{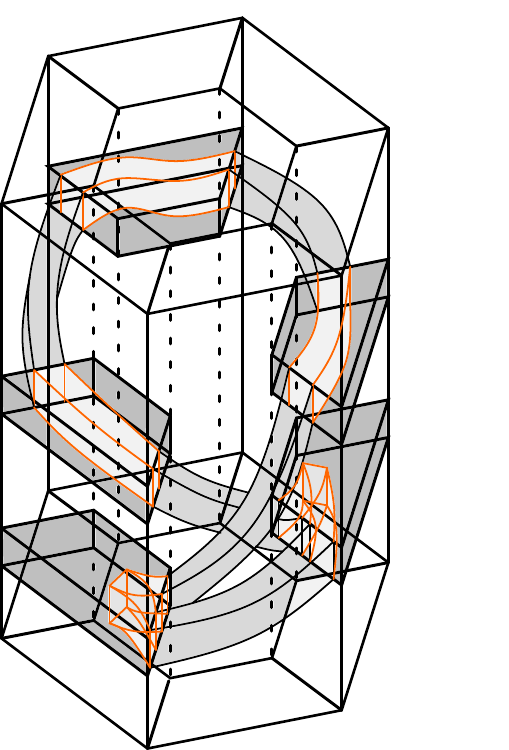}}%
    \put(0.16558838,1.44904353){\color[rgb]{0,0,0}\makebox(0,0)[lt]{\lineheight{1.25}\smash{\begin{tabular}[t]{l}$\mathbf{G}_1$\end{tabular}}}}%
    \put(0.01380642,0.09109884){\color[rgb]{0,0,0}\makebox(0,0)[lt]{\lineheight{1.25}\smash{\begin{tabular}[t]{l}$\mathbf{G}_2$\end{tabular}}}}%
    \put(0.72378546,0.15786729){\color[rgb]{0,0,0}\makebox(0,0)[lt]{\lineheight{1.25}\smash{\begin{tabular}[t]{l}$\mathbf{G}_0$\end{tabular}}}}%
  \end{picture}%
\endgroup%
 & \begin{tikzpicture}
      [decoration=snake,
      line around/.style={decoration={pre length=#1,post length=#1}}]
      \draw[->, decorate, line around = 1pt] (0,0)--(1.5,0);
    \end{tikzpicture} & \def\svgscale{0.9}%% Creator: Inkscape 1.4.2 (ebf0e940d0, 2025-05-08), www.inkscape.org
%% PDF/EPS/PS + LaTeX output extension by Johan Engelen, 2010
%% Accompanies image file 'honest_cycle_parametrization.pdf' (pdf, eps, ps)
%%
%% To include the image in your LaTeX document, write
%%   \input{<filename>.pdf_tex}
%%  instead of
%%   \includegraphics{<filename>.pdf}
%% To scale the image, write
%%   \def\svgwidth{<desired width>}
%%   \input{<filename>.pdf_tex}
%%  instead of
%%   \includegraphics[width=<desired width>]{<filename>.pdf}
%%
%% Images with a different path to the parent latex file can
%% be accessed with the `import' package (which may need to be
%% installed) using
%%   \usepackage{import}
%% in the preamble, and then including the image with
%%   \import{<path to file>}{<filename>.pdf_tex}
%% Alternatively, one can specify
%%   \graphicspath{{<path to file>/}}
%% 
%% For more information, please see info/svg-inkscape on CTAN:
%%   http://tug.ctan.org/tex-archive/info/svg-inkscape
%%
\begingroup%
  \makeatletter%
  \providecommand\color[2][]{%
    \errmessage{(Inkscape) Color is used for the text in Inkscape, but the package 'color.sty' is not loaded}%
    \renewcommand\color[2][]{}%
  }%
  \providecommand\transparent[1]{%
    \errmessage{(Inkscape) Transparency is used (non-zero) for the text in Inkscape, but the package 'transparent.sty' is not loaded}%
    \renewcommand\transparent[1]{}%
  }%
  \providecommand\rotatebox[2]{#2}%
  \newcommand*\fsize{\dimexpr\f@size pt\relax}%
  \newcommand*\lineheight[1]{\fontsize{\fsize}{#1\fsize}\selectfont}%
  \ifx\svgwidth\undefined%
    \setlength{\unitlength}{87.27717482bp}%
    \ifx\svgscale\undefined%
      \relax%
    \else%
      \setlength{\unitlength}{\unitlength * \real{\svgscale}}%
    \fi%
  \else%
    \setlength{\unitlength}{\svgwidth}%
  \fi%
  \global\let\svgwidth\undefined%
  \global\let\svgscale\undefined%
  \makeatother%
  \begin{picture}(1,0.60185387)%
    \lineheight{1}%
    \setlength\tabcolsep{0pt}%
    \put(0,0){\includegraphics[width=\unitlength,page=1]{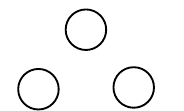}}%
    \put(-0.00552403,0.09652246){\color[rgb]{0,0,0}\makebox(0,0)[lt]{\lineheight{1.25}\smash{\begin{tabular}[t]{l}$a$\end{tabular}}}}%
    \put(0.86030314,0.09860293){\color[rgb]{0,0,0}\makebox(0,0)[lt]{\lineheight{1.25}\smash{\begin{tabular}[t]{l}$c$\end{tabular}}}}%
    \put(0.31556104,0.53414435){\color[rgb]{0,0,0}\makebox(0,0)[lt]{\lineheight{1.25}\smash{\begin{tabular}[t]{l}$b$\end{tabular}}}}%
    \put(0,0){\includegraphics[width=\unitlength,page=2]{honest_cycle_parametrization.pdf}}%
  \end{picture}%
\endgroup%
 & \begin{tikzpicture}
      [decoration=snake,
   line around/.style={decoration={pre length=#1,post length=#1}}]
      \draw[->, decorate, line around = 1pt] (0,0)--(1.5,0);
    \end{tikzpicture} & \def\svgscale{0.65}%% Creator: Inkscape 1.4.2 (ebf0e940d0, 2025-05-08), www.inkscape.org
%% PDF/EPS/PS + LaTeX output extension by Johan Engelen, 2010
%% Accompanies image file 'contrived_cycle.pdf' (pdf, eps, ps)
%%
%% To include the image in your LaTeX document, write
%%   \input{<filename>.pdf_tex}
%%  instead of
%%   \includegraphics{<filename>.pdf}
%% To scale the image, write
%%   \def\svgwidth{<desired width>}
%%   \input{<filename>.pdf_tex}
%%  instead of
%%   \includegraphics[width=<desired width>]{<filename>.pdf}
%%
%% Images with a different path to the parent latex file can
%% be accessed with the `import' package (which may need to be
%% installed) using
%%   \usepackage{import}
%% in the preamble, and then including the image with
%%   \import{<path to file>}{<filename>.pdf_tex}
%% Alternatively, one can specify
%%   \graphicspath{{<path to file>/}}
%% 
%% For more information, please see info/svg-inkscape on CTAN:
%%   http://tug.ctan.org/tex-archive/info/svg-inkscape
%%
\begingroup%
  \makeatletter%
  \providecommand\color[2][]{%
    \errmessage{(Inkscape) Color is used for the text in Inkscape, but the package 'color.sty' is not loaded}%
    \renewcommand\color[2][]{}%
  }%
  \providecommand\transparent[1]{%
    \errmessage{(Inkscape) Transparency is used (non-zero) for the text in Inkscape, but the package 'transparent.sty' is not loaded}%
    \renewcommand\transparent[1]{}%
  }%
  \providecommand\rotatebox[2]{#2}%
  \newcommand*\fsize{\dimexpr\f@size pt\relax}%
  \newcommand*\lineheight[1]{\fontsize{\fsize}{#1\fsize}\selectfont}%
  \ifx\svgwidth\undefined%
    \setlength{\unitlength}{240.91341514bp}%
    \ifx\svgscale\undefined%
      \relax%
    \else%
      \setlength{\unitlength}{\unitlength * \real{\svgscale}}%
    \fi%
  \else%
    \setlength{\unitlength}{\svgwidth}%
  \fi%
  \global\let\svgwidth\undefined%
  \global\let\svgscale\undefined%
  \makeatother%
  \begin{picture}(1,1.48869665)%
    \lineheight{1}%
    \setlength\tabcolsep{0pt}%
    \put(0,0){\includegraphics[width=\unitlength,page=1]{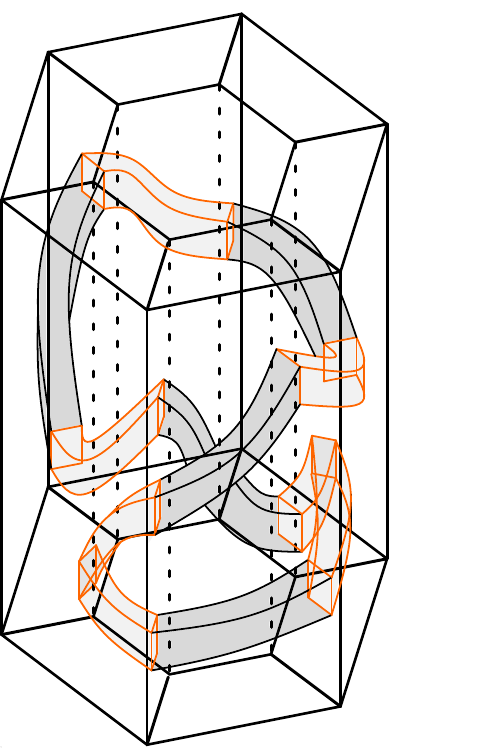}}%
    \put(0.16599437,1.4577647){\color[rgb]{0,0,0}\makebox(0,0)[lt]{\lineheight{1.25}\smash{\begin{tabular}[t]{l}$\mathbf{G}_1$\end{tabular}}}}%
    \put(0.00580409,0.10949399){\color[rgb]{0,0,0}\makebox(0,0)[lt]{\lineheight{1.25}\smash{\begin{tabular}[t]{l}$\mathbf{G}_2$\end{tabular}}}}%
    \put(0.72127354,0.14561716){\color[rgb]{0,0,0}\makebox(0,0)[lt]{\lineheight{1.25}\smash{\begin{tabular}[t]{l}$\mathbf{G}_0$\end{tabular}}}}%
  \end{picture}%
\endgroup%

  \end{tabular}
    \caption{Left: A cycle $K\subset \partial\mathcal{C}(z)$. Middle The facet cycle $c$ of $K$. Right: a contrived cycle $K'$ parametrized by $K$. The blue Pontrjagin-Thom tubes were constructed in (\ref{PT construction}), and the magenta boundary matching tubes were constructed in (\ref{bm construction}). Notice here that boundary-matched components in $K$ have to have the same $[-R,R]^B$ coordinates, but there is no such requirement for $K'$.}
    \label{contrived example}
  \end{figure}
  Similarly, if we are given a (signed or unsigned) facet chain
  \[
    D = (a_1,\omega_1)\longline (a_2,\omega_2)\longline \ldots\longline (a_n,\omega_n),
  \]
  we can construct a d.s.\ tube $U$, using Steps \ref{PT construction}, \ref{bm construction}, but with a slight subtlety: we do not include the Pontrjagin-Thom tubes corresponding to $a_1\longline a_2$, $a_{n-1}\longline a_n$. Indeed, the d.s.\ tube $U$ is constructed as a union
  \[
    U = \tilde{\eta}_2\cup\zeta_{2,3}\cup\widetilde{\eta_3}\cup\ldots\cup\zeta_{n-2,n-1}\cup \tilde{\eta}_{n-1}.
  \]
  For an illustration of an example (in the unsigned case), see the left side of Figure \ref{short to long comparison} to see a tube $U$ parametrizing $a\longline b\longline c\longline d\longline e$.
\end{construction}

\begin{definition}
  Let $\tilde{\eta}$ be a boundary matching tube constructed in \ref{bm construction}, Construction \ref{cycle building}. We will denote the tube $\tilde{\eta}^{\conv}$ used in the construction as the \textit{convex version} of $\tilde{\eta}$.\par
  Furthermore, if $U$ is a d.s.\ tube parametrized by a facet chain, we denote the \textit{convex version} $U^{\conv}$ of $U$ to be a modification of $U$ where we replace every boundary matching tube $\tilde{\eta}\subset U$ with its convex version $\tilde{\eta}^{\conv}$. See Figure \ref{convex long} for an illustration.
\end{definition}
\begin{definition}
  Let $Z$ be a signed facet cycle and let $K$ be as in Construction \ref{cycle building}. We say that $Z$ \textit{parametrizes} the tube cycle $K$, and that $K$ is a \textit{(contrived) cycle}. If $K$ is a cycle in $\partial\mathcal{C}(z)$ arising from our cubical neat embedding $\iota'$ and boundary matching tubes $\eta\subset \partial\mathcal{C}(y)$, we call $K$ an \textit{honest cycle}.\par
  Just as in Definition \ref{0-section construction}, we define the \textit{core} $\mathcal{K}_\zeta$, $\mathcal{K}_{\tilde{\eta}}$ of a Pontrjagin-Thom tube $\zeta$ and a boundary matching tube $\tilde{\eta}$. For $\zeta\subset K$, define $\mathcal{K}_\zeta$ to be the $\{P\}\times [0,1]$-identified subset of $\zeta$, and for $\tilde{\eta}\subset K$, define $\mathcal{K}_{\tilde{\eta}}$ to be the $\{P\}\times [0,1]$-identified subset of $\tilde{\eta}$. Now define the \textit{core} $\mathcal{K}$ of $K$ to be the union of the cores of its comprising $\zeta$ and $\tilde{\eta}$ tubes.
\end{definition}
Just as in Lemma \ref{trivialization}, there are two canonical ways to locate a point in $K$ in terms of its ``core'' coordinate and its ``fiber'' coordinate.
\begin{lemma}
  Let $Z$ be a signed facet cycle, and let $Z$ parametrize the cycle $K=\bigcup_{\tilde{\eta}}\tilde{\eta} \cup \bigcup_\zeta \zeta$. Let $\mathcal{K}\subset K$ be constructed as in Definition \ref{0-section construction}. There are exactly two trivializations of the form $\Phi: K\to E(r)\times\mathcal{K}$ that satisfy the following conditions:
  \begin{itemize}
  \item For every Pontrjagin-Thom tube $\zeta$, $\Phi|_\zeta$ is equal to
    \[
      \zeta \xhookrightarrow{\Theta^{-1}} E(r)\times [0,1] \xhookrightarrow{\tau^\omega \times \Theta(P,\cdot)}E(r)\times \mathcal{K}_\zeta,
    \]
    where $\tau^\omega\in \{\Id,\tau\}$.
    \item For every boundary matching tube $\tilde{\eta}$, $\Phi|_{\tilde{\eta}}$ is equal to
    \[
      \tilde{\eta} \xhookrightarrow{\Theta^{-1}} E(r)\times [0,1] \xhookrightarrow{\tau^\omega \times \Theta(P,\cdot)}E(r)\times \mathcal{K}_{\tilde{\eta}},
    \]
    where $\tau^\omega\in \{\Id,\tau \}$.
  \end{itemize}
  These trivializations, which we call $\Phi,\Phi'$, are related by a flip $\tau$ in the $J$-factor in $E(r)\times \mathcal{K}$.
\end{lemma}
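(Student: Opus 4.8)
The plan is to run the argument of Proposition~\ref{trivialization} essentially verbatim, the one genuinely new ingredient being a parity count playing the role of Lemma~\ref{even incoherent} for contrived cycles.

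\emph{Decomposing $K$ and reducing to coherence of a single tube.} First I would unwind $K$ into a cyclic sequence of tubes: reading around the cycle $Z=(a_1,\omega_1)\longline\cdots\longline(a_n,\omega_n)\longline(a_1,\omega_1)$, let $\zeta_i$ be the Pontrjagin--Thom tube attached to the edge $(a_i,\omega_i)\longline(a_{i+1},\omega_{i+1})$ (built in step~\ref{PT construction} of Construction~\ref{cycle building}) and $\tilde\eta_i$ the boundary matching tube at the vertex $(a_i,\omega_i)$ (built in step~\ref{bm construction}), so $K=\bigcup_i\zeta_i\cup\bigcup_i\tilde\eta_i$. These meet along copies of $E(r)$, so using Definition~\ref{tube compose operation} I glue them into a single doubly specified tube $T=\zeta_1\cup\tilde\eta_1\cup\cdots\cup\zeta_n\cup\tilde\eta_n$; in $K$ the two remaining free ends of $T$ are themselves identified, so $K$ is $T$ with its $\alpha$-end glued to its $\beta$-end. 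Exactly as in Proposition~\ref{trivialization}, it suffices to show $T$ is boundary-coherent: then the parametrization $\overline\Theta\colon E(r)\times[0,1]\to T$ agrees on its two ends under the identification, hence factors through a homeomorphism $E(r)\times S^1\xrightarrow{\sim}K$, which gives $\Phi$ (project to $E(r)$ and to the core $\mathcal{K}$ of Definition~\ref{0-section construction}) and then $\Phi':=(\tau\times\Id)\circ\Phi$. Each $\zeta_i$ is boundary-coherent by construction, and conjugating a d.s.\ tube by $\tau$ in the $J$-factor sends its defining isometry $D$ to $\tau D\tau$, which is $\Id$ iff $D$ is; so coherence is unchanged by the $\tau^{\omega_i}$-conjugations in step~\ref{bm construction}, and coherence of $T$ reduces (by the same induction as in Proposition~\ref{trivialization}) to the claim that an even number of the $\tilde\eta_i$ are boundary-incoherent.

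\emph{The parity count.} Fix a vertex $(a_i,\omega_i)$. Inside the facet $\mathbf{G}_{a_i}$ (a thickened permutohedron) the two ends of the convex model of $\tilde\eta_i$ lie in subfacets $\mathbf{G}_{\ell_i}$, $\mathbf{G}_{m_i}$ with $\ell_i=a_{i-1}-[a_{i-1}>a_i]$ and $m_i=a_{i+1}-[a_{i+1}>a_i]$ the relative indices of $a_{i-1},a_{i+1}$ with respect to $a_i$. By Remark~\ref{eta coherence}, with no $\sigma(p_0)+\sigma(p_1)$ term in the contrived setting and the $\tau^{\omega_i}$-conjugation coherence-neutral by the previous paragraph, $\tilde\eta_i$ is boundary-incoherent exactly when $\ell_i+m_i+1\equiv1$, so (indices read mod $n$)
\[
  \#\{\text{incoherent }\tilde\eta_i\}\;\equiv\;\sum_{i=1}^n(\ell_i+m_i+1)\;=\;\sum_{i=1}^n(a_{i-1}+a_{i+1})\;-\;\sum_{i=1}^n\bigl([a_{i-1}>a_i]+[a_{i+1}>a_i]\bigr)\;+\;n\pmod 2.
\]
The first sum is $2\sum_i a_i$, hence even; in the second sum each edge $\{a_i,a_{i+1}\}$ is counted once at each endpoint, contributing $[a_i>a_{i+1}]+[a_{i+1}>a_i]=1$ since consecutive indices are distinct, so that sum equals $n$; therefore the total is $2\sum_i a_i\equiv0\pmod 2$. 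This proves the parity claim, hence $T$ is boundary-coherent and $\Phi,\Phi'$ exist.

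\emph{Uniqueness.} On each component tube $\zeta_i$ or $\tilde\eta_i$ a trivialization of the stated form is determined up to a single factor $\tau^{\omega}\in\{\Id,\tau\}$; choosing it on $\zeta_1$ gives two options, and compatibility along the shared $E(r)$-faces (with the already-fixed gluing of $T$) then propagates the choice around the whole cycle, forcing the trivialization. So there are exactly the two trivializations $\Phi$ and $\Phi'=(\tau\times\Id)\circ\Phi$, related by a flip in the $J$-factor. The main obstacle is the parity count of boundary-incoherent tubes; once the relative-index bookkeeping above is in place it collapses to the displayed double-counting, and the only genuinely delicate point is confirming that the $\tau^{\omega}$-conjugations in Construction~\ref{cycle building} really are coherence-neutral.
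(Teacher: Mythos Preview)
Your proposal is correct and follows the same overall architecture as the paper's proof: decompose $K$ into its constituent tubes, compose them into a single d.s.\ tube $T$, argue $T$ is boundary-coherent because an even number of the $\tilde\eta_i$ are boundary-incoherent, and then finish exactly as in Proposition~\ref{trivialization}. The paper's own proof is only a sketch, deferring the parity step with the remark that it is ``along the same lines as the proof of Lemma~\ref{even incoherent}''---i.e., one should adapt the sign-function/telescoping argument from that lemma to the contrived-cycle setting. You take a different and more direct route for this step: rather than setting up an auxiliary $\mathbb{F}_2$-valued sign function on vertices and telescoping, you compute $\sum_i(\ell_i+m_i+1)\pmod 2$ explicitly via the relative-index formulas $\ell_i=a_{i-1}-[a_{i-1}>a_i]$, $m_i=a_{i+1}-[a_{i+1}>a_i]$ and a double-count over edges. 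This is cleaner here because the facet cycle carries its combinatorics on its sleeve, so no auxiliary graph structure is needed; your observation that the $\tau^{\omega_i}$-conjugation is coherence-neutral (since it replaces the defining isometry $D$ by $\tau^{\omega_i}D\tau^{-\omega_i}=D$) is exactly what makes the signed case collapse to the unsigned count.
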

\begin{proof}
  Let the components of $K$, ordered so that one tube follows another, be $\zeta_1,\tilde{\eta}_1,\ldots,\zeta_l,\tilde{\eta}_l$. Note that all the $\zeta$ tubes are boundary-coherent. One tricky part is to prove there are an even number of $\eta$ boundary components, but the proof is along the same lines as the proof of Lemma \ref{even incoherent}. The rest of the proof is similar to our proof of Lemma \ref{trivialization}, so we leave it to the reader to fill in the details.
\end{proof}
Similar to Definition \ref{K element}, both $\Phi,\Phi'$ define classes $[K,\Phi],[K,\Phi']\in [\partial\mathcal{C}_{\kappa+2},E(r)/\partial E(r)]$, which are identical, and which will call $[K]$.
\begin{lemma}\label{suspension agreement}
  Let a facet cycle $Z$ parametrize cycles $K\subset \partial \mathcal{C}(r+2)$, $K'\subset \partial\mathcal{C}(s+2)$, $r,s\geq 0$. Then $[K]=[K']$.
\end{lemma}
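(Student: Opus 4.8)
The plan is to show that $K$ and $K'$ differ, up to a chain of homotopies, only by suspension coordinates, so that their associated elements in $\pi_1^s(\mathbb{S}) \cong \mathbb{Z}/2$ coincide. The key point is that $K \cong E(r) \times S^1$ and $K' \cong E(s) \times S^1$, where $E(r) = [-\epsilon,\epsilon]^{A+B}\times J\times \widetilde{\mathcal{M}}_{\mathscr{C}_C(r)}(\overline{1},\overline{0})$ and $E(s)$ only differ from $E(r)$ in the dimension of the permutohedron-with-interval factor $\widetilde{\mathcal{M}}_{\mathscr{C}_C(\cdot)}(\overline{1},\overline{0})$. Since $\widetilde{\mathcal{M}}_{\mathscr{C}_C(r)}(\overline{1},\overline{0}) = [0,1]\times \mathcal{M}_{\mathscr{C}_C(r)}(\overline{1},\overline{0}) = [0,1]\times \Pi^{r-1}$ is a disk (hence $E(r)/\partial E(r) \cong S^m$ with $m$ depending on $r$), both $[K]$ and $[K']$ live in groups $[\partial\mathcal{C}(r+2), E(r)/\partial E(r)] \cong \pi_{\dim(r+2)}(S^{\dim E(r)})$, and after suspending to a common stable range both are the image of the class of $\Sigma^\infty\eta$. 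First I would reduce to the case $s = r+1$ by transitivity, so it suffices to compare a facet cycle's realizations in two consecutive dimensions.

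Concretely, the first step is to exhibit, for $s = r+1$, an explicit ``suspension'' comparison: the tube cycle $K' \subset \partial\mathcal{C}(r+3)$ parametrized by $Z$ should be obtained from $K \subset \partial\mathcal{C}(r+2)$ by the operation of crossing with an extra interval factor and stabilizing, exactly as in Proposition \ref{suspension identification} and the discussion in Section \ref{truncation} where $\Sigma^{|\mathbf{d}|-|\mathbf{d}'|}Y'$ is identified with a subquotient of $Y$. The Pontrjagin-Thom tubes $\zeta$ and boundary matching tubes $\tilde\eta$ of Construction \ref{cycle building} are built out of the same combinatorial data (the embeddings $V_{ij}$, the twists $\boldvarphi_r$, the conjugations $\tau^\omega$) in both dimensions; the only change is that the ambient permutohedron $\Pi^{\bullet}$ has one more coordinate, and each tube picks up one extra suspension-like $[-\epsilon,\epsilon]$ or $[0,1]$ direction that is untouched by all the gluing maps, twists, and $\tau$-flips. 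Hence the trivialized cycle $(K', \Phi')$ is literally a suspension of $(K,\Phi)$: $\pi\circ\Phi'$ for $K'$ equals $\Sigma(\pi\circ\Phi)$ after identifying $E(r+1)/\partial E(r+1) \cong \Sigma(E(r)/\partial E(r))$ and $\partial\mathcal{C}(r+3) \cong \Sigma\partial\mathcal{C}(r+2)$ (up to a homeomorphism respecting the cell structure). This is the step requiring the most care: checking that all the auxiliary data — the choice of smoothly embedded paths $\gamma$, the decay functions $\lambda$ in the twists of Definition \ref{twist definition}, the perturbations ensuring the $\zeta$'s are disjoint — can be chosen compatibly in the two dimensions, or that different choices only change the cycle within its homotopy class (which is fine since $[K]$ is a homotopy invariant). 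I expect the main obstacle to be bookkeeping here: one must verify that the $180^\circ$ rotations in the $\langle\mathbf{n},\partial_J\rangle$ planes and the twists $\varphi_i$, which are defined relative to a splitting $T_P X \cong \langle\mathbf{a},\partial_J\rangle^\perp \times \langle\mathbf{a},\partial_J\rangle$, act identically in both dimensions because the extra coordinate lands in the orthogonal complement $\langle\mathbf{a},\partial_J\rangle^\perp$ and is fixed by the rotation.

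Once the suspension identification is in place, the conclusion is immediate: $[K']$ is the image of $[K]$ under the suspension homomorphism $[\partial\mathcal{C}(r+2), E(r)/\partial E(r)] \to [\partial\mathcal{C}(r+3), E(r+1)/\partial E(r+1)]$. Because we have fixed $A, B > 1$ and hence $m > 2$ throughout (see Section \ref{truncation}), both groups are already in the stable range $\pi_1^s(\mathbb{S}) \cong \mathbb{Z}/2$ by Lemma \ref{1 stem} and Lemma \ref{suspension RP2} (the relevant suspension maps are isomorphisms once $m\geq 3$), so the suspension homomorphism is an isomorphism and $[K] = [K']$. Iterating over consecutive dimensions between $r$ and $s$ then gives the general statement. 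I would close by remarking that this is precisely the mechanism by which the coefficient $[\mathfrak{c}|_{\partial\mathcal{C}(z)}] = \sum_K [K]$ in $\Sq^2(\mathbf{c})$ is seen to depend only on the facet cycles $Z(C_K)$, and not on the dimension $\kappa$ of the truncation — which is what makes the contrived cycles of Construction \ref{cycle building} a legitimate computational device.
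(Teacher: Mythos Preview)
Your proposal is correct and follows essentially the same route as the paper: reduce to consecutive dimensions and show that the realization in $\partial\mathcal{C}(r+3)$ is a suspension of the one in $\partial\mathcal{C}(r+2)$, then invoke the stability of $\pi_1^s(\mathbb{S})$. The paper makes two points more explicit than you do: it first treats the case $r=s$ separately (any two realizations of $Z$ in the \emph{same} dimension are isotopic through cycles parametrized by $Z$, so the auxiliary choices in Construction~\ref{cycle building} do not matter), and for the dimension jump it gives a concrete geometric model---embed $\mathcal{C}(r+2)$ into the facet $\mathbf{G}_{\{0,\ldots,r+1\}}$ of $\mathcal{C}(r+3)$ via the face inclusion $\Pi^{r+1}\hookrightarrow\Pi^{r+2}$ and then thicken $\mathbf{f}(K)$ in the normal direction---which is exactly the global realization of your observation that the extra coordinate lands in $\langle\mathbf{a},\partial_J\rangle^\perp$ and is fixed by all the twists.
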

\begin{proof}
  Suppose $r=s$, and thus $K$, $K'$ both live in the same set $\partial\mathcal{C}(r)$. Then since $K$, $K'$ are both parametrized by the same facet cycle $Z$, we can isotope $K$ to $K'$ through a continuous family of cycles, all parametrized by $Z$. Thus $[K]=[K']$.\par
  By the previous paragraph and an induction argument, it remains to prove the following statement: If $Z$ parametrizes a cycle $K\subset \partial \mathcal{C}(r)$, then $Z$ also parametrizes a cycle $K'\subset \partial \mathcal{C}(r+1)$ with $[K] = [K']$. So we fix $K$, which we see as a union of tubes $\zeta_{01}\cup \tilde{\eta}_1\cup \ldots \cup \zeta_{n0}\cup \tilde{\eta}_0$. With the Pontrjagin-Thom construction in mind, we describe a ``suspension'' of $K$ to a cycle $K'$.\par
  Now $K$ is an embedded $S^1$-family of $\mathcal{C}(r) = [-\epsilon,\epsilon]^A\times [-\epsilon,\epsilon]^B\times J \times \widetilde{\mathcal{M}}_{\mathscr{C}_C(r)}(\overline{1},\overline{0})$ in $\mathcal{C}(r+2) = [-R,R]^A\times [-R,R]^B\times J \times \widetilde{\mathcal{M}}_{\mathscr{C}_C(r+2)}(\overline{1},\overline{0})$. Now consider $\mathbf{f}(K)$, where $\mathbf{f}$ is the embedding $\mathbf{f}:\mathcal{C}(r+2)\hookrightarrow \mathbf{G}_{\{0,\ldots,r+1\}}(\mathcal{C}(r+3))$ defined by.
  \begin{align*}
    \mathbf{f}:&[-R,R]^A\times [-R,R]^B\times J \times \widetilde{\mathcal{M}}_{\mathscr{C}_C(r+2)}(\overline{1},\overline{0})\\
    &=  [-R,R]^A\times [-R,R]^B\times J \times \Pi^0\times \widetilde{\mathcal{M}}_{\mathscr{C}_C(r+2)}(\overline{1},\overline{0})\\
    &\xhookrightarrow{\Id_R \times\Id_R\times \Id_J\times f} [-R,R]^A\times [-R,R]^B\times J \times \widetilde{\mathcal{M}}_{\mathscr{C}_C(r+3)}(\overline{1},\overline{0}),
  \end{align*}
  where $f$ is the embedding defined, for $r>0$, as
  \[
    \widetilde{\mathcal{M}}_{\mathscr{C}_C(r+2)}(\overline{1},\overline{0}) = \Pi^{r+1} \times [0,1] \xhookrightarrow{f_{\{n\}}\times \Id} \Pi^{r+2} \times [0,1] = \widetilde{\mathcal{M}}_{\mathscr{C}_C(r+3)}(\overline{1},\overline{0}),
  \]
  and for $r=0$, $f$ is defined as the embedding
  \[
    \widetilde{\mathcal{M}}_{\mathscr{C}_C(2)}(\overline{1},\overline{0}) = \Pi^1\times[0,1]\xhookrightarrow{} \Pi^2\times \{0\} \hookrightarrow \Pi^2\times [0,1] = \widetilde{\mathcal{M}}_{\mathscr{C}_C(2)}(\overline{1},\overline{0}).
  \]
  See Figure \ref{r=0 embedding} for an illustration of $\mathbf{f}$ for $r + 2 = 3$. 
  \begin{figure}
    \centering
    \begin{tabular}{m{5.5cm} m{1.3cm} m{4.0cm}}
    \def\svgscale{0.7}%% Creator: Inkscape 1.4.2 (ebf0e940d0, 2025-05-08), www.inkscape.org
%% PDF/EPS/PS + LaTeX output extension by Johan Engelen, 2010
%% Accompanies image file 'lowest_dim_Cy.pdf' (pdf, eps, ps)
%%
%% To include the image in your LaTeX document, write
%%   \input{<filename>.pdf_tex}
%%  instead of
%%   \includegraphics{<filename>.pdf}
%% To scale the image, write
%%   \def\svgwidth{<desired width>}
%%   \input{<filename>.pdf_tex}
%%  instead of
%%   \includegraphics[width=<desired width>]{<filename>.pdf}
%%
%% Images with a different path to the parent latex file can
%% be accessed with the `import' package (which may need to be
%% installed) using
%%   \usepackage{import}
%% in the preamble, and then including the image with
%%   \import{<path to file>}{<filename>.pdf_tex}
%% Alternatively, one can specify
%%   \graphicspath{{<path to file>/}}
%% 
%% For more information, please see info/svg-inkscape on CTAN:
%%   http://tug.ctan.org/tex-archive/info/svg-inkscape
%%
\begingroup%
  \makeatletter%
  \providecommand\color[2][]{%
    \errmessage{(Inkscape) Color is used for the text in Inkscape, but the package 'color.sty' is not loaded}%
    \renewcommand\color[2][]{}%
  }%
  \providecommand\transparent[1]{%
    \errmessage{(Inkscape) Transparency is used (non-zero) for the text in Inkscape, but the package 'transparent.sty' is not loaded}%
    \renewcommand\transparent[1]{}%
  }%
  \providecommand\rotatebox[2]{#2}%
  \newcommand*\fsize{\dimexpr\f@size pt\relax}%
  \newcommand*\lineheight[1]{\fontsize{\fsize}{#1\fsize}\selectfont}%
  \ifx\svgwidth\undefined%
    \setlength{\unitlength}{395.21230623bp}%
    \ifx\svgscale\undefined%
      \relax%
    \else%
      \setlength{\unitlength}{\unitlength * \real{\svgscale}}%
    \fi%
  \else%
    \setlength{\unitlength}{\svgwidth}%
  \fi%
  \global\let\svgwidth\undefined%
  \global\let\svgscale\undefined%
  \makeatother%
  \begin{picture}(1,0.43379461)%
    \lineheight{1}%
    \setlength\tabcolsep{0pt}%
    \put(0,0){\includegraphics[width=\unitlength,page=1]{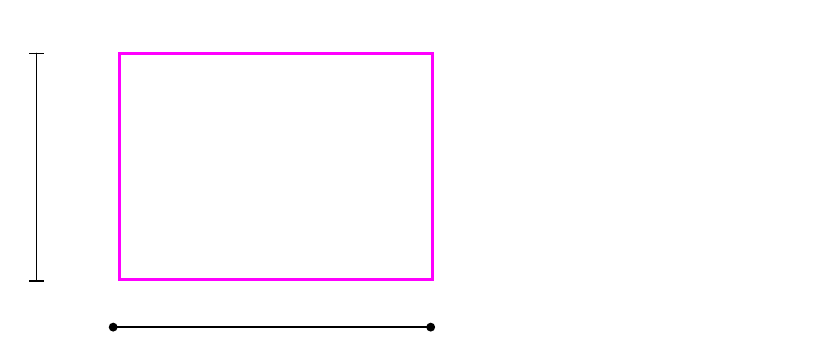}}%
    \put(0.11782497,0.00132084){\color[rgb]{0,0,0}\makebox(0,0)[lt]{\lineheight{1.25}\smash{\begin{tabular}[t]{l}$21$\end{tabular}}}}%
    \put(0.50612504,0.00701451){\color[rgb]{0,0,0}\makebox(0,0)[lt]{\lineheight{1.25}\smash{\begin{tabular}[t]{l}$12$\end{tabular}}}}%
    \put(0,0){\includegraphics[width=\unitlength,page=2]{lowest_dim_Cy.pdf}}%
    \put(0.00113404,0.08977352){\color[rgb]{0,0,0}\makebox(0,0)[lt]{\lineheight{1.25}\smash{\begin{tabular}[t]{l}$0$\end{tabular}}}}%
    \put(-0.00141187,0.35887791){\color[rgb]{0,0,0}\makebox(0,0)[lt]{\lineheight{1.25}\smash{\begin{tabular}[t]{l}$1$\end{tabular}}}}%
    \put(0,0){\includegraphics[width=\unitlength,page=3]{lowest_dim_Cy.pdf}}%
    \put(0.22181627,0.41648755){\color[rgb]{0,0,0}\makebox(0,0)[lt]{\lineheight{1.25}\smash{\begin{tabular}[t]{l}$\widetilde{\mathcal{M}}_{\mathscr{C}_C(r+2)}(\overline{1},\overline{0})$\end{tabular}}}}%
  \end{picture}%
\endgroup%
 &  $\lhook\joinrel\longline\joinrel\longrightarrow$ & \def\svgscale{0.7}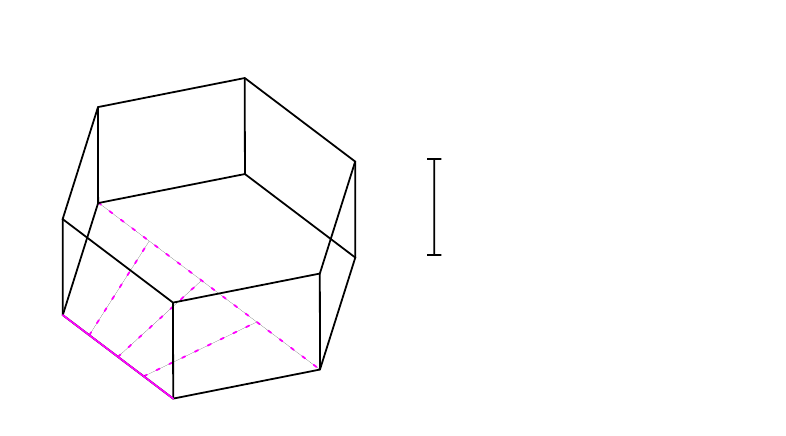
  \end{tabular}
    \caption{The embedding $\widetilde{\mathcal{M}}_{\mathscr{C}_C(r+2)}(\overline{1},\overline{0}) \hookrightarrow \widetilde{\mathcal{M}}_{\mathscr{C}_C(r+3)}(\overline{1},\overline{0})$ for $r=0$}
    \label{r=0 embedding}
  \end{figure}
  Observe that $\mathbf{f}$ restricts to embeddings $\mathbf{G}_i(\mathcal{C}(r+2))\hookrightarrow \mathbf{G}_a(\mathcal{C}(r+3))$, $\mathbf{G}_{ab}(\mathcal{C}(r+2))\hookrightarrow \mathbf{G}_{ab}(\mathcal{C}(r+3))$, meaning that if $\eta\subset K$ is in $\mathbf{G}_{ab}(\mathcal{C}(r+2))$, then $\mathbf{f}(\eta)\subset \mathbf{G}_{ab}(\mathcal{C}(r+3))$, and (similarly for $\zeta\subset K$). So we can view $\mathbf{f}(K)$ as a ``thinned out'' version of a cycle in $\partial \mathcal{C}(r+3)$.\par
  In the case $r>0$, we ``thicken'' $\mathbf{f}(K)$ in the direction outwardly normal to $\mathbf{G}_{\{0,\ldots, r+1\}}(\mathcal{C}(r+3))\subset \partial \mathcal{C}(r+3)$ to get a $S^1$-family of $E(r)$, which we view as a cycle $K'\subset \mathcal{C}(r+3)$. In the case $r=0$, we simply ``thicken'' $\mathbf{f}(K)$ uniformly in the $[0,1]$-coordinate of $\mathcal{C}(r+3)$ to obtain $K'$. (See Figure \ref{cycle suspension} for an illustration of an example of $K'$.)\par
  The class $[K]$ is an element of $[\partial\mathcal{C}(r+2), \mathcal{C}(r)]\cong [S^{A+B+r+2},S^{A+B+\kappa'+r+1}]\cong\mathbb{Z}/2$ and $[K']$ is an element of $[\partial\mathcal{C}(r+3), \mathcal{C}(r+1)]\cong [S^{A+B+r+3},S^{A+B+r+1}]\cong \mathbb{Z}/2$. Since $K'$ was defined from $K$ through a Pontrjagin-Thom like construction, we see $[K'] = \Sigma [K] = [K]$.
  \begin{figure}
    \begin{tabular}{p{8cm} p{7cm}}
    \def\svgscale{1}%% Creator: Inkscape 1.4.2 (ebf0e940d0, 2025-05-08), www.inkscape.org
%% PDF/EPS/PS + LaTeX output extension by Johan Engelen, 2010
%% Accompanies image file 'facet_3-cycle_unsuspended.pdf' (pdf, eps, ps)
%%
%% To include the image in your LaTeX document, write
%%   \input{<filename>.pdf_tex}
%%  instead of
%%   \includegraphics{<filename>.pdf}
%% To scale the image, write
%%   \def\svgwidth{<desired width>}
%%   \input{<filename>.pdf_tex}
%%  instead of
%%   \includegraphics[width=<desired width>]{<filename>.pdf}
%%
%% Images with a different path to the parent latex file can
%% be accessed with the `import' package (which may need to be
%% installed) using
%%   \usepackage{import}
%% in the preamble, and then including the image with
%%   \import{<path to file>}{<filename>.pdf_tex}
%% Alternatively, one can specify
%%   \graphicspath{{<path to file>/}}
%% 
%% For more information, please see info/svg-inkscape on CTAN:
%%   http://tug.ctan.org/tex-archive/info/svg-inkscape
%%
\begingroup%
  \makeatletter%
  \providecommand\color[2][]{%
    \errmessage{(Inkscape) Color is used for the text in Inkscape, but the package 'color.sty' is not loaded}%
    \renewcommand\color[2][]{}%
  }%
  \providecommand\transparent[1]{%
    \errmessage{(Inkscape) Transparency is used (non-zero) for the text in Inkscape, but the package 'transparent.sty' is not loaded}%
    \renewcommand\transparent[1]{}%
  }%
  \providecommand\rotatebox[2]{#2}%
  \newcommand*\fsize{\dimexpr\f@size pt\relax}%
  \newcommand*\lineheight[1]{\fontsize{\fsize}{#1\fsize}\selectfont}%
  \ifx\svgwidth\undefined%
    \setlength{\unitlength}{82.13873075bp}%
    \ifx\svgscale\undefined%
      \relax%
    \else%
      \setlength{\unitlength}{\unitlength * \real{\svgscale}}%
    \fi%
  \else%
    \setlength{\unitlength}{\svgwidth}%
  \fi%
  \global\let\svgwidth\undefined%
  \global\let\svgscale\undefined%
  \makeatother%
  \begin{picture}(1,0.76449719)%
    \lineheight{1}%
    \setlength\tabcolsep{0pt}%
    \put(0,0){\includegraphics[width=\unitlength,page=1]{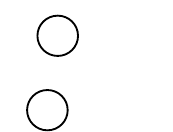}}%
    \put(-0.00586958,0.00549065){\color[rgb]{0,0,0}\makebox(0,0)[lt]{\lineheight{1.25}\smash{\begin{tabular}[t]{l}$0$\end{tabular}}}}%
    \put(0.17113609,0.69264623){\color[rgb]{0,0,0}\makebox(0,0)[lt]{\lineheight{1.25}\smash{\begin{tabular}[t]{l}$1$\end{tabular}}}}%
    \put(0,0){\includegraphics[width=\unitlength,page=2]{facet_3-cycle_unsuspended.pdf}}%
    \put(0.84285422,0.39993235){\color[rgb]{0,0,0}\makebox(0,0)[lt]{\lineheight{1.25}\smash{\begin{tabular}[t]{l}$2$\end{tabular}}}}%
    \put(0,0){\includegraphics[width=\unitlength,page=3]{facet_3-cycle_unsuspended.pdf}}%
  \end{picture}%
\endgroup%
 &\def\svgscale{1}%% Creator: Inkscape 1.4.2 (ebf0e940d0, 2025-05-08), www.inkscape.org
%% PDF/EPS/PS + LaTeX output extension by Johan Engelen, 2010
%% Accompanies image file 'facet_3-cycle_suspended.pdf' (pdf, eps, ps)
%%
%% To include the image in your LaTeX document, write
%%   \input{<filename>.pdf_tex}
%%  instead of
%%   \includegraphics{<filename>.pdf}
%% To scale the image, write
%%   \def\svgwidth{<desired width>}
%%   \input{<filename>.pdf_tex}
%%  instead of
%%   \includegraphics[width=<desired width>]{<filename>.pdf}
%%
%% Images with a different path to the parent latex file can
%% be accessed with the `import' package (which may need to be
%% installed) using
%%   \usepackage{import}
%% in the preamble, and then including the image with
%%   \import{<path to file>}{<filename>.pdf_tex}
%% Alternatively, one can specify
%%   \graphicspath{{<path to file>/}}
%% 
%% For more information, please see info/svg-inkscape on CTAN:
%%   http://tug.ctan.org/tex-archive/info/svg-inkscape
%%
\begingroup%
  \makeatletter%
  \providecommand\color[2][]{%
    \errmessage{(Inkscape) Color is used for the text in Inkscape, but the package 'color.sty' is not loaded}%
    \renewcommand\color[2][]{}%
  }%
  \providecommand\transparent[1]{%
    \errmessage{(Inkscape) Transparency is used (non-zero) for the text in Inkscape, but the package 'transparent.sty' is not loaded}%
    \renewcommand\transparent[1]{}%
  }%
  \providecommand\rotatebox[2]{#2}%
  \newcommand*\fsize{\dimexpr\f@size pt\relax}%
  \newcommand*\lineheight[1]{\fontsize{\fsize}{#1\fsize}\selectfont}%
  \ifx\svgwidth\undefined%
    \setlength{\unitlength}{82.21929535bp}%
    \ifx\svgscale\undefined%
      \relax%
    \else%
      \setlength{\unitlength}{\unitlength * \real{\svgscale}}%
    \fi%
  \else%
    \setlength{\unitlength}{\svgwidth}%
  \fi%
  \global\let\svgwidth\undefined%
  \global\let\svgscale\undefined%
  \makeatother%
  \begin{picture}(1,0.92990414)%
    \lineheight{1}%
    \setlength\tabcolsep{0pt}%
    \put(0,0){\includegraphics[width=\unitlength,page=1]{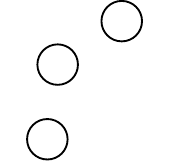}}%
    \put(-0.00586333,0.00548545){\color[rgb]{0,0,0}\makebox(0,0)[lt]{\lineheight{1.25}\smash{\begin{tabular}[t]{l}$0$\end{tabular}}}}%
    \put(0.84300891,0.77912046){\color[rgb]{0,0,0}\makebox(0,0)[lt]{\lineheight{1.25}\smash{\begin{tabular}[t]{l}$3$\end{tabular}}}}%
    \put(0.17096857,0.69196771){\color[rgb]{0,0,0}\makebox(0,0)[lt]{\lineheight{1.25}\smash{\begin{tabular}[t]{l}$1$\end{tabular}}}}%
    \put(0,0){\includegraphics[width=\unitlength,page=2]{facet_3-cycle_suspended.pdf}}%
    \put(0.84202871,0.39954065){\color[rgb]{0,0,0}\makebox(0,0)[lt]{\lineheight{1.25}\smash{\begin{tabular}[t]{l}$2$\end{tabular}}}}%
    \put(0,0){\includegraphics[width=\unitlength,page=3]{facet_3-cycle_suspended.pdf}}%
  \end{picture}%
\endgroup%
\\
    \def\svgscale{0.80}%% Creator: Inkscape 1.4.2 (ebf0e940d0, 2025-05-08), www.inkscape.org
%% PDF/EPS/PS + LaTeX output extension by Johan Engelen, 2010
%% Accompanies image file '3-cycle_unsuspended.pdf' (pdf, eps, ps)
%%
%% To include the image in your LaTeX document, write
%%   \input{<filename>.pdf_tex}
%%  instead of
%%   \includegraphics{<filename>.pdf}
%% To scale the image, write
%%   \def\svgwidth{<desired width>}
%%   \input{<filename>.pdf_tex}
%%  instead of
%%   \includegraphics[width=<desired width>]{<filename>.pdf}
%%
%% Images with a different path to the parent latex file can
%% be accessed with the `import' package (which may need to be
%% installed) using
%%   \usepackage{import}
%% in the preamble, and then including the image with
%%   \import{<path to file>}{<filename>.pdf_tex}
%% Alternatively, one can specify
%%   \graphicspath{{<path to file>/}}
%% 
%% For more information, please see info/svg-inkscape on CTAN:
%%   http://tug.ctan.org/tex-archive/info/svg-inkscape
%%
\begingroup%
  \makeatletter%
  \providecommand\color[2][]{%
    \errmessage{(Inkscape) Color is used for the text in Inkscape, but the package 'color.sty' is not loaded}%
    \renewcommand\color[2][]{}%
  }%
  \providecommand\transparent[1]{%
    \errmessage{(Inkscape) Transparency is used (non-zero) for the text in Inkscape, but the package 'transparent.sty' is not loaded}%
    \renewcommand\transparent[1]{}%
  }%
  \providecommand\rotatebox[2]{#2}%
  \newcommand*\fsize{\dimexpr\f@size pt\relax}%
  \newcommand*\lineheight[1]{\fontsize{\fsize}{#1\fsize}\selectfont}%
  \ifx\svgwidth\undefined%
    \setlength{\unitlength}{222.30056402bp}%
    \ifx\svgscale\undefined%
      \relax%
    \else%
      \setlength{\unitlength}{\unitlength * \real{\svgscale}}%
    \fi%
  \else%
    \setlength{\unitlength}{\svgwidth}%
  \fi%
  \global\let\svgwidth\undefined%
  \global\let\svgscale\undefined%
  \makeatother%
  \begin{picture}(1,0.75943412)%
    \lineheight{1}%
    \setlength\tabcolsep{0pt}%
    \put(0,0){\includegraphics[width=\unitlength,page=1]{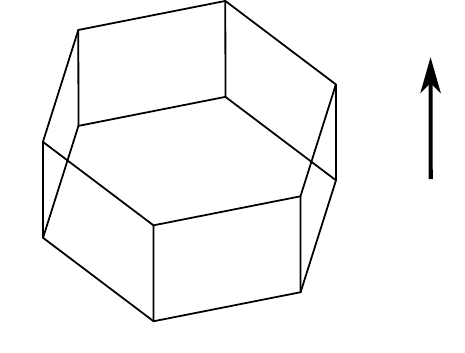}}%
    \put(0.94489885,0.54150816){\color[rgb]{0,0,0}\makebox(0,0)[lt]{\lineheight{1.25}\smash{\begin{tabular}[t]{l}$J$\end{tabular}}}}%
    \put(0.41291856,0.47160246){\color[rgb]{0,0,0}\makebox(0,0)[lt]{\lineheight{1.25}\smash{\begin{tabular}[t]{l}$123$\end{tabular}}}}%
    \put(0.18125981,0.4414636){\color[rgb]{0,0,0}\makebox(0,0)[lt]{\lineheight{1.25}\smash{\begin{tabular}[t]{l}$132$\end{tabular}}}}%
    \put(-0.00251005,0.18313469){\color[rgb]{0,0,0}\makebox(0,0)[lt]{\lineheight{1.25}\smash{\begin{tabular}[t]{l}$231$\end{tabular}}}}%
    \put(0.26292995,0.0023482){\color[rgb]{0,0,0}\makebox(0,0)[lt]{\lineheight{1.25}\smash{\begin{tabular}[t]{l}$321$\end{tabular}}}}%
    \put(0.63784973,0.06579296){\color[rgb]{0,0,0}\makebox(0,0)[lt]{\lineheight{1.25}\smash{\begin{tabular}[t]{l}$312$\end{tabular}}}}%
    \put(0.75509758,0.32270247){\color[rgb]{0,0,0}\makebox(0,0)[lt]{\lineheight{1.25}\smash{\begin{tabular}[t]{l}$213$\end{tabular}}}}%
    \put(0,0){\includegraphics[width=\unitlength,page=2]{3-cycle_unsuspended.pdf}}%
  \end{picture}%
\endgroup%
 &\def\svgscale{0.6}%% Creator: Inkscape 1.4.2 (ebf0e940d0, 2025-05-08), www.inkscape.org
%% PDF/EPS/PS + LaTeX output extension by Johan Engelen, 2010
%% Accompanies image file '3-cycle_suspended.pdf' (pdf, eps, ps)
%%
%% To include the image in your LaTeX document, write
%%   \input{<filename>.pdf_tex}
%%  instead of
%%   \includegraphics{<filename>.pdf}
%% To scale the image, write
%%   \def\svgwidth{<desired width>}
%%   \input{<filename>.pdf_tex}
%%  instead of
%%   \includegraphics[width=<desired width>]{<filename>.pdf}
%%
%% Images with a different path to the parent latex file can
%% be accessed with the `import' package (which may need to be
%% installed) using
%%   \usepackage{import}
%% in the preamble, and then including the image with
%%   \import{<path to file>}{<filename>.pdf_tex}
%% Alternatively, one can specify
%%   \graphicspath{{<path to file>/}}
%% 
%% For more information, please see info/svg-inkscape on CTAN:
%%   http://tug.ctan.org/tex-archive/info/svg-inkscape
%%
\begingroup%
  \makeatletter%
  \providecommand\color[2][]{%
    \errmessage{(Inkscape) Color is used for the text in Inkscape, but the package 'color.sty' is not loaded}%
    \renewcommand\color[2][]{}%
  }%
  \providecommand\transparent[1]{%
    \errmessage{(Inkscape) Transparency is used (non-zero) for the text in Inkscape, but the package 'transparent.sty' is not loaded}%
    \renewcommand\transparent[1]{}%
  }%
  \providecommand\rotatebox[2]{#2}%
  \newcommand*\fsize{\dimexpr\f@size pt\relax}%
  \newcommand*\lineheight[1]{\fontsize{\fsize}{#1\fsize}\selectfont}%
  \ifx\svgwidth\undefined%
    \setlength{\unitlength}{290.31692144bp}%
    \ifx\svgscale\undefined%
      \relax%
    \else%
      \setlength{\unitlength}{\unitlength * \real{\svgscale}}%
    \fi%
  \else%
    \setlength{\unitlength}{\svgwidth}%
  \fi%
  \global\let\svgwidth\undefined%
  \global\let\svgscale\undefined%
  \makeatother%
  \begin{picture}(1,1.01024505)%
    \lineheight{1}%
    \setlength\tabcolsep{0pt}%
    \put(0,0){\includegraphics[width=\unitlength,page=1]{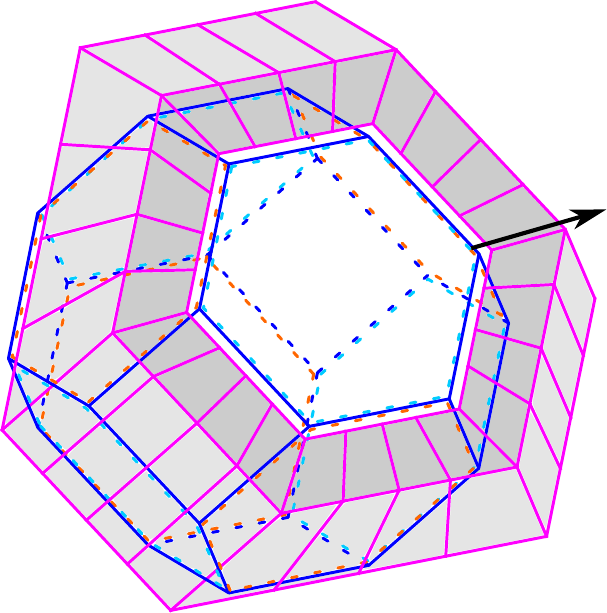}}%
    \put(0.95255098,0.72383802){\color[rgb]{0,0,0}\makebox(0,0)[lt]{\lineheight{1.25}\smash{\begin{tabular}[t]{l}$J$\end{tabular}}}}%
    \put(0.52043452,0.70719431){\color[rgb]{0,0,0}\makebox(0,0)[lt]{\lineheight{1.25}\smash{\begin{tabular}[t]{l}$2341$\end{tabular}}}}%
    \put(0.63337974,0.56321315){\color[rgb]{0,0,0}\makebox(0,0)[lt]{\lineheight{1.25}\smash{\begin{tabular}[t]{l}$3241$\end{tabular}}}}%
    \put(0.61185613,0.37652893){\color[rgb]{0,0,0}\makebox(0,0)[lt]{\lineheight{1.25}\smash{\begin{tabular}[t]{l}$4231$\end{tabular}}}}%
    \put(0.47403481,0.34868785){\color[rgb]{0,0,0}\makebox(0,0)[lt]{\lineheight{1.25}\smash{\begin{tabular}[t]{l}$4321$\end{tabular}}}}%
    \put(0.35632099,0.50230187){\color[rgb]{0,0,0}\makebox(0,0)[lt]{\lineheight{1.25}\smash{\begin{tabular}[t]{l}$3421$\end{tabular}}}}%
    \put(0.38460053,0.67925445){\color[rgb]{0,0,0}\makebox(0,0)[lt]{\lineheight{1.25}\smash{\begin{tabular}[t]{l}$2431$\end{tabular}}}}%
  \end{picture}%
\endgroup%

  \end{tabular}
    \caption{Left: A loop in the 3-vertex complete graph and the cycle $K$ it parametrizes. Right: The same loop embedded in the 4-vertex complete graph and the cycle $K'$ it parametrizes. As proved in Lemma \ref{suspension agreement}, the map $S^{(A+B)+2}\to S^{(A+B)+1}$ from the right figure is the suspension of the map from the left figure $S^{(A+B)+3}\to S^{(A+B)+2}$.}
    \label{cycle suspension}
  \end{figure}
\end{proof}
\section{Computing $[K]$ for $3$-cycles}
We assume for the next few sections that facet cycles $Z$ are unsigned.
\begin{lemma}\label{3-cycle}
  Let $K\subset \partial\mathcal{C}_{\kappa+2}$ be a $3$-cycle: that is, a cycle with three boundary matching tubes and three Pontrjagin-Thom tubes. Assume $K$ is parametrized by the facet cycle $a\longline b\longline c\longline a$. We have $[K]=ab+bc+ac+a+b+c+1$. 
\end{lemma}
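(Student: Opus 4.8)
The strategy is to compute $[K]$ by reducing the $3$-cycle to a concrete map $S^{m+1}\to S^m$ built entirely from suspensions of the Hopf map $\eta$, and then to read off the parity from how many copies of $\eta$ (or $-\eta$) appear. By Lemma \ref{suspension agreement} I may take $\kappa = 1$, so that $\partial\mathcal{C}_{\kappa+2} = \partial \mathcal{C}_3$ and $E(r) = E(1)$; this is the smallest model in which all three facets $\mathbf{G}_a,\mathbf{G}_b,\mathbf{G}_c$ exist, and it keeps the bookkeeping manageable. First I would use Construction \ref{cycle building} to build the contrived cycle $K$ explicitly from the facet cycle $a\longline b\longline c\longline a$: three Pontrjagin--Thom tubes $\zeta_{ab}$, $\zeta_{bc}$, $\zeta_{ac}$ straddling the corresponding pairs of facets, and three boundary matching tubes $\tilde\eta_a$, $\tilde\eta_b$, $\tilde\eta_c$ sitting inside $\mathbf{G}_a$, $\mathbf{G}_b$, $\mathbf{G}_c$. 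I would then assemble the single d.s.\ tube $T = \zeta_{ab}\cup\tilde\eta_b\cup\zeta_{bc}\cup\tilde\eta_c\cup\zeta_{ca}\cup\tilde\eta_a$ as in the proof of Proposition \ref{trivialization}, whose closure is $K$.

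The core computation is to identify the ``monodromy'' of this assembled tube --- the twist $\rho\in \pi_1(SO(T_P E(1)))\cong \mathbb{Z}/2$ by which $T$ fails to be boundary-coherent, equivalently the obstruction to the trivialization $\Phi$ being the ``untwisted'' one. I would track this through the identities of Section 3. Each boundary matching tube $\tilde\eta$ that matches a face with itself (which happens when $a=b$, etc., but more relevantly in the turnarounds forced by the cyclic structure) contributes a flip $\tau$ in the $J$-factor, as recorded in equation (\ref{same face t=1}); each Pontrjagin--Thom tube is boundary-coherent. Composing the six tubes and using the gluing computation of Example \ref{gluing tubes homotopy} together with Table \ref{flipback table} --- which tells me exactly which $\varphi_{i\rightleftarrows j}$ twist arises when gluing $T_{\{a,b\}}\cup T_b \cup T_{\{b,c\}}$ --- I get a product of $\varphi$-twists and $\tau$-flips whose total class in $\pi_1(SO)$ I compute via Lemma \ref{commutators} and Remark \ref{full twist fact}. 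The claim is that this class, combined with the flips, evaluates to the polynomial $ab+bc+ac+a+b+c+1 \bmod 2$ (symmetric in $a,b,c$, as it must be since $[K]$ does not depend on the labeling of the three facets, only on the unordered triple and the orientation constraints from (\ref{facet cycle 4})). I expect the cleanest route is to verify this by direct enumeration of the finitely many ordering patterns of $\{a,b,c\}$ --- there are essentially two cases up to symmetry, $a<b<c$ versus the others are forced by $a\longline \overrightarrow{b}\longline c$ whenever $a<c$ --- and to check that in each case the count of flip-contributions plus twist-contributions matches the asserted formula, using that $[\Sigma^{m-2}\eta]\neq 0$ in $\pi_{m+1}(S^m)$ from Lemma \ref{1 stem}.

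Concretely, the steps in order: (1) reduce to $\kappa = 1$ via Lemma \ref{suspension agreement}; (2) write out the six tubes of $K$ from Construction \ref{cycle building}, recording for each $\tilde\eta$ whether it is boundary-coherent, using Remark \ref{eta coherence} with all signs $\omega = 0$; (3) compose them into $T$ and compute the composite twist $\rho$ using Example \ref{gluing tubes homotopy}, Table \ref{flipback table}, and Lemma \ref{commutators}; (4) translate $\rho$ and the accumulated $\tau$-flips into the class $[K]\in\mathbb{Z}/2$ via Lemma \ref{suspension RP2} (the $\mathbb{R}P^2$-suspension analysis that already underlies Definition \ref{K element} and Lemma \ref{trivialization equivalent}); (5) enumerate the order-types of $(a,b,c)$ and check the arithmetic against $ab+bc+ac+a+b+c+1$.

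\textbf{Main obstacle.} The hard part will be step (3)--(4): correctly accounting for the interaction between the $J$-factor flips $\tau$ coming from turnarounds (equation (\ref{same face t=1})) and the $\Pi^{\kappa-1}$-plane twists $\boldvarphi_r$ coming from the ``convex'' constructions when facets are more than one apart (Case 1b of Section \ref{eta construction}, and the $\boldvarphi_{j-1}\diamond\ldots\diamond\boldvarphi_{i+1}$ corrections in Construction \ref{cycle building}, step \ref{bm construction}). These live in different tangent factors of $T_P E(1)$, but both can contribute to the single generator of $\pi_1(SO(T_P E(1)))\cong\mathbb{Z}/2$ once dimensions are large enough, so I must be careful that ``adding a twist'' in the $\Pi$-plane really does or does not register as a full twist --- this is exactly the subtlety flagged in Remark \ref{full twist fact} and the reason the $J$-factor was built into the construction in the first place (Remark \ref{sign importance}). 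I anticipate that organizing the computation around the facet cycle $Z(C)$ and its turnarounds (as in Figure \ref{parametrization turnarounds}), rather than around the raw tubes, will be what makes the $ab+bc+ac$ cross-terms visible: each pair of facets contributes a term according to whether the corresponding Pontrjagin--Thom tube's two endpoints are separated by an even or odd number of intermediate facet indices, which is precisely the $\#\{i\in S : i<j\}$ parity from the index assignment $s_{\mathbb{Z}}$.
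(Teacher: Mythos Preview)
Your plan has two concrete errors and one missing idea. First, the reduction to $\kappa=1$ via Lemma \ref{suspension agreement} is invalid: that lemma lets you change the ambient dimension $r$ of $\partial\mathcal{C}(r+2)$, but the facet indices $a,b,c$ are part of the facet cycle $Z$ and are fixed. Since the formula $ab+bc+ac+a+b+c+1$ depends on the actual values of $a,b,c$, not just on having three distinct facets, you must work in general $\kappa\ge\max(a,b,c)-1$. Second, your remark about ``turnarounds forced by the cyclic structure'' is off: in the $3$-cycle $a\longline b\longline c\longline a$ with distinct $a,b,c$ there are no turnarounds at all---each boundary matching tube $\tilde\eta_b$ sits in $\mathbf{G}_b$ and connects the $a_b$-facet to the $c_b$-facet, never a facet to itself. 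So equation~(\ref{same face t=1}) plays no role here, and the $\tau$-flip bookkeeping you describe does not arise.

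More seriously, you are missing the comparison device that makes the computation tractable. The paper introduces the \emph{convex} cycle $K^{\conv}$, obtained by replacing each $\tilde\eta_r$ with its untwisted convex version $T_r=\tilde\eta_r^{\conv}$. This $K^{\conv}$ is visibly an untwisted $S^1$-family circling once around a hexagonal region, so $[K^{\conv}]=0$. Then one writes $\tilde\eta_a=T_a\diamond(\boldvarphi_{c-2}\cdots\boldvarphi_b,\alpha_a)$, $\tilde\eta_b=T_b\diamond(\boldvarphi_{c-2}\cdots\boldvarphi_{a+1},\alpha_b)$, $\tilde\eta_c=T_c\diamond(\boldvarphi_{b-1}\cdots\boldvarphi_{a+1},\alpha_c)$ (assuming $a<b<c$), and slides all these $\boldvarphi$-twists to a single location using Proposition \ref{twist through concat} and Example \ref{Vij twists}. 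The cancellations from Lemma \ref{commutators}, together with the conjugation identities $\tau_J\varphi\tau_J^{-1}=\varphi^{-1}$, produce exactly the count $(ab+bc+ac+a+b+c+1)$ of full twists, giving $K\cong K^{\conv}+(ab+bc+ac+a+b+c+1)$. Your plan to compute a ``monodromy'' directly, without this reference cycle, runs into the problem that the composed tube $T$ is boundary-coherent (Lemma \ref{even incoherent}), so its internal monodromy is trivial; the nontrivial information is in the framing relative to the ambient $\partial\mathcal{C}(z)$, and $K^{\conv}$ is precisely what pins that down. Example \ref{gluing tubes homotopy} and Table \ref{flipback table} are not the right tools here---they handle collapsing three tubes in $\Pi^\kappa$ into one, and are used later in Lemma \ref{pulling along cycle}, not for closed $3$-cycles.
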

\begin{proof}
  See Figure \ref{3-cycle compared to convex hull}.
  \begin{figure}
    \begin{tabular}{ m{8cm} m{8cm} }
      \def\svgscale{0.55}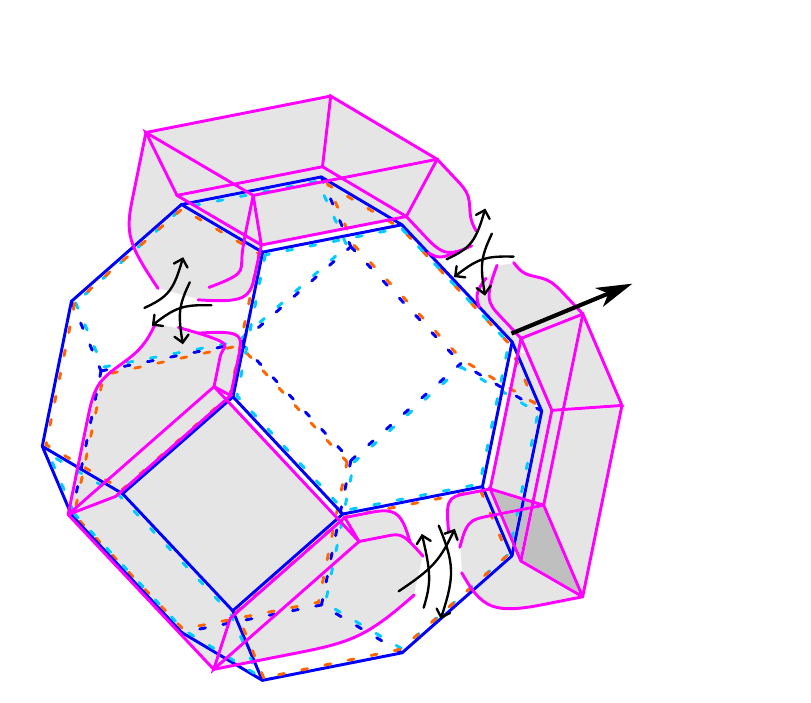 & \def\svgscale{0.55}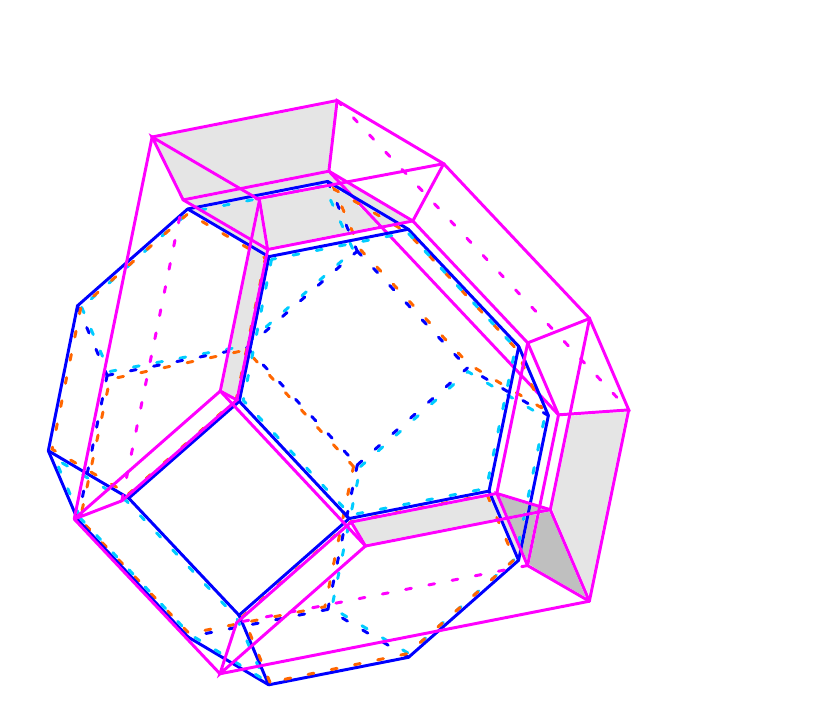
    \end{tabular}
    \caption{Left: an illustration of a $3$-cycle $K$. Right: an illustration of $K^{\conv}$. In the proof of Lemma \ref{3-cycle}, we view both cycles as tubes by splitting the cycles along the darkly shaded section. write $K$ as $K^{\conv}$ plus some twists.}
    \label{3-cycle compared to convex hull}
  \end{figure}
  Without loss of generality, we assume that $a<b<c$. $K$ is composed of boundary matching tubes $\tilde{\eta}_a,\tilde{\eta}_b,\tilde{\eta}_c$ lying in facets $\mathbf{G}_a$, $\mathbf{G}_b$, $\mathbf{G}_c$, and Pontrjagin-Thom tubes $\zeta_{ab},\zeta_{bc},\zeta_{ac}$ lying in facets $\mathbf{G}_{ab}$, $\mathbf{G}_{bc}$, $\mathbf{G}_{ac}$. Before we investigate $K$, we construct a new tube $K^{\conv}$ by modifying the boundary matching tubes. In place of the $\tilde{\eta}$ tubes defined in Construction \ref{cycle building}, we define tubes $T_a,T_b,T_c$ as follows: For each $\tilde{\eta}_r$, replace the $\{\Theta_r,\Lambda_r\}$, used to define $\tilde{\eta}_r$ in Section \ref{eta construction}, with the convex tube $T_r = \{\Theta_r^{\conv},\Lambda_r^{\conv}\}$. This should result in a more ``flat'' looking cycle $K^{\conv}$. Furthermore, if we view $K^{\conv}$ as a d.s.\ tube
  \[
    K^{\conv} = T_a\cup \zeta_{ac}\cup T_c\cup \zeta_{bc}\cup T_b\cup \zeta_{ab},
  \]
  with both ends meeting in $T_a\cap \zeta_{ab}$ (the darkly shaded section of Figure \ref{3-cycle compared to convex hull}), then $K^{\conv}$ is boundary-coherent, meaning we can trivialize $K^{\conv}$ as in Proposition \ref{trivialization}, and define an element $[K^{\conv}]\in \mathbb{Z}/2$. Note that $[K^{\conv}] = 0$, since $K^{\conv}$ looks like an $S^1$ family of fibers $E(r)$ that circles once around a hexagon without any further twisting.\par
  Now if we view $K$ as a d.s.\ tube
  \[
    K = \tilde{\eta}_a\cup \zeta_{ac}\cup \tilde{\eta}_c\cup \zeta_{bc}\cup \tilde{\eta}_b\cup \zeta_{ab},
  \]
  with both ends meeting in $\tilde{\eta}_a\cap \zeta_{ab}$, then we can obtain $K$ by adding twists to $K^{\conv}$ (see Figure \ref{3-cycle diagram}).
    \begin{figure}
    \begin{tikzpicture}[scale=0.9]
  \draw (60:5) pic{mynode1={A, $n-1$, , $n$ , , }};
  \draw (120:5) pic{mynode1={B, $n$, , $n-1$ , }};
  \draw (180:5) pic{mynode1={C, $n$ , $n-1$ , , }};
  \draw (240:5) pic{mynode1={D, $n-1$ ,$n$ , , }};
  \draw (300:5) pic{mynode1={E, , $n$ , $n-1$ , }};
  \draw (0:5) pic{mynode1={F, , $n-1$ , $n$ , }};
  \draw[shorten <=1.2cm,shorten >=1.2cm, thick] (A)--(B);
  \draw[shorten <=1.2cm,shorten >=1.2cm, thick] (C)--(D);
  \draw[shorten <=1.2cm,shorten >=1.2cm, thick] (E)--(F);
  \draw[thick, decorate,decoration={coil,segment length=4pt}] (15:4.5)--(45:4.5);
  \draw[thick, decorate,decoration={coil,segment length=4pt}] (135:4.5)--(165:4.5);
  \draw[thick, decorate,decoration={coil,segment length=4pt}] (255:4.5)--(285:4.5
  );
  \draw [arrows = {-Stealth},line width=1pt] (5:6) to [bend right=40] (55:6);
  \draw [arrows = {-Stealth},line width=1pt] (125:6) to [bend right=40] (175:6);
  \draw [arrows = {-Stealth},line width=1pt] (245:6) to [bend right=40] (295:6);
  \node at (0,0) {$i<j<k$};
  \node at (30:3.5) {$\tilde{\eta}_j\subset \mathbf{G}_j$};
  \node at (150:3.5) {$\tilde{\eta}_k\subset \mathbf{G}_k$};
  \node at (270:3.8) {$\tilde{\eta}_i\subset \mathbf{G}_i$};
  \node at (90:4.8) {$\zeta_{jk}$};
  \node at (210:4.8) {$\zeta_{ik}$};
  \node at (-30:4.8) {$\zeta_{ij}$};
  \node at (35:7.6) {$\boldvarphi_{k-2}\circ\ldots\circ\boldvarphi_{i+1}$};
  \node at (145:7.6) {$\boldvarphi_{i+1}\circ\ldots\circ\boldvarphi_{j-1}$};
  \node at (-90:6.8) {$\boldvarphi_{j}\circ\ldots\circ\boldvarphi_{k-2}$};
  \end{tikzpicture}
    \caption{A diagram of the boundary matching and Pontrjagin-Thom neighborhoods behave in a 3-cycle. The straight edges represent Pontrjagin-Thom tubes and the coiled edges represent boundary matching tubes. The coils represent the possibly twists that have been added.}
    \label{3-cycle diagram}
  \end{figure}
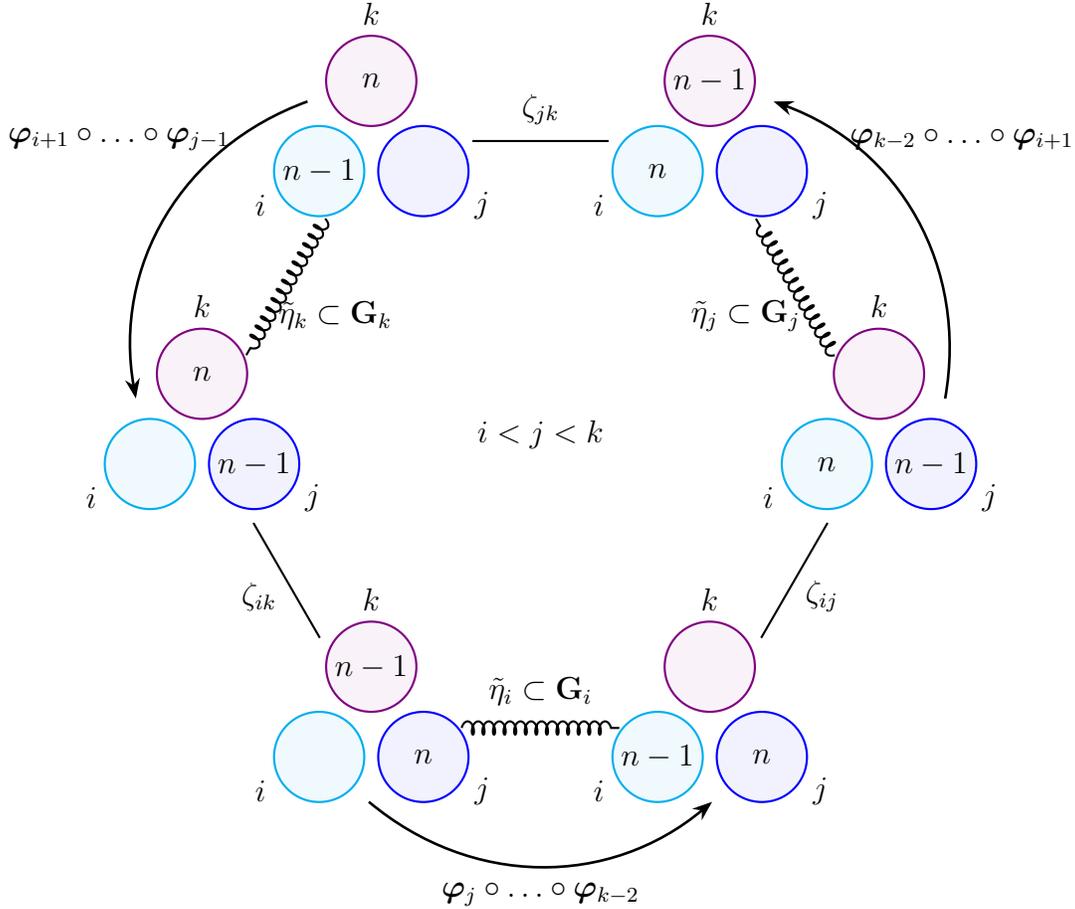
  Indeed, $\tilde{\eta}_a = T_a\diamond(\boldvarphi_{c-2}\ldots\boldvarphi_b,\alpha_a)$, $\tilde{\eta}_b = T_b\diamond(\boldvarphi_{c-2}\ldots\boldvarphi_{a+1},\alpha_b)$, $\tilde{\eta}_c = T_c\diamond(\boldvarphi_{b-1}\ldots\boldvarphi_{a+1},\alpha_c)$. Therefore, we can use Proposition \ref{twist through concat} to compare $K$ with $K^{\conv}$:
  \begingroup
  \allowdisplaybreaks
  \begin{align*}
    K &= (T_a\diamond (\boldvarphi_{c-2}\ldots\boldvarphi_b,\alpha_a)) \cup \zeta_{ab} \cup
        (T_b\diamond(\boldvarphi_{c-2}\ldots\boldvarphi_{a+1},\alpha_b)) \cup \zeta_{bc}\\
    &\quad \cup(T_c\diamond(\boldvarphi_{b-1}\ldots\boldvarphi_{a+1},\alpha_c)) \cup \zeta_{ac}\\[\jot]
    &\cong  (T_a\diamond(\boldvarphi_{b}\ldots\boldvarphi_{c-2},\alpha_a)) \cup \zeta_{ab} \cup
      (T_b\diamond(\boldvarphi_{c-2}\ldots\boldvarphi_{a+1},\beta_b) \cup \zeta_{bc}\\
    &\quad \cup(T_c\diamond(\boldvarphi_{a+1}\ldots\boldvarphi_{b-1},\beta_c)) \cup \zeta_{ac}\\
    &\pushright{(\text{using Example \ref{Vij twists} to rewrite the boundary matching tubes})}\\[\jot]
    &\cong (T_a\diamond (\boldvarphi_{b}\ldots\boldvarphi_{c-2},\beta_a) \cup \zeta_{ab} \cup
      ((T_b\diamond(\boldvarphi_{c-2}\ldots\boldvarphi_{a+1},\alpha_b)\diamond(\tau^{-\omega}(\boldvarphi_{a+1}\ldots\boldvarphi_{b-1})\tau^{\omega},\alpha_b)\cup\zeta_{bc}\\
    &\quad \cup T_c \cup \zeta_{ac}\\
      &\pushright{(\text{by Proposition \ref{twist through concat}, setting } \omega = c+a \mod 2)}\\[\jot]
    &\cong (T_a\diamond(\boldvarphi_{b}\ldots\boldvarphi_{c-2},\beta_a) \cup \zeta_{ab} \cup
      (T_b\diamond(\boldvarphi_{c-2}\ldots\boldvarphi_{a+1}\tau^{-\omega}(\boldvarphi_{a+1}\ldots\boldvarphi_{b-1})\tau^{\omega},\alpha_b) \cup \zeta_{bc}\\
    &\quad \cup T_c \cup \zeta_{ac}\\[\jot]
    &\cong (T_a\diamond(\boldvarphi_{b}\ldots\boldvarphi_{c-2},\beta_a) \cup \zeta_{ab} \cup
      (T_b\diamond(\boldvarphi_{c-2}\ldots\boldvarphi_{a+1}(\boldvarphi_{a+1}^{-1}\ldots\boldvarphi_{b-1}^{-1}),\alpha_b)\cup \zeta_{bc} \cup T_c \cup \zeta_{ac}\\
       &\quad + ((\omega+1)(b-a-1))\\[\jot]
    &\cong (T_a\diamond(\boldvarphi_{b}\ldots\boldvarphi_{c-2},\beta_a)) \cup \zeta_{ab} \cup
    (T_b\diamond(\boldvarphi_{c-2}\ldots\boldvarphi_b,\alpha_b)) \cup \zeta_{bc} \cup
      T_c \cup \zeta_{ac}\\
    &\quad + ((\omega+1)(b-a-1))\\[\jot]
    &\cong (T_a\diamond(\boldvarphi_{b}\ldots\boldvarphi_{c-2}\tau^{-\omega'}(\boldvarphi_{c-2}\ldots\boldvarphi_b)\tau^{\omega'},\beta_a)) \cup \zeta_{ab} \cup
       T_b \cup \zeta_{bc} \cup
      T_c \cup \zeta_{ac}\\
    &\quad + ((\omega+1)(b-a-1))\\
    &\pushright{(\text{by Proposition \ref{twist through concat}, setting } \omega = b+c \mod 2)}\\[\jot]
    &\cong T_a \cup \zeta_{ab} \cup
       T_b \cup \zeta_{bc} \cup
       T_c \cup \zeta_{ac} + ((\omega'+1)(c-b-1))+((\omega+1)(b-a-1))\\
    &= K^{\conv} + (ab+bc+ac+a+b+c+1),
  \end{align*}
  \endgroup
  implying $[K] = [K^{\conv}] + ab+bc+ac+a+b+c+1 = ab+bc+ac+a+b+c+1$.
\end{proof}
\section{Simplifying facet cycles $Z$ without turnarounds}\label{simplifying}
We again only consider unsigned facet cycles $Z$ in this section.\par
To motivate our strategy, we recall Remark \ref{omnitruncation}, which says a loop in $\Pi^{n-1}$ occupying facets of the form $G_a$, $G_{ab}$ can nullhomotope by only traveling through facets of the form $G_{abc}$. We adapt this remark by noting $K$ is a tubular cycle that only passes through facets of the form $\mathbf{G}_a$, $\mathbf{G}_{ab}$, meaning we can isotope $K$ to a small loop by only passing $K$ through facets of the form $\mathbf{G}_{abc}$. It is important, therefore, for us to know how $K$ looks after each traversal. We explore this behavior on the level of facet cycles.\par
Consider a (unsigned) facet cycle $Z$ that has no turnarounds, that is, there are no portions that look like $a\longline \overrightarrow{b}\longline a$ (or $a\longline \overleftarrow{b}\longline a$). (See Figure \ref{backtrack comparison} for illustrations of these cycles.)
\begin{figure}
  \centering
  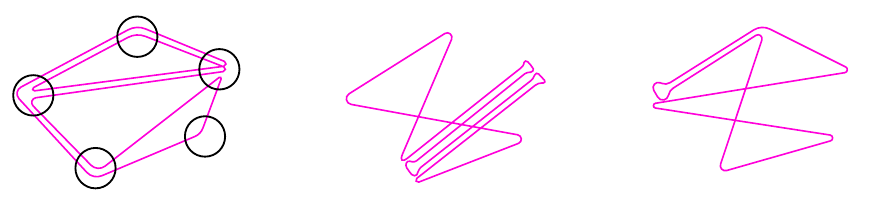
  \caption{Left: a cycle that does not backtrack. Middle and Right: cycles that backtrack. Cycles that backtrack are not allowed in Section \ref{simplifying}.}
  \label{backtrack comparison}
\end{figure}
This is equivalent to saying that if $Z$ parametrizes $K$, then $K$ does not contain any tube $\tilde{\eta}$ that starts and ends at the same face $\mathbf{G}_b$. We first investigate these types of facet cycles because we do not have to worry about how the orientation of a turnaround $a\longline \overrightarrow{b}\longline a$ affects the parametrization of $K$; orientations do not factor at all here.
Now suppose $Z$ contains a facet chain $a\longline b\longline c \longline d\longline e$, which we shall call $D$, where $b\neq d$, $a\neq c$, $c\neq e$, $a\neq d$, $b\neq e$. Then if we take $Z$ and replace $D$ with the facet chain $a\longline b \longline d\longline e$, which we call $D'$, then we obtain a new facet cycle $Z'$, which also lacks turnarounds (see Figure \ref{path move} for a schematic).
\begin{figure}
  \centering
  %% Creator: Inkscape 1.4.2 (ebf0e940d0, 2025-05-08), www.inkscape.org
%% PDF/EPS/PS + LaTeX output extension by Johan Engelen, 2010
%% Accompanies image file 'path_move_1.pdf' (pdf, eps, ps)
%%
%% To include the image in your LaTeX document, write
%%   \input{<filename>.pdf_tex}
%%  instead of
%%   \includegraphics{<filename>.pdf}
%% To scale the image, write
%%   \def\svgwidth{<desired width>}
%%   \input{<filename>.pdf_tex}
%%  instead of
%%   \includegraphics[width=<desired width>]{<filename>.pdf}
%%
%% Images with a different path to the parent latex file can
%% be accessed with the `import' package (which may need to be
%% installed) using
%%   \usepackage{import}
%% in the preamble, and then including the image with
%%   \import{<path to file>}{<filename>.pdf_tex}
%% Alternatively, one can specify
%%   \graphicspath{{<path to file>/}}
%% 
%% For more information, please see info/svg-inkscape on CTAN:
%%   http://tug.ctan.org/tex-archive/info/svg-inkscape
%%
\begingroup%
  \makeatletter%
  \providecommand\color[2][]{%
    \errmessage{(Inkscape) Color is used for the text in Inkscape, but the package 'color.sty' is not loaded}%
    \renewcommand\color[2][]{}%
  }%
  \providecommand\transparent[1]{%
    \errmessage{(Inkscape) Transparency is used (non-zero) for the text in Inkscape, but the package 'transparent.sty' is not loaded}%
    \renewcommand\transparent[1]{}%
  }%
  \providecommand\rotatebox[2]{#2}%
  \newcommand*\fsize{\dimexpr\f@size pt\relax}%
  \newcommand*\lineheight[1]{\fontsize{\fsize}{#1\fsize}\selectfont}%
  \ifx\svgwidth\undefined%
    \setlength{\unitlength}{118.90288826bp}%
    \ifx\svgscale\undefined%
      \relax%
    \else%
      \setlength{\unitlength}{\unitlength * \real{\svgscale}}%
    \fi%
  \else%
    \setlength{\unitlength}{\svgwidth}%
  \fi%
  \global\let\svgwidth\undefined%
  \global\let\svgscale\undefined%
  \makeatother%
  \begin{picture}(1,0.86469752)%
    \lineheight{1}%
    \setlength\tabcolsep{0pt}%
    \put(0,0){\includegraphics[width=\unitlength,page=1]{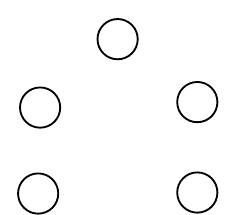}}%
    \put(0.42693939,0.8150624){\color[rgb]{0,0,0}\makebox(0,0)[lt]{\lineheight{1.25}\smash{\begin{tabular}[t]{l}$c$\end{tabular}}}}%
    \put(-0.00405442,0.07084909){\color[rgb]{0,0,0}\makebox(0,0)[lt]{\lineheight{1.25}\smash{\begin{tabular}[t]{l}$a$\end{tabular}}}}%
    \put(0.89196667,0.06941966){\color[rgb]{0,0,0}\makebox(0,0)[lt]{\lineheight{1.25}\smash{\begin{tabular}[t]{l}$e$\end{tabular}}}}%
    \put(0.85689837,0.52995148){\color[rgb]{0,0,0}\makebox(0,0)[lt]{\lineheight{1.25}\smash{\begin{tabular}[t]{l}$d$\end{tabular}}}}%
    \put(0.40850891,0.27759257){\color[rgb]{0,0,0}\makebox(0,0)[lt]{\lineheight{1.25}\smash{\begin{tabular}[t]{l}$D$\end{tabular}}}}%
    \put(0.04941217,0.50043962){\color[rgb]{0,0,0}\makebox(0,0)[lt]{\lineheight{1.25}\smash{\begin{tabular}[t]{l}$b$\end{tabular}}}}%
    \put(0,0){\includegraphics[width=\unitlength,page=2]{path_move_1.pdf}}%
  \end{picture}%
\endgroup%

  \hspace{0.5cm}\raisebox{1.5cm}{$\rightsquigarrow$}\hspace{0.5cm}
  %% Creator: Inkscape 1.4.2 (ebf0e940d0, 2025-05-08), www.inkscape.org
%% PDF/EPS/PS + LaTeX output extension by Johan Engelen, 2010
%% Accompanies image file 'path_move_2.pdf' (pdf, eps, ps)
%%
%% To include the image in your LaTeX document, write
%%   \input{<filename>.pdf_tex}
%%  instead of
%%   \includegraphics{<filename>.pdf}
%% To scale the image, write
%%   \def\svgwidth{<desired width>}
%%   \input{<filename>.pdf_tex}
%%  instead of
%%   \includegraphics[width=<desired width>]{<filename>.pdf}
%%
%% Images with a different path to the parent latex file can
%% be accessed with the `import' package (which may need to be
%% installed) using
%%   \usepackage{import}
%% in the preamble, and then including the image with
%%   \import{<path to file>}{<filename>.pdf_tex}
%% Alternatively, one can specify
%%   \graphicspath{{<path to file>/}}
%% 
%% For more information, please see info/svg-inkscape on CTAN:
%%   http://tug.ctan.org/tex-archive/info/svg-inkscape
%%
\begingroup%
  \makeatletter%
  \providecommand\color[2][]{%
    \errmessage{(Inkscape) Color is used for the text in Inkscape, but the package 'color.sty' is not loaded}%
    \renewcommand\color[2][]{}%
  }%
  \providecommand\transparent[1]{%
    \errmessage{(Inkscape) Transparency is used (non-zero) for the text in Inkscape, but the package 'transparent.sty' is not loaded}%
    \renewcommand\transparent[1]{}%
  }%
  \providecommand\rotatebox[2]{#2}%
  \newcommand*\fsize{\dimexpr\f@size pt\relax}%
  \newcommand*\lineheight[1]{\fontsize{\fsize}{#1\fsize}\selectfont}%
  \ifx\svgwidth\undefined%
    \setlength{\unitlength}{118.92079511bp}%
    \ifx\svgscale\undefined%
      \relax%
    \else%
      \setlength{\unitlength}{\unitlength * \real{\svgscale}}%
    \fi%
  \else%
    \setlength{\unitlength}{\svgwidth}%
  \fi%
  \global\let\svgwidth\undefined%
  \global\let\svgscale\undefined%
  \makeatother%
  \begin{picture}(1,0.86456732)%
    \lineheight{1}%
    \setlength\tabcolsep{0pt}%
    \put(-0.00405384,0.06569435){\color[rgb]{0,0,0}\makebox(0,0)[lt]{\lineheight{1.25}\smash{\begin{tabular}[t]{l}$a$\end{tabular}}}}%
    \put(0.42253095,0.26418227){\color[rgb]{0,0,0}\makebox(0,0)[lt]{\lineheight{1.25}\smash{\begin{tabular}[t]{l}$D'$\end{tabular}}}}%
    \put(0,0){\includegraphics[width=\unitlength,page=1]{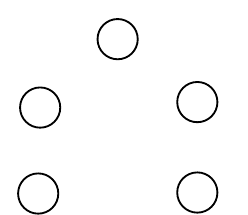}}%
    \put(0.42702555,0.81493967){\color[rgb]{0,0,0}\makebox(0,0)[lt]{\lineheight{1.25}\smash{\begin{tabular}[t]{l}$c$\end{tabular}}}}%
    \put(0.89198281,0.06940924){\color[rgb]{0,0,0}\makebox(0,0)[lt]{\lineheight{1.25}\smash{\begin{tabular}[t]{l}$e$\end{tabular}}}}%
    \put(0.85691979,0.52987166){\color[rgb]{0,0,0}\makebox(0,0)[lt]{\lineheight{1.25}\smash{\begin{tabular}[t]{l}$d$\end{tabular}}}}%
    \put(0.04955518,0.50036425){\color[rgb]{0,0,0}\makebox(0,0)[lt]{\lineheight{1.25}\smash{\begin{tabular}[t]{l}$b$\end{tabular}}}}%
    \put(0,0){\includegraphics[width=\unitlength,page=2]{path_move_2.pdf}}%
  \end{picture}%
\endgroup%

  \caption{For this path move to be valid, all triples of adjacent numbers in both the left and right diagram should consist of different numbers (e.g. $c,d,e$ must be different, but so must $a,b,d$).}
  \label{path move}
\end{figure}
We call this act of replacing $D$ with $D'$, thus replacing $Z$ with $Z'$, an \textit{elementary move}. Our goal in this section is to study the difference $[K]-[K']$ if $K$ (resp.\ $K'$) is parametrized by $Z$ (resp.\ $Z'$), and $Z$, $Z'$ differ by an elementary move.\par
We briefly outline this section's argument: Observe that we can obtain $K'$ by replacing a tubular cutout $U\subset K$ (parametrized by $D$) with another d.s.\ tube $U'$ (parametrized by $D'$). (See Figure \ref{short to long comparison} for an illustration.)
\begin{figure}
  \begin{tabular}{ m{7.8cm} m{6cm} }
      \def\svgscale{0.55}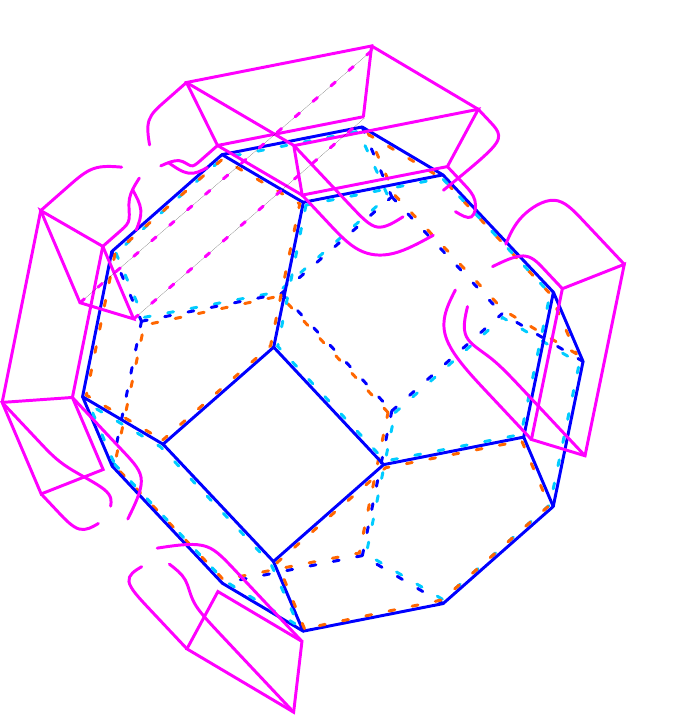 & \def\svgscale{0.55}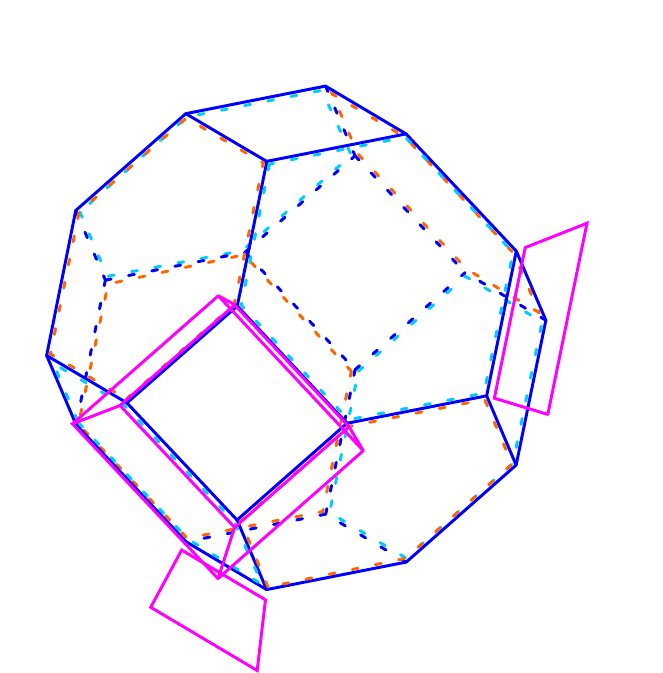
    \end{tabular}
  \caption{Left: The tubular chain $U\subset K$ parametrized by $a \text{---} b \text{---} c \text{---} d \text{---} e$. , Right: The tubular chain $U'\subset K'$ parametrized by $a \text{---} b \text{---} d \text{---} e$. We can imagine that $U$ is homotopic to $U'$ plus a twist.}
  \label{short to long comparison}
\end{figure}
We observe that if $U-U' = (\omega)$, then $[K]-[K'] = \omega \in \mathbb{Z}/2$.
\begin{figure}
  \centering
  \def\svgscale{0.55}%% Creator: Inkscape 1.4.2 (ebf0e940d0, 2025-05-08), www.inkscape.org
%% PDF/EPS/PS + LaTeX output extension by Johan Engelen, 2010
%% Accompanies image file 'U_conv_tube.pdf' (pdf, eps, ps)
%%
%% To include the image in your LaTeX document, write
%%   \input{<filename>.pdf_tex}
%%  instead of
%%   \includegraphics{<filename>.pdf}
%% To scale the image, write
%%   \def\svgwidth{<desired width>}
%%   \input{<filename>.pdf_tex}
%%  instead of
%%   \includegraphics[width=<desired width>]{<filename>.pdf}
%%
%% Images with a different path to the parent latex file can
%% be accessed with the `import' package (which may need to be
%% installed) using
%%   \usepackage{import}
%% in the preamble, and then including the image with
%%   \import{<path to file>}{<filename>.pdf_tex}
%% Alternatively, one can specify
%%   \graphicspath{{<path to file>/}}
%% 
%% For more information, please see info/svg-inkscape on CTAN:
%%   http://tug.ctan.org/tex-archive/info/svg-inkscape
%%
\begingroup%
  \makeatletter%
  \providecommand\color[2][]{%
    \errmessage{(Inkscape) Color is used for the text in Inkscape, but the package 'color.sty' is not loaded}%
    \renewcommand\color[2][]{}%
  }%
  \providecommand\transparent[1]{%
    \errmessage{(Inkscape) Transparency is used (non-zero) for the text in Inkscape, but the package 'transparent.sty' is not loaded}%
    \renewcommand\transparent[1]{}%
  }%
  \providecommand\rotatebox[2]{#2}%
  \newcommand*\fsize{\dimexpr\f@size pt\relax}%
  \newcommand*\lineheight[1]{\fontsize{\fsize}{#1\fsize}\selectfont}%
  \ifx\svgwidth\undefined%
    \setlength{\unitlength}{335.34071074bp}%
    \ifx\svgscale\undefined%
      \relax%
    \else%
      \setlength{\unitlength}{\unitlength * \real{\svgscale}}%
    \fi%
  \else%
    \setlength{\unitlength}{\svgwidth}%
  \fi%
  \global\let\svgwidth\undefined%
  \global\let\svgscale\undefined%
  \makeatother%
  \begin{picture}(1,1.02305784)%
    \lineheight{1}%
    \setlength\tabcolsep{0pt}%
    \put(0,0){\includegraphics[width=\unitlength,page=1]{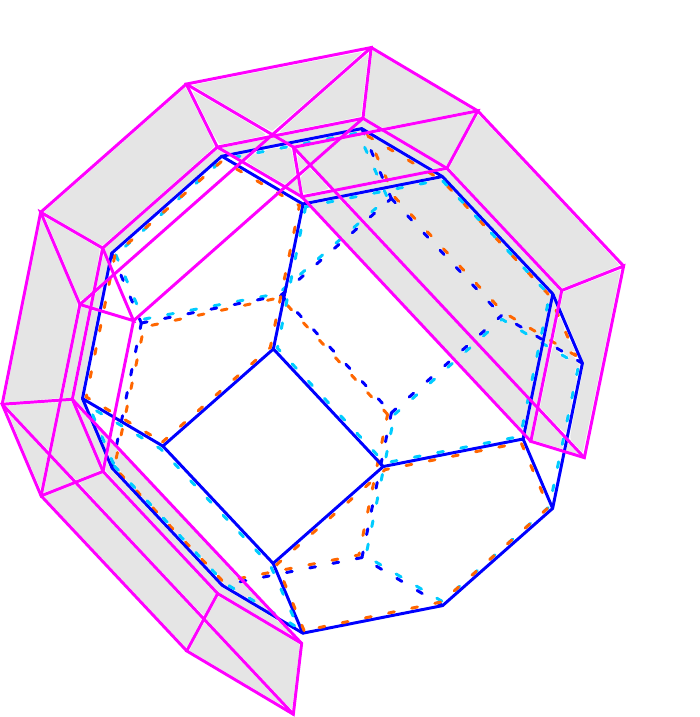}}%
    \put(0.72904939,0.15175196){\color[rgb]{0,0,0}\makebox(0,0)[lt]{\lineheight{1.25}\smash{\begin{tabular}[t]{l}$\mathbf{G}_a$\end{tabular}}}}%
    \put(0.80166425,0.22250457){\color[rgb]{0,0,0}\makebox(0,0)[lt]{\lineheight{1.25}\smash{\begin{tabular}[t]{l}$\mathbf{G}_e$\end{tabular}}}}%
    \put(0.05058598,0.17334579){\color[rgb]{0,0,0}\makebox(0,0)[lt]{\lineheight{1.25}\smash{\begin{tabular}[t]{l}$\mathbf{G}_b$\end{tabular}}}}%
    \put(0.0673205,0.8376632){\color[rgb]{0,0,0}\makebox(0,0)[lt]{\lineheight{1.25}\smash{\begin{tabular}[t]{l}$\mathbf{G}_c$\end{tabular}}}}%
    \put(0.75022556,0.79962327){\color[rgb]{0,0,0}\makebox(0,0)[lt]{\lineheight{1.25}\smash{\begin{tabular}[t]{l}$\mathbf{G}_d$\end{tabular}}}}%
    \put(0.4464506,1.00268744){\color[rgb]{0,0,0}\makebox(0,0)[lt]{\lineheight{1.25}\smash{\begin{tabular}[t]{l}$U^{\conv}$\end{tabular}}}}%
    \put(0.15027413,0.93880679){\color[rgb]{0,0,0}\makebox(0,0)[lt]{\lineheight{1.25}\smash{\begin{tabular}[t]{l}$J$\end{tabular}}}}%
    \put(0,0){\includegraphics[width=\unitlength,page=2]{U_conv_tube.pdf}}%
  \end{picture}%
\endgroup%

  \caption{The convex version $U^{\conv}$ of $U$, where $U$ is parametrized by $a \text{---} b \text{---} c \text{---} d \text{---} e$. All three of the boundary matching tubes $\tilde{\eta}_b$, $\tilde{\eta}_c$, $\tilde{\eta}_d$ in $U$ have been replaced with their convex versions.}
  \label{convex long}
\end{figure}
\begin{lemma}\label{pulling along cycle}
  Let the facet chain $D$, denoted by $a\longline b\longline c\longline d\longline e$, parametrize $U = \{(\Theta_a,\alpha),(\Theta_e,\beta)\}$. Now let $U^{\conv}$ be the convex version of $U$ (see Figure \ref{convex long}). Table \ref{pulling along cycle table}
  \begin{table}
    \centering
    \hspace{-1cm}
    \begin{tabular}{ M{4.5cm} m{3cm} m{3cm} m{3cm} }
\makecell{Change\\ $\hexchangeDots{$c_b$}{$d_b$}{$a_b$}$ \raisebox{0.8cm}{or} $\hexchangeDots{$c_d$}{$b_d$}{$e_d$}$} & \makecell{Twist $g_b$\\ (resp. $g_d$)} & \makecell{Twist $g'_b$\\ (resp. $g'_d$)} & \makecell{Twist $f_b$ (resp.\ $f_d$)} \\
\hexchangeDots{$j$}{$k$}{$i$}& $\boldvarphi_{j-1}\ldots \boldvarphi_{i+1}$ & $\boldvarphi_{k-1}\ldots \boldvarphi_{j+1}$ & $\boldvarphi_{k-1}\ldots\widehat{\boldvarphi_{j}}\ldots\boldvarphi_{i+1}$ \\ 
\hexchangeDots{$j$}{$i$}{$k$}& $\boldvarphi_{j+1}\ldots \boldvarphi_{k-1}$ & $\boldvarphi_{i+1}\ldots \boldvarphi_{j-1}$ & $\boldvarphi_{i+1}\ldots\widehat{\boldvarphi_{j}}\ldots\boldvarphi_{k-1}$ \\
\hexchangeDots{$k$}{$j$}{$i$}& $\boldvarphi_{k-1}\ldots \boldvarphi_{i+1}$ & $\boldvarphi_{j+1}\ldots \boldvarphi_{k-1}$ & $\boldvarphi_{j, k}\boldvarphi_{j-1}\ldots\boldvarphi_{i+1}$ \\
\hexchangeDots{$k$}{$i$}{$j$} & $\boldvarphi_{k-1}\ldots \boldvarphi_{j+1}$ & $\boldvarphi_{i+1}\ldots \boldvarphi_{k-1}$ & $\boldvarphi_{i+1}\ldots\boldvarphi_{j-1}\boldvarphi_{j, k}$ \\
\hexchangeDots{$i$}{$k$}{$j$} &$\boldvarphi_{i+1}\ldots \boldvarphi_{j-1}$ & $\boldvarphi_{k-1}\ldots \boldvarphi_{i+1}$ & $\boldvarphi_{k-1}\ldots\boldvarphi_{j+1}\boldvarphi_{i+1, j + 1}$ \\ 
\hexchangeDots{$i$}{$j$}{$k$} &$\boldvarphi_{i+1}\ldots \boldvarphi_{k-1}$ & $\boldvarphi_{j-1}\ldots \boldvarphi_{i+1}$& $\boldvarphi_{i+1, j+1}\boldvarphi_{j+1}\ldots\boldvarphi_{k-1}$
    \end{tabular}
    \caption{The table referenced in Lemma \ref{pulling along cycle}}
    \label{pulling along cycle table}
  \end{table}
  lists twists $f_b$, $f_d$ associated to either change based on case output $\vcenter{\hbox{\DchangeDash{$a$}{$b$}{$c$}{$d$}}}$, $\vcenter{\hbox{\DchangeDash{$e$}{$d$}{$c$}{$b$}}}$ Then,
  \begin{equation}\label{long to readjusted eq}
    U \cong U^{\conv} \diamond (f_b,\alpha)\diamond (f_d,\beta) + (bc+cd+bd+b+c+d+1).
  \end{equation}
\end{lemma}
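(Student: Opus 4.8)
The plan is to reduce the full five-term chain $D = a \longline b \longline c \longline d \longline e$ to the three-term ``convex'' model by peeling off, one turnaround at a time, exactly as in the proof of Lemma \ref{3-cycle}. First I would recall that $U$ is built as the union of d.s.\ tubes $U = \tilde\eta_b \cup \zeta_{bc} \cup \tilde\eta_c \cup \zeta_{cd} \cup \tilde\eta_d$, and that each $\tilde\eta_r$ is, by Construction \ref{cycle building}, its convex version $\tilde\eta_r^{\conv}$ with a prescribed $\diamond$-composition of $\boldvarphi$-twists added (the twists recorded in Table \ref{pulling along cycle table} as $g_r$, $g'_r$). So the first step is to write $U$ as $U^{\conv}$ with twists $(g_b,\Theta_a)$, $(g_d,\Theta_e)$, and a twist attached to the middle tube $\tilde\eta_c$ that depends on the case at $c$ (i.e.\ whether $c$ is the min, mid, or max of $\{b,c,d\}$).

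The heart of the argument is then the same ``sliding'' maneuver used in Lemma \ref{3-cycle}: I would push the twists attached to $\tilde\eta_c$ out along the Pontrjagin-Thom tubes $\zeta_{bc}$ and $\zeta_{cd}$ toward the two free ends $\Theta_a$, $\Theta_e$, using Proposition \ref{twist through concat} and Proposition \ref{sliding}, together with Example \ref{Vij twists} to re-express the boundary-matching twists $g_r$ as the ``$\beta$-direction'' twists $g'_r$ when needed. Each time a twist is slid past a turnaround or conjugated by a $\tau^\omega$ whose value records the parity $\omega = (\text{index}) + (\text{index}) \bmod 2$, one picks up a full-twist correction $(\omega+1)(\ell-1)$-many times (where $\ell$ is the length of the relevant $\boldvarphi$-run), exactly mimicking the five displayed lines of the Lemma \ref{3-cycle} computation; the Lemma \ref{commutators} identities then collapse the telescoping $\boldvarphi$-products. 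Tracking the signs carefully, the accumulated scalar correction totals $bc + cd + bd + b + c + d + 1$ in $\mathbb{F}_2$, which is precisely the expression appearing in the $3$-cycle formula applied to the middle three indices $b,c,d$. After the slide, the twist on $\tilde\eta_c$ has been cancelled into the twists at $\Theta_a$ and $\Theta_e$, and by Lemma \ref{commutators} those recombine into exactly $f_b$ and $f_d$ as listed in Table \ref{pulling along cycle table}, leaving $U \cong U^{\conv} \diamond (f_b,\Theta_a) \diamond (f_d,\Theta_e)$ up to the scalar $(bc+cd+bd+b+c+d+1)$, which is (\ref{long to readjusted eq}).

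Concretely, I would organize the proof as a case analysis on the position of $c$ relative to $b$ and $d$ (the six rows of Table \ref{pulling along cycle table}), but observe that, just as in Lemma \ref{3-cycle}, the six cases are handled by the same computation with indices relabeled via $i = \min$, $j = \Mid$, $k = \max$; so it suffices to carry out one representative case, say $i \longline j \longline k$ for the triple $(b,c,d)$, in full and remark that the others follow by symmetry (and by the symmetry $U \leftrightarrow$ reversed-$U$, which swaps the roles of $\Theta_a$ and $\Theta_e$, i.e.\ the left and right halves of the table). Within the representative case the manipulations are: (1) express $\tilde\eta_c = \tilde\eta_c^{\conv} \diamond (\boldvarphi\text{-run at } c, \alpha_c)$; (2) slide the $c$-run toward $\Theta_a$ across $\zeta_{bc}$ via Proposition \ref{twist through concat}, absorbing a $\tau^\omega$-conjugation with $\omega = c+b$; (3) use Example \ref{Vij twists} and Lemma \ref{commutators} to merge it with $g_b$, producing $f_b$ plus a correction $(\omega+1)(\,\cdot\,)$; (4) symmetrically slide the remaining $c$-contribution toward $\Theta_e$ across $\zeta_{cd}$ and merge with $g_d$ to get $f_d$; (5) add up the two corrections, which combine with $[K^{\conv}]$-type bookkeeping to give $bc+cd+bd+b+c+d+1$.

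The main obstacle I anticipate is the sign bookkeeping in steps (2)--(5): keeping straight which $\tau^\omega$ conjugations appear (these depend on both the parities of the facet indices and on whether a boundary-matching tube is coherent), and verifying that the Lemma \ref{commutators} rewrites of the telescoping $\boldvarphi$-products leave behind precisely the claimed $f_b$, $f_d$ and precisely the claimed scalar. This is exactly the kind of computation that occupied the bulk of the proof of Lemma \ref{3-cycle}, and I would lean heavily on the template there, presenting the representative case as an \verb|align*| display of successive $\cong$'s with \verb|\pushright| annotations naming the proposition invoked at each line, and then deducing (\ref{long to readjusted eq}) by noting $[K]-[K'] = $ the scalar correction since replacing $U$ by $U'$ inside $K$ changes $[K]$ by exactly the difference of the attached twists, as stated just before the Lemma.
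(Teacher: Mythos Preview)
Your overall strategy is sound and would likely succeed, but it differs from the paper's proof in one key structural way. You propose to slide the twists on $\tilde\eta_c$ directly outward to the two ends, redoing a Lemma~\ref{3-cycle}-style computation inside the 5-chain and tracking the $\tau^\omega$-conjugations to accumulate the scalar $bc+cd+bd+b+c+d+1$ by hand. The paper instead invokes Lemma~\ref{3-cycle} as a black box: it builds an auxiliary contrived 3-cycle $K$ parametrized by $b\longline c\longline d\longline b$ that \emph{shares} the middle segment $V_1 = \zeta_{bc}\cup\tilde\eta_c\cup\zeta_{cd}$ with $U$, and splits $K = V_1 \cup V_2$ where $V_2$ is the complementary arc through $\mathbf{G}_{bd}$. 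Since $[K]-[K^{\conv}] = bc+cd+bd+b+c+d+1$ is already known, this immediately yields $V_1 - V_1^{\conv} \cong V_2 - V_2^{\conv} + (bc+cd+bd+b+c+d+1)$, so the scalar appears for free. The remaining work is then to replace $V_1$ by $V_2$ inside $U$, push the resulting twists (now sitting in $\tilde\eta'_b$, $\tilde\eta'_d$) through to the ends via Proposition~\ref{twist through concat}, and check they combine with $g_b,g_d$ into $f_b,f_d$.

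What each approach buys: the paper's slicing trick localizes all the delicate $\tau^\omega$ bookkeeping into the already-proved 3-cycle lemma, so the remaining manipulations are purely about merging $\boldvarphi$-products at the ends (no new scalar corrections arise). Your direct-sliding approach is conceptually simpler to describe but forces you to reproduce the 3-cycle sign computation in situ, which is exactly the ``main obstacle'' you flag. Both are correct; the paper's is shorter and less error-prone because it reuses rather than reproves.
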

See Figure \ref{long to readjusted fig} for an illustration of the two homotopic tubes in Equation (\ref{long to readjusted eq}).
\begin{figure}
  \begin{tabular}{ m{6.3cm} m{1cm} m{6.5cm} }
      \def\svgscale{0.55}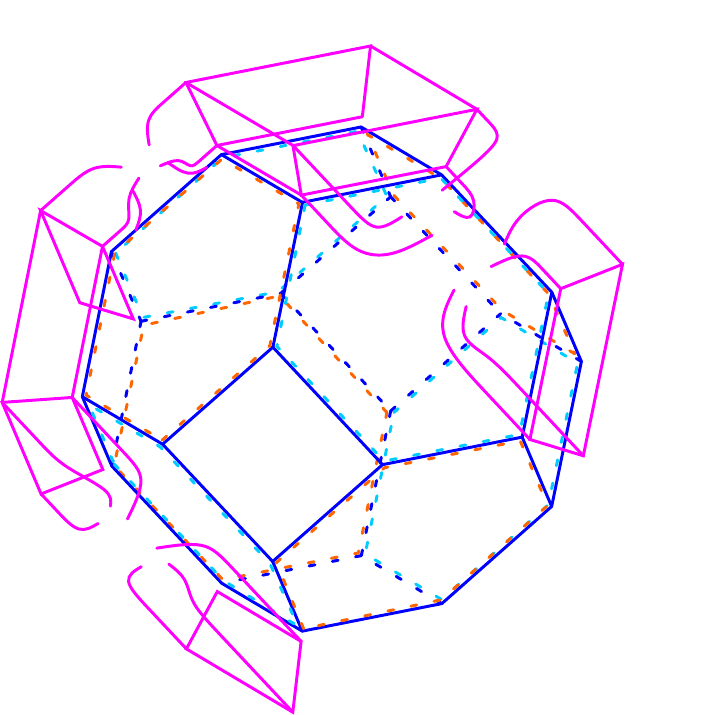 & $\cong$ & \def\svgscale{0.55}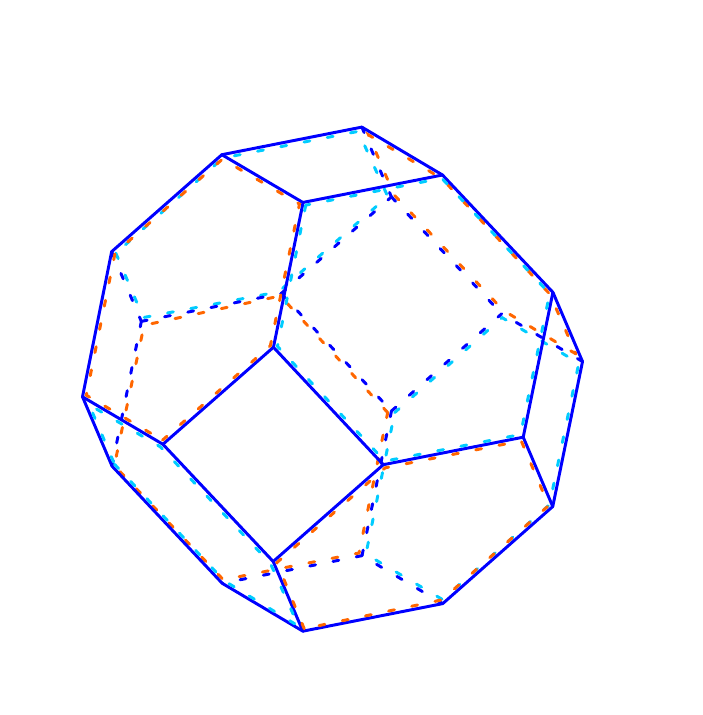
    \end{tabular}
  \caption{Left: the tube $U$. Right: the tube $U^{\conv}\diamond (f_b,\alpha)\diamond (f_d,\beta)$, which is homotopic to $U$. The twists in the darkly shaded part of $U$ get redistributed, while the twists in the lightly shaded part stay the same.}
  \label{long to readjusted fig}
\end{figure}
We view this lemma as a way to take $U$ and ``straighten'' the middle boundary matching component $\tilde{\eta}_c$, working the twists to either end.
\begin{proof}
  We denote
  \[
    U = \tilde{\eta}_b\cup\zeta_{bc}\cup\tilde{\eta}_c\cup\zeta_{cd}\cup\tilde{\eta}_d,\qquad U^{\conv} = \tilde{\eta}^{\conv}_b\cup\zeta_{bc}\cup\tilde{\eta}^{\conv}_c\cup\zeta_{cd}\cup\tilde{\eta}^{\conv}_d.
  \]
  Observe that $U$ occupies $\mathbf{G}_b\cup \mathbf{G}_{bc}\cup \mathbf{G}_c\cup \mathbf{G}_{cd} \cup \mathbf{G}_d$. Also occupying these facets is any $3$-cycle $K$  parametrized by $b\longline c\longline d\longline b$. In this spirit, we fix such a $3$-cycle $K = \zeta_{bc}\cup \tilde{\eta}_{c}\cup \zeta_{cd} \cup \tilde{\eta}'_{d}\cup \zeta_{bd}\cup \tilde{\eta}'_b$, where note the tubes $\zeta_{bc},\tilde{\eta}_{c},\zeta_{cd}\subset K$ are the same as in $U$. The tube $\tilde{\eta}'_{b}$ (resp. $\tilde{\eta}'_{d}$) is, however, a new tube that straddles faces $\mathbf{G}_{c}$, $\mathbf{G}_{d}$ (resp. $\mathbf{G}_{b}$, $\mathbf{G}_{c}$ ). We first split $K$ up into tubes $V_1 := \zeta_{bc}\cup \tilde{\eta}_{c}\cup \zeta_{cd}$, $V_2 := \tilde{\eta}'_{d}\cup \zeta_{bd}\cup \tilde{\eta}'_b$ and compute
  \begin{equation}\label{slice equation}
    V_1- V_1^{\conv} \cong V_2 - V_2^{\conv} + (bc+cd+bd+b+c+d+1)
  \end{equation}
  (see Figure \ref{slice comparison}).
  \begin{figure}
    \begingroup
      \renewcommand{\arraystretch}{6}:
    \begin{tabular}{ m{0.3em} m{5.0cm} m{0.3cm} m{5.2cm} m{3cm} }
      &\def\svgscale{0.48}%% Creator: Inkscape 1.4.2 (ebf0e940d0, 2025-05-08), www.inkscape.org
%% PDF/EPS/PS + LaTeX output extension by Johan Engelen, 2010
%% Accompanies image file 'top_slice.pdf' (pdf, eps, ps)
%%
%% To include the image in your LaTeX document, write
%%   \input{<filename>.pdf_tex}
%%  instead of
%%   \includegraphics{<filename>.pdf}
%% To scale the image, write
%%   \def\svgwidth{<desired width>}
%%   \input{<filename>.pdf_tex}
%%  instead of
%%   \includegraphics[width=<desired width>]{<filename>.pdf}
%%
%% Images with a different path to the parent latex file can
%% be accessed with the `import' package (which may need to be
%% installed) using
%%   \usepackage{import}
%% in the preamble, and then including the image with
%%   \import{<path to file>}{<filename>.pdf_tex}
%% Alternatively, one can specify
%%   \graphicspath{{<path to file>/}}
%% 
%% For more information, please see info/svg-inkscape on CTAN:
%%   http://tug.ctan.org/tex-archive/info/svg-inkscape
%%
\begingroup%
  \makeatletter%
  \providecommand\color[2][]{%
    \errmessage{(Inkscape) Color is used for the text in Inkscape, but the package 'color.sty' is not loaded}%
    \renewcommand\color[2][]{}%
  }%
  \providecommand\transparent[1]{%
    \errmessage{(Inkscape) Transparency is used (non-zero) for the text in Inkscape, but the package 'transparent.sty' is not loaded}%
    \renewcommand\transparent[1]{}%
  }%
  \providecommand\rotatebox[2]{#2}%
  \newcommand*\fsize{\dimexpr\f@size pt\relax}%
  \newcommand*\lineheight[1]{\fontsize{\fsize}{#1\fsize}\selectfont}%
  \ifx\svgwidth\undefined%
    \setlength{\unitlength}{307.72780771bp}%
    \ifx\svgscale\undefined%
      \relax%
    \else%
      \setlength{\unitlength}{\unitlength * \real{\svgscale}}%
    \fi%
  \else%
    \setlength{\unitlength}{\svgwidth}%
  \fi%
  \global\let\svgwidth\undefined%
  \global\let\svgscale\undefined%
  \makeatother%
  \begin{picture}(1,1.14331327)%
    \lineheight{1}%
    \setlength\tabcolsep{0pt}%
    \put(0,0){\includegraphics[width=\unitlength,page=1]{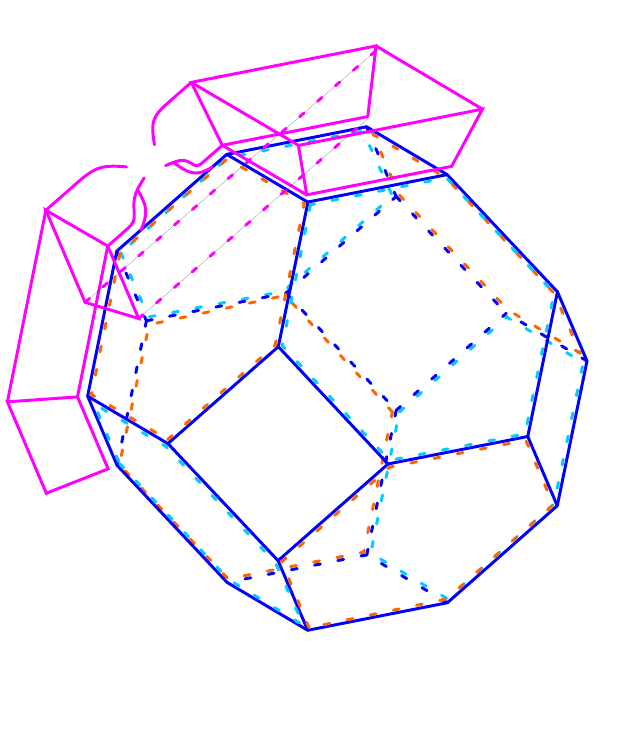}}%
    \put(0.14181735,0.27584869){\color[rgb]{0,0,0}\makebox(0,0)[lt]{\lineheight{1.25}\smash{\begin{tabular}[t]{l}$\mathbf{G}_b$\end{tabular}}}}%
    \put(0.07718831,0.95103174){\color[rgb]{0,0,0}\makebox(0,0)[lt]{\lineheight{1.25}\smash{\begin{tabular}[t]{l}$\mathbf{G}_c$\end{tabular}}}}%
    \put(0.78237585,0.83646209){\color[rgb]{0,0,0}\makebox(0,0)[lt]{\lineheight{1.25}\smash{\begin{tabular}[t]{l}$\mathbf{G}_d$\end{tabular}}}}%
    \put(0.45134226,1.12111511){\color[rgb]{0,0,0}\makebox(0,0)[lt]{\lineheight{1.25}\smash{\begin{tabular}[t]{l}$U$\end{tabular}}}}%
    \put(0,0){\includegraphics[width=\unitlength,page=2]{top_slice.pdf}}%
  \end{picture}%
\endgroup%
 & \raisebox{0.5cm}{$-$} & \def\svgscale{0.48}%% Creator: Inkscape 1.4.2 (ebf0e940d0, 2025-05-08), www.inkscape.org
%% PDF/EPS/PS + LaTeX output extension by Johan Engelen, 2010
%% Accompanies image file 'top_slice_convex.pdf' (pdf, eps, ps)
%%
%% To include the image in your LaTeX document, write
%%   \input{<filename>.pdf_tex}
%%  instead of
%%   \includegraphics{<filename>.pdf}
%% To scale the image, write
%%   \def\svgwidth{<desired width>}
%%   \input{<filename>.pdf_tex}
%%  instead of
%%   \includegraphics[width=<desired width>]{<filename>.pdf}
%%
%% Images with a different path to the parent latex file can
%% be accessed with the `import' package (which may need to be
%% installed) using
%%   \usepackage{import}
%% in the preamble, and then including the image with
%%   \import{<path to file>}{<filename>.pdf_tex}
%% Alternatively, one can specify
%%   \graphicspath{{<path to file>/}}
%% 
%% For more information, please see info/svg-inkscape on CTAN:
%%   http://tug.ctan.org/tex-archive/info/svg-inkscape
%%
\begingroup%
  \makeatletter%
  \providecommand\color[2][]{%
    \errmessage{(Inkscape) Color is used for the text in Inkscape, but the package 'color.sty' is not loaded}%
    \renewcommand\color[2][]{}%
  }%
  \providecommand\transparent[1]{%
    \errmessage{(Inkscape) Transparency is used (non-zero) for the text in Inkscape, but the package 'transparent.sty' is not loaded}%
    \renewcommand\transparent[1]{}%
  }%
  \providecommand\rotatebox[2]{#2}%
  \newcommand*\fsize{\dimexpr\f@size pt\relax}%
  \newcommand*\lineheight[1]{\fontsize{\fsize}{#1\fsize}\selectfont}%
  \ifx\svgwidth\undefined%
    \setlength{\unitlength}{309.55004594bp}%
    \ifx\svgscale\undefined%
      \relax%
    \else%
      \setlength{\unitlength}{\unitlength * \real{\svgscale}}%
    \fi%
  \else%
    \setlength{\unitlength}{\svgwidth}%
  \fi%
  \global\let\svgwidth\undefined%
  \global\let\svgscale\undefined%
  \makeatother%
  \begin{picture}(1,0.98146496)%
    \lineheight{1}%
    \setlength\tabcolsep{0pt}%
    \put(0,0){\includegraphics[width=\unitlength,page=1]{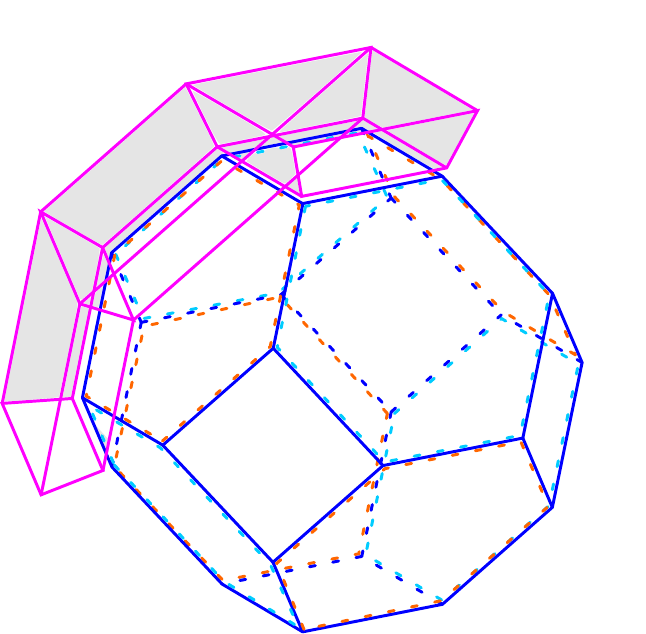}}%
    \put(0.14202279,0.11910658){\color[rgb]{0,0,0}\makebox(0,0)[lt]{\lineheight{1.25}\smash{\begin{tabular}[t]{l}$\mathbf{G}_b$\end{tabular}}}}%
    \put(0.07292937,0.78062395){\color[rgb]{0,0,0}\makebox(0,0)[lt]{\lineheight{1.25}\smash{\begin{tabular}[t]{l}$\mathbf{G}_c$\end{tabular}}}}%
    \put(0.78365683,0.69580293){\color[rgb]{0,0,0}\makebox(0,0)[lt]{\lineheight{1.25}\smash{\begin{tabular}[t]{l}$\mathbf{G}_d$\end{tabular}}}}%
    \put(0.48364734,0.95939748){\color[rgb]{0,0,0}\makebox(0,0)[lt]{\lineheight{1.25}\smash{\begin{tabular}[t]{l}$U^{\conv}$\end{tabular}}}}%
    \put(0,0){\includegraphics[width=\unitlength,page=2]{top_slice_convex.pdf}}%
  \end{picture}%
\endgroup%
& \\
      \raisebox{0.5cm}{$=$} &\def\svgscale{0.48}%% Creator: Inkscape 1.4.2 (ebf0e940d0, 2025-05-08), www.inkscape.org
%% PDF/EPS/PS + LaTeX output extension by Johan Engelen, 2010
%% Accompanies image file 'bottom_slice.pdf' (pdf, eps, ps)
%%
%% To include the image in your LaTeX document, write
%%   \input{<filename>.pdf_tex}
%%  instead of
%%   \includegraphics{<filename>.pdf}
%% To scale the image, write
%%   \def\svgwidth{<desired width>}
%%   \input{<filename>.pdf_tex}
%%  instead of
%%   \includegraphics[width=<desired width>]{<filename>.pdf}
%%
%% Images with a different path to the parent latex file can
%% be accessed with the `import' package (which may need to be
%% installed) using
%%   \usepackage{import}
%% in the preamble, and then including the image with
%%   \import{<path to file>}{<filename>.pdf_tex}
%% Alternatively, one can specify
%%   \graphicspath{{<path to file>/}}
%% 
%% For more information, please see info/svg-inkscape on CTAN:
%%   http://tug.ctan.org/tex-archive/info/svg-inkscape
%%
\begingroup%
  \makeatletter%
  \providecommand\color[2][]{%
    \errmessage{(Inkscape) Color is used for the text in Inkscape, but the package 'color.sty' is not loaded}%
    \renewcommand\color[2][]{}%
  }%
  \providecommand\transparent[1]{%
    \errmessage{(Inkscape) Transparency is used (non-zero) for the text in Inkscape, but the package 'transparent.sty' is not loaded}%
    \renewcommand\transparent[1]{}%
  }%
  \providecommand\rotatebox[2]{#2}%
  \newcommand*\fsize{\dimexpr\f@size pt\relax}%
  \newcommand*\lineheight[1]{\fontsize{\fsize}{#1\fsize}\selectfont}%
  \ifx\svgwidth\undefined%
    \setlength{\unitlength}{326.97745496bp}%
    \ifx\svgscale\undefined%
      \relax%
    \else%
      \setlength{\unitlength}{\unitlength * \real{\svgscale}}%
    \fi%
  \else%
    \setlength{\unitlength}{\svgwidth}%
  \fi%
  \global\let\svgwidth\undefined%
  \global\let\svgscale\undefined%
  \makeatother%
  \begin{picture}(1,0.84057733)%
    \lineheight{1}%
    \setlength\tabcolsep{0pt}%
    \put(0,0){\includegraphics[width=\unitlength,page=1]{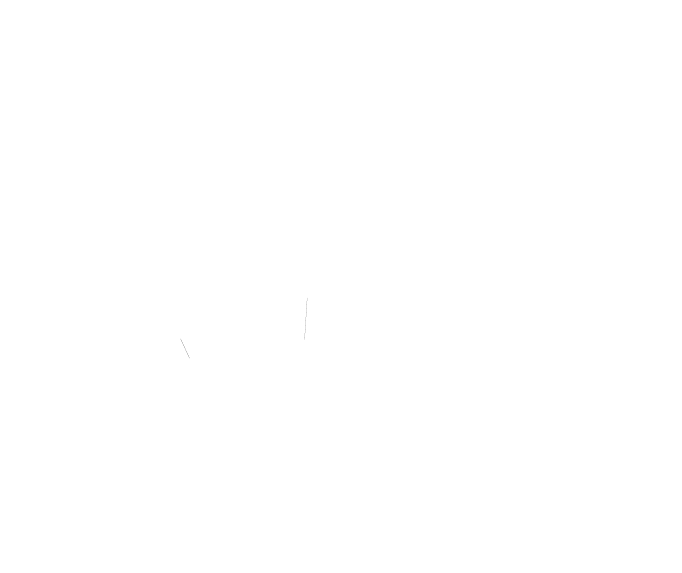}}%
    \put(0.27430711,0.81965852){\color[rgb]{0,0,0}\makebox(0,0)[lt]{\lineheight{1.25}\smash{\begin{tabular}[t]{l}$\tilde{\eta}_b\cup V_1 \cup \tilde{\eta}_d$\end{tabular}}}}%
    \put(0,0){\includegraphics[width=\unitlength,page=2]{bottom_slice.pdf}}%
    \put(0.02127368,0.04563727){\color[rgb]{0,0,0}\makebox(0,0)[lt]{\lineheight{1.25}\smash{\begin{tabular}[t]{l}$\mathbf{G}_b$\end{tabular}}}}%
    \put(0.11144139,0.63809659){\color[rgb]{0,0,0}\makebox(0,0)[lt]{\lineheight{1.25}\smash{\begin{tabular}[t]{l}$\mathbf{G}_c$\end{tabular}}}}%
    \put(0.79518767,0.66247301){\color[rgb]{0,0,0}\makebox(0,0)[lt]{\lineheight{1.25}\smash{\begin{tabular}[t]{l}$\mathbf{G}_d$\end{tabular}}}}%
    \put(0,0){\includegraphics[width=\unitlength,page=3]{bottom_slice.pdf}}%
  \end{picture}%
\endgroup%
 & \raisebox{0.5cm}{$-$} & \def\svgscale{0.48}%% Creator: Inkscape 1.4.2 (ebf0e940d0, 2025-05-08), www.inkscape.org
%% PDF/EPS/PS + LaTeX output extension by Johan Engelen, 2010
%% Accompanies image file 'bottom_slice_convex.pdf' (pdf, eps, ps)
%%
%% To include the image in your LaTeX document, write
%%   \input{<filename>.pdf_tex}
%%  instead of
%%   \includegraphics{<filename>.pdf}
%% To scale the image, write
%%   \def\svgwidth{<desired width>}
%%   \input{<filename>.pdf_tex}
%%  instead of
%%   \includegraphics[width=<desired width>]{<filename>.pdf}
%%
%% Images with a different path to the parent latex file can
%% be accessed with the `import' package (which may need to be
%% installed) using
%%   \usepackage{import}
%% in the preamble, and then including the image with
%%   \import{<path to file>}{<filename>.pdf_tex}
%% Alternatively, one can specify
%%   \graphicspath{{<path to file>/}}
%% 
%% For more information, please see info/svg-inkscape on CTAN:
%%   http://tug.ctan.org/tex-archive/info/svg-inkscape
%%
\begingroup%
  \makeatletter%
  \providecommand\color[2][]{%
    \errmessage{(Inkscape) Color is used for the text in Inkscape, but the package 'color.sty' is not loaded}%
    \renewcommand\color[2][]{}%
  }%
  \providecommand\transparent[1]{%
    \errmessage{(Inkscape) Transparency is used (non-zero) for the text in Inkscape, but the package 'transparent.sty' is not loaded}%
    \renewcommand\transparent[1]{}%
  }%
  \providecommand\rotatebox[2]{#2}%
  \newcommand*\fsize{\dimexpr\f@size pt\relax}%
  \newcommand*\lineheight[1]{\fontsize{\fsize}{#1\fsize}\selectfont}%
  \ifx\svgwidth\undefined%
    \setlength{\unitlength}{319.11184957bp}%
    \ifx\svgscale\undefined%
      \relax%
    \else%
      \setlength{\unitlength}{\unitlength * \real{\svgscale}}%
    \fi%
  \else%
    \setlength{\unitlength}{\svgwidth}%
  \fi%
  \global\let\svgwidth\undefined%
  \global\let\svgscale\undefined%
  \makeatother%
  \begin{picture}(1,0.86291272)%
    \lineheight{1}%
    \setlength\tabcolsep{0pt}%
    \put(0,0){\includegraphics[width=\unitlength,page=1]{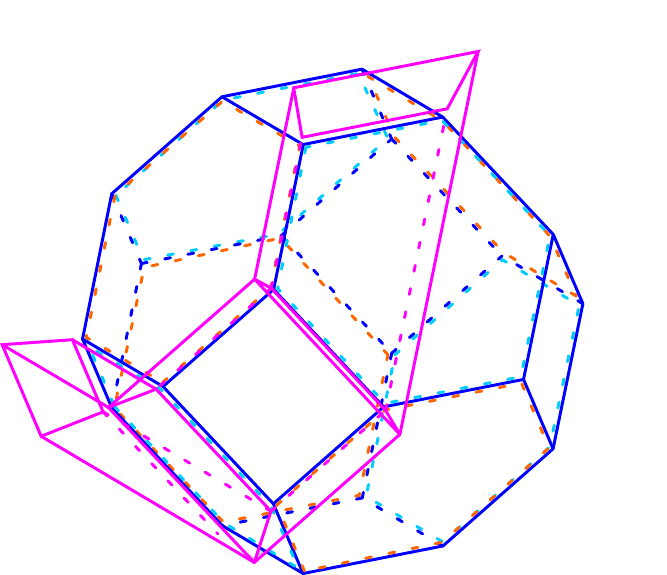}}%
    \put(0.04929368,0.04695192){\color[rgb]{0,0,0}\makebox(0,0)[lt]{\lineheight{1.25}\smash{\begin{tabular}[t]{l}$\mathbf{G}_b$\end{tabular}}}}%
    \put(0.13604205,0.68032258){\color[rgb]{0,0,0}\makebox(0,0)[lt]{\lineheight{1.25}\smash{\begin{tabular}[t]{l}$\mathbf{G}_c$\end{tabular}}}}%
    \put(0.79013908,0.66893022){\color[rgb]{0,0,0}\makebox(0,0)[lt]{\lineheight{1.25}\smash{\begin{tabular}[t]{l}$\mathbf{G}_d$\end{tabular}}}}%
    \put(0,0){\includegraphics[width=\unitlength,page=2]{bottom_slice_convex.pdf}}%
    \put(0.39806439,0.84147843){\color[rgb]{0,0,0}\makebox(0,0)[lt]{\lineheight{1.25}\smash{\begin{tabular}[t]{l}$T_b\cup V^{\conv}\cup T_d$\end{tabular}}}}%
    \put(0,0){\includegraphics[width=\unitlength,page=3]{bottom_slice_convex.pdf}}%
  \end{picture}%
\endgroup%
 & \raisebox{0.5cm}{$+\left(\begin{gathered}bc+cd+bd\\+b+c+d\\+1\end{gathered}\right)$} \\
    \end{tabular}
    \endgroup
    \caption{The tubes on the left combine to form a (contrived) $3$-cycle $K$, while the tubes on the right combine to form $K^{\conv}$. By Lemma \ref{3-cycle}, the difference of the top tubes is indeed the difference of the bottom tubes plus $(bc+cd+bd+b+c+d+1)$.}
    \label{slice comparison}
  \end{figure}
  To lighten notation, we define $T_r:=\tilde{\eta}_r^{\conv}$, $T'_r:=\tilde{\eta}_r'^{\conv}$ for general $r$. We use Equation \ref{slice equation} to compute $U-U^{\conv}$, with Figure \ref{differences visual aid} as a visual aid.
    \begin{figure}
      \begingroup
      \renewcommand{\arraystretch}{4}:
    \begin{tabular}{ p{0.1em} m{5.2cm} p{0.3cm} m{5.1cm} p{3cm}}
      &\def\svgscale{0.44}%% Creator: Inkscape 1.4.2 (ebf0e940d0, 2025-05-08), www.inkscape.org
%% PDF/EPS/PS + LaTeX output extension by Johan Engelen, 2010
%% Accompanies image file 'U_tube_unlabeled.pdf' (pdf, eps, ps)
%%
%% To include the image in your LaTeX document, write
%%   \input{<filename>.pdf_tex}
%%  instead of
%%   \includegraphics{<filename>.pdf}
%% To scale the image, write
%%   \def\svgwidth{<desired width>}
%%   \input{<filename>.pdf_tex}
%%  instead of
%%   \includegraphics[width=<desired width>]{<filename>.pdf}
%%
%% Images with a different path to the parent latex file can
%% be accessed with the `import' package (which may need to be
%% installed) using
%%   \usepackage{import}
%% in the preamble, and then including the image with
%%   \import{<path to file>}{<filename>.pdf_tex}
%% Alternatively, one can specify
%%   \graphicspath{{<path to file>/}}
%% 
%% For more information, please see info/svg-inkscape on CTAN:
%%   http://tug.ctan.org/tex-archive/info/svg-inkscape
%%
\begingroup%
  \makeatletter%
  \providecommand\color[2][]{%
    \errmessage{(Inkscape) Color is used for the text in Inkscape, but the package 'color.sty' is not loaded}%
    \renewcommand\color[2][]{}%
  }%
  \providecommand\transparent[1]{%
    \errmessage{(Inkscape) Transparency is used (non-zero) for the text in Inkscape, but the package 'transparent.sty' is not loaded}%
    \renewcommand\transparent[1]{}%
  }%
  \providecommand\rotatebox[2]{#2}%
  \newcommand*\fsize{\dimexpr\f@size pt\relax}%
  \newcommand*\lineheight[1]{\fontsize{\fsize}{#1\fsize}\selectfont}%
  \ifx\svgwidth\undefined%
    \setlength{\unitlength}{342.92981966bp}%
    \ifx\svgscale\undefined%
      \relax%
    \else%
      \setlength{\unitlength}{\unitlength * \real{\svgscale}}%
    \fi%
  \else%
    \setlength{\unitlength}{\svgwidth}%
  \fi%
  \global\let\svgwidth\undefined%
  \global\let\svgscale\undefined%
  \makeatother%
  \begin{picture}(1,0.99829184)%
    \lineheight{1}%
    \setlength\tabcolsep{0pt}%
    \put(0,0){\includegraphics[width=\unitlength,page=1]{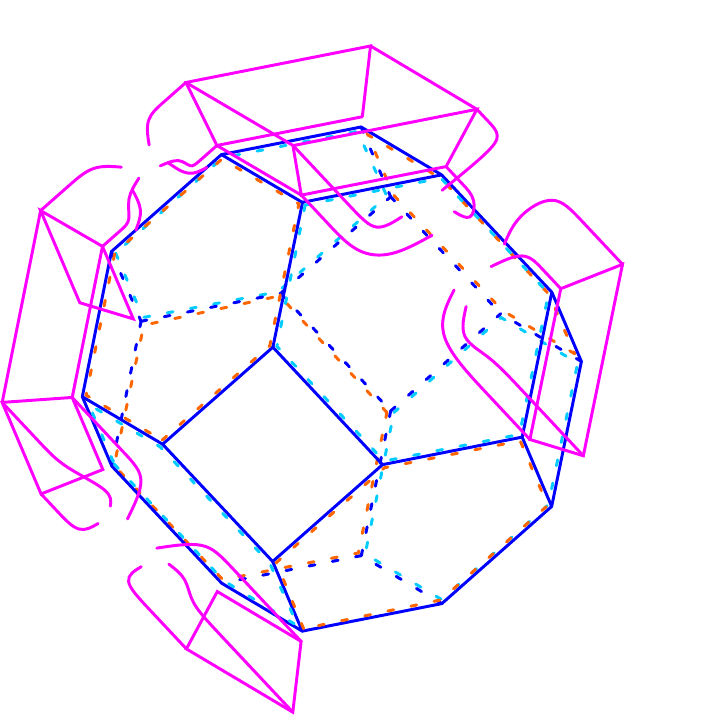}}%
    \put(0.04120855,0.16738459){\color[rgb]{0,0,0}\makebox(0,0)[lt]{\lineheight{1.25}\smash{\begin{tabular}[t]{l}$\mathbf{G}_b$\end{tabular}}}}%
    \put(0.03201579,0.80932579){\color[rgb]{0,0,0}\makebox(0,0)[lt]{\lineheight{1.25}\smash{\begin{tabular}[t]{l}$\mathbf{G}_c$\end{tabular}}}}%
    \put(0.8047152,0.72251512){\color[rgb]{0,0,0}\makebox(0,0)[lt]{\lineheight{1.25}\smash{\begin{tabular}[t]{l}$\mathbf{G}_d$\end{tabular}}}}%
    \put(0.3976939,0.97837234){\color[rgb]{0,0,0}\makebox(0,0)[lt]{\lineheight{1.25}\smash{\begin{tabular}[t]{l}$U$\end{tabular}}}}%
    \put(0,0){\includegraphics[width=\unitlength,page=2]{U_tube_unlabeled.pdf}}%
  \end{picture}%
\endgroup%
 & \raisebox{0.5cm}{$-$} & \def\svgscale{0.44}%% Creator: Inkscape 1.4.2 (ebf0e940d0, 2025-05-08), www.inkscape.org
%% PDF/EPS/PS + LaTeX output extension by Johan Engelen, 2010
%% Accompanies image file 'U_conv_tube_unlabeled.pdf' (pdf, eps, ps)
%%
%% To include the image in your LaTeX document, write
%%   \input{<filename>.pdf_tex}
%%  instead of
%%   \includegraphics{<filename>.pdf}
%% To scale the image, write
%%   \def\svgwidth{<desired width>}
%%   \input{<filename>.pdf_tex}
%%  instead of
%%   \includegraphics[width=<desired width>]{<filename>.pdf}
%%
%% Images with a different path to the parent latex file can
%% be accessed with the `import' package (which may need to be
%% installed) using
%%   \usepackage{import}
%% in the preamble, and then including the image with
%%   \import{<path to file>}{<filename>.pdf_tex}
%% Alternatively, one can specify
%%   \graphicspath{{<path to file>/}}
%% 
%% For more information, please see info/svg-inkscape on CTAN:
%%   http://tug.ctan.org/tex-archive/info/svg-inkscape
%%
\begingroup%
  \makeatletter%
  \providecommand\color[2][]{%
    \errmessage{(Inkscape) Color is used for the text in Inkscape, but the package 'color.sty' is not loaded}%
    \renewcommand\color[2][]{}%
  }%
  \providecommand\transparent[1]{%
    \errmessage{(Inkscape) Transparency is used (non-zero) for the text in Inkscape, but the package 'transparent.sty' is not loaded}%
    \renewcommand\transparent[1]{}%
  }%
  \providecommand\rotatebox[2]{#2}%
  \newcommand*\fsize{\dimexpr\f@size pt\relax}%
  \newcommand*\lineheight[1]{\fontsize{\fsize}{#1\fsize}\selectfont}%
  \ifx\svgwidth\undefined%
    \setlength{\unitlength}{318.55027086bp}%
    \ifx\svgscale\undefined%
      \relax%
    \else%
      \setlength{\unitlength}{\unitlength * \real{\svgscale}}%
    \fi%
  \else%
    \setlength{\unitlength}{\svgwidth}%
  \fi%
  \global\let\svgwidth\undefined%
  \global\let\svgscale\undefined%
  \makeatother%
  \begin{picture}(1,1.07698212)%
    \lineheight{1}%
    \setlength\tabcolsep{0pt}%
    \put(0,0){\includegraphics[width=\unitlength,page=1]{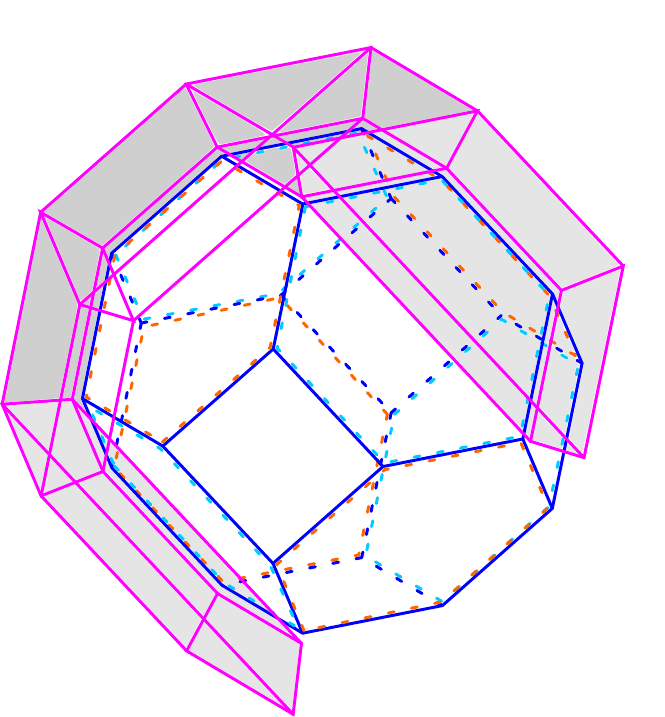}}%
    \put(0.05325267,0.18248297){\color[rgb]{0,0,0}\makebox(0,0)[lt]{\lineheight{1.25}\smash{\begin{tabular}[t]{l}$\mathbf{G}_b$\end{tabular}}}}%
    \put(0.08970486,0.87710679){\color[rgb]{0,0,0}\makebox(0,0)[lt]{\lineheight{1.25}\smash{\begin{tabular}[t]{l}$\mathbf{G}_c$\end{tabular}}}}%
    \put(0.78976945,0.84177086){\color[rgb]{0,0,0}\makebox(0,0)[lt]{\lineheight{1.25}\smash{\begin{tabular}[t]{l}$\mathbf{G}_d$\end{tabular}}}}%
    \put(0.46998264,1.05553819){\color[rgb]{0,0,0}\makebox(0,0)[lt]{\lineheight{1.25}\smash{\begin{tabular}[t]{l}$U^{\conv}$\end{tabular}}}}%
  \end{picture}%
\endgroup%
&\\
  \raisebox{0.5cm}{$\cong$}&\def\svgscale{0.44}%% Creator: Inkscape 1.4.2 (ebf0e940d0, 2025-05-08), www.inkscape.org
%% PDF/EPS/PS + LaTeX output extension by Johan Engelen, 2010
%% Accompanies image file 'U_tube_mirrored.pdf' (pdf, eps, ps)
%%
%% To include the image in your LaTeX document, write
%%   \input{<filename>.pdf_tex}
%%  instead of
%%   \includegraphics{<filename>.pdf}
%% To scale the image, write
%%   \def\svgwidth{<desired width>}
%%   \input{<filename>.pdf_tex}
%%  instead of
%%   \includegraphics[width=<desired width>]{<filename>.pdf}
%%
%% Images with a different path to the parent latex file can
%% be accessed with the `import' package (which may need to be
%% installed) using
%%   \usepackage{import}
%% in the preamble, and then including the image with
%%   \import{<path to file>}{<filename>.pdf_tex}
%% Alternatively, one can specify
%%   \graphicspath{{<path to file>/}}
%% 
%% For more information, please see info/svg-inkscape on CTAN:
%%   http://tug.ctan.org/tex-archive/info/svg-inkscape
%%
\begingroup%
  \makeatletter%
  \providecommand\color[2][]{%
    \errmessage{(Inkscape) Color is used for the text in Inkscape, but the package 'color.sty' is not loaded}%
    \renewcommand\color[2][]{}%
  }%
  \providecommand\transparent[1]{%
    \errmessage{(Inkscape) Transparency is used (non-zero) for the text in Inkscape, but the package 'transparent.sty' is not loaded}%
    \renewcommand\transparent[1]{}%
  }%
  \providecommand\rotatebox[2]{#2}%
  \newcommand*\fsize{\dimexpr\f@size pt\relax}%
  \newcommand*\lineheight[1]{\fontsize{\fsize}{#1\fsize}\selectfont}%
  \ifx\svgwidth\undefined%
    \setlength{\unitlength}{368.39948038bp}%
    \ifx\svgscale\undefined%
      \relax%
    \else%
      \setlength{\unitlength}{\unitlength * \real{\svgscale}}%
    \fi%
  \else%
    \setlength{\unitlength}{\svgwidth}%
  \fi%
  \global\let\svgwidth\undefined%
  \global\let\svgscale\undefined%
  \makeatother%
  \begin{picture}(1,0.86252518)%
    \lineheight{1}%
    \setlength\tabcolsep{0pt}%
    \put(0,0){\includegraphics[width=\unitlength,page=1]{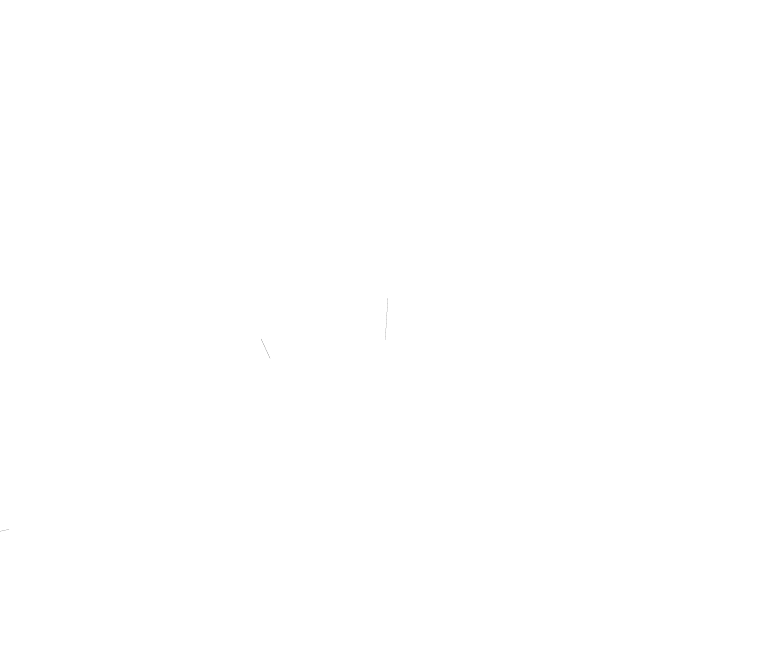}}%
    \put(0.34779387,0.84395837){\color[rgb]{0,0,0}\makebox(0,0)[lt]{\lineheight{1.25}\smash{\begin{tabular}[t]{l}$\tilde{\eta}_b\cup V_1 \cup \tilde{\eta}_d$\end{tabular}}}}%
    \put(0,0){\includegraphics[width=\unitlength,page=2]{U_tube_mirrored.pdf}}%
    \put(0.12317599,0.14707692){\color[rgb]{0,0,0}\makebox(0,0)[lt]{\lineheight{1.25}\smash{\begin{tabular}[t]{l}$\mathbf{G}_b$\end{tabular}}}}%
    \put(0.20320544,0.67292084){\color[rgb]{0,0,0}\makebox(0,0)[lt]{\lineheight{1.25}\smash{\begin{tabular}[t]{l}$\mathbf{G}_c$\end{tabular}}}}%
    \put(0.81821624,0.72712932){\color[rgb]{0,0,0}\makebox(0,0)[lt]{\lineheight{1.25}\smash{\begin{tabular}[t]{l}$\mathbf{G}_d$\end{tabular}}}}%
    \put(0,0){\includegraphics[width=\unitlength,page=3]{U_tube_mirrored.pdf}}%
  \end{picture}%
\endgroup%
& \raisebox{0.5cm}{$-$}& \def\svgscale{0.44}%% Creator: Inkscape 1.4.2 (ebf0e940d0, 2025-05-08), www.inkscape.org
%% PDF/EPS/PS + LaTeX output extension by Johan Engelen, 2010
%% Accompanies image file 'mirrored_long_straight_tube.pdf' (pdf, eps, ps)
%%
%% To include the image in your LaTeX document, write
%%   \input{<filename>.pdf_tex}
%%  instead of
%%   \includegraphics{<filename>.pdf}
%% To scale the image, write
%%   \def\svgwidth{<desired width>}
%%   \input{<filename>.pdf_tex}
%%  instead of
%%   \includegraphics[width=<desired width>]{<filename>.pdf}
%%
%% Images with a different path to the parent latex file can
%% be accessed with the `import' package (which may need to be
%% installed) using
%%   \usepackage{import}
%% in the preamble, and then including the image with
%%   \import{<path to file>}{<filename>.pdf_tex}
%% Alternatively, one can specify
%%   \graphicspath{{<path to file>/}}
%% 
%% For more information, please see info/svg-inkscape on CTAN:
%%   http://tug.ctan.org/tex-archive/info/svg-inkscape
%%
\begingroup%
  \makeatletter%
  \providecommand\color[2][]{%
    \errmessage{(Inkscape) Color is used for the text in Inkscape, but the package 'color.sty' is not loaded}%
    \renewcommand\color[2][]{}%
  }%
  \providecommand\transparent[1]{%
    \errmessage{(Inkscape) Transparency is used (non-zero) for the text in Inkscape, but the package 'transparent.sty' is not loaded}%
    \renewcommand\transparent[1]{}%
  }%
  \providecommand\rotatebox[2]{#2}%
  \newcommand*\fsize{\dimexpr\f@size pt\relax}%
  \newcommand*\lineheight[1]{\fontsize{\fsize}{#1\fsize}\selectfont}%
  \ifx\svgwidth\undefined%
    \setlength{\unitlength}{325.11185533bp}%
    \ifx\svgscale\undefined%
      \relax%
    \else%
      \setlength{\unitlength}{\unitlength * \real{\svgscale}}%
    \fi%
  \else%
    \setlength{\unitlength}{\svgwidth}%
  \fi%
  \global\let\svgwidth\undefined%
  \global\let\svgscale\undefined%
  \makeatother%
  \begin{picture}(1,0.96633979)%
    \lineheight{1}%
    \setlength\tabcolsep{0pt}%
    \put(0,0){\includegraphics[width=\unitlength,page=1]{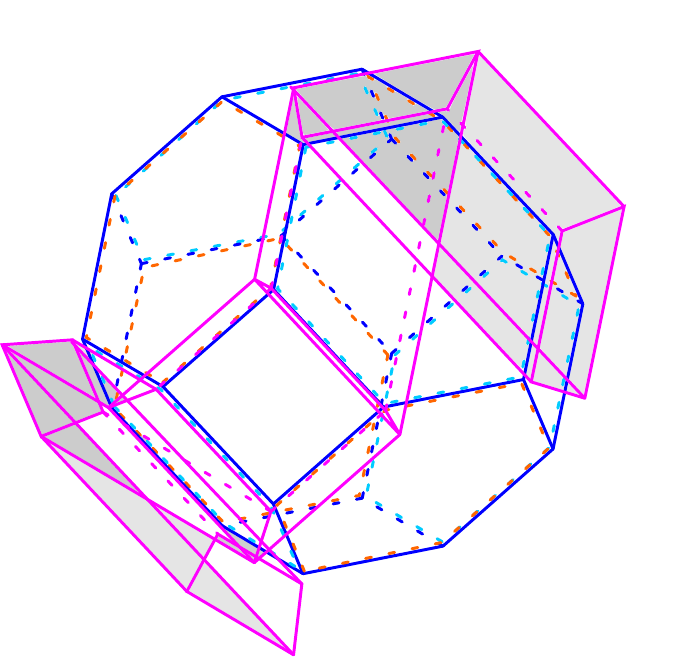}}%
    \put(0.04838399,0.16543736){\color[rgb]{0,0,0}\makebox(0,0)[lt]{\lineheight{1.25}\smash{\begin{tabular}[t]{l}$\mathbf{G}_b$\end{tabular}}}}%
    \put(0.1335314,0.78711904){\color[rgb]{0,0,0}\makebox(0,0)[lt]{\lineheight{1.25}\smash{\begin{tabular}[t]{l}$\mathbf{G}_c$\end{tabular}}}}%
    \put(0.79401214,0.82207427){\color[rgb]{0,0,0}\makebox(0,0)[lt]{\lineheight{1.25}\smash{\begin{tabular}[t]{l}$\mathbf{G}_d$\end{tabular}}}}%
    \put(0,0){\includegraphics[width=\unitlength,page=2]{mirrored_long_straight_tube.pdf}}%
    \put(0.39071807,0.94530073){\color[rgb]{0,0,0}\makebox(0,0)[lt]{\lineheight{1.25}\smash{\begin{tabular}[t]{l}$T_b\cup V^{\conv}\cup T_d$\end{tabular}}}}%
  \end{picture}%
\endgroup%
&
  \raisebox{0.5cm}{$+\left(\begin{gathered}bc+cd+bd\\+b+c+d\\+1\end{gathered}\right)$}\\
  \raisebox{0.5cm}{$\cong$} & \def\svgscale{0.44}%% Creator: Inkscape 1.4.2 (ebf0e940d0, 2025-05-08), www.inkscape.org
%% PDF/EPS/PS + LaTeX output extension by Johan Engelen, 2010
%% Accompanies image file 'mirrored_long_tube_compressed.pdf' (pdf, eps, ps)
%%
%% To include the image in your LaTeX document, write
%%   \input{<filename>.pdf_tex}
%%  instead of
%%   \includegraphics{<filename>.pdf}
%% To scale the image, write
%%   \def\svgwidth{<desired width>}
%%   \input{<filename>.pdf_tex}
%%  instead of
%%   \includegraphics[width=<desired width>]{<filename>.pdf}
%%
%% Images with a different path to the parent latex file can
%% be accessed with the `import' package (which may need to be
%% installed) using
%%   \usepackage{import}
%% in the preamble, and then including the image with
%%   \import{<path to file>}{<filename>.pdf_tex}
%% Alternatively, one can specify
%%   \graphicspath{{<path to file>/}}
%% 
%% For more information, please see info/svg-inkscape on CTAN:
%%   http://tug.ctan.org/tex-archive/info/svg-inkscape
%%
\begingroup%
  \makeatletter%
  \providecommand\color[2][]{%
    \errmessage{(Inkscape) Color is used for the text in Inkscape, but the package 'color.sty' is not loaded}%
    \renewcommand\color[2][]{}%
  }%
  \providecommand\transparent[1]{%
    \errmessage{(Inkscape) Transparency is used (non-zero) for the text in Inkscape, but the package 'transparent.sty' is not loaded}%
    \renewcommand\transparent[1]{}%
  }%
  \providecommand\rotatebox[2]{#2}%
  \newcommand*\fsize{\dimexpr\f@size pt\relax}%
  \newcommand*\lineheight[1]{\fontsize{\fsize}{#1\fsize}\selectfont}%
  \ifx\svgwidth\undefined%
    \setlength{\unitlength}{329.97743622bp}%
    \ifx\svgscale\undefined%
      \relax%
    \else%
      \setlength{\unitlength}{\unitlength * \real{\svgscale}}%
    \fi%
  \else%
    \setlength{\unitlength}{\svgwidth}%
  \fi%
  \global\let\svgwidth\undefined%
  \global\let\svgscale\undefined%
  \makeatother%
  \begin{picture}(1,0.96818702)%
    \lineheight{1}%
    \setlength\tabcolsep{0pt}%
    \put(0.03861463,0.94732199){\color[rgb]{0,0,0}\makebox(0,0)[lt]{\lineheight{1.25}\smash{\begin{tabular}[t]{l}$T_b \diamond (f_b,\alpha))\cup V^{\conv}\cup  (T_d\diamond(f_d,\beta))$\end{tabular}}}}%
    \put(0,0){\includegraphics[width=\unitlength,page=1]{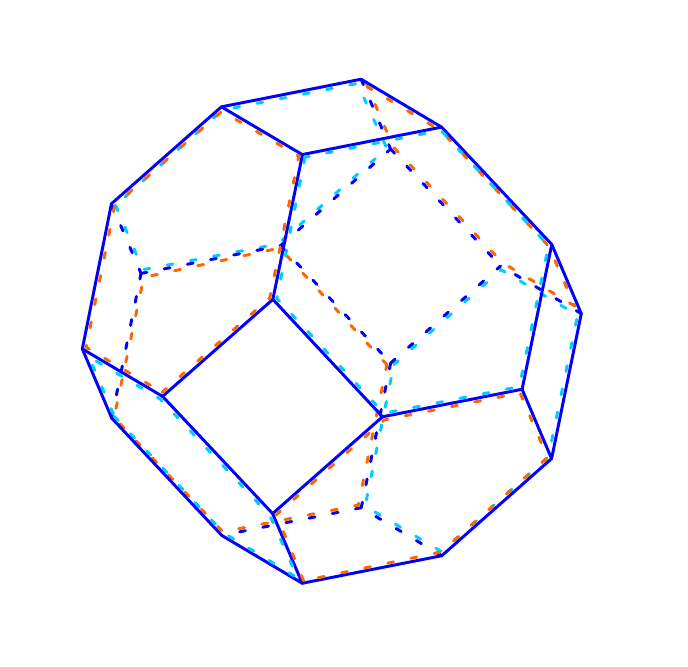}}%
    \put(0.0210804,0.16420198){\color[rgb]{0,0,0}\makebox(0,0)[lt]{\lineheight{1.25}\smash{\begin{tabular}[t]{l}$\mathbf{G}_b$\end{tabular}}}}%
    \put(0.04678848,0.86037437){\color[rgb]{0,0,0}\makebox(0,0)[lt]{\lineheight{1.25}\smash{\begin{tabular}[t]{l}$\mathbf{G}_c$\end{tabular}}}}%
    \put(0.79704989,0.81179475){\color[rgb]{0,0,0}\makebox(0,0)[lt]{\lineheight{1.25}\smash{\begin{tabular}[t]{l}$\mathbf{G}_d$\end{tabular}}}}%
    \put(0,0){\includegraphics[width=\unitlength,page=2]{mirrored_long_tube_compressed.pdf}}%
  \end{picture}%
\endgroup%
 & \raisebox{0.5cm}{$-$} & \def\svgscale{0.44} & \raisebox{0.5cm}{$+\left(\begin{gathered}bc+cd+bd\\+b+c+d\\+1\end{gathered}\right)$}\\
  \raisebox{0.5cm}{$\cong$}&\def\svgscale{0.44}%% Creator: Inkscape 1.4.2 (ebf0e940d0, 2025-05-08), www.inkscape.org
%% PDF/EPS/PS + LaTeX output extension by Johan Engelen, 2010
%% Accompanies image file 'U_conv_plus_twists_tube_unannotated.pdf' (pdf, eps, ps)
%%
%% To include the image in your LaTeX document, write
%%   \input{<filename>.pdf_tex}
%%  instead of
%%   \includegraphics{<filename>.pdf}
%% To scale the image, write
%%   \def\svgwidth{<desired width>}
%%   \input{<filename>.pdf_tex}
%%  instead of
%%   \includegraphics[width=<desired width>]{<filename>.pdf}
%%
%% Images with a different path to the parent latex file can
%% be accessed with the `import' package (which may need to be
%% installed) using
%%   \usepackage{import}
%% in the preamble, and then including the image with
%%   \import{<path to file>}{<filename>.pdf_tex}
%% Alternatively, one can specify
%%   \graphicspath{{<path to file>/}}
%% 
%% For more information, please see info/svg-inkscape on CTAN:
%%   http://tug.ctan.org/tex-archive/info/svg-inkscape
%%
\begingroup%
  \makeatletter%
  \providecommand\color[2][]{%
    \errmessage{(Inkscape) Color is used for the text in Inkscape, but the package 'color.sty' is not loaded}%
    \renewcommand\color[2][]{}%
  }%
  \providecommand\transparent[1]{%
    \errmessage{(Inkscape) Transparency is used (non-zero) for the text in Inkscape, but the package 'transparent.sty' is not loaded}%
    \renewcommand\transparent[1]{}%
  }%
  \providecommand\rotatebox[2]{#2}%
  \newcommand*\fsize{\dimexpr\f@size pt\relax}%
  \newcommand*\lineheight[1]{\fontsize{\fsize}{#1\fsize}\selectfont}%
  \ifx\svgwidth\undefined%
    \setlength{\unitlength}{329.97743622bp}%
    \ifx\svgscale\undefined%
      \relax%
    \else%
      \setlength{\unitlength}{\unitlength * \real{\svgscale}}%
    \fi%
  \else%
    \setlength{\unitlength}{\svgwidth}%
  \fi%
  \global\let\svgwidth\undefined%
  \global\let\svgscale\undefined%
  \makeatother%
  \begin{picture}(1,1.03770297)%
    \lineheight{1}%
    \setlength\tabcolsep{0pt}%
    \put(0,0){\includegraphics[width=\unitlength,page=1]{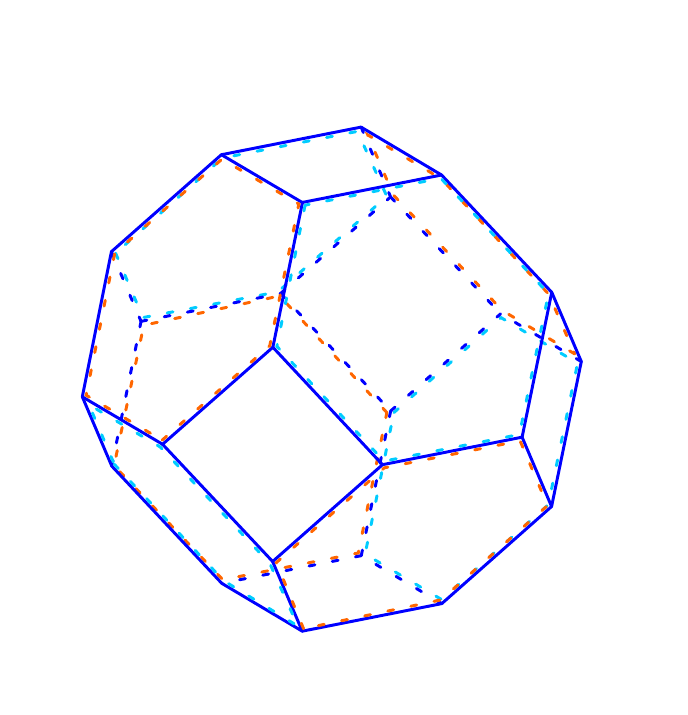}}%
    \put(0.02108034,0.16420251){\color[rgb]{0,0,0}\makebox(0,0)[lt]{\lineheight{1.25}\smash{\begin{tabular}[t]{l}$\mathbf{G}_b$\end{tabular}}}}%
    \put(0.04678843,0.8603749){\color[rgb]{0,0,0}\makebox(0,0)[lt]{\lineheight{1.25}\smash{\begin{tabular}[t]{l}$\mathbf{G}_c$\end{tabular}}}}%
    \put(0.79704971,0.81179528){\color[rgb]{0,0,0}\makebox(0,0)[lt]{\lineheight{1.25}\smash{\begin{tabular}[t]{l}$\mathbf{G}_d$\end{tabular}}}}%
    \put(0,0){\includegraphics[width=\unitlength,page=2]{U_conv_plus_twists_tube_unannotated.pdf}}%
    \put(0.07760508,1.01683762){\color[rgb]{0,0,0}\makebox(0,0)[lt]{\lineheight{1.25}\smash{\begin{tabular}[t]{l}$U^{\conv} \diamond (f_b, \alpha)\diamond (f_d, \beta)$\end{tabular}}}}%
    \put(0,0){\includegraphics[width=\unitlength,page=3]{U_conv_plus_twists_tube_unannotated.pdf}}%
  \end{picture}%
\endgroup%
 & \raisebox{0.5cm}{$-$} & \def\svgscale{0.44} & \raisebox{0.5cm}{$+\left(\begin{gathered}bc+cd+bd\\+b+c+d\\+1\end{gathered}\right)$}\\
    \end{tabular}
    \endgroup
      \caption{A step-by-step illustration of how we simplify $U-U^{\conv}$. In particular, we illustrate the equivalences (\ref{row 1}), (\ref{row 2}), (\ref{row 3}), (\ref{row 4}).}
      \label{differences visual aid}
    \end{figure}
    Our computation is as follows:
    \begingroup
    \allowdisplaybreaks
   \begin{align}
    U-U^{\conv}
    &= \tilde{\eta}_b\cup \zeta_{bc} \cup \tilde{\eta}_c\cup \zeta_{cd} \cup \tilde{\eta}_d-  T_b\cup \zeta_{bc} \cup T_c\cup \zeta_{cd} \cup T_d \label{row 1}\\[\jot]
    &= \tilde{\eta}_b\cup V_1 \cup \tilde{\eta}_d-  T_b\cup V_1^{\conv} \cup T_d\nonumber\\[\jot]
    &\cong \tilde{\eta}_b\cup V_2 \cup  \tilde{\eta}_d- T_b\cup V_2^{\conv}\cup T_d  + (bc+cd+bd+b+c+d+1)\label{row 2}\\[\jot]
    &= \tilde{\eta}_b\cup \tilde{\eta}'_b \cup \zeta_{bd} \cup \tilde{\eta}'_d \cup  \tilde{\eta}_d- T_b\cup T'_b \cup \zeta_{bd} \cup T'_d\cup T_d + (bc+cd+bd+b+c+d+1)\nonumber\\[\jot]
    &\cong (T_b \diamond (g_b,\alpha))\cup (T'_b\diamond (g'_b,\alpha')) \cup \zeta_{bd} \cup (T'_d\diamond (g'_d,\beta')) \cup  (T_d\diamond (g_d,\beta))\nonumber\\
    &\quad- T_b\cup T'_b \cup \zeta_{bd} \cup T'_d\cup T_d + (bc+cd+bd+b+c+d+1)\nonumber\\[\jot]
    &\cong (T_b \diamond (f_b,\alpha))\cup T'_b \cup \zeta_{bd} \cup T'_d \cup  (T_d\diamond(f_d,\beta))- T_b\cup T'_b \cup \zeta_{bd} \cup T'_d\cup T_d\nonumber\\
    &\quad + (bc+cd+bd+b+c+d+1)\nonumber\\[\jot]
    &\cong (T_b \diamond(f_b,\alpha))\cup V_2^{\conv}\cup  (T_d\diamond (f_d,\beta))- T_b\cup V_2^{\conv} \cup T_d\label{row 3}\\
    &\quad + (bc+cd+bd+b+c+d+1)\nonumber\\[\jot]
    &\cong (T_b \diamond (f_b,\alpha)\cup V_1^{\conv}\cup  (T_d\diamond (f_d,\beta))- T_b\cup V_1^{\conv} \cup T_d\label{row 4}\\
    &\quad + (bc+cd+bd+b+c+d+1)\nonumber\\[\jot]
    &\cong U^{\conv} \diamond (f_b,\alpha)\diamond (f_d,\beta) - U^{\conv} + (bc+cd+bd+b+c+d+1)\nonumber,
  \end{align}
  \endgroup
Here, $g_b$, $g_d$, $g'_b$, $g'_d$ are defined using Table \ref{pulling along cycle table}, and $f_b$, $f_d$ are defined using Table \ref{pulling along cycle table}. The equivalence is due to Proposition \ref{twist through concat}. Also, (\ref{row 1}), (\ref{row 2}), (\ref{row 3}), (\ref{row 4}) represent the $1^{\text{st}}$, $2^{\text{nd}}$, $3^{\text{rd}}$, and $4^{\text{th}}$ rows of Figure \ref{differences visual aid}.
\end{proof}
We have determined a new way of rewriting $U$, parametrized by $a\longline b\longline c\longline d\longline e$, as an addition of twists to $U^{\conv}$. The following proposition compares $U$ to $U'$, where $U'$ is a d.s.\ tube parametrized by $a\longline b\longline d\longline e$.
\begin{proposition}\label{chain move}
  Let $D\rightsquigarrow D'$ be an elementary move of facet chains, where $D$ denotes $a\longline b\longline c\longline d\longline e$ and $D'$ denotes $a\longline b\longline d\longline e$. Let $D$ (resp.\ $D'$) parametrize $U$ (resp.\ $U'$), where the ends of $U$ match with the corresponding ends of $U'$. We have the identity
  \[
    U - U'= (\omega_b+\omega_d + bc + cd + bd + b + c + d + 1),
  \]
  where we consult the fourth column of Table \ref{difference table} to determine the constants $\omega_b,\omega_d$.
\begin{table}
\begin{tabular}{ M{1.9cm} m{2cm} m{4.2cm} m{2.7cm} l}
  Cases & \makecell{Twist $\rho_{b}$\\ (resp. $\rho_d$)} & Twist $f_b$ (resp. $f_d$) & \makecell{Twist $f'_b$\\(resp. $f'_d$)} & \makecell{$\omega_b:= \rho_bf_bf'^{-1}_b$\\ (resp.\\ $\omega_d:= \rho_df_df'^{-1}_d$)}\\
\hexchange{$j$}{$k$}{$i$} & $\boldvarphi_{j, k}$ & $\boldvarphi_{k-1}\ldots\widehat{\boldvarphi_{j}}\ldots\boldvarphi_{i+1}$ & $\boldvarphi_{k-1}\ldots\boldvarphi_{i+1}$ & $k-j-1$\\ 
\hexchange{$j$}{$i$}{$k$} & $\boldvarphi_{j+1, i+1}$ & $\boldvarphi_{i+1}\ldots\widehat{\boldvarphi_{j}}\ldots\boldvarphi_{k-1}$ & $\boldvarphi_{i+1}\ldots \boldvarphi_{k-1}$ & $i-j$\\
\hexchange{$k$}{$j$}{$i$} & $\boldvarphi_{k, j}$ & $\boldvarphi_{j, k}\boldvarphi_{j-1}\ldots\boldvarphi_{i+1}$ & $\boldvarphi_{j-1}\ldots\boldvarphi_{i+1}$ & $0$\\
\hexchange{$k$}{$i$}{$j$} & $\boldvarphi_{k, i+1}$ & $\boldvarphi_{i+1}\ldots\boldvarphi_{j-1}\boldvarphi_{j, k}$ & $\boldvarphi_{i+1}\ldots\boldvarphi_{j-1}$ & $i-j-1$\\
\hexchange{$i$}{$k$}{$j$} & $\boldvarphi_{i+1, k}$ & $\boldvarphi_{k-1}\ldots\boldvarphi_{j+1}\boldvarphi_{i+1, j+1}$ & $\boldvarphi_{k-1}\ldots\boldvarphi_{j+1}$ & $k-j$\\ 
\hexchange{$i$}{$j$}{$k$} & $\boldvarphi_{i+1, j+1}$ & $\boldvarphi_{i+1, j+1}\boldvarphi_{j+1}\ldots\boldvarphi_{k-1}$ & $\boldvarphi_{j+1}\ldots\boldvarphi_{k-1}$ & $1$
\end{tabular}
\caption{The quantity $a_l$ denotes $a$ if $a<l$ and $a-1$ if $i>l$. Composition in the $3^{\text{rd}}$ and $4^{\text{th}}$ columns is the $\diamond$ composition, yet we omit the $\diamond$ symbol to lighten notation. $\omega_b$ (resp. $\omega_d$) denotes the difference between the twists $\rho_b\diamond f_b$ and $f'_b$ (resp. $\rho_d\diamond f_d$ and $f'_d$).}
\label{difference table}
\end{table}
\end{proposition}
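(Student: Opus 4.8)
The plan is to compare $U$ and $U'$ by routing both through a common ``convex'' tube and keeping careful track of the twists accumulated along the way. Since $U'$ is parametrized by the shorter chain $a\longline b\longline d\longline e$, it consists of a single middle boundary matching tube $\tilde{\eta}_{b}$ (connecting $\zeta_{ab}$ to $\zeta_{bd}$) followed by $\tilde{\eta}_d$ (connecting $\zeta_{bd}$ to $\zeta_{de}$), so there is no ``internal'' $3$-cycle correction to worry about — the analogue of the quantity $bc+cd+bd+b+c+d+1$ from Lemma \ref{pulling along cycle} does not arise for $U'$, only for $U$.

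First I would apply Lemma \ref{pulling along cycle} to $U$ directly, obtaining $U \cong U^{\conv}\diamond(f_b,\Theta_a)\diamond(f_d,\Theta_e) + (bc+cd+bd+b+c+d+1)$, where $f_b$, $f_d$ are the twists read off from Table \ref{pulling along cycle table} according to which of the six orderings of $\{b,c,d\}$ holds. Next I would analyze $U'$: its convex version $U'^{\conv}$ differs from $U^{\conv}$ only in the shape of the $\mathbf{G}_b$-straddling portion, since both agree on the $\zeta_{ab}$, $\zeta_{de}$ and the outermost boundary matching tubes $\tilde{\eta}_b^{\conv}$ (near the $a$-end) and $\tilde{\eta}_d^{\conv}$ (near the $e$-end). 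The essential point is that $U'^{\conv}$ passes straight through $\mathbf{G}_b$, whereas $U^{\conv}$ has the extra excursion through $\mathbf{G}_c$ and $\mathbf{G}_{\{b,c\}}$, $\mathbf{G}_{\{c,d\}}$. Using Example \ref{gluing tubes homotopy} (Table \ref{flipback table}) to collapse the $\mathbf{G}_{\{b,c\}}\cup\mathbf{G}_c\cup\mathbf{G}_{\{c,d\}}$ detour to a single Pontrjagin-Thom tube in $\mathbf{G}_{\{b,d\}}$, I would identify $U^{\conv}$ with $U'^{\conv}$ plus a single flipback twist $\rho_b$ (applied at the $\Theta_a$ end) and a corresponding twist $\rho_d$ (at the $\Theta_e$ end), where $\rho_b$, $\rho_d$ are the entries in the second column of Table \ref{difference table}. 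Combining, $U \cong U'^{\conv}\diamond(\rho_b\diamond f_b,\Theta_a)\diamond(\rho_d\diamond f_d,\Theta_e) + (bc+cd+bd+b+c+d+1)$.

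Then I would apply Lemma \ref{pulling along cycle} once more, but now to $U'$ viewed as parametrized by the chain $a\longline b\longline d\longline e$ — except this chain only has one ``interior'' vertex to straighten, so the computation is the degenerate/shorter analogue of that lemma, yielding $U' \cong U'^{\conv}\diamond(f'_b,\Theta_a)\diamond(f'_d,\Theta_e)$ with no cubic correction term and with $f'_b$, $f'_d$ the twists in the third-to-last and second-to-last columns of Table \ref{difference table}. Subtracting the two expressions and using Lemma \ref{associativity}, Lemma \ref{commutators}, and the fact that full twists are central (Remark \ref{full twist fact}), the $U'^{\conv}$ cancels and the difference collapses to $(\rho_b\diamond f_b \diamond f'^{-1}_b) + (\rho_d\diamond f_d\diamond f'^{-1}_d) + (bc+cd+bd+b+c+d+1)$. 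The final step is the bookkeeping: for each of the six cases, $\rho_b\diamond f_b\diamond f'^{-1}_b$ is a $\diamond$-composition of the $\varphi$'s and one $\varphi_{i\rightleftarrows k}$-type twist, which by repeated use of Lemma \ref{commutators} simplifies to a power $[\omega_b]$ of a full twist; the exponents are exactly the entries $(k-j-1)$, $(i-j)$, $(0)$, $(i-j-1)$, $(k-j)$, $(1)$ in the last column of Table \ref{difference table}. This gives $U-U' = (\omega_b+\omega_d+bc+cd+bd+b+c+d+1)$ as claimed.

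The main obstacle I expect is the last step — verifying, case by case, that the composite twist $\rho_b\diamond f_b\diamond f'^{-1}_b$ really does reduce to the stated full-twist count $\omega_b$. This requires carefully tracking how the various $\varphi_r$ and $\varphi_{i\rightleftarrows k}$ commute past one another via Lemma \ref{commutators} (each such commutation potentially contributing a full twist), and making sure the index-shifting conventions ($a_l$ meaning $a$ or $a-1$ depending on position relative to $l$, per the caption of Table \ref{difference table}) are applied consistently across the six orderings. The twists $f_b$, $f'_b$ differ only by deleting (or inserting) a single $\widehat{\boldvarphi_j}$ factor or commuting a transposition through, so the net discrepancy is small, but the sign/count of the resulting full twists is exactly where an off-by-one error would sneak in; the computation is essentially the same flavor as the long displayed chain of equivalences in the proof of Lemma \ref{3-cycle}, and I would organize it the same way.
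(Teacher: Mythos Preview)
Your proposal is correct and follows essentially the same route as the paper: apply Lemma \ref{pulling along cycle} to express $U$ as $U^{\conv}$ plus twists $f_b,f_d$ and the cubic correction, homotope $U^{\conv}$ onto $U'^{\conv}$ via the $\mathbf{G}_{\{b,c,d\}}$-flip of Example \ref{gluing tubes homotopy} picking up $\rho_b,\rho_d$, write $U'=U'^{\conv}\diamond(f'_b,\alpha)\diamond(f'_d,\beta)$ directly from the definition of the boundary matching tubes, subtract, and then reduce $\rho_b\diamond f_b\diamond f_b'^{-1}$ case by case to $(\omega_b)$. The paper even singles out the same final step you flag as the obstacle, working out the first row of Table \ref{difference table} explicitly via Lemma \ref{commutators} and leaving the remaining rows as analogous exercises.
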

\begin{proof}
  Our idea is to homotope $U$ into a tube that, just like $U'$, only occupies the facets $\mathbf{G}_b$, $\mathbf{G}_{bd}$, $\mathbf{G}_d$. It is after this homotopy that we can more easily compute $U-U'$. Following the illustration in Figure \ref{manipulation},
  \begin{figure}
    \begingroup
    \renewcommand{\arraystretch}{4}:
    \begin{tabular}{m{0.0cm} m{6.6cm} m{0.4cm} M{7.5cm}}
      & \def\svgscale{0.53}%% Creator: Inkscape 1.4.2 (ebf0e940d0, 2025-05-08), www.inkscape.org
%% PDF/EPS/PS + LaTeX output extension by Johan Engelen, 2010
%% Accompanies image file 'shortening_part_1.pdf' (pdf, eps, ps)
%%
%% To include the image in your LaTeX document, write
%%   \input{<filename>.pdf_tex}
%%  instead of
%%   \includegraphics{<filename>.pdf}
%% To scale the image, write
%%   \def\svgwidth{<desired width>}
%%   \input{<filename>.pdf_tex}
%%  instead of
%%   \includegraphics[width=<desired width>]{<filename>.pdf}
%%
%% Images with a different path to the parent latex file can
%% be accessed with the `import' package (which may need to be
%% installed) using
%%   \usepackage{import}
%% in the preamble, and then including the image with
%%   \import{<path to file>}{<filename>.pdf_tex}
%% Alternatively, one can specify
%%   \graphicspath{{<path to file>/}}
%% 
%% For more information, please see info/svg-inkscape on CTAN:
%%   http://tug.ctan.org/tex-archive/info/svg-inkscape
%%
\begingroup%
  \makeatletter%
  \providecommand\color[2][]{%
    \errmessage{(Inkscape) Color is used for the text in Inkscape, but the package 'color.sty' is not loaded}%
    \renewcommand\color[2][]{}%
  }%
  \providecommand\transparent[1]{%
    \errmessage{(Inkscape) Transparency is used (non-zero) for the text in Inkscape, but the package 'transparent.sty' is not loaded}%
    \renewcommand\transparent[1]{}%
  }%
  \providecommand\rotatebox[2]{#2}%
  \newcommand*\fsize{\dimexpr\f@size pt\relax}%
  \newcommand*\lineheight[1]{\fontsize{\fsize}{#1\fsize}\selectfont}%
  \ifx\svgwidth\undefined%
    \setlength{\unitlength}{372.56645623bp}%
    \ifx\svgscale\undefined%
      \relax%
    \else%
      \setlength{\unitlength}{\unitlength * \real{\svgscale}}%
    \fi%
  \else%
    \setlength{\unitlength}{\svgwidth}%
  \fi%
  \global\let\svgwidth\undefined%
  \global\let\svgscale\undefined%
  \makeatother%
  \begin{picture}(1,0.92083691)%
    \lineheight{1}%
    \setlength\tabcolsep{0pt}%
    \put(0,0){\includegraphics[width=\unitlength,page=1]{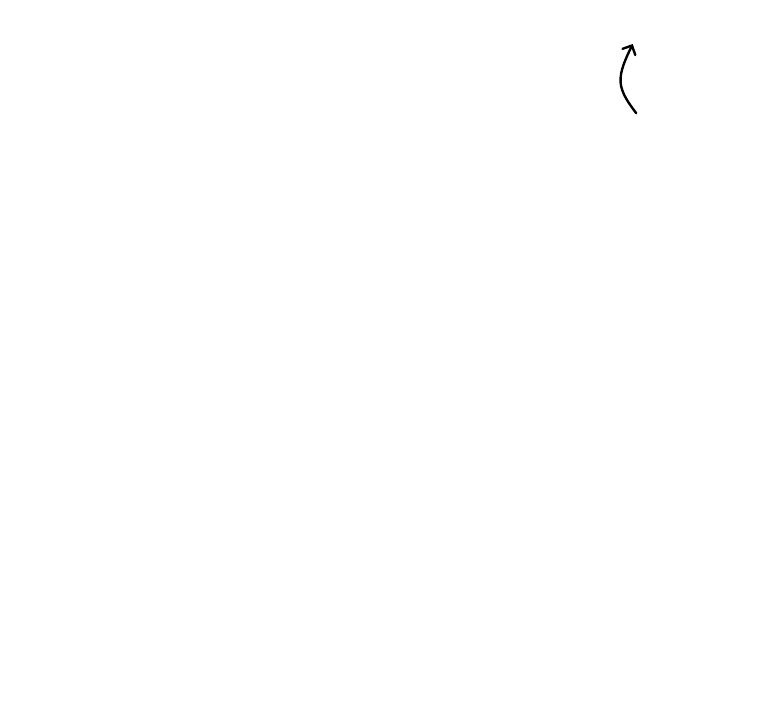}}%
    \put(0.83398897,0.81457697){\color[rgb]{0,0,0}\makebox(0,0)[lt]{\lineheight{1.25}\smash{\begin{tabular}[t]{l}lift\end{tabular}}}}%
    \put(0,0){\includegraphics[width=\unitlength,page=2]{shortening_part_1.pdf}}%
    \put(-0.00205331,0.43066288){\color[rgb]{0,0,0}\makebox(0,0)[lt]{\lineheight{1.25}\smash{\begin{tabular}[t]{l}lift\end{tabular}}}}%
    \put(0,0){\includegraphics[width=\unitlength,page=3]{shortening_part_1.pdf}}%
    \put(0.1905159,0.15602615){\color[rgb]{0,0,0}\makebox(0,0)[lt]{\lineheight{1.25}\smash{\begin{tabular}[t]{l}$\mathbf{G}_b$\end{tabular}}}}%
    \put(0.20557835,0.7539668){\color[rgb]{0,0,0}\makebox(0,0)[lt]{\lineheight{1.25}\smash{\begin{tabular}[t]{l}$\mathbf{G}_c$\end{tabular}}}}%
    \put(0.82024949,0.71972771){\color[rgb]{0,0,0}\makebox(0,0)[lt]{\lineheight{1.25}\smash{\begin{tabular}[t]{l}$\mathbf{G}_d$\end{tabular}}}}%
    \put(0.54682689,0.90250231){\color[rgb]{0,0,0}\makebox(0,0)[lt]{\lineheight{1.25}\smash{\begin{tabular}[t]{l}$U^{\conv}$\end{tabular}}}}%
  \end{picture}%
\endgroup%
& $\cong$ & \def\svgscale{0.53}%% Creator: Inkscape 1.4.2 (ebf0e940d0, 2025-05-08), www.inkscape.org
%% PDF/EPS/PS + LaTeX output extension by Johan Engelen, 2010
%% Accompanies image file 'shortening_part_2.pdf' (pdf, eps, ps)
%%
%% To include the image in your LaTeX document, write
%%   \input{<filename>.pdf_tex}
%%  instead of
%%   \includegraphics{<filename>.pdf}
%% To scale the image, write
%%   \def\svgwidth{<desired width>}
%%   \input{<filename>.pdf_tex}
%%  instead of
%%   \includegraphics[width=<desired width>]{<filename>.pdf}
%%
%% Images with a different path to the parent latex file can
%% be accessed with the `import' package (which may need to be
%% installed) using
%%   \usepackage{import}
%% in the preamble, and then including the image with
%%   \import{<path to file>}{<filename>.pdf_tex}
%% Alternatively, one can specify
%%   \graphicspath{{<path to file>/}}
%% 
%% For more information, please see info/svg-inkscape on CTAN:
%%   http://tug.ctan.org/tex-archive/info/svg-inkscape
%%
\begingroup%
  \makeatletter%
  \providecommand\color[2][]{%
    \errmessage{(Inkscape) Color is used for the text in Inkscape, but the package 'color.sty' is not loaded}%
    \renewcommand\color[2][]{}%
  }%
  \providecommand\transparent[1]{%
    \errmessage{(Inkscape) Transparency is used (non-zero) for the text in Inkscape, but the package 'transparent.sty' is not loaded}%
    \renewcommand\transparent[1]{}%
  }%
  \providecommand\rotatebox[2]{#2}%
  \newcommand*\fsize{\dimexpr\f@size pt\relax}%
  \newcommand*\lineheight[1]{\fontsize{\fsize}{#1\fsize}\selectfont}%
  \ifx\svgwidth\undefined%
    \setlength{\unitlength}{402.43707131bp}%
    \ifx\svgscale\undefined%
      \relax%
    \else%
      \setlength{\unitlength}{\unitlength * \real{\svgscale}}%
    \fi%
  \else%
    \setlength{\unitlength}{\svgwidth}%
  \fi%
  \global\let\svgwidth\undefined%
  \global\let\svgscale\undefined%
  \makeatother%
  \begin{picture}(1,0.89300152)%
    \lineheight{1}%
    \setlength\tabcolsep{0pt}%
    \put(0,0){\includegraphics[width=\unitlength,page=1]{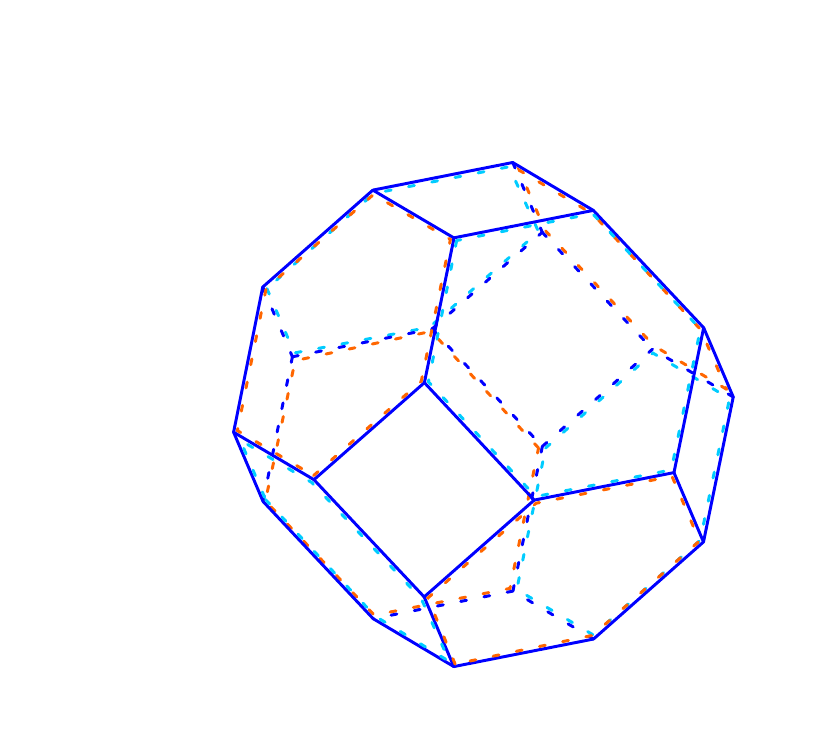}}%
    \put(0.23356771,0.13179023){\color[rgb]{0,0,0}\makebox(0,0)[lt]{\lineheight{1.25}\smash{\begin{tabular}[t]{l}$\mathbf{G}_b$\end{tabular}}}}%
    \put(0.14015787,0.78078087){\color[rgb]{0,0,0}\makebox(0,0)[lt]{\lineheight{1.25}\smash{\begin{tabular}[t]{l}$\mathbf{G}_c$\end{tabular}}}}%
    \put(0.83359144,0.6656301){\color[rgb]{0,0,0}\makebox(0,0)[lt]{\lineheight{1.25}\smash{\begin{tabular}[t]{l}$\mathbf{G}_d$\end{tabular}}}}%
    \put(0,0){\includegraphics[width=\unitlength,page=2]{shortening_part_2.pdf}}%
    \put(0.76720703,0.80549873){\color[rgb]{0,0,0}\makebox(0,0)[lt]{\lineheight{1.25}\smash{\begin{tabular}[t]{l}lay flat\end{tabular}}}}%
    \put(0,0){\includegraphics[width=\unitlength,page=3]{shortening_part_2.pdf}}%
    \put(-0.00190084,0.28301266){\color[rgb]{0,0,0}\makebox(0,0)[lt]{\lineheight{1.25}\smash{\begin{tabular}[t]{l}lay flat\end{tabular}}}}%
    \put(0,0){\includegraphics[width=\unitlength,page=4]{shortening_part_2.pdf}}%
  \end{picture}%
\endgroup%
\\
      $\cong$ &\def\svgscale{0.53}%% Creator: Inkscape 1.4.2 (ebf0e940d0, 2025-05-08), www.inkscape.org
%% PDF/EPS/PS + LaTeX output extension by Johan Engelen, 2010
%% Accompanies image file 'shortening_part_3.pdf' (pdf, eps, ps)
%%
%% To include the image in your LaTeX document, write
%%   \input{<filename>.pdf_tex}
%%  instead of
%%   \includegraphics{<filename>.pdf}
%% To scale the image, write
%%   \def\svgwidth{<desired width>}
%%   \input{<filename>.pdf_tex}
%%  instead of
%%   \includegraphics[width=<desired width>]{<filename>.pdf}
%%
%% Images with a different path to the parent latex file can
%% be accessed with the `import' package (which may need to be
%% installed) using
%%   \usepackage{import}
%% in the preamble, and then including the image with
%%   \import{<path to file>}{<filename>.pdf_tex}
%% Alternatively, one can specify
%%   \graphicspath{{<path to file>/}}
%% 
%% For more information, please see info/svg-inkscape on CTAN:
%%   http://tug.ctan.org/tex-archive/info/svg-inkscape
%%
\begingroup%
  \makeatletter%
  \providecommand\color[2][]{%
    \errmessage{(Inkscape) Color is used for the text in Inkscape, but the package 'color.sty' is not loaded}%
    \renewcommand\color[2][]{}%
  }%
  \providecommand\transparent[1]{%
    \errmessage{(Inkscape) Transparency is used (non-zero) for the text in Inkscape, but the package 'transparent.sty' is not loaded}%
    \renewcommand\transparent[1]{}%
  }%
  \providecommand\rotatebox[2]{#2}%
  \newcommand*\fsize{\dimexpr\f@size pt\relax}%
  \newcommand*\lineheight[1]{\fontsize{\fsize}{#1\fsize}\selectfont}%
  \ifx\svgwidth\undefined%
    \setlength{\unitlength}{361.13823117bp}%
    \ifx\svgscale\undefined%
      \relax%
    \else%
      \setlength{\unitlength}{\unitlength * \real{\svgscale}}%
    \fi%
  \else%
    \setlength{\unitlength}{\svgwidth}%
  \fi%
  \global\let\svgwidth\undefined%
  \global\let\svgscale\undefined%
  \makeatother%
  \begin{picture}(1,0.96825803)%
    \lineheight{1}%
    \setlength\tabcolsep{0pt}%
    \put(0.18364815,0.67912575){\color[rgb]{0,0,0}\makebox(0,0)[lt]{\lineheight{1.25}\smash{\begin{tabular}[t]{l}$\mathbf{G}_c$\end{tabular}}}}%
    \put(0.04909673,0.05216311){\color[rgb]{0,0,0}\makebox(0,0)[lt]{\lineheight{1.25}\smash{\begin{tabular}[t]{l}draw tight\end{tabular}}}}%
    \put(0,0){\includegraphics[width=\unitlength,page=1]{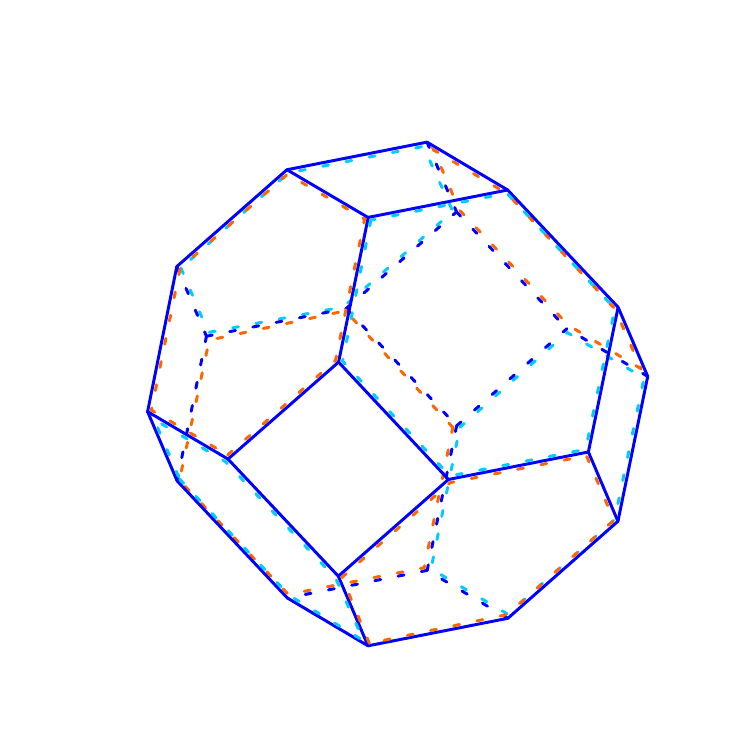}}%
    \put(0.01060444,0.12636736){\color[rgb]{0,0,0}\makebox(0,0)[lt]{\lineheight{1.25}\smash{\begin{tabular}[t]{l}$\mathbf{G}_b$\end{tabular}}}}%
    \put(0.81456138,0.74174975){\color[rgb]{0,0,0}\makebox(0,0)[lt]{\lineheight{1.25}\smash{\begin{tabular}[t]{l}$\mathbf{G}_d$\end{tabular}}}}%
    \put(0,0){\includegraphics[width=\unitlength,page=2]{shortening_part_3.pdf}}%
    \put(0.50013592,0.94931802){\color[rgb]{0,0,0}\makebox(0,0)[lt]{\lineheight{1.25}\smash{\begin{tabular}[t]{l}draw tight\end{tabular}}}}%
    \put(0.2344667,0.8535292){\color[rgb]{0,0,0}\makebox(0,0)[lt]{\lineheight{1.25}\smash{\begin{tabular}[t]{l}$W$\end{tabular}}}}%
  \end{picture}%
\endgroup%
 & $\cong$ & \def\svgscale{0.53}%% Creator: Inkscape 1.4.2 (ebf0e940d0, 2025-05-08), www.inkscape.org
%% PDF/EPS/PS + LaTeX output extension by Johan Engelen, 2010
%% Accompanies image file 'shortening_part_4.pdf' (pdf, eps, ps)
%%
%% To include the image in your LaTeX document, write
%%   \input{<filename>.pdf_tex}
%%  instead of
%%   \includegraphics{<filename>.pdf}
%% To scale the image, write
%%   \def\svgwidth{<desired width>}
%%   \input{<filename>.pdf_tex}
%%  instead of
%%   \includegraphics[width=<desired width>]{<filename>.pdf}
%%
%% Images with a different path to the parent latex file can
%% be accessed with the `import' package (which may need to be
%% installed) using
%%   \usepackage{import}
%% in the preamble, and then including the image with
%%   \import{<path to file>}{<filename>.pdf_tex}
%% Alternatively, one can specify
%%   \graphicspath{{<path to file>/}}
%% 
%% For more information, please see info/svg-inkscape on CTAN:
%%   http://tug.ctan.org/tex-archive/info/svg-inkscape
%%
\begingroup%
  \makeatletter%
  \providecommand\color[2][]{%
    \errmessage{(Inkscape) Color is used for the text in Inkscape, but the package 'color.sty' is not loaded}%
    \renewcommand\color[2][]{}%
  }%
  \providecommand\transparent[1]{%
    \errmessage{(Inkscape) Transparency is used (non-zero) for the text in Inkscape, but the package 'transparent.sty' is not loaded}%
    \renewcommand\transparent[1]{}%
  }%
  \providecommand\rotatebox[2]{#2}%
  \newcommand*\fsize{\dimexpr\f@size pt\relax}%
  \newcommand*\lineheight[1]{\fontsize{\fsize}{#1\fsize}\selectfont}%
  \ifx\svgwidth\undefined%
    \setlength{\unitlength}{281.41909862bp}%
    \ifx\svgscale\undefined%
      \relax%
    \else%
      \setlength{\unitlength}{\unitlength * \real{\svgscale}}%
    \fi%
  \else%
    \setlength{\unitlength}{\svgwidth}%
  \fi%
  \global\let\svgwidth\undefined%
  \global\let\svgscale\undefined%
  \makeatother%
  \begin{picture}(1,1.08942896)%
    \lineheight{1}%
    \setlength\tabcolsep{0pt}%
    \put(0.00110952,0.88182351){\color[rgb]{0,0,0}\makebox(0,0)[lt]{\lineheight{1.25}\smash{\begin{tabular}[t]{l}$\mathbf{G}_c$\end{tabular}}}}%
    \put(-0.0019827,0.22385666){\color[rgb]{0,0,0}\makebox(0,0)[lt]{\lineheight{1.25}\smash{\begin{tabular}[t]{l}$\mathbf{G}_b$\end{tabular}}}}%
    \put(0.74259946,0.8714625){\color[rgb]{0,0,0}\makebox(0,0)[lt]{\lineheight{1.25}\smash{\begin{tabular}[t]{l}$\mathbf{G}_d$\end{tabular}}}}%
    \put(0,0){\includegraphics[width=\unitlength,page=1]{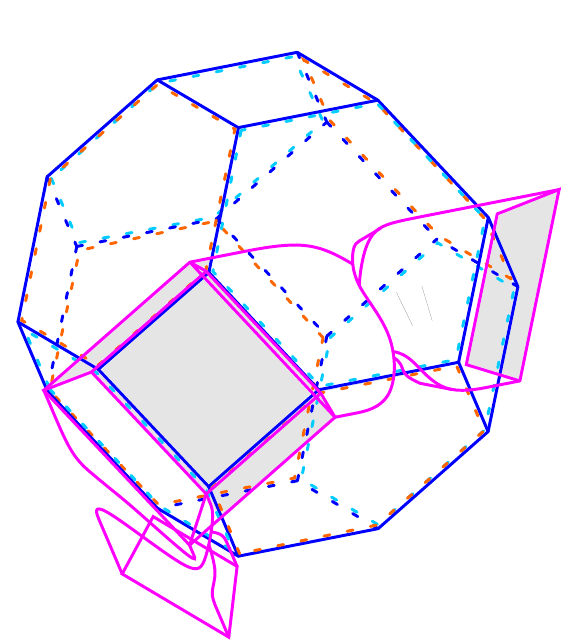}}%
    \put(0.07367739,1.06512405){\color[rgb]{0,0,0}\makebox(0,0)[lt]{\lineheight{1.25}\smash{\begin{tabular}[t]{l}$U'^{\conv}\diamond(\rho_b, \alpha)\diamond (\rho_d,\beta)$\end{tabular}}}}%
    \put(0,0){\includegraphics[width=\unitlength,page=2]{shortening_part_4.pdf}}%
  \end{picture}%
\endgroup%

    \end{tabular}
    \endgroup
    \caption{The homotopy of $U^{\conv}$ (top left) to $U'^{\conv} \diamond (\rho_b,\alpha) \diamond (\rho_d,\beta)$ (bottom right) described in the proof of Theorem \ref{chain move}. We first homotope $U^{\conv}$ to $W$ (bottom left) by flipping over $\mathbf{G}_{bcd}$, and then using Example \ref{gluing tubes homotopy}, we homotope $W$ to $U'^{\conv} \diamond (\rho_b,\alpha) \diamond (\rho_d,\beta)$.}
    \label{manipulation}
  \end{figure}
  we start with the tube $U^{\conv}$, lifting the far end $T_c\subset \mathbf{G}_c$ up in the $+J$-direction, and carrying it through the face $\mathbf{G}_{bcd}$, ``turning'' $T_c$ over and laying it back down in the facet $\mathbf{G}_{bd}$. The result $W$ is a concatenation of several tubes: On one end, we have a tube in $\mathbf{G}_b$ which moves directly from $\mathbf{G}_a$ to $\mathbf{G}_c$, a tube in $\mathbf{G}_{bc}$ which flips up and around back into $\mathbf{G}_b$, and a tube in $\mathbf{G}_b$ that moves directly from $\mathbf{G}_c$ to $\mathbf{G}_d$. These tubes look like they arise from Example \ref{gluing tubes homotopy}, and in fact, this is exactly what we use. We describe $W$ as
  \[
    W \cong( T'_b\cup \zeta_{bd}\cup T'_d) \diamond (\rho_b,\alpha) \diamond (\rho_d,\beta) = U'^{\conv} \diamond (\rho_b,\alpha) \diamond (\rho_d,\beta),
  \]
  where we consult the second column of Table \ref{difference table} to define $\rho_b$, $\rho_d$. Therefore,
  \begin{align*}
    U & \cong U^{\conv} \diamond (f_b,\alpha) \diamond (f_d,\beta) + (bc+cd+bd+b+c+d+1)\\
      &\cong W \diamond (f_b,\alpha) \diamond (f_d,\beta) + (bc+cd+bd+b+c+d+1)\\
      &\cong (U'^{\conv} \diamond (\rho_b,\alpha) \diamond (\rho_d,\beta)) \diamond (f_b,\alpha) \diamond (f_d,\beta) + (bc+cd+bd+b+c+d+1)\\
      &\cong U'^{\conv}\diamond (\rho_b\diamond f_b,\alpha) \diamond (\rho_d\diamond f_d,\beta) + (bc+cd+bd+b+c+d+1),
  \end{align*}
  where $f_b$, $f_d$ are determined in the column $3$ of Table \ref{difference table}. Furthermore, note $U' \cong U'^{\conv}\diamond (f'_b,\alpha) \diamond (f'_d,\beta)$, where $f'_b$, $f'_d$ defined in column $4$ of Table \ref{difference table}. Our computation of $U-U'$ proceeds as follows:
  \begin{align*}
    U - U'
    &= \left(U'^{\conv}\diamond (\rho_b\diamond f_b,\alpha) \diamond (\rho_d\diamond f_d,\beta) + (bc+cd+bd+b+c+d+1) \right)\\
    & \quad - \left(U'^{\conv}\diamond (f'_b,\alpha) \diamond (f'_d,\beta)\right)\\
    &= (\rho_b\diamond f_b \diamond f_b'^{-1},\alpha) \diamond (\rho_d\diamond f_d\diamond f_d'^{-1},\beta) + (bc+cd+bd+b+c+d+1).
  \end{align*}
  We can verify case-by-case that the twists $\rho_b\diamond f_b \diamond f_b'^{-1}$ and $\rho_d\diamond f_d\diamond f_d'^{-1}$ are always full twists, and in fact we can write $\rho_b\diamond f_b \diamond f_b'^{-1} = (\omega_b)$, $\rho_d\diamond f_d\diamond f_d'^{-1} = (\omega_d)$, where $\omega_b$, $\omega_d$ are defined in the last column of Table \ref{difference table}. We finally have $U-U' = (\omega_b)+(\omega_d) + (bc+cd+bd+b+c+d+1)$, as promised.
\end{proof}
  For an example of how we derive the last column of Table \ref{difference table}, we include our computation for the first row
  \paragraph{Proving the first row: $\rho_b\diamond f_b \diamond f_b'^{-1} = (k-j-1)$.}
  We compute the following:
  \begingroup
  \allowdisplaybreaks
\begin{align*}
  \rho_b\diamond f_b \diamond f_b'^{-1}
  &= \boldvarphi_{j, k}\boldvarphi_{k-1}\ldots\widehat{\boldvarphi_j}\ldots \boldvarphi_{i+1}(\boldvarphi_{k-1}\ldots\boldvarphi_{i+1})^{-1}\\
  &\cong \boldvarphi_{j, k}\boldvarphi_{k-1}\ldots\widehat{\boldvarphi_j}\ldots \boldvarphi_{i+1}(\boldvarphi_{j-1}\ldots\boldvarphi_{i+1})^{-1}(\boldvarphi_{k-1}\ldots\boldvarphi_{j})^{-1}\\
  &\cong \boldvarphi_{j, k}\boldvarphi_{k-1}\ldots\boldvarphi_{j+1}(\boldvarphi_{k-1}\ldots\boldvarphi_{j})^{-1}\\
  &\cong\boldvarphi_{k-1}\boldvarphi^{-1}_{j, k-1}\boldvarphi_{k-2}\ldots\boldvarphi_{j+1}(\boldvarphi_{k-1}\ldots\boldvarphi_{j})^{-1}\\
  &\pushright{\text{using Lemma \ref{commutators}}}\\
  &\cong\boldvarphi_{k-1}\boldvarphi_{k-2}\boldvarphi_{j, k-2}\boldvarphi_{k-3}\ldots\boldvarphi_{j+1}(\boldvarphi_{k-1}\ldots\boldvarphi_{j})^{-1}\\
  &\pushright{\text{again using Lemma \ref{commutators}}}\\
  &\cong\ldots
    \cong \begin{cases}
      \boldvarphi_{k-1}\ldots\boldvarphi_{j+1}\boldvarphi_{j, j+1}(\boldvarphi_{k-1}\ldots\boldvarphi_{j})^{-1} & \text{if $k-j$ is odd}\\
      \boldvarphi_{k-1}\ldots\boldvarphi_{j+1}\boldvarphi_{j, j+1}^{-1}(\boldvarphi_{k-1}\ldots\boldvarphi_{j})^{-1} & \text{if $k-j$ is even}
    \end{cases}\\
  &\cong \begin{cases}
    0 & \text{if $k-j$ is odd}\\
    1 & \text{if $k-j$ is even}
  \end{cases}
\end{align*}
\endgroup
  which tells us that if $\vcenter{\hbox{\hexchange{$c_b$}{$d_b$}{$a_b$}}}$ $\Bigl($ respectively, $\vcenter{\hbox{\hexchange{$c_d$}{$b_d$}{$e_d$}}}\Bigr)$ falls into the category the first row $\vcenter{\hbox{\hexchange{$j$}{$k$}{$i$}}}$, then $\rho_b\diamond f_b \diamond f_b^{-1}$ (resp. $\rho_d\diamond f_d \diamond f_d'^{-1}$) is equal to $(k-j-1)$.\par
  The rest of the rows are a similar exercise.
  \begin{corollary}\label{cycle move}
  Let $Z$, $Z'$ be facet cycles that differ by an elementary move
  \[
    (a\longline b\longline c\longline d\longline e)\rightsquigarrow (a\longline b\longline d\longline e).
  \]
  Suppose that $Z$, $Z'$ parametrize the cycles $K\subset\partial\mathcal{C}(r+2)$, $K'\subset\partial\mathcal{C}_{r'+2}$ respectively. We have the equality
  \begin{equation}\label{cycle difference}
    [K]-[K'] = \omega_b+\omega_d + bc + cd + bd + b + c + d + 1,
  \end{equation}
  where we again consult the fifth column of Table \ref{difference table} to determine the constants $\omega_b,\omega_d$.
\end{corollary}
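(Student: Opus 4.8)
The plan is to obtain Corollary \ref{cycle move} as a ``surgery'' consequence of Proposition \ref{chain move}, after first arranging that $K$ and $K'$ sit inside a common boundary. Choose an integer $\rho$ at least as large as every index appearing in $Z$ (hence also in $Z'$). Then both $Z$ and $Z'$ parametrize contrived cycles $\widetilde{K},\widetilde{K}'\subset\partial\mathcal{C}(\rho+2)$ built by Construction \ref{cycle building}, and Lemma \ref{suspension agreement} gives $[K]=[\widetilde{K}]$ and $[K']=[\widetilde{K}']$ (using also, as in the first paragraph of the proof of that lemma, that two cycles in $\partial\mathcal{C}(\rho+2)$ with the same facet cycle are isotopic through cycles with that facet cycle). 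So it suffices to compute $[\widetilde{K}]-[\widetilde{K}']$.

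Write $Z$ as the concatenation of the facet chain $D=(a\longline b\longline c\longline d\longline e)$ with the facet chain $R$ running from $e$ back around to $a$, and $Z'$ as the concatenation of $D'=(a\longline b\longline d\longline e)$ with the same $R$. Applying Construction \ref{cycle building} edge by edge, we may take $\widetilde{K}=U\cup U_R$ and $\widetilde{K}'=U'\cup U_R$, where $U_R$ is the d.s.\ tube parametrized by $R$ (used identically in both) and $U$, $U'$ are d.s.\ tubes parametrized by $D$, $D'$ whose free ends are glued to the two ends of $U_R$; in particular the ends of $U$ match the corresponding ends of $U'$, as required in Proposition \ref{chain move}. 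Now I would invoke the observation recorded just before Lemma \ref{pulling along cycle}: replacing the sub-tube $U$ of the cycle $\widetilde{K}$ by $U'$ changes the class by exactly the element $U-U'\in\pi_1(SO(T_PE(\rho)))\cong\mathbb{Z}/2$, that is, $[\widetilde{K}]-[\widetilde{K}']=\omega$ whenever $U-U'=(\omega)$. By Proposition \ref{chain move}, $U-U'=(\omega_b+\omega_d+bc+cd+bd+b+c+d+1)$ with $\omega_b,\omega_d$ read from the last column of Table \ref{difference table}; combining the two identities yields $[K]-[K']=\omega_b+\omega_d+bc+cd+bd+b+c+d+1$, which is (\ref{cycle difference}).

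The step I expect to be the main obstacle is pinning down that surgery principle precisely: that inserting a full twist into one component tube of a cycle $\widetilde{K}\subset\partial\mathcal{C}(\rho+2)$ toggles $[\widetilde{K}]\in[\partial\mathcal{C}(\rho+2),E(\rho)/\partial E(\rho)]\cong\mathbb{Z}/2$, and that only the full-twist part of $U-U'$ is relevant. The latter is cheap, since by Remark \ref{full twist fact} any non-full-twist component of $U-U'$ is killed in $\pi_1(SO(T_PE(\rho)))$; the former should follow by the same bookkeeping used to define $[K]$ via the canonical pair of trivializations $\Phi,\Phi'$ (Proposition \ref{trivialization} and its contrived analogue). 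Concretely, one compares a trivialization of $\widetilde{K}'\cong E(\rho)\times S^1$ with one of $\widetilde{K}$: since $U$ and $U'$ agree on their ends and differ only by a full twist in $T_PE(\rho)$, the two glued cycles differ by that single fibre rotation as one traverses the core $S^1$, and this rotation is precisely the generator of $\pi_1(SO(T_PE(\rho)))$ that the Pontrjagin--Thom invariant $[K]$ sees. Orientations of the edges of $Z,Z'$ play no role here, since all facet cycles in this section are unsigned and turnaround-free and an elementary move preserves both properties.
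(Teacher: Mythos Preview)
Your proposal is correct and follows essentially the same approach as the paper: reduce via Lemma \ref{suspension agreement} to a single ambient $\partial\mathcal{C}(\rho+2)$, realize $K'$ by excising the sub-tube $U$ parametrized by $D$ from $K$ and replacing it with a $U'$ parametrized by $D'$, then invoke Proposition \ref{chain move} together with the surgery observation ``if $U-U'=(\omega)$ then $[K]-[K']=\omega$'' recorded just before Lemma \ref{pulling along cycle}. The paper's proof is simply a terser version of this same argument, constructing $K'$ directly from $K$ rather than first passing both through the common $\rho$.
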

\begin{proof}
  Note that by Lemma \ref{suspension agreement}, it suffices to prove Equation (\ref{cycle difference}) for just \textit{some} $K'$ parametrized by $Z'$. Now let $U\subset K$ be the tube parametrized by $D = (a\longline b\longline c\longline d\longline e)$. Replace $U\subset K$ with a tube $U'$ parametrized by $D' = (a\longline b\longline d\longline e)$ to yield a cycle $K'$. Since $U-U' = (\omega_b+\omega_d + bc + cd + bd + b + c + d + 1)$ by Lemma \ref{chain move}, we have the identity $[K]-[K'] = \omega_b+\omega_d + bc + cd + bd + b + c + d + 1$.
\end{proof}
\section{A formula $Q$ for all $[K]$.}
Let $K$ be parametrized by $Z$, where $Z$ is an (unsigned) facet cycle. We derive a formula for $[K]$ that generalizes our result for $3$-cycles. Starting with Corollary \ref{cycle move}, we have explicit formula for $[K]-[K']$, so long as their facet cycles $Z$, $Z'$ differ by an elementary move. We now have a strategy for computing $[K]$ for cycles $K$ with no turnarounds. Indeed, we denote $Z$ the facet cycle for $K$ and look for a sequence $Z = Z_1\rightsquigarrow Z_2\rightsquigarrow\ldots \rightsquigarrow Z_r$ of elementary moves terminating at a $3$-cycle, and repeatedly apply Corollary \ref{cycle move}. In the spirit of this strategy, we derive an explicit function $Q:\{\text{facet cycles } Z \}\to \mathbb{Z}/2$ such that $Q(Z)-Q(Z')$ measures the difference $[K]-[K']$ for any two facet cycles $Z$, $Z'$ differing by an elementary move. 
\begin{definition}\label{Q definition}
  Let $Z$ be a signed facet cycle $(a_1,\omega_1)\longline (a_2,\omega_2)\longline\ldots\longline (a_r,\omega_r)\longline (a_1,\omega_1)$. Choose a direction to orient $Z$, say $(a_1,\omega_1)\to (a_2,\omega_2)\to \ldots\to (a_r,\omega_r)\to (a_1,\omega_1)$. We define
  \begin{align}
    \label{other summands}
    Q(Z)&=\sum_{a \text{---} b} ab
    +\sum_{a} a
    +\sum_{a \text{---} b}\max(a,b)
    +\sum_{a \text{---} b\text{---} c}\max(a_b,c_b)\\
    \label{winding number}
    &\quad +1 + \#\{a\to \overrightarrow{b}\to c\ \vert\  a>b\} + \#\{a\to \overleftarrow{b}\to c\ \vert\  a<b\}\\
    &\quad \label{signed vertex} + \sum_{(a,\omega')\text{---} (b,\omega)} (a_b\omega + b_a\omega') + \sum_{(b,\omega)}\omega\mod 2.
  \end{align}
\end{definition}
Note that the summand (\ref{winding number}) does not depend on how we orient $Z_K$. Hence, $Q(Z)$ is well-defined. For $C\subset \Gamma(z,\mu)$ a cycle, we define $Q(C) := Q(Z(C))$, where $Z(C)$ is the corresponding facet cycle.
\begin{lemma}\label{Difference}
  Let $K$ be parametrized by a facet cycle $Z$ and let $K'$ be parametrized by a facet cycle $Z'$. Suppose that $Z$ and $Z'$ differ by an elementary move. We have the equality of differences $Q(Z)-Q(Z') = [K]-[K']$.
\end{lemma}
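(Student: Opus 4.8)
The plan is to reduce the statement to a local, purely combinatorial computation and then to cite Corollary~\ref{cycle move}. An elementary move changes $Z$ only inside the window $a\longline b\longline c\longline d\longline e$, replacing it by $a\longline b\longline d\longline e$, so $Z$ and $Z'$ are literally equal outside this window. Fix an orientation of $Z$ and give $Z'$ the orientation agreeing with it away from the window (inside it then reads $a\to b\to c\to d\to e$ versus $a\to b\to d\to e$). With this choice each summand of $Q$ in Definition~\ref{Q definition} has the same contribution from data outside the window, so $Q(Z)-Q(Z')$ only sees the edges $\{b,c\},\{c,d\}$, the vertex $c$, and the consecutive triples $\{a,b,c\},\{b,c,d\},\{c,d,e\}$ of $Z$, against the edge $\{b,d\}$ and the triples $\{a,b,d\},\{b,d,e\}$ of $Z'$ (the triples $\{w,a,b\}$ and $\{d,e,y\}$ at the ends of the window are unchanged). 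In particular the summands \eqref{one}, \eqref{turnaround counter}, \eqref{signed vertex} drop out of the difference: the $+1$'s cancel, an elementary move preserves the ``no turnaround'' condition (so \eqref{turnaround counter} vanishes for both), and in this section all signs $\omega_i=0$ (so \eqref{signed vertex} vanishes for both).

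Next I would write the local differences explicitly. Over $\mathbb{F}_2$ the product summand of \eqref{other summands} contributes $bc+cd+bd$, the vertex summand contributes $c$, the $\max$ summand contributes $\max\{b,c\}+\max\{c,d\}+\max\{b,d\}$, and the $\Mid$ summand contributes $\Mid(a,b,c)+\Mid(b,c,d)+\Mid(c,d,e)+\Mid(a,b,d)+\Mid(b,d,e)$. For \eqref{switchback number} I would first observe that for any facet cycle the total ``$b$ not between $a$ and $c$'' count is even and that the two end triples contribute equally to $Z$ and $Z'$; hence the window counts $n_Z$ (over $\{a,b,c\},\{b,c,d\},\{c,d,e\}$) and $n_{Z'}$ (over $\{a,b,d\},\{b,d,e\}$) differ by an even integer, and the contribution to $Q(Z)-Q(Z')$ is the honest integer $(n_Z-n_{Z'})/2$ reduced mod $2$. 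The summand \eqref{winding number} is handled identically using the fixed orientations, giving the difference of the ``$a-1/2$ between $b$ and $c$'' counts over the ordered triples $(a,b,c),(b,c,d),(c,d,e)$ and $(a,b,d),(b,d,e)$. Assembling these expresses $Q(Z)-Q(Z')$ as an explicit function of the relative order of $a,b,c,d,e$.

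Finally I would compare with Corollary~\ref{cycle move}, which already gives $[K]-[K']=bc+cd+bd+b+c+d+1+\omega_b+\omega_d$, where $\omega_b$ and $\omega_d$ are read off the last column of Table~\ref{difference table} according to which of the six hexagon cases the relevant triples fall into ($\omega_b$ governed by the order of $a,c,d$ relative to $b$, and $\omega_d$ by the order of $b,c,e$ relative to $d$). Since the $\max$, $\Mid$, switchback, and winding contributions depend only on the relative orders of the triples $\{a,b,c\},\{b,c,d\},\{c,d,e\}$ and $\{a,b,d\},\{b,d,e\}$, the verification splits into a finite check, which I would organize by the position of $c$ relative to its neighbours $b,d$ and then by the positions of $a$ and $e$; in each case one expands the formula from the previous paragraph and confirms it equals $bc+cd+bd+b+c+d+1+\omega_b+\omega_d$. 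I expect this last case analysis to be the main obstacle: the real work is the bookkeeping of the $\max$/$\Mid$ terms, translating the shifted indices $a_b,b_a,\dots$ of Table~\ref{difference table} back into the unshifted $a,b,c,d,e$ used in $Q$, and checking that the integer $(n_Z-n_{Z'})/2$ supplies exactly the parity needed to absorb the $\omega_b+\omega_d$ discrepancy; everything preceding it is formal.
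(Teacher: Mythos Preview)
Your approach is essentially the same as the paper's: both compute $Q(Z)-Q(Z')$ summand by summand over the local window and then match the result against the formula from Corollary~\ref{cycle move}. The paper organizes the bookkeeping a bit more tightly than your proposed brute-force case split. It simplifies the $\max$ contribution to $\Mid(b,c,d)$ and, crucially, uses a small four-term identity (Lemma~\ref{middles}: $\Mid(p,q,r)+\Mid(p,q,s)+\Mid(p,r,s)+\Mid(q,r,s)\equiv 0$) to collapse your five $\Mid$ terms down to $\Mid(a,c,d)+\Mid(b,c,d)+\Mid(b,c,e)$. For the switchback and winding summands the paper does exactly what you suggest, recording the differences case by case (introducing its own Table~\ref{difference facet cycle table} for the winding term), and then adds everything up in closed form to land directly on $bc+bd+cd+b+c+d+1+\omega'_b+\omega'_d$, which it then identifies with the $\omega_b,\omega_d$ of Table~\ref{difference table}. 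So the paper's version avoids the large case tree you anticipate at the end by front-loading the algebraic simplifications; your route would work but is more laborious.
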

  \begin{proof}
    For concreteness, let $Z$ differ from $Z'$ by the elementary move
    \[
      (a\longline b\longline c\longline d\longline e)\rightsquigarrow (a\longline b\longline d\longline e).
    \]
    We compare $Q(Z)$ with $Q(Z')$, summand by summand:
  \subsubsection*{Difference of $\sum_{a \text{---} b} ab $:}
  The difference is $bc+bd+cd$.
  \subsubsection*{Difference of $\sum_{b} b$:}
  The difference is $c$.
  \subsubsection*{Difference of $\sum_{a \text{---} b}\max(a,b)$}
  The difference is $\max(b,c) + \max(c,d) + \max(b,d) = \Mid(b,c,d)$. Indeed, notice that between $Z_K$ and $Z_{K'}$, the list of consective pairs of vertices differs by only three pairs: namely $\{b,c\}$, $\{c,d\}$, and $\{b,d\}$.
  \subsubsection*{Difference of $\sum_{a\text{---} b\text{---} c}\max(a_b,c_b)$:}
  \begin{align*}
    &\max(a_b,c_b) + \max(b_c,d_c) + \max(c_d,e_d) + \max(a_b,d_b) + \max(b_c,e_c)\\
    &= (\max(a_b,c_b) + \max(a_b,d_b) + \max(c_b,d_b))  + (\max(c_b,d_b) + \max(b,d) + \max(b_d,c_d))\\
    &\quad + (\max(b_d,c_d) + \max(b_d,e_d) + \max(c_d,e_d))\\
    &= \Mid(a_b,c_b,d_b) + \Mid(b,c,d) + \Mid(b_d,c_d,e_d).
  \end{align*}
  Indeed, notice that between $Z_K$ and $Z_{K'}$, the list of consective triples of vertices differs by only $5$ triples: namely $\{a,b,c\}$, $\{b,c,d\}$, $\{c,d,e\}$, $\{a,c,d\}$, $\{b,c,e\}$.
    \subsubsection*{Difference of $\#\left\{a \text{---} b \text{---} c \enspace \middle\vert \text{ $b<a,c$}\right\}$:}
    $1|_{b<\Mid(a,c,d)}+1|_{d<\Mid(e,c,b)}+1$. We can verify this fact case-by-case.
  \subsubsection*{Difference of $\#\{a\to \overrightarrow{b}\to c\ \vert\  a>b\} + \#\{a\to \overleftarrow{b}\to c\ \vert\  a<b\}$:} We view the elementary move $Z\rightsquigarrow Z'$ as a pair $\vcenter{\hbox{\Dchange{$a$}{$b$}{$c$}{$d$}}}$, $\vcenter{\hbox{\Dchange{$e$}{$d$}{$c$}{$b$}}}$. Obtaining the values $\omega_b$, $\omega_d$ from Table \ref{difference facet cycle table}, we observe that the difference is $1+\omega_b+\omega_d$.
  \begin{table}
    \centering
    \begin{tabular}{ M{6.5cm} c m{5cm} }
      \makecell{Change\\ $\vcenter{\hbox{\Dchange{$a$}{$b$}{$c$}{$d$}}}$ $\Biggl(\text{resp.\ }\vcenter{\hbox{\Dchange{$e$}{$d$}{$c$}{$b$}}}\Biggr)$} & $\omega_{b}$ (resp.\ $\omega_{d}$) & $\omega'_{b}$ (resp.\ $\omega'_{d}$) \\
      \hline
      \Dchange{$i$}{$l$}{$j$}{$k$} & $1$ & $k_l+j_l+1$\\
      \Dchange{$k$}{$l$}{$j$}{$i$} & $0$ & $i_l+j_l$\\
      \Dchange{$i$}{$l$}{$k$}{$j$} & $0$ & $0$\\
      \Dchange{$j$}{$l$}{$k$}{$i$} & $1$ & $i_l+j_l+1$\\
      \Dchange{$j$}{$l$}{$i$}{$k$} & $0$ & $k_l+j_l$\\
      \Dchange{$k$}{$l$}{$i$}{$j$} & $1$ & $1$               
    \end{tabular}
    \caption{In the left column, $i$, $j$, $k$ respectively denote $\min(a,c,d)$, $\Mid(a,c,d)$, $\max(a,c,d))$ (similarly, $\min(e,c,b)$, $\Mid(e,c,b)$, $\max(e,c,b)$). So for example, the first row refers to the case $a<c<d$ (resp.\ $e<c<b$).}
    \label{difference facet cycle table}
  \end{table}
  \paragraph{Adding up the differences:}
  We get
  \begin{align*}
    Q(Z_K)-Q(Z_{K'})&=bc+bd+cd+c\\
    &\quad + \Mid(b,c,d)+(\Mid(a_b,c_b,d_b)+\Mid(b,c,d)+\Mid(b_d,c_d,e_d))\\
    &\quad + 1|_{b<\Mid(a,c,d)}+1|_{d<\Mid(e,c,b)}+1\\
    &\quad +(1+\omega_{b}+\omega_{d})\\
    &=bc+bd+cd+b+'c+d+1+\Mid(a,c,d)|_{b}+\Mid(b,c,e)|_{d}\\
    &\quad +(b+d+1) + \omega_{b}+\omega_{d}\\
    &= (1+\omega_{b}+\omega_{d})\\
    &=bc+bd+cd+b+c+d+1+\Mid(a,c,d)|_{b}+\Mid(b,c,e)|_{d}\\
    &\quad +(b_{d}+d_{b}) + \omega_{b}+\omega_{d}\\
    &= bc+bd+cd+b+c+d+1+\omega'_{b}+\omega'_{d}\mod 2,
  \end{align*}
  where we refer to the third column of Table \ref{difference facet cycle table} for $\omega'_{b},\omega'_{d}$. By Corollary \ref{cycle move}, $Q(Z_K)-Q(Z_{K'}) = [K]-[K']$.
\end{proof}
\begin{definition}
  By Lemma \ref{suspension agreement}, there exists a well-defined function $\mathcal{Q}: \{\text{signed facet cycles}\}\to \mathbb{F}_2$ such that $\mathcal{Q}(Z) = [K]$ for every $K$ parametrizing $Z$. Now let $\widetilde{Q}: B\to \mathbb{F}_2$ be a map, where $B\subset \{\text{signed facet cycles}\}$ is some subset. We say that $\widetilde{Q}$ is \textit{sincere on $B$} if $\widetilde{Q}|_{B} = \mathcal{Q}|_{B}$. If $\widetilde{Q} \equiv \mathcal{Q}$, we simply say $\widetilde{Q}$ is \textit{sincere}.
\end{definition}
\begin{proposition}
  The map $Q$ defined in Definition \ref{Q definition} is sincere on the set of unsigned facet cycles $Z$ without backtracks.
\end{proposition}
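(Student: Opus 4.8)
The plan is to combine the base-case computation for $3$-cycles (Lemma \ref{3-cycle}) with the difference formula for elementary moves (Lemma \ref{Difference}) and a purely combinatorial reduction of arbitrary backtrack-free facet cycles to $3$-cycles. First I would record the formal backbone of the argument: by Lemma \ref{Difference}, whenever backtrack-free facet cycles $Z, Z'$ differ by an elementary move we have $Q(Z) - Q(Z') = [K] - [K']$, while by definition $\mathcal{Q}(Z) = [K]$ and $\mathcal{Q}(Z') = [K']$ (well-defined by Lemma \ref{suspension agreement}, and the move can be realized in any $\partial\mathcal{C}(r+2)$ with enough labels). Hence $Q(Z) - \mathcal{Q}(Z) = Q(Z') - \mathcal{Q}(Z')$, so $Q - \mathcal{Q}$ is constant on each equivalence class of backtrack-free facet cycles under the relation generated by elementary moves in either direction.

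Next I would verify the base case. For a $3$-cycle $Z$ denoted $a \longline b \longline c \longline a$, all summands of Definition \ref{Q definition} involving turnarounds and the signs $\omega$ vanish (the cycle is unsigned and has no turnarounds), and the surviving sums collapse, after reducing the orientation-dependent summand \eqref{winding number} modulo $2$, to $ab + bc + ca + a + b + c + 1$. By Lemma \ref{3-cycle} this is exactly $[K] = \mathcal{Q}(Z)$, so $Q - \mathcal{Q}$ vanishes on every $3$-cycle.

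The heart of the proof is then to show that every backtrack-free facet cycle is elementary-move-equivalent to a $3$-cycle, which I would carry out in two stages. (a) A \emph{simple} backtrack-free facet cycle (no repeated labels) of length $n \geq 5$ can be shortened: around any interior vertex $c$ the window $a \longline b \longline c \longline d \longline e$ has five distinct labels, so deleting $c$ is a valid elementary move producing a simple backtrack-free cycle of length $n-1$; and a simple $4$-cycle reduces in one move to a $3$-cycle. Thus simple cycles reduce to $3$-cycles. (b) An arbitrary backtrack-free facet cycle is elementary-move-equivalent to a simple one: using Lemma \ref{suspension agreement} I may assume an unbounded supply of labels, then insert fresh labels by inverse elementary moves adjacent to a repeated label and apply forward moves to excise the repetition, strictly decreasing the multiplicity excess $\sum_v(\mathrm{mult}(v)-1)$; iterating yields a simple cycle. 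Combining (a) and (b) with the first two steps shows $Q - \mathcal{Q} \equiv 0$ on backtrack-free unsigned facet cycles, so $Q$ is sincere there.

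The hard part will be stage (b): one must produce, for any backtrack-free facet cycle with a repeated label, an explicit sequence of insertions and deletions that removes the repetition while keeping every intermediate cycle backtrack-free, and argue the process terminates via a suitable complexity measure. The ``period-$3$'' cycles such as $0\longline 1\longline 2\longline 0\longline 1\longline 2\longline (0)$, which admit no forward elementary move at all, show that insertions are genuinely necessary and that naive length induction does not suffice; this case analysis is where the real work lies. By contrast, the base-case bookkeeping in the second step is routine, though it requires a little care because of the orientation-dependence of the summand \eqref{winding number}.
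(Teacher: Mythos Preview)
Your proposal is correct and follows essentially the same approach as the paper: verify the base case on a $3$-cycle via Lemma \ref{3-cycle}, then propagate via Lemma \ref{Difference} along a chain of elementary moves. The paper's own proof is much terser---it simply invokes ``a standard connectedness argument'' to find a sequence of elementary moves from the $3$-cycle $Z_0 = (0\longline 1\longline 2\longline 0)$ to an arbitrary $Z$---whereas you spell out the reduction explicitly and correctly flag the period-$3$ obstruction that forces the use of inverse moves (label insertions).
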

\begin{proof}
  $Q$ is sincere on the $3$-cycle $Z_0 = (0\longline 1\longline 2\longline 0)$ by Lemma \ref{3-cycle}. Now let $Z$ be an arbitrary unsigned facet cycle with no boundary matchings that match a face to itself. Following and a standard connectedness argument, we find a sequence of elementary moves $Z_0\rightsquigarrow Z_1\rightsquigarrow \ldots \rightsquigarrow Z_n = Z$, and apply Lemma \ref{Difference} at each step.
\end{proof}
\section{Proving $Q$ is sincere, and a general formula for $\text{Sq}^2$}
In the previous section, we only looked at instances where $[K]$ is parametrized by an unsigned facet cycle $Z$ with no turnarounds. In this section, we now examine all cases of $K$, so we include the possibility that $Z$ has both turnarounds and signs.
\begin{proposition}\label{General formula}
  $Q$ is sincere.
\end{proposition}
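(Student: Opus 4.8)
The plan is to bootstrap from the partial sincerity already in hand: by the previous proposition $Q$ is sincere on unsigned facet cycles without turnarounds, and on the base $3$-cycle $Z_0=(0\longline 1\longline 2\longline 0)$ the value $Q(Z_0)=ab+bc+ac+a+b+c+1$ agrees with $[K_0]$ by Lemma \ref{3-cycle}. It remains to connect an arbitrary signed facet cycle $Z$ to $Z_0$ by a finite sequence of moves, each changing $[K]$ and $Q$ by the same element of $\mathbb{F}_2$. Alongside the elementary moves of Section \ref{simplifying}, which already preserve ``unsigned'' and ``turnaround-free'', I introduce two further moves: a \emph{sign move}, flipping a single $\omega_i$, and a \emph{turnaround move}, deleting a portion $a\longline b\longline a$ (with either orientation of the middle vertex). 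The reduction then runs in three stages --- flip every $\omega_i$ to $0$ (including at turnaround vertices), delete every turnaround, then apply the elementary moves down to $Z_0$ --- and throughout I use Lemma \ref{suspension agreement} to perform each step at whatever ambient dimension $r$ is convenient, since $[K]$ depends only on the facet cycle.

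For the sign move, flipping $\omega_i$ conjugates the $i$-th boundary matching tube $\tilde\eta_i$ by the $J$-flip $\tau$ (Construction \ref{cycle building}, step \ref{bm construction}). In the twist calculus of Section \ref{twists} this replaces every factor $\boldvarphi_r$ in the convex-plus-twists presentation of $\tilde\eta_i$ by $\tau\boldvarphi_r\tau^{-1}=\boldvarphi_r^{-1}$, and since $\boldvarphi_r$ is a $180^\circ$ rotation it shares its endpoint with $\boldvarphi_r^{-1}$, so $\boldvarphi_r^{-1}=\boldvarphi_r+(1)$ in the notation of Definition \ref{full twist notation}. Collecting the resulting full twists --- their number is the parity of the twist count attached to $\tilde\eta_i$, which depends only on the faces of the two Pontrjagin--Thom tubes incident to vertex $i$ --- and using Proposition \ref{twist through concat} to slide them to the free ends, one finds that $[K]$ changes by $1+a_b+c_b$, where $a,c$ are the faces of the neighbouring boundary matching tubes and $b$ is the face of the $i$-th one. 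A short case analysis on the relative order of $a,b,c$ identifies this with the change in summand \eqref{signed vertex} of $Q$: the $1$ from $\sum_{(b,\omega)}\omega$, and $a_b+c_b$ from the two edges incident to the vertex. At a turnaround vertex the two incident edges have the same face $a$, so the same computation gives change $1$ on both sides.

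The turnaround move is the main obstacle. For a portion $x\longline a\longline b\longline a\longline y$ of the now sign-trivial facet cycle, the corresponding piece of $K$ is an excursion $\tilde\eta_a^{(1)}\cup\zeta_{\{a,b\}}\cup\tilde\eta_b\cup\zeta_{\{a,b\}}\cup\tilde\eta_a^{(2)}$ leaving the facet $\mathbf{G}_a$ through $\mathbf{G}_{\{a,b\}}$, turning around in $\mathbf{G}_b$, and returning; I compare it with a single boundary matching tube $\tilde\eta_a\subset\mathbf{G}_a$ joining the same two ends, so that deleting the turnaround replaces $Z$ by $Z'=(\ldots x\longline a\longline y\ldots)$. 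This is exactly the situation of Example \ref{gluing tubes homotopy} and Table \ref{flipback table} carried out inside the permutohedron $\mathbf{G}_a$, with two extra features: the middle tube $\tilde\eta_b$ matches the subface $\mathbf{G}_{a_b}\subset\mathbf{G}_b$ to itself (a Case 0 tube in the sense of Section \ref{eta construction}), and the turn into $\mathbf{G}_b$ carries a $J$-flip whose coherence is recorded by whether the middle vertex is oriented $\overrightarrow{b}$ or $\overleftarrow{b}$. Tracking the resulting full twists via Lemma \ref{commutators} and Proposition \ref{twist through concat} expresses $[K]-[K']$ as a finite formula in $a$, $b$ and the neighbours. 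The delicate point --- and the reason this must be done with care --- is to verify, summand by summand, that this formula equals the change in $Q$: here the $\overrightarrow{}$ versus $\overleftarrow{}$ distinction feeds simultaneously the turnaround counter \eqref{turnaround counter} and the orientation-dependent winding term \eqref{winding number}, which is only orientation-independent mod $2$ once the two are combined, on top of the algebraic and switchback contributions from \eqref{other summands} and \eqref{switchback number}. As in the proof of Lemma \ref{Difference}, this reduces to a case analysis on the relative order of $a$ and $b$.

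With the sign and turnaround moves verified, sincerity of $Q$ on all signed facet cycles follows by a connectedness argument: after the three stages above, $[K]-[K_0]=Q(Z)-Q(Z_0)$ by telescoping the per-move identities, and $Q(Z_0)=[K_0]$ by the $3$-cycle case, so $[K]=Q(Z)$; the finitely many degenerate short cycles that the elementary moves cannot reach (beyond the $3$-cycles) are dealt with directly. Hence $\mathcal{Q}=Q$ on the whole set of signed facet cycles, which is the assertion.
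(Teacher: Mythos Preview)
Your proposal is correct and follows essentially the same strategy as the paper: reduce an arbitrary signed facet cycle to the base $3$-cycle by a sequence of moves, each of which changes $[K]$ and $Q(Z)$ by the same amount in $\mathbb{F}_2$. The three move types you isolate---sign flip, turnaround removal, elementary move---are exactly the moves the paper uses, and your sign-move computation (conjugation by $\tau$ sends each $\boldvarphi_r$ to $\boldvarphi_r^{-1}=\boldvarphi_r+(1)$, yielding change $1+a_b+c_b$) matches the paper's Level~4 argument verbatim.

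The main organizational difference is the order: the paper strips signs \emph{last} and organizes the turnaround removal into two levels---first prune assuming all turnarounds point the same way (Level~2, via Lemma~\ref{pulling through} and Lemma~\ref{Q change}), then flip any remaining turnaround orientations one at a time (Level~3, each flip changing both sides by $1$). You instead handle both turnaround orientations inside a single turnaround move. Both routes work; the paper's separation has the minor advantage that the pruning lemma need only be stated and verified for one orientation.

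One caution: your claim that the turnaround case analysis ``reduces to a case analysis on the relative order of $a$ and $b$'' understates what is needed. In the paper's notation the pruning formula is $[K]-[K']=\Mid(a,c,d)_b+c_b+\sgn(a,c,d)$, which depends on the turnaround vertex, the repeated vertex, \emph{and} the two outer neighbours; the proof of Lemma~\ref{pulling through} accordingly splits into five rows (by coincidence pattern among the neighbours) and the generic row into six subcases by the full order of $a,c,d$. You acknowledge the neighbours earlier in the paragraph, so this is likely just loose phrasing, but be sure the actual verification tracks all four indices.
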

\begin{proof}
  The outline of our proof is  proving the above formula in successively general levels.
  \begin{itemize}
  \item Level 1: $Q$ is sincere on the set of unsigned facet cycles without turnarounds
  \item Level 2: $Q$ is sincere on the set of unsigned facet cycles $Z$ where all the turnarounds of $Z$ point in the same direction.
  \item Level 3: $Q$ is sincere on the set of unsigned facet cycles.
  \item Level 4: $Q$ is sincere.
  \end{itemize}
  We already proved Level 1 in Lemma \ref{Difference}. The following remark shows that it suffices to prove Level 2:
  \begin{remark}
    Level 3 follows from Level 2.
  \end{remark}
  \begin{proof}
    let $Z$ be a facet cycle with turnarounds. If we switch the orientation of a turnaround, we see that $[K]$ changes by adding $1$. But so does the quantity (\ref{winding number}) in $Q(Z)$. We can repeat the process of switching turnarounds in $Z$ to reduce ourselves to Level 2.
  \end{proof}
  
  \paragraph{Proof of Level 2 (all turnarounds in the same direction):}
  From any unsigned facet cycle $Z$, we can perform a sequence of simplifying moves to obtain a cycle $Z'$ that either 1. does not contain any direct turnarounds, or 2. is a $2$-cycle $\overrightarrow{a}\longline \overrightarrow{b}\longline \overrightarrow{a}$: $\vcenter{\hbox{%% Creator: Inkscape 1.4.2 (ebf0e940d0, 2025-05-08), www.inkscape.org
%% PDF/EPS/PS + LaTeX output extension by Johan Engelen, 2010
%% Accompanies image file '2-cycle.pdf' (pdf, eps, ps)
%%
%% To include the image in your LaTeX document, write
%%   \input{<filename>.pdf_tex}
%%  instead of
%%   \includegraphics{<filename>.pdf}
%% To scale the image, write
%%   \def\svgwidth{<desired width>}
%%   \input{<filename>.pdf_tex}
%%  instead of
%%   \includegraphics[width=<desired width>]{<filename>.pdf}
%%
%% Images with a different path to the parent latex file can
%% be accessed with the `import' package (which may need to be
%% installed) using
%%   \usepackage{import}
%% in the preamble, and then including the image with
%%   \import{<path to file>}{<filename>.pdf_tex}
%% Alternatively, one can specify
%%   \graphicspath{{<path to file>/}}
%% 
%% For more information, please see info/svg-inkscape on CTAN:
%%   http://tug.ctan.org/tex-archive/info/svg-inkscape
%%
\begingroup%
  \makeatletter%
  \providecommand\color[2][]{%
    \errmessage{(Inkscape) Color is used for the text in Inkscape, but the package 'color.sty' is not loaded}%
    \renewcommand\color[2][]{}%
  }%
  \providecommand\transparent[1]{%
    \errmessage{(Inkscape) Transparency is used (non-zero) for the text in Inkscape, but the package 'transparent.sty' is not loaded}%
    \renewcommand\transparent[1]{}%
  }%
  \providecommand\rotatebox[2]{#2}%
  \newcommand*\fsize{\dimexpr\f@size pt\relax}%
  \newcommand*\lineheight[1]{\fontsize{\fsize}{#1\fsize}\selectfont}%
  \ifx\svgwidth\undefined%
    \setlength{\unitlength}{76.1923374bp}%
    \ifx\svgscale\undefined%
      \relax%
    \else%
      \setlength{\unitlength}{\unitlength * \real{\svgscale}}%
    \fi%
  \else%
    \setlength{\unitlength}{\svgwidth}%
  \fi%
  \global\let\svgwidth\undefined%
  \global\let\svgscale\undefined%
  \makeatother%
  \begin{picture}(1,0.2691116)%
    \lineheight{1}%
    \setlength\tabcolsep{0pt}%
    \put(0,0){\includegraphics[width=\unitlength,page=1]{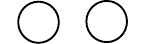}}%
    \put(-0.00632716,0.11056476){\color[rgb]{0,0,0}\makebox(0,0)[lt]{\lineheight{1.25}\smash{\begin{tabular}[t]{l}$a$\end{tabular}}}}%
    \put(0.82620245,0.09380272){\color[rgb]{0,0,0}\makebox(0,0)[lt]{\lineheight{1.25}\smash{\begin{tabular}[t]{l}$b$\end{tabular}}}}%
    \put(0,0){\includegraphics[width=\unitlength,page=2]{2-cycle.pdf}}%
  \end{picture}%
\endgroup%
}}$. Our simplifying moves involve taking a turnaround pulling through the turnaround vertex, and getting a shortened cycle with the turnaround vertex removed, in a move which looks like
  \[
    \def\svgwidth{3cm}
    %% Creator: Inkscape 1.4.2 (ebf0e940d0, 2025-05-08), www.inkscape.org
%% PDF/EPS/PS + LaTeX output extension by Johan Engelen, 2010
%% Accompanies image file 'pruning_5_part_1.pdf' (pdf, eps, ps)
%%
%% To include the image in your LaTeX document, write
%%   \input{<filename>.pdf_tex}
%%  instead of
%%   \includegraphics{<filename>.pdf}
%% To scale the image, write
%%   \def\svgwidth{<desired width>}
%%   \input{<filename>.pdf_tex}
%%  instead of
%%   \includegraphics[width=<desired width>]{<filename>.pdf}
%%
%% Images with a different path to the parent latex file can
%% be accessed with the `import' package (which may need to be
%% installed) using
%%   \usepackage{import}
%% in the preamble, and then including the image with
%%   \import{<path to file>}{<filename>.pdf_tex}
%% Alternatively, one can specify
%%   \graphicspath{{<path to file>/}}
%% 
%% For more information, please see info/svg-inkscape on CTAN:
%%   http://tug.ctan.org/tex-archive/info/svg-inkscape
%%
\begingroup%
  \makeatletter%
  \providecommand\color[2][]{%
    \errmessage{(Inkscape) Color is used for the text in Inkscape, but the package 'color.sty' is not loaded}%
    \renewcommand\color[2][]{}%
  }%
  \providecommand\transparent[1]{%
    \errmessage{(Inkscape) Transparency is used (non-zero) for the text in Inkscape, but the package 'transparent.sty' is not loaded}%
    \renewcommand\transparent[1]{}%
  }%
  \providecommand\rotatebox[2]{#2}%
  \newcommand*\fsize{\dimexpr\f@size pt\relax}%
  \newcommand*\lineheight[1]{\fontsize{\fsize}{#1\fsize}\selectfont}%
  \ifx\svgwidth\undefined%
    \setlength{\unitlength}{87.88092546bp}%
    \ifx\svgscale\undefined%
      \relax%
    \else%
      \setlength{\unitlength}{\unitlength * \real{\svgscale}}%
    \fi%
  \else%
    \setlength{\unitlength}{\svgwidth}%
  \fi%
  \global\let\svgwidth\undefined%
  \global\let\svgscale\undefined%
  \makeatother%
  \begin{picture}(1,0.92801205)%
    \lineheight{1}%
    \setlength\tabcolsep{0pt}%
    \put(0,0){\includegraphics[width=\unitlength,page=1]{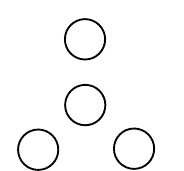}}%
    \put(0.39942512,0.86076762){\color[rgb]{0,0,0}\makebox(0,0)[lt]{\lineheight{1.25}\smash{\begin{tabular}[t]{l}$d$\end{tabular}}}}%
    \put(-0.00548558,0.0958589){\color[rgb]{0,0,0}\makebox(0,0)[lt]{\lineheight{1.25}\smash{\begin{tabular}[t]{l}$a$\end{tabular}}}}%
    \put(0.86126312,0.09392512){\color[rgb]{0,0,0}\makebox(0,0)[lt]{\lineheight{1.25}\smash{\begin{tabular}[t]{l}$c$\end{tabular}}}}%
    \put(0.31288949,0.45090645){\color[rgb]{0,0,0}\makebox(0,0)[lt]{\lineheight{1.25}\smash{\begin{tabular}[t]{l}$b$\end{tabular}}}}%
    \put(0,0){\includegraphics[width=\unitlength,page=2]{pruning_5_part_1.pdf}}%
  \end{picture}%
\endgroup%
\hspace{0.5cm}\raisebox{1.3cm}{$\rightsquigarrow$}\hspace{0.5cm}
    \def\svgwidth{3cm}
    %% Creator: Inkscape 1.4.2 (ebf0e940d0, 2025-05-08), www.inkscape.org
%% PDF/EPS/PS + LaTeX output extension by Johan Engelen, 2010
%% Accompanies image file 'pruning_5_part_2.pdf' (pdf, eps, ps)
%%
%% To include the image in your LaTeX document, write
%%   \input{<filename>.pdf_tex}
%%  instead of
%%   \includegraphics{<filename>.pdf}
%% To scale the image, write
%%   \def\svgwidth{<desired width>}
%%   \input{<filename>.pdf_tex}
%%  instead of
%%   \includegraphics[width=<desired width>]{<filename>.pdf}
%%
%% Images with a different path to the parent latex file can
%% be accessed with the `import' package (which may need to be
%% installed) using
%%   \usepackage{import}
%% in the preamble, and then including the image with
%%   \import{<path to file>}{<filename>.pdf_tex}
%% Alternatively, one can specify
%%   \graphicspath{{<path to file>/}}
%% 
%% For more information, please see info/svg-inkscape on CTAN:
%%   http://tug.ctan.org/tex-archive/info/svg-inkscape
%%
\begingroup%
  \makeatletter%
  \providecommand\color[2][]{%
    \errmessage{(Inkscape) Color is used for the text in Inkscape, but the package 'color.sty' is not loaded}%
    \renewcommand\color[2][]{}%
  }%
  \providecommand\transparent[1]{%
    \errmessage{(Inkscape) Transparency is used (non-zero) for the text in Inkscape, but the package 'transparent.sty' is not loaded}%
    \renewcommand\transparent[1]{}%
  }%
  \providecommand\rotatebox[2]{#2}%
  \newcommand*\fsize{\dimexpr\f@size pt\relax}%
  \newcommand*\lineheight[1]{\fontsize{\fsize}{#1\fsize}\selectfont}%
  \ifx\svgwidth\undefined%
    \setlength{\unitlength}{87.8809579bp}%
    \ifx\svgscale\undefined%
      \relax%
    \else%
      \setlength{\unitlength}{\unitlength * \real{\svgscale}}%
    \fi%
  \else%
    \setlength{\unitlength}{\svgwidth}%
  \fi%
  \global\let\svgwidth\undefined%
  \global\let\svgscale\undefined%
  \makeatother%
  \begin{picture}(1,0.92170663)%
    \lineheight{1}%
    \setlength\tabcolsep{0pt}%
    \put(0.40300531,0.85446158){\color[rgb]{0,0,0}\makebox(0,0)[lt]{\lineheight{1.25}\smash{\begin{tabular}[t]{l}$d$\end{tabular}}}}%
    \put(0,0){\includegraphics[width=\unitlength,page=1]{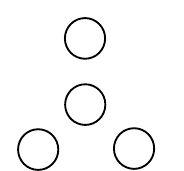}}%
    \put(-0.00548549,0.09585845){\color[rgb]{0,0,0}\makebox(0,0)[lt]{\lineheight{1.25}\smash{\begin{tabular}[t]{l}$a$\end{tabular}}}}%
    \put(0.86126314,0.09392468){\color[rgb]{0,0,0}\makebox(0,0)[lt]{\lineheight{1.25}\smash{\begin{tabular}[t]{l}$c$\end{tabular}}}}%
    \put(0.31288959,0.45090587){\color[rgb]{0,0,0}\makebox(0,0)[lt]{\lineheight{1.25}\smash{\begin{tabular}[t]{l}$b$\end{tabular}}}}%
    \put(0,0){\includegraphics[width=\unitlength,page=2]{pruning_5_part_2.pdf}}%
  \end{picture}%
\endgroup%

  \]
Of course, there are cases where for example $a\to b\to c$ and $a \to b\to d$ are themselves turnarounds, in which case, there is a similar shortening move. We call these moves \textit{pruning moves} as well. We need to measure how these pruning moves affect both the associated element $[K]$, and $Q(Z)$. But first, we need to define these elementary moves:
\begin{definition}
  Let $Z$ be a facet cycle $a_1\longline a_2\longline\ldots\longline a_n\longline a_1$, containing a facet chain $D = (a\longline b\longline \overrightarrow{c}\longline b\longline d)$. Define $Z'$ to be the facet cycle with the facet chain $D$ replaced by $D' = (a\longline b \longline d)$. If $a=b=d$, then the orientation of the vertex $b\in D'$ should agree with the orientation of the vertex $c\in D$. The move $D\rightsquigarrow D'$, and likewise, $Z\rightsquigarrow Z'$, is called a \textit{pruning move}.
\end{definition}
We exhibit these pruning moves in the first column of Table \ref{pull through figure}.
\begin{definition}
  Given (possibly repeating) integers $a,b,c$, we define $\sgn(a,b,c)$ by
  \[
    \sgn(a,b,c) := 1|_{a\leq b} + 1|_{b\leq c} + 1|_{a\leq c} + 1
  \]
  We can imagine $\sgn(a,b,c)$ to generically mean the permutation sign given by ordering $a,b,c$:
  \[
    (a,b,c)\mapsto (\text{min}\{a,b,c\},\Mid\{a,b,c\},\text{max}\{a,b,c\}),
  \]
  while imagining in the cases that $a=b$, $b=c$, and $a=c$, that respectively $b$ is a little bigger than $a$, $c$ is a little bigger than $b$, and $c$ is a little bigger than $a$.
\end{definition}
\begin{lemma}\label{pulling through}
  Let $Z$ be a facet cycle, and let $Z'$ be a pruning of $Z$, given by
  \[
    \overbrace{(a\longline b\longline c\longline b\longline d)}^{D}\rightsquigarrow \overbrace{(a\longline b\longline d)}^{D'}.
  \]
  Suppose $K$ is parametrized by $Z$ and $K'$ is parametrized by $Z'$. Then $[K]-[K'] = \Mid(a,c,d)_b+c_b+\sgn(a,c,d)$.
  \begin{table}
    \centering
    \begin{tabular}{|c| M{9.5cm} |}
     \hline
     Pruning move $Z\rightsquigarrow Z'$ & Difference $[K] - [K']$\\
      \hline
     \makecell{\def\svgscale{0.8}
    %% Creator: Inkscape 1.4.2 (ebf0e940d0, 2025-05-08), www.inkscape.org
%% PDF/EPS/PS + LaTeX output extension by Johan Engelen, 2010
%% Accompanies image file 'pruning_1_part_1.pdf' (pdf, eps, ps)
%%
%% To include the image in your LaTeX document, write
%%   \input{<filename>.pdf_tex}
%%  instead of
%%   \includegraphics{<filename>.pdf}
%% To scale the image, write
%%   \def\svgwidth{<desired width>}
%%   \input{<filename>.pdf_tex}
%%  instead of
%%   \includegraphics[width=<desired width>]{<filename>.pdf}
%%
%% Images with a different path to the parent latex file can
%% be accessed with the `import' package (which may need to be
%% installed) using
%%   \usepackage{import}
%% in the preamble, and then including the image with
%%   \import{<path to file>}{<filename>.pdf_tex}
%% Alternatively, one can specify
%%   \graphicspath{{<path to file>/}}
%% 
%% For more information, please see info/svg-inkscape on CTAN:
%%   http://tug.ctan.org/tex-archive/info/svg-inkscape
%%
\begingroup%
  \makeatletter%
  \providecommand\color[2][]{%
    \errmessage{(Inkscape) Color is used for the text in Inkscape, but the package 'color.sty' is not loaded}%
    \renewcommand\color[2][]{}%
  }%
  \providecommand\transparent[1]{%
    \errmessage{(Inkscape) Transparency is used (non-zero) for the text in Inkscape, but the package 'transparent.sty' is not loaded}%
    \renewcommand\transparent[1]{}%
  }%
  \providecommand\rotatebox[2]{#2}%
  \newcommand*\fsize{\dimexpr\f@size pt\relax}%
  \newcommand*\lineheight[1]{\fontsize{\fsize}{#1\fsize}\selectfont}%
  \ifx\svgwidth\undefined%
    \setlength{\unitlength}{87.88092546bp}%
    \ifx\svgscale\undefined%
      \relax%
    \else%
      \setlength{\unitlength}{\unitlength * \real{\svgscale}}%
    \fi%
  \else%
    \setlength{\unitlength}{\svgwidth}%
  \fi%
  \global\let\svgwidth\undefined%
  \global\let\svgscale\undefined%
  \makeatother%
  \begin{picture}(1,0.51815093)%
    \lineheight{1}%
    \setlength\tabcolsep{0pt}%
    \put(0,0){\includegraphics[width=\unitlength,page=1]{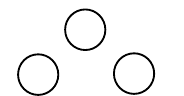}}%
    \put(-0.00548561,0.09585948){\color[rgb]{0,0,0}\makebox(0,0)[lt]{\lineheight{1.25}\smash{\begin{tabular}[t]{l}$a$\end{tabular}}}}%
    \put(0.8612631,0.09392571){\color[rgb]{0,0,0}\makebox(0,0)[lt]{\lineheight{1.25}\smash{\begin{tabular}[t]{l}$c$\end{tabular}}}}%
    \put(0.31288934,0.45090654){\color[rgb]{0,0,0}\makebox(0,0)[lt]{\lineheight{1.25}\smash{\begin{tabular}[t]{l}$b$\end{tabular}}}}%
    \put(0,0){\includegraphics[width=\unitlength,page=2]{pruning_1_part_1.pdf}}%
  \end{picture}%
\endgroup%
\raisebox{1.5em}{$\rightsquigarrow$} \def\svgwidth{2cm}
    %% Creator: Inkscape 1.4.2 (ebf0e940d0, 2025-05-08), www.inkscape.org
%% PDF/EPS/PS + LaTeX output extension by Johan Engelen, 2010
%% Accompanies image file 'pruning_1_part_2.pdf' (pdf, eps, ps)
%%
%% To include the image in your LaTeX document, write
%%   \input{<filename>.pdf_tex}
%%  instead of
%%   \includegraphics{<filename>.pdf}
%% To scale the image, write
%%   \def\svgwidth{<desired width>}
%%   \input{<filename>.pdf_tex}
%%  instead of
%%   \includegraphics[width=<desired width>]{<filename>.pdf}
%%
%% Images with a different path to the parent latex file can
%% be accessed with the `import' package (which may need to be
%% installed) using
%%   \usepackage{import}
%% in the preamble, and then including the image with
%%   \import{<path to file>}{<filename>.pdf_tex}
%% Alternatively, one can specify
%%   \graphicspath{{<path to file>/}}
%% 
%% For more information, please see info/svg-inkscape on CTAN:
%%   http://tug.ctan.org/tex-archive/info/svg-inkscape
%%
\begingroup%
  \makeatletter%
  \providecommand\color[2][]{%
    \errmessage{(Inkscape) Color is used for the text in Inkscape, but the package 'color.sty' is not loaded}%
    \renewcommand\color[2][]{}%
  }%
  \providecommand\transparent[1]{%
    \errmessage{(Inkscape) Transparency is used (non-zero) for the text in Inkscape, but the package 'transparent.sty' is not loaded}%
    \renewcommand\transparent[1]{}%
  }%
  \providecommand\rotatebox[2]{#2}%
  \newcommand*\fsize{\dimexpr\f@size pt\relax}%
  \newcommand*\lineheight[1]{\fontsize{\fsize}{#1\fsize}\selectfont}%
  \ifx\svgwidth\undefined%
    \setlength{\unitlength}{87.88095249bp}%
    \ifx\svgscale\undefined%
      \relax%
    \else%
      \setlength{\unitlength}{\unitlength * \real{\svgscale}}%
    \fi%
  \else%
    \setlength{\unitlength}{\svgwidth}%
  \fi%
  \global\let\svgwidth\undefined%
  \global\let\svgscale\undefined%
  \makeatother%
  \begin{picture}(1,0.51815028)%
    \lineheight{1}%
    \setlength\tabcolsep{0pt}%
    \put(0,0){\includegraphics[width=\unitlength,page=1]{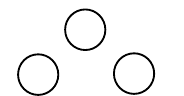}}%
    \put(-0.00548566,0.0958586){\color[rgb]{0,0,0}\makebox(0,0)[lt]{\lineheight{1.25}\smash{\begin{tabular}[t]{l}$a$\end{tabular}}}}%
    \put(0.86126303,0.09392483){\color[rgb]{0,0,0}\makebox(0,0)[lt]{\lineheight{1.25}\smash{\begin{tabular}[t]{l}$c$\end{tabular}}}}%
    \put(0.31288969,0.45090555){\color[rgb]{0,0,0}\makebox(0,0)[lt]{\lineheight{1.25}\smash{\begin{tabular}[t]{l}$b$\end{tabular}}}}%
    \put(0,0){\includegraphics[width=\unitlength,page=2]{pruning_1_part_2.pdf}}%
  \end{picture}%
\endgroup%
 \\ $w\neq y = z$} & \vspace{0.3cm}\def\svgscale{0.43}%% Creator: Inkscape 1.4.2 (ebf0e940d0, 2025-05-08), www.inkscape.org
%% PDF/EPS/PS + LaTeX output extension by Johan Engelen, 2010
%% Accompanies image file 'first_row_before.pdf' (pdf, eps, ps)
%%
%% To include the image in your LaTeX document, write
%%   \input{<filename>.pdf_tex}
%%  instead of
%%   \includegraphics{<filename>.pdf}
%% To scale the image, write
%%   \def\svgwidth{<desired width>}
%%   \input{<filename>.pdf_tex}
%%  instead of
%%   \includegraphics[width=<desired width>]{<filename>.pdf}
%%
%% Images with a different path to the parent latex file can
%% be accessed with the `import' package (which may need to be
%% installed) using
%%   \usepackage{import}
%% in the preamble, and then including the image with
%%   \import{<path to file>}{<filename>.pdf_tex}
%% Alternatively, one can specify
%%   \graphicspath{{<path to file>/}}
%% 
%% For more information, please see info/svg-inkscape on CTAN:
%%   http://tug.ctan.org/tex-archive/info/svg-inkscape
%%
\begingroup%
  \makeatletter%
  \providecommand\color[2][]{%
    \errmessage{(Inkscape) Color is used for the text in Inkscape, but the package 'color.sty' is not loaded}%
    \renewcommand\color[2][]{}%
  }%
  \providecommand\transparent[1]{%
    \errmessage{(Inkscape) Transparency is used (non-zero) for the text in Inkscape, but the package 'transparent.sty' is not loaded}%
    \renewcommand\transparent[1]{}%
  }%
  \providecommand\rotatebox[2]{#2}%
  \newcommand*\fsize{\dimexpr\f@size pt\relax}%
  \newcommand*\lineheight[1]{\fontsize{\fsize}{#1\fsize}\selectfont}%
  \ifx\svgwidth\undefined%
    \setlength{\unitlength}{261.82349017bp}%
    \ifx\svgscale\undefined%
      \relax%
    \else%
      \setlength{\unitlength}{\unitlength * \real{\svgscale}}%
    \fi%
  \else%
    \setlength{\unitlength}{\svgwidth}%
  \fi%
  \global\let\svgwidth\undefined%
  \global\let\svgscale\undefined%
  \makeatother%
  \begin{picture}(1,0.60452903)%
    \lineheight{1}%
    \setlength\tabcolsep{0pt}%
    \put(0,0){\includegraphics[width=\unitlength,page=1]{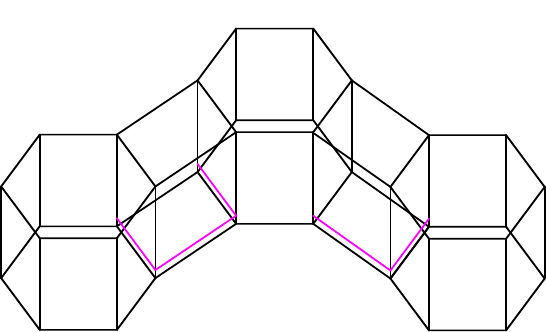}}%
    \put(0.10768171,0.38616691){\color[rgb]{0,0,0}\makebox(0,0)[lt]{\lineheight{1.25}\smash{\begin{tabular}[t]{l}$a$\end{tabular}}}}%
    \put(0.46872992,0.57840462){\color[rgb]{0,0,0}\makebox(0,0)[lt]{\lineheight{1.25}\smash{\begin{tabular}[t]{l}$b$\end{tabular}}}}%
    \put(0.82654692,0.39228683){\color[rgb]{0,0,0}\makebox(0,0)[lt]{\lineheight{1.25}\smash{\begin{tabular}[t]{l}$c$\end{tabular}}}}%
    \put(0,0){\includegraphics[width=\unitlength,page=2]{first_row_before.pdf}}%
  \end{picture}%
\endgroup%
 \raisebox{2.0em}{$ - $ } \def\svgscale{0.43}%% Creator: Inkscape 1.4.2 (ebf0e940d0, 2025-05-08), www.inkscape.org
%% PDF/EPS/PS + LaTeX output extension by Johan Engelen, 2010
%% Accompanies image file 'first_row_after.pdf' (pdf, eps, ps)
%%
%% To include the image in your LaTeX document, write
%%   \input{<filename>.pdf_tex}
%%  instead of
%%   \includegraphics{<filename>.pdf}
%% To scale the image, write
%%   \def\svgwidth{<desired width>}
%%   \input{<filename>.pdf_tex}
%%  instead of
%%   \includegraphics[width=<desired width>]{<filename>.pdf}
%%
%% Images with a different path to the parent latex file can
%% be accessed with the `import' package (which may need to be
%% installed) using
%%   \usepackage{import}
%% in the preamble, and then including the image with
%%   \import{<path to file>}{<filename>.pdf_tex}
%% Alternatively, one can specify
%%   \graphicspath{{<path to file>/}}
%% 
%% For more information, please see info/svg-inkscape on CTAN:
%%   http://tug.ctan.org/tex-archive/info/svg-inkscape
%%
\begingroup%
  \makeatletter%
  \providecommand\color[2][]{%
    \errmessage{(Inkscape) Color is used for the text in Inkscape, but the package 'color.sty' is not loaded}%
    \renewcommand\color[2][]{}%
  }%
  \providecommand\transparent[1]{%
    \errmessage{(Inkscape) Transparency is used (non-zero) for the text in Inkscape, but the package 'transparent.sty' is not loaded}%
    \renewcommand\transparent[1]{}%
  }%
  \providecommand\rotatebox[2]{#2}%
  \newcommand*\fsize{\dimexpr\f@size pt\relax}%
  \newcommand*\lineheight[1]{\fontsize{\fsize}{#1\fsize}\selectfont}%
  \ifx\svgwidth\undefined%
    \setlength{\unitlength}{261.82338204bp}%
    \ifx\svgscale\undefined%
      \relax%
    \else%
      \setlength{\unitlength}{\unitlength * \real{\svgscale}}%
    \fi%
  \else%
    \setlength{\unitlength}{\svgwidth}%
  \fi%
  \global\let\svgwidth\undefined%
  \global\let\svgscale\undefined%
  \makeatother%
  \begin{picture}(1,0.60101745)%
    \lineheight{1}%
    \setlength\tabcolsep{0pt}%
    \put(0,0){\includegraphics[width=\unitlength,page=1]{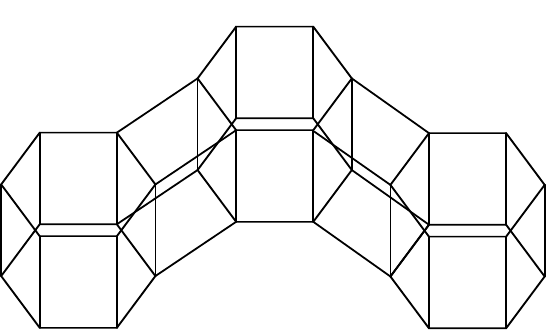}}%
    \put(0.47601099,0.57489314){\color[rgb]{0,0,0}\makebox(0,0)[lt]{\lineheight{1.25}\smash{\begin{tabular}[t]{l}$b$\end{tabular}}}}%
    \put(0,0){\includegraphics[width=\unitlength,page=2]{first_row_after.pdf}}%
    \put(0.10535779,0.39629246){\color[rgb]{0,0,0}\makebox(0,0)[lt]{\lineheight{1.25}\smash{\begin{tabular}[t]{l}$a$\end{tabular}}}}%
    \put(0.81930891,0.37569778){\color[rgb]{0,0,0}\makebox(0,0)[lt]{\lineheight{1.25}\smash{\begin{tabular}[t]{l}$c$\end{tabular}}}}%
  \end{picture}%
\endgroup%
\vspace{0.2cm}\\
      \hline
      \makecell{\def\svgscale{0.8}
      %% Creator: Inkscape 1.4.2 (ebf0e940d0, 2025-05-08), www.inkscape.org
%% PDF/EPS/PS + LaTeX output extension by Johan Engelen, 2010
%% Accompanies image file 'pruning_2_part_1.pdf' (pdf, eps, ps)
%%
%% To include the image in your LaTeX document, write
%%   \input{<filename>.pdf_tex}
%%  instead of
%%   \includegraphics{<filename>.pdf}
%% To scale the image, write
%%   \def\svgwidth{<desired width>}
%%   \input{<filename>.pdf_tex}
%%  instead of
%%   \includegraphics[width=<desired width>]{<filename>.pdf}
%%
%% Images with a different path to the parent latex file can
%% be accessed with the `import' package (which may need to be
%% installed) using
%%   \usepackage{import}
%% in the preamble, and then including the image with
%%   \import{<path to file>}{<filename>.pdf_tex}
%% Alternatively, one can specify
%%   \graphicspath{{<path to file>/}}
%% 
%% For more information, please see info/svg-inkscape on CTAN:
%%   http://tug.ctan.org/tex-archive/info/svg-inkscape
%%
\begingroup%
  \makeatletter%
  \providecommand\color[2][]{%
    \errmessage{(Inkscape) Color is used for the text in Inkscape, but the package 'color.sty' is not loaded}%
    \renewcommand\color[2][]{}%
  }%
  \providecommand\transparent[1]{%
    \errmessage{(Inkscape) Transparency is used (non-zero) for the text in Inkscape, but the package 'transparent.sty' is not loaded}%
    \renewcommand\transparent[1]{}%
  }%
  \providecommand\rotatebox[2]{#2}%
  \newcommand*\fsize{\dimexpr\f@size pt\relax}%
  \newcommand*\lineheight[1]{\fontsize{\fsize}{#1\fsize}\selectfont}%
  \ifx\svgwidth\undefined%
    \setlength{\unitlength}{87.88092005bp}%
    \ifx\svgscale\undefined%
      \relax%
    \else%
      \setlength{\unitlength}{\unitlength * \real{\svgscale}}%
    \fi%
  \else%
    \setlength{\unitlength}{\svgwidth}%
  \fi%
  \global\let\svgwidth\undefined%
  \global\let\svgscale\undefined%
  \makeatother%
  \begin{picture}(1,0.51815256)%
    \lineheight{1}%
    \setlength\tabcolsep{0pt}%
    \put(0,0){\includegraphics[width=\unitlength,page=1]{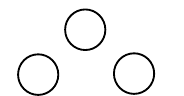}}%
    \put(-0.0054856,0.09585978){\color[rgb]{0,0,0}\makebox(0,0)[lt]{\lineheight{1.25}\smash{\begin{tabular}[t]{l}$a$\end{tabular}}}}%
    \put(0.86126313,0.09392551){\color[rgb]{0,0,0}\makebox(0,0)[lt]{\lineheight{1.25}\smash{\begin{tabular}[t]{l}$c$\end{tabular}}}}%
    \put(0.3128894,0.45090784){\color[rgb]{0,0,0}\makebox(0,0)[lt]{\lineheight{1.25}\smash{\begin{tabular}[t]{l}$b$\end{tabular}}}}%
    \put(0,0){\includegraphics[width=\unitlength,page=2]{pruning_2_part_1.pdf}}%
  \end{picture}%
\endgroup%
\raisebox{1.5em}{ $\rightsquigarrow$} \def\svgwidth{2cm}
    %% Creator: Inkscape 1.4.2 (ebf0e940d0, 2025-05-08), www.inkscape.org
%% PDF/EPS/PS + LaTeX output extension by Johan Engelen, 2010
%% Accompanies image file 'pruning_2_part_2.pdf' (pdf, eps, ps)
%%
%% To include the image in your LaTeX document, write
%%   \input{<filename>.pdf_tex}
%%  instead of
%%   \includegraphics{<filename>.pdf}
%% To scale the image, write
%%   \def\svgwidth{<desired width>}
%%   \input{<filename>.pdf_tex}
%%  instead of
%%   \includegraphics[width=<desired width>]{<filename>.pdf}
%%
%% Images with a different path to the parent latex file can
%% be accessed with the `import' package (which may need to be
%% installed) using
%%   \usepackage{import}
%% in the preamble, and then including the image with
%%   \import{<path to file>}{<filename>.pdf_tex}
%% Alternatively, one can specify
%%   \graphicspath{{<path to file>/}}
%% 
%% For more information, please see info/svg-inkscape on CTAN:
%%   http://tug.ctan.org/tex-archive/info/svg-inkscape
%%
\begingroup%
  \makeatletter%
  \providecommand\color[2][]{%
    \errmessage{(Inkscape) Color is used for the text in Inkscape, but the package 'color.sty' is not loaded}%
    \renewcommand\color[2][]{}%
  }%
  \providecommand\transparent[1]{%
    \errmessage{(Inkscape) Transparency is used (non-zero) for the text in Inkscape, but the package 'transparent.sty' is not loaded}%
    \renewcommand\transparent[1]{}%
  }%
  \providecommand\rotatebox[2]{#2}%
  \newcommand*\fsize{\dimexpr\f@size pt\relax}%
  \newcommand*\lineheight[1]{\fontsize{\fsize}{#1\fsize}\selectfont}%
  \ifx\svgwidth\undefined%
    \setlength{\unitlength}{87.8809579bp}%
    \ifx\svgscale\undefined%
      \relax%
    \else%
      \setlength{\unitlength}{\unitlength * \real{\svgscale}}%
    \fi%
  \else%
    \setlength{\unitlength}{\svgwidth}%
  \fi%
  \global\let\svgwidth\undefined%
  \global\let\svgscale\undefined%
  \makeatother%
  \begin{picture}(1,0.5181513)%
    \lineheight{1}%
    \setlength\tabcolsep{0pt}%
    \put(0,0){\includegraphics[width=\unitlength,page=1]{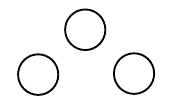}}%
    \put(-0.00548551,0.09585898){\color[rgb]{0,0,0}\makebox(0,0)[lt]{\lineheight{1.25}\smash{\begin{tabular}[t]{l}$a$\end{tabular}}}}%
    \put(0.86126336,0.0939257){\color[rgb]{0,0,0}\makebox(0,0)[lt]{\lineheight{1.25}\smash{\begin{tabular}[t]{l}$c$\end{tabular}}}}%
    \put(0.31289006,0.45090689){\color[rgb]{0,0,0}\makebox(0,0)[lt]{\lineheight{1.25}\smash{\begin{tabular}[t]{l}$b$\end{tabular}}}}%
    \put(0,0){\includegraphics[width=\unitlength,page=2]{pruning_2_part_2.pdf}}%
  \end{picture}%
\endgroup%
 \\ $w = y\neq z$} & \vspace{0.3cm}\def\svgscale{0.43}%% Creator: Inkscape 1.4.2 (ebf0e940d0, 2025-05-08), www.inkscape.org
%% PDF/EPS/PS + LaTeX output extension by Johan Engelen, 2010
%% Accompanies image file 'second_row_before.pdf' (pdf, eps, ps)
%%
%% To include the image in your LaTeX document, write
%%   \input{<filename>.pdf_tex}
%%  instead of
%%   \includegraphics{<filename>.pdf}
%% To scale the image, write
%%   \def\svgwidth{<desired width>}
%%   \input{<filename>.pdf_tex}
%%  instead of
%%   \includegraphics[width=<desired width>]{<filename>.pdf}
%%
%% Images with a different path to the parent latex file can
%% be accessed with the `import' package (which may need to be
%% installed) using
%%   \usepackage{import}
%% in the preamble, and then including the image with
%%   \import{<path to file>}{<filename>.pdf_tex}
%% Alternatively, one can specify
%%   \graphicspath{{<path to file>/}}
%% 
%% For more information, please see info/svg-inkscape on CTAN:
%%   http://tug.ctan.org/tex-archive/info/svg-inkscape
%%
\begingroup%
  \makeatletter%
  \providecommand\color[2][]{%
    \errmessage{(Inkscape) Color is used for the text in Inkscape, but the package 'color.sty' is not loaded}%
    \renewcommand\color[2][]{}%
  }%
  \providecommand\transparent[1]{%
    \errmessage{(Inkscape) Transparency is used (non-zero) for the text in Inkscape, but the package 'transparent.sty' is not loaded}%
    \renewcommand\transparent[1]{}%
  }%
  \providecommand\rotatebox[2]{#2}%
  \newcommand*\fsize{\dimexpr\f@size pt\relax}%
  \newcommand*\lineheight[1]{\fontsize{\fsize}{#1\fsize}\selectfont}%
  \ifx\svgwidth\undefined%
    \setlength{\unitlength}{261.82344692bp}%
    \ifx\svgscale\undefined%
      \relax%
    \else%
      \setlength{\unitlength}{\unitlength * \real{\svgscale}}%
    \fi%
  \else%
    \setlength{\unitlength}{\svgwidth}%
  \fi%
  \global\let\svgwidth\undefined%
  \global\let\svgscale\undefined%
  \makeatother%
  \begin{picture}(1,0.65451281)%
    \lineheight{1}%
    \setlength\tabcolsep{0pt}%
    \put(0,0){\includegraphics[width=\unitlength,page=1]{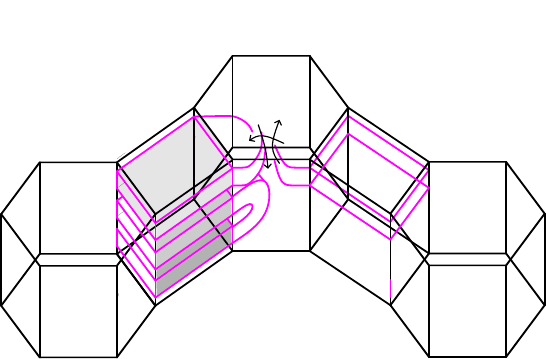}}%
    \put(0.4729287,0.5842439){\color[rgb]{0,0,0}\makebox(0,0)[lt]{\lineheight{1.25}\smash{\begin{tabular}[t]{l}$b$\end{tabular}}}}%
    \put(0.0987661,0.39020052){\color[rgb]{0,0,0}\makebox(0,0)[lt]{\lineheight{1.25}\smash{\begin{tabular}[t]{l}$a$\end{tabular}}}}%
    \put(0.83331177,0.38676799){\color[rgb]{0,0,0}\makebox(0,0)[lt]{\lineheight{1.25}\smash{\begin{tabular}[t]{l}$c$\end{tabular}}}}%
    \put(0,0){\includegraphics[width=\unitlength,page=2]{second_row_before.pdf}}%
  \end{picture}%
\endgroup%
 \raisebox{2.0em}{$ - $ } \def\svgscale{0.43}%% Creator: Inkscape 1.4.2 (ebf0e940d0, 2025-05-08), www.inkscape.org
%% PDF/EPS/PS + LaTeX output extension by Johan Engelen, 2010
%% Accompanies image file 'second_row_after.pdf' (pdf, eps, ps)
%%
%% To include the image in your LaTeX document, write
%%   \input{<filename>.pdf_tex}
%%  instead of
%%   \includegraphics{<filename>.pdf}
%% To scale the image, write
%%   \def\svgwidth{<desired width>}
%%   \input{<filename>.pdf_tex}
%%  instead of
%%   \includegraphics[width=<desired width>]{<filename>.pdf}
%%
%% Images with a different path to the parent latex file can
%% be accessed with the `import' package (which may need to be
%% installed) using
%%   \usepackage{import}
%% in the preamble, and then including the image with
%%   \import{<path to file>}{<filename>.pdf_tex}
%% Alternatively, one can specify
%%   \graphicspath{{<path to file>/}}
%% 
%% For more information, please see info/svg-inkscape on CTAN:
%%   http://tug.ctan.org/tex-archive/info/svg-inkscape
%%
\begingroup%
  \makeatletter%
  \providecommand\color[2][]{%
    \errmessage{(Inkscape) Color is used for the text in Inkscape, but the package 'color.sty' is not loaded}%
    \renewcommand\color[2][]{}%
  }%
  \providecommand\transparent[1]{%
    \errmessage{(Inkscape) Transparency is used (non-zero) for the text in Inkscape, but the package 'transparent.sty' is not loaded}%
    \renewcommand\transparent[1]{}%
  }%
  \providecommand\rotatebox[2]{#2}%
  \newcommand*\fsize{\dimexpr\f@size pt\relax}%
  \newcommand*\lineheight[1]{\fontsize{\fsize}{#1\fsize}\selectfont}%
  \ifx\svgwidth\undefined%
    \setlength{\unitlength}{261.82336041bp}%
    \ifx\svgscale\undefined%
      \relax%
    \else%
      \setlength{\unitlength}{\unitlength * \real{\svgscale}}%
    \fi%
  \else%
    \setlength{\unitlength}{\svgwidth}%
  \fi%
  \global\let\svgwidth\undefined%
  \global\let\svgscale\undefined%
  \makeatother%
  \begin{picture}(1,0.6010175)%
    \lineheight{1}%
    \setlength\tabcolsep{0pt}%
    \put(0,0){\includegraphics[width=\unitlength,page=1]{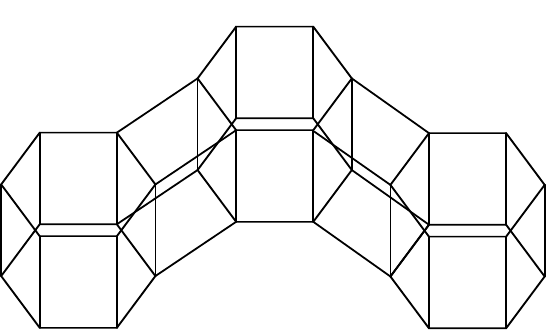}}%
    \put(0.47601097,0.57489297){\color[rgb]{0,0,0}\makebox(0,0)[lt]{\lineheight{1.25}\smash{\begin{tabular}[t]{l}$b$\end{tabular}}}}%
    \put(0,0){\includegraphics[width=\unitlength,page=2]{second_row_after.pdf}}%
    \put(0.10535784,0.39629243){\color[rgb]{0,0,0}\makebox(0,0)[lt]{\lineheight{1.25}\smash{\begin{tabular}[t]{l}$a$\end{tabular}}}}%
    \put(0.81930904,0.37569759){\color[rgb]{0,0,0}\makebox(0,0)[lt]{\lineheight{1.25}\smash{\begin{tabular}[t]{l}$c$\end{tabular}}}}%
  \end{picture}%
\endgroup%
\vspace{0.2cm}\\
     \hline
     \makecell{\def\svgscale{0.8}
      %% Creator: Inkscape 1.4.2 (ebf0e940d0, 2025-05-08), www.inkscape.org
%% PDF/EPS/PS + LaTeX output extension by Johan Engelen, 2010
%% Accompanies image file 'pruning_3_part_1.pdf' (pdf, eps, ps)
%%
%% To include the image in your LaTeX document, write
%%   \input{<filename>.pdf_tex}
%%  instead of
%%   \includegraphics{<filename>.pdf}
%% To scale the image, write
%%   \def\svgwidth{<desired width>}
%%   \input{<filename>.pdf_tex}
%%  instead of
%%   \includegraphics[width=<desired width>]{<filename>.pdf}
%%
%% Images with a different path to the parent latex file can
%% be accessed with the `import' package (which may need to be
%% installed) using
%%   \usepackage{import}
%% in the preamble, and then including the image with
%%   \import{<path to file>}{<filename>.pdf_tex}
%% Alternatively, one can specify
%%   \graphicspath{{<path to file>/}}
%% 
%% For more information, please see info/svg-inkscape on CTAN:
%%   http://tug.ctan.org/tex-archive/info/svg-inkscape
%%
\begingroup%
  \makeatletter%
  \providecommand\color[2][]{%
    \errmessage{(Inkscape) Color is used for the text in Inkscape, but the package 'color.sty' is not loaded}%
    \renewcommand\color[2][]{}%
  }%
  \providecommand\transparent[1]{%
    \errmessage{(Inkscape) Transparency is used (non-zero) for the text in Inkscape, but the package 'transparent.sty' is not loaded}%
    \renewcommand\transparent[1]{}%
  }%
  \providecommand\rotatebox[2]{#2}%
  \newcommand*\fsize{\dimexpr\f@size pt\relax}%
  \newcommand*\lineheight[1]{\fontsize{\fsize}{#1\fsize}\selectfont}%
  \ifx\svgwidth\undefined%
    \setlength{\unitlength}{50.94389007bp}%
    \ifx\svgscale\undefined%
      \relax%
    \else%
      \setlength{\unitlength}{\unitlength * \real{\svgscale}}%
    \fi%
  \else%
    \setlength{\unitlength}{\svgwidth}%
  \fi%
  \global\let\svgwidth\undefined%
  \global\let\svgscale\undefined%
  \makeatother%
  \begin{picture}(1,0.89383713)%
    \lineheight{1}%
    \setlength\tabcolsep{0pt}%
    \put(0,0){\includegraphics[width=\unitlength,page=1]{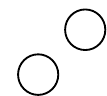}}%
    \put(-0.00946298,0.16536135){\color[rgb]{0,0,0}\makebox(0,0)[lt]{\lineheight{1.25}\smash{\begin{tabular}[t]{l}$a$\end{tabular}}}}%
    \put(0.53975131,0.77783644){\color[rgb]{0,0,0}\makebox(0,0)[lt]{\lineheight{1.25}\smash{\begin{tabular}[t]{l}$b$\end{tabular}}}}%
    \put(0,0){\includegraphics[width=\unitlength,page=2]{pruning_3_part_1.pdf}}%
  \end{picture}%
\endgroup%
\raisebox{1.5em}{$\rightsquigarrow$} \def\svgwidth{1.5cm}
    %% Creator: Inkscape 1.4.2 (ebf0e940d0, 2025-05-08), www.inkscape.org
%% PDF/EPS/PS + LaTeX output extension by Johan Engelen, 2010
%% Accompanies image file 'pruning_3_part_2.pdf' (pdf, eps, ps)
%%
%% To include the image in your LaTeX document, write
%%   \input{<filename>.pdf_tex}
%%  instead of
%%   \includegraphics{<filename>.pdf}
%% To scale the image, write
%%   \def\svgwidth{<desired width>}
%%   \input{<filename>.pdf_tex}
%%  instead of
%%   \includegraphics[width=<desired width>]{<filename>.pdf}
%%
%% Images with a different path to the parent latex file can
%% be accessed with the `import' package (which may need to be
%% installed) using
%%   \usepackage{import}
%% in the preamble, and then including the image with
%%   \import{<path to file>}{<filename>.pdf_tex}
%% Alternatively, one can specify
%%   \graphicspath{{<path to file>/}}
%% 
%% For more information, please see info/svg-inkscape on CTAN:
%%   http://tug.ctan.org/tex-archive/info/svg-inkscape
%%
\begingroup%
  \makeatletter%
  \providecommand\color[2][]{%
    \errmessage{(Inkscape) Color is used for the text in Inkscape, but the package 'color.sty' is not loaded}%
    \renewcommand\color[2][]{}%
  }%
  \providecommand\transparent[1]{%
    \errmessage{(Inkscape) Transparency is used (non-zero) for the text in Inkscape, but the package 'transparent.sty' is not loaded}%
    \renewcommand\transparent[1]{}%
  }%
  \providecommand\rotatebox[2]{#2}%
  \newcommand*\fsize{\dimexpr\f@size pt\relax}%
  \newcommand*\lineheight[1]{\fontsize{\fsize}{#1\fsize}\selectfont}%
  \ifx\svgwidth\undefined%
    \setlength{\unitlength}{50.94387926bp}%
    \ifx\svgscale\undefined%
      \relax%
    \else%
      \setlength{\unitlength}{\unitlength * \real{\svgscale}}%
    \fi%
  \else%
    \setlength{\unitlength}{\svgwidth}%
  \fi%
  \global\let\svgwidth\undefined%
  \global\let\svgscale\undefined%
  \makeatother%
  \begin{picture}(1,0.89383731)%
    \lineheight{1}%
    \setlength\tabcolsep{0pt}%
    \put(0,0){\includegraphics[width=\unitlength,page=1]{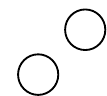}}%
    \put(-0.00946276,0.16536132){\color[rgb]{0,0,0}\makebox(0,0)[lt]{\lineheight{1.25}\smash{\begin{tabular}[t]{l}$a$\end{tabular}}}}%
    \put(0.53975153,0.77783655){\color[rgb]{0,0,0}\makebox(0,0)[lt]{\lineheight{1.25}\smash{\begin{tabular}[t]{l}$b$\end{tabular}}}}%
    \put(0,0){\includegraphics[width=\unitlength,page=2]{pruning_3_part_2.pdf}}%
  \end{picture}%
\endgroup%
 \\ $w=y=z$} & \vspace{0.3cm} \def\svgscale{0.45}%% Creator: Inkscape 1.4.2 (ebf0e940d0, 2025-05-08), www.inkscape.org
%% PDF/EPS/PS + LaTeX output extension by Johan Engelen, 2010
%% Accompanies image file 'third_row_before.pdf' (pdf, eps, ps)
%%
%% To include the image in your LaTeX document, write
%%   \input{<filename>.pdf_tex}
%%  instead of
%%   \includegraphics{<filename>.pdf}
%% To scale the image, write
%%   \def\svgwidth{<desired width>}
%%   \input{<filename>.pdf_tex}
%%  instead of
%%   \includegraphics[width=<desired width>]{<filename>.pdf}
%%
%% Images with a different path to the parent latex file can
%% be accessed with the `import' package (which may need to be
%% installed) using
%%   \usepackage{import}
%% in the preamble, and then including the image with
%%   \import{<path to file>}{<filename>.pdf_tex}
%% Alternatively, one can specify
%%   \graphicspath{{<path to file>/}}
%% 
%% For more information, please see info/svg-inkscape on CTAN:
%%   http://tug.ctan.org/tex-archive/info/svg-inkscape
%%
\begingroup%
  \makeatletter%
  \providecommand\color[2][]{%
    \errmessage{(Inkscape) Color is used for the text in Inkscape, but the package 'color.sty' is not loaded}%
    \renewcommand\color[2][]{}%
  }%
  \providecommand\transparent[1]{%
    \errmessage{(Inkscape) Transparency is used (non-zero) for the text in Inkscape, but the package 'transparent.sty' is not loaded}%
    \renewcommand\transparent[1]{}%
  }%
  \providecommand\rotatebox[2]{#2}%
  \newcommand*\fsize{\dimexpr\f@size pt\relax}%
  \newcommand*\lineheight[1]{\fontsize{\fsize}{#1\fsize}\selectfont}%
  \ifx\svgwidth\undefined%
    \setlength{\unitlength}{169.32503047bp}%
    \ifx\svgscale\undefined%
      \relax%
    \else%
      \setlength{\unitlength}{\unitlength * \real{\svgscale}}%
    \fi%
  \else%
    \setlength{\unitlength}{\svgwidth}%
  \fi%
  \global\let\svgwidth\undefined%
  \global\let\svgscale\undefined%
  \makeatother%
  \begin{picture}(1,0.855255)%
    \lineheight{1}%
    \setlength\tabcolsep{0pt}%
    \put(0,0){\includegraphics[width=\unitlength,page=1]{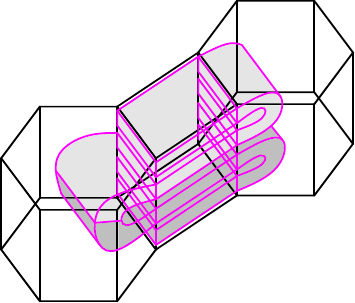}}%
  \end{picture}%
\endgroup%
 \raisebox{2.0em}{$ - $ } \def\svgscale{0.45}%% Creator: Inkscape 1.4.2 (ebf0e940d0, 2025-05-08), www.inkscape.org
%% PDF/EPS/PS + LaTeX output extension by Johan Engelen, 2010
%% Accompanies image file 'third_row_after.pdf' (pdf, eps, ps)
%%
%% To include the image in your LaTeX document, write
%%   \input{<filename>.pdf_tex}
%%  instead of
%%   \includegraphics{<filename>.pdf}
%% To scale the image, write
%%   \def\svgwidth{<desired width>}
%%   \input{<filename>.pdf_tex}
%%  instead of
%%   \includegraphics[width=<desired width>]{<filename>.pdf}
%%
%% Images with a different path to the parent latex file can
%% be accessed with the `import' package (which may need to be
%% installed) using
%%   \usepackage{import}
%% in the preamble, and then including the image with
%%   \import{<path to file>}{<filename>.pdf_tex}
%% Alternatively, one can specify
%%   \graphicspath{{<path to file>/}}
%% 
%% For more information, please see info/svg-inkscape on CTAN:
%%   http://tug.ctan.org/tex-archive/info/svg-inkscape
%%
\begingroup%
  \makeatletter%
  \providecommand\color[2][]{%
    \errmessage{(Inkscape) Color is used for the text in Inkscape, but the package 'color.sty' is not loaded}%
    \renewcommand\color[2][]{}%
  }%
  \providecommand\transparent[1]{%
    \errmessage{(Inkscape) Transparency is used (non-zero) for the text in Inkscape, but the package 'transparent.sty' is not loaded}%
    \renewcommand\transparent[1]{}%
  }%
  \providecommand\rotatebox[2]{#2}%
  \newcommand*\fsize{\dimexpr\f@size pt\relax}%
  \newcommand*\lineheight[1]{\fontsize{\fsize}{#1\fsize}\selectfont}%
  \ifx\svgwidth\undefined%
    \setlength{\unitlength}{169.32514942bp}%
    \ifx\svgscale\undefined%
      \relax%
    \else%
      \setlength{\unitlength}{\unitlength * \real{\svgscale}}%
    \fi%
  \else%
    \setlength{\unitlength}{\svgwidth}%
  \fi%
  \global\let\svgwidth\undefined%
  \global\let\svgscale\undefined%
  \makeatother%
  \begin{picture}(1,0.95013331)%
    \lineheight{1}%
    \setlength\tabcolsep{0pt}%
    \put(0,0){\includegraphics[width=\unitlength,page=1]{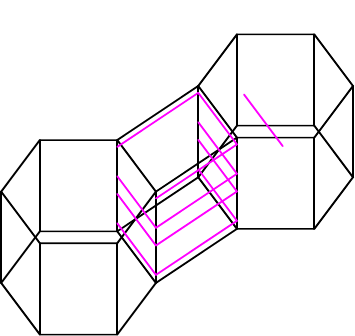}}%
    \put(0.16650595,0.59580739){\color[rgb]{0,0,0}\makebox(0,0)[lt]{\lineheight{1.25}\smash{\begin{tabular}[t]{l}$a$\end{tabular}}}}%
    \put(0.72718637,0.90973778){\color[rgb]{0,0,0}\makebox(0,0)[lt]{\lineheight{1.25}\smash{\begin{tabular}[t]{l}$b$\end{tabular}}}}%
    \put(0,0){\includegraphics[width=\unitlength,page=2]{third_row_after.pdf}}%
  \end{picture}%
\endgroup%
\vspace{0.2cm}\\
     \hline
     \makecell{\def\svgscale{0.8}
      %% Creator: Inkscape 1.4.2 (ebf0e940d0, 2025-05-08), www.inkscape.org
%% PDF/EPS/PS + LaTeX output extension by Johan Engelen, 2010
%% Accompanies image file 'pruning_4_part_1.pdf' (pdf, eps, ps)
%%
%% To include the image in your LaTeX document, write
%%   \input{<filename>.pdf_tex}
%%  instead of
%%   \includegraphics{<filename>.pdf}
%% To scale the image, write
%%   \def\svgwidth{<desired width>}
%%   \input{<filename>.pdf_tex}
%%  instead of
%%   \includegraphics[width=<desired width>]{<filename>.pdf}
%%
%% Images with a different path to the parent latex file can
%% be accessed with the `import' package (which may need to be
%% installed) using
%%   \usepackage{import}
%% in the preamble, and then including the image with
%%   \import{<path to file>}{<filename>.pdf_tex}
%% Alternatively, one can specify
%%   \graphicspath{{<path to file>/}}
%% 
%% For more information, please see info/svg-inkscape on CTAN:
%%   http://tug.ctan.org/tex-archive/info/svg-inkscape
%%
\begingroup%
  \makeatletter%
  \providecommand\color[2][]{%
    \errmessage{(Inkscape) Color is used for the text in Inkscape, but the package 'color.sty' is not loaded}%
    \renewcommand\color[2][]{}%
  }%
  \providecommand\transparent[1]{%
    \errmessage{(Inkscape) Transparency is used (non-zero) for the text in Inkscape, but the package 'transparent.sty' is not loaded}%
    \renewcommand\transparent[1]{}%
  }%
  \providecommand\rotatebox[2]{#2}%
  \newcommand*\fsize{\dimexpr\f@size pt\relax}%
  \newcommand*\lineheight[1]{\fontsize{\fsize}{#1\fsize}\selectfont}%
  \ifx\svgwidth\undefined%
    \setlength{\unitlength}{87.88094168bp}%
    \ifx\svgscale\undefined%
      \relax%
    \else%
      \setlength{\unitlength}{\unitlength * \real{\svgscale}}%
    \fi%
  \else%
    \setlength{\unitlength}{\svgwidth}%
  \fi%
  \global\let\svgwidth\undefined%
  \global\let\svgscale\undefined%
  \makeatother%
  \begin{picture}(1,0.51815034)%
    \lineheight{1}%
    \setlength\tabcolsep{0pt}%
    \put(0,0){\includegraphics[width=\unitlength,page=1]{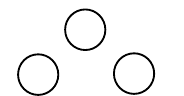}}%
    \put(-0.00548562,0.09585846){\color[rgb]{0,0,0}\makebox(0,0)[lt]{\lineheight{1.25}\smash{\begin{tabular}[t]{l}$a$\end{tabular}}}}%
    \put(0.86126304,0.09392468){\color[rgb]{0,0,0}\makebox(0,0)[lt]{\lineheight{1.25}\smash{\begin{tabular}[t]{l}$c$\end{tabular}}}}%
    \put(0.31288951,0.45090545){\color[rgb]{0,0,0}\makebox(0,0)[lt]{\lineheight{1.25}\smash{\begin{tabular}[t]{l}$b$\end{tabular}}}}%
    \put(0,0){\includegraphics[width=\unitlength,page=2]{pruning_4_part_1.pdf}}%
  \end{picture}%
\endgroup%
\raisebox{1.5em}{$\rightsquigarrow$} \def\svgwidth{2cm}
    %% Creator: Inkscape 1.4.2 (ebf0e940d0, 2025-05-08), www.inkscape.org
%% PDF/EPS/PS + LaTeX output extension by Johan Engelen, 2010
%% Accompanies image file 'pruning_4_part_2.pdf' (pdf, eps, ps)
%%
%% To include the image in your LaTeX document, write
%%   \input{<filename>.pdf_tex}
%%  instead of
%%   \includegraphics{<filename>.pdf}
%% To scale the image, write
%%   \def\svgwidth{<desired width>}
%%   \input{<filename>.pdf_tex}
%%  instead of
%%   \includegraphics[width=<desired width>]{<filename>.pdf}
%%
%% Images with a different path to the parent latex file can
%% be accessed with the `import' package (which may need to be
%% installed) using
%%   \usepackage{import}
%% in the preamble, and then including the image with
%%   \import{<path to file>}{<filename>.pdf_tex}
%% Alternatively, one can specify
%%   \graphicspath{{<path to file>/}}
%% 
%% For more information, please see info/svg-inkscape on CTAN:
%%   http://tug.ctan.org/tex-archive/info/svg-inkscape
%%
\begingroup%
  \makeatletter%
  \providecommand\color[2][]{%
    \errmessage{(Inkscape) Color is used for the text in Inkscape, but the package 'color.sty' is not loaded}%
    \renewcommand\color[2][]{}%
  }%
  \providecommand\transparent[1]{%
    \errmessage{(Inkscape) Transparency is used (non-zero) for the text in Inkscape, but the package 'transparent.sty' is not loaded}%
    \renewcommand\transparent[1]{}%
  }%
  \providecommand\rotatebox[2]{#2}%
  \newcommand*\fsize{\dimexpr\f@size pt\relax}%
  \newcommand*\lineheight[1]{\fontsize{\fsize}{#1\fsize}\selectfont}%
  \ifx\svgwidth\undefined%
    \setlength{\unitlength}{87.88094168bp}%
    \ifx\svgscale\undefined%
      \relax%
    \else%
      \setlength{\unitlength}{\unitlength * \real{\svgscale}}%
    \fi%
  \else%
    \setlength{\unitlength}{\svgwidth}%
  \fi%
  \global\let\svgwidth\undefined%
  \global\let\svgscale\undefined%
  \makeatother%
  \begin{picture}(1,0.51815084)%
    \lineheight{1}%
    \setlength\tabcolsep{0pt}%
    \put(0,0){\includegraphics[width=\unitlength,page=1]{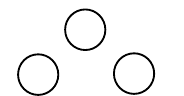}}%
    \put(-0.00548565,0.09585859){\color[rgb]{0,0,0}\makebox(0,0)[lt]{\lineheight{1.25}\smash{\begin{tabular}[t]{l}$a$\end{tabular}}}}%
    \put(0.86126314,0.09392482){\color[rgb]{0,0,0}\makebox(0,0)[lt]{\lineheight{1.25}\smash{\begin{tabular}[t]{l}$c$\end{tabular}}}}%
    \put(0.31288948,0.45090558){\color[rgb]{0,0,0}\makebox(0,0)[lt]{\lineheight{1.25}\smash{\begin{tabular}[t]{l}$b$\end{tabular}}}}%
    \put(0,0){\includegraphics[width=\unitlength,page=2]{pruning_4_part_2.pdf}}%
  \end{picture}%
\endgroup%
 \\ $w=z\neq y$} & \vspace{0.3cm} \def\svgscale{0.45}%% Creator: Inkscape 1.4.2 (ebf0e940d0, 2025-05-08), www.inkscape.org
%% PDF/EPS/PS + LaTeX output extension by Johan Engelen, 2010
%% Accompanies image file 'fourth_row_before.pdf' (pdf, eps, ps)
%%
%% To include the image in your LaTeX document, write
%%   \input{<filename>.pdf_tex}
%%  instead of
%%   \includegraphics{<filename>.pdf}
%% To scale the image, write
%%   \def\svgwidth{<desired width>}
%%   \input{<filename>.pdf_tex}
%%  instead of
%%   \includegraphics[width=<desired width>]{<filename>.pdf}
%%
%% Images with a different path to the parent latex file can
%% be accessed with the `import' package (which may need to be
%% installed) using
%%   \usepackage{import}
%% in the preamble, and then including the image with
%%   \import{<path to file>}{<filename>.pdf_tex}
%% Alternatively, one can specify
%%   \graphicspath{{<path to file>/}}
%% 
%% For more information, please see info/svg-inkscape on CTAN:
%%   http://tug.ctan.org/tex-archive/info/svg-inkscape
%%
\begingroup%
  \makeatletter%
  \providecommand\color[2][]{%
    \errmessage{(Inkscape) Color is used for the text in Inkscape, but the package 'color.sty' is not loaded}%
    \renewcommand\color[2][]{}%
  }%
  \providecommand\transparent[1]{%
    \errmessage{(Inkscape) Transparency is used (non-zero) for the text in Inkscape, but the package 'transparent.sty' is not loaded}%
    \renewcommand\transparent[1]{}%
  }%
  \providecommand\rotatebox[2]{#2}%
  \newcommand*\fsize{\dimexpr\f@size pt\relax}%
  \newcommand*\lineheight[1]{\fontsize{\fsize}{#1\fsize}\selectfont}%
  \ifx\svgwidth\undefined%
    \setlength{\unitlength}{261.82346855bp}%
    \ifx\svgscale\undefined%
      \relax%
    \else%
      \setlength{\unitlength}{\unitlength * \real{\svgscale}}%
    \fi%
  \else%
    \setlength{\unitlength}{\svgwidth}%
  \fi%
  \global\let\svgwidth\undefined%
  \global\let\svgscale\undefined%
  \makeatother%
  \begin{picture}(1,0.61531491)%
    \lineheight{1}%
    \setlength\tabcolsep{0pt}%
    \put(0,0){\includegraphics[width=\unitlength,page=1]{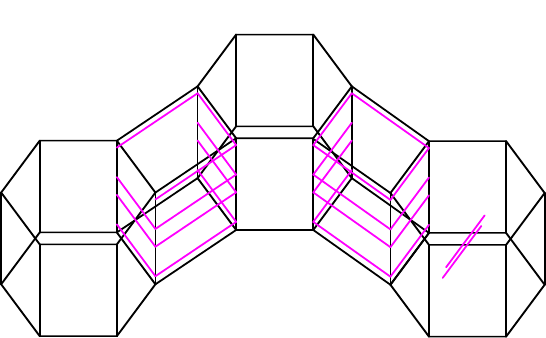}}%
    \put(0.10768178,0.38616696){\color[rgb]{0,0,0}\makebox(0,0)[lt]{\lineheight{1.25}\smash{\begin{tabular}[t]{l}$a$\end{tabular}}}}%
    \put(0.47028235,0.58919043){\color[rgb]{0,0,0}\makebox(0,0)[lt]{\lineheight{1.25}\smash{\begin{tabular}[t]{l}$b$\end{tabular}}}}%
    \put(0.82654708,0.39228679){\color[rgb]{0,0,0}\makebox(0,0)[lt]{\lineheight{1.25}\smash{\begin{tabular}[t]{l}$c$\end{tabular}}}}%
    \put(0,0){\includegraphics[width=\unitlength,page=2]{fourth_row_before.pdf}}%
  \end{picture}%
\endgroup%
 \raisebox{2.0em}{$ - $ } \def\svgscale{0.45}%% Creator: Inkscape 1.4.2 (ebf0e940d0, 2025-05-08), www.inkscape.org
%% PDF/EPS/PS + LaTeX output extension by Johan Engelen, 2010
%% Accompanies image file 'fourth_row_after.pdf' (pdf, eps, ps)
%%
%% To include the image in your LaTeX document, write
%%   \input{<filename>.pdf_tex}
%%  instead of
%%   \includegraphics{<filename>.pdf}
%% To scale the image, write
%%   \def\svgwidth{<desired width>}
%%   \input{<filename>.pdf_tex}
%%  instead of
%%   \includegraphics[width=<desired width>]{<filename>.pdf}
%%
%% Images with a different path to the parent latex file can
%% be accessed with the `import' package (which may need to be
%% installed) using
%%   \usepackage{import}
%% in the preamble, and then including the image with
%%   \import{<path to file>}{<filename>.pdf_tex}
%% Alternatively, one can specify
%%   \graphicspath{{<path to file>/}}
%% 
%% For more information, please see info/svg-inkscape on CTAN:
%%   http://tug.ctan.org/tex-archive/info/svg-inkscape
%%
\begingroup%
  \makeatletter%
  \providecommand\color[2][]{%
    \errmessage{(Inkscape) Color is used for the text in Inkscape, but the package 'color.sty' is not loaded}%
    \renewcommand\color[2][]{}%
  }%
  \providecommand\transparent[1]{%
    \errmessage{(Inkscape) Transparency is used (non-zero) for the text in Inkscape, but the package 'transparent.sty' is not loaded}%
    \renewcommand\transparent[1]{}%
  }%
  \providecommand\rotatebox[2]{#2}%
  \newcommand*\fsize{\dimexpr\f@size pt\relax}%
  \newcommand*\lineheight[1]{\fontsize{\fsize}{#1\fsize}\selectfont}%
  \ifx\svgwidth\undefined%
    \setlength{\unitlength}{261.8235118bp}%
    \ifx\svgscale\undefined%
      \relax%
    \else%
      \setlength{\unitlength}{\unitlength * \real{\svgscale}}%
    \fi%
  \else%
    \setlength{\unitlength}{\svgwidth}%
  \fi%
  \global\let\svgwidth\undefined%
  \global\let\svgscale\undefined%
  \makeatother%
  \begin{picture}(1,0.61531472)%
    \lineheight{1}%
    \setlength\tabcolsep{0pt}%
    \put(0,0){\includegraphics[width=\unitlength,page=1]{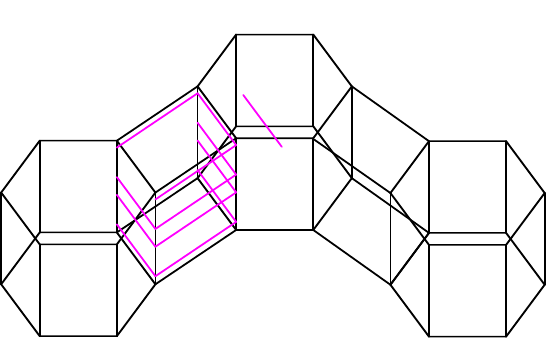}}%
    \put(0.10768184,0.38616695){\color[rgb]{0,0,0}\makebox(0,0)[lt]{\lineheight{1.25}\smash{\begin{tabular}[t]{l}$a$\end{tabular}}}}%
    \put(0.47028218,0.58919039){\color[rgb]{0,0,0}\makebox(0,0)[lt]{\lineheight{1.25}\smash{\begin{tabular}[t]{l}$b$\end{tabular}}}}%
    \put(0.82654689,0.39228695){\color[rgb]{0,0,0}\makebox(0,0)[lt]{\lineheight{1.25}\smash{\begin{tabular}[t]{l}$c$\end{tabular}}}}%
    \put(0,0){\includegraphics[width=\unitlength,page=2]{fourth_row_after.pdf}}%
  \end{picture}%
\endgroup%
\vspace{0.2cm}\\
     \hline
     \makecell{\def\svgscale{0.8}
      \raisebox{1.5em}{$\rightsquigarrow$} \def\svgwidth{2cm}
     \\ $w,y,z$ are all different} & \vspace{0.3cm} \def\svgscale{0.45}%% Creator: Inkscape 1.4.2 (ebf0e940d0, 2025-05-08), www.inkscape.org
%% PDF/EPS/PS + LaTeX output extension by Johan Engelen, 2010
%% Accompanies image file 'fifth_row_turnaround_before.pdf' (pdf, eps, ps)
%%
%% To include the image in your LaTeX document, write
%%   \input{<filename>.pdf_tex}
%%  instead of
%%   \includegraphics{<filename>.pdf}
%% To scale the image, write
%%   \def\svgwidth{<desired width>}
%%   \input{<filename>.pdf_tex}
%%  instead of
%%   \includegraphics[width=<desired width>]{<filename>.pdf}
%%
%% Images with a different path to the parent latex file can
%% be accessed with the `import' package (which may need to be
%% installed) using
%%   \usepackage{import}
%% in the preamble, and then including the image with
%%   \import{<path to file>}{<filename>.pdf_tex}
%% Alternatively, one can specify
%%   \graphicspath{{<path to file>/}}
%% 
%% For more information, please see info/svg-inkscape on CTAN:
%%   http://tug.ctan.org/tex-archive/info/svg-inkscape
%%
\begingroup%
  \makeatletter%
  \providecommand\color[2][]{%
    \errmessage{(Inkscape) Color is used for the text in Inkscape, but the package 'color.sty' is not loaded}%
    \renewcommand\color[2][]{}%
  }%
  \providecommand\transparent[1]{%
    \errmessage{(Inkscape) Transparency is used (non-zero) for the text in Inkscape, but the package 'transparent.sty' is not loaded}%
    \renewcommand\transparent[1]{}%
  }%
  \providecommand\rotatebox[2]{#2}%
  \newcommand*\fsize{\dimexpr\f@size pt\relax}%
  \newcommand*\lineheight[1]{\fontsize{\fsize}{#1\fsize}\selectfont}%
  \ifx\svgwidth\undefined%
    \setlength{\unitlength}{261.82336041bp}%
    \ifx\svgscale\undefined%
      \relax%
    \else%
      \setlength{\unitlength}{\unitlength * \real{\svgscale}}%
    \fi%
  \else%
    \setlength{\unitlength}{\svgwidth}%
  \fi%
  \global\let\svgwidth\undefined%
  \global\let\svgscale\undefined%
  \makeatother%
  \begin{picture}(1,0.92638772)%
    \lineheight{1}%
    \setlength\tabcolsep{0pt}%
    \put(0,0){\includegraphics[width=\unitlength,page=1]{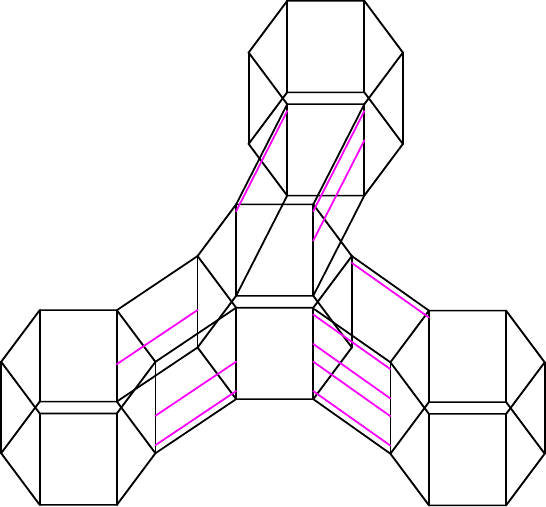}}%
    \put(0.10574439,0.37648141){\color[rgb]{0,0,0}\makebox(0,0)[lt]{\lineheight{1.25}\smash{\begin{tabular}[t]{l}$a$\end{tabular}}}}%
    \put(0.47185751,0.09338529){\color[rgb]{0,0,0}\makebox(0,0)[lt]{\lineheight{1.25}\smash{\begin{tabular}[t]{l}$b$\end{tabular}}}}%
    \put(0.83797079,0.37428904){\color[rgb]{0,0,0}\makebox(0,0)[lt]{\lineheight{1.25}\smash{\begin{tabular}[t]{l}$c$\end{tabular}}}}%
    \put(0.75042413,0.74259461){\color[rgb]{0,0,0}\makebox(0,0)[lt]{\lineheight{1.25}\smash{\begin{tabular}[t]{l}$d$\end{tabular}}}}%
    \put(0,0){\includegraphics[width=\unitlength,page=2]{fifth_row_turnaround_before.pdf}}%
  \end{picture}%
\endgroup%
 \raisebox{2.0em}{$ - $ }\def\svgscale{0.45}%% Creator: Inkscape 1.4.2 (ebf0e940d0, 2025-05-08), www.inkscape.org
%% PDF/EPS/PS + LaTeX output extension by Johan Engelen, 2010
%% Accompanies image file 'fifth_row_after.pdf' (pdf, eps, ps)
%%
%% To include the image in your LaTeX document, write
%%   \input{<filename>.pdf_tex}
%%  instead of
%%   \includegraphics{<filename>.pdf}
%% To scale the image, write
%%   \def\svgwidth{<desired width>}
%%   \input{<filename>.pdf_tex}
%%  instead of
%%   \includegraphics[width=<desired width>]{<filename>.pdf}
%%
%% Images with a different path to the parent latex file can
%% be accessed with the `import' package (which may need to be
%% installed) using
%%   \usepackage{import}
%% in the preamble, and then including the image with
%%   \import{<path to file>}{<filename>.pdf_tex}
%% Alternatively, one can specify
%%   \graphicspath{{<path to file>/}}
%% 
%% For more information, please see info/svg-inkscape on CTAN:
%%   http://tug.ctan.org/tex-archive/info/svg-inkscape
%%
\begingroup%
  \makeatletter%
  \providecommand\color[2][]{%
    \errmessage{(Inkscape) Color is used for the text in Inkscape, but the package 'color.sty' is not loaded}%
    \renewcommand\color[2][]{}%
  }%
  \providecommand\transparent[1]{%
    \errmessage{(Inkscape) Transparency is used (non-zero) for the text in Inkscape, but the package 'transparent.sty' is not loaded}%
    \renewcommand\transparent[1]{}%
  }%
  \providecommand\rotatebox[2]{#2}%
  \newcommand*\fsize{\dimexpr\f@size pt\relax}%
  \newcommand*\lineheight[1]{\fontsize{\fsize}{#1\fsize}\selectfont}%
  \ifx\svgwidth\undefined%
    \setlength{\unitlength}{261.82359831bp}%
    \ifx\svgscale\undefined%
      \relax%
    \else%
      \setlength{\unitlength}{\unitlength * \real{\svgscale}}%
    \fi%
  \else%
    \setlength{\unitlength}{\svgwidth}%
  \fi%
  \global\let\svgwidth\undefined%
  \global\let\svgscale\undefined%
  \makeatother%
  \begin{picture}(1,0.92638688)%
    \lineheight{1}%
    \setlength\tabcolsep{0pt}%
    \put(0,0){\includegraphics[width=\unitlength,page=1]{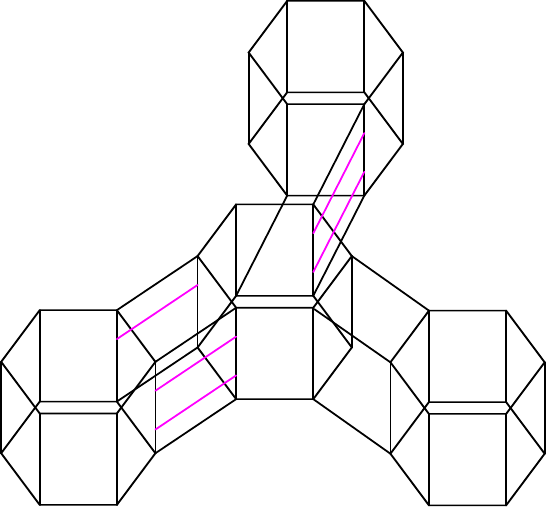}}%
    \put(0.10573674,0.37648113){\color[rgb]{0,0,0}\makebox(0,0)[lt]{\lineheight{1.25}\smash{\begin{tabular}[t]{l}$a$\end{tabular}}}}%
    \put(0.47184903,0.09911428){\color[rgb]{0,0,0}\makebox(0,0)[lt]{\lineheight{1.25}\smash{\begin{tabular}[t]{l}$b$\end{tabular}}}}%
    \put(0.83796181,0.37428876){\color[rgb]{0,0,0}\makebox(0,0)[lt]{\lineheight{1.25}\smash{\begin{tabular}[t]{l}$c$\end{tabular}}}}%
    \put(0.75041523,0.74259367){\color[rgb]{0,0,0}\makebox(0,0)[lt]{\lineheight{1.25}\smash{\begin{tabular}[t]{l}$d$\end{tabular}}}}%
    \put(0,0){\includegraphics[width=\unitlength,page=2]{fifth_row_after.pdf}}%
  \end{picture}%
\endgroup%
\vspace{0.2cm}\\
     \hline
   \end{tabular}
   \caption{The following table gives us how the type of pruning move $Z \rightsquigarrow Z'$ determines the difference $[K] - [K']$. The change in each case is $y_x+(\Mid\{w,y,z\})_x+\sgn(w,y,z)$.}
\label{pull through figure}
\end{table}
\end{lemma}
\begin{proof}
  We break up our proof into cases of pruning moves $Z\rightsquigarrow Z'$ described in the rows of Table \ref{pull through figure}. We may assume that $K'$ is defined by taking the tube $U\subset K$ parametrized by $D$, and replacing it with a tube $U'$ parametrized by $D'$. And just as in Corollary \ref{cycle move}, it suffices to compute $U-U' = ([K] - [K'])$. We first look at the second, third, and fourth rows, since they are the easiest.
  \paragraph{Rows 1 and 2} The difference $U-U'$ can be seen as the following:
  \[
    \def\svgscale{0.55}\raisebox{1cm}{ $ - $ }\def\svgscale{0.55}
  \]
  And as we can see, $U\cong U'$, since $U'$ is a homotopy of $U$ by ``pulling'' $U$ tight between faces $\mathbf{F}_y$ and $\mathbf{F}_x$. The third row can be visualized similarly, and in both cases, $U-U' = (0)$.
  \paragraph{Row 3:} The difference $U-U'$ can be visualized as the following:
  \[
    \def\svgscale{0.55}\raisebox{1cm}{ $ - $ }\def\svgscale{0.55},
  \]
  which can be seen as unfolding a crease in $U$ between faces $\mathbf{F}_a$ and $\mathbf{F}_b$. So $U\cong U'$, and therefore $U-U = (0)$.\par
  \paragraph{Row 4:} The difference $U-U'$ can be seen as the following:
  \[
    \def\svgscale{0.5}\raisebox{2.0em}{ $ - $ }\def\svgscale{0.5},
    \]
  which is the same as the difference $U - U^{\conv}$:
  \[
    \def\svgscale{0.5}\raisebox{2.0em}{ $ - $ } \def\svgscale{0.5}%% Creator: Inkscape 1.4.2 (ebf0e940d0, 2025-05-08), www.inkscape.org
%% PDF/EPS/PS + LaTeX output extension by Johan Engelen, 2010
%% Accompanies image file 'fourth_row_smoothed.pdf' (pdf, eps, ps)
%%
%% To include the image in your LaTeX document, write
%%   \input{<filename>.pdf_tex}
%%  instead of
%%   \includegraphics{<filename>.pdf}
%% To scale the image, write
%%   \def\svgwidth{<desired width>}
%%   \input{<filename>.pdf_tex}
%%  instead of
%%   \includegraphics[width=<desired width>]{<filename>.pdf}
%%
%% Images with a different path to the parent latex file can
%% be accessed with the `import' package (which may need to be
%% installed) using
%%   \usepackage{import}
%% in the preamble, and then including the image with
%%   \import{<path to file>}{<filename>.pdf_tex}
%% Alternatively, one can specify
%%   \graphicspath{{<path to file>/}}
%% 
%% For more information, please see info/svg-inkscape on CTAN:
%%   http://tug.ctan.org/tex-archive/info/svg-inkscape
%%
\begingroup%
  \makeatletter%
  \providecommand\color[2][]{%
    \errmessage{(Inkscape) Color is used for the text in Inkscape, but the package 'color.sty' is not loaded}%
    \renewcommand\color[2][]{}%
  }%
  \providecommand\transparent[1]{%
    \errmessage{(Inkscape) Transparency is used (non-zero) for the text in Inkscape, but the package 'transparent.sty' is not loaded}%
    \renewcommand\transparent[1]{}%
  }%
  \providecommand\rotatebox[2]{#2}%
  \newcommand*\fsize{\dimexpr\f@size pt\relax}%
  \newcommand*\lineheight[1]{\fontsize{\fsize}{#1\fsize}\selectfont}%
  \ifx\svgwidth\undefined%
    \setlength{\unitlength}{261.8235118bp}%
    \ifx\svgscale\undefined%
      \relax%
    \else%
      \setlength{\unitlength}{\unitlength * \real{\svgscale}}%
    \fi%
  \else%
    \setlength{\unitlength}{\svgwidth}%
  \fi%
  \global\let\svgwidth\undefined%
  \global\let\svgscale\undefined%
  \makeatother%
  \begin{picture}(1,0.61531476)%
    \lineheight{1}%
    \setlength\tabcolsep{0pt}%
    \put(0,0){\includegraphics[width=\unitlength,page=1]{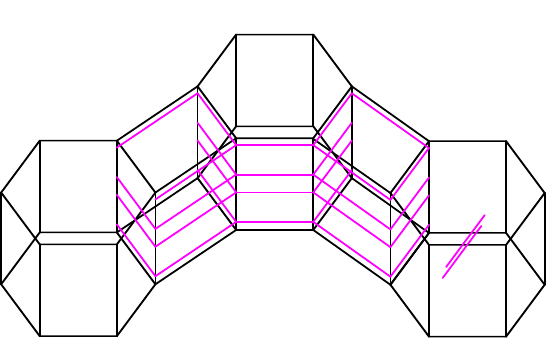}}%
    \put(0.10768184,0.38616691){\color[rgb]{0,0,0}\makebox(0,0)[lt]{\lineheight{1.25}\smash{\begin{tabular}[t]{l}$a$\end{tabular}}}}%
    \put(0.47028218,0.58919035){\color[rgb]{0,0,0}\makebox(0,0)[lt]{\lineheight{1.25}\smash{\begin{tabular}[t]{l}$b$\end{tabular}}}}%
    \put(0.82654689,0.39228687){\color[rgb]{0,0,0}\makebox(0,0)[lt]{\lineheight{1.25}\smash{\begin{tabular}[t]{l}$c$\end{tabular}}}}%
    \put(0,0){\includegraphics[width=\unitlength,page=2]{fourth_row_smoothed.pdf}}%
  \end{picture}%
\endgroup%
,
  \]
  For ease of notation, we will call $i=a_b$ and $j=c_b$:
  \[
    \def\svgscale{0.5}%% Creator: Inkscape 1.4.2 (ebf0e940d0, 2025-05-08), www.inkscape.org
%% PDF/EPS/PS + LaTeX output extension by Johan Engelen, 2010
%% Accompanies image file 'faces_of_F_b.pdf' (pdf, eps, ps)
%%
%% To include the image in your LaTeX document, write
%%   \input{<filename>.pdf_tex}
%%  instead of
%%   \includegraphics{<filename>.pdf}
%% To scale the image, write
%%   \def\svgwidth{<desired width>}
%%   \input{<filename>.pdf_tex}
%%  instead of
%%   \includegraphics[width=<desired width>]{<filename>.pdf}
%%
%% Images with a different path to the parent latex file can
%% be accessed with the `import' package (which may need to be
%% installed) using
%%   \usepackage{import}
%% in the preamble, and then including the image with
%%   \import{<path to file>}{<filename>.pdf_tex}
%% Alternatively, one can specify
%%   \graphicspath{{<path to file>/}}
%% 
%% For more information, please see info/svg-inkscape on CTAN:
%%   http://tug.ctan.org/tex-archive/info/svg-inkscape
%%
\begingroup%
  \makeatletter%
  \providecommand\color[2][]{%
    \errmessage{(Inkscape) Color is used for the text in Inkscape, but the package 'color.sty' is not loaded}%
    \renewcommand\color[2][]{}%
  }%
  \providecommand\transparent[1]{%
    \errmessage{(Inkscape) Transparency is used (non-zero) for the text in Inkscape, but the package 'transparent.sty' is not loaded}%
    \renewcommand\transparent[1]{}%
  }%
  \providecommand\rotatebox[2]{#2}%
  \newcommand*\fsize{\dimexpr\f@size pt\relax}%
  \newcommand*\lineheight[1]{\fontsize{\fsize}{#1\fsize}\selectfont}%
  \ifx\svgwidth\undefined%
    \setlength{\unitlength}{302.86536269bp}%
    \ifx\svgscale\undefined%
      \relax%
    \else%
      \setlength{\unitlength}{\unitlength * \real{\svgscale}}%
    \fi%
  \else%
    \setlength{\unitlength}{\svgwidth}%
  \fi%
  \global\let\svgwidth\undefined%
  \global\let\svgscale\undefined%
  \makeatother%
  \begin{picture}(1,0.53889915)%
    \lineheight{1}%
    \setlength\tabcolsep{0pt}%
    \put(0,0){\includegraphics[width=\unitlength,page=1]{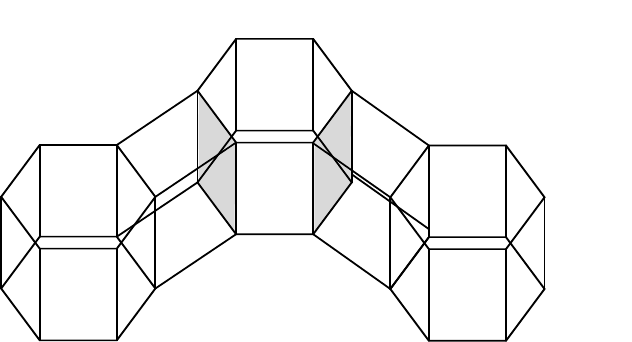}}%
    \put(0.09462072,0.33645116){\color[rgb]{0,0,0}\makebox(0,0)[lt]{\lineheight{1.25}\smash{\begin{tabular}[t]{l}$a$\end{tabular}}}}%
    \put(0.39672164,0.50799587){\color[rgb]{0,0,0}\makebox(0,0)[lt]{\lineheight{1.25}\smash{\begin{tabular}[t]{l}$b$\end{tabular}}}}%
    \put(0.72007581,0.33037873){\color[rgb]{0,0,0}\makebox(0,0)[lt]{\lineheight{1.25}\smash{\begin{tabular}[t]{l}$c$\end{tabular}}}}%
    \put(0,0){\includegraphics[width=\unitlength,page=2]{faces_of_F_b.pdf}}%
    \put(0.10372439,0.51616631){\color[rgb]{0,0,0}\makebox(0,0)[lt]{\lineheight{1.25}\smash{\begin{tabular}[t]{l}$\mathbf{F}_i(\mathbf{F}_b)$\end{tabular}}}}%
    \put(0.59410681,0.51604088){\color[rgb]{0,0,0}\makebox(0,0)[lt]{\lineheight{1.25}\smash{\begin{tabular}[t]{l}$\mathbf{F}_j(\mathbf{F}_b)$\end{tabular}}}}%
  \end{picture}%
\endgroup%
.
  \]
  So in other words, the $i^\text{th}$ face and $j^\text{th}$ face of the face $F_b$ are the faces that point towards to $F_a$ and $F_c$ respectively. For now we assume that $a < c$.\par
  By a repeated application of Proposition \ref{sliding}, we see
  \begingroup
  \allowdisplaybreaks
  \begin{align*}
    U &= U^{\conv} + (\boldvarphi_{j-1}\ldots\boldvarphi_{i+1},\alpha ) + (\boldvarphi_{j-1}\ldots\boldvarphi_{i+1},\beta)\\
      & = U^{\conv} + (\boldvarphi_{j-2}\ldots\boldvarphi_{i+1},\alpha ) + (\boldvarphi_{j-1}\boldvarphi_{j-1}\ldots\boldvarphi_{i+1},\beta)\\
      & = \ldots\\
      &= U^{\conv} + (\boldvarphi_{i+1},\alpha ) + (\boldvarphi_{i+2}\ldots\boldvarphi_{j-1}\boldvarphi_{j-1}\ldots\boldvarphi_{i+1},\beta)\\
      &= U^{\conv} + (\boldvarphi_{i+1}\ldots\boldvarphi_{j-1}\boldvarphi_{j-1}\ldots\boldvarphi_{i+1},\beta)\\
    &= U^{\conv} + (j-i-1),
  \end{align*}
  \endgroup
  where we realize the last equality by realizing each diamond composition of half-twists $\boldvarphi\diamond \boldvarphi$ becomes a full twist $(1)$.
  Therefore, $U- U^{\conv} = U-U' = i+j-1 = a_b + c_b - 1$.\par
  The case $a>c$ is similar to our worked case $a<c$.\par
  \paragraph{Row 5:}
  There are $6$ cases to work with, depending on the order of $a,b,d$, but the strategy for each case is the same. We lay out the general argument, keeping in mind the picture of $U-U'$ as
  \begin{equation}\label{column one before}
    \def\svgscale{0.5} \raisebox{5.0em}{$-$ } \def\svgscale{0.5}.
  \end{equation}
  We note $U - U'^{\conv} =  (U^{\conv} - U'^{\conv}) \diamond (U-U^{\conv})$, as depicted by
  \begingroup
  \begin{gather*}
    \allowdisplaybreaks
    \def\svgscale{0.38} \raisebox{4.0em}{$-$ } \def\svgscale{0.38}%% Creator: Inkscape 1.4.2 (ebf0e940d0, 2025-05-08), www.inkscape.org
%% PDF/EPS/PS + LaTeX output extension by Johan Engelen, 2010
%% Accompanies image file 'fifth_row_untwisted.pdf' (pdf, eps, ps)
%%
%% To include the image in your LaTeX document, write
%%   \input{<filename>.pdf_tex}
%%  instead of
%%   \includegraphics{<filename>.pdf}
%% To scale the image, write
%%   \def\svgwidth{<desired width>}
%%   \input{<filename>.pdf_tex}
%%  instead of
%%   \includegraphics[width=<desired width>]{<filename>.pdf}
%%
%% Images with a different path to the parent latex file can
%% be accessed with the `import' package (which may need to be
%% installed) using
%%   \usepackage{import}
%% in the preamble, and then including the image with
%%   \import{<path to file>}{<filename>.pdf_tex}
%% Alternatively, one can specify
%%   \graphicspath{{<path to file>/}}
%% 
%% For more information, please see info/svg-inkscape on CTAN:
%%   http://tug.ctan.org/tex-archive/info/svg-inkscape
%%
\begingroup%
  \makeatletter%
  \providecommand\color[2][]{%
    \errmessage{(Inkscape) Color is used for the text in Inkscape, but the package 'color.sty' is not loaded}%
    \renewcommand\color[2][]{}%
  }%
  \providecommand\transparent[1]{%
    \errmessage{(Inkscape) Transparency is used (non-zero) for the text in Inkscape, but the package 'transparent.sty' is not loaded}%
    \renewcommand\transparent[1]{}%
  }%
  \providecommand\rotatebox[2]{#2}%
  \newcommand*\fsize{\dimexpr\f@size pt\relax}%
  \newcommand*\lineheight[1]{\fontsize{\fsize}{#1\fsize}\selectfont}%
  \ifx\svgwidth\undefined%
    \setlength{\unitlength}{261.82338204bp}%
    \ifx\svgscale\undefined%
      \relax%
    \else%
      \setlength{\unitlength}{\unitlength * \real{\svgscale}}%
    \fi%
  \else%
    \setlength{\unitlength}{\svgwidth}%
  \fi%
  \global\let\svgwidth\undefined%
  \global\let\svgscale\undefined%
  \makeatother%
  \begin{picture}(1,0.92638781)%
    \lineheight{1}%
    \setlength\tabcolsep{0pt}%
    \put(0,0){\includegraphics[width=\unitlength,page=1]{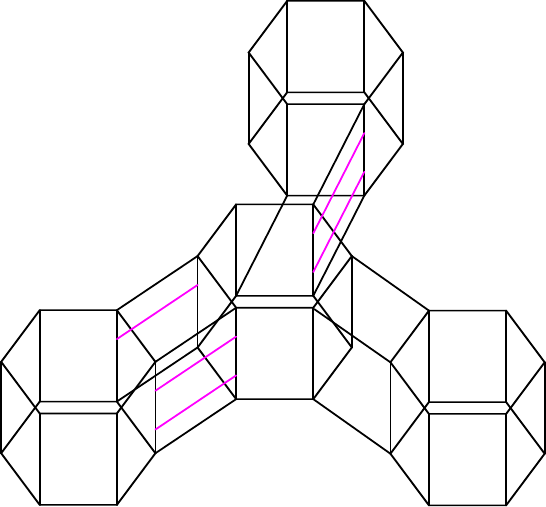}}%
    \put(0.10574435,0.37648155){\color[rgb]{0,0,0}\makebox(0,0)[lt]{\lineheight{1.25}\smash{\begin{tabular}[t]{l}$a$\end{tabular}}}}%
    \put(0.4718575,0.12203037){\color[rgb]{0,0,0}\makebox(0,0)[lt]{\lineheight{1.25}\smash{\begin{tabular}[t]{l}$b$\end{tabular}}}}%
    \put(0.83797059,0.37428918){\color[rgb]{0,0,0}\makebox(0,0)[lt]{\lineheight{1.25}\smash{\begin{tabular}[t]{l}$c$\end{tabular}}}}%
    \put(0.75042401,0.74259455){\color[rgb]{0,0,0}\makebox(0,0)[lt]{\lineheight{1.25}\smash{\begin{tabular}[t]{l}$d$\end{tabular}}}}%
    \put(0,0){\includegraphics[width=\unitlength,page=2]{fifth_row_untwisted.pdf}}%
  \end{picture}%
\endgroup%
\\    
    \raisebox{4.0em}{$=$} \def\svgscale{0.38}%% Creator: Inkscape 1.4.2 (ebf0e940d0, 2025-05-08), www.inkscape.org
%% PDF/EPS/PS + LaTeX output extension by Johan Engelen, 2010
%% Accompanies image file 'fifth_row_smoothed.pdf' (pdf, eps, ps)
%%
%% To include the image in your LaTeX document, write
%%   \input{<filename>.pdf_tex}
%%  instead of
%%   \includegraphics{<filename>.pdf}
%% To scale the image, write
%%   \def\svgwidth{<desired width>}
%%   \input{<filename>.pdf_tex}
%%  instead of
%%   \includegraphics[width=<desired width>]{<filename>.pdf}
%%
%% Images with a different path to the parent latex file can
%% be accessed with the `import' package (which may need to be
%% installed) using
%%   \usepackage{import}
%% in the preamble, and then including the image with
%%   \import{<path to file>}{<filename>.pdf_tex}
%% Alternatively, one can specify
%%   \graphicspath{{<path to file>/}}
%% 
%% For more information, please see info/svg-inkscape on CTAN:
%%   http://tug.ctan.org/tex-archive/info/svg-inkscape
%%
\begingroup%
  \makeatletter%
  \providecommand\color[2][]{%
    \errmessage{(Inkscape) Color is used for the text in Inkscape, but the package 'color.sty' is not loaded}%
    \renewcommand\color[2][]{}%
  }%
  \providecommand\transparent[1]{%
    \errmessage{(Inkscape) Transparency is used (non-zero) for the text in Inkscape, but the package 'transparent.sty' is not loaded}%
    \renewcommand\transparent[1]{}%
  }%
  \providecommand\rotatebox[2]{#2}%
  \newcommand*\fsize{\dimexpr\f@size pt\relax}%
  \newcommand*\lineheight[1]{\fontsize{\fsize}{#1\fsize}\selectfont}%
  \ifx\svgwidth\undefined%
    \setlength{\unitlength}{261.82336041bp}%
    \ifx\svgscale\undefined%
      \relax%
    \else%
      \setlength{\unitlength}{\unitlength * \real{\svgscale}}%
    \fi%
  \else%
    \setlength{\unitlength}{\svgwidth}%
  \fi%
  \global\let\svgwidth\undefined%
  \global\let\svgscale\undefined%
  \makeatother%
  \begin{picture}(1,0.9263878)%
    \lineheight{1}%
    \setlength\tabcolsep{0pt}%
    \put(0,0){\includegraphics[width=\unitlength,page=1]{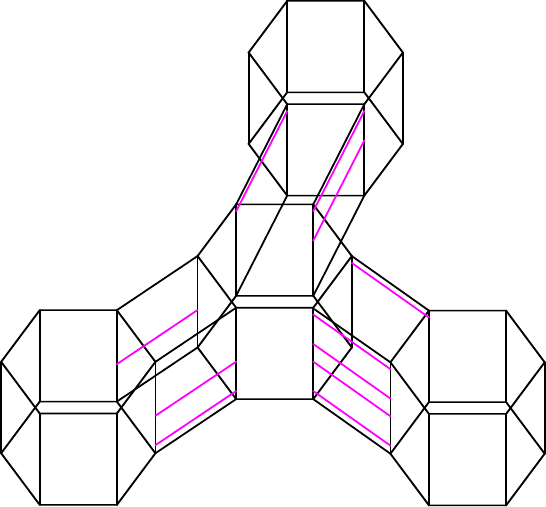}}%
    \put(0.10574437,0.37648151){\color[rgb]{0,0,0}\makebox(0,0)[lt]{\lineheight{1.25}\smash{\begin{tabular}[t]{l}$a$\end{tabular}}}}%
    \put(0.47185748,0.09911366){\color[rgb]{0,0,0}\makebox(0,0)[lt]{\lineheight{1.25}\smash{\begin{tabular}[t]{l}$b$\end{tabular}}}}%
    \put(0.83797068,0.37428914){\color[rgb]{0,0,0}\makebox(0,0)[lt]{\lineheight{1.25}\smash{\begin{tabular}[t]{l}$c$\end{tabular}}}}%
    \put(0.75042401,0.74259473){\color[rgb]{0,0,0}\makebox(0,0)[lt]{\lineheight{1.25}\smash{\begin{tabular}[t]{l}$d$\end{tabular}}}}%
    \put(0,0){\includegraphics[width=\unitlength,page=2]{fifth_row_smoothed.pdf}}%
  \end{picture}%
\endgroup%
 \raisebox{4.0em}{$-$ } \def\svgscale{0.38}
    \raisebox{4.0em}{$\circ$} \def\svgscale{0.38} \raisebox{4.0em}{$-$ } \def\svgscale{0.38}
  \end{gather*}
  \endgroup
  Denote $U^{\conv} = \{(\Theta^{\conv}_{ad},\alpha),(\Theta^{\conv}_{da},\beta)\}$, $U'^{\conv} = \{(\Theta'^{\conv}_{ad},\alpha),(\Theta'^{\conv}_{da},\beta)\}$, where $\Theta^{\conv}_{ad}$ (resp. $\Theta'^{\conv}_{ad}$) moves from $\mathbf{G}_a$ to $\mathbf{G}_b$ (purple to green). Note that $U^{\conv}$ is homotopic to a single half twist of $U'^{\conv}$, which we can see as the equivalence
  \begin{equation*}
    \def\svgscale{0.5} \raisebox{4.0em}{$\cong$}\def\svgscale{0.5}%% Creator: Inkscape 1.4.2 (ebf0e940d0, 2025-05-08), www.inkscape.org
%% PDF/EPS/PS + LaTeX output extension by Johan Engelen, 2010
%% Accompanies image file 'fifth_row_one_twist.pdf' (pdf, eps, ps)
%%
%% To include the image in your LaTeX document, write
%%   \input{<filename>.pdf_tex}
%%  instead of
%%   \includegraphics{<filename>.pdf}
%% To scale the image, write
%%   \def\svgwidth{<desired width>}
%%   \input{<filename>.pdf_tex}
%%  instead of
%%   \includegraphics[width=<desired width>]{<filename>.pdf}
%%
%% Images with a different path to the parent latex file can
%% be accessed with the `import' package (which may need to be
%% installed) using
%%   \usepackage{import}
%% in the preamble, and then including the image with
%%   \import{<path to file>}{<filename>.pdf_tex}
%% Alternatively, one can specify
%%   \graphicspath{{<path to file>/}}
%% 
%% For more information, please see info/svg-inkscape on CTAN:
%%   http://tug.ctan.org/tex-archive/info/svg-inkscape
%%
\begingroup%
  \makeatletter%
  \providecommand\color[2][]{%
    \errmessage{(Inkscape) Color is used for the text in Inkscape, but the package 'color.sty' is not loaded}%
    \renewcommand\color[2][]{}%
  }%
  \providecommand\transparent[1]{%
    \errmessage{(Inkscape) Transparency is used (non-zero) for the text in Inkscape, but the package 'transparent.sty' is not loaded}%
    \renewcommand\transparent[1]{}%
  }%
  \providecommand\rotatebox[2]{#2}%
  \newcommand*\fsize{\dimexpr\f@size pt\relax}%
  \newcommand*\lineheight[1]{\fontsize{\fsize}{#1\fsize}\selectfont}%
  \ifx\svgwidth\undefined%
    \setlength{\unitlength}{261.82333879bp}%
    \ifx\svgscale\undefined%
      \relax%
    \else%
      \setlength{\unitlength}{\unitlength * \real{\svgscale}}%
    \fi%
  \else%
    \setlength{\unitlength}{\svgwidth}%
  \fi%
  \global\let\svgwidth\undefined%
  \global\let\svgscale\undefined%
  \makeatother%
  \begin{picture}(1,0.92638771)%
    \lineheight{1}%
    \setlength\tabcolsep{0pt}%
    \put(0,0){\includegraphics[width=\unitlength,page=1]{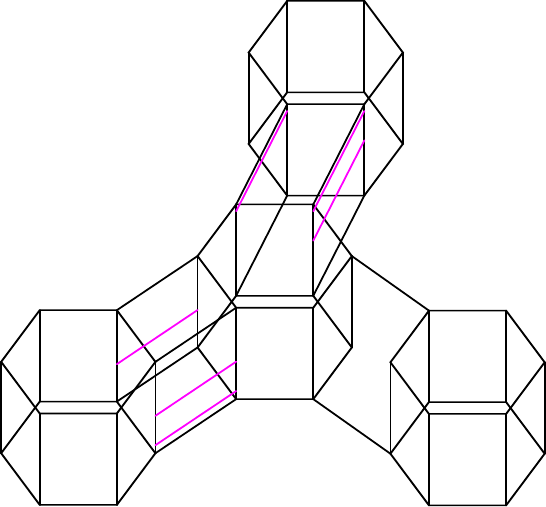}}%
    \put(0.10574419,0.3764814){\color[rgb]{0,0,0}\makebox(0,0)[lt]{\lineheight{1.25}\smash{\begin{tabular}[t]{l}$a$\end{tabular}}}}%
    \put(0.47185734,0.10484176){\color[rgb]{0,0,0}\makebox(0,0)[lt]{\lineheight{1.25}\smash{\begin{tabular}[t]{l}$b$\end{tabular}}}}%
    \put(0.83797066,0.37428886){\color[rgb]{0,0,0}\makebox(0,0)[lt]{\lineheight{1.25}\smash{\begin{tabular}[t]{l}$c$\end{tabular}}}}%
    \put(0.75042415,0.74259455){\color[rgb]{0,0,0}\makebox(0,0)[lt]{\lineheight{1.25}\smash{\begin{tabular}[t]{l}$d$\end{tabular}}}}%
    \put(0,0){\includegraphics[width=\unitlength,page=2]{fifth_row_one_twist.pdf}}%
  \end{picture}%
\endgroup%
,
  \end{equation*}
  $U^{\conv}\cong U'^{\conv}\diamond (\boldsymbol{\rho},\alpha)$, where $\boldsymbol{\rho} = [-\epsilon,\epsilon]^{A+B}\times \rho \times [0,1]$, and $\rho$ is the twist determined by Table \ref{flipback table} with input $\vcenter{\hbox{\hextwist{$c_b$}{$d_b$}{$a_b$}}}$. So in total, we observe $(U^{\conv} - U'^{\conv}) = (\boldsymbol{\rho},\Theta'^{\conv}_{ad})$ and thus compute
  \[
    U-U'^{\conv} = (\boldsymbol{\rho}, \alpha) \diamond (U-U^{\conv}),\qquad U-U' = (U'^{\conv}-U') \diamond (U-U'^{\conv}).
  \]
  \paragraph{Case 1: $a<d<c$.} We let $i = a_b$, $j = d_b$, $k = c_b$. We compute $U'^{\conv} - U' = (U' - U'^{\conv})^{-1} =  (\boldvarphi^{-1}_{i+1}\ldots\boldvarphi^{-1}_{j-1},\alpha)$:
  \begin{equation}\label{after to untwisted}
    \def\svgscale{0.5} \raisebox{5.0em}{$-$ } \def\svgscale{0.5},
  \end{equation}
  Now we compute $(U - U^{\conv}) = (\boldvarphi_{k-1}\ldots\boldvarphi_{i+1},\alpha) \diamond (\boldvarphi_{k-1}\ldots\boldvarphi_{j+1}, \beta)$:
  \begin{equation}\label{before to smoothed}
    \def\svgscale{0.5} \raisebox{5.0em}{$-$ } \def\svgscale{0.5}.
  \end{equation}
  Then we input $\vcenter{\hbox{\hextwist{$k$}{$j$}{$i$}}}$ into Table \ref{flipback table} to calculate the twist $\boldsymbol{\rho} = \boldvarphi_{k, j}$
  \begin{equation}\label{smoothed to untwisted}
    \def\svgscale{0.5}\raisebox{5.0em}{$-$ } \def\svgscale{0.5},
  \end{equation}
  Thus, we have
  \begin{align*}
    U - U'^{\conv}
    &= (\boldvarphi_{k, j}, \alpha) \diamond ((\boldvarphi_{k-1}\ldots\boldvarphi_{i+1},\alpha) \diamond (\boldvarphi_{k-1}\ldots\boldvarphi_{j+1}, \beta))\\
    &= (\boldvarphi_{k, j}\boldvarphi_{k-1}\ldots\boldvarphi_{i+1},\alpha) \diamond (\boldvarphi_{k-1}\ldots\boldvarphi_{j+1}, \beta)\\
    &= (\boldvarphi_{k-1}\boldvarphi_{k-1, j}\ldots\boldvarphi_{i+1},\alpha) \diamond (\boldvarphi_{k-1}\ldots\boldvarphi_{j+1}, \beta)\\
    &= \ldots\\
    &= (\boldvarphi_{k-1}\ldots\boldvarphi_{j+1}\boldvarphi_{j+1, j}^{(-1)^{k-j-1}}\boldvarphi_j\ldots\boldvarphi_{i+1},\alpha) \diamond (\boldvarphi_{k-1}\ldots\boldvarphi_{j+1}, \beta)\\
    &= (\boldvarphi_{k-1}\ldots\widehat{\boldvarphi_{j}}\ldots\boldvarphi_{i+1},\Theta'^{\conv}_{ad}) \diamond (\boldvarphi_{k-1}\ldots\boldvarphi_{j+1}, \Theta'^{\conv}_{da}) + (k-j-1)\\
    &= (\boldvarphi_{k-1}^{-1}\boldvarphi_{k-1}\ldots\widehat{\boldvarphi_{j}}\ldots\boldvarphi_{i+1},\alpha) \diamond (\boldvarphi_{k-2}\ldots\boldvarphi_{j+1}, \beta) + (k-j-1)\\
    &= \ldots\\
    &= (\boldvarphi_{j+1}^{-1}\ldots\boldvarphi_{k-1}^{-1}\boldvarphi_{k-1}\ldots\widehat{\boldvarphi_{j}}\ldots\boldvarphi_{i+1},\alpha) \diamond (c_{\Id}, \beta) + (k-j-1)\\
    &=  \boldvarphi_{j-1}\ldots\widehat{\boldvarphi_{j}}\ldots\boldvarphi_{i+1},\alpha) \diamond (c_{\Id}, \beta) + (k-j-1),
  \end{align*}
  which implies
  \begin{align*}
    U-U'
    &= (U'^{\conv}-U')\diamond (U-U'^{\conv})\\
    &= (\boldvarphi_{i+1}^{-1}\ldots\boldvarphi_{j-1}^{-1},\alpha) \diamond (\boldvarphi_{j-1}\ldots\boldvarphi_{i+1},\alpha) + (k-j-1)\\
    &= (k-j-1)\\
    &= (d_b+c_b+1) = (\Mid(a,c,d)_b+c_b+\sgn(a,c,d)).\qedhere
  \end{align*}
\end{proof}
\begin{lemma}\label{Q change}
  Let $Z$ be a facet cycle with all turnarounds oriented along the same direction and let $Z\rightsquigarrow Z'$ be a pruning move $(a\longline b\longline \overrightarrow{c}\longline b\longline d) \rightsquigarrow (a\longline b\longline d)$. If $K$ is parametrized by $Z$ and $K'$ is parametrized by $Z'$, then we have $Q(Z)-Q(Z') = \Mid(a,c,d)_b+c_b+\sgn(a,c,d)$.
\end{lemma}
\begin{proof}
  Orient $Z$ as, for example $a_1\to a_2\to\ldots a_n\to a_1$, so that the orientation agrees with the direction of the turnarounds. Also orient $Z'$ in the same way as $Z$ (so $Z$ agrees with $Z'$ outside of the pruning). Now we compute the difference:
  \begingroup
  \allowdisplaybreaks
\begin{align*}
  &Q(Z_K)-Q(Z_K') \\
  &=\Delta\left(\sum_{a \text{---}b} ab
    +\sum_{a} a
    +\sum_{a \text{---} b}\max(a,b)
    +\sum_{a \text{---} b\text{---} c}\max(a_b,c_b)\right)\\
    &\quad +\overbrace{\Delta(1)}^{=0} + \Delta(\#\{a\to \overrightarrow{b}\to c\ \vert\  a>b\} + \#\{a\to \overleftarrow{b}\to c\ \vert\  a<b\})\\
  &= \overbrace{(bc+cb)}^{=0} + (c+b) + (\overbrace{\max(b,c)+\max(c,b)}^{=0})\\
  &\quad + (\overbrace{\max(a_b,c_b) + \max(b_c,b_c) + \max(c_b,d_b)+ \max(a_b,d_b)}^{=\Mid(a,c,d)_b + b_c} \\
  &\quad + (1|_{a>b,a\leq c} + \overbrace{1|_{a<b,a>c}}^{1|_{a<b}(1+1|_{a\leq c})}) + 1|_{b>c}+ (1|_{c>b,c\leq d}+\overbrace{1|_{c<b,c>d}}^{1|_{c<b}(1+1|_{c\leq d})})+(1|_{a>b,a\leq d}+ \overbrace{1|_{a<b,a>d}}^{^{1|_{a<b}(1+1|_{a\leq d})}})\\
  &=c+b+(\Mid(a,c,d))_b+b_c+(1|_{a\leq c}+ 1|_{c\leq d} + 1_{a\leq d})\\
  &=c_b+(\Mid(a,c,d))_b+\sgn(a,c,d).\qedhere
\end{align*}
\endgroup
\end{proof}
\subsection{Continuation of proof of Level 2:}
Let $Z$ be a facet cycle where all the possible turnarounds are pointing in the same direction. If we perform any of the pruning moves $Z\rightsquigarrow Z'$ illustrated in Figure \ref{pull through figure}, Lemmas \ref{pulling through} and \ref{Q change} tell us that $Q(Z)-Q(Z')=[K]-[K']$, where $K$ (resp $K'$) is parametrized by $Z$ (resp.\ $Z'$). After a sequence of pruning moves $Z=Z_0\rightsquigarrow Z_1\rightsquigarrow \ldots \rightsquigarrow Z_n$ we can obtain either
\begin{itemize}
\item Case 1: A cycle $Z_n$ that has no turnarounds, or
\item Case 2: A $2$ vertex cycle $Z_n$ of the form $\overrightarrow{a}\longline\overrightarrow{b}\longline \overrightarrow{a}$.
  \begin{equation}
    
\end{equation}
\end{itemize}
Let $Z_n$ parametrize $K_n$. If $Z_n$ is in Case 1, then $Q(Z_n)=[K_n]$ by our proof of Level 1. If $Z_n$ is in Case 2, we can directly compute $Q(Z_n) = 1 = [K_n]$.\par
We see through a connectedness argument that the difference $Q(Z)-[K] = Q(Z_n) - [K_n] = 0$.
\end{proof}
\subsection{Proof of Level 4.}
We use another connectedness argument to prove that $Q$ is sincere for all facet cycles.\par
Suppose $Z  = (a_1,\omega_1)\longline (a_2,\omega_2)\ldots \longline (a_r,\omega_r)\longline (a_1,\omega_1)$, and $Z' = (a_1,\omega_1+1)\longline (a_2,\omega_2)\ldots \longline (a_r,\omega_r)\longline (a_1,\omega_1+1)$.
\begin{align*}
  Q(Z)-Q(Z')
  &= \sum_{\substack{(a',\omega')\longline (a,\omega)\\\text{in } Z}} (a_b\omega + b_a\omega') + \sum_{(a,\omega)\in Z}\omega - \sum_{\substack{(a',\nu')\longline (a,\nu)\\\text{in }Z'}} (a_b\nu + b_a\nu') - \sum_{(a,\nu)\in Z'}\omega\\
  &= (a_r)_{(a_1)} + (a_2)_{(a_1)} - 1.
\end{align*}
Now let $K$ be a tube parametrized by $Z$, and let $\tilde{\eta}$ be the boundary matching tube corresponding to $(a_1,\omega_1)$. If we ``reflect'' $\tilde{\eta}$ about the $J$-coordinate, we obtain a tube $\tilde{\eta}'$ which is parametrized by $D'$. Substituting in $\tilde{\eta}'$ for $\tilde{\eta}$ therefore gives us a tube $K'$ parametrized by $Z'$. Now observe that $\tilde{\eta}-\tilde{\eta}' = (a_r)_{(a_1)} + (a_2)_{(a_1)} - 1$. Indeed, if $\tilde{\eta} = \tilde{\eta}^{\conv} + (\boldvarphi_{j-1}\diamond \ldots \diamond \boldvarphi_{i+1},\alpha)$, then $\tilde{\eta}' = \tilde{\eta}^{\conv} \diamond (\boldvarphi^{-1}_{j-1}\diamond \ldots \diamond \boldvarphi^{-1}_{i+1},\alpha)$, so
\[
  \tilde{\eta}-\tilde{\eta}' \cong (\boldvarphi_{i+1}\diamond \ldots \diamond \boldvarphi_{j-1}\diamond \boldvarphi_{j-1}\diamond \ldots \diamond \boldvarphi_{i+1},\alpha) = (j-i-1) = (b_c + d_c + 1).
\]
Therefore, $[K] - [K'] = b_c + d_c - 1$.
In other words, if $\eta_c$ is $\eta^{\conv}$ with $(b_c + d_c - 1)$ half-twists added. The tube $\eta'_c$ is simply $\eta_c$ with each half-twist reversed, so the difference $\eta_c-\eta'_c$ is $(b_c + d_c - 1)$ full twists. (See Figure \ref{changing vertex figure} for an illustration of $\eta_c$ compared with $\eta'_c$.)
\begin{figure}
  \begin{tabular}{ c c c}
    \def\svgscale{0.65}%% Creator: Inkscape 1.4.2 (ebf0e940d0, 2025-05-08), www.inkscape.org
%% PDF/EPS/PS + LaTeX output extension by Johan Engelen, 2010
%% Accompanies image file 'chain_unflipped.pdf' (pdf, eps, ps)
%%
%% To include the image in your LaTeX document, write
%%   \input{<filename>.pdf_tex}
%%  instead of
%%   \includegraphics{<filename>.pdf}
%% To scale the image, write
%%   \def\svgwidth{<desired width>}
%%   \input{<filename>.pdf_tex}
%%  instead of
%%   \includegraphics[width=<desired width>]{<filename>.pdf}
%%
%% Images with a different path to the parent latex file can
%% be accessed with the `import' package (which may need to be
%% installed) using
%%   \usepackage{import}
%% in the preamble, and then including the image with
%%   \import{<path to file>}{<filename>.pdf_tex}
%% Alternatively, one can specify
%%   \graphicspath{{<path to file>/}}
%% 
%% For more information, please see info/svg-inkscape on CTAN:
%%   http://tug.ctan.org/tex-archive/info/svg-inkscape
%%
\begingroup%
  \makeatletter%
  \providecommand\color[2][]{%
    \errmessage{(Inkscape) Color is used for the text in Inkscape, but the package 'color.sty' is not loaded}%
    \renewcommand\color[2][]{}%
  }%
  \providecommand\transparent[1]{%
    \errmessage{(Inkscape) Transparency is used (non-zero) for the text in Inkscape, but the package 'transparent.sty' is not loaded}%
    \renewcommand\transparent[1]{}%
  }%
  \providecommand\rotatebox[2]{#2}%
  \newcommand*\fsize{\dimexpr\f@size pt\relax}%
  \newcommand*\lineheight[1]{\fontsize{\fsize}{#1\fsize}\selectfont}%
  \ifx\svgwidth\undefined%
    \setlength{\unitlength}{261.82336041bp}%
    \ifx\svgscale\undefined%
      \relax%
    \else%
      \setlength{\unitlength}{\unitlength * \real{\svgscale}}%
    \fi%
  \else%
    \setlength{\unitlength}{\svgwidth}%
  \fi%
  \global\let\svgwidth\undefined%
  \global\let\svgscale\undefined%
  \makeatother%
  \begin{picture}(1,0.60101746)%
    \lineheight{1}%
    \setlength\tabcolsep{0pt}%
    \put(0,0){\includegraphics[width=\unitlength,page=1]{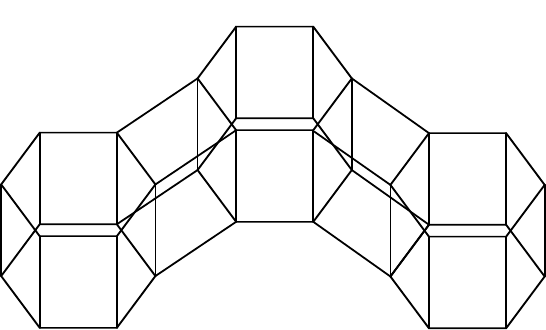}}%
    \put(0.47601107,0.57489306){\color[rgb]{0,0,0}\makebox(0,0)[lt]{\lineheight{1.25}\smash{\begin{tabular}[t]{l}$b$\end{tabular}}}}%
    \put(0,0){\includegraphics[width=\unitlength,page=2]{chain_unflipped.pdf}}%
    \put(0.10535781,0.39629236){\color[rgb]{0,0,0}\makebox(0,0)[lt]{\lineheight{1.25}\smash{\begin{tabular}[t]{l}$a$\end{tabular}}}}%
    \put(0.81930897,0.37569768){\color[rgb]{0,0,0}\makebox(0,0)[lt]{\lineheight{1.25}\smash{\begin{tabular}[t]{l}$c$\end{tabular}}}}%
    \put(0,0){\includegraphics[width=\unitlength,page=3]{chain_unflipped.pdf}}%
  \end{picture}%
\endgroup%
\vspace{0.0cm} &\raisebox{1.5cm}{\begin{tikzpicture}
      [decoration=snake,
      line around/.style={decoration={pre length=#1,post length=#1}}]
      \draw[->, decorate, line around = 1pt] (0,0)--(1.5,0);
    \end{tikzpicture}} &\def\svgscale{0.65}%% Creator: Inkscape 1.4.2 (ebf0e940d0, 2025-05-08), www.inkscape.org
%% PDF/EPS/PS + LaTeX output extension by Johan Engelen, 2010
%% Accompanies image file 'chain_flipped.pdf' (pdf, eps, ps)
%%
%% To include the image in your LaTeX document, write
%%   \input{<filename>.pdf_tex}
%%  instead of
%%   \includegraphics{<filename>.pdf}
%% To scale the image, write
%%   \def\svgwidth{<desired width>}
%%   \input{<filename>.pdf_tex}
%%  instead of
%%   \includegraphics[width=<desired width>]{<filename>.pdf}
%%
%% Images with a different path to the parent latex file can
%% be accessed with the `import' package (which may need to be
%% installed) using
%%   \usepackage{import}
%% in the preamble, and then including the image with
%%   \import{<path to file>}{<filename>.pdf_tex}
%% Alternatively, one can specify
%%   \graphicspath{{<path to file>/}}
%% 
%% For more information, please see info/svg-inkscape on CTAN:
%%   http://tug.ctan.org/tex-archive/info/svg-inkscape
%%
\begingroup%
  \makeatletter%
  \providecommand\color[2][]{%
    \errmessage{(Inkscape) Color is used for the text in Inkscape, but the package 'color.sty' is not loaded}%
    \renewcommand\color[2][]{}%
  }%
  \providecommand\transparent[1]{%
    \errmessage{(Inkscape) Transparency is used (non-zero) for the text in Inkscape, but the package 'transparent.sty' is not loaded}%
    \renewcommand\transparent[1]{}%
  }%
  \providecommand\rotatebox[2]{#2}%
  \newcommand*\fsize{\dimexpr\f@size pt\relax}%
  \newcommand*\lineheight[1]{\fontsize{\fsize}{#1\fsize}\selectfont}%
  \ifx\svgwidth\undefined%
    \setlength{\unitlength}{261.82336041bp}%
    \ifx\svgscale\undefined%
      \relax%
    \else%
      \setlength{\unitlength}{\unitlength * \real{\svgscale}}%
    \fi%
  \else%
    \setlength{\unitlength}{\svgwidth}%
  \fi%
  \global\let\svgwidth\undefined%
  \global\let\svgscale\undefined%
  \makeatother%
  \begin{picture}(1,0.77696872)%
    \lineheight{1}%
    \setlength\tabcolsep{0pt}%
    \put(0,0){\includegraphics[width=\unitlength,page=1]{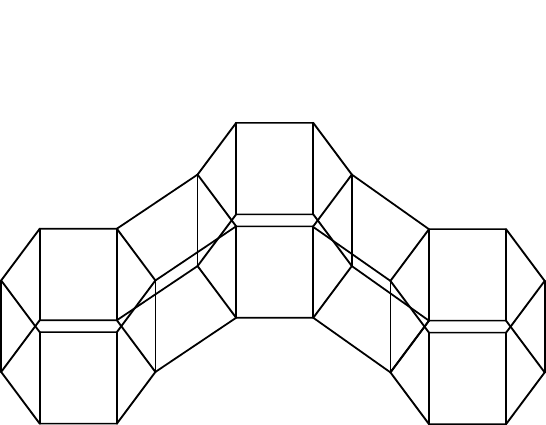}}%
    \put(0.47601108,0.57489312){\color[rgb]{0,0,0}\makebox(0,0)[lt]{\lineheight{1.25}\smash{\begin{tabular}[t]{l}$b$\end{tabular}}}}%
    \put(0,0){\includegraphics[width=\unitlength,page=2]{chain_flipped.pdf}}%
    \put(0.10535781,0.39629242){\color[rgb]{0,0,0}\makebox(0,0)[lt]{\lineheight{1.25}\smash{\begin{tabular}[t]{l}$a$\end{tabular}}}}%
    \put(0.81930907,0.37569774){\color[rgb]{0,0,0}\makebox(0,0)[lt]{\lineheight{1.25}\smash{\begin{tabular}[t]{l}$c$\end{tabular}}}}%
    \put(0,0){\includegraphics[width=\unitlength,page=3]{chain_flipped.pdf}}%
    \put(0.02176475,0.75084434){\color[rgb]{0,0,0}\makebox(0,0)[lt]{\lineheight{1.25}\smash{\begin{tabular}[t]{l}Flipping $J$-direction\\turns each $\mathbf{\varphi}_i$ to $\mathbf{\varphi}_i^{-1}$\end{tabular}}}}%
    \put(0,0){\includegraphics[width=\unitlength,page=4]{chain_flipped.pdf}}%
  \end{picture}%
\endgroup%

  \end{tabular}\vspace{0.0cm}
  \caption{Changing $\sigma(v)$ has the effect of flipping the J-factor in the boundary-matching tube of $K$ corresponding to $v$. Each successive flip $\boldvarphi_i$ effectively gets replaced with $\boldvarphi_i^{-1}$.}
  \label{changing vertex figure}
\end{figure}
So if $Z$ is an arbitrary signed facet cycle we find a sequence $Z_0,  Z_1, \ldots, Z_n = Z$ of facet cycles where $Z_0$ has vanishing signs and $Z_i$ differs from $Z_{i+1}$ by one vertex sign. A connectedness argument shows us that $Q(Z) = [K]$ for any $K$ parametrized by $Z$, thus proving that $Q$ is sincere.
\subsection{A computable Steenrod square formula}
\begin{definition}
  Let $\mathbf{c}$ be a cycle
  \begin{equation}\label{c written}
    \mathbf{c} = \sum_x \mu_x\cdot \mathcal{C}(x)\in C^l(\mathcal{X}_1(\mathscr{C});\mathbb{F}_2)
  \end{equation}
  Given a facewise boundary matching $\mathfrak{m}=(\mathfrak{b}_y, \mathfrak{s}_y)$ for $\mu:= \sum_x\mu_x\cdot x$, we define the cochain $\sq^2_{\mathfrak{m}}(\mathbf{c})\in C^{l+2}(\mathcal{X}_1(\mathscr{C});\mathbb{F}_2)$ by
  \begin{equation}\label{sq2 def}
    \langle\sq^2_\mathfrak{m}(\mathbf{c}),\mathcal{C}(z)\rangle = \sum_{\substack{\text{cycles }\\C\subset \Gamma(z,\mu)}} Q(C).
  \end{equation}
\end{definition}
\begin{theorem}\label{sq2 is Sq2}
  For a signed cubical flow category $\mathscr{C}$, suppose we have a cocycle $\mathbf{c}\in C^l(\mathcal{X}_1(\mathscr{C});\mathbb{F}_2)$ written as in (\ref{c written}), and a facewise boundary matching $\mathfrak{m}$ for $\mu$. We have the identity \normalfont $\left[\sq_\mathfrak{m}^2(\mathbf{c})\right] = \Sq^2([\mathbf{c}])$.
\end{theorem}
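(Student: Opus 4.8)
The plan is to concatenate the constructions of the preceding sections into the single identity. First I would pass to the truncated model. By Procedure~\ref{computation idea}, Proposition~\ref{suspension identification}, and the canonical isomorphism $C_{\mathcal{M}}^*(\mathscr{C};\mathbb{F}_2)\cong C_{\cell}^*(\mathcal{X}_\sigma(\mathscr{C});\mathbb{F}_2)$, computing $\Sq^2([\mathbf{c}])$ on $\mathcal{X}_\sigma(\mathscr{C})$ is the same as computing $\Sq^2:H^m(Y';\mathbb{F}_2)\to H^{m+2}(Y';\mathbb{F}_2)$ on the truncated cubical realization $Y'$ of Section~\ref{truncation}, and the coefficient of a cell $\mathcal{C}(z)$ is carried through all of these identifications unchanged. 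So it suffices to show that the cochain $\sq_\mathfrak{m}^2(\mathbf{c})\in C^{m+2}_{\cell}(Y';\mathbb{F}_2)$ equals $\mathfrak{c}^*e^{m+2}$ for a suitable cellular map $\mathfrak{c}$, where $\mathbf{c}'=\sum_x\mu_x\cdot\mathcal{C}(x)$ denotes the cocycle of $Y'$ corresponding to $\mathbf{c}$.

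Next I would run the Eilenberg--MacLane strategy of Section~2 with the truncated space $K_m^{(m+2)}$ of Section~\ref{boundary matching}. Using the facewise boundary matching $\mathfrak{m}=(\mathfrak{b}_y,\mathfrak{s}_y)$, the constructions of Sections~\ref{boundary matching}--\ref{special graph intro} produce a map $\mathfrak{c}:Y'\to K_m^{(m+2)}$: on $m$-cells $\mathfrak{c}$ sends $\mathcal{C}(x)$ by the identity to $e^m$ when $\mu_x=1$ and to the basepoint otherwise, so $\mathfrak{c}^*\iota=\mathbf{c}'$; on $(m+1)$-cells $\mathfrak{c}$ is extended over each $\mathcal{C}(y)$ using the boundary matching tubes $\eta$ of Section~\ref{eta construction}; and on the $(m+2)$-cells it is extended one cell at a time, which is possible because the attaching map of $e^{m+2}$ is the generator $\Sigma^{m-2}\eta$ of $\pi_{m+1}(K_m^{(m+1)})\cong\mathbb{Z}/2$ (Lemmas~\ref{1 stem} and~\ref{suspension RP2}). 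The hypothesis that $\mathbf{c}$ is a cocycle is used twice here: it forces $\#\mathcal{M}(y,\mu)$ to be even so that $\mathfrak{m}$ exists, and it guarantees $\mathfrak{c}|_{\partial\mathcal{C}(z)}$ lands in $K_m^{(m+1)}$. Since $K_m^{(m+2)}$ has no cells above dimension $m+2$, $e^{m+2}$ is automatically a cocycle and $\Sq^2(\iota)=[e^{m+2}]$, so naturality gives $\Sq^2([\mathbf{c}'])=\mathfrak{c}^*\Sq^2(\iota)=[\mathfrak{c}^*e^{m+2}]$; and by obstruction theory the coefficient of $\mathcal{C}(z)$ in the cochain $\mathfrak{c}^*e^{m+2}$ is exactly the class $[\mathfrak{c}|_{\partial\mathcal{C}(z)}]\in\mathbb{Z}/2$ obstructing the extension of $\mathfrak{c}|_{\partial\mathcal{C}(z)}$ over $\mathcal{C}(z)$ inside $K_m^{(m+1)}$.

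Then I would identify this obstruction with the formula defining $\nu_z$. Decomposing the relevant part of $\partial\mathcal{C}(z)$ into the connected cycles $K$ of alternating Pontrjagin--Thom and boundary matching tubes (Steps~(\ref{step 1})--(\ref{step 4})), we have $[\mathfrak{c}|_{\partial\mathcal{C}(z)}]=\sum_{K\subset\partial\mathcal{C}(z)}[\mathfrak{c}|_K]$, and by the Proposition of Section~\ref{boundary matching} that $[K]=[\mathfrak{c}|_K]$ for every cycle $K$, this is $\sum_K[K]$. By Remark~\ref{graph correspondence} the cycles $K\subset\partial\mathcal{C}(z)$ correspond bijectively to the cycles $C\subset\Gamma(z,\mu)$, with $C_K$ parametrizing $K$ and $K$ parametrized in the sense of Construction~\ref{cycle building} by the signed facet cycle $Z(C_K)$; and by Proposition~\ref{General formula} ($Q$ is sincere) we get $[K]=\mathcal{Q}(Z(C_K))=Q(Z(C_K))$. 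Hence $[\mathfrak{c}|_{\partial\mathcal{C}(z)}]=\sum_{C\subset\Gamma(z,\mu)}Q(Z(C))=\nu_z$, so $\mathfrak{c}^*e^{m+2}=\sq^2_\mathfrak{m}(\mathbf{c})$ as cochains. Therefore $[\sq^2_\mathfrak{m}(\mathbf{c})]=[\mathfrak{c}^*e^{m+2}]=\Sq^2([\mathbf{c}'])$, which is $\Sq^2([\mathbf{c}])$ under the identifications of the first paragraph.

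The main obstacle is bookkeeping rather than a new idea: one must check that the piecewise definition of $\mathfrak{c}$ on the boundary matching tubes $\eta$ and on the individual $(m+2)$-cells is continuous, cellular, and compatible on overlaps of the CW structure of $Y'$ (this is precisely what all the lemmas on adding, sliding, and composing twists with tubes in Sections~3--4 provide), and that the obstruction-theoretic identification of $\mathfrak{c}^*e^{m+2}$ with the cell-by-cell class $[\mathfrak{c}|_{\partial\mathcal{C}(z)}]$ is valid at the cochain level. Once these routine verifications are in place, the theorem follows by stringing together the named results exactly as above.
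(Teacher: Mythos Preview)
Your proposal is correct and follows essentially the same approach as the paper's proof: pass to the truncated model $Y'$ via Procedure~\ref{computation idea}, use the map $\mathfrak{c}:Y'\to K_m^{(m+2)}$ and naturality of $\Sq^2$ to get $\Sq^2([\mathbf{c}'])=[\mathfrak{c}^*e^{m+2}]$, then identify the $z$-coefficient as $[\mathfrak{c}|_{\partial\mathcal{C}'(z)}]=\sum_K[\mathfrak{c}|_K]=\sum_K[K]=\sum_{C\subset\Gamma(z,\mu)}Q(Z(C))=\nu_z$ using the Proposition $[K]=[\mathfrak{c}|_K]$, Remark~\ref{graph correspondence}, and Proposition~\ref{General formula}. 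The paper's proof is terser but the chain of identifications is identical; your extra remarks about continuity/cellularity of $\mathfrak{c}$ and the obstruction-theoretic reading of $\mathfrak{c}^*e^{m+2}$ are accurate elaborations of points the paper leaves implicit.
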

Our cocycle $\sq_{\mathfrak{m}}^2(\mathbf{c})$ is defined combinatorially, and by the above theorem, is a representative for $\Sq^2([\mathbf{c}])$. Therefore, we can define the operation $\Sq^2:H^l(\mathcal{X}_1(\mathscr{C});\mathbb{F}_2)\to H^{l+2}(\mathcal{X}_1(\mathscr{C});\mathbb{F}_2)$ combinatorially.
\begin{proof}
  Recall from Procedure \ref{computation idea} our method for computing $\Sq^2$. Take the corresponding cycle $\mathbf{c}' = \sum_x \mu_x\cdot \mathcal{C}'(x)\in C_{\cell}^m(Y';\mathbb{F}_2)$. We have $\Sq^2([\mathbf{c}']) = \Sq^2([c^*e^m]) = \mathfrak{c}^* \Sq^2 ([e^m]) = [\mathfrak{c}^* e^{m+2}]$, and for all $z$ with $\gr(z) = l + 2$,
  \[
    \langle \mathfrak{c}^*e^{m+2},\mathcal{C}'(z) \rangle = [\mathfrak{c}|_{\partial\mathcal{C}'(z)}] = \sum_{\substack{\text{cycles }\\K\subset \partial\mathcal{C}'(z)}} [\mathfrak{c}|_K] = \sum_{\substack{\text{cycles }\\K\subset \partial\mathcal{C}'(z)}} [K] = \sum_{C \subset \Gamma(z,\mu)} Q(C).
  \]
  Therefore, $\Sq^2([\mathbf{c}']) = \left[\sum_z \nu_z\cdot \mathcal{C}'(z)\right]$, where $\nu_z$ is defined in (\ref{sq2 def}). By the last step in Procedure \ref{computation idea}, we conclude $\Sq^2([\mathbf{c}]) = \left[\sum_z \nu_z\cdot \mathcal{C}(z)\right]$.
\end{proof}
\section{Defining $\text{Sq}^2$ on the family of signed cubical realizations $\mathcal{X}_k(\mathscr{C})$.}
Sarkar-Scaduto-Stoffregen \cite{MR4078823} have introduced a family of spaces $\mathcal{X}_k(L)$, for $k\geq 0$, noting $\mathcal{X}_0(L)$ is, by definition, the original (even) homotopy type $\mathcal{X}_{Kh}(L)$. They next call $\mathcal{X}_{o}(L):=\mathcal{X}_1(L)$ the \textit{odd Khovanov homotopy type}. Furthermore, the spaces $\mathcal{X}_{2k}(L)$ have cellular chain complexes isomorphic to $Kc_o(L)$, and the spaces $\mathcal{X}_{2k+1}(L)$ have cellular chain complexes isomorphic to $Kc(L)$. In \cite{MR4078823}, these spaces are defined as homotopy colimits of doubly, triply---and so on---signed refinements (doubly signed for $\mathcal{X}_2(L)$, triply signed for $\mathcal{X}_3(L)$). But in this paper, it is more convenient to view these spaces as \textit{$l$-signed cubical} realizations.
\begin{definition}
  Given a cubical neat embedding $\iota$ of a signed flow category $(\mathscr{C},\mathfrak{f},\sigma)$ relative to a tuple $\mathbf{d}$. We construct a CW complex $||\mathscr{C}||_k$, following the same exact construction of the signed cubical realization $||\mathscr{C}||$ (outlined in Definition \ref{cubical realization def}), but with some changes:
  \begin{enumerate}[label = (C-\arabic*), ref = (C-\arabic*)]
    \item The CW complex $||\mathscr{C}||_k$ has one cell $\mathcal{C}(x)$ for each $x\in \Ob(\mathscr{C})$. Letting $u$ denote $\mathfrak{f}(x)$, this cell is given by
	\[
	  \mathcal{C}(x)=\prod_{i=0}^{|u|-1}[-R,R]^{d_i}\times \prod_{i=|u|}^{n-1}[-\epsilon,\epsilon]^{d_i}\times J^k \times \widetilde{\mathcal{M}}_{\mathscr{C}_C(n)}(u,\overline{0}).
	\]
        Note the factor $J^k$ instead of $J$.
    \item For any $x,y\in \Ob(\mathscr{C})$ with $\mathfrak{f}(x)=u>\mathfrak{f}(y)=v$, the cubical neat embedding $\iota$ gives an embedding $\jmath_\gamma$ very similar to the signed cubical realization, but we highlight the difference:
      \[
        \jmath_\gamma: \mathcal{C}(y) \times \gamma \xhookrightarrow{\tau_k^{\sigma(\gamma)}\times\text{Id}}\mathcal{C}(y)\times \gamma\hookrightarrow \partial\mathcal{C}(x),
      \]
	where $\gamma \in \widehat{A}_{x,y}$, and $\tau_k: \mathcal{C}(y)\to \mathcal{C}(y)$ denotes the negation $(t,\ldots, t)\mapsto (-t, \ldots, -t)$ in the $J^k$-factor.
  \item The attaching maps $\partial \mathcal{C}(x)\to \mathcal{C}(y)$ are defined similarly to Definition \ref{cubical realization def}.
  \end{enumerate}
  The \textit{$k$-signed cubical realization} $\mathcal{X}_k(\mathscr{C})$ is defined to be the formal desuspension
  \[
    \mathcal{X}_k(\mathscr{C}):= \Sigma^{-(N+|\mathbf{d}|+k)}||\mathscr{C}||.
  \]
\end{definition}
Given a cocycle $\mathbf{c}\in C^{l}(\mathcal{X}_k(\mathscr{C});\mathbb{F}_2)$, we repeat a similar boundary matching argument with slightly different boundary matching tubes $\eta$, again creating similar cycles $K$ yielding classes $[K]$ with values we can compute. Now these cycles $K$ are parametrized by cycles $C$ in our special graph structure $\Gamma(z,\mu)$. The quantity $Q_k(C)$ defined below gives the value of $[K]$:``''
\begin{definition}
  Let $C\subset \Gamma(z,\mu)$ be a graph cycle $e_{a_1}\longline e_{a_2}\longline\ldots\longline e_{a_r}\longline e_{a_1}$. Choose a direction to orient $C$. We define
  \begingroup
  \allowdisplaybreaks
  \begin{align*}
    Q_{2k}(C)
    &=\sum_{a \text{---} b} ab
    +\sum_{a} a
    +\sum_{a \text{---} b}\max(a,b)
    +\sum_{a \text{---} b\text{---} c}\max(a_b,c_b)\\
    &\quad +1 + \#\{a\to \overrightarrow{b}\to c\ \vert\  a>b\} + \#\{a\to \overleftarrow{b}\to c\ \vert\  a<b\}\\
    &\quad+ k\left(\#\{{a\to \overrightarrow{b}\to c}\mid\Delta S(e_b) = 1\} - \#\{{a\to \overleftarrow{b}\to c}\mid\Delta S(e_b) = 1\}\right)/2,\\[\jot]
    Q_{2k+1}(C)
    &=\sum_{a \text{---} b} ab
    +\sum_{a} a
    +\sum_{a \text{---} b}\max(a,b)
    +\sum_{a \text{---} b\text{---} c}\max(a_b,c_b)\\
    &\quad +1 + \#\{a\to \overrightarrow{b}\to c\ \vert\  a>b\} + \#\{a\to \overleftarrow{b}\to c\ \vert\  a<b\}\\
    &\quad +\sum_{(a,\omega')\longline (b,\omega)} (a_b\omega + b_a\omega') + \sum_{(b,\omega)}\omega\\
    &\quad+ k\left(\#\{{a\to \overrightarrow{b}\to c}\mid\Delta S(e_b) = 1\} - \#\{{a\to \overleftarrow{b}\to c}\mid\Delta S(e_b) = 1\}\right)/2.
  \end{align*}
  \endgroup
  Now let $\mathbf{c}$ be a cocycle
  \begin{equation}
    \mathbf{c} = \sum_x \mu_x\cdot \mathcal{C}(x)\in C^l(\mathcal{X}_k(\mathscr{C});\mathbb{F}_2).
  \end{equation}
  For any facewise boundary matching $\mathfrak{m}=(\mathfrak{b}_y, \mathfrak{s}_y)$ for $\mu$, we define the cochain $\sq_{k,\mathfrak{m}}^2(\mathbf{c})\in C^{l+2}(\mathcal{X}_k(\mathscr{C});\mathbb{F}_2)$ exactly as before, that is,
  \begin{equation*}
    \langle\sq^2_\mathfrak{m}(\mathbf{c}),\mathcal{C}(z)\rangle = \sum_{\substack{\text{cycles }\\C\subset \Gamma(z,\mu)}} Q_k(C).
  \end{equation*}
  
\end{definition}
\begin{theorem}
  View the second Steenrod square $\Sq^2$ on $\mathcal{X}_k(\mathscr{C})$ as an operation $H^*(\mathscr{C},\mathbb{F}_2)\to H^{*+2}(\mathscr{C},\mathbb{F}_2)$. Then we have $\Sq^2([\mathbf{c}]) = [\sq_{k,\mathfrak{m}}^2(\mathbf{c})]$.
\end{theorem}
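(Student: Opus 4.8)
The plan is to reduce the statement for general $k$ to the already-established case $k=1$ (Theorem \ref{sq2 is Sq2}), by repeating the entire boundary-matching argument verbatim with the single interval factor $J$ replaced by $J^k$. First I would redo Section \ref{boundary matching} in the $k$-signed setting: the truncated Eilenberg-MacLane space $K_m^{(m+2)}$ now has $m$-cell $e^m = [-\epsilon,\epsilon]^A\times[-\epsilon,\epsilon]^B\times J^k\times\widetilde{\mathcal{M}}_{\mathscr{C}_C(\kappa)}(\overline{1},\overline{0})$, and the $(m+1)$-cell is glued by the involution $\tau_k$ that negates all $k$ of the $J$-coordinates. The crucial input is that $\tau_k$ still acts by the nontrivial element on $H^*(-;\mathbb{F}_2)$ and still generates $\pi_1$ of the relevant rotation group, so Lemmas \ref{1 stem} and \ref{suspension RP2} apply unchanged; concretely, the quotient $\bigslant{J^k\times[0,1]^k}{(t,0)\sim(-t,1)}$ is a $k$-fold product of $\mathbb{R}P^2$'s, but after $(m-1)$-fold suspension all the relevant maps still factor as in Lemma \ref{trivialization equivalent}, and the stably-nullhomotopic suspension of $\pi_2(\mathbb{R}P^2)$ still kills the correction term. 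So the entire ``honest cycle $K$'' and ``contrived cycle'' machinery of Sections \ref{special graph intro}--\ref{simplifying} goes through, with each $\tau$ replaced by $\tau_k$ and each ``flip in the $J$-factor'' replaced by ``simultaneous flip in all $k$ $J$-coordinates.''

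Next I would recompute the class $[K]$ in terms of the facet cycle $Z$. The key change is in how many full twists a boundary-incoherent tube $\tilde{\eta}$ or a turnaround contributes: whereas in the $k=1$ case a $J$-flip conjugation $\tau\varphi\tau^{-1}$ produces (by Lemma \ref{conjugation} and the commutator relations of Lemma \ref{commutators}) a single full twist per relevant $\varphi_i\diamond\varphi_i$, in the $k$-signed case the conjugating isometry $\tau_k$ acts diagonally on a $k$-dimensional space of $J$-coordinates, so each such conjugation contributes $k$ full twists rather than one. Since $\pi_1(SO(m))=\mathbb{Z}/2$, only the parity of $k$ matters for the ``sign'' summands that came from $J$-flips, but the ``geometric'' summands (the ones recording how $K$ winds through the permutohedral faces $\mathbf{G}_i$, $\mathbf{G}_{\{i,j\}}$) are unchanged. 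Tracing this through Lemma \ref{3-cycle}, Lemma \ref{chain move}, Proposition \ref{chain move}, Lemma \ref{pulling through}, Lemma \ref{Q change}, and the Level 1--4 structure of Proposition \ref{General formula}, I would check that every place a $\tau$-conjugation entered now picks up an extra factor of $k$; the term $k\bigl(\#\{e_a\to\overrightarrow{e_b}\to e_c\mid \Delta S(e_b)=1\}-\#\{e_a\to\overleftarrow{e_b}\to e_c\mid \Delta S(e_b)=1\}\bigr)/2$ in $Q_k$ is precisely the bookkeeping of those extra $k$-fold contributions coming from turnarounds and direction-change tubes weighted by whether the sign $S$ changes across them. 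The odd case $Q_{2k+1}$ additionally keeps the signed-vertex summand $\sum (a_b\omega+b_a\omega')+\sum\omega$ from Definition \ref{Q definition}, since for $k$ odd the $k$-fold $J^k$ already carries a ``built-in'' odd-type flip exactly as in $\mathcal{X}_1$.

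Finally, having shown that the map $C\mapsto Q_k(C)$ is sincere for the $k$-signed realization (i.e.\ $Q_k(C)=[K]$ for every $K$ parametrizing $Z(C)$) by exactly the Level-1-through-Level-4 connectedness arguments used for $Q$, the proof concludes just as Theorem \ref{sq2 is Sq2} does: one has $\Sq^2([\mathbf{c}']) = \mathfrak{c}^*\Sq^2([e^m]) = [\mathfrak{c}^*e^{m+2}]$, the coefficient of $\mathcal{C}'(z)$ in $\mathfrak{c}^*e^{m+2}$ is $\sum_{K\subset\partial\mathcal{C}'(z)}[K] = \sum_{C\subset\Gamma(z,\mu)}Q_k(Z(C))=\nu_z$, and then one transports back along the suspension identification of Proposition \ref{suspension identification}. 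The main obstacle I anticipate is the case-by-case verification that the extra $k$-factor lands exactly on the turnaround/direction-change terms and nowhere else — in particular confirming that Lemma \ref{suspension agreement} (independence of $[K]$ from the ambient dimension $r$) still holds in the $k$-signed setting, which is needed so that $Q_k$ is well-defined as a function of facet cycles alone; this requires re-examining the ``suspension'' move $K\rightsquigarrow K'$ with $J$ replaced by $J^k$ and checking the Pontrjagin-Thom argument $[K']=\Sigma[K]=[K]$ is unaffected by the enlarged $J$-factor, which it should be since the suspension does not touch the $J^k$-coordinates at all.
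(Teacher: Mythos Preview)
Your overall plan coincides with the paper's: its entire proof is the one line ``The proof is analogous to the proof of Theorem~\ref{sq2 is Sq2},'' and you are spelling out precisely that analogy. The strategy of rerunning the boundary-matching, cycle, and sincerity arguments with $J$ replaced by $J^k$ is the intended one.

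However, several of your stated details for the even-$k$ case are wrong and would not go through as written. The map $\tau_k$ has degree $(-1)^k$ on $e^m/\partial e^m\cong S^m$, so for $k$ even the attaching map you describe for the $(m{+}1)$-cell of $K_m$ (identity on one end, $\tau_k$ on the other) has degree $0$, not $2$; the resulting space has $\pi_m=\mathbb{Z}$ and is not an Eilenberg--MacLane skeleton. Likewise $\tau_k$ always acts \emph{trivially} on $H^*(-;\mathbb{F}_2)$, and $(J^k\times[0,1])/((t,0)\sim(-t,1))$ (a single $[0,1]$, not $[0,1]^k$) is not a product of copies of $\mathbb{R}P^2$. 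The fix is to keep the $(m{+}1)$-cell of $K_m$ glued by a genuine degree-$2$ map (say the flip of one $J$-coordinate) regardless of $k$, and to absorb the remaining $\tau_k$-flip into the construction of the boundary-matching tubes as a path inside $SO$; the $k$-dependent summand in $Q_k$ then records the $\pi_1(SO)$-class of that path at each boundary-incoherent segment, rather than arising from ``$k$ full twists per conjugation'' (conjugating $\boldvarphi_i$ by $\tau_k$ still yields $\boldvarphi_i^{-1}$, since $\tau_k$ acts on the relevant $2$-plane through only one $J$-coordinate). None of this derails the plan, but the even-$k$ bookkeeping you sketched needs to be redone along these lines.
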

\begin{proof}
  The proof is analogous to the proof of Theorem \ref{sq2 is Sq2}.
\end{proof}

\begin{notation}
  We write $\Sq^2|_{\mathcal{X}_k(\mathscr{C})}:= \Sq^2|_{H^*(\mathcal{X}_k(\mathscr{C});\mathbb{F}_2)}$ as the second Steenrod square on the space $\mathcal{X}_k(\mathscr{C})$. We also view $\Sq^2|_{\mathcal{X}_k(\mathscr{C})}$ as an operation $H^*(\mathscr{C};\mathbb{F}_2)\to H^{*+2}(\mathscr{C};\mathbb{F}_2)$. In particular, for any link $L$ we view $\Sq^2|_{\mathcal{X}_k(L)}$ as an operation on $Kh(L;\mathbb{F}_2)$.
\end{notation}

\begin{proof}[Proof of Theorem \ref{sums to 0}]
  This is a direct consequence of our definition of $\sq_{k,\mathfrak{m}}^2(\mathbf{c})$.
\end{proof}

While $\Sq^2$ on $\mathcal{X}_{k}(L)$ does not depend on $k\ \mathrm{mod}\ 4$, we shall see by computations in Section \ref{computations} that $\Sq^2$ on $\mathcal{X}_{k}(L)$ does indeed depend on $k\ \mathrm{mod}\ 2$.
\section{A simplified formula for $\text{Sq}^2$}\label{simplified formula}
Fortunately, we can simplify our formulas for $\Sq^2$.
\begin{definition}
  For $C\subset \Gamma(z,\mu)$ a cycle, define
  \begin{align*}
    \overline{Q}_{2k}(C)
    &=\sum_{a \text{---} b} ab
    +\sum_{a} a +1 + \#\{a\to \overrightarrow{b}\to c\ \vert\  a>b\} + \#\{a\to \overleftarrow{b}\to c\ \vert\  a<b\}\\
    &\quad+ k\left(\#\{{a\to \overrightarrow{b}\to c}\mid\Delta S(e_b) = 1\} - \#\{{a\to \overleftarrow{b}\to c}\mid\Delta S(e_b) = 1\}\right)/2,\\[\jot]
    \overline{Q}_{2k+1}(C)
    &=\sum_{a \text{---} b} ab
    +\sum_{a} a +1 + \#\{a\to \overrightarrow{b}\to c\ \vert\  a>b\} + \#\{a\to \overleftarrow{b}\to c\ \vert\  a<b\}\\
    &\quad +\sum_{(a,\omega')\longline (b,\omega)} (a_b\omega + b_a\omega') + \sum_{(b,\omega)}\omega\\
    &\quad+ k\left(\#\{{a\to \overrightarrow{b}\to c}\mid\Delta S(e_b) = 1\} - \#\{{a\to \overleftarrow{b}\to c}\mid\Delta S(e_b) = 1\}\right)/2.
  \end{align*}
  We define the cochain $\overline{\sq}_{k,\mathfrak{m}}^2(\mu)$ by
  \[
    \langle \overline{\sq}_{k,\mathfrak{m}}^2(\mu),z\rangle = \sum_{C\subset \Gamma(z,\mu)}\overline{Q}_k(C)
  \]
\end{definition}
\begin{proposition}
  We have the identity $[\sq_{k,\mathfrak{m}}^2(\mu)] = [\overline{\sq}_{k,\mathfrak{m}}^2(\mu)]$.
\end{proposition}
\begin{proof}
  We measure the difference between the two cochains as
  \begin{align*}
    \langle\sq_{k,\mathfrak{m}}^2(\mu)-\overline{\sq}_{k,\mathfrak{m}}^2(\mu),z\rangle
    &= \sum_{C\subset \Gamma(z,\mu)}\Bigl(\sum_{a \text{---} b}\max(a,b)
      +\sum_{a \text{---} b\text{---} c}\max(a_b,c_b)\Bigr)\\
    &= \sum_{C\subset\Gamma(z,\mu)}\sum_{a \text{---} b}\max(a,b) + \sum_{C\subset\Gamma(z,\mu)}\sum_{a \text{---} b\text{---} c}\max(a_b,c_b)\\
    &=: \langle T_1,z\rangle + \langle T_2,z\rangle,
  \end{align*}
  and we argue that both $T_1$ and $T_2$ are coboundaries. To show that $T_1$ is a coboundary, we remark that we can view $T_1$ as $L(\mu)$, where $L$ is a linear map defined by
  \begin{align*}
  \langle L(x),z\rangle =  \sum_{\substack{\text{intervals}\\I\subset \mathcal{M}(z,x)}} m(I),
  \end{align*}
  where $m(I) := \max(a,b)$ for $\partial I = \{p\circ q, p'\circ q'\}$, $S_\mathbb{Z}(q) = a$, $S_\mathbb{Z}(q') = b$. Now $T_1$ is a coboundary because $L$ is nullhomotopic. Indeed, we can define a nullhomotopy $H :C^*_\mathcal{M}(\mathscr{C};\mathbb{F}_2) \to C^{*+1}_\mathcal{M}(\mathscr{C};\mathbb{F}_2)$ by
  \[
    \langle H(x),y\rangle = \sum_{p\in\mathcal{M}(y,x)}\binom{S_\mathbb{Z}(p)+1}{2}.
  \]
  We prove that $d H(x) + H d(x) = L(x)$ using the following diagrams:
  \[
  \begin{tabular}{m{3cm} m{1cm} m{3cm}}
    \begin{tikzpicture}[scale=1]
  \path[draw, <-, shorten <=\T,shorten >=\T, thick] (0,1) -- (1,2) node [midway, label={[label distance=-0.2cm]135:$q$}] {};
  \path[draw, <-, shorten <=\T,shorten >=\T, thick] (2,1) -- (1,2) node [midway, label={[label distance=-0.2cm]45:$q'$}] {};
  \path[draw, <-, shorten <=\T,shorten >=\T, thick] (1,0) -- (2,1) node [midway, label={[label distance=-0.35cm]-45:$p'$}] {};
  \path[draw, <-, shorten <=\T,shorten >=\T, thick] (1,0) -- (0,1) node [midway, label={[label distance=-0.3cm]225:$p$}] {};
     \node at (1, 2) {$z$};
   \node at (0, 1) {$y$};
   \node at (2, 1) {$y'$};
   \node at (1, 0) {$x$};
   \end{tikzpicture} & $\xrightarrow{\mathfrak{f}}$& \begin{tikzpicture}[scale=1]
  \path[draw, <-, shorten <=\T,shorten >=\T, thick] (0,1) -- (1,2) node [midway, label={[label distance=-0.2cm]135:$a$}] {};
  \path[draw, <-, shorten <=\T,shorten >=\T, thick] (2,1) -- (1,2) node [midway, label={[label distance=-0.2cm]45:$b$}] {};
  \path[draw, <-, shorten <=\T,shorten >=\T, thick] (1,0) -- (2,1) node [midway, label={[label distance=-0.3cm]-45:$a-1$}] {};
  \path[draw, <-, shorten <=\T,shorten >=\T, thick] (1,0) -- (0,1) node [midway, label={[label distance=-0.3cm]225:$b$}] {};
     \node at (1, 2) {$w$};
   \node at (0, 1) {$v$};
   \node at (2, 1) {$v'$};
   \node at (1, 0) {$u$};
   \end{tikzpicture}
  \end{tabular}
  \]
  where $a = S_{\mathbb{Z}}(q):= s_{\mathbb{Z}}(\mathcal{C}_{w,v})$, $b = S_{\mathbb{Z}}(q) = s_{\mathbb{Z}}(\mathcal{C}_{w,v'})$, with $a>b$. The contribution of an interval $I$ corresponding to the above diagram to $\langle dH(x) + Hd(x),z\rangle$ is
  \[
    \binom{a+1}{2} + \binom{a}{2} + \binom{b+1}{2} + \binom{b+1}{2} = a = m(I) \mod 2.
  \]
  Finally, we prove that $T_2$ is a coboundary. This is because
  \[
    \langle T_2,z\rangle = \sum_{q\in \mathcal{M}(z,y)}\sum_{(p,p')\in \mathfrak{s}_y}\max(S_{\mathbb{Z}}(p),S_{\mathbb{Z}}(p')),
  \]
  implying $T_2 = d T'$, where
  \[
    \langle T', y\rangle = \sum_{(p,p')\in \mathfrak{s}_y}\max(S_{\mathbb{Z}}(p),S_{\mathbb{Z}}(p')).\qedhere
  \]
\end{proof}

\section{A combinatorial proof that $\text{Sq}^2$ agrees with earlier formulas.}
Lipshitz-Sarkar \cite{MR3252965} derived a combinatorial formula for $\Sq^2 $ on the cubical realization of an unsigned flow category. Their formula allowed them to compute the stable homotopy types of the even Khovanov spectra $\mathcal{X}_e(L)$ of links $L$ up to 11 crossings.\par
Furthermore, Sch\"utz \cite{MR4970173} introduced combinatorially defined operations
\[
  \Sq_0^2,\ \Sq_1^2: H^*(\mathscr{C};\mathbb{F}_2)\to H^{*+2}(\mathscr{C};\mathbb{F}_2),
\]
which are both defined on signed flow categories (more precisely, Sch\"utz only requires that $\mathscr{C}$ is a \textit{signed $1$-flow category}, which is a looser requirement).\par
Sch\"utz's operations $\Sq_0^2$, $\Sq_1^2$ are generalizations of Sarkar-Lipshitz's formula for $\Sq^2$ in the sense that if $\mathscr{C}$ is a trivially signed cubical flow category (or analogously, $F:\underline{2}^n\to B_\sigma$ has all signs $+1$), $\Sq_0^2$, $\Sq_1^2$ agree with $\Sq^2$ on the level of cycles $\mu$. However, it has not been known how $\Sq_0^2$, $\Sq_1^2$ relate to $\Sq^2$ on the odd Khovanov spectrum. We give a combinatorial proof that $\Sq_0^2$, $\Sq_1^2$ do arise as honest Steenrod squares, with $\Sq_1^2$ being the Steenrod square on $\mathcal{X}_1(\mathscr{C})$ and $\Sq_1^2$. We first begin with a quick overview of $\Sq_0^2$, $\Sq_1^2$.
\begin{definition}
  The \textit{standard frame assignment} $f\in C^2(\mathcal{C}(n),\mathbb{F}_2)$ is the following $2$-cochain. If  $w = \{a_1,\ldots, a_{\kappa + 2}\}$ and $u = \{a_1,\ldots, \widehat{a_i},\ldots, \widehat{a_j},\ldots, a_{\kappa + 2}\}$, then
  \[
    f(\mathcal{C}_{w,u}) = (i-1)(j-i-1) = ij + i + j + 1\pmod 2\in\mathbb{F}_2.
  \]
  
\end{definition}
\begin{definition}
  Given a cycle $\mu\in C^l_\mathcal{M}(\mathscr{C};\mathbb{F}_2)$, we define a \textit{signwise boundary matching} $\widetilde{\mathfrak{m}}$ of $\mu$ as a collection of pairs $(\widetilde{\mathfrak{b}}_y,\widetilde{\mathfrak{s}}_y)$ where
  \begin{itemize}
  \item $\widetilde{\mathfrak{b}}_y$ is a fixed point free involution of $\mathcal{M}(y,c)$. We can also think of $\widetilde{\mathfrak{b}}_y$ as a partition of $\mathcal{M}(y,c)$ into unordered pairs of the form $\{p,p'\}$.
  \item  $\widetilde{\mathfrak{s}}_y$ is an ordering for all the pairs $\{p,p'\}$ where $S(p)=S(p')$. In other words, if the $p$ and $p'$-summands in (\ref{complex coeff}), Definition \ref{Z morse chain complex}, agree, then $\widetilde{\mathfrak{s}}_y$ orders $\{p,p'\}$
  \end{itemize}
\end{definition}

\begin{definition}
  Given a facewise boundary matching $\mathfrak{m} = (\mathfrak{b}_y,\mathfrak{s}_y)$, we define the \textit{corresponding signwise boundary matching} $\widetilde{\mathfrak{m}} = (\widetilde{\mathfrak{b}}_y,\widetilde{\mathfrak{s}}_y)$ as follows:
  \begin{itemize}
  \item The fixed-point free involutions are the same, that is $\widetilde{\mathfrak{b}}_y:=\mathfrak{b}_y$.
  \item If $\{p,p'\}\in \widetilde{\mathfrak{b}}_y$ is a matched pair with $S(p)=S(p')$, then $\widetilde{\mathfrak{s}}$ must order $\{p,p'\}$ as $(p,p')$ or $(p',p)$. We simply choose the ordering to be the ordering from $\mathfrak{s}_y$ (recall that every pair $\{p,p'\}\in \mathfrak{b}_y$ is ordered by $\mathfrak{s}_y$).
  \end{itemize}
  The ordering $\widetilde{\mathfrak{s}}_y$ does not order all pairs (compare with $\mathfrak{s}_y$). But for the pairs $\{p,p'\}$ that must be ordered, $\widetilde{\mathfrak{s}}_y$ uses the ordering of $\mathfrak{s}_y$.
\end{definition}
\begin{definition}[compare \cite{MR4970173}]\label{graph structure Schutz}
  Let $\mathscr{C}$ be a signed flow category, $\mu\in C^l(\mathcal{C};\mathbb{F}_2)$ a cocycle, and $\widetilde{\mathfrak{m}} = (\widetilde{\mathfrak{b}}_y,\widetilde{\mathfrak{s}}_y)$ a signwise boundary matching for $\mu$. Given $z\in \Ob{\mathscr{C}}$, we define another special graph structure $\widetilde{\Gamma}(z,\mu) := \widetilde{\Gamma}_{\widetilde{\mathfrak{m}}}(z,\mu)$ (see Definition \ref{special graph structure def}) as follows. (See Figure \ref{special graph structure drawing} for an illustration.) The vertex set $V$, edge set $E$, and function $S:V\to\mathbb{F}_2$ are defined the same as in Example \ref{R graph}:
  \begin{align*}
    V &:= \coprod_{\gr(y) = l+1} \mathcal{M}(y,\mu)\times \mathcal{M}(z,y)\\
    E' &:= \{e=\{(p_1,q_1), (p_2,q_2)\}\mid e=\partial I\text{ for some } x\in \mu\text{, } I\subset \mathcal{M}(z,x)\}\\
    E\backslash E' &= \{\{(p,q),(p',q)\}\mid \text{$p$ is boundary-matched with $p'$}\}\}\\
    S(p,q) &:= S(p) + S(q)\qquad
             \begin{gathered}\text{($S$ is the cubical sign assignment}\\
      \text{from Definition \ref{canonical sign assignment})}\end{gathered}
  \end{align*}
  The directed edges $E''$ are different from the directed edges in Example \ref{R graph}: we define $E'' = \{\{(p,q),(p',q)\}\in E\backslash E'\ \vert\ S(p,q)=S(p',q)\}$, and $e\in E''$ is directed from $(p,q)$ to $(p',q)$ if $(p,p')\in \widetilde{s}_y$.\par
  Additionally, we equip $\widetilde{\Gamma}(z,\mu)$ with a function $f:E'\to \mathbb{F}_2$, which we call a ``framing'' of $\mathscr{C}$. (The concept of a framing is discussed in \cite{MR4970173} and generalizes the frame assignment of \cite{MR3252965}.)\par
  Note that each graph component $C$ of $\Gamma(z,\mu)$ has an even number of directed edges. The proof is almost exactly the proof of Lemma \ref{even incoherent}.
  \begin{figure}
    \begin{tabular}{ m{4.4cm} m{4.7cm} m{4.5cm} }
      \makecell{\begin{tikzpicture}[scale=1.6]
  \path[draw, <-, shorten <=\T,shorten >=\T, thick] (0,1) -- (1,2) node [midway, label={[label distance=-0.2cm]135:$q_1$}] {};
  \path[draw, <-, shorten <=\T,shorten >=\T, thick] (1,1) -- (1,2) node [midway, label={[label distance=-0.2cm]0:$q_2$}] {};
  \path[draw, <-, shorten <=\T,shorten >=\T, thick] (2,1) -- (1,2) node [midway, label={[label distance=-0.2cm]45:$q_3$}] {};
  \path[draw, <-, shorten <=\T,shorten >=\T, thick] (0,0) -- (0,1) node [midway, label={[label distance=-0.2cm]180:$p_1$}] {};
  \path[, draw, <-, shorten <=\T,shorten >=\T, thick] (2,0) -- (2,1) node [midway, label={[label distance=-0.2cm]0:$p_3'$}] {};
  \path[draw, <-, shorten <=\T,shorten >=\T, thick] (1,0) -- (2,1) node [pos=0.3, label={[label distance=-0.25cm]-45:$p_3$}] {};
  \path[draw, <-, shorten <=\T,shorten >=\T, thick] (1,0) -- (0,1) node [pos=0.3, label={[label distance=-0.35cm]225:$p_1'$}] {};
  \path[draw, <-, shorten <=\T,shorten >=\P, thick] (0,0) -- (0.5,0.5);
  \path[draw, <-, shorten <=\T,shorten >=\P, thick] (2,0) -- (1.5,0.5);
  \draw[shorten <=\T,shorten >=\P, thick] (1,1) -- (0.5,0.5) node [midway, label={[label distance=-0.2cm]135:$p_2'$}] {};
  \draw[shorten <=\T,shorten >=\P, thick] (1,1) -- (1.5,0.5) node [midway, label={[label distance=-0.2cm]45:$p_2$}] {};
   \node at (1, 2) {$z$};
   \node at (0, 1) {$y_1$};
   \node at (1, 1) {$y_2$};
   \node at (2, 1) {$y_3$};
   \node at (0, 0) {$x_1$};
   \node at (1, 0) {$x_2$};
   \node at (2, 0) {$x_3$};
 \end{tikzpicture}\\ $S(p_1) = S(p_1')= S(p_2)$ \\$ = 0$,\\ $S(p'_2) = S(p_3) = S(p_3') $\\ $= 1$}&
 \begin{tikzpicture}[scale = 0.8]
  \path[thick, draw,->,decorate,
  decoration={snake,amplitude=.7mm,segment length=3mm,post length=1mm, pre length = 1mm}] (255:2)--(285:2);
   \path[thick, draw,->,decorate,
  decoration={snake,amplitude=.4mm,segment length=3mm,post length=1mm, pre length = 1mm}] (45:2) -- (15:2);
  \path[thick, draw,->,decorate,
  decoration={snake,amplitude=.4mm,segment length=3mm,post length=1mm, pre length = 1mm}] (135:2)--(165:2);
  \draw[thick] (75:2)--(105:2);
  \draw[thick] (195:2)--(225:2);
  \draw[thick] (315:2)--(345:2);
  \node at (90: 2.7) {$C\subset \Gamma(\mu,z)$};
  \node at (0:2.2) {$(p_2',q_2)$};
  \node at (55:2.3) {$(p_2,q_2)$};
  \node at (125:2.3) {$(p_3',q_3)$};
  \node at (180:2.2) {$(p_3,q_3)$};
  \node at (235:2.3) {$(p_1',q_1)$};
  \node at (305:2.3) {$(p_1,q_1)$};
\end{tikzpicture} &
\begin{tikzpicture}[scale = 0.8]
  \path[thick, draw,->,decorate,
  decoration={snake,amplitude=.7mm,segment length=3mm,post length=1mm, pre length = 1mm}] (255:2)--(285:2);
   \path[thick, draw, decorate,
  decoration={snake,amplitude=.7mm,segment length=3mm}](45:2) -- (15:2);
  \path[thick, draw,->,decorate,
  decoration={snake,amplitude=.4mm,segment length=3mm,post length=1mm, pre length = 1mm}] (135:2)--(165:2);
  \draw[thick] (75:2)--(105:2);
  \draw[thick] (195:2)--(225:2);
  \draw[thick] (315:2)--(345:2);
  \node at (90: 2.7) {$\widetilde{C}\subset \widetilde{\Gamma}(\mu,z)$};
  \node at (0:2.2) {$(p_2',q_2)$};
  \node at (55:2.3) {$(p_2,q_2)$};
  \node at (125:2.3) {$(p_3',q_3)$};
  \node at (180:2.2) {$(p_3,q_3$};
  \node at (235:2.3) {$(p_1',q_1)$};
  \node at (305:2.3) {$(p_1,q_1)$};
\end{tikzpicture}
    \end{tabular}
    \caption{Left: A subset of the chains $z\xrightarrow{q} y\xrightarrow{p} x$ that form the cycle $C\subset \Gamma_{\mathfrak{m}}(\mu, z)$ from Figure \ref{graph counting incoherent}. Middle, the cycle $C$. Right: The corresponding cycle $\widetilde{C}\subset \widetilde{\Gamma}_{\widetilde{\mathfrak{m}}}(\mu, z)$. Note that all the oriented edges in $\widetilde{C}$ must have orientation in the same direction as the corresponding edge in $C$. However, note that $\{(p_2,q_2),(p'_2,q_2)\}$ is unoriented, since $S(p_2)\neq S(p'_2)$.}
    \label{special graph structure drawing}
  \end{figure}
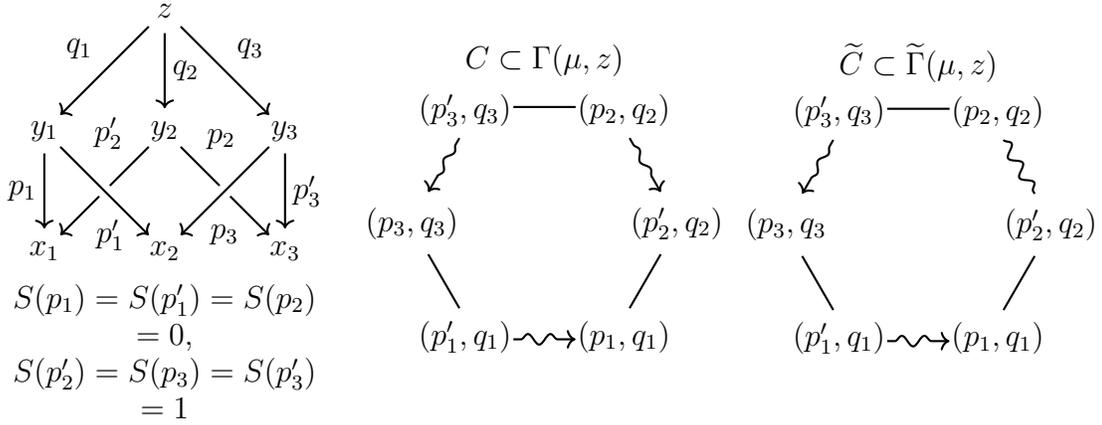
\end{definition}
\begin{remark}
  The cubical special graph structure $\widetilde{\Gamma}(z,\mu)$ has the exact same edge set $E$ as $\Gamma(z,\mu)$, but the difference arises with the directed edge set $E''$. In $\widetilde{\Gamma}(z,\mu)$, only the edges $e=\{v,v'\}$ where $S(v)=S(v')$ are oriented, but the orientation of these edges must agree with their orientation in $\Gamma(z,\mu)$.
\end{remark}

\begin{definition}[see \cite{MR4970173}]
  Let $\mathscr{C}$, $\mu$, and $(\widetilde{b}_y,\widetilde{s}_y)$ be as in Definition \ref{graph structure Schutz}, and let $C$ be a cycle in $\widetilde{\Gamma}(z,\mu)$. We define $F(C)\in \mathbb{F}_2$ to be the sum of the framing values $f(e')$ for $e'\in E'$ in $C$. Also let $D(C) \in \mathbb{F}_2$ denote the number of oriented edges in $C$ that point in a given direction. This quantity $D(C)$ is well-defined given the fact that there are an even number of edges $E''$ in $C$.
\end{definition}
\begin{definition}[see \cite{MR4970173}]
  \begin{align*}
    &f_\epsilon \left(\vcenter{\hbox{\begin{tikzpicture}[scale=0.7]
  \path[draw, <-, shorten <=\T,shorten >=\T] (0,1) -- (1,2) node [midway, label={[label distance=-0.2cm]135:$a'$}] {};
  \path[draw, <-, shorten <=\T,shorten >=\T] (2,1) -- (1,2) node [midway, label={[label distance=-0.2cm]45:$b'$}] {};
  \path[draw, <-, shorten <=\T,shorten >=\T] (1,0) -- (2,1) node [midway, label={[label distance=-0.3cm]-45:$d'$}] {};
  \path[draw, <-, shorten <=\T,shorten >=\T] (1,0) -- (0,1) node [midway, label={[label distance=-0.20cm]225:$c'$}] {};
     \node at (1, 2) {$z$};
   \node at (0, 1) {$y$};
   \node at (2, 1) {$y'$};
   \node at (1, 0) {$w$};
 \end{tikzpicture}}}\right)
      - f\left(\vcenter{\hbox{\begin{tikzpicture}[scale=0.7]
  \path[draw, <-, shorten <=\T,shorten >=\T] (0,1) -- (1,2) node [midway, label={[label distance=-0.2cm]135:$a$}] {};
  \path[draw, <-, shorten <=\T,shorten >=\T] (2,1) -- (1,2) node [midway, label={[label distance=-0.2cm]45:$b$}] {};
  \path[draw, <-, shorten <=\T,shorten >=\T] (1,0) -- (2,1) node [midway, label={[label distance=-0.3cm]-45:$d$}] {};
  \path[draw, <-, shorten <=\T,shorten >=\T] (1,0) -- (0,1) node [midway, label={[label distance=-0.20cm]225:$c$}] {};
     \node at (1, 2) {$w$};
   \node at (0, 1) {$v$};
   \node at (2, 1) {$v'$};
   \node at (1, 0) {$u$};
 \end{tikzpicture}}} \right)
    =
    \begin{cases*}
      0 & if $a=a'$, $b=b'$, $c=c'$, $d=d'$, \\
      1 & if $a= a'$, $b= b'$, $c\neq c'$, $d\neq d'$\\
      a+b & if $a\neq a'$, $b\neq b'$, $c= c'$, $d= d'$\\
      c & if $a\neq a'$, $b=b'$, $c\neq c'$, $d= d'$\\
      d & if $a=a'$, $b\neq b'$, $c\neq c'$, $d=d'$\\
      \epsilon + d & if $a= a'$, $b\neq b'$, $c\neq c'$, $d= d'$\\
      \epsilon + c & if $a\neq a'$, $b= b'$, $c=c'$, $d\neq d'$\\
      c + d & if $a\neq a'$, $b\neq b'$, $c\neq c'$, $d\neq d'$
    \end{cases*}
  \end{align*}
  Here, $a,b,c,d$ denote the sign assignment values of $s$, and $a',b',c',d'$ denote the sign assignment values of $S$. In terms of our sign map $\sigma$, we write
  \begin{align}\label{frame difference}
    &f_\epsilon \left(\vcenter{\hbox{\begin{tikzpicture}[scale=0.7]
  \path[draw, <-, shorten <=\T,shorten >=\T] (0,1) -- (1,2) node [midway, label={[label distance=-0.2cm]135:$q$}] {};
  \path[draw, <-, shorten <=\T,shorten >=\T] (2,1) -- (1,2) node [midway, label={[label distance=-0.2cm]45:$q'$}] {};
  \path[draw, <-, shorten <=\T,shorten >=\T] (1,0) -- (2,1) node [midway, label={[label distance=-0.3cm]-45:$p'$}] {};
  \path[draw, <-, shorten <=\T,shorten >=\T] (1,0) -- (0,1) node [midway, label={[label distance=-0.20cm]225:$p$}] {};
     \node at (1, 2) {$z$};
   \node at (0, 1) {$y$};
   \node at (2, 1) {$y'$};
   \node at (1, 0) {$w$};
 \end{tikzpicture}}}\right)
      - f\left(\vcenter{\hbox{\begin{tikzpicture}[scale=0.7]
  \path[draw, <-, shorten <=\T,shorten >=\T] (0,1) -- (1,2) node [midway, label={[label distance=-0.2cm]135:$a$}] {};
  \path[draw, <-, shorten <=\T,shorten >=\T] (2,1) -- (1,2) node [midway, label={[label distance=-0.2cm]45:$b$}] {};
  \path[draw, <-, shorten <=\T,shorten >=\T] (1,0) -- (2,1) node [midway, label={[label distance=-0.3cm]-45:$a_b$}] {};
  \path[draw, <-, shorten <=\T,shorten >=\T] (1,0) -- (0,1) node [midway, label={[label distance=-0.4cm]225:$b_a$}] {};
     \node at (1, 2) {$w$};
   \node at (0, 1) {$v$};
   \node at (2, 1) {$v'$};
   \node at (1, 0) {$u$};
 \end{tikzpicture}}} \right)
    =
    \begin{cases*}
      0 & if $\sigma(p)=\sigma(p')=\sigma(q)=\sigma(q')=0$, \\
      1 & if  $\sigma(q)=\sigma(q')=0$, $\sigma(p)=\sigma(p')=1$\\
      a+b & if $\sigma(p)=\sigma(p')=0$, $\sigma(q)=\sigma(q')=1$\\
      b_a & if $\sigma(p')=\sigma(q')=0$, $\sigma(p)=\sigma(q)=1$\\
      a_b & if $\sigma(p')=\sigma(q)=0$, $\sigma(p)=\sigma(q')=1$\\
      \epsilon + a_b & if $\sigma(p')=\sigma(q)=0$, $\sigma(p)=\sigma(q')=1$\\
      \epsilon + b_a & if $\sigma(p)=\sigma(q')=0$, $\sigma(p')=\sigma(q)=1$\\
      a + b & if $\sigma(p)=\sigma(p')=\sigma(q)=\sigma(q')=1$
    \end{cases*}
  \end{align}
\end{definition}
\begin{definition}[\cite{MR4970173}]\label{Sq2 epsilon def}
  Let $\mathscr{C}$ be a signed cubical flow category with standard sign assignment $S$ and frame assignment $f_\epsilon$. Given a cycle $\mu\in C^l(\mathscr{C};\mathbb{F}_2)$ and boundary matching $\widetilde{\mathfrak{m}}$, we define $\sq_{\epsilon,\widetilde{\mathfrak{m}}}^2(\mu)\in C^{l+2}(\mathscr{C};\mathbb{F}_2)$ by
  \[
    \langle\sq_{\epsilon,\widetilde{\mathfrak{m}}}^2(\mu), z\rangle = \sum_{C\in \Gamma(z,a)} 1 + F(C) + D(C).
  \]
  It is proved in \cite{MR4165986} that $\sq_{\epsilon,\widetilde{\mathfrak{m}}}^2(\mu)$ is a cocycle, and in fact, only differs by a coboundary if we change $\widetilde{\mathfrak{m}}$. Therefore, we have a well-defined operation $\Sq_\epsilon^2: H^l(\mathscr{C};\mathbb{F}_2)\to H^{l+2}(\mathscr{C};\mathbb{F}_2)$
\end{definition}
\begin{lemma}
  \begin{equation}\label{frame simplified}
    f_1 \left(\vcenter{\hbox{\begin{tikzpicture}[scale=0.7]
  \path[draw, <-, shorten <=\T,shorten >=\T] (0,1) -- (1,2) node [midway, label={[label distance=-0.2cm]135:$q$}] {};
  \path[draw, <-, shorten <=\T,shorten >=\T] (2,1) -- (1,2) node [midway, label={[label distance=-0.2cm]45:$q'$}] {};
  \path[draw, <-, shorten <=\T,shorten >=\T] (1,0) -- (2,1) node [midway, label={[label distance=-0.3cm]-45:$p'$}] {};
  \path[draw, <-, shorten <=\T,shorten >=\T] (1,0) -- (0,1) node [midway, label={[label distance=-0.20cm]225:$p$}] {};
     \node at (1, 2) {$z$};
   \node at (0, 1) {$y$};
   \node at (2, 1) {$y'$};
   \node at (1, 0) {$w$};
 \end{tikzpicture}}}\right) = \sigma(p\circ q) + b_a\sigma(q) + a_b\sigma(q') \mod 2,
  \end{equation}
  so in the case where we are using the face assignment $f_1$,
  \begin{equation}\label{F(C) formula}
    F(C) = \sum_{e_a\overset{e}{\text{---}} e_b}\left( ab + \sigma(e) + b_a\sigma_2(e_a) + a_b\sigma_2(e_b)\right).
  \end{equation}
  
\end{lemma}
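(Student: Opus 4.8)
The plan is to establish (\ref{frame simplified}) by a direct verification against the eight-case formula (\ref{frame difference}) specialized to $\epsilon=1$, and then to deduce (\ref{F(C) formula}) by summing over the $E'$-edges of $C$, after separately matching the standard (unsigned) frame value on an $E'$-edge with the product of the labels of its two adjacent $E\setminus E'$-edges.

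First I would fix the dictionary between the two sides of (\ref{frame simplified}). An edge $e'\in E'$ is an interval component $I\subset\mathcal{M}(z,x)$ with $\partial I=\{p\circ q,\ p'\circ q'\}$; writing $\mathfrak{f}(z)=w$, $\mathfrak{f}(x)=u$, we have $|w\setminus u|=2$, say $w\setminus u=\{\alpha,\beta\}$, and the two boundary chains are $z\xrightarrow{q}y\xrightarrow{p}x$ and $z\xrightarrow{q'}y'\xrightarrow{p'}x$. Using the definition of the index assignment $s_{\mathbb{Z}}$ I would express $s_{\mathbb{Z}}(q),s_{\mathbb{Z}}(q'),s_{\mathbb{Z}}(p),s_{\mathbb{Z}}(p')$ as counts of coordinates of $u$ lying below $\alpha$ or below $\beta$, and hence all four $s$-sign values $a,b,c,d$ occurring in (\ref{frame difference}) (recalling $s_{T,S}=s_{\mathbb{Z}}(T,S)\bmod 2$ and $S=s+\sigma$ from Definition \ref{canonical sign assignment}) in terms of $\alpha,\beta$ and the positions $i<j$ of the two deleted coordinates in $w$. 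The same count yields the two graph labels $a=s_{\mathbb{Z}}(q)$, $b=s_{\mathbb{Z}}(q')$ of the $E\setminus E'$-edges flanking $e'$, and the reductions $a_b,b_a$ of Table \ref{difference table}; note that whether $a<b$ or $a>b$ is controlled by which of $\alpha,\beta$ is smaller. Since $I$ is connected, $\sigma(p)+\sigma(q)=\sigma(p')+\sigma(q')$, so only the eight even-weight sign patterns of $(\sigma(p),\sigma(p'),\sigma(q),\sigma(q'))$ occur, matching the eight rows of (\ref{frame difference}); for each I would compute $\sigma(p\circ q)+a_b\sigma(q)+b_a\sigma(q')$ using $\sigma(p\circ q)=\sigma(p)+\sigma(q)$ (naturality of the sign map) and the dictionary, with $a_b,b_a$ entering only mod $2$, and check agreement with the tabulated correction — each row reducing to an elementary $\mathbb{F}_2$-identity among the $s$-values.

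For (\ref{F(C) formula}) I would first record the identity $f(\mathcal{C}_{w,u})=(i-1)(j-i-1)\equiv ab\pmod 2$ for an $E'$-edge, where $a,b$ are again the labels of the two adjacent $E\setminus E'$-edges; this is the same position count, using that $i(i-1)$ is even. Since $F(C)=\sum_{e'\in E'\cap C}f_1(e')$ and $f_1$ is the standard frame $f$ together with the $\epsilon=1$ correction of (\ref{frame difference}), I would substitute $f(e')=ab$ and (\ref{frame simplified}) for the correction, and translate the morphism-level data to graph data across the shared vertices: $\sigma(p\circ q)=\sigma(e)$ for $e=e'$, while $\sigma(q)$ and $\sigma(q')$ are the $\sigma_2$-values of the flanking $E\setminus E'$-edges labelled $a$ and $b$ respectively (because $\sigma_2(p,q)=\sigma(q)$, as in Example \ref{R graph}). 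Summing over all $E'$-edges of $C$, each flanked by a well-defined pair $e'_a,e_b$, then yields (\ref{F(C) formula}).

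The hard part will be the eight-case bookkeeping, and in particular pinning down the orientation conventions so that they stay consistent: which endpoint of $I$ plays the role of $(p,q)$ rather than $(p',q')$ — and hence whether $a_b$ equals $a$ or $a-1$ — is fixed by the construction of $\widetilde{\Gamma}(z,\mu)$ and by the ordering data $\widetilde{\mathfrak{s}}_y$, and one must split according to which deleted coordinate $\alpha$ or $\beta$ is the smaller before the $s$-values and the labels line up with the rows of (\ref{frame difference}). Once the dictionary of the second paragraph is in place, the remaining verifications are routine.
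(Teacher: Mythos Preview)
Your proposal is correct and follows essentially the same approach as the paper: a case-by-case verification of (\ref{frame simplified}) against the eight rows of (\ref{frame difference}) at $\epsilon=1$, combined with the identity $f(\mathcal{C}_{w,u})\equiv ab\pmod 2$, and then summing over $E'$-edges to obtain (\ref{F(C) formula}). The only organizational difference is that the paper first packages the eight cases into the single bilinear expression $a\sigma(q)+b\sigma(q')+(c+b)\sigma(p')+(d+a)\sigma(p)$ and then simplifies uniformly using $c=b_a$, $d=a_b$ and $\sigma(p\circ q)=\sigma(p'\circ q')$, whereas you propose to check each case against the target directly; both routes work and involve the same ingredients.
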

\begin{proof}
  When $\epsilon = 1$, the difference of frame assignments (\ref{frame difference}) is equal to
  \[
    \sigma(p\circ q) + b_a\sigma_2(e_a)+a_b\sigma_2(e_b).
  \]
  Combining with the identity
  \begin{equation*}
    f\left(\vcenter{\hbox{\begin{tikzpicture}[scale=0.7]
  \path[draw, <-, shorten <=\T,shorten >=\T] (0,1) -- (1,2) node [midway, label={[label distance=-0.2cm]135:$a$}] {};
  \path[draw, <-, shorten <=\T,shorten >=\T] (2,1) -- (1,2) node [midway, label={[label distance=-0.2cm]45:$b$}] {};
  \path[draw, <-, shorten <=\T,shorten >=\T] (1,0) -- (2,1) node [midway, label={[label distance=-0.3cm]-45:$d$}] {};
  \path[draw, <-, shorten <=\T,shorten >=\T] (1,0) -- (0,1) node [midway, label={[label distance=-0.20cm]225:$c$}] {};
     \node at (1, 2) {$w$};
   \node at (0, 1) {$v$};
   \node at (2, 1) {$v'$};
   \node at (1, 0) {$u$};
 \end{tikzpicture}}} \right) =
    \begin{cases*}
      a(b-a-1) & if $a<b$\\
      b(a-b-1) & if $b<a$
    \end{cases*} = ab \mod 2,
  \end{equation*}
  and summing over all edges $e_a\xline{e}e_b$ we recover the equation from our lemma.
\end{proof}
\begin{lemma}
  \begin{equation}\label{D(C) formula}
  \begin{aligned}
    D(\widetilde{C}) &= \sum_{a\to \overrightarrow{b}\to c}(a_b+c_b + 1 + \Delta\sigma(e_b))
  \end{aligned}
  \end{equation}
  
\end{lemma}
\begin{proof}
  Observe the following equalities
  \begin{align*}
    D(\widetilde{C}) &= \#\left\{a\to \overrightarrow{b}\to c\ \middle| \ \Delta S(e_b) = 1 \right\}\\
    &= \sum_{a\to \overrightarrow{b}\to c}(\Delta S(e_b)+1)= \sum_{a\to \overrightarrow{b}\to c}\left(a_b+c_b+1+\Delta\sigma(e_b)\right).\qedhere
  \end{align*}
\end{proof}

\begin{theorem}\label{Sq2 equal}
  Fix a signed flow category $\mathscr{C}$ and fix $l\geq 0$, $l\equiv 1 \mod 4$. If we view $\Sq^2|_{\mathcal{X}_l(\mathscr{C})}$ as an operation $H^*(\mathscr{C};\mathbb{F}_2) \to H^{*+2}(\mathscr{C};\mathbb{F}_2)$, then we have $\Sq^2 = \Sq_1^2$, where $\Sq_1^2$ is defined in \cite{MR4970173}. In particular, $\Sq_1^2$ agrees with the Steenrod square on the odd Khovanov homotopy types $\mathcal{X}_l(L)$, $l \equiv 1 \mod 4$.
 \end{theorem}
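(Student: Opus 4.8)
The plan is to reduce Theorem \ref{Sq2 equal} to a purely combinatorial identity relating our formula for $\Sq^2$ to Schütz's, and then to verify that identity cycle by cycle. We have already shown $\Sq^2|_{\mathcal{X}_k(\mathscr{C})}([\mathbf{c}]) = [\sq^2_{k,\mathfrak{m}}(\mathbf{c})]$, whose coefficient on $\mathcal{C}(z)$ is $\sum_{C\subset\Gamma(z,\mu)} Q_k(Z(C))$. When $k\equiv 1\bmod 4$ the $k$-correction term in $Q_{2k'+1}$ reduces, modulo $2$, to a multiple of $\#\{e_b \text{ with } \Delta S(e_b)=1\}$, which is even by Lemma \ref{even incoherent}; hence $Q_k(Z(C))\equiv Q_1(Z(C))$ and the coefficient is $\sum_C Q_1(Z(C))$. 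On the other side $\Sq_1^2([\mathbf{c}])=[\sq^2_{1,\widetilde{\mathfrak m}}(\mu)]$, whose coefficient on $\mathcal{C}(z)$ is $\sum_{C\in\widetilde\Gamma(z,\mu)}\bigl(1+F(C)+D(C)\bigr)$ computed with the frame assignment $f_1$. Taking $\widetilde{\mathfrak m}$ to be the signwise boundary matching corresponding to $\mathfrak m$, the graphs $\Gamma(z,\mu)$ and $\widetilde\Gamma(z,\mu)$ have the same underlying graph and the same orientations on every edge that $\widetilde\Gamma$ orients, so their cycles coincide and it suffices to prove, for each cycle $C$,
\[
  Q_1(Z(C)) \;=\; 1 + F(C) + D(C)\qquad\text{in }\mathbb{F}_2 .
\]

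For the verification I would feed in the closed forms already available in the excerpt: the formula (\ref{F(C) formula}) for $F(C)$ and the formula (\ref{D(C) formula}) for $D(C)$, each expressing the quantity through the facet cycle $Z(C)$ together with $\sigma$ and $\sigma_2$. I would then split both $Q_1(Z(C))$ and $1+F(C)+D(C)$ into a ``signless'' part — the contributions built from $ab$, $a$, $\max\{a,b\}$, $\Mid(a,b,c)$, the two betweenness counts, the turnaround count, and the constant $1$ — and a ``signed'' part involving $\sigma$, $\sigma_2$, and $\Delta\sigma$, and match the two parts separately. The signless match is a finite case analysis on the relative order of consecutive triples $a\longline b\longline c$ in $Z(C)$, using $a_b+b_a\equiv a+b+1\bmod 2$ and the $\Mid$-identity of Lemma \ref{middles}; it is in the same spirit as the bookkeeping in the proofs of Lemmas \ref{Difference} and \ref{Q change}. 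The signed match is where the cycle structure is essential: the mismatch between the $\sigma_2$-weights of $Q_1$ and of $F(C)$, and between $\sum\sigma_2$ and the $\Delta\sigma$-term of $D(C)$, is resolved by telescoping $\sigma$ around $C$ via the naturality relation $\sigma(\gamma\circ\xi)=\sigma(\gamma)+\sigma(\xi)$, much as in the proof of Lemma \ref{even incoherent}.

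Once the cycle-by-cycle identity holds, summing over the cycles of $\Gamma(z,\mu)$ and over all $z$ with $\gr(z)=l+2$ gives $\sq^2_{k,\mathfrak m}(\mathbf c)=\sq^2_{1,\widetilde{\mathfrak m}}(\mu)$ as cochains, hence $\Sq^2|_{\mathcal{X}_k(\mathscr{C})}=\Sq_1^2$; specializing to $\mathscr{C}=\mathscr{C}_{K,o}(L)$ and using $C^*_{\mathcal M}(\mathscr{C}_{K,o}(L);\mathbb{F}_2)\cong Kh(L;\mathbb{F}_2)$ identifies this with $\Sq^2$ on $\mathcal{X}_k(L)$ for $k\equiv 1\bmod 4$. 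I expect the main obstacle to be the signed match: tracking the two different orientation conventions ($\Gamma$ orients all of $E\setminus E'$, while $\widetilde\Gamma$ orients only the subset with $S(v)=S(v')$) while showing that the surviving sign terms assemble precisely into $\sum_{e\in E'}\sigma(e)$ plus the $\Delta\sigma$-term of $D(C)$ with no residue. A secondary nuisance is that $Q_1$ counts backward turnarounds whereas $D(C)$ counts forward ones, which one reconciles using that each cycle carries an even number of oriented $E\setminus E'$-edges.
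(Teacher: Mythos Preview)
Your reduction to a cycle-by-cycle identity $Q_1(Z(C)) = 1 + F(C) + D(C)$ is too strong: that identity is false in general, so the attempt to match ``signless'' and ``signed'' parts separately cannot succeed. The paper carries out exactly the computation you propose and finds that the difference does \emph{not} vanish: after substituting (\ref{F(C) formula}) and (\ref{D(C) formula}) into $Q(Z(C)) - (1+F(C)+D(C))$ and simplifying, what survives is
\[
  T_1(C) + T_2(C) \;=\; \sum_{a\longline b}\max\{a,b\} \;+\; \sum_{\overrightarrow{e_b}}\sigma_1\bigl(t(\overrightarrow{e_b})\bigr),
\]
and already the signless residue $T_1(C)$ is nonzero for most cycles (for the $3$-cycle $0\longline 1\longline 2\longline 0$ it equals $1+2+2\equiv 1$). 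So $\sq^2_{\mathfrak m}(\mathbf c)$ and $\sq^2_{1,\widetilde{\mathfrak m}}(\mu)$ are \emph{not} equal as cochains; they only represent the same cohomology class.

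The missing step is therefore an explicit chain homotopy. The paper does this by showing that $T_1 = L(\mu)$ for a nullhomotopic chain map $L$ (Lemma \ref{L nullhomotopy}, with homotopy $H(x)$ built from the binomial coefficients $\binom{S_{\mathbb Z}(p)+1}{2}$), and that $T_2 = d\bigl(\sum_y \xi_y\,y\bigr)$ with $\xi_y = \sum_{(\mathfrak b_y p,p)\in\mathfrak s_y}\sigma(p)$. Without these primitives you cannot conclude $\Sq^2|_{\mathcal X_k(\mathscr C)} = \Sq_1^2$. Your instinct that the hard part is the signed bookkeeping is partly right for $T_2$, but the more substantial obstruction is the leftover unsigned term $T_1$, whose primitive is not visible from any telescoping argument and requires the separate homotopy of Lemma \ref{L nullhomotopy}.
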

 We remark that if $\mathscr{C}$ is an \textit{unsigned} cubical flow category, then $\Sq^2_0=\Sq^2_1$, and both formulas agree on the nose with the Lipshitz-Sarkar formula for $\Sq^2$. Therefore, the above theorem provides a combinatorial proof that our even second Steenrod square $\Sq^2: H^*(\mathcal{X}_e(L);\mathbb{F}_2)\to H^{*+2}(\mathcal{X}_e(L);\mathbb{F}_2)$ agrees with Lipshitz-Sarkar's $\Sq^2$ formula on even $Kh$.

\begin{proof}
  Let $\mu\in C^l_\mathcal{M}(\mathscr{C};\mathbb{F}_2)$ be a cocycle, let $\mathfrak{m}$ be a facewise boundary matching for $\mu$, and let $\widetilde{\mathfrak{m}}$ be the corresponding signwise boundary matching. For a fixed $z$ of grading $\gr(z) = l+2$, we recall the $z$-coefficients
  \[
    \langle \overline{\sq}_{1,\mathfrak{m}}^2(\mu),z\rangle = \sum_{C\in \Gamma(z,\mu)} \overline{Q}_1(C)\qquad \langle \sq_{1,\widetilde{\mathfrak{m}}}^2(\mu),z\rangle = \sum_{C\in \widetilde{\Gamma}(z,\mu)} \widetilde{Q}(C)
  \]
  where $\widetilde{Q}(C) = 1+F(C)+D(C)$. Fix a cycle $C\in \Gamma(z,\mu)$ and its corresponding cycle $\widetilde{C}\in \widetilde{\Gamma}(z,\mu)$; then subtract $\widetilde{Q}(\widetilde{C})$ from $\overline{Q}(C)$ $\pmod 2$ to obtain
\begingroup
\allowdisplaybreaks
\begin{align*}
  &\overline{Q}_1(C)-\widetilde{Q}(\widetilde{C})\\
  &=\left(\begin{aligned}
    &&&\sum_{a \text{---} b} ab
    +\sum_{a} a
    +\sum_{a \text{---} b\text{---} c}\max(a_b,c_b)\\
    &&&+1+\#\{a\to \overrightarrow{b}\to c\ \vert\  a>b\} + \#\{a\to \overleftarrow{b}\to c\ \vert\  a<b\}\\
    &&& + \sum_{a \to b} (a_b\sigma_2(e_b) + b_a\sigma_2(e_a)) + \sum_{b}\sigma_2(e_b)
    \end{aligned}\right)\\
  &-\left(1 + \sum_{a \xrightarrow{e} b}\left( ab + \sigma(e) + b_a\sigma_2(e_a) + a_b\sigma_2(e_b)\right)+ \sum_{a\to \overrightarrow{b}\to c}(a_b+c_b + 1 + \Delta\sigma(e_b))\right)\\
  &= \sum_{a \text{---} b} ab
    +\sum_{a} a
    +\sum_{a \text{---} b\text{---} c}\max(a_b,c_b)+\#\{a\to \overrightarrow{b}\to c\ \vert\  a>b\} + \#\{a\to \overleftarrow{b}\to c\ \vert\  a<b\}\\
  & \quad+ \sum_{a \to b} (a_b\sigma_2(e_b) + b_a\sigma_2(e_a)) + \sum_{b}\sigma_2(e_b)\\
  &\quad+\sum_{a \xrightarrow{e} b}\left( ab + \sigma(e) + b_a\sigma_2(e_a) + a_b\sigma_2(e_b)\right)+ \sum_{a\to \overrightarrow{b}\to c}(a_b+c_b + 1 + \Delta\sigma(e_b))\\
  &=\sum_{a} a+\sum_{a \text{---} b\text{---} c}a_b+\#\{a\to \overrightarrow{b}\to c\ \vert\  a>b\} + \#\{a\to \overleftarrow{b}\to c\ \vert\  a<b\}+ \sum_{b}\sigma_2(e_b)\\
  &\quad+\sum_{a \overset{e}{\text{---}}  b}\sigma(e)+ \sum_{a\to \overrightarrow{b}\to c}(1 + \Delta\sigma(e_b))\\
  &=\Bigl(\sum_{a} a+\sum_{a\rightarrow b} a_b+\#\{a\to b\ \vert\ a<b\}\Bigr) + \Bigl(\sum_{b}\sigma_2(e_b) + \sum_{a \overset{e}{\text{---}} b}\sigma(e)+ \sum_{a\to \overrightarrow{b}\to c}(\Delta\sigma(e_b))\Bigr)\\
  &= \sum_a 1 + \sum_a \sigma(t(e_a)),
\end{align*}
\endgroup
where $t(e_a)$ denotes $p_1$ if $e_a$ is the edge $(p_0,q)\longline(p_1,q)$, and $\mathfrak{m}$ orders $p_0<p_1$.
\[
  \langle\overline{\sq}_{1,\mathfrak{m}}^2-\sq_{1,\widetilde{\mathfrak{m}}}^2(\mu)(\mu),z\rangle =  \Bigl(\sum_{\substack{\text{intervals}\\I\subset \mathcal{M}(z,\mu)}} 1\Bigr) + \Bigl(\sum_{q\in \mathcal{M}(z,y)}\sum_{(p_0,p_1)\in \mathfrak{s}_y}\sigma(p_1)\Bigr).
\]
Now the first term can be seen as a coboundary from a similar nullhomotopy argument as in Section \ref{simplified formula}. The second term can also be seen as a coboundary, by looking at the interior sum.
\end{proof}
Recall the operation $\Sq_0^2$ outlined in Definition \ref{Sq2 epsilon def}. It is a similar combinatorial exercise to prove the following:
\begin{theorem}\label{Sq0 agrees}
  Fix $l\geq 0$, $l\equiv 3 \mod 4$. If we view $\Sq^2|_{\mathcal{X}_l(L)}$ as an operation on $Kh_o(L;\mathbb{F}_2)$, then we have $\Sq^2|_{\mathcal{X}_l(L)} = \Sq_0^2$.
\end{theorem}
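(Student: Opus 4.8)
The plan is to run the argument of Theorem \ref{Sq2 equal} again, carefully tracking the two places where the residue $l\equiv 3\pmod 4$ differs from $l\equiv 1\pmod 4$. On the flow-category side, writing $\mathcal{X}_l\simeq\mathcal{X}_{2k+1}$ with $k=(l-1)/2\equiv 1\pmod 2$, the formula $Q_l=Q_{2k+1}$ acquires, on top of the base function $Q=Q_1$, the extra ``$k$-term''
\[
  E(C):=\bigl(\#\{e_a\to \overrightarrow{e_b}\to e_c\mid \Delta S(e_b)=1\}-\#\{e_a\to \overleftarrow{e_b}\to e_c\mid \Delta S(e_b)=1\}\bigr)/2 .
\]
On Sch\"utz's side, $\Sq_0^2$ is $\Sq^2_\epsilon$ with $\epsilon=0$ (Definition \ref{Sq2 epsilon def}), so the frame assignment $f_1$ used in the proof of Theorem \ref{Sq2 equal} is replaced by $f_0$; by \eqref{frame difference} the two differ only on the $E'$-edges of $C$ sitting in one of the two ``$\epsilon$'' sign configurations, and $F_1(C)+F_0(C)\pmod 2$ is the count of such edges in $C$.

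First I would fix a cocycle $\mu\in C^l_{\mathcal M}(\mathscr C;\mathbb F_2)$, a facewise boundary matching $\mathfrak m$ with corresponding signwise boundary matching $\widetilde{\mathfrak m}$, and $z$ with $\gr(z)=l+2$. For a cycle $C\subset\Gamma(z,\mu)$ with corresponding cycle $\widetilde C\subset\widetilde\Gamma(z,\mu)$, the displayed computation already carried out in the proof of Theorem \ref{Sq2 equal} gives $Q(Z(C))-\widetilde Q_1(\widetilde C)=T_1(C)+T_2(C)\pmod 2$, where $\widetilde Q_1=1+F_1+D$ and $\sum_z\sum_C T_i(C)\cdot z$ are coboundaries (via Lemma \ref{L nullhomotopy} for $T_1$ and the explicit primitive $\sum_y\xi_y\cdot y$ for $T_2$). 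Since $\widetilde Q_0=1+F_0+D=\widetilde Q_1+(F_0+F_1)$ and $Q_l=Q+E$, this yields
\[
  Q_l(Z(C))-\widetilde Q_0(\widetilde C)=T_1(C)+T_2(C)+E(C)+F_0(C)+F_1(C)\pmod 2 .
\]
The remaining step is to show that $\sum_z\bigl(\sum_{C\subset\Gamma(z,\mu)}(E(C)+F_0(C)+F_1(C))\bigr)z$ is a coboundary in $C^{l+2}_{\mathcal M}(\mathscr C;\mathbb F_2)$. For this I would expand $E(C)$: the edges $e_b$ with $\Delta S(e_b)=1$ are exactly the $E''$-edges that carry an orientation in $\Gamma(z,\mu)$ but not in $\widetilde\Gamma(z,\mu)$, and each cycle contains an even number of them by the argument of Lemma \ref{even incoherent}, so $(\#\mathrm{fwd}-\#\mathrm{bwd})/2$ rewrites modulo $2$ as a sum of per-edge terms depending on the $S_{\mathbb Z}$-ordering. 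Matching these against the per-edge description of $F_0(C)+F_1(C)$ coming from \eqref{frame difference}, I expect $E(C)+F_0(C)+F_1(C)$ to reduce to a sum of contributions attached to individual vertices $(p,q)$ and to interval edges, so that summing over $C$ produces $d\bigl(\sum_y\zeta_y\cdot y\bigr)$ together with (possibly) the image of a chain homotopy in the style of Lemma \ref{L nullhomotopy}; in the best case the whole correction vanishes identically and $\sq^2_{l,\mathfrak m}(\mu)-\sq^2_{0,\widetilde{\mathfrak m}}(\mu)$ is the same coboundary $T_1+T_2$ as in Theorem \ref{Sq2 equal}. In either event $\sq^2_{l,\mathfrak m}(\mu)$ and $\sq^2_{0,\widetilde{\mathfrak m}}(\mu)$ represent the same class, giving $\Sq^2|_{\mathcal X_l(L)}=\Sq_0^2$ on $Kh_o(L;\mathbb F_2)$.

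The main obstacle will be this last step: verifying that the ``$k=1$'' correction $E(C)$ precisely offsets the switch $f_1\rightsquigarrow f_0$ in Sch\"utz's frame assignment, modulo coboundaries. This is a purely combinatorial identity, but executing it requires a case analysis over the relative orderings of the $S_{\mathbb Z}$-values together with the eight sign configurations in \eqref{frame difference}, handled in the same spirit as --- but somewhat more delicately than --- the long displayed computation in the proof of Theorem \ref{Sq2 equal}. In particular one must be careful about the differing orientation conventions of $\Gamma(z,\mu)$ and $\widetilde\Gamma(z,\mu)$ on the $\Delta S=1$ edges, since those are exactly the edges whose treatment distinguishes the $\Sq_1^2$ and $\Sq_0^2$ formulas.
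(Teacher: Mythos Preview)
Your proposal is correct and follows essentially the same approach as the paper's own proof. The paper's proof is just a two-line sketch: it says to use Theorem \ref{Sq2 equal} and then compare the difference $\Sq^2|_{\mathcal{X}_o(L)} - \Sq^2|_{\mathcal{X}_3(L)}$ with $\Sq_0^2-\Sq_1^2$; you have carried out exactly this, correctly identifying the two differences as the $E(C)$ term (from the $k$-dependence of $Q_{2k+1}$) and the $F_0(C)+F_1(C)$ term (from the $\epsilon$-dependence in \eqref{frame difference}), and reducing the problem to showing their sum is a coboundary. The paper leaves that last combinatorial verification as an exercise, just as you flag it as the remaining obstacle.
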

\begin{proof}
  It might be easier to use the fact proved in Theorem \ref{Sq2 equal} that $\Sq^2|_{\mathcal{X}_1(L)}  = \Sq_1^2$. Indeed, we are left with comparing the cancelled out terms in $\Sq^2|_{\mathcal{X}_1(L)} - \Sq^2|_{\mathcal{X}_3(L)}$ and $\Sq_0^2-\Sq_1^2$.
\end{proof}
\begin{proof}[Proof of Theorem \ref{agrees with Schutz}]
  The theorem follows from Theorems \ref{Sq2 equal}, \ref{Sq0 agrees}.
\end{proof}

\section{Khovanov homotopy types for width three knots}\label{computations}

Fix $l\geq 0$ and let $\bullet= l\ \mathrm{mod}\ 2$ (we interpret $e = 0\ \mathrm{mod}\ 2$, $o = 1\ \mathrm{mod}\ 2$).
\begin{definition}[see \cite{MR3252965}]
  For any link $L$, we define a function $St_l= St_l(L) :\mathbb{Z}^2\to \mathbb{N}^4$ is defined as follows: Fix gradings $(i,j)\in \mathbb{Z}^2$, let $k\in \{i,i+1\}$, and let $\Sq_{(k)}^1$ denote the map $\Sq^1: Kh_\bullet^{k,j}(L)\to Kh_\bullet^{k+1,j}(L)$. Now let $\Sq^2$ denote the map $\Sq^2|_{\mathcal{X}_l(L)}: Kh_\bullet^{*,j}(L)\to Kh_\bullet^{*+2,j}(L)$.\par
  Let $r_1$ be the rank of $\Sq_l^2: Kh_\bullet^{i,j}(L)\to Kh_\bullet^{i+2,j}(L)$, and let $r_2 = \rank(\Sq^2|_{\ker \Sq^1_{(i)}})$, $r_3 = \dim \left( \im \Sq^1_{(i+1)}\cap \im \Sq^2\right)$, and $r_4 = \dim \left(\im \Sq^1_{(i+1)}\cup\im \bigl(\Sq^2|_{\ker \Sq^1_{(i)}}\bigr)\right)$. We define 
  \[
    St_l(i,j):= (r_2-r_4,r_1-r_2-r_3+r_4,r_4,r_3-r_4).
  \]
\end{definition}
\begin{proposition}[compare \cite{MR3252965}]\label{classification}
  Suppose the Khovanov homology $Kh_\bullet(L)$ of a link $L$ satisfies the following properties:
  \begin{enumerate}[label = (\arabic*), ref = (\arabic*)]
  \item $Kh^{i,j}_\bullet(L)$ lies on three adjacent diagonals, say, $2i-j=\sigma,\sigma+2,\sigma+4$.\label{3 diag}
  \item $Kh^{i,j}_\bullet(L)$ is a product of copies of $\mathbb{Z}$, $\mathbb{Z}/3$, and $\mathbb{Z}/2$.\label{torsion}
  \item There is no torsion on the diagonal $2i-j=\sigma$\label{lowest diag}.
  \end{enumerate}
  Then the stable homotopy types of the spectra $\mathcal{X}^j_l(L)$ are determined by $Kh_\bullet(L)$ and $St_l(L)$ as follows: Fix a $q$-grading $j\in \mathbb{Z}$, let $i=(\sigma+j)/2$, and let $St_l(i,j)=(x_1,x_2,x_3,x_4)$; the Khovanov spectrum $\mathcal{X}_l^j(L)$ is stably homotopy equivalent to
  \[
    \Bigl(\bigvee^{x_1}\Sigma^{i-2}\mathbb{C}P^2\Bigr)\vee \Bigl(\bigvee^{x_2}\Sigma^{i-3}\mathbb{R}P^5/\mathbb{R}P^2\Bigr)\vee \Bigl(\bigvee^{x_3}\Sigma^{i-2}\mathbb{R}P^4/\mathbb{R}P^1\Bigr)\vee \Bigl(\bigvee^{x_4}\Sigma^{i-2}\mathbb{R}P^2\wedge\mathbb{R}P^2\Bigr)
  \]
  and a wedge of Moore spaces. Furthermore, such a wedge decomposition into these factors is unique. In particular, $\mathcal{X}_l^j$ is a wedge sum of Moore spaces if and only if $x_1=x_2=x_3=x_4=0$. 
\end{proposition}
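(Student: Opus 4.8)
The plan is to follow the argument of Lipshitz--Sarkar \cite{MR3252965}, observing that it uses only the $\mathbb{F}_2$-cohomology of the spectrum together with the action of $\Sq^1$ and $\Sq^2$, both of which we have already pinned down for $\mathcal{X}_l(L)$: $\Sq^1$ is the integral Bockstein, which is determined by $Kh_\bullet(L)$ and hence depends only on $\bullet = l\bmod 2$, while $\Sq^2|_{\mathcal{X}_l(L)}$ is the operation computed combinatorially in the previous sections. First I would fix a quantum grading $j$, set $i = (\sigma+j)/2$, and use hypothesis \ref{3 diag} to conclude that $H^*(\mathcal{X}_l^j(L);\mathbb{F}_2)$ and $Kh_\bullet^{*,j}(L)$ are concentrated in the three consecutive cohomological degrees $i, i+1, i+2$; after a formal (de)suspension this realizes $\mathcal{X}_l^j(L)$ as a finite CW spectrum with cells in three adjacent dimensions, which is the setting of the classification.

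Second I would split off the ``uninteresting'' summands. Since $Kh_\bullet^{*,j}(L)$ is a sum of copies of $\mathbb{Z}$, $\mathbb{Z}/3$ and $\mathbb{Z}/2$ by \ref{torsion}, and there are no stable mod-$p$ cohomology operations of degree $1$ or $2$ for $p$ odd (for $p=3$ the only low-degree operations are the Bockstein in degree $1$ and $P^1$ in degree $4$, and for $p\ge 5$ the first operation past the Bockstein has degree $2p-2>2$), localizing at each odd prime shows that the $\mathbb{Z}/3$-torsion and the free part contribute wedge summands that are spheres and mod-$3$ Moore spaces; these are absorbed into the ``wedge of Moore spaces'' factor and do not affect $St_l$. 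What remains is the $2$-local part, a finite spectrum $X$ with $\mathbb{F}_2$-cohomology in degrees $i, i+1, i+2$.

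Third, and this is the heart of the matter, I would analyze $X$ by its cell filtration: let $Y\subset X$ be the subspectrum carrying the cells in degrees $i$ and $i+1$. Using hypothesis \ref{lowest diag} (no torsion on the bottom diagonal), every degree-$i$ class is a free integral summand, so $\Sq^1$ on $X$ has no component landing in degree $i$ and $Y$ splits as a wedge of $S^i$'s, $S^{i+1}$'s and mod-$2$ Moore spectra $S^i\cup_2 e^{i+1}$, all detected by the integral Bockstein. Then $X$ is the cofiber of an attaching map $a\colon \bigvee S^{i+1}\to Y$, and I would decompose $a$ componentwise using the computations $\pi_1^s(\mathbb{S}) = \mathbb{Z}/2\langle\eta\rangle$ and $\pi_2^s(\mathbb{R}P^2) = \mathbb{Z}/2$ from Lemmas \ref{1 stem} and \ref{suspension RP2}, together with the standard low-stem computation of maps out of and into the mod-$2$ Moore spectrum. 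Each indecomposable possibility for a component of $a$ corresponds to exactly one of the four listed factors $\Sigma^{i-2}\mathbb{C}P^2$, $\Sigma^{i-3}\mathbb{R}P^5/\mathbb{R}P^2$, $\Sigma^{i-2}\mathbb{R}P^4/\mathbb{R}P^1$, $\Sigma^{i-2}\mathbb{R}P^2\wedge\mathbb{R}P^2$ (or to a sphere or a Moore space), and in each case one reads off which of the ranks $r_1,\dots,r_4$ --- hence which entry of $St_l(i,j) = (x_1,x_2,x_3,x_4)$ --- is incremented: $\Sq^2$ acting freely-to-freely matches $\mathbb{C}P^2$, $\Sq^2$ on $\ker\Sq^1$ matches the two $\mathbb{R}P$-quotients, and the overlap with $\im\Sq^1$ matches $\mathbb{R}P^2\wedge\mathbb{R}P^2$. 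Uniqueness of the wedge decomposition into these factors follows either from a Krull--Schmidt-type argument for finite spectra or, more concretely, by checking that $St_l(i,j)$ together with $Kh_\bullet(L)$ recovers the multiplicity of each factor.

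The main obstacle I anticipate is this third step: verifying that the list of $2$-local indecomposables is complete and, equivalently, that $(Kh_\bullet, \Sq^1, \Sq^2)$ is a \emph{complete} stable invariant of $X$ in this range --- that is, that no secondary operation can distinguish two such complexes with identical primary data. This requires careful bookkeeping of the groups $[\bigvee S^{i+1}, Y]$ and of how the choice of splitting of $Y$ interacts with the decomposition of $a$; the absence of bottom torsion (\ref{lowest diag}) is precisely what rules out attaching maps that would force additional invariants, so that hypothesis must be invoked with care at each stage.
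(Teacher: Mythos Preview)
The paper does not give its own proof of this proposition: it is stated with the attribution \cite{MR3252965} and simply quoted as a known result of Lipshitz--Sarkar, then immediately used (together with the Baues classification \cite{MR1361886} in the next proposition) to interpret the $St_l$ tables. So there is nothing in the paper to compare your argument against beyond the citation.

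That said, your outline is an accurate reconstruction of the Lipshitz--Sarkar argument: localize away from~$2$ to split off spheres and odd Moore spaces, then classify the remaining $2$-local spectrum with cells in three adjacent dimensions by its attaching maps, using $\pi_1^s(\mathbb{S})=\mathbb{Z}/2$ and the low stems of the mod-$2$ Moore spectrum, and match the four indecomposable pieces to the four entries of $St_l(i,j)$. Your identification of hypothesis~\ref{lowest diag} as the condition that prevents extra indecomposables (and hence secondary invariants) is exactly right. The only thing to flag is that your proposal is a sketch rather than a proof: the ``careful bookkeeping'' you allude to in the last paragraph is the actual content, and in \cite{MR3252965} it is handled by appealing to the general classification of stable homotopy types with cells in a small range (Chang complexes, as in Baues \cite{MR1361886}), rather than by an ad hoc analysis of $[\bigvee S^{i+1},Y]$. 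If you want a self-contained write-up, invoking that classification directly is cleaner than redoing the case analysis.
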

In the absence of Criterion \ref{torsion}, we have a slight weakening of Proposition \ref{classification}, which uses \cite[Theorems 11.2, 11.7]{MR1361886} to decompose $X_l(L)$ into elementary Chang complexes.
\begin{proposition}[\cite{MR1361886}]\label{more torsion}
  Suppose the Khovanov homology $Kh_\bullet(L)$ of a link $L$ satisfies Criteria \ref{3 diag} and \ref{lowest diag}. Then we can decompose $\mathcal{X}^j_l(L)$ into a wedge of a collection of Moore spaces and a collection of Chang complexes of the form $X(\eta)$, $X(\eta q)$, $X(_p\eta)$, $X(_p\eta q)$, where $p$ and $q$ are powers of $2$. Furthermore, this decomposition is unique.
\end{proposition}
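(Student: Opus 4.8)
The plan is to reduce the assertion to a structural classification of finite spectra whose cohomology is concentrated in three consecutive dimensions, and then invoke \cite[Theorems 11.2, 11.7]{MR1361886}. First I would fix the quantum grading $j$ and, exactly as in Section \ref{truncation}, pass to the truncated cubical realization. Criterion \ref{3 diag} says that $Kh_\bullet^{i,j}(L)$ is nonzero only for $i$ equal to $(\sigma+j)/2$, $(\sigma+j)/2+1$, or $(\sigma+j)/2+2$; hence the cellular cochain complex of $\mathcal{X}^j_l(L)$ is concentrated in three consecutive dimensions $m$, $m+1$, $m+2$. Thus $\mathcal{X}^j_l(L)$ is, up to stable equivalence, a finite CW spectrum $Y$ with cells only in dimensions $m$, $m+1$, $m+2$: precisely the kind of object classified in \cite{MR1361886}.

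Next I would split off the prime-to-$2$ part: since odd torsion interacts with neither $\Sq^1$ nor $\Sq^2$, localizing away from $2$ shows that all odd torsion in $H^*(Y)$ appears as a wedge of Moore spectra $M(\mathbb{Z}/p^k,\ast)$, $p$ odd, so it remains to analyze the $2$-local spectrum $Y_{(2)}$. Here Criterion \ref{lowest diag} is the crucial input: it says precisely that $H^m(Y;\mathbb{Z})\cong Kh_\bullet^{(\sigma+j)/2,j}(L)$ is free abelian, i.e. there is no torsion in the bottom cohomology. Under this hypothesis the classification of finite $2$-local spectra with cells in three consecutive dimensions collapses to a wedge of Moore spectra $M(\mathbb{Z},\ast)$ and $M(\mathbb{Z}/2^k,\ast)$ together with the four elementary Chang complexes $X(\eta)$, $X(\eta q)$, $X(_p\eta)$, $X(_p\eta q)$ with $p,q$ powers of $2$ --- these being exactly the indecomposables whose only nontrivial stable attaching datum is an $\eta$ (equivalently a nonzero $\Sq^2\colon H^m\to H^{m+2}$), possibly decorated by degree-$2^k$ relations recording Bocksteins. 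Feeding $Y_{(2)}$ into \cite[Theorem 11.2]{MR1361886} yields the wedge decomposition, \cite[Theorem 11.7]{MR1361886} gives uniqueness, and recombining with the odd Moore summands finishes the proof.

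I expect the main obstacle to be verifying hypotheses and matching conventions rather than anything conceptual. Concretely: (i) one must check that the cell ordering and orientation data in \cite{MR1361886} are compatible with our cells $\mathcal{C}(x)$ once the dimension $m$ and the role of the $J$-factor are pinned down, so that the truncation of Section \ref{truncation} really produces an admissible input for Theorems 11.2 and 11.7; and (ii) one must confirm that Criterion \ref{lowest diag} genuinely eliminates the remaining indecomposables in Chang's list --- those in which the middle cell is attached to the bottom cell through $\eta$ followed by a degree map, which would force $2$-torsion into $H^m(Y)$ and contradict freeness. Point (ii) is where ``no torsion on the lowest diagonal'' is used essentially, and I would make it explicit by this torsion count. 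Finally I would note that the proposition only fixes the \emph{shape} of the decomposition; to pin down which Chang complexes, and how many of each, actually occur for the knots in Section \ref{computations}, one then runs the combinatorial $\Sq^2$ formula of Theorem \ref{sq2 is Sq2} alongside the integral differentials, exactly as in the proof of Proposition \ref{classification}.
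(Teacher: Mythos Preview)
Your proposal is correct and matches the paper's approach, which is simply to cite \cite[Theorems~11.2, 11.7]{MR1361886} without further argument; in fact you supply more detail than the paper does, since the paper treats this as a black-box citation. Your reduction --- three diagonals give three consecutive cell dimensions, Criterion~\ref{lowest diag} forces the bottom cohomology to be free and hence rules out the longer Chang indecomposables, and Baues's theorems then classify the 2-local piece --- is exactly the intended content of that citation.
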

\section{Computations}
Let $\mathcal{X}_l(L)$ be a Khovanov spectrum, where $l\geq 0$ and $L$ a link, and let $\bullet = l\ \mathrm{mod}\ 2$. It can be checked from the databases \cite{knotinfo} that, with the exception of the case where both $l$ is odd and $K \in \{K11n19, m(K11n19)\}$, $\mathcal{X}_l(K)$ satisfies the conditions of Proposition \ref{classification} for all knots $K$ of 11 crossings or fewer. Therefore, the homotopy types of the rest of the spaces $\mathcal{X}_l(L)$ are determined by the (integral) Khovanov homology $Kh_\bullet$ and the function $St_l$. 
\begin{remark}
  Indeed, the Khovanov homologies $Kh_o(K_0)$ and $Kh_o(m(K_0))$ each have a copy of $\mathbb{Z}/4$, violating Condition \ref{torsion}. However we can use Proposition \ref{more torsion} to conclude that $\mathcal{X}_1(K_0)\cong \mathcal{X}_3(m(K_0))$ is a wedge of Moore spaces together with $\Sigma^{-3}(\mathbb{R}P^5/\mathbb{R}P^2)$, and $\mathcal{X}(m(K_0))$ is a wedge of Moore spaces together with an elementary Chang complex $_4\eta 2$.
\end{remark}
 We present the $St_1$, $St_3$ values for prime knots $K$ of 11 or fewer crossings in Table \ref{St table} (for the $St_0$ values, see \cite{MR3252965}). Interestingly, $St_2(L)$ is trivial for all prime knots and links up to 11 crossings, so we do not list the $St_2$ values. To save space, we only include the knots $K$ for which $St_l(K)$ is not identically $(0,0,0,0)$ for all $l$, and for these knots, we only list the tuples $(i,j)$ for which $St_l(i,j)\neq (0,0,0,0)$. We refer to \cite{knotinfo} to check the even and odd Khovanov homologies $Kh_e(K)$, $Kh_o(K)$.\par
 We collect the data for the MorseLink presentations of \cite{knotatlas} and use several Python programs to carry out the computations. All the programs and computations are available at \url{https://github.com/charuvinda/KhovanovSteenrod}.
 We summarize some results obtained by our computations of $St_l$.
 \begin{proof}[Proof of Theorem \ref{depends more than mod 2}]
   We find that $St_2(T_{3,4})$ is identically $(0,0,0,0)$, and so by Proposition \ref{classification}, we see that $\mathcal{X}_{2}(T_{3,4})$ is a wedge sum of Moore spaces. However, $\mathcal{X}_0(T_{3,4})$ is not a wedge sum of Moore spaces by \cite[Theorem 1]{MR3252965}. For the second statement, we note that $St_1(T_{3,4})$ is identically $(0,0,0,0)$, while $St_3(T_{3,4})(2,11) = (1,0,0,0)$.
 \end{proof}
\begin{proof}[Proof of Theorem \ref{not dual}]
  $\mathcal{X}_1(T_{3,-4})^\vee$ has nontrivial $\Sq^2$ (since $\mathcal{X}_1(T_{3,-4})$ does), but $\mathcal{X}_1(m(T_{3,-4})) = \mathcal{X}_1(8_{19})$ has trivial $\Sq^2$ (see Table \ref{St table}). Furthermore, $\mathcal{X}_3(T_{3,4})^\vee$ has nontrivial $\Sq^2$ (since $\mathcal{X}_3(T_{3,4})$ does), but $\mathcal{X}_3(m(T_{3,-4})) = \mathcal{X}_1(8_{19})$ has trivial $\Sq^2$.
\end{proof}
\begin{proof}[Proof of Theorem \ref{not smash product}]
  Fix an $l$ as in the theorem. Observe that for degree reasons and homological reasons ($Kh_o(T_{2,3})$ has no torsion and is homologically thin), $\mathcal{X}_l(T_{2,3})$ is a wedge of spheres $S^n$, implying $\mathcal{X}_l(T_{2,3})\wedge \mathcal{X}_l(T_{2,3})$ is also a wedge of sphers. However, $St_l(T_{2,3}\sqcup T_{2,3})$  maps $(4,14) \mapsto (1,0,0,0)$ for all $l$, meaning $\mathcal{X}_l(T_{2,3}\sqcup T_{2,3})$ is never a wedge of spheres.
\end{proof}
\begin{proof}[Proof of Theorem \ref{relation with sum of reduced}]
  $\mathcal{X}_1^{-11}(T_{3,-4})$ has nontrivial $\Sq^2$, but for degree reasons, neither $\widetilde{\mathcal{X}}^{-12}_1(T_{3,-4})$ nor $\widetilde{\mathcal{X}}^{-10}_1(T_{3,-4})$ has nontrivial $\Sq^2$.
\end{proof}
\begin{proof}[Proof of Theorem \ref{mutant}]
  For all odd $k$, $\mathcal{X}_k(T_{2,3}\sqcup T_{2,3})$ has a nontrivial second Steenrod square, while $\mathcal{X}_k(T_{2,3}\# T_{2,3}\sqcup U)$ has no nontrivial second Steenrod squares.
\end{proof}
We include some a question that arose from our $St_l$ computations:
\begin{question}
  Does there exist a prime knot or link $L$ for which $\mathcal{X}_l^j(L)$ contains $\Sigma^m\mathbb{C}P^2$ in some wedge sum decomposition, for some $l,j,m$?
\end{question}
A point related to this question is that for $L = T_{2,3}\sqcup T_{2,3}$, we have that $\mathcal{X}_1^j(L)$ contains a copy of $\Sigma^{2}\mathbb{C}P^2$ in its wedge sum decomposition, so the question has already been answered in the affirmative for arbitrary links.
\begin{scriptsize}
\begin{longtable}[l]{l L{7.0cm}L{7.0cm}}
  \caption{}\label{St table}\\
  \toprule
  \large{$L$} & \large{$St_1(L)$} & \large{$St_3(L)$}\\
  \midrule
  \endfirsthead
  $8_{19}$ &   & $(2, 11)\mapsto(0, 0, 1, 0)$\\
  $9_{42}$ &   & $(-2, -1)\mapsto(0, 0, 1, 0)$\\
  $10_{124}$ &  $(5, 19)\mapsto(0, 1, 0, 0)$ & $(2, 13)\mapsto(0, 0, 1, 0)$\\
  $10_{128}$ &   & $(2, 11)\mapsto(0, 0, 1, 0)$\\
  $10_{132}$ &  $(-4, -7)\mapsto(0, 0, 0, 1)$ & $(-4, -7)\mapsto(0, 0, 1, 0),\ (-5, -9)\mapsto(0, 0, 1, 0),\ (-2, -3)\mapsto(0, 0, 1, 0)$\\
  $10_{136}$ &   & $(-2, -1)\mapsto(0, 0, 1, 0)$\\
  $10_{139}$ &  $(5, 19)\mapsto(0, 1, 0, 0)$ & $(2, 13)\mapsto(0, 0, 1, 0)$\\
  $10_{145}$ &  $(-6, -13)\mapsto(0, 0, 0, 1),\ (-4, -9)\mapsto(0, 1, 0, 0)$ & $(-6, -13)\mapsto(0, 0, 1, 0),\ (-7, -15)\mapsto(0, 0, 1, 0)$\\
  $10_{152}$ &  $(-4, -13)\mapsto(0, 1, 0, 0)$ & $(-7, -19)\mapsto(0, 0, 1, 0)$\\
  $10_{153}$ &  $(0, 1)\mapsto(0, 1, 0, 0),\ (-2, -3)\mapsto(0, 0, 0, 1),\ (1, 3)\mapsto(0, 1, 0, 0)$ & $(0, 1)\mapsto(0, 0, 0, 1),\ (-2, -3)\mapsto(0, 0, 1, 0),\ (-3, -5)\mapsto(0, 0, 1, 0)$\\
  $10_{154}$ &  $(5, 17)\mapsto(0, 1, 0, 0)$ & $(2, 11)\mapsto(0, 0, 1, 0)$\\
  $10_{161}$ &  $(-4, -11)\mapsto(0, 1, 0, 0)$ & $(-7, -17)\mapsto(0, 0, 1, 0)$\\
  K$11n6$ &  $(0, 1)\mapsto(0, 1, 0, 0),\ (-1, -1)\mapsto(0, 1, 0, 0),\ (-3, -5)\mapsto(0, 0, 0, 1)$ & $(-1, -1)\mapsto(0, 0, 0, 1),\ (-4, -7)\mapsto(0, 0, 1, 0),\ (-3, -5)\mapsto(0, 0, 1, 0)$\\
  K$11n9$ &  $(5, 17)\mapsto(0, 1, 0, 0),\ (4, 15)\mapsto(0, 1, 0, 0),\ (3, 13)\mapsto(0, 1, 0, 0),\ (1, 9)\mapsto(0, 0, 0, 1)$ & $(0, 7)\mapsto(0, 0, 1, 0),\ (3, 13)\mapsto(0, 0, 0, 1),\ (2, 11)\mapsto(0, 0, 1, 0),\ (1, 9)\mapsto(0, 0, 1, 0)$\\
  K$11n12$ &  $(2, 7)\mapsto(0, 1, 0, 0),\ (0, 3)\mapsto(0, 1, 0, 0),\ (3, 9)\mapsto(0, 1, 0, 0)$ & $(2, 7)\mapsto(0, 0, 0, 1)$\\
  K$11n19$ &  $(0, -1)\mapsto(0, 1, 0, 0)$ & $(-3, -7)\mapsto(0, 0, 1, 0)$\\
  K$11n20$ &  $(0, 1)\mapsto(0, 1, 0, 0)$ & \\
  K$11n24$ &   & $(-2, -1)\mapsto(0, 0, 1, 0)$\\
  K$11n27$ &   & $(2, 11)\mapsto(0, 0, 1, 0)$\\
  K$11n31$ &  $(5, 15)\mapsto(0, 1, 0, 0),\ (4, 13)\mapsto(0, 2, 0, 0),\ (1, 7)\mapsto(0, 0, 0, 1),\ (3, 11)\mapsto(0, 1, 0, 0)$ & $(4, 13)\mapsto(0, 0, 0, 1),\ (2, 9)\mapsto(0, 0, 1, 0),\ (1, 7)\mapsto(0, 0, 1, 0),\ (0, 5)\mapsto(0, 0, 1, 0),\ (3, 11)\mapsto(0, 0, 0, 1)$\\
  K$11n34$ &  $(0, 1)\mapsto(0, 2, 0, 0),\ (-1, -1)\mapsto(0, 1, 0, 0),\ (-2, -3)\mapsto(0, 0, 0, 1),\ (1, 3)\mapsto(0, 1, 0, 0),\ (-3, -5)\mapsto(0, 0, 0, 1)$ & $(0, 1)\mapsto(0, 0, 0, 1),\ (-1, -1)\mapsto(0, 0, 0, 1),\ (-2, -3)\mapsto(0, 0, 1, 0),\ (-4, -7)\mapsto(0, 0, 1, 0),\ (-3, -5)\mapsto(0, 0, 2, 0)$\\
  K$11n38$ &  $(-3, -3)\mapsto(0, 0, 0, 1),\ (0, 3)\mapsto(0, 1, 0, 0)$ & $(-3, -3)\mapsto(0, 0, 1, 0),\ (-1, 1)\mapsto(0, 0, 0, 1),\ (-4, -5)\mapsto(0, 0, 1, 0)$\\
  K$11n39$ &  $(-2, -1)\mapsto(0, 0, 0, 1),\ (1, 5)\mapsto(0, 1, 0, 0),\ (0, 3)\mapsto(0, 1, 0, 0)$ & $(-2, -1)\mapsto(0, 0, 1, 0),\ (-3, -3)\mapsto(0, 0, 1, 0),\ (0, 3)\mapsto(0, 0, 0, 1)$\\
  K$11n42$ &  $(0, 1)\mapsto(0, 2, 0, 0),\ (-1, -1)\mapsto(0, 1, 0, 0),\ (-2, -3)\mapsto(0, 0, 0, 1),\ (1, 3)\mapsto(0, 1, 0, 0),\ (-3, -5)\mapsto(0, 0, 0, 1)$ & $(0, 1)\mapsto(0, 0, 0, 1),\ (-1, -1)\mapsto(0, 0, 0, 1),\ (-2, -3)\mapsto(0, 0, 1, 0),\ (-4, -7)\mapsto(0, 0, 1, 0),\ (-3, -5)\mapsto(0, 0, 2, 0)$\\
  K$11n45$ &  $(-2, -1)\mapsto(0, 0, 0, 1),\ (1, 5)\mapsto(0, 1, 0, 0),\ (0, 3)\mapsto(0, 1, 0, 0)$ & $(-2, -1)\mapsto(0, 0, 1, 0),\ (-3, -3)\mapsto(0, 0, 1, 0),\ (0, 3)\mapsto(0, 0, 0, 1)$\\
  K$11n49$ &  $(-3, -3)\mapsto(0, 0, 0, 1),\ (-1, 1)\mapsto(0, 1, 0, 0),\ (0, 3)\mapsto(0, 1, 0, 0)$ & $(-3, -3)\mapsto(0, 0, 1, 0),\ (-1, 1)\mapsto(0, 0, 0, 1),\ (-4, -5)\mapsto(0, 0, 1, 0)$\\
  K$11n57$ &  $(4, 15)\mapsto(0, 0, 0, 1),\ (1, 9)\mapsto(0, 0, 0, 1)$ & $(0, 7)\mapsto(0, 0, 1, 0),\ (3, 13)\mapsto(0, 0, 1, 0),\ (2, 11)\mapsto(0, 0, 1, 0),\ (1, 9)\mapsto(0, 0, 1, 0)$\\
  K$11n61$ &  $(0, 5)\mapsto(0, 0, 0, 1)$ & $(2, 9)\mapsto(0, 0, 1, 0),\ (-1, 3)\mapsto(0, 0, 1, 0),\ (0, 5)\mapsto(0, 0, 1, 0)$\\
  K$11n67$ &  $(2, 7)\mapsto(0, 1, 0, 0),\ (1, 5)\mapsto(0, 1, 0, 0),\ (-1, 1)\mapsto(0, 0, 0, 1)$ & $(-2, -1)\mapsto(0, 0, 1, 0),\ (1, 5)\mapsto(0, 0, 0, 1),\ (-1, 1)\mapsto(0, 0, 1, 0)$\\
  K$11n70$ &  $(1, 7)\mapsto(0, 0, 0, 1)$ & $(-2, 1)\mapsto(0, 0, 1, 0),\ (1, 7)\mapsto(0, 0, 1, 0),\ (0, 5)\mapsto(0, 0, 1, 0)$\\
  K$11n73$ &  $(-2, -1)\mapsto(0, 0, 0, 1),\ (1, 5)\mapsto(0, 1, 0, 0),\ (0, 3)\mapsto(0, 1, 0, 0)$ & $(-2, -1)\mapsto(0, 0, 1, 0),\ (-3, -3)\mapsto(0, 0, 1, 0),\ (0, 3)\mapsto(0, 0, 0, 1)$\\
  K$11n74$ &  $(-2, -1)\mapsto(0, 0, 0, 1),\ (1, 5)\mapsto(0, 1, 0, 0),\ (0, 3)\mapsto(0, 1, 0, 0)$ & $(-2, -1)\mapsto(0, 0, 1, 0),\ (-3, -3)\mapsto(0, 0, 1, 0),\ (0, 3)\mapsto(0, 0, 0, 1)$\\
  K$11n77$ &  $(5, 19)\mapsto(0, 1, 0, 0)$ & $(2, 13)\mapsto(0, 0, 1, 0)$\\
  K$11n79$ &   & $(-2, -1)\mapsto(0, 0, 1, 0)$\\
  K$11n80$ &  $(-1, -3)\mapsto(0, 1, 0, 0),\ (-3, -7)\mapsto(0, 0, 0, 1),\ (0, -1)\mapsto(0, 1, 0, 0)$ & $(-4, -9)\mapsto(0, 0, 1, 0),\ (-1, -3)\mapsto(0, 0, 0, 1),\ (-3, -7)\mapsto(0, 0, 1, 0)$\\
  K$11n81$ &   & $(2, 11)\mapsto(0, 0, 1, 0)$\\
  K$11n88$ &   & $(2, 11)\mapsto(0, 0, 1, 0)$\\
  K$11n92$ &  $(0, 1)\mapsto(0, 1, 0, 0)$ & \\
  K$11n96$ &  $(-2, -1)\mapsto(0, 0, 0, 1),\ (1, 5)\mapsto(0, 1, 0, 0),\ (0, 3)\mapsto(0, 1, 0, 0)$ & $(-2, -1)\mapsto(0, 0, 2, 0),\ (-3, -3)\mapsto(0, 0, 1, 0),\ (0, 3)\mapsto(0, 0, 0, 1)$\\
  K$11n97$ &  $(-1, -1)\mapsto(0, 0, 0, 1),\ (2, 5)\mapsto(0, 1, 0, 0),\ (1, 3)\mapsto(0, 1, 0, 0)$ & $(-1, -1)\mapsto(0, 0, 1, 0),\ (-2, -3)\mapsto(0, 0, 1, 0),\ (1, 3)\mapsto(0, 0, 0, 1)$\\
  K$11n102$ &  $(-5, -9)\mapsto(0, 0, 0, 1),\ (-2, -3)\mapsto(0, 1, 0, 0),\ (-3, -5)\mapsto(0, 1, 0, 0)$ & $(-5, -9)\mapsto(0, 0, 1, 0),\ (-6, -11)\mapsto(0, 0, 1, 0),\ (-3, -5)\mapsto(0, 0, 0, 1)$\\
  K$11n104$ &  $(4, 15)\mapsto(0, 1, 0, 0),\ (3, 13)\mapsto(0, 1, 0, 0),\ (1, 9)\mapsto(0, 0, 0, 1)$ & $(0, 7)\mapsto(0, 0, 1, 0),\ (3, 13)\mapsto(0, 0, 0, 1),\ (2, 11)\mapsto(0, 0, 1, 0),\ (1, 9)\mapsto(0, 0, 1, 0)$\\
  K$11n111$ &  $(2, 9)\mapsto(0, 1, 0, 0),\ (1, 7)\mapsto(0, 1, 0, 0),\ (-1, 3)\mapsto(0, 0, 0, 1)$ & $(-2, 1)\mapsto(0, 0, 1, 0),\ (1, 7)\mapsto(0, 0, 0, 1),\ (-1, 3)\mapsto(0, 0, 1, 0)$\\
  K$11n116$ &  $(0, 1)\mapsto(0, 1, 0, 0),\ (-1, -1)\mapsto(0, 1, 0, 0),\ (-3, -5)\mapsto(0, 0, 0, 1)$ & $(-1, -1)\mapsto(0, 0, 0, 1),\ (-4, -7)\mapsto(0, 0, 1, 0),\ (-3, -5)\mapsto(0, 0, 1, 0)$\\
  K$11n126$ &   & $(2, 11)\mapsto(0, 0, 1, 0)$\\
  K$11n133$ &  $(0, 5)\mapsto(0, 0, 0, 1)$ & $(2, 9)\mapsto(0, 0, 1, 0),\ (-1, 3)\mapsto(0, 0, 1, 0),\ (0, 5)\mapsto(0, 0, 1, 0)$\\
  K$11n135$ &  $(4, 13)\mapsto(0, 1, 0, 0),\ (1, 7)\mapsto(0, 0, 0, 1),\ (3, 11)\mapsto(0, 1, 0, 0)$ & $(1, 7)\mapsto(0, 0, 1, 0),\ (0, 5)\mapsto(0, 0, 1, 0),\ (3, 11)\mapsto(0, 0, 0, 1)$\\
  K$11n138$ &   & $(-2, -1)\mapsto(0, 0, 1, 0)$\\
  K$11n143$ &  $(2, 7)\mapsto(0, 1, 0, 0),\ (1, 5)\mapsto(0, 1, 0, 0),\ (-1, 1)\mapsto(0, 0, 0, 1)$ & $(-2, -1)\mapsto(0, 0, 1, 0),\ (1, 5)\mapsto(0, 0, 0, 1),\ (-1, 1)\mapsto(0, 0, 1, 0)$\\
  K$11n145$ &  $(-2, -1)\mapsto(0, 0, 0, 1),\ (1, 5)\mapsto(0, 1, 0, 0),\ (0, 3)\mapsto(0, 1, 0, 0)$ & $(-2, -1)\mapsto(0, 0, 1, 0),\ (-3, -3)\mapsto(0, 0, 1, 0),\ (0, 3)\mapsto(0, 0, 0, 1)$\\
  K$11n151$ &  $(2, 9)\mapsto(0, 1, 0, 0),\ (1, 7)\mapsto(0, 1, 0, 0),\ (-1, 3)\mapsto(0, 0, 0, 1)$ & $(-2, 1)\mapsto(0, 0, 1, 0),\ (1, 7)\mapsto(0, 0, 0, 1),\ (-1, 3)\mapsto(0, 0, 1, 0)$\\
  K$11n152$ &  $(2, 9)\mapsto(0, 1, 0, 0),\ (1, 7)\mapsto(0, 1, 0, 0),\ (-1, 3)\mapsto(0, 0, 0, 1)$ & $(-2, 1)\mapsto(0, 0, 1, 0),\ (1, 7)\mapsto(0, 0, 0, 1),\ (-1, 3)\mapsto(0, 0, 1, 0)$\\
  K$11n183$ &  $(5, 17)\mapsto(0, 1, 0, 0)$ & $(2, 11)\mapsto(0, 0, 1, 0)$\\
 \bottomrule
\end{longtable}
\end{scriptsize}
\bibliography{Bibliography} 
\bibliographystyle{alpha}
\end{document}